\numberwithin{equation}{section}
\newtheorem{theorem}{Theorem}[section]
\newtheorem{lemma}[theorem]{Lemma}
\newtheorem{proposition}[theorem]{Proposition}
\newtheorem{corollary}[theorem]{Corollary}
\theoremstyle{definition}
\newtheorem{definition}[theorem]{Definition}
\newtheorem{remark}[theorem]{Remark}
\renewcommand{\epsilon}{\varepsilon}
\renewcommand{\phi}{\varphi}
\newcommand{\mathd}{{\mathrm d}}
\newcommand{\Iota}{{\mathrm I}}
\begin{document}

\title{The Markov property for $\varphi^4_3$ on the cylinder}

\author{Nikolay Barashkov
\footnote{Max Planck Institute for Mathematics in the Sciences,
Leipzig; \url{nikolay.barashkov@mis.mpg.de}} \:  and Trishen S. Gunaratnam\footnote{Tata Institute of Fundamental Research, Mumbai; \url{trishen@math.tifr.res.in}}\footnote{International Centre for Theoretical Sciences, Bengaluru; \url{trishen.gunaratnam@icts.res.in}}}

\maketitle
\begin{abstract}
We prove that the $\varphi^4_3$ model satisfies a version of Segal's axioms in the special case of three-dimensional tori and cylinders. As a consequence, we give the first proof that this model satisfies a Markov property and we characterize its boundary law up to absolutely continuous perturbations. In addition, we use Segal's axioms to give an alternative construction of the $\varphi^4_3$ Hamiltonian on two-dimensional tori as compared with Glimm (Comm. Math. Phys., 1968). We exploit this probabilistic approach to prove novel fundamental spectral properties of the Hamiltonian, such as discrete spectrum and a Perron-Froebenius type result on its ground state. The key technical contributions of this article are the development of tools to analyze $\varphi^4_3$ models with rough boundary conditions. We heavily use the variational approach to $\varphi^4_3$ models introduced in Barashkov and Gubinelli (Duke, 2020) that is based on the Bou\'e-Dupuis formula and dual to Polchinski's continuous renormalization group.
\end{abstract}


\tableofcontents

\section{Introduction}

The study of relativistic quantum fields through the lens of statistical physics was initiated in the '50s and '60s by Schwinger \cite{S58}, Nakano \cite{N59}, and Symanzik \cite{S66}. Mathematically, quantum field theory concerns the study of operator valued distributions acting on $d$-dimensional Minkowski space $\mathbb R^{1,d-1}$ (see, for example, Wightman's axioms \cite{W56}). By analytically continuing time $t$ into imaginary time $it$, $d$-dimensional Minkowski space $\mathbb R^{1,d-1}$ is mapped to the Euclidean space $\mathbb R^d$. Under this mapping, the Feynman path integral approach to quantum field theory \cite{F48} is transformed to the study of \emph{Gibbs probability measures} on Euclidean fields $\varphi:\mathbb R^d \rightarrow \mathbb R$. Informally, these are defined by their averages on observables $F(\varphi)$ via the path integral
\begin{equation} \label{eq: intro: path integral}
	\int_{\varphi: \mathbb R^d \rightarrow \mathbb R} F(\varphi) \exp(-S(\varphi)) D\varphi.
\end{equation}
Above, $D\varphi$ is the (non-existent) uniform measure on the space of fields $\varphi:\mathbb R^d\rightarrow \mathbb R$ and the functional $S(\varphi)$ is the called the action of the theory. The study of these probability measures that in addition allow one to reconstruct quantum fields in Minkowski space is called \emph{Euclidean field theory}.

The rigorous construction of path integrals of the form \eqref{eq: intro: path integral} is delicate. The notion of a uniform or Lebesgue measure on the infinite dimensional space of fields $\varphi: \mathbb R^d\rightarrow \mathbb R$ is not well-defined. We will exclusively be interested in the case where the action takes the form:
\begin{equation}
	S(\varphi) = \int_{\mathbb R^d} V(\varphi(x)) + \frac 12 |\nabla \varphi(x)|^2 \, dx,
\end{equation}
where $V$ is a potential function and $\nabla$ is the gradient on $\mathbb R^d$. In the case that $V\equiv 0$, the path integrals can be rigorously defined by using Gaussian measure theory \cite{J97,S07} and the resulting object is called the \emph{Gaussian Free Field} (GFF). It is natural to then construct the case when $V$ is non-zero by using the GFF as a base measure.  When $d=1$, the "GFF" reduces to Brownian motion and path integrals are well-defined for quite general potentials $V$ and are known as \emph{Feynman-Kac} formulae \cite{S13}. The construction of path integrals for general potentials $V$ is notoriously difficult in dimensions $d\geq 2$. This is because the GFF is almost surely \emph{not} a function, and thus one has to evaluate the potential at a \emph{random Schwartz distribution}. This is a serious issue when $V$ is nonlinear as e.g.\ multiplication of Schwartz distributions is ill-defined. Whilst quadratic potentials still largely fall in the framework of Gaussian measure theory, the next simplest example of a nonlinear potential ---  the \emph{quartic} potential ---  does not.

We are interested in Euclidean field theories with quartic potentials, known as $\varphi^4_d$ models ---  particularly the three-dimensional model, $\varphi^4_3$. The issue of the construction/destruction of $\varphi^4_d$ models on tori and in infinite volume is now more or less settled. Due to the fact that fourth powers of distributions are ill-defined, it is natural to first put an ultraviolet/small-scale cutoff into the model ---  for example, by mollifying the fields or using a lattice discretization ---  and then use a \emph{renormalization procedure} to cancel divergences that are produced when the cutoff is removed. Concretely, this amounts to putting \emph{counter-terms} in the action $S(\varphi)$ that are diverging as the cutoff is removed. The rigorous implementation of a renormalization procedure to construct $\varphi^4_2$ \cite{N66, GJ12, S15} and $\varphi^4_3$ \cite{GJ73, FO76} path integrals were major achievements of the constructive field theory programme in the '60s and '70s. We refer to the introduction of \cite{GH21} for further background and references. Let us also mention that in higher dimensions, there are triviality (i.e.\ destruction) results: see \cite{A82, F82} for the case of $d\geq 5$ and \cite{ADC21} for $d=4$.

 In his seminal paper \cite{N73a}, Nelson proved that the \emph{Markov property}, which is intimately tied with the study of Gibbs measures, is sufficient to recover a relativistic quantum theory in Minkowski space. Although straightforward to establish for lattice models, the Markov property in continuum is a much more delicate property. Examples where it is known include free fields \cite{N73b} and $\varphi^4_2$ fields \cite{N66, S15}. In the Markovian framework, the Hamiltonian can be described probabilistically by considering continuous versions of transfer matrices, that is semigroups generated by path integrals emanating from this boundary (technically speaking, path integrals on \emph{cobordisms}). Furthermore, it is related to Segal's elegant axiomatization of conformal field theories and other quantum fields \cite{S88} (see also \cite{A88}). For our purposes, this can be summarized as a \emph{gluing property} of certain objects associated to the (bulk) theory called \emph{amplitudes}, which in our setting will be weighted Laplace transforms, along boundaries via integration with respect to a \emph{boundary measure}. Segal's axioms have been proved rigorously for relatively few models. The examples we know of include free fields \cite{N73b}, $\varphi^4_2$ fields \cite{P07,L24}, and more recently Liouville conformal field theory \cite{GKRV21}, where it played an important role in establishing the conformal bootstrap. In all of these cases, the boundary measure corresponds to the natural boundary GFF.

 So far, establishing the Markov property for $\varphi^4_3$ has remained an open problem. Indeed, it was shown \cite{AL08} that any field describing the conditional boundary law of the $\varphi^4_3$ measure must be singular with respect to the boundary GFF. This is a significant obstacle to obtaining Nelson's reconstruction theorem and we do not know of any other cases of the Markov property for quantum fields being shown where the boundary law is not the boundary GFF, or variants thereof. This being said, the weaker property of \emph{reflection positivity} has been shown for $\varphi^4_3$ and by the Osterwalder-Schrader theorem \cite{OS73, OS75} it is sufficient to reconstruct a quantum theory. However, the key deficiency of this approach is that many of fundamental objects on the Minkowski side are defined implicitly and are hard to access (e.g.\ using probabilistic techniques). This is a significant obstacle to study quantum fields beyond their construction. Indeed, let us emphasize that the latter objects include the \emph{Hamiltonian} of the theory.

In this article, we show the Markov property for $\varphi^4_3$ models in the special case of (three-dimensional) cylinders and tori. We show that appropriately defined amplitudes associated to these models obey a gluing property in the spirit of Segal's framework. This is the first instance we know of where the boundary measure involved in the gluing is non-Gaussian and singular with respect to the boundary GFF. We use the gluing property to give a probabilistic construction the Hamiltonian of the $\varphi^4_3$ model on two-dimensional tori, giving an alternative approach to that of Glimm \cite{G68}. To the best of our knowledge, beyond lower boundedness of the spectrum \cite{GJ73}, no other properties of the $\varphi^4_3$ Hamiltonian have been established. We exploit the probabilistic approach and establish the following fundamental spectral properties of this infinite dimensional, highly singular, renormalized Schr\"odinger operator: discreteness of spectrum, the simplicity of the lowest eigenvalue, and strict positivity of the associated normalized eigenvector. These three themes of results are respectively formalized in Theorems \ref{theorem: markov}, \ref{thm: main}, and \ref{thm: hamiltonian} below. 

At the heart of our proofs is the development of techniques to understand the \emph{conditional boundary law} of $\varphi^4_3$ fields, up to absolutely continuous perturbations. Although the boundary law is singular with respect to the boundary GFF, we are able to obtain a precise characterization of this singular perturbation, including regularity information. We then develop techniques that allow us to construct $\varphi^4_3$ models with \emph{rough boundary conditions} drawn according to this conditional law. Let us mention that the construction of $\varphi^4_3$ models (on cylinders) with boundary conditions, not necessarily rough, appears to the best of our knowledge to be novel and we believe of independent interest. We build on the approach developed by \cite{BG20}, based on the Bou\'e-Dupuis variational representation of Laplace transforms \cite{BD98}, which is a dual representation of Polchinski's continuous renormalization group transformations \cite{P84} (see \cite{BGH23} for an exact description of the duality). Furthermore, we heavily use paracontrolled calculus tools developed in the context of analyzing singular stochastic partial differential equations \cite{GIP15}.

We now give precise statements of results.

\subsection{Construction of $\varphi^4_3$ models with rough boundary conditions}

Our first result, Theorem \ref{theorem: rough boundary conditions} below, concerns the construction of $\varphi^4_3$ models on cylinders with \emph{rough boundary conditions}. Since the uniform does not exist in infinite dimensions, we will begin by describing a more useful (Gaussian) reference measure. 

Fix $L > 0$ and let $M=[-L,L]\times \mathbb T^2$. Let $S'(M)$ denote the space of Schwartz distributions on $M$. The massive Gaussian free field (GFF) with Dirichlet boundary conditions is the random element of $S'(M)$ with law given by the centred Gaussian measure $\mu$ with covariance $(-\Delta + m^2)^{-1}$, where $\Delta$ is the Dirichlet Laplacian on $M$ and $m^2>0$ is the mass. We now introduce boundary conditions for the GFF. Let $S'(\mathbb T^2)$ denote the space of Schwartz distributions on $\mathbb T^2$. Let $H(\varphi_-,\varphi_+)$ be the unique solution to $(-\Delta + m^2)H(\varphi_-,\varphi_+) \equiv 0$ on $(-L,L)\times \mathbb T^2$ and with boundary conditions $\varphi_-$ on $\partial^- M:= \{-L\}\times\mathbb T^2$ and $\varphi_+$ on $\partial^+M := \{L\}\times\mathbb T^2$. The GFF with boundary conditions $(\varphi_-,\varphi_+)$ on $\partial M$ is the random variable $\varphi+H(\varphi_-,\varphi_+)$, where $\varphi \sim \mu$, i.e.\ we shift the mean of $\mu$ to be $H(\varphi_-,\varphi_+)$. We will denote the resulting measure by $\mu( \,  \cdot \mid \varphi_-,\varphi_+)$.

The $\varphi^4_3$ model on $M$ with boundary conditions will be obtained as a weak limit of regularized and \emph{renormalized} approximate measures, which we now describe. Fix $T\geq 0$. Given $\varphi \in S'(M)$ and $T \geq 0$, we write $\varphi_T:= \varphi \ast \rho_T$, where $(\rho_T)_{T \geq 0}$ is a family of mollifiers on $\mathbb T^2$  and the convolution is performed in the periodic direction of $M$. If $\varphi \sim \mu$, we may define the Wick powers of $\varphi_T$ as follows. For every $x,y \in M$, let $C^M_T(x,y):= \mathbb E_{\mu}[\varphi_T(x) \varphi_T(y)]$. Define
\begin{align}
:\varphi_T(x)^2: \, &:=\varphi_T(x)^2-C_T^M(x,x), \\  :\varphi_T(x)^4: \, &:= \varphi_T(x)^4 - 6C_T^M(x,x)\varphi_T(x)^2	+3C_T^M(x,x)^2.
\end{align}
The approximate potential is the measurable map $V_T: S'( M) \rightarrow \mathbb R$ defined by
\begin{equation}
V_{T}(\varphi):= \int_M : \varphi_T(x)^4: - \gamma_T^ M(x) :\varphi_T(x)^2:\, dx - \delta_T^ M,
\end{equation}
where $\gamma_T^ M: M \rightarrow \mathbb R$ and $\delta_T^M \in \mathbb R$ are the (second order) \emph{mass} and \emph{energy renormalizations}, respectively . Given $\varphi_-,\varphi_+ \in S'(\mathbb T^2)$, the approximate model with these boundary conditions is the probability measure $\nu_T(\, \cdot \mid \varphi_-,\varphi_+)$  on $S'(\mathcal M)$ defined by the density
\begin{equation}
\nu_T(d\varphi \mid \varphi_-,\varphi_+) = \frac{1}{z_T(\varphi_-,\varphi_+)} \exp(-V_T(\varphi_T)) \mu(d\varphi \mid \varphi_-,\varphi_+),
\end{equation}
where $z_T(\varphi_-,\varphi_+)$ is a normalization constant. 

In order to obtain tightness of the measures, we will need to impose regularity restrictions on the boundary conditions. For $s \in \mathbb R$, let $H^s(\mathbb T^2)$ denote the usual fractional Sobolev space of regularity $s$ on $\mathbb T^2$. For deterministic regularity boundary conditions, it is sufficient\footnote{We remark that we did not optimize the regularity in the deterministic boundary condition case.} to take  $s>1/2-\kappa$ for $\kappa>0$ sufficiently small. However, in order to establish the Markov property for the limiting measure, we will need to consider \emph{random rough} boundary conditions sampled according to the natural conditional law of $\varphi^4_3$ restricted to (translations of) $\mathbb T^2$. To this end, let us write $\mu^0$ to denote the centred Gaussian measure on $S'(\mathbb T^2)$ with covariance $(-\Delta_{\mathbb T^2}+m^2)^{-1/2}$. By the domain Markov property of the GFF \cite{S07}, $\mu^0$ is (equivalent to) the conditional law of the GFF restricted to translates of $\mathbb T^2$. We will say that a measure $\tilde \nu^0$ is an \emph{admissible boundary law} if $\tilde\nu^0$ is the law of random variables $W^0+Z^0$, where $W^0$ is either $0$ or distributed according to $\mu^0$, and where $Z^0$ is a random variable that is almost surely in $H^{1/2-\kappa}(\mathbb T^2)$ for every $\kappa >0$ sufficiently small (and with sufficient moments). Let us note that the regularity of $Z^0$ is not sufficiently high to apply the Girsanov theorem with respect to $\mu^0$, and thus $\tilde\nu^0$ is not necessarily absolutely continuous to $\mu^0$ ---  in fact, the interacting boundary measure considered below is indeed singular. See Definition \ref{defn: admissible law} for a more precise statement.  

We now state our first main result concerning the construction of $\varphi^4_3$ models on $M$ with boundary conditions sampled according to admissible boundary laws.

\begin{theorem} \label{theorem: rough boundary conditions}
Let $\varphi_-,\varphi_+$ be i.i.d sampled according to an admissible boundary law $\tilde \nu^0$. Then almost surely there exists a choice of sequence of renormalizations $(\gamma^M_T,\delta^M_T)_{T \geq 0}$ such the sequence $(\nu_T(\, \cdot \mid \varphi_-,\varphi_+))_{T \geq 0}$ converges weakly to a non-trivial, non-Gaussian probability measure $\nu(\, \cdot \mid \varphi-,\varphi_+)$ on $S'(M)$. Moreover, $\nu$ admits a non-trivial Laplace transform $\mathcal L(\, \cdot \mid \varphi_-,\varphi_+): C^\infty(M) \rightarrow \mathbb R$ which can be described by an explicit variational problem. 
\end{theorem}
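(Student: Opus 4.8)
The plan is to follow the variational approach of \cite{BG20}, adapted to the cylinder $M$ with boundary conditions. First I would reduce everything to a statement about Laplace transforms: by the Bou\'e--Dupuis formula, for $f \in C^\infty(M)$,
\begin{equation}
-\log \mathbb E_{\mu(\cdot \mid \varphi_-,\varphi_+)}\big[\exp\big(-V_T(\varphi_T) + \langle f, \varphi\rangle\big)\big] = \inf_{u} \mathbb E\Big[V_T\big((W+H+u)_T\big) - \langle f, W+H+u\rangle + \tfrac12 \int_0^\infty \|\dot u_t\|^2\, dt\Big],
\end{equation}
where $W$ is the cylindrical-Wiener driving process whose terminal value is distributed as $\mu$ (using a suitable Gaussian decomposition of $(-\Delta+m^2)^{-1}$ on $M$ adapted to a scale parameter), $H = H(\varphi_-,\varphi_+)$ is the harmonic extension, and the infimum runs over adapted drifts $u$ in the Cameron--Martin space. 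The role of $f$ is harmless — $\langle f, \cdot\rangle$ is a smooth linear perturbation — so the core task is the $f=0$ case, i.e.\ convergence of $z_T$ and of the measures themselves.

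The main work is to control the variational functional uniformly in $T$. Here I would expand $(W+H+u)_T$ and Wick-renormalize: the dangerous terms are the Wick powers of the rough Gaussian part $W_T$ and the cross terms involving $H$. The harmonic extension $H(\varphi_-,\varphi_+)$ of an admissible boundary datum is, away from the boundary, as regular as one likes, but near $\partial M$ it only lies in $H^{1-\kappa}$ in the normal variable with a boundary trace of regularity $1/2-\kappa$; this is exactly the regularity threshold that fails Girsanov, so $H$ cannot be absorbed by a change of measure and must be kept explicitly in the drift problem. I would split $u = u^\flat + u^\sharp$ into a paracontrolled ansatz $u^\flat$ solving the leading singular part (as in \cite{BG20}, using \cite{GIP15}) plus a remainder $u^\sharp$ of positive regularity, and then establish coercive lower bounds of the form $V_T((W+H+u)_T) + \frac12\int\|\dot u\|^2 \geq c\int (u^\sharp)^4 - (\text{stochastic data}) - (\text{boundary data depending on }H)$, where the stochastic and boundary data have finite moments uniformly in $T$ (Wick powers of $W_T$ converge in the relevant Besov spaces, and the $H$-dependent terms are estimated using the $H^{1/2-\kappa}$-regularity of the traces $\varphi_\pm$, together with parabolic-type estimates for the harmonic extension near the boundary). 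The admissibility hypothesis — that $\varphi_\pm = W^0 + Z^0$ with $W^0$ a boundary GFF and $Z^0 \in H^{1/2-\kappa}$ with moments — is used precisely to make these bounds hold almost surely in $(\varphi_-,\varphi_+)$.

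Given uniform bounds on the variational problems, I would extract a limit: compactness of the near-minimizing drifts (in a weak topology on the Cameron--Martin space, or along the lines of $\Gamma$-convergence) yields convergence of $-\log z_T(\varphi_-,\varphi_+)$ to a finite limit and identifies the limiting variational functional; tightness of $(\nu_T(\cdot\mid\varphi_-,\varphi_+))_T$ on $S'(M)$ follows from uniform moment bounds on $\langle \varphi, g\rangle$ obtained by the same computation applied with test perturbations, and any subsequential limit is characterized by its Laplace transform, which by the above is the stated variational problem; uniqueness of the limit then upgrades tightness to convergence. Non-triviality and non-Gaussianity follow by comparing the variational functional (equivalently, low moments such as $\mathbb E_\nu[:\!\varphi^4\!:]$) against the Gaussian case, exactly as in the torus construction. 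The hard part is the uniform-in-$T$ coercivity in the presence of the rough harmonic extension $H$: one must show the cross terms between the Wick powers of $W_T$ and $H$ (and between $H$ and the paracontrolled remainder) do not destroy the sign of the quartic term, which requires carefully exploiting the extra decay/regularity of $H$ in the bulk and a renormalization $(\gamma^M_T,\delta^M_T)$ tuned to cancel the divergences generated both by $W_T$ alone and by its interaction with the (deterministic, once conditioned) field $H$.
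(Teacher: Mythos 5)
Your proposal follows the paper's strategy essentially verbatim: Bou\'e--Dupuis variational representation, a paracontrolled drift ansatz, uniform-in-$T$ coercivity, and convergence of infima via compactness/$\Gamma$-convergence, with the admissible decomposition $\varphi_\pm = W^0 + Z^0$ used so that the boundary contributions are finite almost surely. One point you underspecify is that the drift ansatz cannot be taken ``as in \cite{BG20}'': the boundary-bulk divergences you correctly identify as the hard part cannot merely be \emph{estimated} against the quartic coercive term, since they have divergent expectation. Rather, the ansatz must be enlarged with three additional boundary-dependent components $J_t(\mathbb{W}^2_t H(\varphi_-,\varphi_+)_T)$, $J_t(W_t\llbracket H(\varphi_-,\varphi_+)_T^2\rrbracket)$, and $J_t\llbracket H(\varphi_-,\varphi_+)_T^3\rrbracket$ (Definition \ref{definition: full ansatz}) so that their divergences can be twisted into the mass and energy counterterms. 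Correspondingly, the Wick powers $\llbracket(\overline H W^0_T)^i\rrbracket$ of the harmonic extension of the rough part $W^0$ of the boundary datum must be controlled \emph{stochastically} in boundary-weighted spaces (Proposition \ref{prop: stochastic boundary v2}), and several of the resulting boundary-bulk error functionals admit only \emph{fractional} moments under the boundary law (Section \ref{sec:stochastic}). Without these extra ansatz terms the renormalized functional is not equicoercive, so ``exploiting the extra decay of $H$'' and tuning $(\gamma^M_T,\delta^M_T)$ alone would not close the argument.
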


The limiting measure $\nu(\, \cdot \mid \varphi_-,\varphi_+)$ is called the $\varphi^4_3$ model on $M$ with boundary conditions $(\varphi_-,\varphi_+)$ sampled according to $\tilde\nu^0$. We point out that full convergence, rather than just tightness, is established with the help of the explicit description of the limiting (unnormalized) Laplace transform, see Theorem \ref{thm: bulk}.  Furthermore, let us remark that in order to obtain tightness, it is necessary to choose the renormalizations $\gamma_T^M$ and $\delta_T^M$ to diverge as $T \rightarrow \infty$. In principle, we may then obtain a family of $\varphi^4_3$ models by shifting $\gamma_T^M$ by any bounded function. Moreover, due to loss of translation invariance in the non-periodic direction, $\gamma_T^M$ may indeed be a function rather than a constant, and this would cause issues when considering e.g. the Markov property. We show that one can choose $\gamma_T^M$ to be constant and canonically chosen to agree with the mass renormalization for the model on the torus, at least up to a finite correction, and thus in agreement with perturbation theory. On the other hand, shifts of $\delta_T^M$ will just affect the normalization constant and in fact $\delta^M_T$ is not necessary for the construction of $\nu$. It is required to ensure the convergence of the \emph{unnormalized Laplace transforms}, which are related to the amplitudes described below. Finally, let us remark that we could have introduced a coupling constant $\lambda > 0$ in front of the quartic term ---  all of the results of this article carry through in this case and therefore we set $\lambda =1$ without loss of generality.

The proof of Theorem \ref{theorem: rough boundary conditions} is closely related to the proof of Theorem \ref{thm: main} below, see Remark \ref{remark: phi43 rough bc proof strat}. Below, we will restate this theorem in a more precise way after introducing the relevant approximate models, see Theorem \ref{thm: phi43 random bc}.

\subsection{Gluing of amplitudes and the Markov property}

We now turn to our second main result, Theorem \ref{thm: main} below, concerning the gluing of $\varphi^4_3$ amplitudes. As a preliminary, let us observe that the $\varphi^4_3$ models constructed in Theorem \ref{thm: main} can easily be generalized to any three-dimensional cylinder. Intuitively, the gluing property tells us how to integrate out the boundary conditions of two $\varphi^4_3$ models on cylinders and obtain a model on the glued cylinder. The relevant objects are called amplitudes and they roughly correspond to \emph{unnormalized} Laplace transforms under the $\varphi^4_3$ measures. As we shall prove below (see the discussion after Theorem \ref{thm: hamiltonian}), the amplitudes are intimately related to transition probabilities of a natural $\varphi^4_3$ Markov process and, as such, generate a semigroup.

In order to define the amplitudes, a natural starting point is to consider the analogue of the gluing property of unnormalized Laplace transforms of $\varphi^4_3$ models in the prelimit. Let us write $M_- = [-L,0]\times\mathbb T^2$ and $M_+=[0,L]\times \mathbb T^2$, so that $M$ is obtained by gluing $M_-$ and $M_+$ along the boundary $B=\{0\}\times\mathbb T^2$. For notational convenience, we write $M_\sigma$ with $\sigma \in \{\emptyset, \pm \}$ to denote $M$ and $M_\pm$, respectively. Fix $T\geq 0$. Let $\mathcal L_T^\sigma( \, \cdot \mid \varphi_-,\varphi_+)$ denote the Laplace transform of $\nu_T^\sigma$. The unnormalized Laplace transform is the map $z_T^\sigma(\, \cdot \mid \varphi_-,\varphi_+):= z_T^\sigma(\varphi_-,\varphi_+)\mathcal L^\sigma_T(\,\cdot \mid \varphi_-,\varphi_+)$. Note that $z^\sigma_T(0\mid \varphi_-,\varphi_+)=z^\sigma_T(\varphi_-,\varphi_+)$. By the domain Markov property for the underlying GFF,
\begin{equation} \label{eq: intro: prelimit glue}
z_T(\varphi_-,\varphi_+) = C_M \mathbb E_{\mu^0}[ z_T^-(\varphi_-, {\Large \boldsymbol{\cdot} }\, ) z^+_T( {\Large \boldsymbol{\cdot} }\, , \varphi_+)],
\end{equation}
where $C_M>0$ is a constant related to the Dirichlet-to-Neumann map on $M$. Unfortunately, for generic fields sampled according to $\mu^0$, the normalization constants inside the expectation do not converge almost surely (even up to subsequence). This is due to a fourth (Wick) power of the (harmonic extension of the) boundary field, which is not well-defined in the limit $T\rightarrow \infty$.

A key step to obtaining convergence is to exponentially tilt the boundary Gaussian by the singular term. At finite $T\geq 0$, this yields a probability measure $\nu^0_T$ that is equivalent to $\mu^0$. We will thus end up with a new set of normalization constants $\mathcal Z_T^\sigma(\cdot,\cdot)$ and the gluing property reads
\begin{equation}
	\mathcal Z_T(\varphi_-,\varphi_+) = C_T(M,\varphi_-,\varphi_+) \mathbb E_{\nu^0_T}[\mathcal Z_T^-(\varphi_-,{\Large \boldsymbol{\cdot} }\,)\mathcal Z_T^+({\Large \boldsymbol{\cdot} }\,, \varphi_+)],
\end{equation}
where $C_M(\varphi_-,\varphi_+)$ is another constant that is induced\footnote{Let us also mention that in this constant is a term that allows us to remove the dependency of $\nu^0_T$ on the lengths of the cylinders considered.} by the tilt applied to $z_T(\varphi_-,\varphi_+)$.   We will be able to show that these new normalization constants ---  which are, up to a renormalization so that $C_T(M,\varphi_-,\varphi_+)=1$, the natural candidates for the $\varphi^4_3$ amplitudes ---  converge almost surely with respect to any admissible boundary law as $T \rightarrow \infty$. Furthermore, we will be able to construct the weak limit of $(\nu^0_T)_{T \geq 0}$ that we will denote by $\nu^0$. However, the limiting boundary law $\nu^0$ is \emph{singular} with respect to the boundary Gaussian $\mu^0$. Let us stress that this is a key difficulty that permeates throughout the whole paper and is at the heart of why the Markov property for $\varphi^4_3$ had remained unproven. As we shall describe in Section \ref{sec: intro: proof strat}, the construction of $\nu^0$ and the subsequent techniques we develop to control the divergences induced by the boundary measure $\nu^0$ in the amplitudes are the key innovations of the paper.

We will now turn to the statement of the gluing property of the $\varphi^4_3$ amplitudes. 
\begin{theorem}\label{thm: main}
There exists a probability measure $\nu^0$ on $S'(\mathbb T^2)$ and a family of amplitudes corresponding to measurable maps 
\begin{equation}
\mathcal A^\sigma({\Large \boldsymbol{\cdot} }\,\mid {\Large \boldsymbol{\cdot} }\,,{\Large \boldsymbol{\cdot} }\,): C^\infty(M^\sigma) \times S'(\mathbb T^2)\times S'(\mathbb T^2)\rightarrow \mathbb R
\end{equation}
such that the following are satisifed.
\begin{enumerate}
\item[(i)] The boundary measure $\nu^0$	is an admissible boundary law.
\item[(ii)] Let $\varphi_-,\varphi_+$ be i.i.d. sampled according to $\nu^0$. For every $f \in C^\infty(M^\sigma)$, $\mathcal A^\sigma(f \mid \varphi_-,\varphi_+)$ is, up to a multiplicative constant, the limit of unnormalized Laplace transforms $\mathcal Z_T(f \mid \varphi_-\ast \rho_T,\varphi_+\ast \rho_T)$ of the prelimiting $\varphi^4_3$ measures as $T \rightarrow \infty$. 
\item[(iii)] The amplitudes satisfy the gluing property. Almost surely for every $\varphi_-,\varphi_+$ independently sampled according to any admissible boundary law, and for every $f \in C^\infty(M)$,
\begin{equation}
\mathcal A(f\mid \varphi_-,\varphi_+) =  \mathbb E_{\nu^0} \left[ \mathcal A^{-}(f|_{M_-} \mid \varphi_-,\varphi^0) \,  \mathcal A^{+}(f|_{ M_+} \mid \varphi^0,\varphi_+) \right],
\end{equation}
where the field $\varphi^0\sim \nu^0$ is independent from $\varphi_-$ and $\varphi_+$, and $f|_{M_\pm}$ is the restriction of $f$ to $M_\pm$.
\end{enumerate}
\end{theorem}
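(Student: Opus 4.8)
The plan is to establish the gluing property by first proving it at the level of the regularized and renormalized models, where everything is a genuine finite-dimensional-looking computation built on the domain Markov property of the GFF, and then to pass to the limit $T \to \infty$ using the quantitative control over the amplitudes and the boundary measures developed for Theorems \ref{theorem: rough boundary conditions} and \ref{thm: bulk}. The starting point is the identity \eqref{eq: intro: prelimit glue} for the unnormalized Laplace transforms $z_T^\sigma$, which is just the domain Markov property of $\mu$ applied to the splitting $M = M_- \cup M_+$ along $B = \{0\} \times \mathbb{T}^2$: conditioning the GFF on $M$ on its restriction to $B$ decouples the two halves, and the potential $V_T$ is additive over $M_-$ and $M_+$ because it is an integral of a local density (here one must check that the Wick renormalizations $C^M_T(x,x)$, $\gamma^M_T$, $\delta^M_T$ split consistently with the restricted-domain covariances up to the explicit boundary corrections, which is where the constant $C_T(M, \varphi_-, \varphi_+)$ and the Dirichlet-to-Neumann data enter). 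Once the source term $f$ is inserted, since $f$ is smooth and also splits additively as $f = f|_{M_-} + f|_{M_+}$ across the codimension-one interface, the same factorization yields the prelimit gluing identity $\mathcal{Z}_T(f \mid \varphi_-, \varphi_+) = C_T(M,\varphi_-,\varphi_+) \, \mathbb{E}_{\nu^0_T}[\mathcal{Z}_T^-(f|_{M_-} \mid \varphi_-, {\boldsymbol{\cdot}}) \mathcal{Z}_T^+(f|_{M_+} \mid {\boldsymbol{\cdot}}, \varphi_+)]$ after the exponential tilt of the boundary Gaussian has been absorbed into $\nu^0_T$.

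The second and main part is the limit $T \to \infty$. By part (ii) (i.e.\ Theorem \ref{thm: bulk}), the left-hand side $\mathcal{Z}_T(f \mid \varphi_- \ast \rho_T, \varphi_+ \ast \rho_T)$ converges almost surely to $\mathcal{A}(f \mid \varphi_-, \varphi_+)$ up to the normalizing constant, and similarly each $\mathcal{Z}_T^\pm$ converges to $\mathcal{A}^\pm$; the constants $C_T(M,\varphi_-,\varphi_+)$ are renormalized to $1$ by construction. The difficulty is that the expectation on the right is over the boundary field $\varphi^0$, and the prelimit integration is against $\nu^0_T$ while the limit must be against $\nu^0$, which is \emph{singular} with respect to $\mu^0$; moreover the integrand $\mathcal{Z}_T^-(\cdot \mid \varphi_-, \varphi^0_T) \mathcal{Z}_T^+(\cdot \mid \varphi^0_T, \varphi_+)$ depends on $T$ both through its own mollification and through that of $\varphi^0$. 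The strategy here is to obtain this as a consequence of the convergence $\nu^0_T \to \nu^0$ that is already being proved (as part of Theorem \ref{thm: main}(i) and the construction of $\nu^0$), together with uniform-in-$T$ moment bounds on the amplitudes $\mathcal{Z}_T^\pm$ when the middle argument is sampled according to the admissible laws $\nu^0_T$. Concretely, I would: (a) show that $(\varphi^0_T, \varphi^0) \mapsto$ the product of amplitudes extends to a bounded continuous functional on the relevant Sobolev-regularity space $H^{1/2 - \kappa}(\mathbb{T}^2)$ (or rather a Besov variant), uniformly in $T$, using the variational/Bou\'e--Dupuis description of the amplitudes and the paracontrolled decomposition of the boundary field; (b) upgrade to an equi-integrability statement so that $\mathbb{E}_{\nu^0_T}[\,\cdot\,] \to \mathbb{E}_{\nu^0}[\,\cdot\,]$; (c) handle the removal of the mollifier $\rho_T$ on $\varphi^0$ by a short argument showing $\varphi^0 \ast \rho_T \to \varphi^0$ in the relevant topology with uniform moments, again using the admissibility decomposition $\varphi^0 = W^0 + Z^0$.

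A point requiring care is the interplay of the two limiting procedures: the amplitudes $\mathcal{A}^\pm(f \mid \cdot, \cdot)$ are defined as limits where the boundary data is mollified \emph{and} sampled from the prelimit boundary measure, so to plug in $\varphi^0 \sim \nu^0$ (unmollified, from the limiting singular measure) one needs the amplitude maps to be genuinely defined pointwise on $H^{1/2-\kappa}$-boundary data, not merely $\nu^0_T$-a.e.; this is exactly the ``rough boundary conditions'' content of Theorem \ref{theorem: rough boundary conditions}, which guarantees the amplitude is well-defined for boundary data drawn from \emph{any} admissible law, and $\nu^0$ is admissible by part (i). Finally, the identity should be promoted from ``$\varphi_\pm$ i.i.d.\ $\sim \nu^0$'' to ``$\varphi_\pm$ independently sampled from any admissible law'' by noting that the prelimit identity holds for arbitrary boundary data and the limiting amplitudes are defined for all admissible data; the almost-sure statement is then in the joint randomness of $(\varphi_-, \varphi_+, \varphi^0)$.

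\textbf{Main obstacle.} The principal difficulty is step (b)--(c) above: controlling the fourth-Wick-power divergence of the harmonic extension of the boundary field — the very term whose non-convergence under $\mu^0$ forced the introduction of the tilted measure $\nu^0_T$ — uniformly as $T \to \infty$ while simultaneously the reference measure $\nu^0_T$ itself is converging to something singular with respect to $\mu^0$. One cannot invoke Girsanov/absolute continuity at the limit, so the convergence of $\mathbb{E}_{\nu^0_T}$ to $\mathbb{E}_{\nu^0}$ has to be obtained through the variational representation and a careful paracontrolled analysis of the joint law of the boundary field and its harmonic extension into both $M_-$ and $M_+$, with all stochastic objects and counterterms tracked uniformly in $T$.
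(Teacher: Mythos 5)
Your high-level architecture — establish the gluing identity at finite regularization $T$ via the GFF domain Markov property, then pass to the limit using convergence of the boundary measures and uniform control of the amplitudes, with a separate equi-integrability input — is exactly the strategy of the paper (Proposition \ref{prop: gluing-cutoff} for the prelimit identity, Theorems \ref{thm: bulk} and \ref{thm: enhancement converge} for the amplitude and enhancement convergence, Lemma \ref{lem: large field} for the large-field/integrability control, combined via Skorokhod embedding and a truncation argument). In that sense the proposal is correct in outline.

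However, step (a) as you state it contains a genuine imprecision that would derail a direct implementation. You assert that the amplitudes extend to \emph{bounded continuous functionals of the boundary field on $H^{1/2-\kappa}(\mathbb T^2)$} (or a Besov variant), uniformly in $T$. This is false as stated, and the failure is precisely what makes this theorem nontrivial. The boundary field $\varphi^0 \sim \nu^0$ is \emph{not} in $H^{1/2-\kappa}(\mathbb T^2)$: in the admissible decomposition $\varphi^0 = W^0 + Z^0$, only the drift component $Z^0$ lands in $H^{1/2-\kappa}$, while $W^0 \sim \mu^0$ lives in $\mathcal C^{-1/2-\kappa}(\mathbb T^2)$. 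More importantly, even on the correct negative-regularity Besov space the amplitude $\mathcal A^\sigma(f\mid \cdot,\cdot)$ is \emph{not} a continuous (or even Borel) function of the raw boundary data — the divergent fourth Wick power of the harmonic extension and the $\Upsilon_i$ renormalized resonant products obstruct any such deterministic continuity. What the paper proves instead is continuity of a \emph{lifted} amplitude $\boldsymbol{\mathcal A}^\sigma$ on an \emph{enhanced} Banach space $\boldsymbol{\mathfrak B}$ whose points carry the Wick powers and renormalized products as additional coordinates (Theorem \ref{thm: bulk} parts I–II), together with a \emph{probabilistic} statement that the enhancement map $\Xi^\partial_T$ converges in law (Theorem \ref{thm: enhancement converge} part II). So the continuity lives one level up, on the enhancement, not on the boundary field itself; your mention of "paracontrolled decomposition" hints you may intend this, but the claim as written is the wrong one and cannot be salvaged on a raw Sobolev/Besov space. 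Relatedly, the equi-integrability in step (b), which you treat as an upgrade, is in the paper a substantial and separate result (Lemma \ref{lem: large field}, proved in Section \ref{sec: smallfield reduction}) obtained by re-running the Bou\'e--Dupuis analysis with an exponential weight in the enhancement norm; it is not a consequence of steps (a) or (c), and without a concrete mechanism for it your argument would remain open at the final exchange of limit and expectation.
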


\begin{figure}[htbp]

\tikzset{every picture/.style={line width=0.75pt}} 

\begin{tikzpicture}[x=0.68pt,y=0.68pt,yscale=-1,xscale=1]

\draw  [fill={rgb, 255:red, 155; green, 155; blue, 155 }  ,fill opacity=1 ] (99.14,203.33) .. controls (99.14,188.61) and (107.82,176.67) .. (118.53,176.67) .. controls (129.23,176.67) and (137.91,188.61) .. (137.91,203.33) .. controls (137.91,218.06) and (129.23,230) .. (118.53,230) .. controls (107.82,230) and (99.14,218.06) .. (99.14,203.33) -- cycle ;
\draw    (49.39,176.67) -- (118.53,176.67) ;
\draw    (49.39,230) -- (118.53,230) ;
\draw  [draw opacity=0] (49.39,230) .. controls (49.39,230) and (49.39,230) .. (49.39,230) .. controls (38.68,230) and (30,218.06) .. (30,203.33) .. controls (30,188.61) and (38.68,176.67) .. (49.39,176.67) -- (49.39,203.33) -- cycle ; \draw   (49.39,230) .. controls (49.39,230) and (49.39,230) .. (49.39,230) .. controls (38.68,230) and (30,218.06) .. (30,203.33) .. controls (30,188.61) and (38.68,176.67) .. (49.39,176.67) ;  

\draw  [fill={rgb, 255:red, 155; green, 155; blue, 155 }  ,fill opacity=1 ] (253.76,203.33) .. controls (253.76,218.06) and (245.08,230) .. (234.37,230) .. controls (223.67,230) and (214.99,218.06) .. (214.99,203.33) .. controls (214.99,188.61) and (223.67,176.67) .. (234.37,176.67) .. controls (245.08,176.67) and (253.76,188.61) .. (253.76,203.33) -- cycle ;
\draw    (303.51,230) -- (234.37,230) ;
\draw    (303.51,176.67) -- (234.37,176.67) ;
\draw  [draw opacity=0] (303.51,176.67) .. controls (303.51,176.67) and (303.51,176.67) .. (303.51,176.67) .. controls (314.22,176.67) and (322.9,188.61) .. (322.9,203.33) .. controls (322.9,218.06) and (314.22,230) .. (303.51,230) .. controls (303.51,230) and (303.51,230) .. (303.51,230) -- (303.51,203.33) -- cycle ; \draw   (303.51,176.67) .. controls (303.51,176.67) and (303.51,176.67) .. (303.51,176.67) .. controls (314.22,176.67) and (322.9,188.61) .. (322.9,203.33) .. controls (322.9,218.06) and (314.22,230) .. (303.51,230) .. controls (303.51,230) and (303.51,230) .. (303.51,230) ;  

\draw    (170,180) .. controls (168.63,188.23) and (170.91,199.79) .. (141.82,200) ;
\draw [shift={(140,200)}, rotate = 0.49] [color={rgb, 255:red, 0; green, 0; blue, 0 }  ][line width=0.75]    (10.93,-3.29) .. controls (6.95,-1.4) and (3.31,-0.3) .. (0,0) .. controls (3.31,0.3) and (6.95,1.4) .. (10.93,3.29)   ;
\draw   (530.28,203.33) .. controls (530.28,188.61) and (538.96,176.67) .. (549.67,176.67) .. controls (560.37,176.67) and (569.05,188.61) .. (569.05,203.33) .. controls (569.05,218.06) and (560.37,230) .. (549.67,230) .. controls (538.96,230) and (530.28,218.06) .. (530.28,203.33) -- cycle ;
\draw    (449.39,176.67) -- (549.67,176.67) ;
\draw    (449.39,230) -- (549.67,230) ;
\draw  [draw opacity=0] (449.39,230) .. controls (449.39,230) and (449.39,230) .. (449.39,230) .. controls (438.68,230) and (430,218.06) .. (430,203.33) .. controls (430,188.61) and (438.68,176.67) .. (449.39,176.67) .. controls (449.39,176.67) and (449.39,176.67) .. (449.39,176.67) -- (449.39,203.33) -- cycle ; \draw   (449.39,230) .. controls (449.39,230) and (449.39,230) .. (449.39,230) .. controls (438.68,230) and (430,218.06) .. (430,203.33) .. controls (430,188.61) and (438.68,176.67) .. (449.39,176.67) .. controls (449.39,176.67) and (449.39,176.67) .. (449.39,176.67) ;  
\draw    (181,180) .. controls (180.8,188.75) and (178.75,199.94) .. (209.1,200.01) ;
\draw [shift={(211,200)}, rotate = 179.29] [color={rgb, 255:red, 0; green, 0; blue, 0 }  ][line width=0.75]    (10.93,-3.29) .. controls (6.95,-1.4) and (3.31,-0.3) .. (0,0) .. controls (3.31,0.3) and (6.95,1.4) .. (10.93,3.29)   ;
\draw    (330,200) .. controls (367.76,173.73) and (388.38,174.63) .. (418.61,198.88) ;
\draw [shift={(420,200)}, rotate = 219.26] [color={rgb, 255:red, 0; green, 0; blue, 0 }  ][line width=0.75]    (10.93,-3.29) .. controls (6.95,-1.4) and (3.31,-0.3) .. (0,0) .. controls (3.31,0.3) and (6.95,1.4) .. (10.93,3.29)   ;

\draw (171,153.4) node [anchor=north west][inner sep=0.75pt]    {$\varphi ^{0}$};
\draw (25,232.4) node [anchor=north west][inner sep=0.75pt]    {$\mathcal{A}^{-}\left( f\mid \varphi _{-} ,\varphi ^{0} \ \right)$};
\draw (214,232.4) node [anchor=north west][inner sep=0.75pt]    {${\textstyle \mathcal{A}^{+}\left( f\mid \varphi ^{0\ } ,\varphi _{+} \ \right)}$};
\draw (441,232.4) node [anchor=north west][inner sep=0.75pt]    {${\textstyle \mathcal{A}( f\mid \varphi _{-} ,\varphi _{+} \ )}$};
\draw (41,192.4) node [anchor=north west][inner sep=0.75pt]    {$M^{-}$};
\draw (292,192.4) node [anchor=north west][inner sep=0.75pt]    {$M^{+}$};
\draw (452,192.4) node [anchor=north west][inner sep=0.75pt]    {$M$};
\draw (371,152.4) node [anchor=north west][inner sep=0.75pt]    {$\nu ^{0}$};

\end{tikzpicture}
  
    \caption{Pictorial representation of Segal's gluing property as stated in Theorem \ref{thm: main}. Amplitudes on $M^-$ and $M^+$ are glued by integrating the boundary field $\varphi^0$ according to the interacting boundary measure $\nu^0$ to produce the amplitude on $M$.}
    \label{fig:segal-gluing}
\end{figure}

We defer a discussion of the proof of Theorem \ref{thm: main} to Section \ref{sec: intro: proof strat}, where it is explained in detail along with precise section references. Let us however remark that condition (ii) above, although not directly relevant for the gluing (which involves the convergence of the amplitudes with boundary values sampled according to $\nu^0_T$), implies that the limiting $\varphi^4_3$ amplitudes are non-trivial.

We now turn to some implications of Theorem \ref{thm: main} and the techniques we use to prove it. First of all, let us mention that the restriction to the cylinders $M,M_-,M_+$ is artificial and clearly we can consider any three-dimensional cylinder. The proof techniques also extend to the case of three-dimensional tori. Let us write $\mathcal M$ to denote the three-dimensional torus obtained from gluing the boundaries of $M$ together. The amplitude $\mathcal M$ is a measurable map $\mathcal A^{\rm per}:C^\infty(\mathcal M) \rightarrow \mathbb R$ that is proportional to the Laplace transform of the $\varphi^4_3$ model constructed in \cite{BG20}. The constant of proportionality is related to the change of covariance in the underlying free fields and is inessential to the rest of the paper, so we will not make it explicit.  

\begin{corollary}\label{cor: tori gluing}
For every $f \in C^\infty(\mathcal M) \subset C^\infty(M)$,
\begin{equation}
\mathcal A^{\rm per}(f) = \mathbb E_{\nu^0}[ \mathcal A(f \mid \varphi^0,\varphi^0)].
\end{equation}
\end{corollary}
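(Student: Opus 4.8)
The plan is to obtain Corollary~\ref{cor: tori gluing} by specializing the gluing philosophy behind Theorem~\ref{thm: main} to the case where the two boundary components $\partial^-M$ and $\partial^+M$ of the cylinder $M$ are identified, so that $M$ becomes the torus $\mathcal M$. Concretely, the periodic amplitude $\mathcal A^{\rm per}(f)$ should arise from the cylinder amplitude $\mathcal A(f\mid\varphi_-,\varphi_+)$ by setting $\varphi_-=\varphi_+=\varphi^0$ and integrating $\varphi^0$ over a single copy of the interacting boundary measure $\nu^0$ (rather than two independent copies, as in the cylinder-gluing statement). First I would set this up at the prelimiting level: write the periodic $\varphi^4_3$ measure on $\mathcal M$ with cutoff $T$ as a Gaussian integral over the torus GFF, and use the domain Markov property of the periodic GFF along the cut $B=\{0\}\times\mathbb T^2\cong\{-L\}\times\mathbb T^2$ to express the periodic partition function (and, with a source $f$ inserted, the unnormalized Laplace transform $\mathcal Z_T^{\rm per}(f)$) as an expectation over the boundary Gaussian $\mu^0$ of the Dirichlet unnormalized Laplace transform $z_T(f\mid\,\cdot\,,\,\cdot\,)$ evaluated on the diagonal — i.e.\ the analogue of \eqref{eq: intro: prelimit glue} but with a single boundary field rather than two. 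The geometric constant here is the periodic analogue of $C_M$ and is exactly the constant of proportionality mentioned before the corollary statement, relating the torus free field to the Dirichlet free field plus harmonic extension.

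The next step is to apply, verbatim, the exponential tilt of the boundary Gaussian by the singular quartic term that converts $\mu^0$ to $\nu^0_T$ and converts $z_T$ into the renormalized amplitude $\mathcal A_T$. Since the tilt acts only on the boundary field, and since in the torus case the boundary field on $\partial^-M$ and $\partial^+M$ is one and the same field $\varphi^0$, only a single tilt factor appears — in contrast with the cylinder gluing, where the tilt on $\mathcal Z_T$ produces two factors (one per internal boundary). After this manipulation one obtains, for every $f\in C^\infty(\mathcal M)\subset C^\infty(M)$,
\begin{equation}
\mathcal A^{\rm per}_T(f) = C_T(\mathcal M) \, \mathbb E_{\nu^0_T}\big[\mathcal A_T(f\mid \varphi^0\ast\rho_T,\varphi^0\ast\rho_T)\big],
\end{equation}
up to an overall multiplicative constant that matches the one in the definition of $\mathcal A^{\rm per}$. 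I would then pass to the limit $T\to\infty$: the left side converges (by the torus version of Theorem~\ref{thm: main}(ii), i.e.\ the construction of \cite{BG20}) to $\mathcal A^{\rm per}(f)$; on the right side, $\nu^0_T\to\nu^0$ weakly and $\mathcal A_T(f\mid\,\cdot\ast\rho_T,\,\cdot\ast\rho_T)\to\mathcal A(f\mid\,\cdot\,,\,\cdot\,)$ in the sense established in the proof of Theorem~\ref{thm: main}, so $\mathbb E_{\nu^0_T}[\cdots]\to\mathbb E_{\nu^0}[\mathcal A(f\mid\varphi^0,\varphi^0)]$. Since $\nu^0$ is an admissible boundary law (Theorem~\ref{thm: main}(i)), the diagonal evaluation $\mathcal A(f\mid\varphi^0,\varphi^0)$ is almost surely well-defined: $\varphi^0$ is almost surely in $H^{1/2-\kappa}(\mathbb T^2)$, which is within the class of boundary conditions for which the cylinder amplitude was constructed.

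The main obstacle I anticipate is precisely the diagonal evaluation: in the cylinder gluing property of Theorem~\ref{thm: main}(iii), the three boundary fields $\varphi_-$, $\varphi^0$, $\varphi_+$ are \emph{independent}, and the delicate probabilistic estimates (Bou\'e--Dupuis variational bounds, paracontrolled expansions of the harmonic extension, uniform integrability of the normalization constants) are carried out exploiting that independence — in particular, independence is what lets one condition on one boundary field and treat the other as frozen while controlling the singular Wick powers of the harmonic extension. When $\varphi_-=\varphi_+=\varphi^0$ are the \emph{same} field, cross-terms in the harmonic extension $H(\varphi^0,\varphi^0)$ and its Wick powers couple the two ends of the cylinder, and one must check that the resulting correlations do not spoil the uniform-in-$T$ bounds needed for the limit exchange. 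I would handle this by noting that $H(\varphi^0,\varphi^0)$ on $M$, with the two ends identified, is nothing but (a piece of) the harmonic-type decomposition of the torus GFF on $\mathcal M$, whose covariance structure is explicit and whose Wick powers were already controlled in the torus construction; thus the diagonal amplitude inherits exactly the estimates proved for $\mathcal A^{\rm per}$ in \cite{BG20}, and the exchange of limits is justified by the same dominated-convergence / uniform-integrability arguments used in the proof of Theorem~\ref{thm: main}, now applied along the diagonal. A secondary point to be careful about is bookkeeping of the geometric constants $C_T(\mathcal M)$ and the length-dependence removal term hidden in $C_M(\varphi_-,\varphi_+)$, but as remarked in the text these are inessential multiplicative factors and can be absorbed into the (unspecified) constant of proportionality defining $\mathcal A^{\rm per}$.
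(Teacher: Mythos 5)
Your overall strategy matches the paper's intended (but unwritten) approach: the paper supplies no proof of Corollary~\ref{cor: tori gluing}, asserting only that ``the proof techniques also extend to the case of three-dimensional tori,'' and your proposal carries out exactly that extension — domain Markov decomposition of the periodic GFF along the cut $B$, exponential tilt from $\mu^0$ to $\nu^0_T$, cancellation of the $\zeta$-regularized determinant constants as in Proposition~\ref{prop: gluing-cutoff}, and then passage to the limit $T\to\infty$ via the deterministic continuity of the lifted amplitude in Theorem~\ref{thm: bulk}\,(II) together with a truncation/large-field argument.

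There is, however, one point where your proposed resolution of the genuine obstacle is too quick. You correctly identify that the probabilistic ingredients in the cylinder proof — Theorem~\ref{thm: enhancement converge}\,(II) and Lemma~\ref{lem: large field} — are stated for \emph{independent} boundary conditions drawn from $\nu^0\otimes\nu^0$, whereas the corollary requires their \emph{diagonal} versions (the same $\varphi^0$ appearing on both ends of the cylinder). But your fix — that the diagonal quantities ``inherit exactly the estimates proved for $\mathcal A^{\rm per}$ in \cite{BG20}'' — does not actually close the gap. The object $\mathcal A_T(f\mid\varphi^0,\varphi^0)$ is still a \emph{cylinder} amplitude (a Dirichlet bulk field plus harmonic extension $H(\varphi^0,\varphi^0)$), not a torus quantity, and the $\Upsilon_{i,T}$ terms in its renormalized variational representation are cross-moments of the \emph{Dirichlet} enhancement $\Xi_T(W)$ with Wick powers of $H(\varphi^0,\varphi^0)$; none of these appear in \cite{BG20}, and the moment computations behind Lemmas~\ref{lem: upsilon2 a}, \ref{lem: upsilon3 a} and \ref{lem: upsilon4 a} explicitly exploit the independence of $W^0_-$ and $W^0_+$ when applying Wick's theorem, so they change form when $W^0_-=W^0_+$ (extra contraction graphs, and the relevant kernel becomes the ``double'' covariance $\mathbb E_{\mu^0}[H(\varphi^0,\varphi^0)(x)\,H(\varphi^0,\varphi^0)(y)]$ which has cross-terms between $M^-$ and $M^+$). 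What actually makes this tractable is that the paper's kernel estimates (Appendices~\ref{sec: Dirichlet covariance estimates} and \ref{appendix: harmonic extension}) and Proposition~\ref{prop: stochastic boundary v2} (which is already stated for the double harmonic extension, not just $\overline H$) apply to these new kernels with essentially the same exponents, so the same Young-convolution bounds go through. That re-verification — not a citation of \cite{BG20} — is the content hiding behind the diagonal specialization, and is what you would need to carry out to make the proposal complete.
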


We now turn our attention towards establishing the Markov property for $\varphi^4_3$ models in the special case of cylinders and tori. For simplicity we will just state it for the cylinder. Given $O \subset M$, we say that a function $f:S'(M) \rightarrow \mathbb R$ is $O$-measurable if it is measurable with respect to the $\sigma$-algebra generated by $\{ \varphi(g) : \varphi \in S'(M), g \in C^\infty(M),  {\rm supp}(g) \subset O \}$. Let now $\ell \in (0,L)$. For clarity, we will write $\nu^L$ and $\nu^{\ell}$ to denote the $\varphi^4_3$ measures on $M$ and $M_\ell$, respectively. 

\begin{theorem}\label{theorem: markov}
For every $f: S'(M) \rightarrow \mathbb R$ bounded and $M_\ell$-measurable,  
\begin{equation}
\mathbb E_{\nu}[f \mid \mathcal F_{M_\ell^c}](\varphi^c) = \mathbb E_{\nu^\ell({\Large \boldsymbol{\cdot} }\,\mid \varphi_-,\varphi_+)}[f],
\end{equation}
where $\varphi_-,\varphi_+$ are the restrictions of $\varphi^c$ to the components of the boundary $\partial M_\ell$ which exists $\nu$-almost surely and are i.i.d. according to $\nu^0$ .
\end{theorem}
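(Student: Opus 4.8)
The plan is to derive the Markov property from the gluing property of amplitudes (Theorem \ref{thm: main}) together with the characterization (ii) of amplitudes as limits of unnormalized Laplace transforms. The strategy is to first establish the statement at the level of the regularized measures $\nu_T$, where the domain Markov property of the underlying GFF and the locality of the interaction potential $V_T$ make everything transparent, and then to pass to the limit $T \to \infty$ using the convergence results already obtained. Concretely, for the prelimit: write $M = M_- \cup M_+ \cup (M_\ell \setminus (M_- \cup M_+))$ appropriately — more precisely decompose $M$ along the two hyperplanes $\partial^\pm M_\ell$ into $M_\ell$ and its complementary pieces. Since $V_T$ is a sum of integrals over disjoint subdomains, $\exp(-V_T(\varphi_T))$ factorizes (up to the mollification, which couples a neighbourhood of the cut of width $O(\rho_T)$ — this is a technical nuisance I will need to address, see below). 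Combined with the domain Markov property of $\mu$ with respect to the boundary fields $\varphi_-, \varphi_+$ on $\partial M_\ell$, this gives that $\mathbb E_{\nu_T}[f \mid \mathcal F_{M_\ell^c}]$ is, for $f$ an $M_\ell$-measurable bounded function, a function of $\varphi_-, \varphi_+$ alone, and equals $\mathbb E_{\nu_T^\ell(\,\cdot\mid\varphi_-,\varphi_+)}[f]$.

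The next step is to take $T \to \infty$. Here I would test against exponentials: it suffices to prove the identity for $f = \exp(\varphi(g))$ with $g \in C^\infty(M)$ supported in $M_\ell$, since such functions are measure-determining on the relevant $\sigma$-algebra and one has the needed uniform integrability from the moment bounds on the Laplace transforms. For such $f$, the conditional expectation $\mathbb E_{\nu_T}[f \mid \mathcal F_{M_\ell^c}](\varphi^c)$ can be rewritten, via the prelimit gluing formula \eqref{eq: intro: prelimit glue} (applied twice, to cut out both boundaries of $M_\ell$), as a ratio of unnormalized Laplace-type quantities $\mathcal Z_T$ evaluated at the appropriate boundary data extracted from $\varphi^c$. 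By Theorem \ref{thm: main}(ii) and Theorem \ref{theorem: rough boundary conditions} (equivalently Theorem \ref{thm: phi43 random bc} / Theorem \ref{thm: bulk}), both numerator and denominator converge almost surely, and the limit of the ratio is exactly $\mathbb E_{\nu^\ell(\,\cdot\mid\varphi_-,\varphi_+)}[f] = \mathcal L^\ell(g\mid\varphi_-,\varphi_+)$, where $\varphi_\pm$ are the restrictions of $\varphi^c$ to $\partial M_\ell$; the fact that these restrictions exist $\nu$-a.s.\ and are distributed according to $\nu^0$ is precisely the content of Theorem \ref{thm: main}(i) together with the consistency of the construction (Corollary \ref{cor: tori gluing}-type reasoning applied to the two cuts). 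One then upgrades from exponentials to general bounded $M_\ell$-measurable $f$ by a monotone class argument, using that the identity holds for a measure-determining family and both sides are conditional expectations / genuine probability measures.

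I expect two places to be the main obstacles. The first, more technical, is the mollification in the periodic direction: $\varphi_T$ is $\varphi \ast \rho_T$ with $\rho_T$ acting on $\mathbb T^2$, which is the \emph{periodic} direction of $M$, so in fact the mollification does \emph{not} smear across the cuts $\partial^\pm M_\ell$ (these are transverse to the periodic $\mathbb T^2$ and the cut direction is the interval direction, which is left untouched by $\rho_T$). I should double-check this — if correct, the factorization of $\exp(-V_T)$ across $M_\ell$ and $M_\ell^c$ is \emph{exact} at every finite $T$, which is a genuine simplification and removes the nuisance entirely; if some smearing did occur, one would need a short argument that the contribution from a shrinking neighbourhood of the cut is negligible in the limit. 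The second, and the real heart of the matter, is controlling the limit of the \emph{ratio} of unnormalized Laplace transforms when the boundary field $\varphi^c|_{\partial M_\ell}$ is sampled according to the singular measure $\nu^0$ rather than the Gaussian $\mu^0$: the normalization constants $\mathcal Z_T$ individually involve the divergent fourth-power-of-harmonic-extension term, and it is only after the exponential tilt built into the definition of the amplitudes $\mathcal A^\sigma$ (and the accompanying renormalization making $C_T(M,\varphi_-,\varphi_+) \to 1$) that the ratio converges. So the clean way to organize the proof is to phrase the whole argument directly in terms of the amplitudes $\mathcal A, \mathcal A^\pm$ and the measure $\nu^0$ from Theorem \ref{thm: main}, invoking (iii) to collapse $\mathcal A$ into an $\mathcal A^\ell$-weighted integral over the two interfaces, rather than carrying bare $\mathcal Z_T$'s through the limit — the latter would force me to re-prove the hard convergence estimates that Theorem \ref{thm: main} already packages.
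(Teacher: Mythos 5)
Your blueprint is essentially the one the paper follows: factorize in the prelimit using the GFF's domain Markov property plus locality of the potential, test against exponentials, and pass $T\to\infty$ using the convergence results packaged in Theorem~\ref{thm: bulk}. A few points of comparison. First, your worry about carrying ``bare $\mathcal Z_T$'s'' through the limit stems from invoking the introductory formula~\eqref{eq: intro: prelimit glue}, whose constants $z_T^\sigma$ indeed diverge; but the objects the paper actually manipulates are the renormalized unnormalized Laplace transforms $\mathcal Z_T^{(-\ell,\ell)}(\,\cdot\mid\varphi_-,\varphi_+)$ from Section~\ref{sec: amplitudes and gluing}, whose defining potential already excludes the quartic-harmonic-extension term and includes the boundary-bulk contributions. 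Those quantities converge by Theorem~\ref{thm: bulk}/Theorem~\ref{thm: phi43 random bc} without any amplitude bookkeeping, so the paper never touches~\eqref{eq: intro: prelimit glue} in the Markov proof. Your fallback suggestion of phrasing everything in terms of $\mathcal A$, $\mathcal A^\pm$ and $\nu^0$ would also work and is arguably cleaner conceptually, but it is doing the same computation after unwinding the definitions. Second, the paper opts for the tower property of conditional expectation under $\mu$ rather than ``applying the gluing formula twice,'' which is the same algebra presented differently. Third, your observation that the mollifier $\rho_T$ acts only in the periodic $\mathbb T^2$-direction and therefore does not smear across the transverse cuts $\{\pm\ell\}\times\mathbb T^2$ is correct and is exactly what makes the potential split exactly at every finite $T$; the paper uses this tacitly in writing $V_T = V_T^{[-\ell,\ell]} + V_T^{[-L,L]\setminus[-\ell,\ell]}$. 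Finally, the paper uses a truncation $\langle f,\phi\rangle\wedge N$ followed by two nested limits instead of a monotone class argument — a minor technical choice, both of which are standard for upgrading from exponentials to bounded measurables.
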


\subsection{The $\varphi^4_3$ Hamiltonian on two-dimensional tori}

We now turn to our final main result concerning the construction and spectral properties of the $\varphi^4_3$ Hamiltonian on $\mathbb T^2$, stated below in Theorem \ref{thm: hamiltonian}. The Hilbert space of the Hamiltonian will be a dense subset of $\mathcal H:= L^2(\nu^0)$. As hinted to above, this is the reason why we chose $(\nu^0_T)_{T \geq 0}$, and thus $\nu^0$, to not depend on the lengths of the ambient cylinders. 

We will define the $\varphi^4_3$ Hamiltonian as the infinitesimal generator of $(\mathbf P_\tau)_{\tau \geq 0}$. This semigroup is built from the amplitudes, as we now explain. Let us write $M_\tau:= [0,\tau]\times \mathbb T^2$ and denote the corresponding $\varphi^4_3$ amplitudes by $\mathcal A_\tau$. In this subsection, we will only consider the amplitudes evaluated at $f=0$ and thus we will simply write e.g.\ $\mathcal A_\tau(\varphi_-,\varphi_+)$. For every $\tau > 0$, we may define a linear operator $\mathbf P_\tau $ with kernel $\mathcal A_\tau$. Explicitly its action on sufficiently regular functions $F:\mathcal H\rightarrow \mathbb R$ is given by
\begin{equation}
\mathbf P_\tau F(\varphi):= \mathbb E_{\nu^0}[F({\Large \boldsymbol{\cdot} })\mathcal A_\tau ({\Large \boldsymbol{\cdot} }, \varphi)], \qquad \forall \varphi \in \mathcal H.
\end{equation}
The fact that the right-hand side is well-defined for e.g.\ bounded functions is established in Section \ref{sec:spectral} and uses the gluing property of Theorem \ref{thm: main} and Corollary \ref{cor: tori gluing}, as well as bounds on the torus partition functions from \cite{BG20}. Furthermore, we also show that $\mathbf P_\tau$ extends to a bounded linear operator on $\mathcal H$.

The following theorem is our main result on the $\varphi^4_3$ Hamiltonian on $\mathbb T^2$.
\begin{theorem} \label{thm: hamiltonian}
The semigroup $(\mathbf P_\tau)_{\tau}$ admits a non-trivial infinitesimal generator given by an unbounded, self-adjoint, linear map $\mathbf H$ densely defined on some domain $D(\mathbf H) \subset \mathcal H$. The spectrum of $\mathbf H$, denoted by $\sigma(\mathbf H)$, is discrete and bounded from below. The lowest eigenvalue $E_0 \in \mathbb R$ is simple and its associated normalized eigenvector is strictly positive almost surely with respect to $\nu^0$. Furthermore, every eigenvector is in the Orlicz\footnote{Recall that the Orlicz space $L^2 (\log_+L)^\alpha$ is the Banach space of functions with integral $\int |f|^2 (\log_+|f|)^\alpha d\nu^0 < \infty$. See Section \ref{subsec: tightness ground state} for further detail.} space $L^2 (\log_+ L)^\alpha(\nu^0)$ for every $\alpha \geq 1$.  
\end{theorem}

The proof of Theorem \ref{thm: hamiltonian} is contained in Section \ref{sec:spectral} and splits into two parts: i) construction, and ii) spectral properties. See Section \ref{subsec: proof of Theorem Hamiltonian}. Given the construction, we are able to establish discrete spectrum and the Perron-Froebenius type result on the lowest eigenvalue/eigenvector by similar arguments as in \cite{GGV24}. The $L^2 (\log_+ L)^\alpha$ bounds on the eigenvectors follow by bounds on the (periodic) amplitudes, see Lemma \ref{lemma: eigenvector bounds} and Proposition \ref{prop: eigenvector bounds limiting}. We remark that our techniques could prove  bounds stronger bounds, replacing $L^2(\log_+L)^\alpha$ by any Orlicz space $L^2 {\psi}$, with $\psi(t) = O( \exp( (\log_+ t)^{\alpha'} )$ where $\alpha'$ is sufficiently small, thus just falling short of $L^p$ bounds for $p>2$. 

We now turn to the construction. Our ultimate goal here is to apply the Hille-Yosida theorem to $(\mathbf P_\tau)_{t\geq 0}$ and thereby construct its generator. In order to do this, we need to verify that $(\mathbf P_\tau)_{\tau \geq 0}$ is a strongly continuous semigroup. Whilst the semigroup property follows from the gluing property of Theorem \ref{thm: main}, the strong continuity is the key difficulty of the construction ---  this is a manifestation of the fact that the boundary measure is singular with respect to the boundary Gaussian.  For every $\tau \geq 0$, we define approximate semigroups $(\mathbf P_\tau^T)_{t \geq 0}$ which act on \emph{different} Hilbert spaces $\mathcal H_T:= L^2(\nu^0_T)$ by using the approximate amplitudes as kernels. By standard Schr\"odinger operator theory, these semigroups are infinitesimally generated by approximate Hamiltonians $\mathbf H^T$. By a compact resolvent argument as in \cite{GGV24}, these have discrete spectrum and one can further show that the lowest eigenvalue $E_0^T$ is simple and its associated eigenvector $e_0^T$ can be taken strictly positive almost surely with respect to $\nu^0_T$. For every $T \geq 0$, we now perform a \emph{ground state transform} of the Hilbert space and the semigroup. (See \cite{LHB11} for applications of the ground state transform in the context of $P(\varphi)_1$ measures and processes.) For the Hilbert space $\mathcal H_T$, this amounts to replacing the probability measure $\nu^0_T$ by the probability measure with density $e_0^T(\varphi)^2 \nu^0_T(d\varphi)$. For the semigroup, it amounts to dividing the kernel by $e_0^T(\varphi)e_0^T(\varphi')$ and multiplying by a constant involving $E_0^T$. The new semigroups $(\overline {\mathbf P}_\tau^T)_{\tau \geq 0}$ have the advantage of being \emph{Markovian} and as such give rise to a family of Markov processes, for which we are able to prove tightness in a suitable space.

Let us now explain how to go from tightness of the Markov processes to obtainining strong continuity of the original semigroup $(\mathbf P_\tau)_{\tau \geq 0}$. We work in the grand coupling of all the boundary measures $(\nu^0_T)_{T \geq 0}$ and $\nu^0$ ---  this is explained more in Section \ref{sec: intro: proof strat} and rigorously constructed in Section \ref{sec: boundary}. We prove that the eigenvalues $(E_0^T)_{T \geq 0}$ are uniformly bounded and that the eigenvectors $(e_0^T)_{T \geq 0}$ are precompact in $L^2$ of the coupling. Given \emph{any}\footnote{Although we do not prove it, it is reasonable to think that the limit point is the ground state eigenvector of the limiting Hamiltonian.} limit point $\tilde e_0$, we show that on a suitable dense subset, $\mathbf P_\tau$ is approximated well by $\mathbf P_\tau^T$, which in turn is related by an isometric transformation to $\overline{\mathbf P}_\tau^T$. We then use uniform bounds on $\overline{\mathbf P}_\tau^T$ to deduce strong continuity of $(\mathbf P_\tau)_{\tau \geq 0}$. Let us emphasize that the construction of the dense set, which is defined in terms of limit point $\tilde e_0$, requires the $\nu^0$-almost sure positivity of $\tilde e_0$. 

\begin{remark}
Let us mention that, by adapting techniques used in \cite{GGV24} (in turn inspired by those in the context of Gibbs measures relative to Brownian motion, see \cite{LHB11} and references therein), we believe it to be possible to use Theorem \ref{thm: hamiltonian} to construct $\varphi^4_3$ measures on the infinite cylinder $\mathbb R_+\times \mathbb T^2$.	Furthermore, one can also show exponential decay of correlations for this infinite cylinder measure (for \emph{any} coupling constant $\lambda > 0$).
\end{remark}

\subsection{Strategy of proof of Theorem \ref{thm: main}} \label{sec: intro: proof strat}

Let $T>0$ and let $\varphi_-,\varphi_+ \in S'(\mathbb T^2)$. For simplicity, we will explain the argument for the amplitudes evaluated at $f=0$, i.e.\ the normalization constants. Our starting point is the gluing formula in the prelimit given by \eqref{eq: intro: prelimit glue} for the normalisation constant $z_T(\varphi_-,\varphi_+)$. We will now consider the factors inside the $\mu^0$-expectation on the right-hand side of \eqref{eq: intro: prelimit glue}. Without loss of generality, consider $z_T^-(\varphi_-,\varphi^0)$ for $\varphi^0 \sim \mu^0$. Letting $V_T^-$ denote the potential on $M_-$ and $\mu^-$ the GFF on $M_-$, we have that
\begin{equation}
z_T^-(\varphi_-,\varphi^0)= \mathbb E_{\mu^-}\left[\exp\left(-V_T^-(\varphi_T+  H(\varphi_-,\varphi^0)_T)\right)\right],
\end{equation}
where we write $H(\varphi_-,\varphi^0)_T = \rho_T \ast H(\varphi_-,\varphi^0)$. The natural thing that we would want to do is to control the right-hand side almost surely in $\varphi^0$. However, expanding the potential (and ignoring Wick renormalization for now), we will see a term corresponding to
\begin{equation}
	\int_{M_-} (H(\varphi_-,\varphi^0)_T(x))^4 \, dx. 
\end{equation}
The variance of this term diverges logarithmically as $T \rightarrow \infty$, even when Wick renormalized (with respect to the boundary measure $\mu^0$). Therefore, even with further renormalization, it is not clear how to control it. 

The strategy that we adopt to handle this divergence is to \emph{not} view this term as a random variable, but rather to absorb it into the density of the boundary measure $\mu^0$. This formally gives an \emph{interacting boundary measure} $\nu^0_T$ defined on $S'(\mathbb T^2)$ by the density
\begin{equation} \label{eq: intro: density bdry}
\nu^0_T(d\varphi^0) = \frac{1}{\mathcal Z_T^0} \exp\left( - \int_{M_-} :H(\varphi_-,\varphi^0)_T(x)^4:\, dx - \frac 12 \delta^0_T  \right) \mu^0(d\varphi^0),
\end{equation}
where $::$ corresponds to Wick ordering with respect to $\mu^0$, $\delta^0_T \in \mathbb R$ is a boundary energy renormalization\footnote{The factor $1/2$ comes from splitting this term over $M_-$ and $M_+$.}, and $\mathcal Z^0_T$ is a normalization constant. This definition allows us to redefine the amplitudes without this (divergent) fourth order Harmonic extension term in the potential. They correspond to (generalized) Laplace transforms of $\varphi^4_3$ measures with additional bulk-boundary terms, i.e.\ integrals involving both the bulk fields $\varphi_T$ and the harmonic extension terms.

The proof of Theorem \ref{thm: main} now splits into two major parts.
\begin{enumerate}
\item[(I)] The construction and properties of the limiting boundary theory $\nu^0$. This is the content of Theorem \ref{thm: boundary} and is proven in Section \ref{sec: boundary}. 
\item[(II)] The construction of the amplitudes ---  in particular, handling the boundary-bulk contributions ---  and almost sure bounds in the boundary field. This is the content of Theorems \ref{thm: bulk} and \ref{thm: enhancement converge}. Their proofs span Sections \ref{sec: renormalization of bulk amplitudes} - \ref{sec: smallfield reduction}. 
\end{enumerate} 
Using parts (I) and (II), the proof of Theorem \ref{thm: main} then follows from considering the gluing formula in the prelimit and using a truncation argument to justify taking limits. See Section \ref{subsec: proof of main}.

\begin{remark} \label{remark: phi43 rough bc proof strat}
Since the amplitudes of Theorem \ref{thm: main} are intimately related to Laplace transforms of the $\varphi^4_3$ field with boundary conditions, the proof of Theorem \ref{theorem: rough boundary conditions} follows from the construction of the amplitudes in Theorem \ref{thm: bulk}, i.e.\ part (II) above. See also Theorem \ref{thm: phi43 random bc}.	
\end{remark}

\subsubsection{Part I: Boundary theory}

We now briefly describe the strategy to show convergence of the interacting boundary measures $\nu^0_T$ to a unique limiting boundary measure $\nu^0$ and that $\nu^0$ is an admissible boundary law. We prove that the unnormalized Laplace transforms of the boundary field $\mathcal Z^0_T(f)$ converge to a limiting object $\mathcal Z^0_\infty(f)$ and that the latter is sufficient to ensure weak convergence to a limiting probability measure $\nu^0$.

Following the variational approach to constructing $\varphi^4$ models initiated in \cite{BG20}, we express $\mathcal Z^0_T(f)$ as an expectation of a well-behaved random variable of a sequence of i.i.d. complex Brownian motions. This just follows by representing the boundary Gaussian $\mu^0$ in terms of a random Fourier series (also known as a Karhunen-Lo\`eve expansion) and representing these Gaussian random variables as stochastic integrals. The upshot of representing the expectation like this is that it allows us to apply tools used to analyze continuous-time martingales to our setting ---  in particular, Girsanov's theorem. By the Bou\'e-Dupuis theorem, we are thus able to represent
\begin{equation}
	-\log \mathcal Z^0_T(f) = \inf_{v \in \mathbb H^0} \mathbb F^0_T(v).
\end{equation}
Above, $\mathbb H^0$ is some Banach space of stochastic processes that are called \emph{drifts}. The cost function $\mathbb F^0_T: \mathbb H^0 \rightarrow \mathbb R$ is an explicit functional which is of the form
\begin{equation}
\mathbb F^0_T(v) = \mathbb E[\Phi^0_T(W^0,v)+ G^0_T(v)],
\end{equation}
where $\mathbb E$ is expectation with respect to Wiener measure $\mathbb P$, ${\rm Law}_{\mathbb P}(W^0)=\mu^0$, and $G^0_T(v)$ is a \emph{good}/\emph{coercive} term which is positive and is the sum of two terms corresponding to a \emph{potential term} and a \emph{drift entropy} term.
 
In order to prove convergence of the $\mathcal Z^0_T$, we therefore are required to prove that the infima of the cost functions $\mathbb F^0_T$ converge (e.g.\ to infima of a limiting cost function).
Due to the divergence of the quartic term in \eqref{eq: intro: density bdry}, the resulting variational problem contains a singular term in $\Phi^0_T(W^0,v)$ in the limit $T\rightarrow \infty$. As in \cite{BG20}, we overcome this divergence by postulating an ansatz on the drifts $v$. This ansatz implies that, to leading order, $v$ is described on small scales by a term that is singular in the limit $T \rightarrow \infty$. This induces a divergence in the drift entropy term which exactly cancels the singular integral up to a divergent constant, which can then be renormalized by $\delta^0_T$. This results in a \emph{renormalized} variational problem
\begin{equation}
	-\log\mathcal Z^0_T(f) = \inf_{v \in \mathbb L^0} \mathbb F^{0, {\rm ren}}_T(v),
\end{equation}
where $\mathbb L^0$ is the space of drifts that are compatible with the ansatz and $\mathbb F^{0, {\rm ren}}_T$ is a renormalized cost function of the same form as $\mathbb F^0_T$ but with $\Phi^0_T(W^0,v)$ replaced by a renormalized term, $\Phi^{0,{\rm ren}}_T(W^0,v)$, and with a modified entropy term in $G^0_T(v)$. We remark here that, unlike in the case of $\varphi^4_3$ which requires a \emph{paracontrolled ansatz}, a relatively simple shift suffices for the ansatz ---  this is due to the regularizing properties of the harmonic extension and explains why mass renormalization is not needed to construct the measure $\nu^0$. In this sense, the measure sits somewhere in between $\varphi^4_2$ and $\varphi^4_3$ in terms of constructability.

We then show that there is a limiting functional $\mathbb F^0_\infty$ on this space such that the infima $\mathbb F^{0, {\rm ren}}_T$ converge to the infima $\mathbb F^0_\infty$, which is sufficient to prove weak convergence of $(\nu^0_T)_{T \geq 0}$ to a limiting probability measure $\nu^0$. We do this by showing that the functionals $\Gamma$-converge to the limit and use the fundamental theorem of $\Gamma$-convergence to ensure convergence of the infima ---  this requires some uniform equicoercivity of the cost functions. In order to obtain the uniform equicoercivity, we need to be able to control the term $\Phi^{0,{\rm ren}}_T(W^0,v)$ by the modified good (coercive) terms uniformly in $T$. In order to do this, we have to develop suitable regularity estimates on Wick powers of harmonic extensions of $W^0$. See Proposition \ref{prop: stochastic boundary v2}.  

Finally, we are left to show that $\nu^0$ is an admissible boundary law. Heuristically, this follows by establishing that there is in fact a minimizer of the limiting functional $\mathbb F^0_\infty$, denoted by $v^*$, and that $\nu^0 = {\rm Law}_{\mathbb P}(W^0+I(v^*))$, where $I$ is some deterministic regularizing map. We then show that $I(v^*)$ satisfies appropriate regularity and moment estimates. The arguments for this part follow closely those in \cite{BG23}.
\subsubsection{Part II: Bulk amplitudes}

We now turn to the convergence and estimates on the bulk amplitudes. Since the amplitudes will be proportional to unnormalized Laplace transforms associated to $\varphi^4_3$ with rough boundary conditions, we will discuss the latter. For simplicity, we will consider $f=0$. The boundary conditions will move as $T \rightarrow \infty$ since we have to sample them according\footnote{We could of course replace $\nu^0_T$ by some tight sequence of admissible boundary laws.} to $\nu^0_T$ in order to establish gluing. For every $T \geq 0$, let $\varphi_{-,T}, \varphi_{+,T} \sim \nu^0_T$. The unnormalized Laplace transforms we consider are denoted $\mathcal Z_T(\varphi_{-,T},\varphi_{+,T})$. Concretely, they are given by the expectation
\begin{equation}
\mathcal Z_T(\varphi_-,\varphi_+) = \mathbb E_{\mu}[\exp(-V(\varphi_T \mid \gamma_T, \delta_T-\delta^0_T) + U_T(\varphi_T \mid \varphi_{-,T},\varphi_{+,T}))],
\end{equation} 
where $V(\varphi_T \mid \gamma_T, \delta_T-\delta^0_T)$ is the $\varphi^4$ (bulk) potential with mass renormalization $\gamma_T$ and energy renormalization $\delta_T-\delta^0_T$, and $U_T(\varphi_T \mid \varphi_-,\varphi_+)$ is a potential term that involves boundary-bulk contributions (but not the quartic term in the harmonic extension since that was absorbed into the boundary field).

As before, the starting point is the Bou\'e-Dupuis representation of the amplitudes/rough boundary normalization constants. For every $T\geq 0$, almost surely with respect to $\nu^0_T\otimes\nu^0_T$, we have that
\begin{equation}
	-\log \mathcal Z_T(\varphi_{-,T},\varphi_{+,T}) = \inf_{v \in \mathbb H} \mathbb F_T(v),
\end{equation}
where $\mathbb H$ is some Banach space of drifts (different from the space considered in the boundary theory) and $\mathbb F_T(v)$ is a cost function that takes the form
\begin{equation}
	\mathbb F_T(v)= \mathbb E[\Phi_T(v \mid \varphi_{-,T}, \varphi_{+,T}) + G_T(v)],
\end{equation}
and as before $\Phi_T(v\mid \varphi_{-,T}, \varphi_{+,T})$ is an error term and $G_T(v)$ is a good term (potential plus drift entropy). The global picture of the argument is the same as in the boundary case, namely establish convergence of infima to the infima of a limiting functional. The estimates on the amplitudes come from uniform estimates on the amplitudes in the prelimit, obtained by estimating the error term in terms of the good term.

The significant difference here as compared with the boundary case or $\varphi^4$ models on the torus is that the error term splits into a \emph{bulk error}, $\Phi_T^{\rm bulk}(v)$, and a \emph{boundary-bulk error}, $\Phi_T^{\partial}(v \mid \varphi_{-,T}, \varphi_{+,T})$. There are divergences in both the bulk and the boundary-bulk error terms. In order to handle the bulk divergences, we perform a paracontrolled ansatz on the drift as in the case of the $\varphi^4_3$ model on the torus $\mathbb T^3$, see \cite{BG20}. One catch here is to show that the \emph{residual} energy renormalization $\delta_T -\delta^0_T$, where $\delta^0_T$ is the boundary energy renormalization used to construct $\nu^0$, suffices to cancel any divergent constants coming from \emph{both} the bulk and the boundary-bulk error terms. 

The main obstacles for us are in the boundary-bulk error terms. The boundary terms are too rough to be treated by purely deterministic arguments. First of all, there are divergent terms that need to be renormalized by a further paracontrolled ansatz involving boundary field terms. In order to control the error terms induced by the ansatz, we cannot argue purely analytically. Therefore we have to establish more refined stochastic regularity estimates on the resulting renormalized products of (harmonic extensions) of the boundary fields. As a result, we get that some of these contributions can be bounded almost surely by random variables that are measurable with respect to $\nu^0\otimes\nu^0$. Any remaining terms are then bounded by analytic arguments that leveraging both the regularizing properties of the harmonic extension and also the underlying Dirichlet boundary conditions for the bulk free field, and in turn the drifts.

\subsection{Open problems}

\begin{enumerate}
	\item[(i)] \textbf{Segal's axioms.} It would be interesting to extend Segal's gluing formula for amplitudes to the case of general manifolds and thereby establish (a version of) the full set of Segal's axioms for $\varphi^4_3$. This would require developing tools to handle rough boundary conditions on more general geometries ---  see \cite{Ba23, HS23} for results on $\varphi^4$ models without boundary conditions. Furthermore, it would be interesting to see whether the techniques we have developed can be applied to establish Segal's axioms for other quantum fields where the boundary measure may be singular with respect to the underlying Gaussian. Let us mention the Sine-Gordon model for certain ranges of parameters (see \cite{GM24} for the massive case).
	\item[(ii)] \textbf{Full space Markov and DLR conditions.}  It is natural to ask whether the Markov property can be extended to more general domains for finite volume measures or infinite volume measures. This is a step towards characterizing infinite volume $\varphi^4_3$ measures by DLR conditions \cite{D70, LR69}. In this article, our methods can be applied to analyze finite volume specifications on cylinders and we expect it to extend to arbitrary domains. Let us also mention that for Euclidean fields, a stronger Markov property where one can additionally condition on infinite volume subsets also holds in some cases. Such a property would be interesting to prove for $\varphi^4_3$ and would allow Nelson's reconstruction theorem to apply in the full space directly. See \cite{AK79a, AK79b, Z84, AK88}. Finally, in this spirit, it would be interesting to analyze the effect of boundary conditions on Gibbs measures. For example, the construction of non-translation invariant measures analogous to Dobrushin states for the 3d Ising model \cite{D73}. 
	\item[(iii)] \textbf{Further analysis of the Hamiltonian.} 
	First, a natural follow-up is to show the equivalence of our construction of the $\varphi^4_3$ Hamiltonian on $\mathbb T^2$ and Glimm's construction \cite{G68}. Moreover, we construct a heat semigroup associated to this model and a second follow-up would be to investigate smoothness properties of its kernel. Finally, it would be nice to analyze the spectral properties of the operator in the infinite volume limit and its interplay with the presence of a phase transition as in the 2d case, see \cite{GJS07, GJ71, SZ76, BDW25}. 
	\item[(iv)] \textbf{Extension to the fermionic setting.} It would be interesting to study markovianity and gluing formulae for fermionic models. See \cite{D24, ABDVG22, DVFG22} for fermionic models that have been recently treated using probabilistic approaches and see \cite{S75, CHP25} for coupled boson-fermion models. Let us also mention \cite{T17} which establishes Segal's axioms for free fermions, though not from a probabilistic perspective.
\end{enumerate}

\subsection{Paper organization}
\begin{itemize}
\item[-] In Section \ref{sec: gaussian}, we will introduce the Gaussian fields that we work with in this paper. 

\item[-] In Section \ref{sec: amplitudes and gluing}, we will define $\varphi^4_3$ models, the interacting boundary measure, and amplitudes, and state the key technical theorems of the paper. We will then prove Theorems \ref{thm: main} and \ref{theorem: markov} assuming these results. 

\item[-] In Section \ref{sec: boundary}, we will construct and analyze the boundary theory. 

\item[-] In Section \ref{sec: renormalization of bulk amplitudes}, we will derive a renormalized variational representation of the bulk amplitudes.

\item[-] In Section \ref{sec:stochastic}, we will obtain moment bounds on various stochastic terms that appear in the renormalized variational problem.

\item[-] In Section \ref{sec:equicoercive}, we will bound remainder terms that appear in the renormalized variational problem.

\item[-] In Section \ref{sec: convergence of bulk amplitudes}, we will prove convergence of the bulk amplitudes.

\item[-] In Section \ref{sec: smallfield reduction}, we obtain estimates on large field contributions of the boundary field.

\item[-] In Section \ref{sec:spectral}, we prove Theorem \ref{thm: hamiltonian}.

\item[-] In Appendix \ref{appendix:besov}, we collect basic facts about Besov spaces and paraproducts.

\item[-] In Appendix \ref{sec: Dirichlet covariance estimates}, we state covariance estimates on the Dirichlet Laplacian and related operators, as well as state their regularizing properties.

\item[-] In Appendix \ref{appendix: harmonic extension}, we state covariance estimates on the boundary covariance, as well as state their regularizing properties. 
\end{itemize}

\paragraph{Acknowledgements.} We thank Francesco De Vecchi, Yankl Mazor, and Romain Panis for helpful remarks, and Nicolai Reshetikhin for an inspiring discussion. We thank the University of Geneva and Max Planck Institute, Leipzig for hosting us at various points of this project. NB gratefully acknowledges financial support from ERC Advanced Grant 74148 “Quantum Fields and Probability”. TSG is supported by the Department of Atomic Energy, Government of India, under project no.12-R\&D-
TFR-5.01-0500.

\section{Preliminaries}
\label{sec: gaussian}

\subsection{Notation}

\textit{Geometry.} We fix $L>0$ unless stated otherwise. Denote by $M=[-L,L]\times \mathbb{T}^2$ the three-dimensional cylinder of length $2L$, where $\mathbb{T}^2:= (\mathbb{R}/\mathbb{Z})^2$ is the two-dimensional unit flat torus. We write $x = (\tau,z)$ for a generic element of $M$, where $\tau \in [-L,L]$ is the longitudinal coordinate and $z \in \mathbb T^2$ is the axial coordinate. The boundary of $M$ is given by $\partial M= \partial^-M \sqcup \partial^+M$, where $\partial^-M = \{-L\}\times \mathbb T^2$ and $\partial^+ M = \{L\} \times \mathbb T^2$. We write $d(\cdot,\partial M)$ to denote the distance to the boundary and note that it depends only on the $\tau$ coordinate. We write $M^{\rm per}$ to denote the periodization of $M$, where we identify $-L\times \mathbb{T}^2$ with $L\times \mathbb{T}^2$. We will fix a distinguished cut $B:=\{0\}\times\mathbb T^2$ and write $M \setminus B:=M^- \sqcup M^+$, where $M^- := [-L,0)\times\mathbb{T}^2$ and $M^+:= (0,L]\times\mathbb{T}^2$ are the backward and forward parts of the cylinder, respectively. Let us observe that $\partial M^- = \partial^-M \sqcup B$ and $\partial M^+ = B \sqcup \partial^+M$. Finally, let us note that $\partial^- M$, $B$, and $\partial^+ M$ are all isomorphic to $\mathbb T^2$ and we shall use this identification implicitly throughout.

\textit{Function spaces on $\mathbb{T}^2$.} Let $\mathbb{N}^*:=\{1,\dots\}$ denote the natural numbers and $\mathbb{N}:=\mathbb{N}\cup\{0\}$. For $k\in\mathbb{N}\cup\{\infty\}$, let $C^k(\mathbb{T})$ denote the space of $k$-times continuously differentiable functions (with the convention that $k=\infty$ are smooth functions and $k=0$ are continuous functions). Write $S'(\mathbb T^2)$ to denote the space of Schwartz distributions on $\mathbb T^2$. For $p \in [1,\infty]$, we write $L^p(\mathbb{T}^2)$ to denote the usual Lebesgue space on $\mathbb{T}^2$, defined with respect to the (unnormalised) Lebesgue measure. Let $\{ \mathsf{e}_n : n \in \mathbb{Z} \}$ denote the Fourier basis of $L^2(\mathbb{T}^2)$, where $\mathsf{e}_n(z) = e^{2\pi i n\cdot z}$. For any $f \in L^2(\mathbb{T}^2)$, let $\widehat f: \mathbb Z^2 \rightarrow \mathbb C$, $n \mapsto \langle f, \mathsf e_n\rangle$ denote the Fourier transform of $f$, where $\langle \cdot \rangle$ is the duality pairing. Sometimes we write $\widehat f = \mathcal{F}\{f\}$. Let $\mathcal{F}^{-1}$ denote the inverse Fourier transform. For $s \in \mathbb R$ and $p,q \in [1,\infty]$, let $H^s(\mathbb T^2)$, $W^{s,p}(\mathbb T^2)$, and $B^s_{p,q}(\mathbb T^2)$ denote the usual Sobolev, Bessel potential, and Besov spaces on $\mathbb T^2$. Note the equality $H^s(\mathbb T^2) = W^{s,2}(\mathbb T^2) = B^s_{2,2}(\mathbb T^2)$. Furthermore, we write $\mathcal C^s(\mathbb T^2) = B^s_{\infty,\infty}(\mathbb T^2)$ to denote the Besov-H\"older space. We refer to \cite{BCD11} for definitions of these standard spaces. Finally, let us note that given any function space $X (\mathbb T^2)$, we will sometimes write $X_z:= X(\mathbb T^2)$.  

\textit{Bulk function spaces.} We define function/distribution spaces on $M$, but everything here equally applies to $M^-$, $M^+$, or any $I\times \mathbb T^2$, where $I \subset \mathbb R$ is a finite union of connected intervals. Similarly as above, we let $C^k(M)$ denote the $k$-times continuously differentiable space of functions on $X$, $S'(M)$ denote the space of Schwartz distributions on $M$, and $L^p(M)$ denote the Lebesgue space on $M$.  For $s \in \mathbb R$, we define $H^s(M)$ to denote the subspace of $H^s(M^{\rm per})$ obtained as the completion of $\{ f \in C^\infty(X) : f|_{\partial X}=0 \}$. Note that in the case $s \leq 0$, these spaces are the same. We can also define the Bessel potential spaces $W^{s,p}(M)$ and Besov spaces $B^{s}_{p,q}(M)$ similarly. Recall the coordinates $(\tau,z)$ imposed on $M$. Given a function space $Y$ on $\mathbb T^2$, we write $L^p_\tau Y_z$ to denote the space of maps that are $L^p$ in the $\tau$-variable and $Y$ in the $z$-variable. Similarly for $C^k_\tau Y_z$ and also for the $Y$-valued H\"older spaces $C^\alpha_\tau Y_z$, where $\alpha \in (0,1)$.

\subsection{Gaussian Free Fields in the bulk}

We will now introduce the massive GFF on $M$ of mass $m^2>0$ and with Dirichlet (zero) boundary conditions through a random series representation. We recall that $m^2$ is arbitrary and we will consider it fixed throughout. Let $\mathcal G_M:=\{g_n : n \in \mathbb{N}^*\times \mathbb{Z}^2 \}$ denote a set of standard complex normal random variables defined on some probability space $(\Omega, \mathcal F, \mathbb P)$ that are independent modulo the constraint $g_{n_1,-n_2,-n_3}=\overline{g_{n_1,n_2,n_3}}$. Furthermore, let $\mathcal I_M = \{ \mathsf f_{n}: n \in \mathbb N^* \times \mathbb Z^2\}$, where
\begin{equation}
	\mathsf f_{(n_1,n_2,n_3)}(\tau,z):=\sin(\pi n_1\tau/L)\mathsf e_{(n_2,n_3)}(z), \qquad \forall (\tau,z) \in M,
\end{equation}
and where we recall that $\{ \mathsf e_{n} : n \in \mathbb Z^2 \}$ is the usual Fourier basis on $\mathbb T^2$. 

The GFF on $M $ is the random variable $\varphi$ defined by the random series
\begin{equation}
\varphi:=\sum_{n \in \mathbb{N}^*\times \mathbb{Z}^2} \frac{g_n}{\sqrt{4\pi^2(n_1/2L)^2+4\pi^2(n_2^2+n_3^2)+m^2}} \mathsf f_n. 
\end{equation}
Let us remark that, by an explicit covariance computation and independence, the above series converges $\mathbb P$-almost surely in $H^{-1/2-\kappa}(M)$ for $\kappa>0$. By a standard hypercontractivity argument, it can be realized on $\mathcal C^{-1/2-\kappa}(M)$. (See \cite{BG20}.) The law of $\varphi$ under $\mathbb P$ is a probability measure $\mu$ corresponding to a centred Gaussian measure on $\mathcal C^{-1/2-\kappa}(M)$ with covariance $C^M:=(-\Delta+m^2)^{-1}$, where $\Delta$ is Laplacian on $M$ with Dirichlet boundary conditions on $\partial M$. We identify $C^M$ with its kernel $(x,y)\mapsto C^M(x,y)$. 

We now define the GFF with boundary conditions. Given $\varphi^0_-,\varphi^0_+ \in S'(\mathbb T^2)$, the $m$-harmonic extension (henceforth harmonic extension) $H(\varphi_-^0,\varphi_+^0)$ is the smooth function on ${\rm Int}(M):=(-L,L)\times\mathbb T^2$ that solves the PDE
\begin{align}
	(-\Delta+m^2) H(\varphi_-^0,\varphi_+^0)(x)&=0, \quad x\in {\rm Int}(M),
	\\
	H(\varphi_-^0,\varphi_+^0)|_{\partial^- M} &= \varphi_-^0,
	\\
	H(\varphi_-^0,\varphi_+^0)|_{\partial^+M} &= \varphi_+^0.
\end{align}
Given $\varphi_-^0,\varphi_+^0 \in S'(\mathbb T^2)$, the GFF on $M$ with these boundary conditions on $\partial M$ is the random variable 
\begin{equation}
\varphi+H(\varphi_-^0,\varphi_+^0),	
\end{equation}
where $\varphi \sim \mu$. We denote its law by $\mu_{\varphi_-^0,\varphi_+^0}$.

The GFFs  on $M^-$ and $M^+$ are defined similarly as above by symmetry. When we wish to speak about a generic property/object related to the GFF on $M$, $M_-$, or $M_+$ in a unified way, we will often refer to $\mu^\sigma$ for $\sigma \in \{\emptyset, -, +\}$, respectively. For example, we write $C^{M_\sigma}$ for the corresponding covariance and  $\mu^\sigma_{\varphi_-^{0,\sigma},\varphi_+^{0,\sigma}}$ for the corresponding law with boundary conditions $\varphi_-^{0,\sigma}, \varphi_+^{0,\sigma} \in S'(\mathbb T^2)$ on $\partial M^\sigma$. Other objects will be decorated similarly.  

\subsection{Regularization and Wick powers in the bulk}

We begin by introducing our regularization. Let $\rho \in C^\infty(\mathbb R;[0,1])$ be compactly supported on $B(0,2)$ and such that $\rho \equiv 1$ on $B(0,1)$. We will first consider a family of mollifiers $(\rho_T)_{T>0}$ acting on $\mathbb T^2$ by convolution. For every $T>0$, let $\rho_T$ be defined by its Fourier transform
\begin{equation}
	\hat \rho_T(n) = \rho(\langle n \rangle/T), \qquad \forall n \in \mathbb Z^2,
\end{equation}
where $\langle \cdot \rangle = \sqrt{4\pi^2|\cdot|^2+m^2}$.
We will extend the action of convolution with $\rho_T$ to functions and distributions on $M$ by convolving only in the periodic $z$-variable. Given $f \in C^\infty(M)$, let $\rho_T \ast f \in C^\infty(M)$ be the smooth function defined by
\begin{equation}
\rho_T\ast f(\tau,z)
:=
\int_{\mathbb T^2} f(\tau,z'-z) \rho_T(z') dz',
\quad \forall (\tau,z) \in M.
\end{equation}
We in turn extend the action of this convolution to distributions on $M$ via duality.  Furthermore, by symmetry considerations, we define analogous convolutions as acting on distributions on $M^-$ and $M^+$. When clear from context, we simply write $\varphi_T:= \rho_T \ast \phi$.

Let us now consider regularizations of the GFF on $M^\sigma$. Without loss of generality, we consider $M^\sigma = M$. For every $T>0$, $\mu$ almost surely the random variable $\varphi_T$ is a function of positive regularity. In order to see this, note that
\begin{equation}
	\varphi_T = \sum_{n_1 \in \mathbb N} \sum_{n_2,n_3 \in \mathbb Z} \frac{g_{(n_1,n_2,n_3)} \hat\rho(\langle (n_2,n_3)\rangle/T) }{\sqrt{4\pi^2(n_1/2L)^2+\langle (n_2,n_3)\rangle^2} }\mathsf f_{(n_1,n_2,n_3)}.
\end{equation}
The term $\hat\rho(\langle (n_2,n_3)\rangle/T)$ implies that there exists $C>0$ such that $|n_2|, |n_3| \leq CT$. A covariance calculation together with hypercontractivity estimates yields that $\mathbb E[\|\varphi_T\|_{H^{s,p}}^2] < \infty$ for every $s<1/2$ and $p< \infty$. By Sobolev embedding, we have that $\varphi_T \in \mathcal C^{1/2-\kappa}(M)$ for some $\kappa > 0$ sufficiently small. We stress that this estimate is valid only when $T<\infty$ (otherwise the limiting object is distribution-valued, as we shall see later in this article).

\begin{remark}
The choice of regularization is motivated by the multiscale analysis later in this article and is an adaptation of that considered in \cite{BG20} to the case of the cylinder. The compact support of $\rho$ is for obtaining function-valued samples $\varphi_T$. The fact that $\rho \equiv 1$ on $B(0,1)$ is technical and related to the control of large fields. See the $\flat$ operator in Section \ref{sec: renormalization of bulk amplitudes}.
\end{remark}

We now define Wick powers of $\varphi_T$. Since this is a function, we may define $\varphi_T^p$ for every $p \in \mathbb N$. Let us write the regularized covariance in the bulk as
\begin{equation}
	C_T^M(x,y) = \mathbb E_{\mu}[\varphi_T(x)\varphi_T(y)], \qquad 
	\forall x,y \in M.
\end{equation}
The first four Wick powers are defined as follows. For every $x \in M$,
\begin{align}
	\llbracket \varphi_T \rrbracket(x) &= \varphi_T(x), &&\llbracket \varphi_T^2 \rrbracket(x) = \varphi_T^2 - C_T^M(x,x),&&
	\\ \llbracket \varphi_T^3\rrbracket (x) &= \varphi_T^3(x) - 3C_T^M(x,x) \varphi_T(x), &&\llbracket \varphi_T^4 \rrbracket = \varphi_T^4(x) - 6C_T^M(x,x)\varphi_T^2(x) + 3C_T^M(x,x)^2.
\end{align}
Note that $\varphi_T^p$ is a multinomial of degree $p$ in the Gaussian random variables $\mathcal G_M$. Although we will not use this explicitly, let us point out that the Wick ordering of this multinomial corresponds to an orthogonal projection onto so-called homogeneous Wiener chaos spaces generated by these Gaussian random variables, see \cite{J97}. By Wick's theorem, it is easy to see that $\varphi_T$ and $\llbracket \varphi_T^2 \rrbracket$ converge to well-defined random variables as $T \rightarrow \infty$. On the other hand, the third and fourth Wick powers have divergent variances. See e.g. \cite{BG20}. 

\begin{remark}
By convention, we also set $C^M_\infty = C^M$.	
\end{remark}

\subsection{Domain Markov property and boundary GFF}

The conditional law of the GFF on $B=\{0\}\times\mathbb T^2$ can be determined from its domain Markov property. This states that, conditional on the value of the field on $B$, the Dirichlet GFF on $M$ is equal to the sum of independent Dirichlet GFFs on $M_+$ and $M_-$, plus the harmonic extension of $\varphi|_B$ to the bulk. A similar statement also exists for the GFFs with boundary conditions. Below we will identify the law of the field $\varphi|_B$ and use it to define a \emph{boundary GFF} on $\mathbb T^2$.

For the moment, let us fix $M$ with cut $B$, i.e.\ $M\setminus B$. Let $\tilde \mu^0$ be a centred Gaussian measure on $S'(B)$ with covariance $(-\mathcal N)^{-1}$, where $\mathcal N$ is the Dirichlet-to-Neumann map $\mathcal N$ on $M$. Recall that $\mathcal N$ is a negative-definite linear map acting on a dense domain $D(\mathcal N)\subset L^2(B)$ that acts on smooth functions $\varphi^0 \in C^\infty(B)$ by
\begin{equation}
\mathcal N \varphi^0 (z)= \lim_{\tau \uparrow  0} \partial_\tau H^-(0, \varphi^0)(\tau,z)+\lim_{\tau \downarrow 0} \partial_\tau H^+(\varphi^0,0)(\tau,z), \qquad \forall z \in B,
\end{equation}
where $H^\sigma$ is the harmonic extension on $M^\sigma$. (See \cite[Chapter 12, Section C]{T96}.) We stress that $\mathcal N$ depends on the geometry: in this case the length of the cylinder. 

Let us recall the notion of measurability of observables. Given an open subset $X \subset M$, we say that $F:S'(M) \rightarrow \mathbb R$ is $X$-measurable if it is measurable with respect to the $\sigma$-algebra generated by $\{ \varphi(f) : \varphi \in S'(M), f \in C^\infty(M), {\rm supp}(f) \subset X \}$. 

We now state the domain Markov property of the GFF. We omit the proof since, in the context of Dirichlet boundary conditions, it is a standard consequence of the Helmholtz decomposition on the Sobolev space $H^1(M)$, see \cite[Section 2.6]{S07}, and the generic case with boundary conditions follows by straightforward modifications.

\begin{proposition} \label{prop: domain markov}
  Let $\varphi_-,\varphi_+ \in S'(\mathbb T^2)$. Let $F^+,F^-:S'(M)\rightarrow \mathbb R$ be bounded functions such that $F^\pm$ is ${\rm Int}(M^\pm)$-measurable, and let $F=F^+ F^-$ be their pointwise product. Then
 \begin{equation}
  \mathbb E_{\mu_{\varphi_-,\varphi_+}}[ F]
  =
\mathbb E_{\tilde\mu^0}\left[  \mathbb E_{\mu^-}[F^-(\varphi^- + H^-(\varphi_-,\varphi^0)) ]\mathbb E_{\mu^+}[F^+(\varphi^+ + H^+(\varphi_+,\varphi^0)) ] \right], 
  \end{equation}
  where $\varphi^{\sigma} \sim \mu^\sigma$ is the Dirichlet GFF on $M^\sigma$ and $\varphi^0 \sim \tilde\mu^0$. 
\end{proposition}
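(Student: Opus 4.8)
The plan is to derive the statement from the Helmholtz/Helmholtz-type orthogonal decomposition of $H^1_0(M)$ (equivalently, the orthogonal decomposition of the associated Cameron--Martin space), exactly as in \cite[Section 2.6]{S07}, and then track the effect of the deterministic shift $H(\varphi_-,\varphi_+)$. First I would reduce to the Dirichlet case $\varphi_- = \varphi_+ = 0$: since $\mu_{\varphi_-,\varphi_+}$ is the law of $\varphi + H(\varphi_-,\varphi_+)$ with $\varphi \sim \mu$, and $H(\varphi_-,\varphi_+)$ is a fixed smooth function on ${\rm Int}(M)$, it suffices to prove the identity for $\mu$ and then translate both sides by $H(\varphi_-,\varphi_+)$, using that $H(\varphi_-,\varphi_+)|_{{\rm Int}(M^\pm)}$ is exactly the restriction appearing inside $F^\pm$, and that the harmonic extensions add: $H^\pm(\varphi_\pm, \varphi^0) + H(\varphi_-,\varphi_+)|_{M^\pm}$ has the right boundary data. (One should be slightly careful that the boundary value of the bulk harmonic extension along $B$ is not $\varphi^0$; but since $F$ only sees ${\rm Int}(M^\pm)$, only the interior values matter, and linearity of the harmonic extension in its boundary data handles the bookkeeping.)

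For the Dirichlet case, the key structural step is: conditionally on the trace $\varphi|_B$, write $\varphi = \varphi^- \oplus \varphi^+ + H_B(\varphi|_B)$, where $\varphi^\pm \sim \mu^\pm$ are independent Dirichlet GFFs on $M^\pm$ (extended by zero across $B$), $H_B$ is the harmonic extension of the trace on $B$ to all of $M \setminus B$, and the trace $\varphi|_B$ has law $\tilde\mu^0$. This is precisely the orthogonal decomposition $H^1_0(M) = H^1_0(M^-) \oplus H^1_0(M^+) \oplus \mathcal{H}$, where $\mathcal{H}$ is the space of functions harmonic (for $-\Delta + m^2$) on $M^- \cup M^+$; the three summands are orthogonal in the $\langle \cdot, \cdot\rangle_{H^1} := \langle (-\Delta+m^2)\cdot,\cdot\rangle_{L^2}$ inner product, which is exactly the inner product dual to the GFF covariance $C^M = (-\Delta+m^2)^{-1}$. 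The identification of the law of the harmonic part as $\tilde\mu^0$, with covariance $(-\mathcal{N})^{-1}$ for the Dirichlet-to-Neumann operator $\mathcal{N}$, comes from computing the $H^1$-norm of $H_B(\varphi^0)$: integrating by parts, $\langle H_B(\varphi^0), H_B(\varphi^0)\rangle_{H^1} = \langle -\mathcal{N}\varphi^0, \varphi^0\rangle_{L^2(B)}$, so the Gaussian measure on the $\mathcal{H}$-component pushes forward to the centred Gaussian on $S'(B)$ with covariance $(-\mathcal{N})^{-1}$. The fact that $F^-$ and $F^+$ are measurable with respect to ${\rm Int}(M^-)$ and ${\rm Int}(M^+)$ respectively means that, once we condition on $\varphi|_B$ (equivalently on the $\mathcal{H}$-component), $F^-$ depends only on $\varphi^-$ and $F^+$ only on $\varphi^+$; by independence of $\varphi^-$ and $\varphi^+$ the conditional expectation of the product factorizes, giving the claimed formula after taking the outer expectation over $\varphi^0 \sim \tilde\mu^0$.

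The main technical point to be careful about — and what I expect to be the only real obstacle — is making the conditioning rigorous at the level of \emph{distributions} rather than functions: the trace $\varphi|_B$ of the three-dimensional GFF is not classically defined (the GFF lives in $\mathcal{C}^{-1/2-\kappa}$, and its restriction to the codimension-one hyperplane $B$ is the boundary field of regularity $-\kappa$, which is borderline). The clean way around this is to avoid pointwise restriction entirely and instead work with the orthogonal projection in the Gaussian Hilbert space (first chaos) $L^2(\mu)$ onto the closed subspace generated by $\{\varphi(f): {\rm supp}(f) \subset {\rm Int}(M^-) \cup {\rm Int}(M^+)\}$: the orthogonal complement is generated by linear functionals that are ``harmonic-extension tested'', and this complement is Gaussian-isomorphic to the first chaos of $\tilde\mu^0$. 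One verifies that the conditional law of $\varphi$ given this orthogonal complement is $\mu^- \otimes \mu^+$ shifted by the (measurable, a.s.-defined) harmonic extension of the boundary data, using that conditional laws of Gaussian measures given sub-$\sigma$-algebras generated by linear functionals are themselves Gaussian with mean the orthogonal projection. Since the interior restrictions of harmonic extensions of rough boundary data are perfectly smooth (interior regularity for $-\Delta + m^2$), the a.s.\ finiteness and the measurability of $F^\pm(\varphi^\pm + H^\pm(\cdots))$ are unproblematic. As the proposition itself notes, this is standard for the Dirichlet GFF and the general boundary-condition case follows by the reduction in the first paragraph, so I would cite \cite{S07} for the core decomposition and only spell out the shift argument.
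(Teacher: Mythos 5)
Your proposal takes exactly the route the paper itself indicates: the paper omits the proof, citing the orthogonal decomposition of $H^1_0(M)$ from \cite[Section 2.6]{S07} for the Dirichlet case (which is your structural step, including identifying the law of the harmonic summand via integration by parts against the Dirichlet-to-Neumann operator) and noting that the general boundary-condition case follows by ``straightforward modifications'' (your translation argument). One small caveat on your parenthetical: after translating the Dirichlet identity by $H(\varphi_-,\varphi_+)$, the trace $H(\varphi_-,\varphi_+)|_B$ recentres the $\tilde\mu^0$-variable $\varphi^0$, so the bookkeeping is really a change of variables in the outer Gaussian integral rather than a remark about ``only interior values mattering''; but since the paper's statement of the proposition elides the same point, this does not distinguish your argument from the paper's.
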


We would like a notion of a boundary GFF that applies to any subset isomorphic to $\mathbb T^2$. The dependence of $\mathcal N$ on the geometry makes $\tilde \mu^0$ unsuitable. However, we shall see that $\mathcal N$ is related to the Laplacian $\Delta_{\mathbb T^2}$ on $\mathbb T^2$. In the case of the infinite half cylinder $\mathbb R_+ \times \mathbb T^2$, the analogous Dirichlet-to-Neumann map can be computed explicitly using Fourier series and one can show it coincides with the linear map 
\begin{equation}
\mathcal N_0:=-\sqrt{-\Delta_{\mathbb T^2}+m^2}.	
\end{equation}
We will see that the same relation holds up to a geometry dependent smoothing term in the case of finite cylinders. 

Let us denote the infinite cylinder harmonic extension by $\overline H$, defined for $\varphi^0 \in S'(\mathbb T^2)$ as the smooth function on $\mathbb R_+\times \mathbb T^2$ that solves the PDE
\begin{equation}
(m^2-\Delta) \overline H\varphi^0=0 \qquad \text{on} \quad (\mathbb{R}\times \mathbb{T}^2) \setminus B,	
\end{equation}
and such that $\overline H \varphi^0|_B = \varphi^0$. By symmetry, one can consider the infinite cylinder harmonic extension on $\mathbb R_-\times \mathbb T^2$  and we shall abuse notation and also denote this by $\overline H$. Observe that
\begin{equation}
H^-(0,\varphi^0) = \overline H\varphi^0|_{M^-}+\Sigma^- (\varphi^0),
\end{equation}
where $\Sigma^-(\varphi^0)$ is the harmonic function on $M^-$ with boundary conditions
\begin{equation}
	\Sigma^-(\varphi^0)|_{\partial^- M^-}=-\overline H\varphi^0|_{\partial^- M^-}, \quad \Sigma^-(\varphi^0)|_{B}=0. 
\end{equation}
Note that $\Sigma^-(\varphi^0)$ is smooth up to both boundaries in this instance. An identical statement holds also for $H^+(\varphi^0,0)$, where now the smooth correction term is denoted $\Sigma^+(\varphi^0)$. 

We now establish a relation between $\mathcal N$ and $\mathcal N_0$. By the above, we have that
\begin{equation}
	\mathcal N\varphi^0=\mathcal N_0\varphi^0 + \mathcal N_\infty \varphi^0,
\end{equation}
where the smoothing term is defined by
\begin{equation}
\mathcal N_\infty \varphi^0 = \lim_{\tau \uparrow 0} \partial_\tau \Sigma^- (\varphi^0) + \lim_{\tau \downarrow 0} \partial_\tau \Sigma^+(\varphi^0).
\end{equation}
Here,  by smoothing we mean that $\mathcal N_\infty$ is an integral operator with smooth Schwartz kernel on $B\times B$. In particular, $\mathcal N_\infty: \mathcal C^\alpha(B) \rightarrow \mathcal C^{\alpha'}(B)$ is bounded for every $\alpha'\geq \alpha$. Let us emphasize that $\mathcal N_0$ is independent of the geometry of $M$, whereas $\mathcal N_\infty$ heavily depends on the geometry. 

We now define the boundary GFF. Let $\mathcal G_{\mathbb T^2}:= \{ g_n^0 : n \in \mathbb N^* \times \mathbb Z^2 \}$ denote a set of standard complex normal random variables, again defined on the probability space $(\Omega, \mathcal F, \mathbb P)$, which are independent modulo the constraint $g_{-n}^0:= \overline{g_n^0}$. The boundary GFF is the random variable $\varphi^0$ defined by the random series:
\begin{equation}
	\varphi^0:= \sum_{n \in \mathbb Z^2} \frac{g_n^0}{\langle n \rangle}\mathsf e_n.
\end{equation}
Again by a covariance computation, independence, and hypercontractivity, it is standard to show that the series converges almost surely in $\mathcal C^{-1/2-\kappa}(\mathbb T^2)$ for every $\kappa >0$. The law of $\varphi^0$ is denoted by $\mu^0$ and is the centred Gaussian probability measure on $\mathcal C^{-1/2-\kappa}(\mathbb T^2)$ with covariance $(-\mathcal N_0)^{-1}$. 

We will now show that $\tilde\mu^0$ is absolutely continuous with respect to $\mu^0$, so that $\mu^0$ describes the conditional boundary law of the GFF up to absolutely continuous perturbations. Later on we will use this fact whenever we apply the domain Markov poperty. Our proof follows closely the 2d proofs in \cite[Lemmas 4.1,4.5, and 5.3]{GKRV21}, but we give a proof adapted to our 3d setting (although with simpler geometry).  

\begin{lemma} \label{lemma: tilde mu and mu abs cont}
The measure $\tilde \mu^0$ is absolutely continuous with respect to $\mu^0$ and its Radon-Nikodym derivative is given by
\begin{equation} \label{eq: RN free}
\tilde \mu^0(d\varphi):= C(\mathcal N) e^{-\frac 12 \int_B \varphi \cdot (-\mathcal N_\infty) \varphi \, dz} \mu^0(d\varphi),
\end{equation}	
where the constant $C(\mathcal N)$ is given by a square root of a Fredholm determinant\footnote{Recall that for a trace-class operator $K$ acting on a Hilbert space $H$, the Fredholm determinant is defined by
\begin{align*}
\det (I+K)
&=
\sum_{k=0}^\infty {\rm Tr} \Lambda^k(K),
\end{align*}
where $\Lambda^k H$ is the $k$-th exterior power of $H$ and $\Lambda^k(K)$ is the natural extension of $K$ onto $\Lambda^k H$. },
\begin{equation}
	C(\mathcal N):= \sqrt{\det \left( I - (-\mathcal N_0)^{-1}\mathcal N_\infty \right)},
\end{equation}
\end{lemma}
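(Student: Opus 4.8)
The plan is to compare the two centred Gaussian measures $\tilde\mu^0$ (covariance $(-\mathcal{N})^{-1}$) and $\mu^0$ (covariance $(-\mathcal{N}_0)^{-1}$) via the general criterion for equivalence of Gaussian measures, exploiting the decomposition $\mathcal{N} = \mathcal{N}_0 + \mathcal{N}_\infty$ established above, where $\mathcal{N}_\infty$ is smoothing. First I would write, at least formally, $(-\mathcal{N}) = (-\mathcal{N}_0)^{1/2}\bigl(I - (-\mathcal{N}_0)^{-1/2}\mathcal{N}_\infty(-\mathcal{N}_0)^{-1/2}\bigr)(-\mathcal{N}_0)^{1/2}$, and observe that $K := (-\mathcal{N}_0)^{-1/2}\mathcal{N}_\infty(-\mathcal{N}_0)^{-1/2}$ is trace-class: indeed $\mathcal{N}_\infty$ has a smooth kernel on $B \times B$ (hence is trace-class, in fact all its singular values decay rapidly), and pre/post-composing with the bounded-from-$\mathcal{C}^\alpha$-to-$\mathcal{C}^{\alpha'}$ operator $(-\mathcal{N}_0)^{-1/2}$ only improves this. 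One should also check $\|K\|_{\mathrm{op}} < 1$ so that $I - K$ is invertible and the Fredholm determinant $\det(I-K)$ is positive; this can be argued from positive-definiteness of $-\mathcal{N}$ (the covariance $(-\mathcal{N})^{-1}$ is a genuine covariance, so $I - K > 0$ as an operator).

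The next step is to identify the Radon–Nikodym derivative. By the Feldman–Hájek / Cameron–Martin–Maruyama theory for Gaussian measures with commuting-up-to-trace-class covariances (or, more elementarily, by a direct finite-dimensional computation on the Fourier modes followed by a limiting argument), the density of $\tilde\mu^0$ with respect to $\mu^0$ is, up to the normalizing constant, $\exp\bigl(-\tfrac12 \langle \varphi, ((-\mathcal{N}) - (-\mathcal{N}_0))\varphi\rangle\bigr) = \exp\bigl(-\tfrac12 \langle \varphi, (-\mathcal{N}_\infty)\varphi\rangle\bigr)$, where the quadratic form $\int_B \varphi\cdot(-\mathcal{N}_\infty)\varphi\,dz$ makes sense $\mu^0$-almost surely precisely because $\mathcal{N}_\infty$ is smoothing: $(-\mathcal{N}_\infty)\varphi$ is almost surely a smooth function even though $\varphi \in \mathcal{C}^{-1/2-\kappa}$, so the pairing is well-defined (this needs the explicit mapping property $\mathcal{N}_\infty: \mathcal{C}^\alpha(B) \to \mathcal{C}^{\alpha'}(B)$ with $\alpha = -1/2-\kappa$ and $\alpha'$ large). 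The normalizing constant is then fixed by requiring the density to integrate to $1$; a Gaussian integral computation (diagonalizing $K$ and using the product formula for the Fredholm determinant) yields $C(\mathcal{N}) = \sqrt{\det(I - K)} = \sqrt{\det(I - (-\mathcal{N}_0)^{-1}\mathcal{N}_\infty)}$, the last equality using that the Fredholm determinant is similarity-invariant under the bounded conjugation by $(-\mathcal{N}_0)^{1/2}$.

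Concretely I would carry this out by truncating to the first $N$ Fourier modes $\{\mathsf{e}_n\}$: on this finite-dimensional space both measures are honest Gaussians, the density is an explicit ratio of Gaussian densities equal to $\sqrt{\det(I - K_N)}\exp(-\tfrac12\langle\varphi, (-\mathcal{N}_\infty)_N\varphi\rangle)$, and then one passes $N \to \infty$. Convergence of the determinants $\det(I-K_N) \to \det(I-K)$ follows from trace-norm convergence $K_N \to K$ (standard continuity of the Fredholm determinant); convergence of the exponents uses the smoothing of $\mathcal{N}_\infty$ together with the almost-sure regularity of $\varphi$; and $L^1(\mu^0)$-convergence of the densities — which upgrades the computation to the claimed identity of measures — follows from a uniform-integrability / $L^2$-bound argument, exploiting again that the quadratic form is subcritical thanks to the smoothing (so e.g. an exponential-moment bound of the type in \cite{GKRV21} applies).

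The main obstacle I anticipate is the last point: controlling the limit $N \to \infty$ of the densities in $L^1(\mu^0)$ rather than merely in distribution, i.e.\ proving uniform integrability of $\exp(-\tfrac12\langle\varphi,(-\mathcal{N}_\infty)_N\varphi\rangle)$ under $\mu^0$. Since $-\mathcal{N}_\infty$ need not be sign-definite, one cannot simply bound the exponential by $1$; instead one must use that, after diagonalizing $K$, the relevant eigenvalues $\lambda_j$ of $K$ satisfy $\lambda_j < 1$ and decay fast, so the Gaussian chaos $\sum_j \frac{\lambda_j}{1-\lambda_j}(g_j^2 - 1)$-type series converges and has all exponential moments. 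This is where the quantitative input "$\mathcal{N}_\infty$ has a smooth kernel, hence rapidly decaying singular values" and the strict operator inequality $\|K\|_{\mathrm{op}} < 1$ are essential. Once this uniform integrability is in hand, the identity \eqref{eq: RN free} and the formula for $C(\mathcal{N})$ follow by passing to the limit.
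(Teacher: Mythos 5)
Your proposal is correct and follows essentially the same route as the paper: establish that the relevant operator is trace-class using the smoothing of $\mathcal{N}_\infty$, truncate to finitely many Fourier modes where the density identity is an elementary Gaussian computation, and pass to the limit via continuity of the Fredholm determinant in trace-class norm together with a uniform-integrability argument for the exponential densities. The only noticeable difference is in how that last uniform integrability is secured: you propose diagonalizing $K$ and invoking Gaussian-chaos exponential moment bounds, whereas the paper derives the more direct pointwise domination $e^{-\langle\varphi,(-\mathcal{N}_\infty)\varphi\rangle} \leq e^{(1-a)\langle\varphi,(-\mathcal{N}_0)\varphi\rangle}$ from positive-definiteness of $\mathcal{N}$ and then cites the analogous computation in \cite{GKRV21}; both are sound and rest on the same underlying fact that $\mathcal{N}$ dominates a positive multiple of $\mathcal{N}_0$.
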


\begin{proof}
We first claim that the Fredholm determinant in the constant $C(\mathcal N)$ exists because the linear map
\begin{equation}
	K:=-(-\mathcal N_0)^{-1}\mathcal N_\infty
\end{equation}
is trace-class. In order to show that, it suffices to show that $K$ is smoothing.  This in turn follows since $\mathcal N_\infty$ is (infinitely) smoothing and $(-\mathcal N_0)^{-1}$ has a kernel with a finite singularity at the origin, hence the convolution of their kernels is smooth.   

We now turn to the expression for the Radon-Nikodym derivative. Let $\Pi_N$ denote orthogonal projection on to fourier modes $|n|\leq N$. For $\# \in \{ \emptyset, 0,\infty\}$, let
\begin{equation}
	\mathcal N^N_\# = \Pi_N\circ \mathcal N_\#\circ \Pi_N.
\end{equation}
Furthermore, for this proof only, let $\mu^0_N:=(\Pi_N)_*\mu^0$ and $\tilde\mu^0_N :=(\Pi_N)_*\tilde\mu^0$ be the pushforward measures under the map $\varphi \mapsto \Pi_N \varphi$. Then a standard computation yields 
\begin{equation} \label{eq: N-proj free RN}
	\mu^0_N(d\varphi) = C_N(\mathcal N) e^{-\frac 12 \langle \varphi, (-\mathcal N^N_\infty) \varphi \rangle_B } \mu^0_N(d\varphi),
\end{equation}
where we recall that $\langle \cdot, \cdot \rangle_B$ denotes the inner product on $L^2(B)$, the constant term is defined by 
\begin{equation}
	C_N(\mathcal N):= \sqrt{\det (I-(-\mathcal N^N_0)^{-1}\mathcal N_\infty^N)},
\end{equation}
and the expression exists provided that the exponential term is integrable. Let us first point out that $C_N(\mathcal N) \rightarrow C(\mathcal N)$ because of the continuity of the map $K'\mapsto \det(I+K')$ in the space of trace-class linear maps. Indeed,
\begin{equation}
K_N:= -(-\mathcal N^N_0)^{-1}\mathcal N_\infty^N =\Pi_N K \Pi_N.
\end{equation}
Thus $K_N$ is a finite rank approximation of $K$. The fact that these linear maps now converge in trace-class norm follows immediately.  

We now turn to integrability and convergence of the exponential term in \eqref{eq: N-proj free RN}. Following \cite[Lemma 4.5]{GKRV21}, let us write
\begin{equation}
	(-\mathcal N)=(-\mathcal N_0)+(-\mathcal N_\infty) =(-\mathcal N_0)^{1/2}(I+\tilde K) (-\mathcal N_0)^{1/2},
\end{equation}
where 
\begin{equation}
	\tilde K:=(-\mathcal N_0)^{-1/2} (-\mathcal N_\infty)(-\mathcal N_0)^{-1/2}.
\end{equation}
Since $\mathcal N$ is positive definite, we therefore have that there exists $a \in (0,1)$ such that 
\begin{equation}
	\langle \varphi, (I+\tilde K )\varphi \rangle_B \geq a\langle \varphi, \varphi\rangle, \qquad \forall \varphi \in L^2(B).
\end{equation}
Thus
\begin{equation}
	e^{-\langle \varphi, (-\mathcal N_\infty)\varphi \rangle_B} \leq e^{(1-a)\langle \varphi, (-\mathcal N_0)\varphi \rangle_B}.
\end{equation}
The same is true for truncated maps. Hence the exponential terms in \eqref{eq: RN free} and \eqref{eq: N-proj free RN} are integrable, and thus the densities are well-defined.

It remains to verify that the function on the righthand side of \eqref{eq: RN free} is indeed the Radon-Nikodym derivative corresponding to $C(\mathcal N)^{-1} \frac{d\tilde\mu^0}{d\mu^0}$. It is sufficient to show that the exponential terms in \eqref{eq: N-proj free RN} converge to \eqref{eq: RN free} in expectation under $\mu^0$. Almost sure convergence is clear and convergence in expectation follows by a dominated convergence argument using smoothing properties of $\mathcal N_\infty$. For details in an almost exactly identical case see \cite[Lemma 5.3]{GKRV21}.
\end{proof}

\section{Amplitudes and gluing for $\varphi^4_3$}
\label{sec: amplitudes and gluing}

In this section, we will define $\varphi^4_3$ models and their amplitudes, and proceed to outline the proofs of our main theorems. We will begin by introducing the models with admissible boundary conditions and restate Theorem \ref{theorem: rough boundary conditions} more precisely, see Theorem \ref{thm: phi43 random bc} below. Next, we will introduce the interacting boundary measure and state the main theorem on its construction, see Theorem \ref{thm: boundary}. We will then turn to the issue of constructing limiting amplitudes and the key technical results of this paper, see Theorems \ref{thm: bulk} and \ref{thm: enhancement converge}.  Conditional on these three aforementioned results, we will then proofs of the main theorem concerning the gluing property, Theorem \ref{thm: main}, and the Markov property, Theorem \ref{theorem: markov}. The remaining sections will then concern the proofs of Theorems \ref{thm: boundary}, \ref{thm: bulk} and \ref{thm: enhancement converge}, as well as the construction and properties of the Hamiltonian, Theorem \ref{thm: hamiltonian}. 

\subsection{Admissible boundary conditions and their Wick powers}

We begin by introducing the notion of \emph{admissible boundary condition}.

\begin{definition}
\label{defn: admissible law}
An $S'(\mathbb T^2)$-valued random variable $\varphi^0$ is an admissible boundary condition if there exists a coupling $(\Omega^0, \mathcal F^0, \mathbb P^0)$ of $(W^0,Z^0) \in S'(\mathbb T^2)\times S'(\mathbb T^2)$ such that:
\begin{enumerate}
\item[(i)]  $\varphi^0$ is equal in law to $W^0+Z^0$.

\item[(ii)] $W^0 = 0$ almost surely or ${\rm Law}_{\mathbb P^0}(W^0) = \mu^0$. 
\item[(iii)] There exists $\kappa > 0$ such that 
\begin{equation}
\mathbb E^0\left[\|Z^0\|_{H^{1/2-\kappa}(\mathbb T^2)}^2\right] < \infty,
\end{equation}
where $\mathbb E^0$ denotes expectation with respect to $\mathbb P^0$.
\end{enumerate}
We call the law of $\varphi^0$ an \emph{admissible boundary law}.
\end{definition}

Let $\sigma \in \{\emptyset, -, + \}$. In order to define $\varphi^4_3$ potentials on $M^\sigma$, we will need to extend the notion of Wick powers to harmonic extensions of admissible boundary conditions, suitably regularized. Recall the family of mollifiers $(\rho_T)_{T \geq 0}$ introduced in Section \ref{sec: gaussian}. We first define Wick powers of 
\begin{equation}
H^\sigma \tilde W^0_T := H^\sigma(\rho_T\ast \tilde W^0,0) =\rho_T \ast (H^\sigma(\tilde W^0,0)) =: H^\sigma (\tilde W^0)_T ,	
\end{equation}
where $\tilde W^0$ is a random variable such that ${\rm Law}_{\mathbb P^0}(\tilde W^0)=\tilde\mu^0$. The reason why we consider $\tilde W^0$ first is because it is the natural measure that arises in the domain Markov property (see Lemma \ref{lemma: covariance DMP decomp} below). We will then explain how to extend this to $H^\sigma(W^0)$ (and hence $H^\sigma(0,W^0)$), and then to incorporate the $Z^0$ term. 

Let us define the kernel $C^{B,\sigma}_T:M^\sigma \times M^\sigma \rightarrow \mathbb R$
\begin{equation} \label{eqdef: Cb cov}
C_{T}^{B,\sigma}(x,y):= \mathbb E^0[ H^\sigma \tilde W^0_T(x) \, H^\sigma \tilde W^0_T (y)], \qquad \forall x,y \in M^\sigma	. 
\end{equation}
Note that the associated operator $C^{B,\sigma}$ is the covariance of the centred Gaussian measure corresponding to the law of the pushforward of $\varphi \sim \tilde \mu^0$ under the map $\cdot \mapsto \rho_T \ast H^\sigma(\cdot)$. As with many other quantities, when the ambient cylinder is clear, we shall drop the dependency of the notation on $\sigma$. 

The domain Markov property yields the following identity.
\begin{lemma} \label{lemma: covariance DMP decomp}
Let $T \geq 0$. Then
\begin{equation}
C^M_T(x,y)= C^B_T(x,y) + C^{M^-}_T(x,y) + C^{M^+}_T(x,y), \qquad \forall x,y \in M. 	
\end{equation}
Above, we have extended the kernels $C^{M^-}_T$ and $C^{M^+}_T$ to be zero if either $x$ or $y$ entry is in $M^+$ and $M^-$, respectively. Furthermore, $C^B_T(x,y) = C^{B,-}_T(x,y)$ if $x,y \in M^-$, $C^B_T(x,y) = C^{B,+}_T(x,y)$ if $x,y \in M^+$.
\end{lemma}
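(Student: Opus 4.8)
The identity is essentially the covariance-level shadow of the domain Markov property (Proposition~\ref{prop: domain markov}), mollified in the periodic variable, so the plan is to derive it by combining the random series representations of the Gaussian fields with the harmonic-extension decomposition $H^-(0,\varphi^0) = \overline H\varphi^0|_{M^-}+\Sigma^-(\varphi^0)$ and its $M^+$ analogue. Concretely, I would start from the observation that, for $\varphi \sim \mu$ the Dirichlet GFF on $M$, the restriction $\varphi|_B$ is a Gaussian field on $B$ whose law is $\tilde\mu^0$, and that by Proposition~\ref{prop: domain markov} one has the a.s.\ decomposition $\varphi = \varphi^- + \varphi^+ + H(\varphi|_B)$ (interpreting $\varphi^\pm$ as the independent Dirichlet GFFs on $M^\pm$ extended by $0$, and $H(\varphi|_B)$ as the harmonic extension with data $\varphi|_B$ on $B$ and $0$ on $\partial^\pm M$), with all three terms mutually independent. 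Applying the (periodic-direction) mollifier $\rho_T\ast$, which is a bounded linear operator commuting with taking covariances, gives $\varphi_T = (\varphi^-)_T + (\varphi^+)_T + \rho_T\ast H(\varphi|_B)$, still a sum of independent terms. Taking $\mathbb E_\mu[\varphi_T(x)\varphi_T(y)]$ and using independence to kill cross terms yields exactly
\begin{equation}
C^M_T(x,y) = \mathbb E[(\varphi^-)_T(x)(\varphi^-)_T(y)] + \mathbb E[(\varphi^+)_T(x)(\varphi^+)_T(y)] + \mathbb E[(\rho_T\ast H(\varphi|_B))(x)(\rho_T\ast H(\varphi|_B))(y)],
\end{equation}
which by definition is $C^{M^-}_T(x,y) + C^{M^+}_T(x,y) + C^B_T(x,y)$ once one checks that $\mathbb E[(\varphi^\sigma)_T(x)(\varphi^\sigma)_T(y)] = C^{M^\sigma}_T(x,y)$ on $M^\sigma\times M^\sigma$ and vanishes otherwise (the latter because $\varphi^\sigma$ is supported, as a distribution, on $\overline{M^\sigma}$ and its mollification in the \emph{periodic} direction does not leak across the cut $B=\{0\}\times\mathbb T^2$), and that the harmonic-extension term has covariance $C^B_T$ as defined in \eqref{eqdef: Cb cov} with $\tilde W^0 = \varphi|_B \sim \tilde\mu^0$.

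For the second sentence of the lemma --- that $C^B_T$ restricted to $M^-\times M^-$ agrees with $C^{B,-}_T$ and likewise on $M^+$ --- I would use the decomposition $H(\varphi^0)|_{M^-} = \overline H\varphi^0|_{M^-} + \Sigma^-(\varphi^0)$ together with the analogous statement for $H^-(0,\varphi^0)$ that defines $C^{B,-}_T$: both $H(\varphi^0)$ (with zero data on $\partial M$) and $H^-(0,\varphi^0)$ (with zero data on $\partial^- M^-$ and data $\varphi^0$ on $B$) are $m$-harmonic on $M^-$ with the \emph{same} boundary data $\varphi^0$ on $B$ and zero on $\partial^- M$; hence they coincide on $M^-$ by uniqueness of the harmonic extension. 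Mollifying and taking covariances against $\tilde\mu^0$ then gives $C^B_T(x,y) = C^{B,-}_T(x,y)$ for $x,y\in M^-$, and symmetrically on $M^+$.

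The only genuinely delicate point --- the step I would flag as the main obstacle --- is the regularity/support bookkeeping needed to justify the pointwise identity: the fields are distribution-valued (in $\mathcal C^{-1/2-\kappa}$), so ``$\varphi^\sigma$ is supported on $\overline{M^\sigma}$'' and ``mollifying in the periodic direction does not create cross-covariances between $M^-$ and $M^+$'' need to be argued carefully, e.g.\ by first establishing the decomposition at the level of the regularized fields (where everything is a genuine function, by the discussion after the definition of $\varphi_T$) and only then identifying covariances, or by working with the random series $\sum_n g_n \langle\cdot\rangle^{-1}\mathsf f_n$ directly and grouping the harmonic part against the bulk part. I expect this to be routine given the tools already set up (Sections on Gaussian fields, regularization, and the domain Markov property), and the authors likely defer it to the standard references \cite{S07, BG20} as with Proposition~\ref{prop: domain markov}; in any case, once the decomposition $\varphi_T = (\varphi^-)_T + (\varphi^+)_T + \rho_T\ast H(\varphi|_B)$ is in hand as an identity of \emph{functions}, the covariance identity and the $\sigma$-localization follow immediately by bilinearity and independence.
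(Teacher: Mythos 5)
Your proposal is correct and gives exactly the argument the paper leaves implicit (the lemma is stated without proof, as a direct consequence of Proposition~\ref{prop: domain markov} applied to quadratic observables). The two points you rightly flag as the only nontrivial content — that the almost-sure decomposition $\varphi_T = (\varphi^-)_T + (\varphi^+)_T + \rho_T\ast H(\varphi|_B)$ holds with the three summands independent and supported respectively in $\overline{M^-}$, $\overline{M^+}$, and all of $M$ (because the mollifier $\rho_T$ acts only in the periodic $z$-direction and so cannot move mass across the slice $B = \{0\}\times\mathbb T^2$), and that the $M$-harmonic extension with data on $B$ and zero on $\partial M$ restricts on $M^\pm$ to the $M^\pm$-harmonic extension defining $C^{B,\pm}_T$ by uniqueness — are the correct ones, and the rest follows by bilinearity and independence as you describe.
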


For ease of notation, let us consider Wick powers on $M$ (the case of $M^-$ and $M^+$ can be adapted trivially). The first four Wick powers of $H\tilde W^0_T$ can be easily computed pointwise for $x \in M$ and are given by:
\begin{align}
\llbracket H \tilde W^0_T \rrbracket(x) &:= H \tilde W^0_T(x),
\\\llbracket (H \tilde W^0_T)^2 \rrbracket(x) &:= (H\tilde W^0_T)^2(x) - C^B_T(x,x),
\\
\llbracket (H \tilde W^0_T)^3 \rrbracket (x) &:= (H\tilde W^0_T)^3(x) - 3C^B_T(x,x) \, H\tilde W^0_T(x), 
\\
\llbracket (H \tilde W^0_T)^4 \rrbracket (x) &:=(H \tilde W^0_T)^4(x) - 6C^B_T(x,x) (H \tilde W^0_T)^2(x) + 3 (C_T^B(x,x))^2.
\end{align}
We now define the Wick powers of $HW^0_T$ exactly the same as above and \emph{with the same Wick renormalization constants $C^B_T(x,x)$}.

\begin{remark}
Let us stress that the use of $C^B_T$ in the definition of Wick powers of $HW^0_T$ does \emph{not} correspond to orthogonal projections onto Wiener chaoses, whereas for $H\tilde W^0_T$ it does. However, as we have seen, $C^B_T$ can be compared with the covariance of $HW^0_T$ up to an infinitely smoothing term. Thus, this will not be an obstruction in our argument.	
\end{remark}

Given $\varphi^0:= W^0+Z^0$ an admissible boundary condition and $T \geq 0$, we can extend the notion of Wick powers to the field $\varphi^0_T = \rho_T \ast \varphi^0 $ via:
\begin{equation}
\llbracket (H \varphi^0_T)^p \rrbracket(x) := \sum_{k=0}^p {p \choose k} \llbracket (H W^0_T)^k \rrbracket (x) (H Z^0_T)^{p-k}, \qquad \forall x \in M,\,  \forall p \in \{1,2,3,4\}. 
\end{equation}
Above, we use the convention that the zeroeth Wick power is identically $1$. Furthermore, in the case of two independent admissible boundary conditions, $\varphi^0_-, \varphi^0_+$, we extend the Wick powers in the double harmonic extension as follows. Note that
\begin{equation}
H(\varphi_-^0, \varphi_+^0) = H(\varphi_-^0,0)+H(0,\varphi_+^0) = H(\varphi_-^0)+H(\varphi_+^0).	
\end{equation}
Writing $H(\varphi^0_-,\varphi^0_+)_T := \rho_T \ast H(\varphi^0_-,\varphi^0_+)$, we may define
\begin{equation}
\llbracket H(\varphi^0_-, \varphi^0_+)_T^p \rrbracket (x):=	\sum_{k=0}^p {p \choose k} \llbracket H(\varphi^0_-)_T^p\rrbracket (x) \llbracket H(\varphi^0_+)^{p-k}\rrbracket (x), \qquad \forall x \in M,\, \forall p \in \{1,2,3,4\}.
\end{equation}

\subsection{$\varphi^4_3$ models with admissible boundary conditions}

In this subsection, we define $\varphi^4_3$ models with admissible boundary conditions. We will construct them as weak limits of regularized measures that have been appropriately renormalized. We will first therefore define the approximate $\varphi^4_3$ models on $M^\sigma$ for $\sigma \in \{\emptyset, -, + \}$.

We begin by introducing the renormalizations. The \emph{second-order mass renormalization} is the map $(\gamma_T^\sigma)_{T \geq 0}$ defined by
\begin{equation}
\gamma_T^{\sigma} = -3\cdot 4^2 \int_{M^{\sigma,{\rm per}}}  \int_{M^{\sigma,{\rm per}}}C^{M^{\sigma,{\rm per}}}_T(x,y)^3 dx dy, \qquad \forall T \geq 0,
\end{equation}
where we recall that $M^{\sigma,{\rm per}}$ is the periodization of $M^\sigma$. By standard covariance estimates, we have that $\gamma_T^\sigma = O(\log T)$ as $T \rightarrow \infty$.  Our choice is such that $\gamma_T^\sigma$ is \emph{independent} of the choice of boundary conditions and also so that the gluing formula holds.

Now we turn to the energy renormalization $(\delta^\sigma_T(\varphi_-^{0,\sigma},\varphi_+^{0,\sigma}))_{T \geq 0 }$. It will depend on the choice of boundary condition -- more precisely, it will depend on whether $\varphi^{0,\sigma}_-$ or $\varphi^{0,\sigma}_+$ in the coupling have $W^0$ component distributed according to $\mu^0$. Let us write $W^0_-$ and $W^0_+$ for these components, respectively. We define
\begin{equation}
C^B_T(x,y \mid \varphi^{0,\sigma}_{-}, \varphi^{0,\sigma}_+) = C^B_T(x,y) ( \mathbbm 1_{\{ W^0_- \sim \mu^0  \}} + \mathbbm 1_{\{W^0_+ \sim \mu^0 \}}). 	
\end{equation}
Let us write
\begin{equation}
	\delta^\sigma_T(\varphi_-^{0,\sigma},\varphi^{0,\sigma}_+) = \delta^\sigma_T + \delta^{\sigma, \partial }_T(\varphi^{0,\sigma}_-,\varphi^{0,\sigma}_+),
\end{equation}
where
\begin{align}
\delta^\sigma_T&:= -3\cdot 4\int_{M^\sigma}  \int_{M^\sigma}C_T^{M^\sigma}(x,y)^4 dx dy
\\&\qquad + 2^5 \cdot 3^2 \int_{M^\sigma}	\int_{M^\sigma}\int_{M^\sigma} C_T^{M^{\sigma,{\rm per}}}(x,y)^2 C_T^{M^{\sigma,{\rm per}}}(x,u)^2 C_T^{M^{\sigma,{\rm per}}}(y,u)^2 dx dy du,
\end{align}
and
\begin{equation}
\delta^{\sigma, \partial}_T(\varphi_-^{0,\sigma},\varphi_+^{0,\sigma}) := -3\cdot 4 \int_{M^\sigma}\int_{M^\sigma} \sum_{p=1}^3 {4\choose p} C_T^{M^\sigma}(x,y)^{4-p} C_T^B(x,y \mid \varphi^{0,\sigma}_-, \varphi^{0,\sigma}_+)^{p} dx dy.	
\end{equation}

\begin{remark}
Let us stress that that $(\gamma^\sigma_T)_{T \geq 0}$, $(\delta^\sigma_T)_{T \geq 0}$, and $(\delta^{\sigma, \partial}_T(\varphi_-^{0,\sigma},\varphi_+^{0,\sigma}))_{T \geq 0}$ are constants. This is perhaps surprising for the mass renormalization. In principle, due to the loss of translation invariance, one might expect $\gamma^\sigma_T$ to be a non-constant renormalization function. It is important to our argument that this is not the case -- in fact, later on, we will show that $\gamma^\sigma_T$ agrees with the mass renormalization of the model on the torus (see \cite{BG20}) up to a finite correction. It can be shown that they diverge at rates
\begin{equation}
|\gamma_T^\sigma| \asymp
\log T, \, 
|\delta_T^\sigma| \asymp T,	
\end{equation}
where $\asymp$ means bounded from above and below as $T \rightarrow \infty$.
\end{remark}

\begin{remark} \label{remark: energy renormalization recombination}
The presence of $C^{M^{\sigma,{\rm per}}}$ in the second term of the energy renormalization $\delta^\sigma_T$ is to ensure that, given boundary conditions on the left $\varphi^0_-=\varphi^{0,-}_-$, the middle $\varphi^0=\varphi^{0,-}_+=\varphi^{0,+}_-$, and the right $\varphi^{0}_+=\varphi^{0,+}_+$,
\begin{equation}
\delta_T(\varphi^0_-,\varphi^0_+)=\sum_{\sigma=\pm }\delta^\sigma_T + \delta^{\sigma, \partial}_T(\varphi_-^{0,\sigma},\varphi_+^{0,\sigma}) - 3\cdot 4 \int_{M^\sigma} \int_{M^\sigma} C^B_T(x,y \mid \varphi^0, \varphi^0)^4 dx dy.
\end{equation}
An analogous statement holds for the torus. 
\end{remark}

Let $T \geq 0$. We define the regularized potential, which naturally decomposes into bulk and boundary-bulk contributions. The \emph{bulk regularized potential} is the map $V_T^\sigma$ given by
\begin{equation}
V_T^\sigma(\varphi):= \int_{M^\sigma} \llbracket \varphi_T^4\rrbracket (x) - \gamma_T^\sigma \llbracket \varphi_T^2\rrbracket (x) dx - \delta_T^\sigma, \qquad \forall \varphi \in S'(M^\sigma).	
\end{equation}
We emphasize that the Wick powers of $\varphi_T$ above are formally defined by the \emph{same} expressions as for the Gaussian case, i.e.\ with the same covariance $C^{M^\sigma}_T$ appearing in the constants. We will only use them when the argument is $\varphi \sim \mu^\sigma$. Given $\varphi^{0,\sigma}_-,\varphi^{0,\sigma}_+ \in S'(\mathbb T^2)$, the \emph{boundary-bulk regularized potential} is the map $V^{\sigma,\partial}_T(\cdot \mid \varphi^{0,\sigma}_-,\varphi^{0,\sigma}_+)$ defined for every $\varphi \in S'(M^\sigma)$ by
\begin{align}
V_T^\sigma(\varphi \mid \varphi^{0,\sigma}_-, \varphi^{0,\sigma}_+) &= \int_{M^\sigma} 4\llbracket \varphi_T^3 \rrbracket H^\sigma(\varphi^\sigma_+,\varphi^\sigma_-)_T+ 6\llbracket \varphi_T^2 \rrbracket  \llbracket H^\sigma(\varphi_+^\sigma,\varphi^\sigma_-)_T^2 \rrbracket 
\\
&\qquad + 4 \varphi_T \llbracket H^\sigma(\varphi^\sigma_+,\varphi^\sigma_-)^3_T\rrbracket -2\gamma_T^\sigma \varphi_T H^\sigma(\varphi^\sigma_+,\varphi^\sigma_-)
\\
&\qquad - \gamma^{\sigma}_T \llbracket H^\sigma(\varphi_+^\sigma,\varphi_-^\sigma)_T^2\rrbracket \, dx - \delta_T^{\sigma,\partial}, 
\end{align}
where we recall the definition of Wick powers of harmonic extensions of the boundary fields from the preceding subsection. 

\begin{remark}
It is natural to ask why the term $\int_{M^\sigma} \llbracket H^\sigma(\varphi_+^\sigma, \varphi^\sigma_-)_T^4 \rrbracket\, dx$ is not included in the boundary-bulk regularized potential above. On the one hand, since these potentials are defined almost surely in the boundary condition, this term is "deterministic" and hence will be cancelled by the normalization constant. However, the more severe issue arises when considering the gluing formula. For boundary fields sampled according to $\mu^0$, we will \emph{not} be able to control this term and it cannot be cancelled in a trivial fashion (e.g. by a further constant renormalization) since we will integrate over the boundary field for the gluing property and Markov property. As we shall see, handling it necessitates a changing the boundary measure in a singular way. 
\end{remark}

With the same notation as above, we define the \emph{regularized potential} $V_T^\sigma(\cdot \mid \varphi^{0,\sigma}_-,\varphi^{0,\sigma}_+)$ to be the sum of the bulk and boundary-bulk potentials, i.e.\
\begin{equation}
V_T^\sigma(\varphi \mid \varphi^{0,\sigma}_-,\varphi^{0,\sigma}_+) = V_T^\sigma(\varphi) + V^{\sigma,\partial}_T(\varphi \mid \varphi^{0,\sigma}_-,\varphi^{0,\sigma}_+), \qquad \forall \varphi \in S'(M^\sigma). 
\end{equation}

\begin{definition}
The approximate $\varphi^4_3$ measure on $M^\sigma$ with pair of boundary conditions $\varphi^{0,\sigma}_-,\varphi^{0,\sigma}_+ \in S'(\mathbb T^2)$ is the probability measure $\nu_T^\sigma(\cdot \mid \varphi_-^{0,\sigma},\varphi_+^{0,\sigma})$ with Radon-Nikodym derivative
\begin{equation}
\nu_T^\sigma(d\varphi \mid \varphi_-^{0,\sigma}, \varphi_+^{0,\sigma}) = \frac{1}{\mathcal Z_T^\sigma(0\mid \varphi_-^{0,\sigma},\varphi_+^{0,\sigma})} \exp(-V_T^\sigma(\varphi \mid  \varphi_-^{0,\sigma}, \varphi_+^{0,\sigma})) \mu^\sigma(d\varphi),
\end{equation}
where $\mathcal Z_T^\sigma(0\mid \varphi_-^{0,\sigma},\varphi_+^{0,\sigma})$ is a normalization constant. For every $f \in C^\infty(M^\sigma)$, the unnormalized Laplace transform of $f$ under this measure is the quantity
\begin{equation}
\mathcal Z^\sigma_T(f \mid \varphi^{0,\sigma}_-,\varphi^{0,\sigma}_-):=	\mathbb E_{\mu^\sigma} \left[ \exp\left(\langle f,\varphi\rangle_{M^\sigma} -V_T^\sigma(\varphi \mid \varphi^{0,\sigma}_-,\varphi^{0,\sigma}_+) \right) \right], \qquad \forall f \in C^\infty(M^\sigma),
\end{equation}
where $\langle \cdot,\cdot \rangle_{M^\sigma}$ denotes the $L^2(M^\sigma)$ inner product. Note that taking $f=0$ above indeed yields the normalization constant $\mathcal Z^\sigma_T(0 \mid \varphi^{0,\sigma}_-,\varphi^{0,\sigma}_+)$. 
\end{definition}

We now restate a more precise version of Theorem \ref{theorem: rough boundary conditions} on the construction of $\varphi^4_3$ with admissible boundary conditions. 

\begin{theorem} \label{thm: phi43 random bc}
Let $\sigma \in \{\emptyset,+,-\}$ and let $(\varphi^{0,\sigma_-},\varphi^{0,\sigma}_+)$ be independent admissible boundary conditions. Then almost surely in the boundary conditions, the following statements hold.
\begin{enumerate}
\item[(i)] For every $f \in C^\infty(M^\sigma)$, there exists a limiting unnormalized Laplace transform $\mathcal Z^\sigma(f \mid \varphi^{0,\sigma}_-,\varphi^{0,\sigma}_+) >0$ such that
\begin{equation}
	\lim_{T \rightarrow \infty} \mathcal Z_T^\sigma(f\mid \varphi^{\sigma,0}_-,\varphi^{\sigma,0}_+) = \mathcal Z^\sigma(f \mid \varphi^{\sigma,0}_-,\varphi^{\sigma,0}_+).
\end{equation}
\item[(ii)] There exists a probability measure $\nu^\sigma(\cdot \mid \varphi^{0,\sigma}_-,\varphi^{0,\sigma}_+)$ on $S'(M^\sigma)$ such that $\nu^\sigma_T(\cdot \mid \varphi^{0,\sigma}_-,\varphi^{0,\sigma}_+)$ converges weakly to $ \nu^\sigma(\cdot \mid \varphi^{0,\sigma}_-,\varphi^{0,\sigma}_+)$ as $T \rightarrow \infty$. 
\item[(iii)] For every $f \in C^\infty(M^\sigma)$, we have that $-\log \mathcal Z^\sigma(f \mid \varphi^{\sigma,0}_-,\varphi^{\sigma,0}_+)$ is given by an explicit variational problem. 
\end{enumerate}
\end{theorem}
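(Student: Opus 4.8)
The strategy is to express each unnormalized Laplace transform as the infimum of a renormalized variational functional via the Bou\'e--Dupuis formula, and then to analyze the $T\to\infty$ limit of these functionals by a $\Gamma$-convergence argument. Fix once and for all a realization of the independent admissible boundary conditions $(\varphi^{0,\sigma}_-,\varphi^{0,\sigma}_+)$; by Definition~\ref{defn: admissible law} they decompose as $W^0_\pm+Z^0_\pm$, with $Z^0_\pm\in H^{1/2-\kappa}(\mathbb T^2)$ and $W^0_\pm$ either $0$ or distributed as $\mu^0$. All the Wick powers $\llbracket (HW^0_\pm)^p\rrbracket$ of the harmonic extensions, together with their $Z^0_\pm$-corrections, then belong to appropriate $\tau$-weighted parabolic spaces with controlled singularity at $\partial M^\sigma$; the relevant stochastic norms are a.s.\ finite with moments by Proposition~\ref{prop: stochastic boundary v2}, and we restrict to the full-measure event on which they are finite. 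On this event, realizing $\mu^\sigma$ through its Karhunen--Lo\`eve series driven by i.i.d.\ complex Brownian motions and applying Girsanov's theorem yields, for every $T\geq 0$ and $f\in C^\infty(M^\sigma)$,
\[
-\log\mathcal Z^\sigma_T(f\mid\varphi^{0,\sigma}_-,\varphi^{0,\sigma}_+)
=\inf_{v\in\mathbb H}\,\mathbb E\big[\Phi_T(v\mid\varphi^{0,\sigma}_-,\varphi^{0,\sigma}_+)+G_T(v)\big],
\]
with $G_T$ the coercive term (drift potential plus drift entropy) and $\Phi_T=\Phi_T^{\mathrm{bulk}}+\Phi_T^{\partial}$ splitting into a bulk error and a boundary--bulk error.

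Next I would impose a two-layer ansatz on the drifts. For the bulk divergences, a paracontrolled ansatz as for $\varphi^4_3$ on $\mathbb T^3$ in~\cite{BG20} produces a divergent constant absorbed into $\gamma^\sigma_T$ and part of $\delta^\sigma_T$, using paracontrolled calculus~\cite{GIP15}. For the boundary--bulk divergences, a further shift/paracontrolled ansatz built from the Wick-renormalized harmonic extension terms produces an additional divergent constant absorbed into $\delta^{\sigma,\partial}_T(\varphi^{0,\sigma}_-,\varphi^{0,\sigma}_+)$; because the harmonic extension is smooth in the interior, only a simple shift --- rather than a full paracontrolled expansion --- is needed for its most singular contributions near $\partial M^\sigma$. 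The result is a renormalized representation $-\log\mathcal Z^\sigma_T(f\mid\cdot)=\inf_{v\in\mathbb L}\mathbb F^{\sigma,\mathrm{ren}}_T(v)$ over the ansatz-compatible drift space $\mathbb L$, where $\mathbb F^{\sigma,\mathrm{ren}}_T$ has the same structure but with a renormalized error term and a modified entropy.

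The analytic core is a pair of estimates, uniform in $T$: (a) equicoercivity, i.e.\ $\mathbb F^{\sigma,\mathrm{ren}}_T(v)\geq c\,\mathcal G^\sigma(v)-C$ for the modified coercive functional $\mathcal G^\sigma$ and constants independent of $T$; and (b) control of $\Phi_T^{\mathrm{bulk}}$ and $\Phi_T^\partial$ (after renormalization) by $\tfrac12\mathcal G^\sigma(v)$ plus an $L$- and $T$-independent additive constant that is a.s.\ finite in the boundary data. The bulk part follows as in~\cite{BG20}. The boundary--bulk part is the main obstacle: the boundary fields are too rough to treat these products purely deterministically, so each term $\llbracket\varphi_T^3\rrbracket H^\sigma(\cdot)_T$, $\llbracket\varphi_T^2\rrbracket\llbracket H^\sigma(\cdot)_T^2\rrbracket$, $\varphi_T\llbracket H^\sigma(\cdot)_T^3\rrbracket$ is split into a piece dominated a.s.\ by the stochastic norms of Proposition~\ref{prop: stochastic boundary v2} (which are $\nu^0\otimes\nu^0$-measurable, hence finite on our event) and an analytic remainder; the remainder is estimated using the interior smoothing of the harmonic extension to trade its singularity at $\partial M^\sigma$ for a $\tau$-weight, and using the Dirichlet boundary conditions of the bulk free field --- inherited by the admissible drifts --- to provide the vanishing at $\partial M^\sigma$ needed for integrability. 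One must also check the bookkeeping that $\delta^\sigma_T+\delta^{\sigma,\partial}_T$ cancels \emph{all} the divergent constants produced at both layers of the ansatz.

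Granting (a) and (b), the conclusions are soft. Equicoercivity together with the (explicit) pointwise $T\to\infty$ limit of $\mathbb F^{\sigma,\mathrm{ren}}_T$ along $\mathcal G^\sigma$-bounded drifts yields $\Gamma$-convergence of $\mathbb F^{\sigma,\mathrm{ren}}_T$ to an explicit limiting functional $\mathbb F^\sigma_\infty$, and the fundamental theorem of $\Gamma$-convergence gives $\inf\mathbb F^{\sigma,\mathrm{ren}}_T\to\inf\mathbb F^\sigma_\infty=:-\log\mathcal Z^\sigma(f\mid\varphi^{0,\sigma}_-,\varphi^{0,\sigma}_+)$, proving~(i) and~(iii); positivity $\mathcal Z^\sigma(f\mid\cdot)>0$ is immediate from testing with $v=0$. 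For~(ii), applying the same uniform variational estimates to smooth perturbations of the potential gives uniform bounds on all Wick-polynomial observables under $\nu^\sigma_T(\cdot\mid\varphi^{0,\sigma}_-,\varphi^{0,\sigma}_+)$, hence tightness in $\mathcal C^{-1/2-\kappa}(M^\sigma)$; since the normalized Laplace transform $\mathcal Z^\sigma_T(f\mid\cdot)/\mathcal Z^\sigma_T(0\mid\cdot)$ converges for every $f\in C^\infty(M^\sigma)$, all subsequential weak limits agree and define a single measure $\nu^\sigma(\cdot\mid\varphi^{0,\sigma}_-,\varphi^{0,\sigma}_+)$. Finally, constructing a minimizer $v^\ast$ of $\mathbb F^\sigma_\infty$ and establishing regularity and moment bounds on the associated shift $I(v^\ast)$ --- which also feed into the admissibility of $\nu^0$ --- follows the scheme of~\cite{BG23}.
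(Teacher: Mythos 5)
Your proposal is correct and follows essentially the same route the paper takes: the paper itself treats Theorem~\ref{thm: phi43 random bc} parts (i) and (iii) as a special case of Theorem~\ref{thm: bulk} (since $\mathcal Z^\sigma_T$ differs from the amplitude $\mathcal A^\sigma_T$ only by the free amplitude and the $\mathcal E^\sigma_T$ prefactor, which are handled separately), and notes that (ii) is a standard consequence of (i). Your roadmap --- Bou\'e--Dupuis representation, a paracontrolled drift ansatz for the bulk divergences plus shift terms for the boundary--bulk divergences (which are milder because of the interior smoothing of the harmonic extension), $T$-uniform equicoercivity and error bounds traded against the good terms, $\Gamma$-convergence of the renormalized cost functions, and tightness plus Laplace-transform identification for (ii) --- matches the paper's actual construction carried out in Sections~\ref{sec: renormalization of bulk amplitudes}--\ref{sec: convergence of bulk amplitudes}, including the key device of trading the boundary singularity of the harmonic extension for $\tau$-weights near $\partial M^\sigma$ and exploiting the Dirichlet boundary conditions of the bulk field and drifts.
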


We will call the probability measure $\nu^\sigma(\cdot \mid \varphi_-^\sigma, \varphi_+^\sigma)$ constructed in Theorem \ref{thm: phi43 random bc} the $\varphi^4_3$ measure on $M^\sigma$ with random boundary conditions $(\varphi_-^{0,\sigma}, \varphi_+^{0,\sigma})$. 

Let us remark that Theorem \ref{theorem: rough boundary conditions} follows from Theorem \ref{thm: phi43 random bc}. Parts (i) and (iii) of Theorem \ref{thm: phi43 random bc} are special cases of Theorem \ref{thm: bulk} below, and so we defer a discussion of the proof to after the latter theorem is stated. Let us comment, however, that (ii) is a standard consequence of (i). 

\begin{remark}
We stress that the inclusion of the energy renormalization $\delta_T^\sigma$ in the regularized potential is simply to make the unnormalized Laplace transforms converge. It does not play an intrinsic role in constructing the measure thanks to the normalization constant. However, on the other hand there is a degeneracy in the mass renormalization. One could add e.g.\ a smooth function to $\gamma^\sigma_T$ that is bounded as $T\rightarrow \infty$ and this will yield an equivalent measure to the one constructed above.
\end{remark}

\begin{remark}
The variational characterization of the unnormalized Laplace transform as stated in Theorems \ref{theorem: rough boundary conditions} and part (iii) of Theorem \ref{thm: phi43 random bc} is intentionally vague at this point since stating it will require some additional technical definitions. See Theorem \ref{thm: bulk} below. The relevance of the (limiting) variational problem is that we will use it to establish full convergence of the regularized measures, as opposed to just tightness. See also \cite{BG20} where this characterization was first establish for $\varphi^4_3$ models on the torus.
\end{remark}

\subsection{Identification of the boundary measure}

In order to identify the correct boundary law of $\varphi^4_3$, we will first attempt to understand where the naive approach to establishing a gluing of amplitudes fails. For simplicity, we will consider the case of gluing when the boundary conditions on $M$ are $0$ on both ends -- the resulting boundary measure that we will define will not depend on the choice of boundary conditions\footnote{We will see below that we put all boundary condition dependence into the amplitude.}. In this case, we will simply omit the boundary conditions from the notation, i.e.\ write $\mathcal Z_T(f)$ instead of $\mathcal Z_T(f\mid 0,0)$. 
 
Let $T \geq 0$ and $f \in C^\infty(M)$. By the domain Markov property of Proposition \ref{prop: domain markov} and the change of measure in Lemma \ref{lemma: tilde mu and mu abs cont}, we have that
\begin{equation}
\mathcal Z_T(f) = \mathbb E_{\mu^0} \left[ \mathcal Q \exp\left(-\int_M \llbracket H(\varphi^0)^4_T \rrbracket \, dx + \tilde \delta^0_T \right) \mathcal Z_T^-(f \mid 0,\varphi^0)\mathcal Z_T^+(f \mid \varphi^0,0)  \right],
\end{equation}
where
\begin{equation}
\mathcal Q:= C(\mathcal N) \exp\left( - \frac 12 \langle \varphi^0, (-\mathcal N_\infty) \varphi^0 \rangle_B - \langle f, H(\varphi^0) \rangle_{M^-} - \langle f, H(\varphi^0) \rangle_{M^+} \right),	
\end{equation}
and
\begin{equation}
\tilde \delta^0_T = -3\cdot 4 \int_M \int_M C^B_T(x,y)^4 dx dy. 	
\end{equation}
Note that in order to obtain the formula for $\delta^0_T$, we have used the definitions of the Wick powers above together with Lemma \ref{lemma: covariance DMP decomp}. The term $\mathcal E_T(\varphi^0)$ consists of terms which can be bounded using fairly straightforward analytic arguments (using smoothing properties of the operators involved). The quartic term, on the other hand, is much more severe. Under the $\mu^0$, one can show that the variance of the term
\begin{equation}
\llbracket H(\varphi^0)^4_T \rrbracket + \tilde \delta^0_T	
\end{equation}
contains a logarithmic divergence (see the remark below).

\begin{remark}
One can decompose this quartic term into homogeneous Wiener chaoses with respect to $\mu^0$ and compute the second moment using Wick's theorem (see e.g. the stochastic estimates in \cite{BG20}). The term $\delta^0_T$ renormalizes the divergent zeroth chaos (constant) term, but the term in the second chaos also contains a divergence. Let us remark that the zeroth chaos term is not exactly cancelled due to the difference in covariances between $\mu^0$ and $\tilde \mu^0$.
\end{remark}

The strategy we adopt is to instead exponentially tilt the boundary measure $\mu^0$ by this quartic density. This leads one to consider an \emph{interacting boundary measure} with formal density
\begin{equation}
\exp\left( - \int_M \llbracket H(\varphi^0)^4_T \rrbracket \, dx + \tilde \delta^0_T	\right)	\mu^0(d\varphi^0).
\end{equation}
There are two issues with this. The first issue, as hinted above, is that the variance of the random variable inside the exponential diverges as $T \rightarrow \infty$, so it is not clear how to interpret the potential, let alone the measure. We will use intuition coming from the construction of $\varphi^4_3$ to instead realize this measure as a \emph{singular perturbation} of $\mu^0$. This is in accordance with the predicted singularity in \cite{AL08}. We will discuss what this means on a more technical level shortly. Let us also emphasize that this is a key point of departure from any existing proofs of the Markov property or Segal's axioms for quantum fields of which we are aware. Furthermore, such a boundary measure would be \emph{non-Gaussian}, which further adds to the difficulties in working with it.

The second issue is that the candidate potential depends on the bulk geometry $M$. Furthermore, if one were to impose boundary conditions, a priori the potential would include dependence on these. Any reasonable candidate for the boundary measure for the gluing property must be independent of the ambient manifold and boundary conditions -- ultimately it should rely on the underlying boundary geometry, of which the bulk geometries are "cobordisms". In order to sidestep this issue, we will replace the harmonic extensions on $M$ (or $M^+$ or $M^-$) by their \emph{infinite cylinder} counterparts (recall that this is denoted by $\overline H$). We will put any residual terms into the amplitudes. 

We will now introduce the boundary measure and postpone further discussion of amplitudes to the next subsection. As usual, we will define this measure as a weak limit of approximations and thus we begin by defining the regularized boundary measure. Let $T \geq 0$. The boundary energy renormalization is the constant
\begin{equation}
\delta^0_T:= - 3\cdot 4 \int_{[-1,1]\times \mathbb T^2} \int_{[-1,1]\times\mathbb T^2} \overline C^B_T(x,y)^4 dx dy,
\end{equation}
where
\begin{equation}
	\overline C^B_T(x,y):= \mathbb E_{\mu^0}[\overline H\varphi_T(x) \overline H\varphi_T(y)], \qquad \forall x,y \in [0,1]\times\mathbb T^2.
\end{equation}
We define the \emph{regularized boundary potential} $V_T^0(\cdot)$ for every $\varphi^0 \in S'(\mathbb T^2)$ by
\begin{equation}
V_T^0(\varphi^0)= \int_{[-1,1]\times \mathbb{T}^2} \llbracket (\overline{H}\varphi_T^0)^4\rrbracket-\delta_T^0 \,  dx,
\end{equation}
 where here our convention for the Wick powers of the infinite cylinder harmonic extension is subtly\emph{different} -- we use the same definitions as above except $C^B_T(x,y)$ is replaced by $\overline C^B_T(x,y)$. 
 
 \begin{remark}
 Let us stress that the replacement of $C^B_T(x,y)$ by $\overline C^B_T(x,y)$ is crucial and allows the boundary theory to be \emph{indepedent of the geometry}. We will have to estimate the error terms from changing this covariance in our analytic estimates on the amplitudes below.
 \end{remark}

\begin{definition} \label{def: regularized boundary measure}
For every $T>0$, the \emph{regularized interacting boundary measure} is the probability measure $\nu^0_T$ on $S'(\mathbb T^2)$ with Radon-Nikodym derivative
\begin{equation}
\nu^0_T(d\varphi^0)=\frac{1}{\mathcal Z_T^0}\exp(-V_T^0(\varphi^0)) \mu^0(d\varphi^0),
\end{equation}
where $\mathcal Z_T^0 = \mathbb{E}_{\mu^0}[\exp(-V_T^0(\varphi^0))]$ is a normalization constant. For every $f \in C^\infty(\mathbb T^2)$, the unnormalized Laplace transform of $f$ under $\nu^0_T$ is the quantity 
\begin{equation}
\mathcal{Z}_T^0(f) := \mathbb E_{\mu^0} [\exp( \langle f, \varphi^0 \rangle_{\mathbb T^2}-V_T^0(\varphi^0)],
\end{equation}
where $\langle \cdot,\cdot \rangle_{\mathbb T^2}$ is the $L^2(\mathbb T^2)$ inner product.
\end{definition}

The following theorem constructs the boundary measure and establishes fundamental properties that we will need in the sequel. It will be proved in Section \ref{sec: boundary}. 

\begin{theorem} \label{thm: boundary}
For every $f \in C^\infty(\mathbb T^2
)$, there exists $\mathcal{Z}^0(f) > 0$ such that
\begin{equation}
\lim_{T \rightarrow \infty}\mathcal{Z}_T^0(f) = \mathcal{Z}^0(f). 	
\end{equation}
Consequently, there exists a unique probability measure$\nu^0$ on $S'(\mathbb T^2)$ such that the measures $\nu^0_T$ converge weakly to $\nu^0$ as $T \rightarrow \infty$.  

Furthermore, there exists random variables $W^0_\infty$ and $Z^0_\infty$ defined on the probability space $(\Omega, \mathcal F, \mathbb P)$ such that
\begin{enumerate}
	\item[(i)] $\nu^0 = {\rm Law}_{\mathbb P}(W^0_\infty+Z^0_\infty)$,
	\item[(ii)] ${\rm Law}_{\mathbb P}(W^0_\infty) = \mu^0$,
	\item[(iii)] $Z^0_\infty \in H^{1/2-\kappa}_z$ almost  surely,
	\item[(iv)] $\mathbb E[\|Z^0_\infty\|_{H^{1/2-\kappa}_z}^2]\leq C$. 
\end{enumerate}
\end{theorem}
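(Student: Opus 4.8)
The plan is to follow the variational strategy of \cite{BG20, BG23}, adapted to the boundary potential $V_T^0$ built from the infinite-cylinder harmonic extension $\overline H$. First I would realize the boundary Gaussian $W^0 \sim \mu^0$ through its Karhunen--Lo\`eve expansion and express the associated i.i.d.\ complex Gaussian coefficients as terminal values of independent complex Brownian motions. The Bou\'e--Dupuis formula then gives, for each $T \geq 0$ and $f \in C^\infty(\mathbb T^2)$,
\begin{equation}
-\log \mathcal Z^0_T(f) = \inf_{v \in \mathbb H^0} \mathbb E\left[ \Phi^0_T(W^0, v) - \langle f, W^0 + I(v)\rangle_{\mathbb T^2} + G^0_T(v) \right],
\end{equation}
where $\Phi^0_T$ collects the quartic interaction in $\overline H(W^0 + I(v))_T$ together with the Wick counterterms, $I$ is the deterministic (smoothing) map sending a drift to the corresponding shift, and $G^0_T(v)$ is a coercive term consisting of a Wick-positive potential piece plus the drift entropy $\tfrac12 \mathbb E\int_0^1 \|\dot v_s\|^2 \, ds$.

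The next step is to remove the logarithmic divergence in $\Phi^0_T$ as $T \to \infty$. Since $\overline H$ gains half a derivative of regularity relative to the boundary field, a simple deterministic shift of the drift --- rather than the full paracontrolled ansatz required for $\varphi^4_3$ in the bulk --- suffices: it produces a divergence in the drift-entropy term that exactly cancels the singular part of $\Phi^0_T$ up to the constant $\delta^0_T$. This yields a renormalized variational problem $-\log \mathcal Z^0_T(f) = \inf_{w \in \mathbb L^0} \mathbb F^{0,{\rm ren}}_T(f;w)$, with cost functional of the same structure but with a renormalized interaction $\Phi^{0,{\rm ren}}_T$ and a modified entropy. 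The crucial analytic input is uniform-in-$T$ control of $\Phi^{0,{\rm ren}}_T$ by the good terms, which in turn rests on moment and regularity estimates for the renormalized Wick powers $\llbracket (\overline H W^0_T)^k \rrbracket$, $k \leq 4$; this is the content of Proposition~\ref{prop: stochastic boundary v2} and is what provides the equicoercivity needed below.

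With equicoercivity available, I would show that $\mathbb F^{0,{\rm ren}}_T(f;\cdot)$ $\Gamma$-converges as $T \to \infty$ to a limiting functional $\mathbb F^0_\infty(f;\cdot)$ in which the regularized harmonic Wick powers are replaced by their limits. The $\liminf$ inequality follows from lower semicontinuity of the good terms and continuity of the error term along sequences of drifts with uniformly bounded cost; a recovery sequence is produced by mollifying a near-minimizer of $\mathbb F^0_\infty$. The fundamental theorem of $\Gamma$-convergence, together with equicoercivity, then yields convergence of the infima, i.e.\ $\mathcal Z^0_T(f) \to \mathcal Z^0(f) := \exp(-\inf_w \mathbb F^0_\infty(f;w)) > 0$, the positivity being clear since the infimum is finite. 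Weak convergence of $\nu^0_T$ to a probability measure $\nu^0$ on $S'(\mathbb T^2)$ then follows from convergence of the normalized Laplace transforms $\mathcal Z^0_T(f)/\mathcal Z^0_T(0)$ for all $f \in C^\infty(\mathbb T^2)$, combined with uniform moment bounds (again from the stochastic estimates) giving tightness on $\mathcal C^{-1/2-\kappa}(\mathbb T^2)$.

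Finally, for admissibility I would follow \cite{BG23}: apply the direct method to $\mathbb F^0_\infty(0;\cdot)$ --- take a minimizing sequence of drifts, extract a weak limit using coercivity of the entropy, and pass to the limit using lower semicontinuity and the stochastic bounds --- to produce a minimizer $v^*$, and then identify $\nu^0 = {\rm Law}_{\mathbb P}(W^0 + I(v^*))$. Setting $W^0_\infty := W^0$ and $Z^0_\infty := I(v^*)$, properties (i) and (ii) are immediate; (iii) and (iv) follow by reading off the Euler--Lagrange equation for $v^*$, which expresses $I(v^*)$ through regularizing operators applied to the finite-regularity harmonic Wick powers, together with the a priori bound $\mathbb E \int_0^1 \|\dot v^*_s\|^2 \, ds < \infty$ coming from finiteness of the cost. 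The main obstacle throughout is precisely that $\nu^0$ is singular with respect to $\mu^0$: Girsanov is unavailable, so no shortcut via absolute continuity exists, and every bound on the interaction must be extracted from the renormalized stochastic objects and the coercive structure of the variational problem --- the uniform equicoercivity estimate (Proposition~\ref{prop: stochastic boundary v2}) is the technical heart of the whole argument.
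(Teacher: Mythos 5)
Your high-level architecture matches the paper's: Bou\'e--Dupuis representation, a deterministic drift shift rather than a paracontrolled ansatz (with the cubic It\^o term producing the divergence cancelled by $\delta^0_T$), uniform equicoercivity via the stochastic estimates on harmonic Wick powers, a $\Gamma$-convergence argument to pass to the limit, and then a direct-method extraction of the minimizer $v^*$ to identify $\nu^0 = {\rm Law}_{\mathbb P}(W^0 + I(v^*))$ and establish admissibility. That part is right.

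The gap is in the recovery sequence, where you write that it ``is produced by mollifying a near-minimizer of $\mathbb F^0_\infty$.'' Plain mollification of the drift fails here, and the failure is exactly the singularity that makes the problem hard. The limiting cost function contains the residual entropy $\tfrac12\|\ell^0_\infty(u)\|^2_{\mathcal H^0}$, where $\ell^0_\infty(u)_t = 4J_t^0 \overline H^* \llbracket(\overline HW^0_t)^3\rrbracket + u_t$. If you mollify $u$ alone, the stochastic term $4J_t^0 \overline H^*\llbracket(\overline HW^0_t)^3\rrbracket$ is unchanged, so $\ell^0_\infty(\text{mollified }u)$ need not land in $\mathcal H^0$ and need not converge in $\mathcal H^0$-norm; there is no regularity boost available because $\ell^0_\infty$ is not a map of $\mathcal H^0$ into itself. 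The paper fixes this by introducing a structured remainder map ${\rm rem}^0_\varepsilon(\Xi^0,u)_t := -4J_t^0\overline H^*\Xi^{0,3}_t + {\rm reg}_{\mathbb T^2,\varepsilon}\bigl(4J_t^0\overline H^*\Xi^{0,3}_t + \ell^0_\infty(u)_t\bigr)$, whose defining feature is the commutation relation $\ell^0_\infty({\rm rem}^0_\varepsilon(\Xi^0,u)) = {\rm reg}_{\mathbb T^2,\varepsilon}(\ell^0_\infty(\Xi^0,u))$ (see Lemma \ref{lem: bdry rem properties}). This is what makes the approximated cost finite and convergent, and it is what you must use in place of naive mollification.

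Relatedly, you propose to run the $\Gamma$-convergence directly on the drift space, whereas the paper relaxes to a space of joint laws $\overline{\mathcal X}^0$ of $(\Xi^0, u)$ with $\boldsymbol{\mathfrak B}^0\times\mathcal L^0_w$ as the base topological space. This relaxation is not cosmetic: it is what yields a well-behaved notion of sequential convergence for which the equicoercivity set $\mathcal K^0$ is actually compact (Prokhorov plus Banach--Alaoglu), and it is also where the Skorokhod-embedding argument for the $\liminf$ inequality lives. If you work on drift space directly you would need to reconstruct all of this compactness by hand, and the approximation lemmas (which the paper proves separately for $T<\infty$ and $T=\infty$, because the latter is genuinely harder) would need to be re-derived; as written, the step is not justified.
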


We will call the probability measure $\nu^0$ constructed in Theorem \ref{thm: boundary} the interacting boundary measure/theory. As a consequence of properties (i)-(iv) above, $\nu^0$ describes an admissible law on boundary conditions.

\begin{remark}
The choice of integration domain being $[-1,1] \times \mathbb T^2$ inside the boundary potential is an arbitrary choice that makes the boundary measure independent of the length of $M$. Replacing $[-1,1]$ with another compact interval yields an equivalent measure to $\nu^0$. Perhaps a more natural choice would be the double infinite cylinder, $\mathbb R\times \mathbb T^2$, but we opted for a compact set to avoid convergence issues. 
\end{remark}

\subsection{Regularized amplitudes}

Let us return to the issue of defining regularized amplitudes. Fix admissible boundary conditions $\varphi^{0}_-,\varphi^{0}_+ \in S'(\mathbb T^2)$. Recall that by the domain Markov property Proposition \ref{prop: domain markov} and Lemma \ref{lemma: tilde mu and mu abs cont}, for every $f \in C^\infty(M)$,
\begin{equation}
\mathcal Z_T(f \mid \varphi^0_-,\varphi^0_+)  = \mathbb E_{\mu^0} \left[ \mathcal Q  \cdot  \mathcal O_T\cdot  \mathcal Z_T^-(f \mid 0,\varphi^0_-,\varphi^0)\mathcal Z_T^\sigma(f \mid \varphi^0,\varphi^0_+)  \right],
\end{equation}
where (as above) we have that
\begin{equation}
\mathcal Q:= C(\mathcal N) \exp\left( - \frac 12 \langle \varphi^0, (-\mathcal N_\infty) \varphi^0 \rangle_B - \langle f, H(\varphi^0_-,\varphi^0) \rangle_{M^-} - \langle f, H(\varphi^0,\varphi^0_+) \rangle_{M^+} \right),	
\end{equation}
and, moreover, 
\begin{equation}
\mathcal O_T  := \exp\left( - \int_{M^{-}} \llbracket H(\varphi^0_-,\varphi^0)_T^4 \rrbracket \, dx - \int_{M^+} \llbracket H(\varphi^0,\varphi^0_+)_T^4 \rrbracket \, dx + \tilde \delta^0_T \right),
\end{equation}
with
\begin{equation}
\tilde \delta^0_T = -3\cdot 4 \int_{M}\int_{M} C^B_T(x,y \mid \varphi^0_-,\varphi^0_+)^4 dx dy. 	
\end{equation}

We will decompose the term $\mathcal Q$ into \emph{free amplitudes}. Let ${\mathsf C}(\Delta_{\mathbb T^2}) >0$ be the constant identified in \cite[Corollary 1.3]{L03} such that 
\begin{equation}
\frac{\sqrt{\det(2\pi(-\Delta_{M^-}+m^2)^{-1})}\sqrt{\det(2\pi(-\Delta_{M^+}+m^2)^{-1})}}{\sqrt{\det(2\pi(-\Delta_{M}+m^2)^{-1})}} = \mathsf{C}(\Delta_{\mathbb T^2}) \sqrt{\det(2\pi(-\mathcal N)^{-1})}.
\end{equation}
The above determinants are interpreted as $\zeta$-regularized determinants, see \cite[Chapter 9]{S05}. Let us define
\begin{multline} \label{eqdef: free amplitude}
	\mathcal A^{{\rm free},-}_T(f \mid \varphi^0_-,\varphi^0)\\:= \frac{\sqrt{\det(2\pi(-\Delta_{M^-}+m^2)^{-1})}}{{\mathsf C}(\Delta_\mathbb T^2)\sqrt{\det(2\pi(-\mathcal N_0)^{-1/2})}}\exp\left( - \frac 14 \langle \varphi^0, (-\mathcal N_\infty^-)\varphi^0 \rangle_B - \langle f, H(\varphi_-^0,\varphi^0)\rangle_{M^-} \right).
\end{multline}
The other free amplitudes $\mathcal A^{{\rm free},+}(f \mid \varphi^0,\varphi^0_+)$ and $\mathcal A^{{\rm free}}(f \mid \varphi^0_-,\varphi^0_+)$ are defined analogously. With these definitions, we have that
\begin{align}
\mathcal Q &= \frac{C(\mathcal N)}{\det(2\pi(-\mathcal N_0)^{-1/2}){\mathsf C}(\Delta_{\mathbb T^2})^2} \sqrt{\det(2\pi(-\Delta_{M^-}+m^2)^{-1})}\sqrt{\det(2\pi(-\Delta_{M^+}+m^2)^{-1})}
\\
&\qquad \qquad \times \mathcal A^{{\rm free},-}(f \mid \varphi^0_-,\varphi^0)\mathcal A^{{\rm free},+}(f \mid \varphi^0,\varphi^0_+).
\end{align}

We will now rearrange the term $\mathcal O_T$ so that the interacting boundary measure appears. Observe that we may write
\begin{equation}
\int_{M^{-}} \llbracket H(\varphi^0_-,\varphi^0)_T^4 \rrbracket  \, dx = \int_{M^-} \llbracket H(\varphi^0_T)^4_T \rrbracket\, dx + E_T(a,-),
\end{equation}
and similarly for the analogous term on $M^+$ with corresponding commutator $E_T(a,+)$. Furthermore,
\begin{equation}
\int_M \llbracket H(\varphi^0)_T^4 \rrbracket \, dx = \int_{[-1,1]\times \mathbb T^2} \llbracket \overline H(\varphi^0)_T^4 \rrbracket \, dx + E_T(b),
\end{equation}
where $E_T(b)=E_T(b,-)+E_T(b,+)$ and
\begin{equation}
E_T(b,-)= \int_{M^-} \llbracket H(\varphi^0)^4_T \rrbracket \, dx - \int_{[-1,0]\times \mathbb T^2} \llbracket \overline H(\varphi^0)_T^4 \rrbracket \, dx,
\end{equation}
and similarly for $E_T(b,+)$. Finally, let us also define 
\begin{equation}
\Delta\delta^0_T:= \tilde\delta^0_T-\delta^0_T.
\end{equation}
Thus, by symmetry, we have that
\begin{equation}
\mathcal O_T = \exp(-V_T^0(\varphi^0))\exp(-E_T(a,-)-E_T(a,+)-E_T(b)+\Delta\delta_T^0). 
\end{equation}
In order to smuggle in the normalization constant for the interacting boundary measure, it is convenient to introduce the notation
\begin{equation}
	\mathcal Z^0_T(I^\sigma):= \mathbb E_{\mu^0} \left[ e^{- \int_{I^\sigma \times \mathbb T^2 } \llbracket  (\overline{H}\varphi_T^0)^4\rrbracket\, dx + \delta^0_T |I^\sigma| }  \right],
\end{equation}
where $I_\emptyset = [-1,1]$, $I_- = [-1,0]$, and $I_+ = [0,1]$. We now define
\begin{align} 
\begin{split} \label{eqdef: mathcal E term}
\mathcal E_T(\varphi^0_-,\varphi^0_+) &:= \mathcal Z_T^0(I^\emptyset)^{-1}\exp \left( -E_T(a,-)-E_T(a,+)-E_T(b) +  \Delta\delta^0_T \right),	
\\
\mathcal E_T^-(\varphi^0_-,\varphi^0) &:= \mathcal Z_T^0(I^-)^{-1}\exp \left( -E_T(a,-)-E_T(b,-) +  \frac 12 \Delta\delta^0_T \right),
\\
\mathcal E_T^+(\varphi^0,\varphi^0_+)^{-1} &:= \mathcal Z_T^0(I^+)^{-1}\exp \left( -E_T(a,+)-E_T(b,+) +  \frac 12 \Delta\delta^0_T \right).
\end{split}
\end{align}

The bulk amplitudes are defined as follows.
\begin{definition} \label{def: approx amplitude}
Let $\sigma \in \{\emptyset,-,+\}$ and $\varphi_-^{0,\sigma},\varphi_+^{0,\sigma} \in S'(\mathbb T^2)$. The regularized $\varphi^4_3$ amplitude on $M^\sigma$ is the map
\begin{equation}
	\mathcal A_T^\sigma:C^\infty(M^\sigma)\times S'(\mathbb T^2)\times S'(\mathbb T^2) \rightarrow \mathbb R
\end{equation}
defined by
\begin{equation}
	\mathcal A_T^\sigma(f \mid \varphi_-^\sigma,\varphi_+^\sigma):= \mathcal A^{\rm free, \sigma}(f \mid \varphi_-^{0,\sigma},\varphi_+^{0,\sigma}) \mathcal E_{T}^\sigma(\varphi_-^\sigma,\varphi_+^\sigma)  
	 \mathcal Z^\sigma_T(f \mid \varphi^\sigma_-,\varphi^\sigma_+). 
\end{equation}	
\end{definition}

We now state and prove the gluing property for amplitudes of the regularized $\varphi^4_3$ models.

\begin{proposition}\label{prop: gluing-cutoff}
Let $T>0$. Then for every $f \in C^\infty(M)$, $\varphi_-^0,\varphi_+^0 \in S'(\mathbb T^2)$, the amplitudes satisfy the following gluing property: 
\begin{equation} \label{eq: amplitudes gluing regularized}
\mathcal A_T(f\mid \varphi_-,\varphi_+) =  \mathbb E_{\nu^0_T} \left[ \mathcal A_T^-(f \mid \varphi_-,\varphi^0) \mathcal A_T^+(f \mid \varphi^0,\varphi_-) \right].
\end{equation}
\end{proposition}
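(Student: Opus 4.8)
The plan is to prove \eqref{eq: amplitudes gluing regularized} at fixed finite $T$ by direct substitution of the definitions of this subsection, the only structural inputs being the domain Markov property of the Dirichlet GFF across the cut $B$ (Proposition~\ref{prop: domain markov}) and the absolute continuity of the Dirichlet-to-Neumann boundary Gaussian $\tilde\mu^0$ with respect to $\mu^0$ (Lemma~\ref{lemma: tilde mu and mu abs cont}). Since $T$ is finite, the mollified GFFs on $M,M^-,M^+$ and the mollified harmonic extensions of arbitrary $S'(\mathbb T^2)$ data are smooth with Gaussian tails, so every expectation below is of an exponential of a smooth functional with finite moments, and every use of Fubini--Tonelli, every shift of a Gaussian mean by a harmonic extension, and every absolutely continuous change of boundary measure is automatically justified.

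I would argue from the right-hand side. Unfolding $\nu^0_T(d\varphi^0) = (\mathcal Z^0_T)^{-1}e^{-V^0_T(\varphi^0)}\mu^0(d\varphi^0)$ and Definition~\ref{def: approx amplitude} of $\mathcal A^-_T,\mathcal A^+_T$ into their three factors, and using $\mathcal Z^0_T = \mathcal Z^0_T(I^\emptyset)$ together with the additivity $E_T(b) = E_T(b,-) + E_T(b,+)$ and the splitting $\Delta\delta^0_T = \tfrac12\Delta\delta^0_T + \tfrac12\Delta\delta^0_T$, the two $\mathcal E^\pm_T$ factors merge and one checks directly that $e^{-V^0_T(\varphi^0)}\,\mathcal E^-_T\,\mathcal E^+_T = \big(\mathcal Z^0_T(I^-)\mathcal Z^0_T(I^+)\big)^{-1}\,\mathcal O_T$, with $\mathcal O_T$ the exponential correction appearing in the domain Markov decomposition of $\mathcal Z_T(f\mid\varphi_-,\varphi_+)$; here the covariance identity $C^M_T = C^B_T + C^{M^-}_T + C^{M^+}_T$ (Lemma~\ref{lemma: covariance DMP decomp}) is what makes the Wick renormalization constants in the bulk potential on $M$ compatible with those on $M^\pm$ and on the interface. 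Similarly, the product $\mathcal A^{{\rm free},-}\,\mathcal A^{{\rm free},+}$ is an explicit product of $\zeta$-regularized determinants --- governed by Lee's factorization identity \cite{L03} and the additive splitting $\mathcal N = \mathcal N_0 + \mathcal N_\infty$ of the Dirichlet-to-Neumann map --- times $\mathcal Q$. The right-hand side is thereby rewritten as a constant times $\mathbb E_{\mu^0}\big[\mathcal Q\,\mathcal O_T\,\mathcal Z^-_T(f|_{M^-}\mid\varphi_-,\varphi^0)\,\mathcal Z^+_T(f|_{M^+}\mid\varphi^0,\varphi_+)\big]$, and by Proposition~\ref{prop: domain markov} and Lemma~\ref{lemma: tilde mu and mu abs cont} this expectation is exactly $\mathcal Z_T(f\mid\varphi_-,\varphi_+)$ --- the hypotheses being met because $\rho_T$ mollifies only in the periodic directions, so $\exp(\langle f,\varphi\rangle_M - V_T(\varphi\mid\varphi_-,\varphi_+))$ factors into an ${\rm Int}(M^-)$-measurable times an ${\rm Int}(M^+)$-measurable function up to a $\mu$-null set. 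Comparing with Definition~\ref{def: approx amplitude} of $\mathcal A_T$ then reduces the claim to the assertion that the remaining multiplicative factors cancel.

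That last cancellation is the main obstacle --- not because it is deep, but because it is a delicate deterministic accounting of many competing quantities that the elaborate definitions of this subsection were crafted to make telescope: the interacting-boundary normalizations $\mathcal Z^0_T(I^\emptyset),\mathcal Z^0_T(I^-),\mathcal Z^0_T(I^+)$; the $\zeta$-regularized determinants $C(\mathcal N)$, $\det(2\pi(-\mathcal N_0)^{-1/2})$, $\mathsf C(\Delta_{\mathbb T^2})$ and the bulk Laplacian determinants inside $\mathcal Q$ and the free amplitudes; and the recombination of the energy renormalizations $\delta^\sigma_T,\delta^{\sigma,\partial}_T,\delta^0_T,\tilde\delta^0_T$, which is precisely the identity recorded in Remark~\ref{remark: energy renormalization recombination}, together with the fact that the $\overline C^B_T$-versus-$C^B_T$ and $\overline H$-versus-$H$ discrepancies are by construction exactly the contributions absorbed into $E_T(a,\pm),E_T(b,\pm)$ and hence into $\mathcal E_T,\mathcal E^\pm_T$. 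Since none of this involves limits, once the constants are matched the proposition follows.
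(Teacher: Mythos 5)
Your argument is essentially the same as the paper's: both proofs unfold the definitions of $\mathcal A_T^\sigma$, invoke the domain Markov decomposition $\mathcal Z_T(f\mid\varphi_-,\varphi_+) = \mathbb E_{\mu^0}[\mathcal Q\,\mathcal O_T\,\mathcal Z^-_T\,\mathcal Z^+_T]$ via Proposition~\ref{prop: domain markov} and Lemma~\ref{lemma: tilde mu and mu abs cont}, and reduce to the cancellation of the determinantal constants (Lee's factorization plus $\mathcal N = \mathcal N_0+\mathcal N_\infty$) and the telescoping of the $\mathcal Z^0_T(I^\sigma)$ normalizations, with Lemma~\ref{lemma: covariance DMP decomp} and Remark~\ref{remark: energy renormalization recombination} underlying the compatibility of the Wick constants and energy renormalizations. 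The only stylistic difference is that you work from the right-hand side and write out the intermediate identity $e^{-V^0_T}\mathcal E^-_T\mathcal E^+_T = (\mathcal Z^0_T(I^-)\mathcal Z^0_T(I^+))^{-1}\mathcal O_T$ explicitly, whereas the paper asserts the corresponding formula directly as a consequence of ``the computations and definitions above.''
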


\begin{proof}
From the computations and definitions above, we have that
\begin{align}
\mathcal A_T(f\mid \varphi_-,\varphi_+) &= \frac{C(\mathcal N)}{{\mathsf C}(\Delta_{\mathbb T^2})} \frac{\sqrt{\det(2\pi(-\Delta_{M^-}+m^2)^{-1})}\sqrt{\det(2\pi(-\Delta_{M^+}+m^2)^{-1})}}{\sqrt{\det(2\pi(-\mathcal N_0)^{-1/2})}\sqrt{\det(2\pi(-\Delta_{M}+m^2)^{-1})}} \\
&\qquad \times \frac{\mathcal Z^0_T(I^-)\mathcal Z^0_T(I^+)}{\mathcal Z^0_T(I)} \cdot \mathbb E_{\nu^0_T} \left[ \mathcal A_T^-(f \mid \varphi_-,\varphi^0) \mathcal A_T^+(f \mid \varphi^0,\varphi_-) \right].
\end{align}
First of all, by symmetry we have that
\begin{equation}
\mathcal Z^0_T(I)=\mathcal Z^0_T(I^-)\mathcal Z^0_T(I^+).
\end{equation}
Now, by the definition of ${\mathsf C}(\Delta_{\mathbb T^2})$, the other constant term is equal\footnote{Here, we are using that the ratio of zeta regularized determinants can be obtained by the spectral approximations considered in Lemma \ref{lemma: tilde mu and mu abs cont}. This follows from standard approximation arguments and we omit this.} to
\begin{equation}
C(\mathcal N) \times \frac{\sqrt{\det(2\pi(-\mathcal N)^{-1/2})}}{\sqrt{\det(2\pi(-\mathcal N_0)^{-1/2})}}	 = 1. 
\end{equation}
 
\end{proof}

\subsection{Amplitudes for the $\varphi^4_3$ model}

Our goal in the next two sections is to give a conditional proof of Theorem \ref{thm: main}. We will begin by isolating the key technical results required in order to establish the existence of the amplitudes for the $\varphi^4_3$ model without cutoffs and take the limit in the gluing property of Proposition \ref{prop: gluing-cutoff}. The proofs of these key results -- formalized in Theorems \ref{thm: bulk} and \ref{thm: enhancement converge} -- will then constitute most of the remainder of this article. 

Let us begin by discussing the convergence of the regularized $\varphi^4_3$ amplitudes. Although the regularized amplitudes were defined with respect to arbitrary distributional boundary conditions, here we will require the boundary conditions to be sampled from an admissible law. We will only consider boundary conditions sampled from $\nu^0$ and stress that the analysis carries over to the general case with almost no changes. At the base level, the difficulty of convergence comes from the fact that samples from $\nu^0$ are distribution-valued, in fact almost surely in $\mathcal C^{-\frac 12-\kappa}(\mathbb T^2)$. As such, there are boundary divergences that must be renormalized. Our renormalization procedure is based on the framework of \cite{BG20}. We will write the regularized amplitudes as a \emph{lifted amplitude}, defined on a larger space, and taking as argument an \emph{enhancement} of both boundary fields simultaneously. The lifted amplitude is deterministically defined and is continuous on this larger space -- proving this constitutes the "analytic part" of the argument and is formalized (along with some growth bounds) in Theorem \ref{thm: bulk}. The "probabilistic part" concerns the convergence in law of the random argument, the enhanced boundary field\footnote{Although the enhancement is of both boundary fields simultaneously, we will nevertheless refer to it as the enhanced boundary field.}, to a limiting enhanced boundary field -- this is formalized in Theorem \ref{thm: enhancement converge}.

\begin{figure}[htb]

\centering

\tikzset{every picture/.style={line width=0.75pt}} 

\begin{tikzpicture}[x=0.75pt,y=0.75pt,yscale=-1,xscale=1]

\draw    (113,169.33) -- (113,141.33) ;
\draw [shift={(113,139.33)}, rotate = 90] [color={rgb, 255:red, 0; green, 0; blue, 0 }  ][line width=0.75]    (10.93,-3.29) .. controls (6.95,-1.4) and (3.31,-0.3) .. (0,0) .. controls (3.31,0.3) and (6.95,1.4) .. (10.93,3.29)   ;
\draw    (354,137) -- (354,165) ;
\draw [shift={(354,167)}, rotate = 270] [color={rgb, 255:red, 0; green, 0; blue, 0 }  ][line width=0.75]    (10.93,-3.29) .. controls (6.95,-1.4) and (3.31,-0.3) .. (0,0) .. controls (3.31,0.3) and (6.95,1.4) .. (10.93,3.29)   ;
\draw    (200,122) -- (258,122) ;
\draw [shift={(260,122)}, rotate = 180] [color={rgb, 255:red, 0; green, 0; blue, 0 }  ][line width=0.75]    (10.93,-3.29) .. controls (6.95,-1.4) and (3.31,-0.3) .. (0,0) .. controls (3.31,0.3) and (6.95,1.4) .. (10.93,3.29)   ;

\draw (54.5,176.4) node [anchor=north west][inner sep=0.75pt]    {$\mathcal{A}_{T}( f\ \mid \varphi _{-} ,\varphi _{+})$};
\draw (41,109.4) node [anchor=north west][inner sep=0.75pt]    {$\boldsymbol{\mathcal{A}}_{T}\left( f\ ,\Xi _{T}^{\partial }( \varphi _{-} ,\varphi _{+})\right)$};
\draw (292.5,178.4) node [anchor=north west][inner sep=0.75pt]    {$\mathcal{A}_{\infty }( f\ \mid \varphi _{-} ,\varphi _{+})$};
\draw (279,109.4) node [anchor=north west][inner sep=0.75pt]    {$\boldsymbol{\mathcal{A}}_{\infty }\left( f\ ,\Xi _{\infty }^{\partial }( \varphi _{-} ,\varphi _{+})\right)$};

\end{tikzpicture}
\caption{The regularized amplitudes $\mathcal A_T(f \mid \varphi_-,\varphi_+)$ are lifted to $\boldsymbol{\mathcal{A}}_T(f, \Xi^\partial_T(\varphi_-,\varphi_+))$. Theorems \ref{thm: bulk} and \ref{thm: enhancement converge} yield convergence of these lifted amplitudes to $\boldsymbol{\mathcal{A}}_\infty(f, \Xi^\partial_T(\varphi_-,\varphi_+))$, that we then define to be the $\varphi^4_3$ amplitude. }
\end{figure}

The following theorem -- which is one of, if not the most important, key contributions of this article -- defines the lifted amplitudes, establishes their continuity, and also some exponential growth bounds. An outline of its proof is found in Section \ref{sec: renormalization of bulk amplitudes}, but we stress that the actual proof is spread over Sections \ref{sec: renormalization of bulk amplitudes}-\ref{sec: convergence of bulk amplitudes}.

\begin{theorem} \label{thm: bulk}
There exists a Banach space, $\boldsymbol{\mathfrak B}$ such that the following holds. Let $\sigma \in \{ \emptyset,-,+\}$.
\begin{enumerate}
\item[\rm (I)]\textbf{\textup{(Lifted amplitude).}} Let $T>0$. There exists measurable maps
\begin{itemize}
	\item $\Xi^\partial_T:\mathcal C^{-1/2-\kappa}(\mathbb T^2) \times \mathcal C^{-1/2-\kappa}(\mathbb T^2) \rightarrow \boldsymbol{\mathfrak B} \quad \hfill \textup{(Enhancement)}$
	\item $\boldsymbol{\mathcal A}^\sigma_T: C^\infty(M) \times \boldsymbol{\mathfrak B} \rightarrow \mathbb R \quad \hfill \textup{(Lift)}\hspace{2.2em}$
\end{itemize}
such that for every $f \in C^\infty(M_\sigma)$ and almost surely for every $(\varphi^\sigma_-,\varphi^\sigma_+) \sim \nu^0\otimes \nu^0$,
\begin{equation}\label{eq: lift of amplitudes}
\mathcal A_T^\sigma(f \mid \varphi_-^\sigma,\varphi_+^\sigma)=\boldsymbol{\mathcal A}^\sigma_T(f, \Xi^\partial_T(\varphi^\sigma_-,\varphi^\sigma_+)).
\end{equation}
\item[\rm (II)] {\rm \bf {(Limit amplitude and convergence).}} There exists a measurable map, called the limiting lifted amplitude,
\begin{equation}
	\boldsymbol{\mathcal A}^\sigma_\infty:C^\infty(M_\sigma)\times \boldsymbol{\mathfrak B} \rightarrow \mathbb R
\end{equation}
such the following convergence statement holds. Let $(\Xi^\partial_T) \subset \boldsymbol{\mathfrak B}$ and $\Xi^\partial_\infty \in \boldsymbol{\mathfrak B}$ be such that $\Xi^\partial_T \rightarrow \Xi^\partial_\infty$ in $\boldsymbol{\mathfrak B}$. Then for every $f \in C^\infty(M_\sigma)$,
\begin{equation} \label{eq: convergence of amplitudes}
	\lim_{T \rightarrow \infty} \boldsymbol{\mathcal A}^\sigma_T(f,\Xi^\partial_T) = \boldsymbol{\mathcal A}^\sigma_\infty(f, \Xi^\partial_\infty).
\end{equation}
\item[\textup{(III)}] \textup{\bf (Almost sure bounds)} For every $f \in C^\infty(M)$, there exists $c,C,p>0$ such that for every $T \in (0,\infty]$ and every $\Xi^\partial \subset \boldsymbol{\mathfrak B}$,
\begin{align}
\boldsymbol{\mathcal{A}}_{T}^\sigma(f,\Xi^\partial)
 \leq
  C \exp(c  \|\Xi^\partial \|^{p}_{\boldsymbol{\mathfrak{B}}}), 	
\end{align}
\end{enumerate}
\end{theorem}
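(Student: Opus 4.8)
The plan is to reduce the three assertions to a Bou\'e--Dupuis variational representation and then run the $\Gamma$-convergence scheme of \cite{BG20}, modified to carry the rough boundary data. Using Definition \ref{def: approx amplitude} and the fact that $\mathcal A^{{\rm free},\sigma}$ is deterministic, the first step is to write
\[
-\log\mathcal Z^\sigma_T(f\mid\varphi^\sigma_-,\varphi^\sigma_+)=\inf_{v\in\mathbb H}\,\mathbb E\big[\Phi^\sigma_T(v\mid\varphi^\sigma_-,\varphi^\sigma_+)+G^\sigma_T(v)\big],
\]
where $\mathbb E$ is expectation over a realization $W$ of the bulk Dirichlet GFF represented as an It\^o integral against a cylindrical Wiener process (via a Karhunen--Lo\`eve expansion), $G^\sigma_T\geq0$ is coercive (potential plus drift entropy), and $\Phi^\sigma_T=\Phi^{\sigma,{\rm bulk}}_T+\Phi^{\sigma,\partial}_T$ collects the bulk and boundary--bulk error terms. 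The key point for (I) is that $\Phi^\sigma_T$ depends on $(\varphi^\sigma_-,\varphi^\sigma_+)$ only through a finite list of (renormalized) polynomial functionals of the regularized harmonic extensions --- the Wick powers $\llbracket(H^\sigma W^0_\pm)^k\rrbracket_T$ for $k\in\{1,2,3\}$, certain resonant products of these with the regularized bulk noise, and the $H^\sigma Z^0_\pm$ contributions --- together with the commutator and renormalization data entering $\mathcal E^\sigma_T$ in \eqref{eqdef: mathcal E term}. I would let $\boldsymbol{\mathfrak B}$ be the boundary-adapted weighted Besov-type Banach space in which this list lives, let $\Xi^\partial_T(\varphi^\sigma_-,\varphi^\sigma_+)$ be the resulting $\boldsymbol{\mathfrak B}$-valued random variable, and set $\boldsymbol{\mathcal A}^\sigma_T(f,\cdot):=\mathcal A^{{\rm free},\sigma}(f\mid\cdot)\,\mathcal E^\sigma_T(\cdot)\,\exp(-\inf_{v}\mathbb F^\sigma_T(v\mid\cdot))$; then \eqref{eq: lift of amplitudes} and measurability follow.

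\textbf{Paracontrolled ansatz and the bounds (III).} To control the variational problem uniformly in $T$, I would impose, as in \cite{BG20}, a paracontrolled decomposition $v=-(\text{explicit singular part})+v^\sharp$ in which the singular part contains, besides the usual bulk piece, an additional term built from the enhanced boundary data and designed to pre-cancel the divergences of $\Phi^{\sigma,\partial}_T$. One then verifies that the residual energy renormalization $\delta^\sigma_T-\delta^0_T$ absorbs exactly the divergent constants this ansatz produces from \emph{both} the bulk and the boundary--bulk errors, yielding a renormalized cost function $\mathbb F^{\sigma,{\rm ren}}_T$ of the same form. The bound (III) would then follow from a uniform-in-$T$ estimate
\[
\Phi^{\sigma,{\rm ren}}_T(v\mid\varphi^\sigma_-,\varphi^\sigma_+)\;\geq\;-\tfrac12\,G^{\sigma,{\rm ren}}_T(v)\;-\;C\big(1+\|\Xi^\partial_T(\varphi^\sigma_-,\varphi^\sigma_+)\|^{p}_{\boldsymbol{\mathfrak B}}\big),
\]
proved by Young's inequality and interpolation (trading powers of $v$ against the coercive terms), the regularizing estimates for the Dirichlet covariance and harmonic extension in Appendices \ref{sec: Dirichlet covariance estimates}--\ref{appendix: harmonic extension}, and the paraproduct bounds of Appendix \ref{appendix:besov}. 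Taking $\inf_v$ gives $-\log\mathcal Z^\sigma_T\geq-C(1+\|\Xi^\partial\|^p_{\boldsymbol{\mathfrak B}})$, and combining with the deterministic exponential-type bound on $\mathcal A^{{\rm free},\sigma}\mathcal E^\sigma_T$ in terms of $\|\Xi^\partial\|_{\boldsymbol{\mathfrak B}}$ yields the claimed estimate; the case $T=\infty$ is handled identically with the limiting objects in place.

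\textbf{Convergence (II).} Next I would identify a limiting cost function $\mathbb F^\sigma_\infty(\,\cdot\mid\Xi^\partial_\infty)$ on a space $\mathbb L$ of paracontrolled drifts and show that $\Xi^\partial_T\to\Xi^\partial_\infty$ in $\boldsymbol{\mathfrak B}$ forces $\Gamma$-convergence of $\mathbb F^{\sigma,{\rm ren}}_T(\,\cdot\mid\Xi^\partial_T)$ to $\mathbb F^\sigma_\infty(\,\cdot\mid\Xi^\partial_\infty)$ (the $\liminf$ inequality and the existence of recovery sequences); the equicoercivity supplied by (III) then lets me apply the fundamental theorem of $\Gamma$-convergence to conclude $\inf_v\mathbb F^{\sigma,{\rm ren}}_T\to\inf_v\mathbb F^\sigma_\infty$. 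Combined with the convergence of the deterministic prefactors $\mathcal A^{{\rm free},\sigma}\mathcal E^\sigma_T$ --- for which one uses that the commutators $E_T(a,\pm)$, $E_T(b,\pm)$ and $\Delta\delta^0_T$ in \eqref{eqdef: mathcal E term} converge, exploiting that $C^{B,\sigma}_T-\overline C^{B}_T$ is smoothing --- this defines $\boldsymbol{\mathcal A}^\sigma_\infty$ and establishes \eqref{eq: convergence of amplitudes}.

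\textbf{Main obstacle.} The hardest part is the boundary--bulk error $\Phi^{\sigma,\partial}_T$. Because $\nu^0$-samples are only $\mathcal C^{-1/2-\kappa}$, the harmonic extensions and their Wick powers are negative-regularity distributions away from the far boundary component, so the terms $\int 6\llbracket\varphi_T^2\rrbracket\llbracket(H^\sigma\varphi^0)^2_T\rrbracket$, $\int 4\varphi_T\llbracket(H^\sigma\varphi^0)^3_T\rrbracket$ and $\int 4\llbracket\varphi_T^3\rrbracket H^\sigma\varphi^0$ are genuinely singular and cannot be treated by deterministic paraproduct estimates alone. Handling them requires: (i) choosing the boundary-field paracontrolled ansatz so that $\delta^\sigma_T-\delta^0_T$ cancels \emph{all} the divergent constants it generates; (ii) establishing sharp stochastic regularity estimates for the renormalized products of harmonic extensions, so that the leftover contributions are dominated by $\nu^0\otimes\nu^0$-measurable random variables; and (iii) for the genuinely deterministic remainders, exploiting simultaneously the smoothing of the harmonic extension \emph{and} the vanishing of the Dirichlet bulk field --- hence of the drifts --- at $\partial M^\sigma$ in order to recover the boundary integrability needed to close the estimates against $G^{\sigma,{\rm ren}}_T$.
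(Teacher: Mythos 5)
Your proposal matches the paper's proof strategy: Bou\'e--Dupuis representation of the twisted amplitude $\mathcal E^\sigma_T\mathcal Z^\sigma_T$, a paracontrolled drift ansatz carrying boundary terms with the residual energy renormalization $\delta^\sigma_T-\delta^0_T$ cancelling both bulk and boundary--bulk divergent constants, uniform bounds from stochastic regularity estimates on the Wick powers of harmonic extensions and the $\Upsilon_i$ terms combined with weighted estimates exploiting the Dirichlet decay of the bulk field (Sections \ref{sec: renormalization of bulk amplitudes}--\ref{sec:equicoercive}), and finally a $\Gamma$-convergence argument with equicoercivity (Section \ref{sec: convergence of bulk amplitudes}). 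The only cosmetic difference is that the paper folds the commutator factor $\mathcal E^\sigma_T$ into the variational potential (the remainder $R_3$) rather than treating it as a separate prefactor, but both bookkeeping choices lead to the same analysis.
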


\begin{remark}
Theorem \ref{thm: bulk} contains a few objects whose existence we postulate as part of the theorem, but for ease of reading at this stage we do not define them
\begin{enumerate}
\item[(i)] The Banach space $\boldsymbol{\mathfrak B}$ is called the \emph{enhanced boundary space}. We leave its definition implicit. It is obtained by taking the space $\boldsymbol{\mathfrak B}^0$ in Section \ref{sec: boundary} and products of intersections of Banach spaces appearing in the estimates on boundary terms in Section \ref{sec:stochastic}. 
\item[(ii)] The (cutoff) enhancement map $\Xi^\partial_T$ is a measurable mapping of the boundary data into the stochastic products defined in Section \ref{sec: boundary} and the (boundary-dependent) $\Upsilon_i$ terms defined in Section \ref{sec: renormalization of bulk amplitudes}. See Section \ref{subsec: renormalized var}. 
\item[(iii)] The definitions of the lifted amplitudes and limiting lifted amplitude are in terms of \emph{renormalized variational problems} that we develop across Sections \ref{sec: renormalization of bulk amplitudes}-\ref{sec: convergence of bulk amplitudes}. See in particular the outline of proof in Section \ref{sec: renormalization of bulk amplitudes}. 
\end{enumerate}
\end{remark}

Let us emphasize that Theorem \ref{thm: bulk}, parts I and II are \emph{deterministic statements}. They do not rely on the law of the boundary conditions. However, as clear from part I of the theorem, we need to understand the amplitudes evaluated with random arguments in $\boldsymbol{\mathfrak B}$. This is addressed in the theorem below, which yields the convergence in law of the enhanced boundary field to a limiting enhanced boundary field. Its proof can be found in Sections \ref{sec: boundary} and \ref{sec:stochastic} (corresponding to the purely boundary-dependent terms -- see Proposition \ref{prop: stochastic boundary v2} -- and the $\Upsilon_i$ terms mentioned above, respectively).

\begin{theorem}\label{thm: enhancement converge}
Almost surely for every $(\varphi_1,\varphi_2) \sim \nu^0\otimes \nu^0$, there exist a random variable
\begin{equation}
	\Xi^\partial_\infty(\varphi_1,\varphi_2)\in \boldsymbol{\mathfrak B} 
\end{equation}
called the \emph{limiting enhanced boundary field} such that the following holds.
\begin{enumerate}
\item[\textup{(I)}] {\rm \bf (Almost sure convergence)} Almost surely in $(\varphi_1,\varphi_2) \sim \nu^0\otimes \nu^0$, we have the following convergnce in the Banach space $\boldsymbol{\mathfrak B}$: 
\begin{equation}
\lim_{T \rightarrow \infty} \Xi^\partial_T(\rho_T \ast \varphi_1, \rho_T \ast \varphi_2) =\Xi^\partial_\infty(\varphi_1,\varphi_2)
\end{equation}
\item[\textup{(II)}] { \rm  \bf (Distributional convergence)} Almost surely in $\varphi \sim \nu^0$, we have the following convergence in law:
\begin{equation}
	\big( \Xi^\partial_T(\rho_T \ast \varphi, \cdot) \big) _* \nu^0_T\rightharpoonup \big( \Xi^\partial_\infty(\varphi, \cdot) \big)_* \nu^0.
\end{equation}
\end{enumerate}
\end{theorem}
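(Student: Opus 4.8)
My plan is to establish both parts by decomposing the enhanced boundary field into a \emph{purely boundary} part and the \emph{mixed bulk--boundary} terms $\Upsilon_i$ arising from the paracontrolled ansatz on the bulk drift introduced in Section~\ref{sec: renormalization of bulk amplitudes}. Writing a boundary condition $\varphi\sim\nu^0$ as $\varphi=W^0+Z^0$ with $W^0\sim\mu^0$ and, by Theorem~\ref{thm: boundary}, $Z^0\in H^{1/2-\kappa}_z$ almost surely with all moments, and using the binomial expansion of the Wick powers $\llbracket H(\varphi^0_-,\varphi^0_+)^p_T\rrbracket$ into products of Wick powers of $HW^0_{\pm,T}$ and smooth powers of $HZ^0_{\pm,T}$ recorded in Section~\ref{sec: amplitudes and gluing}, the problem reduces to (a) stochastic estimates on the Gaussian objects $\llbracket(\overline H W^0_T)^k\rrbracket$ and their bulk-coupled analogues, and (b) deterministic paraproduct, resonant-product and commutator estimates (Appendix~\ref{appendix:besov}), exploiting the regularizing properties of the harmonic extension collected in Appendices~\ref{sec: Dirichlet covariance estimates} and~\ref{appendix: harmonic extension}.

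\textbf{Part (I).} I would first treat the purely boundary component. Using the random Fourier series / stochastic integral representation of $\mu^0$ from Section~\ref{sec: gaussian}, I would estimate the second moments of the differences $\llbracket(\overline H W^0_T)^k\rrbracket-\llbracket(\overline H W^0_S)^k\rrbracket$, isolate the divergent constant cancelled by $\delta^0_T$, bound the remainder by a negative power of $\min(S,T)$ in the relevant Besov norm, upgrade to $L^p(\mathbb P)$ for all $p<\infty$ by Gaussian hypercontractivity on fixed Wiener chaos, and conclude almost sure convergence by a Kolmogorov--Besov embedding argument; this is exactly the content of Proposition~\ref{prop: stochastic boundary v2}. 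Since $\varphi^0_-$ and $\varphi^0_+$ are independent, this handles each boundary factor separately, and the cross terms with $Z^0$ — together with the error from replacing $C^B_T$ by the infinitely smoothing $\overline C^B_T$ — follow deterministically from $Z^0\in H^{1/2-\kappa}_z$. For the $\Upsilon_i$ terms, I would combine these boundary estimates with the standard bulk stochastic estimates for the Dirichlet GFF Wick powers (as in \cite{BG20}) and paracontrolled calculus, while checking that the residual energy renormalization $\delta_T-\delta^0_T$ and the mass renormalization $\gamma^\sigma_T$ jointly absorb every divergent constant produced by both the bulk and the bulk--boundary contractions. Assembling these pieces gives convergence of $\Xi^\partial_T(\rho_T\ast\varphi_1,\rho_T\ast\varphi_2)$ in $\boldsymbol{\mathfrak B}$, and hence the limit $\Xi^\partial_\infty(\varphi_1,\varphi_2)$.

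\textbf{Part (II).} I would pass to the grand coupling of $(\nu^0_T)_{T\geq0}$ and $\nu^0$ constructed in Section~\ref{sec: boundary}: on a common probability space there are drifts $v^*_T$ with $\varphi^0_T:=W^0+I(v^*_T)\sim\nu^0_T$ and $\varphi^0:=W^0+I(v^*_\infty)\sim\nu^0$, and $I(v^*_T)\to I(v^*_\infty)$ in $H^{1/2-\kappa}_z$. Fixing $\varphi$ in a set of full $\nu^0$-measure, it then suffices to show $\Xi^\partial_T(\rho_T\ast\varphi,\rho_T\ast\varphi^0_T)\to\Xi^\partial_\infty(\varphi,\varphi^0)$ almost surely in $\boldsymbol{\mathfrak B}$. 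This is a version of Part~(I) in which the second boundary argument also moves along the coupling, handled by the same stochastic estimates applied to a Gaussian pair sharing the component $W^0$, plus the convergence $I(v^*_T)\to I(v^*_\infty)$; the divergent constants are the same ones cancelled in Part~(I). Almost sure convergence of $\boldsymbol{\mathfrak B}$-valued random variables implies convergence in law, which is precisely the asserted weak convergence of the pushforward measures.

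\textbf{Main obstacle.} The crux is Proposition~\ref{prop: stochastic boundary v2}: obtaining sharp enough stochastic regularity for the renormalized Wick powers of the infinite-cylinder harmonic extension, simultaneously in the tangential variable and \emph{uniformly in $T$}, so that the enhanced field lands in a Banach space $\boldsymbol{\mathfrak B}$ strong enough to drive the deterministic continuity and growth estimates of Theorem~\ref{thm: bulk}. A second delicate point is the renormalization bookkeeping — verifying that the single residual counterterm $\delta_T-\delta^0_T$, with $\delta^0_T$ already fixed by the boundary theory, cancels all divergences appearing in both the bulk error and the bulk--boundary error, so that no geometry- or boundary-dependent counterterm is required.
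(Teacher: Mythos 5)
Your decomposition of $\Xi^\partial$ into the boundary Wick-power component (Proposition~\ref{prop: stochastic boundary v2}) and the $\Upsilon_{i}$ components ($i=2,3,4$), together with a.s.\ convergence for Part~(I) and a coupling argument for Part~(II), is essentially the paper's route: the proof is a two-line citation of Proposition~\ref{prop: stochastic boundary v2} and Lemma~\ref{lem: upsiloni convergence}, and Lemma~\ref{lem: upsiloni convergence} proceeds exactly by working in the representation $\nu^0_T={\rm Law}(W^0_T+Z^0_T)$, applying Skorokhod embedding to upgrade to a.s.\ convergence, splitting each $\Upsilon_{i,T}=\Upsilon^a_{i,T}+\Upsilon^b_{i,T}$ into the purely Gaussian part (handled by Wiener-chaos decomposition, the variance bounds of Lemmas~\ref{lem: upsilon2 a}/\ref{lem: upsilon3 a}/\ref{lem: upsilon4 a}, and martingale convergence) and the $Z^0$-dependent part (handled by viewing $\Upsilon^b_{i,T}$ as a fixed bounded multilinear functional of $(W^0,Z^0,C^M_T)$).

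Two imprecisions are worth noting, though neither is fatal. First, the $\Upsilon_{2,T},\Upsilon_{3,T},\Upsilon_{4,T}$ that enter $\Xi^\partial_T$ have the bulk expectation already taken in their definitions; they are \emph{deterministic} multilinear functionals of $(W^0_\pm,Z^0_\pm)$ and the cutoff bulk kernel $C^M_T$. What their convergence actually requires is not ``standard bulk stochastic estimates for the Dirichlet GFF Wick powers'' in the sense of \cite{BG20} — those enter only for the bulk enhancement $\Xi_T(W)$ and $\Upsilon_{1,T}$, which are not part of $\Xi^\partial$ — but rather the kernel estimates on $C^M_T$ and $C^B_T$ from Appendices~\ref{sec: Dirichlet covariance estimates}--\ref{appendix: harmonic extension}, combined with Wick's theorem for the second moments of $\Upsilon^a_{i,T}$ and deterministic Besov/Sobolev estimates for $\Upsilon^b_{i,T}$. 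Second, in Part~(II) you feed $\rho_T\ast\varphi^0_T$ into $\Xi^\partial_T$, but the statement pushes forward $\nu^0_T$ unconvolved through $\Xi^\partial_T(\rho_T\ast\varphi,\cdot)$; inserting the extra $\rho_T$ double-regularizes the second argument, since $\Xi^\partial_T$ already mollifies the harmonic extension at scale $T$ internally, and should be dropped to match the claim, though it does not change the strategy.
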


\begin{remark}
Let us mention that Theorem \ref{thm: enhancement converge} could have been stated for arbitrary independent admissible boundary conditions with small modifications (especially to part II, where $\nu^0_T$ is replaced by a suitable approximation), but we will only apply it in the case of $\nu^0$.
\end{remark}

In light of Theorems \ref{thm: bulk} and \ref{thm: enhancement converge}, we may now rigorously define the $\varphi^4_3$ amplitudes (without cutoffs) and give meaning to the maps introduced in Theorem \ref{thm: main}. 

\begin{definition}
Let $\sigma \in \{\emptyset, -,+\}$. The $\varphi^4_3$ amplitude on $M^\sigma$ is the map
\begin{equation}
	\mathcal A_\infty^\sigma:C^\infty(M_\sigma)\times \mathcal C^{-1/2-\kappa}(\mathbb T^2) \times \mathcal C^{-1/2-\kappa}(\mathbb T^2) \rightarrow \mathbb R
\end{equation}
defined for every $f \in C^\infty(M_\sigma)$ and almost surely for every $\varphi_-,\varphi_+$ independent admissible boundary conditions by
\begin{equation}
	\mathcal A^\sigma_\infty(f \mid \varphi_-,\varphi_+) := \boldsymbol{\mathcal A}^\sigma_\infty (f, \Xi^\partial_\infty(\varphi_-,\varphi_+)),
\end{equation}
where the lifted amplitude $\boldsymbol{\mathcal A}^\sigma_\infty$ is constructed in Theorem \ref{thm: bulk} and the limiting enhanced boundary field is constructed in Theorem \ref{thm: enhancement converge}.
\end{definition}

\subsection{Proof of Segal's gluing property for $\varphi^4_3$} \label{subsec: proof of main}

We will now turn to the proof of Theorem \ref{thm: main}. Our starting point is the approximate gluing property established in Proposition \ref{prop: gluing-cutoff}. In passing from the almost sure and distributional convergence statements of Theorems \ref{thm: bulk} and \ref{thm: enhancement converge} to convergence in expectation, we will need the following upgrade of the bounds in Theorem \ref{thm: bulk} that can be seen as controlling the large field behaviour. Its proof is contained in Section \ref{sec: smallfield reduction}. 

\begin{lemma} \label{lem: large field}
There exists $\alpha > 0$ sufficiently small and a Banach space $\boldsymbol{\mathfrak B}'$ such that $\boldsymbol{\mathfrak B}' \subset \boldsymbol{\mathfrak B}$ is compactly 
embedded and the following estimate holds. There exists $c>0$ such that for every $f \in C^\infty(M)$ and $\nu^0\otimes\nu^0$ almost surely for  every $\varphi_-,\varphi_+ \in S'(\mathbb T^2)$, there exists $C>0$ such that for every $T>0$,
\begin{equation}
E_{\nu^0_T} \left[ \mathcal A_T^-(f \mid \rho_T \ast \varphi_-, \varphi^0)\mathcal A^+_T(f \mid \varphi^0, \rho_T \ast \varphi_+) e^{c \|\Xi_T^\partial (\rho_T \ast \varphi_-,\varphi^0)\|_{\boldsymbol{\mathfrak B}'}^\alpha +c\|\Xi^\partial_T(\varphi^0,\rho_T \ast \varphi_+)\|_{\boldsymbol{\mathfrak B}'}^\alpha }\right]	\leq C.
\end{equation}
\end{lemma}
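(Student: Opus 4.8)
\emph{Proof plan.} The plan is to re-express the left-hand side as a single Gaussian expectation carrying an \emph{extra sublinear weight}, and then to rerun the Bou\'e--Dupuis analysis behind Theorem~\ref{thm: bulk}(III) with this weight included, showing that it is cheap enough to be absorbed into the coercive good terms. First I would unfold the definitions of $\mathcal A^\pm_T$ (Definition~\ref{def: approx amplitude}) and of $\nu^0_T$ (Definition~\ref{def: regularized boundary measure}) and carry out the same bookkeeping as in the proof of Proposition~\ref{prop: gluing-cutoff}, writing
\[ \mathbb E_{\nu^0_T}\!\big[\mathcal A_T^-(f\mid \rho_T\!\ast\!\varphi_-,\varphi^0)\,\mathcal A_T^+(f\mid\varphi^0,\rho_T\!\ast\!\varphi_+)\,W_T\big]=\kappa_T\,\mathbb E_{\mu^0\otimes\mu^-\otimes\mu^+}\!\big[e^{-\mathcal V_T+\log W_T}\big], \]
where $W_T:=\exp\!\big(c\,\|\Xi^\partial_T(\rho_T\!\ast\!\varphi_-,\varphi^0)\|_{\boldsymbol{\mathfrak B}'}^{\alpha}+c\,\|\Xi^\partial_T(\varphi^0,\rho_T\!\ast\!\varphi_+)\|_{\boldsymbol{\mathfrak B}'}^{\alpha}\big)$, $\mathcal V_T$ is the full renormalized potential on $M$ (collecting $V^0_T$, the two bulk potentials $V^{\pm}_T$, the two bulk--boundary potentials, and the counterterms $E_T(a,\pm)$, $E_T(b,\pm)$, $\Delta\delta^0_T$), and $\kappa_T$ is a product of $\zeta$-regularized determinant prefactors and of $\mathcal Z^0_T(I^\pm)$-type normalizations. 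Using $\mathcal Z^0_T(I)=\mathcal Z^0_T(I^-)\mathcal Z^0_T(I^+)$, the constant computation in the proof of Proposition~\ref{prop: gluing-cutoff}, and $\mathcal Z^0_T\to\mathcal Z^0>0$ from Theorem~\ref{thm: boundary}, the factor $\kappa_T$ is bounded above and below uniformly in $T$; in the reassembled three-factor picture $\varphi^0\sim\mu^0$ is genuinely independent of the bulk fields, so $W_T$ is simply a functional of $\varphi^0$ and the fixed data.

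Next I would apply the Bou\'e--Dupuis formula to the expectation over $\mu^0\otimes\mu^-\otimes\mu^+$ (representing all three Gaussians through i.i.d.\ Brownian motions as in Section~\ref{sec: renormalization of bulk amplitudes}) and run the \emph{renormalized} variational problem (paracontrolled ansatz on the drift, modified entropy and renormalized error terms) exactly as for Theorem~\ref{thm: bulk}(III), now keeping the additional term $-\mathbb E[\log W_T]$. The outcome of the Theorem~\ref{thm: bulk}(III) estimates, carried out with a bit of room, is a fixed $\delta_0>0$ and a constant $C_1=C_1(\varphi_-,\varphi_+)<\infty$ — finite $\nu^0\otimes\nu^0$-a.s., since $\varphi_\pm$ enter only through the a.s.-convergent enhancements of Theorem~\ref{thm: enhancement converge}(I) on $M$ and $M^\pm$ — such that the renormalized functional satisfies $\mathbb F^{\mathrm{ren}}_T(v)\ge -C_1+\delta_0\,\mathbb E[\mathcal G_T(v)]$ for every admissible drift $v$, with $\mathcal G_T(v)\ge0$ the modified good term (a fraction of the drift entropy plus the coercive potential term). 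It then remains to bound $-\mathbb E[\log W_T]$ from below by $-\epsilon\,\mathbb E[\mathcal G_T(v)]-C_2$.

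This last absorption is where the smallness of $\alpha$ is used. The enhancement $\Xi^\partial_T$ is built from renormalized products of harmonic extensions of the boundary field of \emph{bounded polynomial degree} $d_0$, so — verifying the stochastic estimates of Proposition~\ref{prop: stochastic boundary v2} directly in the slightly stronger space $\boldsymbol{\mathfrak B}'$, and invoking the paracontrolled treatment of the boundary-field drift terms — one obtains, uniformly in $T$ and in $v$,
\[ \|\Xi^\partial_T(\rho_T\!\ast\!\varphi_-,\ \cdot\ )\circ\big(\mathbf W^0+I^0(v)\big)\|_{\boldsymbol{\mathfrak B}'}\ \lesssim\ \mathsf S_T+\mathcal G_T(v)^{\theta}, \]
with $\theta$ fixed, and $\mathsf S_T=\mathsf S_T(\rho_T\!\ast\!\varphi_-,\mathbf W^0)$ a random variable with $\sup_T\mathbb E[\mathsf S_T^q]<\infty$ for every $q\ge 1$ and a.e.\ $\varphi_-$; the exponents $d_0,\theta$ do not depend on $\alpha$. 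Taking $\alpha$ small enough that $\theta\alpha<1$ and using Young's inequality gives $c\,\|\Xi^\partial_T(\cdots)\|_{\boldsymbol{\mathfrak B}'}^{\alpha}\le \epsilon\,\mathcal G_T(v)+C_\epsilon(1+\mathsf S_T^{\alpha})$, hence $\mathbb E[c\|\Xi^\partial_T(\cdots)\|_{\boldsymbol{\mathfrak B}'}^{\alpha}]\le \epsilon\,\mathbb E[\mathcal G_T(v)]+C_2(\varphi_-)$ with $C_2(\varphi_-)<\infty$ uniformly in $T$ (since $\sup_T\mathbb E[\mathsf S_T^{\alpha}]<\infty$), and likewise for the $+$ term. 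Choosing $\epsilon<\delta_0$ and combining, $\mathbb F^{\mathrm{ren}}_T(v)-\mathbb E[\log W_T]\ge -C_1-C_2(\varphi_-)-C_2(\varphi_+)$ for all $v$, so $\mathbb E_{\mu^0\otimes\mu^-\otimes\mu^+}[e^{-\mathcal V_T+\log W_T}]\le e^{C_1+C_2(\varphi_-)+C_2(\varphi_+)}$; multiplying by the uniformly bounded $\kappa_T$ yields the lemma with $C=C(\varphi_-,\varphi_+)$, finite $\nu^0\otimes\nu^0$-a.s.

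I expect the main obstacle to be precisely the absorption step: establishing the displayed bound on $\|\Xi^\partial_T(\rho_T\!\ast\!\varphi_-,\mathbf W^0+I^0(v))\|_{\boldsymbol{\mathfrak B}'}$ in the stronger space $\boldsymbol{\mathfrak B}'$, uniformly in $T$ and in the drift. This requires the full paracontrolled handling of the boundary-field terms together with refined stochastic regularity estimates on renormalized products of harmonic extensions of $\mathbf W^0$, i.e.\ a quantitatively sharpened version of the machinery behind Theorems~\ref{thm: bulk} and \ref{thm: enhancement converge}; the smallness of $\alpha$ is exactly what turns $\mathcal G_T(v)^{\theta}$ into a strictly sublinear, absorbable quantity. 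Two secondary, more bookkeeping-type points are checking that the leftover coercivity $\delta_0>0$ is genuinely present in the bulk and bulk--boundary estimates (so that it dominates the Young term $\epsilon\,\mathcal G_T(v)$), and that the residual energy renormalization $\delta_T-\delta^0_T$ cancels all constant divergences in $\mathcal V_T$. Finally, the compact embedding $\boldsymbol{\mathfrak B}'\hookrightarrow\boldsymbol{\mathfrak B}$ plays no role in the uniform bound itself, but is what makes the estimate usable — providing tightness together with uniform integrability — when passing to the limit in the gluing identity of Proposition~\ref{prop: gluing-cutoff}.
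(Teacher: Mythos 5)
The student's proof is essentially correct and follows the same overall strategy as the paper: expose the extra weight in a Bou\'e--Dupuis variational problem, show that after the drift ansatz the argument of $\Xi^\partial_T$ is an admissible boundary condition, and then absorb the sublinear weight $\|\Xi^\partial_T\|^\alpha$ using the fractional moment estimates of Section~\ref{sec:stochastic} (Lemmas~\ref{lem: upsilon2}, \ref{lem: upsilon3}, \ref{lem: upsilon4}) together with Young's inequality and the extra coercivity room in the renormalized good terms.

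The one substantive difference is the direction of the initial transformation. The paper \emph{glues}: by the domain Markov property (the calculation in Proposition~\ref{prop: gluing-cutoff}), it rewrites the $\nu^0_T$-expectation of the product of amplitudes and the weight as a single $\mu$-expectation on $M$, with the boundary field $\varphi^0$ appearing only through the trace $(\varphi_T)_0$. This lets the paper apply Bou\'e--Dupuis to $\mu$ alone, exactly the setting of Sections~\ref{sec: renormalization of bulk amplitudes}--\ref{sec: convergence of bulk amplitudes}, and then argue that $(W_T + Z_T(v) + H(\varphi_-,\varphi_+)_T)_0$ is admissible via the intermediate ansatz plus trace theorem, with $\|(K_T)_0\|_{H^{1/2-}}$ controlled by $\|K_T\|_{H^{1-}}$ and hence by the coercive terms (Lemma~\ref{lem: KT estimate}). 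Your proof instead \emph{unfolds} to $\mu^0\otimes\mu^-\otimes\mu^+$ and applies Bou\'e--Dupuis to the tensor product, so that $\varphi^0$ enters through the boundary drift $Z^0_T(u^0)$, controlled directly via Lemma~\ref{lem: bdry a priori Z}. These are equivalent parametrizations of the same variational problem, but the paper's gluing is cleaner: it lets them cite the machinery of Sections~\ref{sec: renormalization of bulk amplitudes}--\ref{sec: convergence of bulk amplitudes} verbatim, whereas your route would require either re-verifying those estimates with three coupled drifts or observing that the Markov property collapses the product BD problem to the one on $M$. Your invocation of ``Theorem~\ref{thm: bulk}(III) estimates with a bit of room'' is therefore slightly off-target as stated (Theorem~\ref{thm: bulk}(III) is for a single $\mu^\sigma$, not the tensor product), but the underlying idea is sound and matches what the paper actually carries out.

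Two bookkeeping points worth flagging. First, your displayed pointwise bound $\|\Xi^\partial_T(\rho_T\ast\varphi_-,\cdot)\circ(\mathbf W^0+I^0(v))\|_{\boldsymbol{\mathfrak B}'}\lesssim\mathsf S_T+\mathcal G_T(v)^\theta$ is the right shape, and it does come directly from the deterministic estimates inside the proofs of Lemmas~\ref{lem: upsilon2 b}, \ref{lem: upsilon3 b}, \ref{lem: upsilon4 b} (e.g.\ \eqref{eq: Upsilon2b deterministic}) together with Proposition~\ref{prop: stochastic boundary v2}; you are correct that verifying these in the slightly stronger space $\boldsymbol{\mathfrak B}'$ is a minor $\kappa$-bookkeeping exercise, exactly as the remark after the lemma suggests. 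Second, your observation that the compact embedding $\boldsymbol{\mathfrak B}'\hookrightarrow\boldsymbol{\mathfrak B}$ plays no role in the bound itself but is what makes the estimate usable in the Skorokhod/truncation step of the proof of Theorem~\ref{thm: main} is exactly right.
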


\begin{remark}
The Banach space $\boldsymbol{\mathfrak B}'$ is obtained in a trivial fashion from $\boldsymbol{\mathfrak B}$. As shall be clear, $\boldsymbol{\mathfrak B}$ is comprised of function/distribution spaces (e.g. Sobolev, H\"older, and Besov spaces) of regularity which can be increased by a sufficiently small parameter without changing any of the statements above. We then use classical compact embedding theorems for these spaces by increasing the regularity of each space by such a sufficiently small parameter.
\end{remark}

We are now ready to give the proof of Theorem  \ref{thm: main}.
\begin{proof}[Proof of Theorem \ref{thm: main}]

Observe that for $T>0$, by Proposition \ref{prop: gluing-cutoff} we have 
\begin{equation} \label{eq: thm1 1}
\mathcal{A}_{T}(f \mid \rho_T \ast \varphi_{-},\rho_T \ast \varphi_{+})= \mathbb{E}_{\nu^{T}_{0}} \left[ \mathcal{A}_{T}^-( f\mid \rho_T \ast \phi_{-},\phi^0) \mathcal{A}_{T}^+(f \mid \varphi^0,\rho_T\ast \varphi_+)\right].
\end{equation}
 Furthermore, for the lefthand side of \eqref{eq: thm1 1}, by part I of Theorem \ref{thm: enhancement converge} and part II of Theorem \ref{thm: bulk} we have that
	\begin{equation}\lim_{T \to \infty} \mathcal{A}_{T}(f \mid \rho_T \ast \phi_{-},\rho_T \ast \phi_{+})= \lim_{T \to \infty} \boldsymbol{\mathcal{A}}_{T}(f,\Xi_{T}^\partial (\rho_T\ast \phi_{-},\rho_T \ast \phi_{+}))
	= \boldsymbol{\mathcal{A}}_{\infty}(f,\Xi_{\infty}^\partial (\phi_{-},\phi_{+})). 
	\end{equation}
Hence it suffices to estimate the expectation term in the righthand side of \eqref{eq: thm1 1}. We will use Skorokhod's embedding theorem to realize this distributional convergence as almost sure convergence and then use a truncation argument to deduce convergence of the expectations. Note that a priori this is up to a subsequence, but the limits will be independent of the choice of subsequence and hence one can deduce full convergence. 

By part II of Theorem \ref{thm: enhancement converge} and Skorokod's embedding theorem \cite{J98}, there exists a subsequence $T\rightarrow \infty$ (not relabelled) and $\boldsymbol{\mathfrak B}'$-valued random variables $(\Xi^{\partial,-}_T,\Xi^{\partial,+}_T)$ and $(\Xi^{\partial,-}_\infty, \Xi^{\partial,+}_\infty)$ defined on some common probability space -- which we abuse notation and denote as $(\Omega, \mathcal F, \mathbb P)$ with expectation written as $\mathbb E$ --  such that 
\begin{itemize}
	\item $\Xi^{\partial,-}_T $ is equal in law to the law of $\Xi^\partial_T(\rho_T \ast \varphi_-,\varphi^0)$ where $(\varphi_-,\varphi^0) \sim \nu^0\otimes \nu^0_T$, and similarly for $\Xi^{\delta,+}_T$.
	\item $\Xi^{\partial,-}_\infty$ is equal in law to the law of $\Xi^\partial_\infty(\varphi_-,\varphi^0)$ where $(\varphi_-,\varphi^0) \sim \nu^0\otimes\nu^0$, and similarly for $\Xi^{\partial,+}_\infty$.
	\item $\Xi^{\partial,-}_T$ (resp. $\Xi^{\partial,+}_T$) converges $\mathbb P$-almost surely in $\boldsymbol{\mathfrak B}$ to $\Xi^{\partial,-}_\infty$ (resp. $\Xi^{\partial,+}_\infty$).  
\end{itemize}

Let $K>0$ be arbitrary. Let us denote $\varphi^0_T \sim \nu^0_T$ for the rest of this proof.   By parts I and  II of Theorem \ref{thm: bulk} and the bounded convergence theorem,
\begin{align}
&\lim_{T\to \infty}\mathbb E\left[ \mathbbm{1}_{\max (||\Xi_{T}^{\partial,-}||_{\boldsymbol{\mathfrak{B}}'}, \|\Xi_T^{\partial,+}\|_{\boldsymbol{\mathfrak B}'})\leq K} \mathcal{A}^{-}_{T}(f \mid \rho_T \ast \varphi^T_{-},\varphi^{T}_{0}) \mathcal{A}^{+}_{T}(f\mid \varphi^T_{0},\rho_T \ast \varphi_{+})\right]
	\\
	&=
	\mathbb E\left[ \mathbbm{1}_{\max(||\Xi_{\infty}^{\partial,-}||_{\boldsymbol{\mathfrak{B}}'}, \|\Xi^{\partial,+}_\infty\|_{\boldsymbol{\mathfrak B}'})\leq K} \boldsymbol{\mathcal{A}}_{\infty}^{-}(f,\Xi_{\infty}(\varphi_{-},\varphi^0)) \boldsymbol{\mathcal{A}}^{+}_{\infty}(f,\Xi_{\infty}(\varphi^0,\varphi_{+}))\right]	
\end{align}
On the other hand, by a union bound and exponential Chebyshev's inequality, followed by Lemma \ref{lem: large field}, there exists $\alpha, c,C>0$ such that for every $T>0$,
\begin{align}
		&\mathbb E\left[\mathbbm{1}_{\max{(||\Xi_{T}^{\partial,-}||_{\boldsymbol{\mathfrak{B}}'},||\Xi_{T}^{\partial,+}||_{\boldsymbol{\mathfrak{B}}'})}\geq K} \mathcal{A}_{T}(f \mid \rho_T \ast \varphi_{-},\varphi^{T}_{0}) \mathcal{A}_{T}(f \mid \varphi^{T}_{0},\rho_T \ast \varphi_{+}) \right] 
		\\
		&\leq 2e^{-c K^{\alpha}} \mathbb E\left[ e^{c ||\Xi_{T}^{\partial,-}||^{\alpha}_{\boldsymbol{\mathfrak{B}}'}+ c||\Xi_{T}^{\partial,+}||^{\alpha}_{\boldsymbol{\mathfrak{B}}'}} \mathcal{A}_{T}(f \mid \rho_T\ast \varphi_{-},\varphi^{T}_{0}) \mathcal{A}_{T}(f \mid \varphi^T_{0},\rho_T \ast \varphi_{+})\right]
		\leq Ce^{-cK^\alpha}. 
	\end{align} 
Since $K$ is arbitrary, Theorem \ref{thm: main} now follows by the definition of the $\varphi^4_3$ amplitudes.

\end{proof}

\subsection{Proof of Markov property}

We now establish the Markov property, Theorem \ref{theorem: markov}.
\begin{proof} [Proof of Theorem \ref{theorem: markov}]
Recall the definition of $\nu^\ell$. Let $f, g \in C^\infty(M)$ such that $f$ is $M_\ell$-measurable and $g$ is $M\setminus M_\ell$-measurable. In order to prove the Theorem \ref{theorem: markov}, it is sufficient to show that 
	\begin{equation}
		\mathbb{E}_{\nu}[\exp(\langle f+g, \phi \rangle)]
		=\mathbb{E}_{\nu}\left[ \mathbb{E}_{\nu^{\ell}(\cdot|\phi_{-},\phi_{+})}[\exp(\langle f, \phi \rangle)] \exp(\langle g,\phi \rangle) \right].
	\end{equation}
	We will denote $\langle f,\phi\rangle^{N}=\langle f,\phi \rangle \wedge N$, and $\mathcal{A}_{T}(f^{N}|\cdot, \cdot)$ be the Amplitude where $\langle f,\phi \rangle $ is replaced by $\langle f,\phi \rangle \wedge N$.
	Furthermore denote by 
\begin{equation}
V^{[a,b]}(\phi_{T})
	=\int_{[a,b]\times \mathbb{T}^2} \llbracket \phi_{T}^4 \rrbracket -\gamma_{T} \int_{[a,b]\times{\mathbb{T}^2}}\llbracket \phi_{T}^2 \rrbracket -\delta_{T}|a-b|.
\end{equation}
By dominated convergence and Theorem \ref{thm: bulk}, we have that 
\begin{equation}
\mathbb{E}_{\nu}[\exp(\langle f+g, \phi \rangle)] =\lim_{N \to \infty} \lim_{T \to \infty} \mathbb{E}_{\nu^{T}}\left[\left(\exp(\langle f, \phi_{T} \rangle^N+\langle g, \phi \rangle)\right)\right].	
\end{equation}

Let us now fix $N$ and $T$. By using the definition of $\nu^T$, splitting the potential, and using the tower property of conditional expectation, we have that
\begin{align}
\begin{split} \label{eq: markov pf}
&\mathbb{E}_{\nu^{T}}\left[\left(\exp(\langle f, \phi_{T} \rangle^N+\langle g, \phi \rangle)\right)\right]	\\
&= \frac{1}{\mathcal Z_T}\mathbb{E}_{\mu}\left[\mathbb{E}_\mu\left[\exp(\langle f, \phi_{T} \rangle^N) \exp(- V^{[-\ell,\ell]}_{T}(\phi_{T})) \mid \mathcal{F}_{M_{l}}\right]  \exp(-\langle g,\phi_{T} \rangle) \exp(- V^{[-L,L]\setminus [-\ell,\ell]}_{T}(\phi_{T}) )\right].
\end{split}
\end{align}
Let us define $\varphi_-$ and $\varphi_+$ as the restriction of $\varphi \sim \mu$ to the tori $\{-\ell\}\times\mathbb T^2$ and $\{\ell\}\times\mathbb T^2$, respectively -- this almost surely exists thanks to the Markov property of the GFF. Then by the definition of the unnormalized Laplace transform, 
\begin{align}
\eqref{eq: markov pf}&= \frac{1}{\mathcal Z_{T}}\mathbb{E}_{\mu}[\mathcal{Z}^{(-\ell,\ell) }_T(f^N \mid (\phi_-)_{T},(\phi_+)_{T})\exp(\langle g, \phi\rangle)\exp(-V^{[-L,L]\setminus(-\ell,\ell)}_{T}(\phi))]\\
	&= \frac{1}{\mathcal Z_{T}}\mathbb{E}_{\mu}\Bigg[\frac{\mathcal Z_{T}^{(-\ell,\ell)}(0 \mid (\phi_{-})_T,(\phi_+)_T)}{\mathcal Z_{T}^{(-\ell,\ell)}(0 \mid (\phi_{-})_T,(\phi_+)_T)}\mathcal{Z}^{(-\ell,\ell)}_T(f^N \mid(\phi_-)_{T},(\phi_+)_{T})
	\\ & \qquad \qquad \qquad \qquad \qquad \qquad \qquad \qquad \times \exp(\langle g, \phi\rangle)\exp(-V^{[-L,L]\setminus(-\ell,\ell)}_{T}(\phi))\Bigg]\\
	&=\frac{1}{\mathcal Z_{T}}\mathbb{E}_{\mu}\Bigg[\frac{1}{\mathcal Z_{T}^{(-\ell,\ell)}(0 \mid (\phi_{-})_T,(\phi_+)_T)}\mathcal{Z}^{(-\ell,\ell)}_T(f^N \mid (\phi_-)_{T},(\phi^{0})_{T}) \\ & \qquad \qquad \qquad \qquad \qquad \qquad \qquad \qquad\times \exp(\langle g, \phi\rangle)\exp(-V^{[-L,L]}_{T}(\phi))\Bigg],
\end{align}
where in the last line we have unpacked the definition of $\mathcal Z_{T}^{(-\ell,\ell)}(0 \mid (\phi_{-})_T,(\phi_+)_T)$. The theorem now follows by using the definition of $\nu^\ell_T(\cdot \mid \cdot, \cdot)$ and taking limits thanks to Theorem \ref{thm: bulk}.
\end{proof}

\section{Construction of the boundary measure}
\label{sec: boundary}

In this section we will prove Theorem \ref{thm: boundary}. Following the strategy introduced in \cite{BG20}, our starting point is to represent $\mathcal Z^0_T(f)$ using a variational formula originally due to Bou\'e and Dupuis \cite{BD98}, and we do this in Section \ref{sec: bdry variational}.  In Section \ref{sec: bdry stochastic} we establish the regularity of the relevant Gaussian/multilinear functions of Gaussian random variables that occur in the variational problem. In Section \ref{sec: bdry unif} we use the regularity of these stochastic objects to obtain a \emph{renormalized} variational representation for $\mathcal Z^0_T(f)$. We then use harmonic analysis estimates to establish uniform bounds on this renormalized variational problem. In Section \ref{sec:boundary:convergence of LT}, assuming some technical results, we upgrade the uniform bounds into full convergence of $\mathcal Z^0_T(f)$ via a $\Gamma$-convergence argument for the variational problems, and thus prove Theorem \ref{thm: boundary}. Finally in Section \ref{sec: boundary technical}, we prove the remaining technical results to close the argument.

\subsection{Variational representation of $\mathcal Z^0_T(f)$}
\label{sec: bdry variational}

Let $\kappa_0>0$ be sufficiently small (fixed a posteriori). Denote by $(\Omega^0,\mathcal{F}^0,\mathbb{P}^0)$ the standard Wiener space, where $\Omega^0 = C(\mathbb{R}_+, \mathcal C^{-1-\kappa_0}(\mathbb T^2))$ and $\mathcal F^0$ is the Borel $\sigma$-algebra. For the remainder of \emph{this} section, we will simply write $\mathbb P=\mathbb P^0$ for notational convenience (with expectation $\mathbb E^0 = \mathbb E)$. The canonical coordinate process $X^0$ under $\mathbb P$ is an $L^2(\mathbb T^2)$-Brownian motion. We will abuse notation and assume that $\mathcal F$ is $\mathbb P$-complete and that the filtration generated by $X^0$ satisfies the usual conditions (on a filtered probability space). For every $t>0$, let $\sigma_t^0 = \rho_t \ast (-\Delta_{\mathbb T^2} + m^2)^{-1/2} \ast \rho_t$ and $J_t^0 = \sqrt{\partial_t \sigma_t^0}$, where the square root is defined in Fourier space after viewing $\sigma^0_t$ as a Fourier multiplier. We abuse notation and write $J_t^0(\cdot,\cdot):\mathbb T^2\times \mathbb T^2 \rightarrow \mathbb R$ for the kernel of $J_t^0$.

Define the stochastic process $(W^0_T)_{T\geq 0}$ by
\begin{equation}
W^0_T
:=
\int_0^T J^0_t dX^0_t, \qquad \forall T\geq 0. 	
\end{equation}
A covariance computation yields that for every $T>0$ we have that  ${\rm Law}_{\mathbb P}(W^0_T) = {\rm Law}_{\mu^0}(\varphi_T)$. Furthermore, again by a covariance computation, we have that the random variable $W^0_\infty := \lim_{T \rightarrow \infty} W_T^0$ exists (the convergence being in e.g.\ $L^2$) and is equal in law to $\mu^0$. 

Thanks to the equality in law, it is clear we can express $\mathcal Z^0_T(f)$ as an expectation of a functional on Wiener space with sufficiently good integrability properties (i.e.\ tamed in the sense of \cite{U14}). This allows us to apply the Bou\'e-Dupuis formula \cite{BD98} (we will use the version in \cite{U14}). In order to state this variational formula, we will need some notation. 

Let $\mathbb H^0$ denote the space of progressively measurable processes that are $\mathbb P$-almost surely in 
\begin{equation}
\mathcal H^0:= L^2_tL^2_z.	
\end{equation}
 For every $u \in \mathbb H^0$ and $T\geq 0$, let
\begin{equation}
Z_T^0(u)
=
\int_0^T J_t^0 u_t dt. 	
\end{equation}

The following proposition is a direct application of the Bou\'e-Dupuis formula to our setting.
\begin{proposition} \label{prop: bdry bd}
For every $f \in C^\infty(\mathbb T^2)$, 
\begin{equation} \label{eq: bdry bd}
-\log \mathcal{Z}_T^0(f)
=
\inf_{u \in \mathbb H^0} \mathbb E \left[ \langle f,W_T^0 + Z_T^0(u) \rangle_{L^2(\mathbb T^2)} + \mathcal{V}_T^0(W_T^0,Z_T^0(u)) + \frac 12 \|u\|_{\mathcal H^0}^2 \right],
\end{equation}
where
\begin{align}
\mathcal{V}_T^0(W_T^0, Z_T^0(u))
&=
\int_{[-1,1]\times \mathbb T^2}  4  \llbracket (\overline H W_T^0)^3 \rrbracket \overline H Z_T^0(u) + 6 \llbracket (\overline H W_T^0)^2 \rrbracket (\overline H Z_T^0(u))^2 
\\
&\quad \quad \quad \quad \quad 
+ 4 \overline H W_T^0 (\overline H Z_T^0(u))^3  + (\overline H Z_T^0(u))^4 \, dx - \delta_T^0. 
\end{align}
\end{proposition}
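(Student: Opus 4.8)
The plan is to read this statement as a direct application of the Boué--Dupuis formula \cite{BD98}, in the ``tamed'' form of \cite{U14}, once the unnormalized Laplace transform $\mathcal Z^0_T(f)$ has been realised as the expectation of a sufficiently integrable functional on the Wiener space $(\Omega^0,\mathcal F^0,\mathbb P)$. The first step is precisely this realisation. Since $V^0_T(\varphi^0)$ depends on $\varphi^0$ only through the regularized field $\varphi^0_T=\rho_T\ast\varphi^0$, and since, by the covariance computation recalled just before the proposition, ${\rm Law}_{\mathbb P}(W^0_T)={\rm Law}_{\mu^0}(\varphi^0_T)$ with $W^0_\infty\sim\mu^0$, one transfers the expectation defining $\mathcal Z^0_T(f)$ from $\mu^0$ to $\mathbb P$ and writes
\begin{equation}
\mathcal Z^0_T(f)=\mathbb E_{\mathbb P}\!\left[\exp\!\big(\langle f,W^0_T\rangle_{L^2(\mathbb T^2)}-\widetilde V^0_T(W^0_T)\big)\right],
\end{equation}
where $\widetilde V^0_T(W^0_T):=\int_{[-1,1]\times\mathbb T^2}\llbracket(\overline H W^0_T)^4\rrbracket\,dx-\delta^0_T$ is the polynomial in the Gaussian field $W^0_T$ obtained by unfolding the definition of $V^0_T$. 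The one point needing a word is the linear term, since $\langle f,\varphi^0\rangle$ sees the unregularized field: the discrepancy $\langle f,\varphi^0-\varphi^0_T\rangle$ is a centred Gaussian independent of $\varphi^0_T$ with finite variance, so it contributes only a multiplicative constant that vanishes as $T\to\infty$; equivalently, and harmlessly for the subsequent analysis, one may pair $f$ against the regularized field so that this term is literally $\langle f,W^0_T\rangle$.

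Next I would verify the hypotheses of the Boué--Dupuis formula. For fixed $T<\infty$, $W^0_T$ is $\mathbb P$-a.s.\ a function of positive Hölder regularity, $\overline H W^0_T$ is smooth, and the functional $G_T(W^0):=\langle f,W^0_T\rangle-\widetilde V^0_T(W^0_T)$ is a polynomial in $W^0_T$ whose leading (quartic) term enters $-\widetilde V^0_T$ with a nonpositive sign; hence $G_T$ is bounded above by a ($T$-dependent) constant plus a subquartic polynomial, $e^{G_T}$ and $e^{-G_T}$ have all $\mathbb P$-moments, and $G_T$ is tamed in the sense of \cite{U14}. The formula then gives
\begin{equation}
-\log\mathcal Z^0_T(f)=\inf_{u\in\mathbb H^0}\mathbb E\!\left[-G_T\big(W^0+Z^0(u)\big)+\tfrac12\|u\|_{\mathcal H^0}^2\right],
\end{equation}
where the Cameron--Martin shift of the driving Brownian motion $X^0\mapsto X^0+\int_0^{\cdot}u_s\,ds$ sends $W^0_T$ to $W^0_T+Z^0_T(u)$, and hence $\langle f,W^0_T\rangle$ to $\langle f,W^0_T+Z^0_T(u)\rangle$.

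It then remains to expand the shifted potential $\widetilde V^0_T(W^0_T+Z^0_T(u))$. By linearity of the infinite-cylinder harmonic extension, $\overline H(W^0_T+Z^0_T(u))=\overline H W^0_T+\overline H Z^0_T(u)$, and since the Wick ordering $\llbracket\,\cdot\,\rrbracket$ is taken with respect to $\mu^0$ (equivalently, subtracts only the $\overline C^B_T$-contractions of the Gaussian part $\overline H W^0_T$), the binomial theorem gives
\begin{equation}
\llbracket(\overline H W^0_T+\overline H Z^0_T(u))^4\rrbracket=\sum_{k=0}^{4}\binom{4}{k}\llbracket(\overline H W^0_T)^k\rrbracket\,(\overline H Z^0_T(u))^{4-k}.
\end{equation}
Separating the $k=4$ term, one has $\mathbb E_{\mathbb P}\big[\int_{[-1,1]\times\mathbb T^2}\llbracket(\overline H W^0_T)^4\rrbracket\,dx\big]=0$ by the choice of Wick constant $\overline C^B_T$, so this term contributes nothing to the cost functional; the terms $k=0,\dots,3$ together with the $-\delta^0_T$ are, by inspection, exactly $\mathcal V^0_T(W^0_T,Z^0_T(u))$. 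Substituting into the previous display, and noting that $\|u\|^2_{\mathcal H^0}=\int_0^{\infty}\|u_t\|^2_{L^2}\,dt$ while only $u|_{[0,T]}$ enters both $Z^0_T(u)$ and the potential, yields the claimed identity \eqref{eq: bdry bd}.

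This proposition is the elementary set-up step and carries no serious obstacle in itself; the two places deserving care are (i) the identification of the time-$T$ stochastic cutoff $W^0_T=\int_0^{T}J^0_t\,dX^0_t$ with the mollification $\rho_T\ast(\,\cdot\,)$ at the level of laws --- this is the covariance computation recalled in the text, and it is also what forces the linear term to be treated as above --- and (ii) the tamedness hypothesis, which is immediate only because $T$ is finite, everything then being a polynomial in a genuine function-valued Gaussian. The genuine difficulty is deferred: the $T\to\infty$ limit requires renormalizing the divergent contribution hidden inside $\mathcal V^0_T$ through an ansatz on the drifts $u$ and establishing coercivity bounds uniform in $T$, which is the subject of the subsequent subsections (and, ultimately, the $\Gamma$-convergence argument).
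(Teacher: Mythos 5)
Your proposal matches the paper's own route: the paper simply asserts that the proposition is ``a direct application of the Bou\'e--Dupuis formula'' and includes a remark that the $k=4$ Wick-power term $\int\llbracket(\overline HW^0_T)^4\rrbracket$ may be dropped because, as an iterated stochastic integral (hence a martingale), it is mean zero under $\mathbb P$; your observation that it is mean zero because the Wick constant $\overline C^B_T(x,x)=\mathbb E_{\mathbb P}[(\overline HW^0_T(x))^2]$ is exactly the variance is the same fact phrased differently. The rest --- realizing $\mathcal Z^0_T(f)$ as a tamed functional on the Wiener space carrying $X^0$, applying \cite{U14}, expanding $\llbracket(\overline HW^0_T+\overline HZ^0_T(u))^4\rrbracket$ by the Wick binomial formula --- is precisely how the paper intends the reader to fill in the one-line assertion.

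One point to correct: the independence you invoke to handle the $\langle f,\varphi^0-\varphi^0_T\rangle$ discrepancy does not actually hold for this mollifier. Since $\hat\rho_T(n)=\rho(\langle n\rangle/T)$ is a smooth rather than sharp cutoff, the cross-covariance $\hat\rho_T(n)\bigl(1-\hat\rho_T(n)\bigr)\langle n\rangle^{-1}$ does not vanish on the intermediate range $T<\langle n\rangle<2T$, so $\varphi^0_T$ and $\varphi^0-\varphi^0_T$ are \emph{not} independent Gaussians, and the asserted factorization into a multiplicative constant is not available. The correct reading is simply your second option: the paper's display pairs $f$ against the $T$-level field $W^0_T+Z^0_T(u)$, which silently amends the definition $\mathcal Z^0_T(f)=\mathbb E_{\mu^0}[\exp(\langle f,\varphi^0\rangle-V^0_T(\varphi^0))]$ by replacing $\varphi^0$ with $\varphi^0_T$ in the linear term. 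This is an innocuous abuse of notation: the linear term is controlled by duality against the drift-entropy term uniformly in $T$, the two choices agree in the limit $T\to\infty$, and nothing downstream (the $\Gamma$-convergence argument, the equicoercivity bounds) distinguishes between them. But ``independence'' is not the right justification and should be removed.
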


Above, we recall that $\llbracket (\overline H W_T^0)^2 \rrbracket$ and$\llbracket (\overline{H} W_T^0)^3\rrbracket$ denote the second and third Wick powers of $\overline H W^0_T$, respectively.

\begin{remark}
Note that in principle the term $\int_{[-1,1]\times\mathbb T^2}\llbracket (\overline H W_T^0)^4 \rrbracket dz$ should be appear in the integrand.  It is classical that this quantity can be written as an iterated stochastic integral and, hence, martingale. See \cite{N09}. However, by the martingale property, it has mean zero and hence can be omitted in light of the expectation in the variational formula \eqref{eq: bdry bd}. 
\end{remark}

\subsection{Regularity and convergence of the Wick powers} \label{sec: bdry stochastic}

 In order to understand better the divergences in the variational problem \eqref{eq: bdry bd}, we first\footnote{We will show later on that $Z_T^0(u) \in H^{1/2}(\mathbb T^2)$ for every $u \in L^2_tL^2_z$ with a uniform estimate on its norm in $T$. See Lemma \ref{lem: bdry a priori Z}. This allows us to deduce some determinstic regularity estimates on $\overline H Z^0_T(u)$ by properties of the harmonic extension $\overline H$. See Appendix \ref{appendix: harmonic extension estimates}.} need to understand the regularity of the Wick powers of $\overline H W_T^0$. The  is contained in the following proposition.

\begin{proposition} \label{prop: stochastic boundary v2}
For every $i\in\{1,2,3\}$, $\alpha \in (\max(0,\frac{i-2}{2}),i/2]$, every $\delta > 0$ sufficiently small, and for every $p\in[1,\infty)$ there exists $C_p>0$ such that  and 
\begin{equation} \label{eq:stoch-weighted-linfty}
\sup_T \mathbb E\Big[ \Big( \sup_{\tau \in [-1,1]}|\tau|^{\alpha}\| \llbracket (\overline H  W^0_T) ^i \rrbracket (\tau,\cdot)\|_{\mathcal C^{-i/2+\alpha-\delta}_z} \Big)^p \Big] \leq C_p. 
\end{equation}
Furthermore,  there exists a random variable $\llbracket (\overline H W^0_\infty)^i\rrbracket \in L^\infty(|\tau|^{\alpha},\mathcal C^{-i/2+\alpha-\delta}_z)$ -- where this weighted space is defined with the norm appearing inside the expectation \eqref{eq:stoch-weighted-linfty} --  such that for every sequence $(T_n) : T_n\rightarrow \infty$, we have
\begin{equation}
\llbracket (\overline H W^0_{T_n})^i\rrbracket\rightarrow \llbracket (\overline H W^0_\infty)^i\rrbracket \, \text{ in }L^\infty(|\tau|^{\alpha},\mathcal C^{-i/2+\alpha-\delta}_z).
\end{equation}
The estimates also hold with $\overline H$ replaced by $H$, and the single harmonic extension replaced by the double harmonic extension.
\end{proposition}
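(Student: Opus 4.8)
The plan is to reduce the whole statement to uniform second-moment estimates inside a fixed Wiener chaos. Fix $i\in\{1,2,3\}$ and $\alpha$ in the stated range. The first observation is that, for each $\tau$, the field $z\mapsto\llbracket(\overline H W^0_T)^i\rrbracket(\tau,z)$ equals the $i$-th Wick power (with respect to $\mu^0$, with Wick constant $\overline C^B_T(\tau;z,z)=\mathbb E[\overline H W^0_T(\tau,z)^2]$) of the boundary free field $\varphi^0\sim\mu^0$ mollified by the Fourier multiplier $\hat\rho(\langle D\rangle/T)\,e^{-|\tau|\langle D\rangle}$, which acts at effective length scale $\epsilon(\tau,T):=\max(T^{-1},|\tau|)$, where $\langle D\rangle:=\sqrt{-\Delta_{\mathbb T^2}+m^2}$. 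In particular it lies in the $i$-th homogeneous chaos generated by the Gaussians $\{g^0_n\}$, so by Nelson's hypercontractivity all $L^p(\mathbb P)$ moments are controlled by the $L^2(\mathbb P)$ moment, and it suffices to bound, uniformly in $T$, the second moments of the Littlewood--Paley blocks $\Delta^z_q$ in the $z$-variable, and of the $\tau$-increments $\Delta^z_q\big(\llbracket(\overline H W^0_T)^i\rrbracket(\tau,\cdot)-\llbracket(\overline H W^0_T)^i\rrbracket(\tau',\cdot)\big)$, and then to pass to the stated Besov--H\"older norm via the embedding $B^{s+2/p}_{p,p}(\mathbb T^2)\hookrightarrow\mathcal C^s(\mathbb T^2)$ with $p$ large.

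The core of the argument is the two-point function $\overline C^B_T(\tau;z,z')=\mathbb E[\overline H W^0_T(\tau,z)\,\overline H W^0_T(\tau,z')]$, with Fourier series $\sum_n\hat\rho(\langle n\rangle/T)^2\langle n\rangle^{-1}e^{-2|\tau|\langle n\rangle}\mathsf e_n(z-z')$; the decisive input, supplied by the covariance estimates of Appendix~\ref{appendix: harmonic extension}, is that $e^{-2|\tau|\langle D\rangle}$ gains $\beta$ derivatives at the price $|\tau|^{-\beta}$, uniformly in $T$. Using the chaos representation, $\mathbb E[|\Delta^z_q\llbracket(\overline H W^0_T)^i\rrbracket(\tau,z)|^2]=i!\,(\Delta^z_q\otimes\Delta^z_q)[\overline C^B_T(\tau;\cdot,\cdot)^i]$ on the diagonal; bounding the $i$-fold pointwise power of the kernel by Young's inequality while tracking the $e^{-2|\tau|\langle n\rangle}$ factors should yield, uniformly in $T$, a bound of the schematic form $2^{q\gamma_i}|\tau|^{-\eta_i}e^{-c2^q|\tau|}$ (up to logarithms), with $\gamma_i,\eta_i$ chosen so that, after optimising over $q$ (the relevant scale being $2^q\sim|\tau|^{-1}$) and absorbing the logarithms and the $2/p$-loss of the embedding into the slack $\delta$, one gets $\mathbb E[\||\tau|^\alpha\llbracket(\overline H W^0_T)^i\rrbracket(\tau,\cdot)\|_{\mathcal C^{-i/2+\alpha-\delta}_z}^p]\lesssim 1$. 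This is exactly where the lower bound $\alpha>\max(0,(i-2)/2)$ enters: for $i\ge2$ it is equivalent to $-i/2+\alpha>-1$, which keeps the low-frequency ($q=O(1)$) contribution --- of size $|\tau|^\alpha\cdot|\tau|^{-\max(0,i-2)/2}$ up to logarithms --- bounded as $\tau\to0$ (and vanishing in the limit when the inequality is strict and $T=\infty$), while for $i=1$ it merely ensures the weight is nontrivial; the upper restriction $\alpha\le i/2$ only reflects the (non-optimised) amount of smoothing the harmonic extension supplies. The $\tau$-increment estimates are obtained in the same way, with an extra factor $(2^q|\tau-\tau'|)^\theta$ for a small $\theta>0$ at the cost of a slightly larger negative power of $\min(|\tau|,|\tau'|)$.

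To pass from fixed-$\tau$ bounds to the supremum over $\tau$ inside the expectation, I would decompose $[-1,1]\setminus\{0\}$ into dyadic annuli $\{|\tau|\asymp 2^{-k}\}$; on each annulus a Kolmogorov--Garsia--Rodemich--Rumsey argument driven by the $\tau$-increment estimate bounds $\mathbb E[\sup_{|\tau|\asymp 2^{-k}}(|\tau|^\alpha\|\llbracket(\overline H W^0_T)^i\rrbracket(\tau,\cdot)\|_{\mathcal C^{-i/2+\alpha-\delta}_z})^p]$ by $C\,2^{-k\theta'p}$ for some $\theta'>0$ using the $\delta$-slack, and summing the geometric series in $k$ gives \eqref{eq:stoch-weighted-linfty}. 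Replacing $\overline H$ by the finite-cylinder extension $H$ (or $H^\pm$) changes nothing: by Section~\ref{sec: gaussian}, $H\varphi^0=\overline H\varphi^0+\Sigma(\varphi^0)$ with $\Sigma$ infinitely smoothing and of operator norm uniform in $\tau\in[-1,1]$, so every Wick power of $H\varphi^0$ differs from that of $\overline H\varphi^0$ only by strictly more regular, no-worse-in-$\tau$ terms. The double harmonic extension expands via $H(\varphi^0_-,\varphi^0_+)=H(\varphi^0_-)+H(\varphi^0_+)$ and the binomial identity into products $\llbracket H(\varphi^0_-)^k\rrbracket\,\llbracket H(\varphi^0_+)^{i-k}\rrbracket$ of independent factors, each controlled by combining the single-extension bounds with a direct second-moment estimate for the product that exploits the independence of $\varphi^0_-$ and $\varphi^0_+$ (so no resonant interaction arises). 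Finally, convergence as $T\to\infty$ follows by running the same second-moment machinery on the differences $\llbracket(\overline H W^0_{T_m})^i\rrbracket-\llbracket(\overline H W^0_{T_n})^i\rrbracket$, using $\hat\rho(\langle n\rangle/T_m)\hat\rho(\langle n\rangle/T_n)\to1$ and dominated convergence afforded by the uniform kernel bounds, which shows the sequence is Cauchy in $L^p(\mathbb P;L^\infty(|\tau|^\alpha,\mathcal C^{-i/2+\alpha-\delta}_z))$; its limit is $\llbracket(\overline H W^0_\infty)^i\rrbracket$ and is independent of the chosen subsequence.

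The hard part will be the bookkeeping in the kernel estimates: extracting exactly the right power of $|\tau|$ from the $i$-fold convolution of $\langle n\rangle^{-1}e^{-2|\tau|\langle n\rangle}$ while staying uniform in $T$, and retaining enough slack in both the spatial regularity (the $\delta$) and in $\tau$ to run the annulus-by-annulus Kolmogorov argument down to the borderline exponents $\alpha=(i-2)/2^+$ and $\alpha=i/2$. The chaos-decomposition, hypercontractivity and Besov-embedding steps are routine once the covariance estimates of Appendix~\ref{appendix: harmonic extension} are in hand, in the spirit of the stochastic estimates of \cite{BG20}; essentially all the genuine difficulty is concentrated in controlling the degeneration of the harmonic-extension smoothing as $\tau\to0$.
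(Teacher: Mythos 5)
Your proposal is correct and follows essentially the same route as the paper's proof: reduce to second-moment estimates within a fixed Wiener chaos via hypercontractivity, compute those second moments by expressing the blocked Wick power through the Fourier/covariance kernel $\overline C^B_T(\tau;\cdot,\cdot)$ with the exponential decay $e^{-2|\tau|\langle n\rangle}$ traded against regularity, bound the $i$-fold convolution, and close with a Besov embedding. The paper phrases the same second-moment computation through iterated stochastic integrals rather than directly through Wick's theorem on the covariance, but that is a cosmetic difference; the resulting kernel bound $\mathcal K_\tau(n)\lesssim|\tau|^{-\beta}\langle n\rangle^{-1-\beta}$ and the convolution estimate (the paper cites \cite[App.~C.3]{TW18}, you invoke Young) are the same object. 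Where you add value is in being explicit about two steps the paper leaves compressed: (i) the passage from fixed-$\tau$ moment bounds to the supremum over $\tau$ inside the expectation, which you handle with a dyadic decomposition of $[-1,1]\setminus\{0\}$ and a Kolmogorov--Garsia argument driven by $\tau$-increment estimates, whereas the paper only states "Let $\tau>0$" and never spells out the supremum step; and (ii) the $T\to\infty$ convergence, where you propose Cauchy estimates on differences while the paper invokes the martingale convergence theorem together with the uniform bound --- both are legitimate. Your bookkeeping on the constraint $\alpha>\max(0,(i-2)/2)\iff -i/2+\alpha>-1$ (for $i\ge2$) and your treatment of $H$ vs.\ $\overline H$ via the smoothing correction $\Sigma$, and of the double extension via the binomial expansion with independence, all match the structure in the paper. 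No gap.
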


In order to prove this proposition, we will need to represent the Wick powers in terms of iterated stochastic integrals. Let us write $\overline P:\mathbb T^2 \times (\mathbb R\times \mathbb T^2) \rightarrow \mathbb R$, $(z,x) \mapsto \overline P(z,x)$ to denote the Poisson kernel corresponding to the kernel of the harmonic extension operator $\overline H$. Observe that
\begin{equation}
\overline HW^0_t = \sum_{n \in \mathbb N} \int_0^t J_s^0 dB^n_s \, \int\mathsf{e_n}(z) \overline P(z,\cdot ) dz.	
\end{equation}
For every $\tau \in \mathbb R$, let $\widehat{\overline P}_\tau(n)$ denote the Fourier transform of $\widehat{\overline P}(\tau,\cdot)$. Then 
\begin{equation}
\widehat {\overline H W_t^0(\tau,\cdot)}(n) = \int_0^t J_s^0 dB_s^n \, \widehat{\overline{P}}_\tau (n).	
\end{equation}
Hence by It\^o's formula,
\begin{equation}
\widehat{\llbracket (\overline HW_t^0)^k \rrbracket(\tau,\cdot)}(n) = \sum_{n_1+\dots+n_k = n} k!\int_{0 \leq t_1 \leq \dots \leq t_{k-1}\leq t } \prod_{j=1}^k J_{t_j}^0 dB_{t_j}^{n_j}  \, \prod_{j=1}^k \widehat{\overline{P}}_\tau(n_j).
\end{equation}
Computing the second moment (see \cite{N09} for computing second moments of iterated stochastic integrals), we find that
\begin{equation}
\mathbb E\Big[\Big|\widehat{\llbracket (\overline HW_t^0)^k \rrbracket(\tau,\cdot)}(n) \Big|^2 \Big] =  (k!)^2\int_{0\leq t_1 \leq \dots \leq t_{k-1}\leq t } \prod_{k=1}^i J_t^0(n_k)^2 dt_k \,  \prod_{j=1}^k \widehat{\overline{P}}_\tau(n_j)^2.
\end{equation}
We may now undo the simplex integration and compute these explicitly to obtain:
\begin{equation}
\mathbb E\Big[\Big|\widehat{\llbracket (\overline HW_t^0)^k \rrbracket(\tau,\cdot)}(n) \Big|^2 \Big] \leq   k!\sum_{n_1+\dots+n_k=n}\prod_{j=1}^k \frac{1}{\langle n_j \rangle }  \exp(-2|\tau| \sum_{j=1}^k \langle n_j\rangle  ),   
\end{equation}
where we have additionally used the explicit symbol of $\overline P_\tau$, which follows by straightforward Fourier space calculations thanks to the infinite cylinder. See Appendix \ref{appendix: harmonic extension estimates}.

Let us now prove the proposition. 

\begin{proof}[Proof of Proposition \ref{prop: stochastic boundary v2}]
Let us observe that the last statements of the proposition follow since the two harmonic extensions $\overline H$ and $H$ agree up to a smoothing term. It is sufficient by the martingale convergence theorem to show that these are uniformly bounded in $L^\infty(|\tau|^{\alpha},\mathcal C^{-i/2+\alpha-\delta}_z)$. Furthermore, by a Besov embedding argument, it is sufficient to take $p \in [1,\infty)$.   

Let $\tau > 0$ and let $\Delta_j$ be a Littlewood-Paley block (see Appendix \ref{appendix:besov}). Then by stationarity and hypercontractivity (see \cite{N09}),
\begin{equation}
2^{jsp}\mathbb E[\| \Delta_j  (\llbracket \overline HW_t^0)^k\rrbracket(\tau,\cdot)\|_{L^p_z}^p]	 \leq C_p \Big(2^{2js} \mathbb E[\| \Delta_j \llbracket(\overline HW_t^0)^k\rrbracket (\tau,\cdot)\|_{L^2_z}^2]\Big)^{p/2}. 
\end{equation}
Then by Parseval's theorem.
\begin{equation} \label{eq: prop bdry stoch 1}
2^{2js} \mathbb E[\| \Delta_j \llbracket(\overline HW_t^0)^k \rrbracket(\tau,\cdot)\|_{L^2_z}^2 = 2^{2js}\sum_{n} |\Delta_j(n)|^2 \mathbb E\Big[\Big|\widehat{\llbracket (\overline HW_t^0)^k \rrbracket(\tau,\cdot)}(n) \Big|^2 \Big].
\end{equation}
We may therefore bound the above by
\begin{align}
\eqref{eq: prop bdry stoch 1} &\leq k! 2^{2js}\sum_{n} |\Delta_j(n)|^2	 \sum_{n_1+\dots+n_k=n}\prod_{j=1}^k \frac{1}{\langle n_j \rangle }  \exp(-2|\tau| \sum_{j=1}^k \langle n_j\rangle  )\\&
=  C k! 2^{2js} \sum_{n} \Delta_j(n) \mathcal K_\tau \underbrace{\ast \dots \ast}_k \mathcal K_\tau (n),
\end{align}
where for any $\beta>0$ there exists $C_\beta>0$
\begin{equation}
\mathcal K_\tau(n) \leq C_\beta \frac{1}{|\tau|^\beta \langle n \rangle^{1+\beta  }},	
\end{equation}
for any $k\beta < 1$. Applying the bounds on convolutions from \cite[Appendix C.3]{TW18}, provided $\frac{1+\beta}2 > \frac{k-1}k$, we then obtain 
\begin{equation} 	
2^{2js} \sum_{n} \Delta_j(n) \mathcal K_\tau \underbrace{\ast \dots \ast}_k \mathcal K_\tau (n) \leq C 2^{2js} \sum_{n} \Delta_j(n) \frac{1}{|\tau|^{k\beta }} \frac{1}{\langle n \rangle^{k\beta+1}} \leq C'|\tau|^{-k\beta }\frac{2^{2j(1+s)}}{2^{j(k\beta + 1)}}.
\end{equation}
 Thus, for convergence, we need
 \begin{equation}
 s<-1/2 + k\beta /2,
 \end{equation}
 for every $\beta \in (\min(0,\frac{k-2}{k}),1)$. 

The convergence statement is a straightforward modification of the bounds. 	
\end{proof}

\subsection{Renormalized variational problem and uniform bounds} \label{sec: bdry unif}

Let us now return to the variational problem in \eqref{eq: bdry bd}. We begin with a deterministic regularity estimate on $Z_T^0(u)$ for $u \in \mathcal H^0=L^2_tL^2_x$ which will allow us to identify divergences. Its proof, which we omit, follows from a similar computation to \cite[Lemma 2]{BG20} and the regularizing properties of $(-\Delta_{\mathbb T^2}+m^2)^{-1/2}$. 
\begin{lemma}\label{lem: bdry a priori Z}
For every $u \in \mathcal H^0$, 
\begin{equation}
\sup_T \|Z^0_T(u)\|_{H^{1/2}_z} \leq \|u\|_{\mathcal H^0}.
\end{equation}
\end{lemma}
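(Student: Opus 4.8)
The plan is to reduce the bound to a modewise computation in Fourier space, where all the operators involved are multipliers. Recall that $\sigma_t^0 = \rho_t \ast (-\Delta_{\mathbb T^2}+m^2)^{-1/2} \ast \rho_t$ has symbol $\widehat{\sigma_t^0}(n) = \widehat{\rho_t}(n)^2\langle n\rangle^{-1} = \rho(\langle n\rangle/t)^2\langle n\rangle^{-1}$ for $n\in\mathbb Z^2$, so that, by definition of $J_t^0 = \sqrt{\partial_t\sigma_t^0}$, one has $\widehat{J_t^0}(n)^2 = \partial_t\widehat{\sigma_t^0}(n)$; here the monotonicity of $\rho$ on $[0,\infty)$, implicit in the construction of the mollifiers, guarantees $\partial_t\widehat{\sigma_t^0}(n)\ge 0$, so that this square root genuinely defines a self-adjoint operator. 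The only structural input we need is the telescoping identity: since $\rho$ is supported in $B(0,2)$ one has $\widehat{\sigma_t^0}(n)=0$ for $t<\langle n\rangle/2$, whence
\begin{equation*}
\int_0^T \widehat{J_t^0}(n)^2\,dt \;=\; \int_0^T \partial_t\widehat{\sigma_t^0}(n)\,dt \;=\; \widehat{\sigma_T^0}(n) \;=\; \frac{\rho(\langle n\rangle/T)^2}{\langle n\rangle} \;\le\; \frac{1}{\langle n\rangle}, \qquad \forall\, n\in\mathbb Z^2,\ T\ge 0,
\end{equation*}
the last inequality using $0\le\rho\le 1$.

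Next I would insert this into the definition of $Z_T^0(u)$ through the Cauchy--Schwarz inequality applied to the time integral. For $u\in\mathcal H^0$ and $n\in\mathbb Z^2$, the $n$-th Fourier coefficient of $Z_T^0(u)=\int_0^T J_t^0 u_t\,dt$ is $\widehat{Z_T^0(u)}(n) = \int_0^T \widehat{J_t^0}(n)\,\widehat{u_t}(n)\,dt$, hence
\begin{equation*}
\bigl|\widehat{Z_T^0(u)}(n)\bigr|^2 \;\le\; \Bigl(\int_0^T \widehat{J_t^0}(n)^2\,dt\Bigr)\Bigl(\int_0^T |\widehat{u_t}(n)|^2\,dt\Bigr) \;\le\; \frac{1}{\langle n\rangle}\int_0^\infty |\widehat{u_t}(n)|^2\,dt .
\end{equation*}
Multiplying by $\langle n\rangle$, summing over $n\in\mathbb Z^2$, and applying Tonelli's theorem together with Parseval's identity then gives
\begin{equation*}
\|Z_T^0(u)\|_{H^{1/2}_z}^2 \;=\; \sum_{n\in\mathbb Z^2}\langle n\rangle\,\bigl|\widehat{Z_T^0(u)}(n)\bigr|^2 \;\le\; \sum_{n\in\mathbb Z^2}\int_0^\infty|\widehat{u_t}(n)|^2\,dt \;=\; \int_0^\infty\|u_t\|_{L^2_z}^2\,dt \;=\; \|u\|_{\mathcal H^0}^2 ,
\end{equation*}
and since the right-hand side does not depend on $T$, taking the supremum over $T$ proves the lemma.

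I do not anticipate any real obstacle here: this is the exact analogue of \cite[Lemma 2]{BG20}, the only change being that $(-\Delta_{\mathbb T^2}+m^2)^{-1}$ is replaced by $(-\Delta_{\mathbb T^2}+m^2)^{-1/2}$, which turns the bound $\int_0^T\widehat{J_t^0}(n)^2\,dt\le\langle n\rangle^{-2}$ into $\int_0^T\widehat{J_t^0}(n)^2\,dt\le\langle n\rangle^{-1}$ and hence the regularity gain of $Z_T^0(u)$ over $\mathcal H^0=L^2_tL^2_z$ from one derivative ($H^1_z$) to a half derivative ($H^{1/2}_z$). The only two points worth spelling out in the final write-up are the nonnegativity of $\partial_t\widehat{\sigma_t^0}(n)$, needed for $J_t^0$ to be well defined, and the evaluation of $\int_0^T\partial_t\widehat{\sigma_t^0}(n)\,dt$; both are immediate from the definitions of $\rho_t$ and $\sigma_t^0$.
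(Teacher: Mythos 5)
Your proof is correct and is precisely the computation the paper has in mind: the paper omits the proof and defers to \cite[Lemma 2]{BG20}, which is exactly the Fourier-modewise Cauchy--Schwarz argument combined with the telescoping identity $\int_0^T\widehat{J^0_t}(n)^2\,dt=\widehat{\sigma^0_T}(n)\le\langle n\rangle^{-1}$ that you write out. The only adaptation --- replacing $(-\Delta_{\mathbb T^2}+m^2)^{-1}$ by $(-\Delta_{\mathbb T^2}+m^2)^{-1/2}$, turning the gain from $H^1$ into $H^{1/2}$ --- is exactly what you carried through.
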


In light of Proposition \ref{prop: stochastic boundary v2} and Lemma \ref{lem: bdry a priori Z}, together with properties of the harmonic extension (see Appendix \ref{appendix: harmonic extension estimates}), it is clear that there is a potential divergence in the integrand involving $\llbracket (\overline H W_T^0)^3 \rrbracket$. In order to renormalize it, let us recall that
\begin{equation}
\delta^0_T:= - 3\cdot 4 \int_{[-1,1]\times \mathbb T^2} \int_{[-1,1]\times \mathbb T^2} \overline C_T^B(x,x')^4 dx dx',	
\end{equation}
where
\begin{equation}
\overline C_T^B(x,x') = \mathbb E_{\mu^0}[H \varphi_T(x) H\varphi_T(x')]. 	
\end{equation}

 For every $T>0$, let us define the map $\ell^0_T:\mathcal H^0 \rightarrow \mathcal H^0$ by  
  \begin{equation}\label{eqdef: bdry cov}
  \ell^0_T(u)_t := 
  4J_t^0 \overline H^* \llbracket (\overline HW_t^0)^3 \rrbracket \mathbbm 1_{t \leq T}+u_t, \qquad \forall t \geq 0.
  \end{equation}
  Above, $\overline H^*$ is the adjoint of $\overline H$ in $L^2([-1,1]\times \mathbb T^2)$ with kernel $\overline P^*:([-1,1]\times\mathbb T^2) \times \mathbb T^2 \rightarrow \mathbb R$. We may also extend the definition of $\ell^0_T$ to $T=\infty$, although it is no longer a map acting on $\mathcal H^0$ to itself, i.e.\ there is a loss of regularity. We will address this issue below in the context of the \emph{boundary remainder map}.    

\begin{lemma}
  \label{lem: bdry cubic} 
  For every $T>0$ and every $u \in \mathbb H^0$,
  \begin{equation}
  \mathbb E \left[ \int_{[-1,1]\times \mathbb T^2} 4\llbracket (\overline HW_T^0)^3\rrbracket \overline HZ_T^0(u)dx + \frac 12 \|u\|_{L^2_tL^2_z}^2 -\delta^0_T \right]=  \mathbb E\left[ \frac 12\|\ell^0_T(u)\|_{L^2_t L^2_z}^2 \right]. 	
  \end{equation}
\end{lemma}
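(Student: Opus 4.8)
The plan is to prove the identity by \emph{completing the square} in the Cameron--Martin norm $\|\cdot\|_{\mathcal H^0}$. Set $a_t := 4 J_t^0 \overline H^* \llbracket (\overline H W_t^0)^3 \rrbracket \mathbbm 1_{t\leq T}$, so that $\ell_T^0(u) = a + u$ and hence $\tfrac12\|\ell_T^0(u)\|_{\mathcal H^0}^2 = \tfrac12\|u\|_{\mathcal H^0}^2 + \langle u, a\rangle_{\mathcal H^0} + \tfrac12\|a\|_{\mathcal H^0}^2$. Since $T<\infty$, the mollification makes $(W_t^0)_{t\in[0,T]}$ a smooth, function-valued process with moments of all orders bounded uniformly on $[0,T]$, so $a$ is a genuine element of $\mathcal H^0 = L^2_t L^2_z$ almost surely and the manipulations below are between honest function-valued objects; we may also assume $\mathbb E\|u\|_{\mathcal H^0}^2 < \infty$ (otherwise both sides are $+\infty$, or one argues by truncation). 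Taking expectations, it then remains to match $\mathbb E[\langle u,a\rangle_{\mathcal H^0}]$ with the cubic term and $\mathbb E[\tfrac12\|a\|_{\mathcal H^0}^2]$ with $-\delta_T^0$.

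For the cross term, first use that $J_t^0$ is self-adjoint on $L^2_z$ (its Fourier symbol is real and non-negative) and that $\overline H^*$ is by definition the adjoint of $\overline H : L^2_z \to L^2([-1,1]\times\mathbb T^2)$ to rewrite $\langle u_t,\, 4 J_t^0 \overline H^* \llbracket (\overline H W_t^0)^3\rrbracket\rangle_{L^2_z} = 4\langle \overline H(J_t^0 u_t),\, \llbracket(\overline H W_t^0)^3\rrbracket\rangle$, the pairing now being over $[-1,1]\times\mathbb T^2$. Next, by the iterated-It\^o-integral representation of the Wick powers derived in Section~\ref{sec: bdry stochastic}, the process $s\mapsto \llbracket(\overline H W_s^0)^3\rrbracket$ is an $L^2$-martingale on the compact interval $[0,T]$; since $u$ is progressively measurable, $\overline H(J_t^0 u_t)$ is $\mathcal F_t$-measurable, so the tower property yields $\mathbb E[\langle\overline H(J_t^0 u_t),\llbracket(\overline H W_t^0)^3\rrbracket\rangle] = \mathbb E[\langle\overline H(J_t^0 u_t),\llbracket(\overline H W_T^0)^3\rrbracket\rangle]$ for all $t\leq T$. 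Integrating over $t\in[0,T]$, exchanging expectation and integral by Fubini (justified by Cauchy--Schwarz together with the $t$-integrability established in the next paragraph), and using linearity of $\overline H$ together with $Z_T^0(u) = \int_0^T J_t^0 u_t\,dt$, we obtain $\mathbb E[\langle u,a\rangle_{\mathcal H^0}] = \mathbb E\big[\int_{[-1,1]\times\mathbb T^2} 4\llbracket(\overline H W_T^0)^3\rrbracket\, \overline H Z_T^0(u)\,dx\big]$, which is exactly the cubic term in the statement.

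For the quadratic term, $\tfrac12\|a\|_{\mathcal H^0}^2 = 8\int_0^T \langle\overline H^*\llbracket(\overline H W_t^0)^3\rrbracket,\,(J_t^0)^2\overline H^*\llbracket(\overline H W_t^0)^3\rrbracket\rangle_{L^2_z}\,dt$; writing this as a kernel integral over $([-1,1]\times\mathbb T^2)^2$ and taking expectations, the integrand becomes $\mathbb E[\llbracket(\overline H W_t^0)^3\rrbracket(x)\,\llbracket(\overline H W_t^0)^3\rrbracket(y)]\cdot(\overline H (J_t^0)^2\overline H^*)(x,y)$. Since $\mathrm{Cov}(W_t^0) = \int_0^t (J_s^0)^2\,ds = \sigma_t^0 = \mathrm{Cov}_{\mu^0}(\varphi_t)$, the Gaussian field $\overline H W_t^0$ has covariance $\overline H\sigma_t^0\overline H^* = \overline C_t^B$, so Wick's theorem for third Wick powers gives $\mathbb E[\llbracket(\overline H W_t^0)^3\rrbracket(x)\,\llbracket(\overline H W_t^0)^3\rrbracket(y)] = 3!\,\overline C_t^B(x,y)^3$; moreover $\partial_t\sigma_t^0 = (J_t^0)^2$ gives $(\overline H (J_t^0)^2\overline H^*)(x,y) = \partial_t\overline C_t^B(x,y)$. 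A direct computation then yields
\[
\mathbb E\big[\tfrac12\|a\|_{\mathcal H^0}^2\big] = 48 \iint \int_0^T \overline C_t^B(x,y)^3\,\partial_t\overline C_t^B(x,y)\,dt\,dx\,dy = 12 \iint \overline C_T^B(x,y)^4\,dx\,dy = -\delta_T^0 ,
\]
where we used that $\overline C_0^B\equiv 0$ since the mollifier annihilates all Fourier modes at $t=0$. Adding the three contributions proves the lemma.

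The step I expect to be the main obstacle is the cross-term manipulation: stating precisely the martingale property of $s\mapsto\llbracket(\overline H W_s^0)^3\rrbracket$ in the relevant function space, the conditional-expectation identity against the adapted integrand $\overline H(J_t^0 u_t)$, and the Fubini exchanges. These are genuinely harmless here only because the cutoff $T<\infty$ keeps $W_t^0$ function-valued with finite moments uniformly on $[0,T]$, and because the mollifier trivializes the behaviour near $t=0$. The rest---Wick's theorem, the kernel identity $\overline H(J_t^0)^2\overline H^* = \partial_t\overline C_t^B$, and the time integration---is routine.
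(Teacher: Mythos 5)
Your proof is correct and follows essentially the same route as the paper: complete the square in the Cameron--Martin norm, identify the cross term $\mathbb E[\langle a,u\rangle_{\mathcal H^0}]$ with the cubic term, and verify $\mathbb E[\tfrac12\|a\|_{\mathcal H^0}^2]=-\delta_T^0$ by Wick's theorem together with the kernel identity $\overline H(J_t^0)^2\overline H^*=\partial_t\overline C_t^B$. The only presentational difference is in the cross-term step: the paper applies It\^o's product rule to $\llbracket(\overline HW_t^0)^3\rrbracket\cdot\overline HZ_t^0(u)$ and drops the mean-zero stochastic integral, whereas you invoke the martingale property of $t\mapsto\llbracket(\overline HW_t^0)^3\rrbracket$ and the tower property against the adapted integrand $\overline H(J_t^0 u_t)$; these are the same fact expressed two ways, and your phrasing is if anything slightly more elementary.
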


\begin{proof}
By It\^o's formula, the stochastic integral being mean zero, and taking adjoints,
\begin{align}
\int_{[-1,1]\times\mathbb T^2}&4\llbracket (\overline HW_T^0)^3\rrbracket \overline HZ_T^0(u) dx 
\\
&= \int_{[-1,1]\times\mathbb T^2} \int_0^T 4\Big( \llbracket (\overline H W_t^0)^3 \rrbracket \overline H J_t^0(u_t) dt+ 4\overline HZ_t^0(u) d \llbracket (\overline HW_t^0)^3\rrbracket  \Big) dt dz
\\
&\overset{\mathbb E[\cdot]=0}{=} \int_{\mathbb T^2} \int_0^T 4 J_t^0 \overline H^* \llbracket (\overline HW_t^0)^3\rrbracket  \, u_t dt dz. 
\end{align}
Thus, by the change of variables \eqref{eqdef: bdry cov}, the lemma will follow if we can establish that 
\begin{equation} \label{eq: bdry cubic 1}
-\frac{4^2}2 \int_{\mathbb T^2} \int_0^T \mathbb E[(J_t^0 \overline H^* \llbracket (\overline HW_t^0)^3\rrbracket)^2] dt dz = \delta^0_T. 
\end{equation}
 
 Observe that
\begin{equation}
-\frac{2}{4^2}\times \text{LHS of } \eqref{eq: bdry cubic 1} = \int_0^T \int_{\mathbb T^2} \int_{\mathbb T^2} \int_{[-1,1]\times \mathbb T^2} \int_{[-1,1]\times\mathbb T^2} \mathbb (\dots) dxdx'dzdz' dt,	
\end{equation}
where
\begin{equation}
(\dots)= (J_t^0)^2(z,z') \overline P^*(z',x') \overline P^*(z,x) 	\mathbb E\Big[ \llbracket (\overline HW_t^0)^3\rrbracket (x) \llbracket (\overline HW_t^0)^3\rrbracket (x')] \Big].
\end{equation}
We will use that on functions mapping $ \mathbb R\times \mathbb T^2 \rightarrow \mathbb R$ that are compactly supported on $[-1,1]\times \mathbb T^2$, the (action of the) adjoint of $\overline H$ is equal to the adjoint of $\overline H$ in $L^2(\mathbb R\times \mathbb T^2)$ restricted to $L^2([-1,1]\times \mathbb T^2)$. Thus, for every $x,x' \in [-1,1]\times\mathbb T^2$,

\begin{equation}
\int_{\mathbb T^2} \int_{\mathbb T^2} (J_t^0)^2(z,z') \overline P^*(z',x') \overline P^*(z,x)  dz dz' = \dot{\overline C}_t^B(x,x').  	
\end{equation}
Hence, by standard properties of Wick powers,
\begin{equation}
-\frac{2}{4^2} \times \text{LHS of } \eqref{eq: bdry cubic 1} = \int_0^T \int_{[-1,1]\times\mathbb T^2}\int_{[-1,1]\times \mathbb T^2}\dot{\overline C}_t^B(x,x') \overline C_t^B(x,x')^3 dx dx'= -\frac{2}{4^2}\delta^0_T,	
\end{equation}
which upon rearrangement yields \eqref{eq: bdry cubic 1}. 
\end{proof}

\subsubsection{Definition of the renormalized variational problem}

Let us now define a renormalized variational problem. First, let us introduce some notation. For every $T \in (0,\infty]$, define the vector of stochastic processes with distribution-valued coordinates,
\begin{equation}
\Xi^0_T(X^0):= \left( (\overline HW_t^0)_{t \leq T}, (\llbracket (\overline HW_t^0)^2\rrbracket)_{t\leq T}, (\llbracket (\overline H W_t^0)^3 \rrbracket)_{t\leq T}  \right).
\end{equation} 
We stress that thanks to Proposition \ref{prop: stochastic boundary v2}, we include the case $T=\infty$.  Consider the Banach space
\begin{equation}
\boldsymbol{\mathfrak B}^0:=C_tL^4_\tau \mathcal C^{-1/2-\kappa_0'}_z \times C_t L^8_\tau \mathcal C^{-1/2-\kappa_0'}_z \times C_tL^1_\tau \mathcal C^{-1/2-\kappa_0'}_z,  
\end{equation} 
where $\kappa_0'>0$ is sufficiently small and fixed a posteriori.  Proposition \ref{prop: stochastic boundary v2} and properties of the harmonic extension implies that $\Xi^0_T(X^0) \in \boldsymbol{\mathfrak B}^0$ almost surely. In fact, we have the following convergence result. 

\begin{proposition}\label{prop: bdry stochastic}
For every $p \in [1,\infty)$, 
\begin{equation}
\lim_{T \rightarrow \infty}\mathbb E\left [ \| \Xi^0_T(X^0) - \Xi^0_\infty(X^{0}) \|_{\boldsymbol{\mathfrak B}^0}^p \right] = 0. 
\end{equation}
	
\end{proposition}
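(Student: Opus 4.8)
The plan is to establish Proposition~\ref{prop: bdry stochastic} as a quantitative upgrade of the qualitative convergence already contained in Proposition~\ref{prop: stochastic boundary v2}. Recall that $\Xi^0_T(X^0)$ consists of the three processes $(\overline H W_t^0)_{t\le T}$, $(\llbracket(\overline H W_t^0)^2\rrbracket)_{t\le T}$, $(\llbracket(\overline H W_t^0)^3\rrbracket)_{t\le T}$, and that Proposition~\ref{prop: stochastic boundary v2} already yields, for each $i\in\{1,2,3\}$, uniform-in-$T$ bounds
\begin{equation}
\sup_T \mathbb E\Big[\Big(\sup_{\tau}|\tau|^{\alpha}\|\llbracket(\overline H W^0_T)^i\rrbracket(\tau,\cdot)\|_{\mathcal C^{-i/2+\alpha-\delta}_z}\Big)^p\Big]\le C_p
\end{equation}
together with $L^\infty(|\tau|^\alpha,\mathcal C^{-i/2+\alpha-\delta}_z)$ convergence along sequences. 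So the first step is to record that, upon choosing $\alpha$ small and $\delta,\kappa_0'$ appropriately, the weighted $L^\infty_\tau$ spaces embed into the $L^q_\tau\mathcal C^{-1/2-\kappa_0'}_z$ spaces (with $q=4,8,1$) appearing in $\boldsymbol{\mathfrak B}^0$; indeed $\int_{-1}^1 |\tau|^{-\alpha q}\,d\tau<\infty$ provided $\alpha q<1$, and $-i/2+\alpha-\delta \ge -1/2-\kappa_0'$ for suitable parameters since $i\ge 1$. The continuity in $t$ (the $C_t$ factor) comes from the fact that $W^0_t$ is a continuous $L^2$-martingale, so $\overline H W^0_t$ and its Wick powers depend continuously on $t$ in these topologies — this is a standard Kolmogorov-type argument using the iterated-stochastic-integral representation derived in Section~\ref{sec: bdry stochastic}.

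The second, main step is the $L^p$ convergence. The natural route is to prove that $(\Xi^0_T(X^0))_T$ is a Cauchy sequence in $L^p(\mathbb P;\boldsymbol{\mathfrak B}^0)$. Fix $T'>T$ and estimate $\mathbb E[\|\Xi^0_{T'}-\Xi^0_T\|^p_{\boldsymbol{\mathfrak B}^0}]$ componentwise. For each component, the difference $\llbracket(\overline H W^0_{T'})^i\rrbracket - \llbracket(\overline H W^0_T)^i\rrbracket$ is again an element of the $i$-th Wiener chaos, so by hypercontractivity (Gaussian hypercontractivity on a fixed chaos, cf.~\cite{N09}) it suffices to bound the second moment of its Fourier modes. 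Using the iterated-stochastic-integral formula from Section~\ref{sec: bdry stochastic}, the difference corresponds to replacing $\prod_j J^0_{t_j}\mathbbm 1_{t_j\le T}$ by $\prod_j J^0_{t_j}\mathbbm 1_{t_j\le T'}$, so the second moment of $\widehat{(\llbracket(\overline HW^0_{T'})^i\rrbracket-\llbracket(\overline HW^0_T)^i\rrbracket)(\tau,\cdot)}(n)$ is controlled by a convolution of kernels $\mathcal K_\tau$ restricted to at least one frequency having $\langle n_j\rangle\gtrsim T$. Running the same convolution estimates as in the proof of Proposition~\ref{prop: stochastic boundary v2} but tracking this high-frequency restriction produces an extra factor $T^{-\theta}$ for some small $\theta>0$ (losing an arbitrarily small amount of regularity). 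Summing over Littlewood--Paley blocks and integrating the weight $|\tau|^{\alpha}$ in $\tau$ yields
\begin{equation}
\mathbb E\big[\|\Xi^0_{T'}(X^0)-\Xi^0_T(X^0)\|^p_{\boldsymbol{\mathfrak B}^0}\big] \le C_p\, T^{-\theta p},
\end{equation}
which gives the Cauchy property and hence convergence in $L^p$; the limit is identified with $\Xi^0_\infty(X^0)$ by uniqueness of limits in probability combined with the almost-sure convergence already furnished by Proposition~\ref{prop: stochastic boundary v2}.

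The step I expect to be the main obstacle is the last one, namely obtaining the \emph{quantitative rate} $T^{-\theta}$ uniformly in the $\tau$-weight and across all three chaoses simultaneously, rather than just qualitative convergence. The delicate point is that the high-frequency cutoff $\mathbbm 1_{t_j\le T}$ interacts with the Poisson-kernel decay $\exp(-2|\tau|\sum_j\langle n_j\rangle)$: near $\tau=0$ the exponential provides no help and one must extract the decay purely from the frequency localization, which forces one to spend regularity and is precisely where the constraint $\alpha>\max(0,(i-2)/2)$ and the parameter choices in Proposition~\ref{prop: stochastic boundary v2} are tight. One has to be careful that the small amount of regularity traded for the rate still leaves the target space $\mathcal C^{-1/2-\kappa_0'}_z$ reachable, and that the $\tau$-integrability exponents $q\in\{4,8,1\}$ for the three components remain admissible ($\alpha q<1$). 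Once these bookkeeping constraints are checked to be simultaneously satisfiable — which they are, by taking $\kappa_0'$ and then $\alpha$, $\delta$ sufficiently small in the right order — the proof concludes. Finally, the statement with $\overline H$ replaced by $H$ and with the double harmonic extension follows verbatim since, as noted in Proposition~\ref{prop: stochastic boundary v2}, the two harmonic extensions differ by an infinitely smoothing operator whose contribution is negligible in every norm considered.
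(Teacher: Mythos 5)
Your overall strategy --- reduce to second-moment bounds on each fixed Wiener chaos via the iterated-integral representation, upgrade through hypercontractivity, and obtain $L^p$ convergence by a Cauchy argument --- is the natural route, and the paper itself gives no proof of this proposition (it is stated as a consequence of Proposition~\ref{prop: stochastic boundary v2} with no argument supplied), so you are genuinely filling a gap rather than reproducing one.

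There is, however, a concrete problem in your first step that must be addressed before the rest can go through. You assert that the two constraints $\alpha q<1$ (for $L^q_\tau$-integrability of $|\tau|^{-\alpha}$) and $-i/2+\alpha-\delta\ge-1/2-\kappa_0'$ (to land in the regularity $\mathcal C^{-1/2-\kappa_0'}_z$ appearing in $\boldsymbol{\mathfrak B}^0$) are simultaneously satisfiable for suitable small parameters. For $i=2$ they are not: the regularity constraint forces $\alpha\ge 1/2+\delta-\kappa_0'$, while the $L^8_\tau$ constraint forces $\alpha<1/8$, and these are incompatible for any small $\kappa_0',\delta>0$. A similar marginal tension appears at $i=3$ (one needs $\alpha\ge 1+\delta-\kappa_0'$ together with $\alpha<1$, which only closes if $\kappa_0'>\delta$). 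The underlying reason is structural: for $i\ge 2$ the boundary blow-up $|\tau|^{-\alpha}$ that must be expended to reach regularity $-1/2-\kappa_0'$ in $z$ exceeds what an \emph{unweighted} $L^q_\tau$ norm can absorb near $\tau=0$. Comparing against how the $\Xi^0$-norm is actually invoked downstream --- in Lemma~\ref{lem: bdry quadratic} and in the proof of Proposition~\ref{prop: bdry unif} the authors use the \emph{weighted} quantity $\||\tau|^{1/2-\delta}\llbracket(\overline H W_T^0)^2\rrbracket\|_{L^4_\tau\mathcal C^{-1/2+\delta}_z}$ and the smoothed object $\mathbb W_T^{0,3}$ in place of $\llbracket(\overline H W_T^0)^3\rrbracket$ --- it is clear that the displayed definition of $\boldsymbol{\mathfrak B}^0$ is a shorthand for weighted norms, and your argument should be recast accordingly. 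With the $|\tau|^{1/2-\delta}$ weight in the $i=2$ component the admissible window becomes $\alpha\in[1/2+2\delta,\,3/4-\delta)$, which is nonempty, and then your Cauchy/chaos argument proceeds unchanged. Without this correction, the claimed embedding of the weighted $L^\infty_\tau$ spaces from Proposition~\ref{prop: stochastic boundary v2} into the literal $\boldsymbol{\mathfrak B}^0$ as written is false.
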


\begin{remark}
We will typically denote an element of $\boldsymbol{\mathfrak B}^0$ by $\Xi^0=(\Xi^{0,1},\Xi^{0,2},\Xi^{0,3})$. To distinguish this from the stochastic process, we will always keep the dependecy of $\Xi^0_T(X^0)$ on the underlying $L^2$-Brownian motion $X^0$ in the notation. Let us also remark that there is flexibility in the exact choice of $\boldsymbol{\mathfrak B}^0$ and our choice is motivated by the analytic estimates below. 	
\end{remark}

Let us now define the \emph{boundary remainder map}, which amounts to modifying the definition of $\ell^0_T$ to allow for the case  $T=\infty$. Let us introduce the Banach space 
\begin{equation}
\mathcal L^0:= L^2_t W^{1/2-\kappa_0',2+\kappa_0''}_z,	
\end{equation}
 where for every $\kappa_0'>0$ sufficiently small, $\kappa_0''>0$ is chosen so that $H^{1/2}_z \hookrightarrow W^{1/2-\kappa_0', 2+\kappa_0''}_z \hookrightarrow L^4_z$. For every $T\in (0,\infty]$, the boundary remainder map is the map $\ell^0_T:\boldsymbol{\mathfrak B}^0 \times \mathcal L^0 \rightarrow \mathcal L^0$ defined by
\begin{equation}
\ell^0_T(\Xi^0, u)_t:= 
  4J_t^0 \overline H^* \Xi^{0,3}_t \mathbbm 1_{t \leq T}+u_t, \qquad \forall t \geq 0.
\end{equation}
In the case $T<\infty$ and  $\Xi^0 = \Xi^0_T(X^0)$, we recover \eqref{eqdef: bdry cov} and simply write $\ell^0_T(\Xi^0_T(X^0),u)=\ell^0_T(u)$. 

For every $T\in(0,\infty]$, the \emph{residual boundary potential} is the map $\Phi^0_T:\boldsymbol{\mathfrak B}^0 \times \mathcal L^0 \rightarrow \mathbb R$  defined by 
\begin{equation}
\Phi_T^0(\Xi^0,u):= \int_{[-1,1]\times\mathbb T^2	} 6\Xi^{0,2} (\overline H Z_T^0(u))^2 + 4\Xi^{0,1} (\overline H Z_T^0(u))^3 dx, \qquad \forall (\Xi,u) \in \boldsymbol{\mathfrak B}^0\times \mathcal L^0.
\end{equation}
Additionally, let $F^0_T:\boldsymbol{\mathfrak B}^0 \times \mathcal{L}^0 \rightarrow \mathbb R$ denote the map defined for every $ (\Xi^0,u) \in \boldsymbol{\mathfrak B}^0\times \mathcal L^0$ by 
\begin{equation}
F^{0}_T(\Xi^0,u):= \Phi^0_T(\Xi^0,u) + \|\overline HZ_T^0(u)\|_{L^4([-1,1]\times\mathbb T^2)}^4 + \frac 12 \|\ell^0_T(\Xi^0,u)\|_{L^2_tL^2_z}^2.	
\end{equation}
The map $F^0_T$ consists of the residual boundary potential together with positive terms -- corresponding to an \emph{effective potential} and \emph{effective relative entropy} --  that will help us analyze the variational problem below.
Given $f \in C^\infty(\mathbb T^2)$, let $F^{0,f}_T:\boldsymbol{\mathfrak B}^0\times \mathcal{L}^{0} \rightarrow \mathbb R$ be the map defined by
\begin{equation}
	F^{0,f}_T(\Xi^0,u):= \langle f, Z^0_T(u)\rangle_{L^2_z} + F^0_T(\Xi^0,u), \qquad \forall (\Xi^0,u) \in \boldsymbol{\mathfrak B}^0\times \mathcal L^0.
\end{equation}

Write $\mathbb L^0$ to denote the space of progressively measurable processes that are almost surely in $\mathcal{L}^{0}$. 
\begin{definition}For every $T\in(0,\infty]$ and $f \in C^\infty(\mathbb T^2)$, the \emph{renormalized boundary cost function} is the map  $\mathbb F^{0,f}_T: \mathbb L^0 \rightarrow \mathbb R$ defined by
\begin{equation}\mathbb F^{0,f}_T(u) = \mathbb E\left [ F^{0,f}_T(\Xi^0_T(X^0), u) \right], \qquad \forall u \in \mathbb L^0.  
\end{equation}
In the case $f=0$,  $F^{0,0}_T=F^0_T$ and we write $\mathbb F^{0,0}_T=\mathbb F^0_T$. 
\end{definition}

As a direct consequence of Lemma \ref{lem: bdry cubic} and the definitions above, we obtain the following renormalized variational problem when $T<\infty$. 

\begin{proposition}\label{prop: bdry renormalized variationalproblem}

For every $T>0$ and $f \in C^\infty(\mathbb T^2)$,
\begin{equation} \label{eq: prop bdry unif 1}
-\log\mathcal Z^0_T(f)
=
\inf_{u \in \mathbb L^0} \mathbb F^{0,f}_T(u).	
\end{equation}
\end{proposition}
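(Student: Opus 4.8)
\textbf{Proof strategy for Proposition \ref{prop: bdry renormalized variationalproblem}.}

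The plan is to start from the Bou\'e--Dupuis representation of Proposition \ref{prop: bdry bd} and transform it into the renormalized form by the change of variables \eqref{eqdef: bdry cov}. First I would recall that, by Proposition \ref{prop: bdry bd}, for every $f\in C^\infty(\mathbb T^2)$ and every finite $T>0$,
\begin{equation}
-\log\mathcal Z^0_T(f) = \inf_{u\in\mathbb H^0}\mathbb E\left[ \langle f, W^0_T + Z^0_T(u)\rangle_{L^2_z} + \mathcal V^0_T(W^0_T, Z^0_T(u)) + \tfrac12\|u\|_{\mathcal H^0}^2\right].
\end{equation}
The term $\langle f, W^0_T\rangle$ has mean zero (it is a stochastic integral against the $J^0_t$'s), so it drops out of the expectation, leaving $\langle f, Z^0_T(u)\rangle$, which is exactly the first summand of $F^{0,f}_T$. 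Next I would isolate, inside $\mathbb E[\mathcal V^0_T + \tfrac12\|u\|^2]$, the two potentially divergent contributions: the cubic term $4\int \llbracket(\overline H W^0_T)^3\rrbracket\,\overline H Z^0_T(u)\,dx$, the quadratic-in-$u$ entropy term $\tfrac12\|u\|_{\mathcal H^0}^2$, and the constant $-\delta^0_T$. By Lemma \ref{lem: bdry cubic}, the expectation of the sum of these three equals $\mathbb E[\tfrac12\|\ell^0_T(u)\|_{L^2_tL^2_z}^2]$, where $\ell^0_T$ is as in \eqref{eqdef: bdry cov}. The remaining terms in $\mathcal V^0_T$ are precisely $6\llbracket(\overline H W^0_T)^2\rrbracket(\overline H Z^0_T(u))^2$, $4\,\overline H W^0_T(\overline H Z^0_T(u))^3$, and $(\overline H Z^0_T(u))^4$, which match $\Phi^0_T(\Xi^0_T(X^0),u) + \|\overline H Z^0_T(u)\|_{L^4([-1,1]\times\mathbb T^2)}^4$ once one identifies $\Xi^{0,1}_T(X^0) = \overline H W^0_t|_{t\le T}$, $\Xi^{0,2}_T(X^0) = \llbracket(\overline H W^0_t)^2\rrbracket|_{t\le T}$. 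Collecting, the integrand inside the expectation is exactly $F^{0,f}_T(\Xi^0_T(X^0),u)$, and so the infimum over $u\in\mathbb H^0$ of $\mathbb E[F^{0,f}_T(\Xi^0_T(X^0),u)]$ equals $-\log\mathcal Z^0_T(f)$.

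It then remains to justify that the infimum may be restricted from $\mathbb H^0$ to the smaller class $\mathbb L^0$ of progressively measurable processes almost surely in $\mathcal L^0 = L^2_t W^{1/2-\kappa_0',2+\kappa_0''}_z$, i.e.\ that
\begin{equation}
\inf_{u\in\mathbb H^0}\mathbb E\left[F^{0,f}_T(\Xi^0_T(X^0),u)\right] = \inf_{u\in\mathbb L^0}\mathbb E\left[F^{0,f}_T(\Xi^0_T(X^0),u)\right].
\end{equation}
One inclusion is trivial since $\mathbb L^0\subset\mathbb H^0$. For the other, one argues that drifts $u$ with infinite cost can be discarded, and that any near-minimizing $u$ can be replaced by one satisfying the regularity of $\mathcal L^0$ at no extra cost: here the point is that $\ell^0_T(\Xi^0_T(X^0),u) = u + (\text{a stochastic process in }\mathcal L^0)$, so finiteness of $\mathbb E[\|\ell^0_T(u)\|^2_{L^2_tL^2_z}]$ already forces $u\in L^2_tL^2_z$ with controlled norm; and one checks that the additional Sobolev regularity is automatic for optimizers because the Euler--Lagrange structure (or simply a truncation/mollification argument combined with Lemma \ref{lem: bdry a priori Z}, which gives $\|Z^0_T(u)\|_{H^{1/2}_z}\le\|u\|_{\mathcal H^0}$ and hence the harmonic extension is controlled) allows approximation within $\mathbb L^0$. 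This reduction is of the same type as the analogous step in \cite{BG20} and I expect it to go through with only minor modifications.

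The main obstacle, such as it is, is bookkeeping rather than conceptual: one must be careful that all the manipulations in Lemma \ref{lem: bdry cubic} --- the It\^o expansion of $\int\llbracket(\overline H W^0_T)^3\rrbracket\,\overline H Z^0_T(u)\,dx$, discarding the mean-zero martingale part, taking adjoints of $\overline H$, and recognizing the It\^o isometry correction as exactly $\delta^0_T$ --- are valid under the integrability provided by the Bou\'e--Dupuis ``tamed'' framework of \cite{U14}, and that the constant $\delta^0_T$ appearing there genuinely coincides with the one fixed in Definition \ref{def: regularized boundary measure}. Since Lemma \ref{lem: bdry cubic} is already stated and its proof supplied, the present proposition is essentially an assembly of Proposition \ref{prop: bdry bd}, Lemma \ref{lem: bdry cubic}, the vanishing of $\mathbb E[\langle f, W^0_T\rangle]$, and the domain-reduction argument above; I would present it in exactly that order.
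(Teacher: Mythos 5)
The paper itself offers no detailed proof of this proposition — it is stated as ``a direct consequence of Lemma \ref{lem: bdry cubic} and the definitions above.'' Your write-up expands this into exactly the intended argument: apply Proposition \ref{prop: bdry bd}, observe that $\mathbb E[\langle f, W^0_T\rangle]=0$, use Lemma \ref{lem: bdry cubic} to trade the cubic term, the entropy term and $-\delta^0_T$ for $\tfrac12\|\ell^0_T(u)\|^2_{\mathcal H^0}$, and recognize what is left as $F^{0,f}_T(\Xi^0_T(X^0),u)$. This is a faithful reconstruction of the paper's route, so there is no methodological disagreement.

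The one place where you go beyond the paper is the change of domain from $\mathbb H^0$ to $\mathbb L^0$. You are right to flag it: with the paper's explicit definition $\mathcal L^0 = L^2_t W^{1/2-\kappa_0',2+\kappa_0''}_z$ one has $\mathcal L^0 \hookrightarrow \mathcal H^0 = L^2_tL^2_z$, so $\mathbb L^0\subset\mathbb H^0$ and the inequality $\inf_{\mathbb H^0}\le\inf_{\mathbb L^0}$ is the trivial one, while $\inf_{\mathbb H^0}\ge\inf_{\mathbb L^0}$ requires a density argument. The paper does not address this, treating the proposition as immediate. Your proposed route — observe that, for $T<\infty$, the shift $4J^0_t\overline H^*\llbracket(\overline HW^0_t)^3\rrbracket\mathbbm 1_{t\le T}$ is smooth (hence in $\mathcal L^0\cap\mathcal H^0$), so the cost only depends on $u$ through quantities controlled in $\mathcal H^0$, and then mollify $u$ in $z$ to land in $\mathbb L^0$ using Lemma \ref{lem: bdry a priori Z} and continuity of the $T<\infty$ cost along such mollifications — is the natural fix and matches the regularization devices (${\rm reg}_{\mathbb T^2,\varepsilon}$) the paper introduces later in Section \ref{sec: boundary technical} for precisely this kind of approximation. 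The appeal to an ``Euler--Lagrange structure'' is not needed and is the weakest part of the sketch; the mollification argument alone suffices and is what I would keep.

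One minor caveat worth noting for your own understanding: the paper's informal discussion before Lemma \ref{lem:bdry-approx-finite} (``supported on $\mathbb H^0$ ... and not just $\mathbb L^0$'') reads as if $\mathbb H^0\subset\mathbb L^0$, which is inconsistent with the stated definition of $\mathcal L^0$ (and with the bulk analogue, where $\mathcal L$ is genuinely a \emph{larger} space of negative regularity). This is likely a sign error or typo in the paper's $\mathcal L^0$. Your proof does not depend on resolving this, but it explains why you found the domain reduction worth spelling out even though the paper treats it as automatic.
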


For the remainder of the section, we will analyze the convergence of the infima in the above renormalized variational problem. 

\subsubsection{Bounds on the residual boundary potential}

As a justification that we now have the "correct" renormalized problem, we will start by showing that the residual boundary potential contains no further (analytic) divergences. In particular, we show that the residual boundary potential admits deterministic bounds in terms of the effective potential and relative entropy terms in $F^0_T$ up to an error that can be controlled by stochastic estimates (c.f. Proposition \ref{prop: stochastic boundary v2}). We will actually show these bounds also hold for the residual boundary potential at $T=\infty$, even though we have not yet defined the variational problem at $T=\infty$. Let us write $\mathcal L^0_w$ and $\mathcal H^0_w$ for the spaces $\mathcal L^0$ and $\mathcal H^0$ endowed with their respective weak topologies.

\begin{proposition}\label{prop: bdry unif}
There exists $p_0\in[1,\infty)$ such that for every $\delta > 0$, there exists $C_\delta > 0$  such that for every $T\in(0,\infty]$ and $(\Xi^{0},u) \in \boldsymbol{\mathfrak B}^0\times \mathcal L^0$,\begin{equation} \label{eq: prop bdry unif 2}
|\Phi^0_T(\Xi^0,u)| \leq C_\delta \|\Xi^0\|_{\boldsymbol{\mathfrak B}^0}^{p_0} + \delta \left(  \|\overline H Z_T^0(u)\|_{L^4([-1,1]\times \mathbb T^2)}^4 +\|\ell^0_T(\Xi^0,u)\|_{\mathcal H^0}^2 \right). 	
\end{equation}
Furthermore, assume that $\Xi_T^0 \rightarrow \Xi_\infty^0$ in $\boldsymbol{\mathfrak B}^0$, $u^T \rightharpoonup u^\infty$ in $\mathcal L_w^0$, and $\ell^0_T(\Xi_T^0,u^T) \rightharpoonup \ell^0_\infty(\Xi_\infty^0, u^\infty)$ in $\mathcal H^0_w$. Then
\begin{equation} \label{eq: prop bdry unif 3}
\lim_{T\rightarrow \infty} \Phi^0_T(\Xi_T^0, u^T) = \Phi^0_\infty(\Xi_\infty^0, u^\infty). 
\end{equation}  
\end{proposition}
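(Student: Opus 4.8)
The plan is to estimate $\Phi^0_T$ term by term, using the product structure $\Phi^0_T(\Xi^0,u) = \int_{[-1,1]\times\mathbb T^2} 6\,\Xi^{0,2}(\overline H Z_T^0(u))^2 + 4\,\Xi^{0,1}(\overline H Z_T^0(u))^3\, dx$, and to absorb all of the $\overline H Z_T^0(u)$-dependence into the two positive terms $\|\overline H Z_T^0(u)\|_{L^4([-1,1]\times\mathbb T^2)}^4$ and $\|\ell^0_T(\Xi^0,u)\|_{\mathcal H^0}^2$. The key preliminary input is Lemma \ref{lem: bdry a priori Z}, which gives $\|Z_T^0(u)\|_{H^{1/2}_z}\lesssim \|u\|_{\mathcal H^0}$; more precisely I would use the version adapted to $\ell^0_T$: since $\ell^0_T(\Xi^0,u)_t = 4 J_t^0 \overline H^*\Xi^{0,3}_t\mathbbm 1_{t\leq T} + u_t$, writing $Z_T^0(u) = Z_T^0(\ell^0_T(\Xi^0,u)) - 4\int_0^T J_t^0 (J_t^0)\overline H^*\Xi^{0,3}_t\, dt$, one gets that $\overline H Z_T^0(u)$ is controlled in a suitable space (say $C_\tau H^{1/2-\kappa_0'}_z \hookrightarrow C_\tau L^\infty_z$ on $[-1,1]$, using the smoothing of $\overline H$ and Appendix \ref{appendix: harmonic extension estimates}) by $\|\ell^0_T(\Xi^0,u)\|_{\mathcal H^0} + \|\Xi^{0,3}\|_{C_tL^1_\tau\mathcal C^{-1/2-\kappa_0'}_z}$, where I also use that the operator $u\mapsto \overline H Z_T^0(u)$ has a $T$-independent norm bound because the $J^0$'s form a resolution of $(-\Delta_{\mathbb T^2}+m^2)^{-1/2}$.

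With this in hand, the two terms are handled by duality/Young. For the quadratic term, estimate $\big|\int \Xi^{0,2}(\overline H Z)^2\big| \leq \int_{-1}^1 \|\Xi^{0,2}(\tau,\cdot)\|_{\mathcal C^{-1/2-\kappa_0'}_z}\|(\overline H Z)(\tau,\cdot)^2\|_{\mathcal C^{1/2+\kappa_0'}_z}\, d\tau$; bounding the square via a product estimate in Besov spaces (Appendix \ref{appendix:besov}), $\|(\overline H Z)^2(\tau,\cdot)\|_{\mathcal C^{1/2+\kappa_0'}}\lesssim \|(\overline H Z)(\tau,\cdot)\|_{H^{1/2}_z}\|(\overline H Z)(\tau,\cdot)\|_{\mathcal C^{1/2+\kappa_0'}}$ or directly $\lesssim \|(\overline H Z)(\tau,\cdot)\|_{H^{1/2+\epsilon}}^2$, and then using $L^8_\tau$ on $\Xi^{0,2}$ (matching $\boldsymbol{\mathfrak B}^0$) against $L^{8/7}_\tau$ on the square, which by Lemma \ref{lem: bdry a priori Z} and harmonic-extension smoothing is controlled by $\|\overline H Z_T^0(u)\|_{L^4}^{\theta}\cdot(\ldots)$. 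The decisive point is that the harmonic extension gains one full $\tau$-derivative worth of integrability away from the boundary, so one genuinely gets an $L^4$-in-$(\tau,z)$ bound with a power strictly less than $4$, enabling Young's inequality with an arbitrarily small $\delta$ in front of $\|\overline H Z_T^0(u)\|_{L^4}^4$. The cubic term $\int \Xi^{0,1}(\overline H Z)^3$ is done identically, pairing $\mathcal C^{-1/2-\kappa_0'}$ with $\mathcal C^{1/2+\kappa_0'}$, using $L^4_\tau$ on $\Xi^{0,1}$ against $L^{4/3}_\tau$ on the cube, and interpolating the cube between $L^4([-1,1]\times\mathbb T^2)$ and the $\ell^0_T$-entropy term; the worst homogeneity is $\|\overline H Z\|_{L^4}^{3}$, again strictly subquartic, so Young absorbs it into $\delta\|\overline H Z\|_{L^4}^4 + \delta\|\ell^0_T\|_{\mathcal H^0}^2$ with a constant $C_\delta\|\Xi^0\|_{\boldsymbol{\mathfrak B}^0}^{p_0}$ for suitable $p_0$ (concretely $p_0=4$ from the cubic term). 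All constants are $T$-uniform because every estimate above only uses the fixed Fourier multipliers and the $T$-independent norms of $\overline H$, $\overline H^*$, and the map $u\mapsto Z_T^0(u)$.

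For the convergence statement \eqref{eq: prop bdry unif 3}, I would argue by a standard weak-strong continuity/lower-semicontinuity scheme. Under the hypotheses $\Xi_T^0\to\Xi_\infty^0$ strongly in $\boldsymbol{\mathfrak B}^0$, $u^T\rightharpoonup u^\infty$ weakly in $\mathcal L^0_w$, and $\ell^0_T(\Xi_T^0,u^T)\rightharpoonup \ell^0_\infty(\Xi_\infty^0,u^\infty)$ weakly in $\mathcal H^0_w$, one first notes that $\overline H Z_T^0(u^T)\to \overline H Z_\infty^0(u^\infty)$ strongly in $C_\tau \mathcal C^{\beta}_z$ on $[-1,1]$ for some $\beta\in(0,1/2)$: this follows because $Z_T^0(u^T) = Z_T^0(\ell^0_T(\Xi_T^0,u^T)) - 4\int_0^T (J_t^0)^2\overline H^*\Xi^{0,3}_{T,t}\,dt$, the first piece converges weakly in $H^{1/2}_z$ (by weak convergence of $\ell^0_T$ plus the fact that $Z_T^0$ is a bounded operator that converges strongly on a dense set), the second piece converges strongly in $H^{1/2}_z$ (since $\Xi^{0,3}_T\to\Xi^{0,3}_\infty$ strongly), and then the compactness of the embedding $H^{1/2}_z\hookrightarrow\hookrightarrow \mathcal C^\beta_z$ together with the $\tau$-smoothing of $\overline H$ upgrades weak to strong convergence in the Hölder-in-$z$, continuous-in-$\tau$ topology. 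Given this, each integrand $\Xi^{0,2}_T(\overline H Z_T^0(u^T))^2$ converges: $\Xi^{0,2}_T\to\Xi^{0,2}_\infty$ strongly in the dual space while $(\overline H Z_T^0(u^T))^2\to(\overline H Z_\infty^0(u^\infty))^2$ strongly in the space $\mathcal C^{1/2+\kappa_0'}_z$ scale (product of strongly convergent sequences), and the $\tau$-integral passes to the limit by dominated convergence using the uniform bounds from the first part; the cubic term is analogous. \textbf{The main obstacle} I anticipate is the convergence $\overline H Z_T^0(u^T)\to\overline H Z_\infty^0(u^\infty)$ in a \emph{strong} enough topology to multiply against the only-weakly-convergent (in fact distribution-valued) $\Xi^{0,i}_T$: this is where one must carefully exploit that the $\Xi^{0,3}_T$-correction piece of $Z_T^0(u^T)$ converges strongly, and that the genuinely-only-weakly-convergent piece $Z_T^0(\ell^0_T(\cdots))$ lands in $H^{1/2}_z$ which compactly embeds into the Hölder space needed for the Besov product estimate — so weak convergence there is in fact upgraded to strong convergence, closing the argument. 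One subtlety to check is that $\ell^0_\infty(\Xi^0_\infty,u^\infty)$ is still in $\mathcal H^0$ (no regularity loss at $T=\infty$), which is exactly why the definition uses $\overline H^*\Xi^{0,3}$ paired with the smoothing $J^0_t$ and the weighted-in-$\tau$ estimate of Proposition \ref{prop: stochastic boundary v2} for $i=3$ with $\alpha$ close to $1$; this should be recorded as a short lemma (or cited from the analogue in \cite{BG20}).
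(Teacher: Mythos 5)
There is a genuine gap in the plan, located at the duality pairing for the quadratic term. You pair $\Xi^{0,2}(\tau,\cdot)\in\mathcal C^{-1/2-\kappa_0'}_z$ against $(\overline H Z)(\tau,\cdot)^2\in\mathcal C^{1/2+\kappa_0'}_z$ and then H\"older in $\tau$ with $L^8_\tau\times L^{8/7}_\tau$. But the boundary datum $Z^0_T(u)$ only lies in $H^{1/2}_z(\mathbb T^2)$ (Lemma \ref{lem: bdry a priori Z}), and in dimension $2$ this does \emph{not} embed into any $\mathcal C^\beta_z$ with $\beta>0$; to upgrade $\overline H Z(\tau,\cdot)$ to $\mathcal C^{1/2+\kappa_0'}_z$ via \eqref{tool: H reg decay} costs a factor $\tau^{-1-\kappa_0'-\varepsilon}$, so $(\overline HZ)^2(\tau,\cdot)\in\mathcal C^{1/2+\kappa_0'}_z$ costs $\tau^{-2-2\kappa_0'-2\varepsilon}$, which is not in $L^{8/7}_\tau$ (nor in $L^1_\tau$). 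The cubic term fails even worse. The stated product estimate $\|f^2\|_{\mathcal C^{1/2+\kappa_0'}_z}\lesssim\|f\|_{H^{1/2}_z}\|f\|_{\mathcal C^{1/2+\kappa_0'}_z}$ is also false in $2$d (it would require $H^{1/2}\hookrightarrow L^\infty$), and so is the compact embedding $H^{1/2}_z\hookrightarrow\hookrightarrow\mathcal C^\beta_z$ used in the convergence argument.

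The fix---which is how the paper proceeds in Lemma \ref{lem: bdry quadratic}---is to apply a fractional Leibniz rule so that only \emph{one} factor of $\overline HZ$ is asked for positive regularity, namely $H^{1/2-\delta}_z$ (which is freely available and costs no $\tau$-weight), while the remaining factor(s) sit in $L^4_z$ (also free, by Sobolev embedding). The $\tau$-weight is then placed on the $\Xi^{0,i}$-slot and matched with the weighted stochastic bound of Proposition \ref{prop: stochastic boundary v2}: concretely, for the quadratic term the slice estimate $|\int \Xi^{0,2}(\overline HZ)^2\,dz|\leq\|\Xi^{0,2}(\tau,\cdot)\|_{\mathcal C^{-1/2+\delta}_z}\|\overline HZ(\tau,\cdot)\|_{H^{1/2-\delta}_z}\|\overline HZ(\tau,\cdot)\|_{L^4_z}$, then multiplying and dividing by $|\tau|^{1/2-\delta}$ and splitting $L^4_\tau\cdot L^2_\tau\cdot L^4_\tau$. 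The split of $\overline HZ_T(u)$ into $-4\overline H\mathbb W^{0,3}_T$ and $\overline HZ_T(\ell^0_T(u))$, which you also identify, is then used to bound the single $H^{1/2-\delta}_z$-factor by $\|\Xi^0\|_{\boldsymbol{\mathfrak B}^0}$ and $\|\ell^0_T\|_{\mathcal H^0}$, and the subquartic homogeneity in $\|\overline HZ\|_{L^4}$ lets Young absorb the remainder. Your final claim $p_0=4$ is therefore also unsupported; the exponent comes out of the Young bookkeeping and is not simply $4$. For the convergence statement \eqref{eq: prop bdry unif 3} the multilinearity of $\Phi^0_T$ together with the same Lemma-\ref{lem: bdry quadratic}-style bounds already do the job, since the $\overline HZ$-slots upgrade to strong $L^4$ convergence by compactness of $\overline H: H^{1/2-\delta}_z\to L^4([-1,1]\times\mathbb T^2)$, not by a (nonexistent) embedding into a H\"older class.
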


We begin with a preliminary analytical lemma.  

\begin{lemma}
  \label{lem: bdry quadratic}
For every $\delta > 0$ sufficiently small there exists $C>0$ such that, for every $T > 0$,
\begin{multline}
\Big|\int_{[-1,1]\times \mathbb T^2} \llbracket (\overline HW_T^0)^2 \rrbracket  (\overline HZ_T^0(u))^2 dx \Big| 
\\
\leq
C\||\tau|^{1/2 - \delta}	\llbracket (\overline H W_T^0)^2 (\tau,\cdot)\rrbracket\|_{L^4_\tau \mathcal C^{-1/2+\delta}_z} \| |\tau|^{-1/2+\delta} \overline H Z_T^0(u)(\tau,\cdot) \|_{L^2_\tau H^{1/2-\delta}_z} \|\overline HZ_T^0(u)\|_{L^4},
\end{multline}
and
\begin{equation}
\Big| \int_{[-1,1]\times\mathbb T^2} \overline HW_T^0 \cdot (\overline HZ_T^0(u))^3 dx \Big| \leq C\|\overline HW_T^0\|_{L^{8(1-\delta)}_\tau \mathcal C^{-1/2+\delta}_z} \| \overline H Z_T^0(u)\|^{
\frac{1/2-\delta}{1-\delta}}_{L^2_\tau H^{1-\delta}_z} \|\overline H Z_T^0(u)\|^{{2+\frac{1/2}{1-\delta}}}_{L^4}.
\end{equation}
\end{lemma}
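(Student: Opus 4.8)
The two estimates in Lemma \ref{lem: bdry quadratic} are both of the form: a (singular, time-weighted) stochastic factor times a power of $\overline H Z^0_T(u)$, which is a smooth (regularity $H^{1/2}$ in space, gaining from the harmonic extension) deterministic function. The strategy is a standard duality/Hölder argument in the $\tau$-variable combined with paraproduct (Bony) estimates in the $z$-variable, supplemented by the regularizing properties of $\overline H$ recorded in Appendix \ref{appendix: harmonic extension estimates} and the a priori bound $\|Z^0_T(u)\|_{H^{1/2}_z}\leq \|u\|_{\mathcal H^0}$ from Lemma \ref{lem: bdry a priori Z}. I will treat the two terms in turn; in both cases, by Proposition \ref{prop: stochastic boundary v2}, the relevant Wick powers live in the weighted spaces $L^\infty(|\tau|^{\alpha},\mathcal C^{-i/2+\alpha-\delta}_z)$, so the factors of $|\tau|$ appearing in the statement are exactly what is needed to absorb those weights.

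\textbf{The quadratic term.} First I would pair $\llbracket (\overline H W^0_T)^2\rrbracket$, which is a distribution of spatial regularity $\approx -1/2+\delta$, against $(\overline H Z^0_T(u))^2$, which is a function. In the $z$-variable, for each fixed $\tau$, I bound the pairing by $\|\llbracket(\overline H W^0_T)^2\rrbracket(\tau,\cdot)\|_{\mathcal C^{-1/2+\delta}_z}$ times a Besov norm of $(\overline H Z^0_T(u))^2(\tau,\cdot)$ of regularity $1/2-\delta$; by the product/paraproduct estimates in Appendix \ref{appendix:besov} (product of a $B^{1/2-\delta}$ function with an $L^4$ or $L^\infty$ function controlling the low-frequency part), this is $\lesssim \|\overline H Z^0_T(u)(\tau,\cdot)\|_{H^{1/2-\delta}_z}\|\overline H Z^0_T(u)(\tau,\cdot)\|_{L^4_z}$ — one slot carries the fractional derivative, the other is a plain $L^4$ factor. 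Then I integrate in $\tau$ over $[-1,1]$ using Hölder with the split $\tfrac14+\tfrac14+\tfrac12=1$: the stochastic factor into $L^4_\tau$ (after inserting $|\tau|^{1/2-\delta}\cdot|\tau|^{-(1/2-\delta)}$), one copy of $\overline H Z^0_T(u)$ into $L^4_\tau$ as the full $L^4([-1,1]\times\mathbb T^2)$ norm (this absorbs a second $|\tau|^{\pm(1/2-\delta)}$-free slot), and the remaining $H^{1/2-\delta}_z$-valued factor into $L^2_\tau$ with the weight $|\tau|^{-1/2+\delta}$ attached. This produces exactly the three-factor bound stated. The weight assignment is the one delicate bookkeeping point: the harmonic-extension estimates give $\||\tau|^{-1/2+\delta}\overline H Z^0_T(u)\|_{L^2_\tau H^{1/2-\delta}_z}\lesssim \|Z^0_T(u)\|_{L^2_z}$ or similar (short-time blow-up of $\partial_\tau$ derivatives of the Poisson extension is integrable against $|\tau|^{-1/2+\delta}$), so the singular weight is harmless.

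\textbf{The cubic term.} Here $\overline H W^0_T$ is a distribution of regularity $\approx -1/2+\delta$ (no Wick renormalization, $i=1$ in Proposition \ref{prop: stochastic boundary v2} with $\alpha$ near $0$, so essentially no $\tau$-weight is needed), paired against $(\overline H Z^0_T(u))^3$. In the $z$-variable I again use a paraproduct estimate: $\|\langle\overline H W^0_T(\tau,\cdot),(\overline H Z^0_T(u))^3(\tau,\cdot)\rangle\|\lesssim \|\overline H W^0_T(\tau,\cdot)\|_{\mathcal C^{-1/2+\delta}_z}\|(\overline H Z^0_T(u))^3(\tau,\cdot)\|_{B^{1/2-\delta}_{1,1}}$, and then interpolate/distribute derivatives among the three copies: one copy in $H^{1-\delta}_z$ (to have enough room for the $1/2-\delta$ derivative with a margin), two copies in high-integrability $L^p_z$ via Sobolev embedding of $H^{1/2}_z$, obtaining a bound by $\|\overline H Z^0_T(u)(\tau,\cdot)\|_{H^{1-\delta}_z}^{\theta}\|\overline H Z^0_T(u)(\tau,\cdot)\|^{3-\theta}_{L^{q}_z}$ for suitable exponents. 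Integrating in $\tau$ by Hölder with the split $\tfrac{1}{8(1-\delta)}+(\text{rest})=1$ — putting the stochastic factor in $L^{8(1-\delta)}_\tau$, one $\overline H Z^0_T(u)$ factor in $L^{2/\theta}_\tau H^{1-\delta}_z$ (with $\theta=\tfrac{1/2-\delta}{1-\delta}$, matching the stated exponent on $\|\overline H Z^0_T(u)\|_{L^2_\tau H^{1-\delta}_z}$), and the remaining $L^4([-1,1]\times\mathbb T^2)$ norm absorbing the rest — yields the claimed inequality. The exponent $2+\tfrac{1/2}{1-\delta}$ on the $L^4$ norm comes from: $3$ total copies of $\overline H Z^0_T(u)$, minus the $\theta=\tfrac{1/2-\delta}{1-\delta}$ copies routed through the $L^2_\tau H^{1-\delta}_z$ norm, after accounting for Hölder conjugacy; I would verify $3-\theta = 2+\tfrac{1/2}{1-\delta}$ by the arithmetic $3-\tfrac{1/2-\delta}{1-\delta}=\tfrac{3(1-\delta)-(1/2-\delta)}{1-\delta}=\tfrac{5/2-2\delta}{1-\delta}$ — which is $2+\tfrac{1/2}{1-\delta}$ only after the $\delta$-dependent terms are tracked carefully, so I expect to either absorb lower-order $\delta$-discrepancies into the $\delta$ already present or adjust the exponents of the embeddings.

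\textbf{Main obstacle.} The arguments are all routine harmonic analysis; the genuine difficulty is the precise bookkeeping of the $|\tau|$-weights so that (a) the singular weights match the admissible ranges $\alpha\in(\max(0,\tfrac{i-2}{2}),i/2]$ from Proposition \ref{prop: stochastic boundary v2}, and (b) the short-time behavior of $\overline H Z^0_T(u)$ near $\tau=0$ — where high-order spatial derivatives of the harmonic extension blow up — is integrable against the chosen $\tau$-weights, using the gain encoded in the Appendix \ref{appendix: harmonic extension estimates} estimates. Getting the exponents in the second inequality to come out exactly as stated (rather than with an extra $O(\delta)$) is the fiddly part, and I would handle it by a careful choice of interpolation exponents, enlarging $\delta$ where needed since the statement is uniform over all sufficiently small $\delta>0$.
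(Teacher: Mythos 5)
Your proposal is correct and follows essentially the same route as the paper's proof: slice in $\tau$, use duality together with the fractional Leibniz / paraproduct estimates in the $z$-variable to peel off $\|\llbracket(\overline H W^0_T)^2\rrbracket(\tau,\cdot)\|_{\mathcal C^{-1/2+\delta}_z}$ (resp. $\|\overline H W^0_T(\tau,\cdot)\|_{\mathcal C^{-1/2+\delta}_z}$), distribute the remaining regularity among the copies of $\overline H Z^0_T(u)$ by interpolation, insert compensating $|\tau|$-weights where the stochastic factor demands them, and close by Hölder in $\tau$.

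One small point worth flagging: your worry at the end that the cubic exponent $3-\theta$ with $\theta=\tfrac{1/2-\delta}{1-\delta}$ might only match $2+\tfrac{1/2}{1-\delta}$ up to an $O(\delta)$ error is unfounded. The identity is exact,
\[
3-\frac{1/2-\delta}{1-\delta}=\frac{3(1-\delta)-(1/2-\delta)}{1-\delta}=\frac{5/2-2\delta}{1-\delta}=\frac{2(1-\delta)+1/2}{1-\delta}=2+\frac{1/2}{1-\delta},
\]
and the corresponding Hölder exponents in $\tau$ also balance exactly: $\tfrac{1}{8(1-\delta)}+\tfrac{\theta}{2}+\tfrac{3-\theta}{4}=\tfrac{1}{8(1-\delta)}+\tfrac{7/8-\delta}{1-\delta}=1$. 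So there is no slack that needs to be absorbed; the bookkeeping closes on the nose, as in the paper.
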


\begin{proof}
Consider the first inequality. We will split the integral on $[-1,1]\times \mathbb T^2$ into an integral over $[-1,1]$ and an integral on the $\tau+\mathbb T^2$ slices. Fix such a $\tau \neq 0$. Then by duality and the fractional Leibniz rule, 
\begin{align}
\begin{split}
 \label{eq: bdry quadratic 1}
\Big|\int_{\mathbb T^2} &\llbracket (\overline HW_T^0)^2(\tau,\cdot) \rrbracket (\overline HZ_T^0(u)(\tau,\cdot))^2 dz\Big|	 \\& \qquad \leq \| \llbracket (\overline HW_T^0)^2(\tau,\cdot) \rrbracket\|_{\mathcal C^{-1/2+\delta}_z} \|\overline H Z_T^0(u)(\tau,\cdot)\|_{H^{1/2-\delta}_z} \|\overline H Z_T^0(u)(\tau,\cdot)\|_{L^4_z}.
\end{split}
\end{align}
Hence
\begin{equation}
\eqref{eq: bdry quadratic 1} \leq |\tau|^{1/2-\delta}\| \llbracket (\overline HW_T^0)^2(\tau,\cdot) \rrbracket\|_{\mathcal C^{-1/2+\delta}_z} \frac{1}{|\tau|^{1/2-\delta }}\|\overline H Z_T^0(u)(\tau,\cdot)\|_{H^{1/2-\delta}_z} \|\overline H Z_T^0(u)(\tau,\cdot)\|_{L^4_z}
\end{equation}
The first inequality now follows from integrating in the $\tau$-variable and using H\"older's inequality.

Let us now analyze the second inequality. Fix a $\tau$-slice as above. By duality, the fractional Leibniz rule applied twice, and interpolation, we have
\begin{align}
\Big| \int_{\mathbb T^2} &\overline HW_T^0(\tau,\cdot) \cdot (\overline H Z_T^0(u)(\tau,\cdot))^3 dz \Big| \\&\qquad \leq C \| \overline HW_T^0(\tau,\cdot)\|_{\mathcal C^{-1/2+\delta}_z} \|\overline H Z_T^0(u)(\tau,\cdot) \|_{H^{1-\delta}_z}^{\frac{1/2-\delta}{1-\delta}}\|\overline H Z_T^0(u)(\tau,\cdot)\|_{L^4_z}^{2+\frac{1/2}{1-\delta}}.
\end{align}
The second inequality follows by integrating in the $\tau$ variable and using H\"older's inequality.
\end{proof}

Let us now turn to the proof of Proposition \ref{prop: bdry unif}. 
\begin{proof}[Proof of Proposition \ref{prop: bdry unif}]
By Lemma \ref{lem: bdry cubic}, we have that \eqref{eq: prop bdry unif 1} holds with $\ell^0_T$ as above and 
\begin{equation}
\Phi^0_T= \int_{[-1,1]\times \mathbb T^2} 6 \llbracket (\overline H W_T^0)^2 \rrbracket (\overline H Z_T^0(u))^2 + 4 \overline HW_T^0 \cdot (\overline H Z_T^0(u))^3 dx.	
\end{equation}
In addition, by Lemma \ref{lem: bdry quadratic} we have that
\begin{align}
|\Phi^0_T| &\leq C\||\tau|^{1/2 - \delta}	\llbracket (\overline H W_T^0)^2 (\tau,\cdot)\rrbracket\|_{L^4_\tau \mathcal C^{-1/2+\delta}_z} 
\\ &\qquad \qquad \times \| |\tau|^{-1/2+\delta} \overline H Z_T^0(u)(\tau,\cdot) \|_{L^2_\tau H^{1/2-\delta}_z} \|\overline H Z_T^0(u)\|_{L^4([-1,1]\times\mathbb T^2)}
\\
&\quad + C\|\overline HW_T^0\|_{L^{8(1-\delta)}_\tau \mathcal C^{-1/2+\delta}_z} \| \overline H Z_T^0(u)\|_{L^2_\tau H^{1-\delta}_z} \|\overline H Z_T^0(u)\|_{L^4([-1,1]\times\mathbb T^2)}.	
\end{align}

Let us define
\begin{equation}
\mathbb W^{0,3}_T:=Z_T^0(J_\bullet ^0 \overline H^* \llbracket (HW_\bullet ^0)^3 \rrbracket )	
\end{equation}
and hence
\begin{equation}
\overline H Z_T (u)  =  - 4\overline  H\mathbb{W}_T^{0.3} + \overline H Z_T (\ell_{T}(u)).
\end{equation}
Using the
   the properties of $Z_T^0$, and that $\tau \mapsto \tau^{-1/2+\delta}$ is in $L^2([-1,1])$, we
  obtain
  \begin{equation}
   \||\tau|^{-1/2+\delta} Z_T^0(u)\|_{L^2_{\tau} H^{1/2-\delta}_z} \leq C( \|\mathbb W^{0,3}_T\|_{\mathcal C^{1/2-\delta}_z} + \|\ell_{T}^0(u)\|_{L^2_tL^2_z}). 	
  \end{equation}
  
  Furthermore, by \eqref{tool: H reg decay} and the estimate above, 
  \begin{equation}
  \|\overline H Z_T^0(u)\|_{L^2_\tau H^{1-\delta}_z}  \leq C\Big(\| \mathbb W^{0,3}_T\|_{\mathcal C^{1/2-2\delta}_z} +\|\ell_{T}^0(u)\|_{L^2_tL^2_z}\Big). 	
  \end{equation}

 Thus by applying Young's inequality to the estimates from Lemma \ref{lem: bdry quadratic}, we have shown that for any $\delta > 0$ sufficiently small, there exists $c_{\delta} > 0$
  and $p, q \in (1, \infty)$ such that
  \begin{align}
  |\Phi^0_T| &\leq C_\delta \Big( 	\|
   \overline  H W_T^0 \|^{p}_{L^{8 (1 - \delta)}_\tau \mathcal C_{z}^{- 1 / 2 + \delta}}
+ \||\tau|^{1/2-\delta} \llbracket (\overline H W_T^0)^2
    \rrbracket \|_{L^4_{\tau} \mathcal C^{- 1 / 2 + \delta}_{z}}^{q} +     \| \mathbb{W}_T^{0,3} \|_{\mathcal C^{1 / 2 - \delta}_{z}}^2  \Big)
    \\
    &\qquad
    + \delta \Big( \|\overline H Z_T^0(u)\|_{L^4([-1,1]\times\mathbb T^2)}^4 +  \frac 12 \|\ell_{T}^0(u)\|_{L^2_t L^2_z}^2 \Big). 
  \end{align}
 Now using the difinition of $\Xi^0_T$ we have thus established \eqref{eq: prop bdry unif 2}. 
  
The final property \eqref{eq: prop bdry unif 3} follows from Lemma \ref{lem: bdry quadratic} and the observation that $\Phi^0_T$ is a sum of multilinear terms in $\Xi^0_T, Z^0_T$. 
\end{proof}

\subsubsection{Limiting renormalized variational problem and ultraviolet stability}

Let us now define the natural limiting Laplace transform $\mathcal Z^0_\infty(f)$ in terms of a variational problem.
\begin{definition}
For every $f \in C^\infty(\mathbb T^2)$, define $\mathcal Z^0_\infty(f)$ by the variational formula 
\begin{equation}\label{eqdef: limiting ren var prob}
	\mathcal -\log \mathcal Z^0_\infty(f):= \inf_{u \in \mathbb L^0}\mathbb F^{0,f}_\infty(u). 
\end{equation}	
Here, we set the infimum to be $\infty$ if it does not exist. 
\end{definition}

Let us now show that the bounds we have already obtained allow us to bound $\mathcal Z^0_T(f)$ uniformly in $T$ -- so called \emph{ultraviolet stability} for the measures $(\nu^0_T)$. From this, one can deduce tightness of the measures $(\nu^0_T)$. Note that this is insufficient for our purposes, we require full convergence. 

\begin{proposition} \label{prop: bdry UV stable}
There exists $c,C>0$ such that for every $T \in (0,\infty]$ and every $f \in C^\infty(\mathbb T^2)$,
\begin{equation}
	\exp(-C-c\|f\|_{H^{-1/2+\kappa_0'}_z}^2 ) \leq \mathcal Z^0_T(f) \leq \exp(C+c\|f\|_{H^{-1/2+\kappa_0'}_z}^2). 
\end{equation}
\end{proposition}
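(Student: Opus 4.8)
The plan is to deduce both bounds directly from the renormalized variational representation in Proposition \ref{prop: bdry renormalized variationalproblem} (for $T<\infty$) and from the definition \eqref{eqdef: limiting ren var prob} (for $T=\infty$), using the coercivity estimate on the residual boundary potential from Proposition \ref{prop: bdry unif} together with the uniform stochastic moment bounds of Proposition \ref{prop: bdry stochastic} (equivalently Proposition \ref{prop: stochastic boundary v2}). Throughout, the key structural fact is that $F^0_T$ consists of the residual boundary potential $\Phi^0_T$ \emph{plus} the two nonnegative ``good'' terms $\|\overline H Z^0_T(u)\|_{L^4([-1,1]\times\mathbb T^2)}^4$ and $\tfrac12\|\ell^0_T(\Xi^0,u)\|_{\mathcal H^0}^2$, and that the linear source term contributes $\langle f, Z^0_T(u)\rangle_{L^2_z}$.

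\textbf{Upper bound on $\mathcal Z^0_T(f)$ (i.e.\ lower bound on the infimum).} First I would handle the linear term: by Lemma \ref{lem: bdry a priori Z}, $\|Z^0_T(u)\|_{H^{1/2}_z}\leq \|u\|_{\mathcal H^0}$, and since $\ell^0_T$ differs from $u$ only by the explicit drift $4J^0_t\overline H^*\Xi^{0,3}_t\mathbbm 1_{t\leq T}$, a triangle inequality and the stochastic bound on $\Xi^{0,3}$ give $\|Z^0_T(u)\|_{H^{1/2}_z}\leq \|\ell^0_T(\Xi^0,u)\|_{\mathcal H^0}+R$ for a random constant $R=R(\Xi^0)$ with uniformly bounded moments. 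Then by duality, $|\langle f,Z^0_T(u)\rangle_{L^2_z}|\leq \|f\|_{H^{-1/2+\kappa_0'}_z}\|Z^0_T(u)\|_{H^{1/2-\kappa_0'}_z}$, and Young's inequality absorbs a small multiple of $\|\ell^0_T(\Xi^0,u)\|_{\mathcal H^0}^2$ at the cost of $c\|f\|^2_{H^{-1/2+\kappa_0'}_z}$ plus a term controlled by $R$. Combining this with Proposition \ref{prop: bdry unif} applied with $\delta=1/4$ (say), we obtain pointwise in $\omega$ and uniformly in $T\in(0,\infty]$ and $u$,
\begin{equation}
F^{0,f}_T(\Xi^0_T(X^0),u)\geq -C_0\|\Xi^0_T(X^0)\|_{\boldsymbol{\mathfrak B}^0}^{p_0}-c\|f\|^2_{H^{-1/2+\kappa_0'}_z}-C_1R(\Xi^0_T(X^0))^2+\tfrac14\Big(\|\overline H Z^0_T(u)\|^4_{L^4}+\|\ell^0_T\|^2_{\mathcal H^0}\Big),
\end{equation}
so dropping the nonnegative remainder, taking expectations (the stochastic terms having finite moments uniformly in $T$ by Proposition \ref{prop: bdry stochastic}), and then the infimum over $u\in\mathbb L^0$, yields $-\log\mathcal Z^0_T(f)\geq -C-c\|f\|^2_{H^{-1/2+\kappa_0'}_z}$, which is the claimed upper bound. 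For $T=\infty$ the same computation applies verbatim using \eqref{eqdef: limiting ren var prob}.

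\textbf{Lower bound on $\mathcal Z^0_T(f)$ (i.e.\ upper bound on the infimum).} Here I would simply evaluate $\mathbb F^{0,f}_T$ at the trivial drift $u\equiv 0$, which is admissible. Then $Z^0_T(0)=0$, so $\langle f,Z^0_T(0)\rangle=0$, $\Phi^0_T(\Xi^0,0)=0$, $\|\overline H Z^0_T(0)\|_{L^4}=0$, and $\ell^0_T(\Xi^0,0)_t=4J^0_t\overline H^*\Xi^{0,3}_t\mathbbm 1_{t\leq T}$, whence $\tfrac12\|\ell^0_T(\Xi^0_T(X^0),0)\|^2_{\mathcal H^0}$ equals (by Lemma \ref{lem: bdry cubic}) the expectation of a quantity bounded in terms of $\|\Xi^0_T(X^0)\|^2_{\boldsymbol{\mathfrak B}^0}$-type norms; in any case $\mathbb F^{0,f}_T(0)=\mathbb E[\tfrac12\|\ell^0_T(\Xi^0_T(X^0),0)\|^2_{\mathcal H^0}]\leq C$ uniformly in $T\in(0,\infty]$ by the stochastic moment bounds. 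Hence $-\log\mathcal Z^0_T(f)\leq C$, giving $\mathcal Z^0_T(f)\geq e^{-C}\geq \exp(-C-c\|f\|^2_{H^{-1/2+\kappa_0'}_z})$.

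\textbf{Main obstacle.} The routine calculations above are genuinely routine; the one point requiring care is the treatment of the linear source term and, more precisely, making sure the bound $\|Z^0_T(u)\|_{H^{1/2-\kappa_0'}_z}\lesssim \|\ell^0_T(\Xi^0,u)\|_{\mathcal H^0}+R$ holds with $R$ having moments uniform in $T$ \emph{including} $T=\infty$ --- this is exactly where the regularity of $\Xi^{0,3}$ and the mapping properties of $\overline H^*$ and $J^0_\bullet$ enter, and where one must verify that $\mathbb W^{0,3}_T$ (as in the proof of Proposition \ref{prop: bdry unif}) converges and is uniformly bounded. Given Propositions \ref{prop: bdry unif} and \ref{prop: bdry stochastic} this is immediate, so really the proposition is a short bookkeeping exercise assembling those ingredients; I would present it in about ten lines.
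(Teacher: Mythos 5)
Your treatment of the upper bound on $\mathcal Z^0_T(f)$ (lower bound on the infimum) is correct and essentially the same as the paper's: apply Proposition~\ref{prop: bdry unif} with small $\delta$, handle the linear source term by duality and Young's inequality via Lemma~\ref{lem: bdry a priori Z} and the decomposition through $\mathbb W^{0,3}_T$, and absorb stochastic constants by Proposition~\ref{prop: bdry stochastic}. The paper states this tersely and you fill in the bookkeeping correctly.

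However, the lower bound on $\mathcal Z^0_T(f)$ has a genuine gap. You evaluate $\mathbb F^{0,f}_T$ at $u\equiv 0$ and claim $\mathbb E\big[\tfrac12\|\ell^0_T(\Xi^0_T(X^0),0)\|_{\mathcal H^0}^2\big]\leq C$ uniformly in $T\in(0,\infty]$. This is false. Indeed, taking $u\equiv 0$ in Lemma~\ref{lem: bdry cubic} (so that $Z^0_T(0)=0$ kills the cubic integral and the $\|u\|^2$ term), the identity in that lemma reads precisely
\begin{equation}
\mathbb E\Big[\tfrac12\|\ell^0_T(\Xi^0_T(X^0),0)\|_{\mathcal H^0}^2\Big] = -\delta^0_T,
\end{equation}
and $\delta^0_T$ is the \emph{divergent} energy renormalization constant (it is there exactly to cancel the blow-up of the Wick-cube's $L^2_tL^2_z$-entropy under the ansatz; see the discussion in Section~1.4.1 of the paper). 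So $\mathbb F^{0,f}_\infty(0)=+\infty$, and the claimed uniform bound $\mathbb F^{0,f}_T(0)\leq C$ fails as $T\to\infty$. The citation of Lemma~\ref{lem: bdry cubic} in support of a uniform $\boldsymbol{\mathfrak B}^0$-type bound is a misreading of that lemma --- it provides the algebraic identity/cancellation, not a stochastic bound.

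The correct choice, which is what the paper uses, is to take $u^*_t:=-4J^0_t\overline H^*\Xi^{0,3}_t\mathbbm 1_{t\leq T}$ so that $\ell^0_T(\Xi^0_T(X^0),u^*)\equiv 0$. Then the relative-entropy term vanishes identically, $Z^0_T(u^*)=-4\mathbb W^{0,3}_T$ is the stochastic object bounded uniformly in $T$ in $\mathcal C^{1/2-\delta}_z$ (Proposition~\ref{prop: stochastic boundary v2}), and $\Phi^0_T(\Xi^0_T(X^0),u^*)$, $\|\overline HZ^0_T(u^*)\|_{L^4}^4$, $\langle f,Z^0_T(u^*)\rangle$ are all controlled by moments of $\Xi^0_T(X^0)$ (via Lemma~\ref{lem: bdry quadratic} and Proposition~\ref{prop: bdry stochastic}), plus $c\|f\|_{H^{-1/2+\kappa_0'}_z}^2$ after Young. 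The drift $u^*$ is progressively measurable and lies in $\mathcal L^0$ (though not in $\mathcal H^0$ uniformly in $T$), which is exactly what the space $\mathbb L^0$ was designed to accommodate. Replacing $u\equiv 0$ by $u^*$ in your second paragraph repairs the argument.
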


\begin{proof}
Let us assume $f=0$ as the case of $f$ non-zero follows easily from this case by additionally using Lemma \ref{lem: bdry a priori Z} and duality. Let $\delta > 0$ be sufficiently small. Then by Propositions \ref{prop: bdry renormalized variationalproblem} and \ref{prop: bdry unif}, together with the definition of $\mathcal Z^0_\infty$ in \eqref{eqdef: limiting ren var prob}, we have that there exists $C_\delta>0$  and $p_0 \in [1,\infty)$ such that for every $u \in \mathbb L^0$,
\begin{equation} \label{eq: bdry infimum bound}
\mathbb F^{0,f}_T(u)  \geq -C_\delta \mathbb E[\|\Xi^0\|^{p_0}_{\boldsymbol{\mathfrak B}^0}]+ (1-\delta)\mathbb E\left[ \|\overline H Z_T^0(u)\|_{L^4([-1,1]\times \mathbb T^2)}^4+\frac 12 \|\ell^0_T(u)\|_{\mathcal H}^2 \right].
\end{equation}
In particular, by Proposition \ref{prop: bdry stochastic} and positivity of the terms on the righthand side, there exists $C>0$ such that
\begin{equation}
	-\log \mathcal Z^0_T(f) \geq -C.
\end{equation}

Let us now consider the reverse inequality. We may simply choose $u$ such that $\ell^0_T(u)=0$. It is then not hard to show, by similar arguments (except we use Proposition \ref{prop: bdry stochastic} again instead of positivity of terms) that, up to a redefinition of $C$,
\begin{equation}
	-\log \mathcal Z^0_T(f) \leq C. 
\end{equation}

\end{proof}

\subsection{Convergence of $\mathcal Z^0_T(f)$ and proof of Theorem \ref{thm: boundary}}
\label{sec:boundary:convergence of LT}

Let us not turn our attention to the convergence of $\mathcal Z^0_T(f)$. By Proposition \ref{prop: bdry renormalized variationalproblem}, we have reduced it to showing that the infima of the renormalized boundary cost functions converge. The renormalized cost functions have themselves a natural pointwise limit  (c.f. the second statement in Proposition \ref{prop: bdry unif}) and therefore we will show that the infima at $T<\infty$ converge to the infima of the latter (at $T=\infty$). We will do so by using the framework of $\Gamma$-convergence. In order to facilitate this, at the cost of certain technicalities, we will instead work with the space of joint laws of $(\Xi, u)$ since it has better compactness properties.

 Let us consider the following space of joint laws 
\begin{equation}
 \mathcal{X}^0:= \{ \mu \in \mathcal{M}_1( \boldsymbol{\mathfrak B}^0\times \mathcal{L}^{0}_w) : \exists u \in \mathbb{L}^0, \,  \mu = {\rm Law}_{\mathbb P}(\Xi^0_\infty(X), u) \text{ and } \mathbb E[\|u\|_{\mathcal{L}^0}^2]<\infty \}.	
\end{equation}
Let us emphasize that $\boldsymbol{\mathfrak B}^0$ is endowed with the strong topology, $\mathcal L^{0}_w$ is the space $\mathcal {L}^{0}$ endowed with the weak topology, and the laws are considered on the product space (equipped with the product topology). We will view $\mathcal X^0 \subset \mathcal Y^0$, where
 \begin{equation}
 \mathcal{Y}^0 := \{ \mu \in\mathcal{M}_1( \boldsymbol{\mathfrak B}^0\times \mathcal{L}^{0}_w) : \mathbb E_\mu \|u\|_{\mathcal{L}^2}^2 < \infty \}.  	
 \end{equation}
We endow $\mathcal X^0$ with the following notion of convergence. A sequence $(\mu_n) \subset \mathcal X^0$ converges to a probability measure $\mu \in \mathcal Y^0$, written as $\mu_n \rightarrow \mu$, if: 
 \begin{enumerate}
 \item[(i)] The measures converge weakly $\mu_n \rightharpoonup \mu$ in $\boldsymbol{\mathfrak B}^0\times \mathcal{L}^{0}_w$.
 \item[(ii)] The second moments are uniformly bounded: $\sup_n \mathbb E_{\mu_{n}} \|u\|_{\mathcal {L}^{0}}^2 < \infty$. 
 \end{enumerate}
 We let $\overline{\mathcal X}^0 \subset \mathcal Y^0$ denote the sequential closure of $\mathcal X^0$ in the space $\mathcal Y^0$. 

Let us now embed the renormalized variational problem in this space of laws. For every $T\in (0,\infty]$, define the functions ${\tilde {\mathbb F}}^{0,f}_T : \mathcal{Y}^0 \rightarrow \mathbb R$,
\begin{equation}
\tilde{\mathbb F}^{0,f}_T(\mu)
=
\mathbb E_\mu \Big[ F^{0,f}_T(\Xi^0_T(X^0),u)\Big], \qquad (\Xi^0(X^0),u)\sim \mu. 	
\end{equation}
As above, we set $\tilde{\mathbb F}^{0,0}_T =: \tilde{\mathbb F}^0_T$.
It is easy to see that for every $T \in (0,\infty]$,
\begin{equation} \label{eq: boundary v to mu}
\inf_{u \in \mathbb{L}^0} \mathbb F^{0,f}_T(u) = \inf_{\mu \in \mathcal{X}^0} {\tilde{\mathbb F}}^{0,f}_T(\mu). 	
\end{equation}

In order to prove Theorem \ref{thm: boundary}, we will show that $\mathcal Z^0_\infty(f)$ is nontrivial for every $f \in C^\infty(\mathbb T^2)$ and that the infima of $\tilde{\mathbb F}^{0,f}_T$ converge to the infima of $\tilde{\mathbb F}^{0,f}_\infty$. We aim to show that the infima are all located on the \emph{same} sequentially compact set as $T\rightarrow \infty$.

We will begin by showing that the infimum can be taken over the closure for every $T\in (0,\infty]$. 
This will be contained in two technical lemmas whose proof we will postpone to Section \ref{sec: boundary technical}. 
The first lemma deals with the case $T<\infty$ and mostly concerns regularity properties of the renormalized cost function under the notion of sequential convergence. 
The main difficulty with these types of regularity under approximation results is that our setup, which was formulated to treat the case $T\rightarrow \infty$, means that a priori any element in $\overline{\mathcal X}^0$ can be approximated with measures in $\mathcal X^0$, but only with uniform control in $\mathcal L^{0}$. This is problematic given the $\mathcal H$-norm term in the cost function. In the case $T<\infty$, however, the variational problem is not singular  -- we recall that it is supported on $\mathbb H^0$ (almost sure regularity in $\mathcal H$) and not just $\mathbb L^0$ (almost sure regularity in $\mathcal {L}^0$).
 
\begin{lemma}\label{lem:bdry-approx-finite}
Let $T>0$. For every $\mu \in \overline{\mathcal X}^0$ and every $K \in \mathbb N^*$, there exists $\mu_K \in \mathcal X^0$ such that  $\mu_K \rightarrow \mu$ and
\begin{equation}\label{eq: bdry mu-to-muK-Tfinite}
|\tilde{\mathbb F}^{0,f}_T(\mu) - \tilde{\mathbb F}^{0,f}_T(\mu_K)| < 1/K.
\end{equation}
\end{lemma}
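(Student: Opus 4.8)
\textbf{Proof strategy for Lemma \ref{lem:bdry-approx-finite}.}

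The plan is to build the approximating sequence $\mu_K$ by a two-step procedure: first reduce to drifts with good integrability/adaptedness, then mollify in time. Fix $T>0$ and $\mu\in\overline{\mathcal X}^0$. By definition of the sequential closure, there is a sequence $(\nu_n)\subset\mathcal X^0$, say $\nu_n={\rm Law}_{\mathbb P}(\Xi^0_\infty(X^0),u^n)$, with $\nu_n\to\mu$ in the sense that $\nu_n\rightharpoonup\mu$ weakly in $\boldsymbol{\mathfrak B}^0\times\mathcal L^0_w$ and $\sup_n\mathbb E_{\nu_n}\|u\|_{\mathcal L^0}^2<\infty$. The key observation is that because $T<\infty$, the cost function $\tilde{\mathbb F}^{0,f}_T$ only sees the drift up to time $T$, and on $[0,T]$ the maps $Z^0_T$ and $\ell^0_T(\Xi^0_T(X^0),\cdot)$ are \emph{continuous} from $\mathcal H^0$ (not merely $\mathcal L^0$) since $\Xi^0_T(X^0)$ has positive regularity for finite $T$. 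Concretely, $\ell^0_T(\Xi^0_T(X^0),u)_t=4J^0_t\overline H^*\llbracket(\overline HW^0_t)^3\rrbracket\mathbbm 1_{t\le T}+u_t$, and the first term is in $\mathcal H^0$ almost surely for $T<\infty$ by Proposition \ref{prop: stochastic boundary v2} (the weighted $\tau$-estimate is summable on the compact interval and the cutoff removes the divergence). Thus $F^{0,f}_T(\Xi^0_T(X^0),\cdot)$ is a continuous function of $u\in\mathcal H^0$ with at most polynomial growth controlled by $\|u\|_{\mathcal H^0}$ and $\|\Xi^0_T(X^0)\|_{\boldsymbol{\mathfrak B}^0}$, using the bound in Proposition \ref{prop: bdry unif} together with $\ell^0_T$-boundedness.

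The steps I would carry out: (1) \emph{Truncation in the norm.} Replace $u^n$ by $u^{n,R}:=u^n\mathbbm 1_{\|u^n\|_{\mathcal H^0}\le R}$, which is still progressively measurable and now uniformly bounded in $\mathcal H^0$ by $R$; by the uniform second-moment bound and dominated convergence, $\mathbb E[\|u^n-u^{n,R}\|_{\mathcal H^0}^2]\to 0$ as $R\to\infty$ uniformly in $n$. (2) \emph{Time mollification / adaptedness repair.} Since we are in $\mathcal X^0$ we already have progressively measurable drifts, so the only issue is to upgrade $\mathcal L^0$-regularity to $\mathcal H^0$-regularity; but after truncation $u^{n,R}\in\mathcal H^0$ a.s.\ with a uniform bound, hence ${\rm Law}_{\mathbb P}(\Xi^0_\infty(X^0),u^{n,R})\in\mathcal X^0$. (3) \emph{Diagonal extraction.} Using continuity of $\tilde{\mathbb F}^{0,f}_T$ along sequences converging in the sense of $\overline{\mathcal X}^0$ — which is exactly the content of the second half of Proposition \ref{prop: bdry unif} applied at fixed $T<\infty$, where $\Xi^0_T$ is \emph{fixed} and only $u$ varies — one gets $\tilde{\mathbb F}^{0,f}_T(\nu_n)\to\tilde{\mathbb F}^{0,f}_T(\mu)$ and $\tilde{\mathbb F}^{0,f}_T$ of the truncated laws $\to\tilde{\mathbb F}^{0,f}_T(\nu_n)$ as $R\to\infty$. (4) A diagonal choice $R=R(n)\to\infty$ slowly enough then produces $\mu_K\in\mathcal X^0$ with $\mu_K\to\mu$ and $|\tilde{\mathbb F}^{0,f}_T(\mu_K)-\tilde{\mathbb F}^{0,f}_T(\mu)|<1/K$, after relabelling.

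\textbf{Main obstacle.} The delicate point is the continuity/lower-semicontinuity of the quartic effective potential $\|\overline HZ^0_T(u)\|_{L^4([-1,1]\times\mathbb T^2)}^4$ and of the quadratic entropy term $\|\ell^0_T(\Xi^0_T(X^0),u)\|_{\mathcal H^0}^2$ along sequences that converge only \emph{weakly} in $\mathcal L^0$ (or $\mathcal H^0$). Weak convergence of $u$ does not in general give convergence of $\|\overline HZ^0_T(u)\|_{L^4}^4$; one needs to exploit that $Z^0_T$ gains derivatives (Lemma \ref{lem: bdry a priori Z}: $Z^0_T(u)\in H^{1/2}_z$ with norm $\le\|u\|_{\mathcal H^0}$) so that $\overline HZ^0_T$ maps weakly convergent bounded sequences in $\mathcal H^0$ into \emph{strongly} convergent sequences in $L^4([-1,1]\times\mathbb T^2)$ via a Rellich-type compact embedding in the $z$ variable combined with the $\tau$-regularity of the harmonic extension (Appendix \ref{appendix: harmonic extension}). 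For the entropy term, weak lower semicontinuity of the $\mathcal H^0$-norm suffices for one inequality, and the truncation argument ensures the reverse. Making these compactness statements precise — in particular checking that $\overline H\colon H^{1/2}_z\to$ (a space compactly embedded in $L^4_{\tau,z}$ on the slab) and that the cutoff $\mathbbm 1_{t\le T}$ genuinely tames $\ell^0_T$ — is where the real work lies; everything else is bookkeeping with dominated convergence and a diagonal argument.
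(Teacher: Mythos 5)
Your strategy has a genuine gap at step (3). The claim you lean on there — that $\tilde{\mathbb F}^{0,f}_T(\nu_n)\to\tilde{\mathbb F}^{0,f}_T(\mu)$ for \emph{any} sequence $\nu_n\to\mu$ in the sense of $\overline{\mathcal X}^0$ — is false, and it is exactly what the lemma is designed to work around. Convergence in $\overline{\mathcal X}^0$ means weak convergence of ${\rm Law}(u)$ in $\mathcal L^0_w$ together with a uniform second-moment bound; this gives only weak lower semicontinuity of the entropy term $\frac 12\|u\|_{\mathcal H^0}^2$, not convergence of its expectation (indeed, Lemma \ref{lem: boundary fatou} establishes precisely and only the $\liminf$ inequality for arbitrary $\overline{\mathcal X}^0$-convergent sequences). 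Proposition \ref{prop: bdry unif}, which you invoke, is a pointwise (not in-expectation) statement about continuity of the residual potential $\Phi^0_T$ under convergence of $(\Xi^0,u,\ell^0_T)$ in specified topologies; it says nothing about the quartic or entropy terms, and certainly does not give continuity of the full cost function $\tilde{\mathbb F}^{0,f}_T$. Your "main obstacle" paragraph correctly flags the quartic and entropy terms as subtle, but the assertion that "the truncation argument ensures the reverse" inequality for the entropy term does not hold: truncating at level $R$ gives a uniform bound, not convergence of $\mathbb E[\|u\|_{\mathcal H^0}^2]$ along the sequence, and as $R\to\infty$ that bound is vacuous. (A smaller issue: $u^n\mathbbm 1_{\|u^n\|_{\mathcal H^0}\le R}$ is not progressively measurable because the indicator depends on the whole path; one would need a stopping-time cutoff $u^n_t\mathbbm 1_{t\le\tau_R}$ with $\tau_R:=\inf\{t:\int_0^t\|u^n_s\|_{L^2_z}^2 ds\ge R^2\}$.)

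The missing idea is that one cannot use an \emph{arbitrary} approximating sequence from the definition of the closure; one must \emph{construct} a special one along which the relevant moments converge. This is exactly what the paper does: Lemma \ref{lem: bdry lemma 14} produces $(\mu_L)$ and $(\mu_{L,n})$ with bounded drift support and with explicit convergence of $\mathbb E[\|u\|_{\mathcal L^0}^2]$ and $\mathbb E[\|\overline HZ^0_T(u)\|_{L^4}^4]$ — these moment convergences, not generic weak convergence, are what recover the entropy and quartic terms. These measures are then pushed forward under the spatial mollifier ${\rm reg}_{\mathbb T^2,\varepsilon}$ (continuous $\mathcal L^0\to\mathcal H^0$) and the space-time mollifier ${\rm reg}_{\mathbb R_+\times\mathbb T^2,\delta}$ (continuous $\mathcal L^0_w\to\mathcal H^0$, i.e.\ upgrading weak to strong convergence in $\mathcal H^0$), and convergence of the cost functions is obtained via Skorokhod embedding plus a Vitali argument using $1+\varepsilon$ moment bounds, before closing with a diagonal over the four indices. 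Your compactness observation — that $\overline HZ^0_T$ sends $\mathcal H^0_w$-bounded weakly convergent sequences to strongly $L^4$-convergent sequences on the slab — is reasonable and in the spirit of the paper's regularization maps, but it only addresses the quartic term; it is the entropy term that forces the more elaborate construction.
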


We now turn to the second technical lemma, which addresses the case $T=\infty$. In this case, we cannot use any regularity boost to $u$ since the remainder term $\ell^0_\infty$ is no longer a mapping on $\mathcal H$. We will therefore have to do a more delicate analysis of the regularizing properties of $\ell^0_\infty$ --  see Section \ref{sec: boundary technical}, particularly Lemma \ref{lem: bdry rem properties}. 
\begin{lemma}\label{lem:bdry-approx-infty}
For every $\mu \in \overline{\mathcal X}^0$ and every $K \in \mathbb N^*$, there exists $\mu_K\in \mathcal X^0$ such that $\mu_K \rightarrow \mu$ and
\begin{equation}\label{eq: bdry mu-to-muK}
|\widetilde{\mathbb F}^{0,f}_\infty (\mu) - \widetilde{\mathbb F}^{0,f}_\infty(\mu_K)| < 1/K.
\end{equation}
and 
\begin{equation} \lim_{T\to \infty}{\mathbb F}^{0,f}_T (\mu_{K})={\mathbb F}^{0,f}_\infty (\mu_{K})\label{eq:T to infty}\end{equation}
\end{lemma}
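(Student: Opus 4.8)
\textbf{Proof plan for Lemma \ref{lem:bdry-approx-infty}.}

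The plan is to construct the approximating sequence $\mu_K$ explicitly by a two-stage regularization of the drift, and then to verify both the closeness \eqref{eq: bdry mu-to-muK} at $T=\infty$ and the convergence \eqref{eq:T to infty} along the approximants. Since $\mu \in \overline{\mathcal X}^0$, by definition of the sequential closure there is a sequence $\mu^{(n)} \in \mathcal X^0$ with $\mu^{(n)} \to \mu$, i.e.\ $\mu^{(n)} \rightharpoonup \mu$ weakly on $\boldsymbol{\mathfrak B}^0 \times \mathcal L^0_w$ and $\sup_n \mathbb E_{\mu^{(n)}}\|u\|_{\mathcal L^0}^2 < \infty$. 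The first step is to reduce to drifts that are progressively measurable and bounded in $\mathcal H^0$ (not merely $\mathcal L^0$): given a measure in $\mathcal X^0$ realized as ${\rm Law}_{\mathbb P}(\Xi^0_\infty(X^0), u)$ with $u \in \mathbb L^0$, I would truncate $u$ in a way compatible with the remainder map — concretely, replace $u$ by $u^{(R)}_t := u_t \mathbbm 1_{\{\|u\|_{\mathcal L^0([0,t])} \le R\}}$ or by a time-cutoff $u_t \mathbbm 1_{t \le S}$ combined with a spectral/mollification cutoff in $z$ — to obtain drifts $u^{(R)}$ lying in $\mathbb H^0$ with $\ell^0_\infty(\Xi^0_\infty(X^0), u^{(R)}) \to \ell^0_\infty(\Xi^0_\infty(X^0), u)$ strongly in $\mathcal H^0$. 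Here I would invoke the regularizing properties of $\ell^0_\infty$ alluded to in the statement (Lemma \ref{lem: bdry rem properties}) together with the deterministic bounds of Proposition \ref{prop: bdry unif}, applied in the form \eqref{eq: prop bdry unif 3}, to see that $\widetilde{\mathbb F}^{0,f}_\infty$ is continuous along this truncation. Combining with the diagonal extraction over $\mu^{(n)}$, one gets for each $K$ a measure $\mu_K \in \mathcal X^0$, represented by a drift $v^K \in \mathbb H^0$ with a uniform $\mathcal H^0$-bound, such that $\mu_K \to \mu$ and \eqref{eq: bdry mu-to-muK} holds.

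The second step is \eqref{eq:T to infty}: showing $\mathbb F^{0,f}_T(\mu_K) \to \mathbb F^{0,f}_\infty(\mu_K)$ as $T\to\infty$ for the fixed approximant. Since $\mu_K$ is the law of $(\Xi^0_\infty(X^0), v^K)$ with $v^K \in \mathbb H^0$, and since $\mathbb F^{0,f}_T(\mu_K) = \mathbb E[F^{0,f}_T(\Xi^0_T(X^0), v^K)]$ while $\mathbb F^{0,f}_\infty(\mu_K) = \mathbb E[F^{0,f}_\infty(\Xi^0_\infty(X^0), v^K)]$, the difference is controlled by: (a) the convergence $\Xi^0_T(X^0) \to \Xi^0_\infty(X^0)$ in $L^p(\mathbb P; \boldsymbol{\mathfrak B}^0)$ from Proposition \ref{prop: bdry stochastic}; (b) the convergence $Z^0_T(v^K) \to Z^0_\infty(v^K)$ and $\ell^0_T(v^K) \to \ell^0_\infty(v^K)$, which follow from $v^K \in \mathbb H^0$, Lemma \ref{lem: bdry a priori Z}, and dominated convergence in the stochastic integral defining $Z^0_T$; and (c) the joint continuity of the integrand $F^{0,f}_\infty$ in $(\Xi^0, Z^0, \ell^0)$ expressed through the multilinear structure of $\Phi^0_\infty$ and the $L^4$ and $\mathcal H^0$ norms, together with the deterministic domination \eqref{eq: prop bdry unif 2} that lets me pass the limit under the expectation via a uniform-integrability argument (the $p_0$-th moment of $\|\Xi^0_T(X^0)\|_{\boldsymbol{\mathfrak B}^0}$ being uniformly bounded by Proposition \ref{prop: stochastic boundary v2}). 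This is essentially the same mechanism as in Proposition \ref{prop: bdry UV stable}, now applied pathwise along a fixed drift rather than to an infimum.

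The main obstacle I anticipate is in the first step: the truncation of a general $u \in \mathbb L^0$ to an element of $\mathbb H^0$ without spoiling the smallness \eqref{eq: bdry mu-to-muK}. The difficulty is precisely the one flagged in the paragraph preceding the lemma — the cost function at $T=\infty$ contains the $\mathcal H^0$-norm term $\|\ell^0_\infty(\Xi^0,u)\|^2_{\mathcal H^0}$, but $\ell^0_\infty$ loses regularity, so one cannot naively cutoff $u$ and expect $\ell^0_\infty$ to be stable; one must exploit the structure $\ell^0_\infty(\Xi^0,u)_t = 4 J^0_t \overline H^* \Xi^{0,3}_t + u_t$, where the singular piece $4 J^0_t \overline H^* \Xi^{0,3}_t$ is \emph{fixed} (it does not depend on $u$), so that $\ell^0_\infty$ is merely an affine shift and the genuine $u$-dependence is benign — this is what Lemma \ref{lem: bdry rem properties} should provide. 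Once that structural point is in hand, the truncation can be performed on the ``entropy variable'' $\ell^0_\infty(\Xi^0,u)$ itself (truncating it in $\mathcal H^0$ and $\mathcal L^0$ and inverting the affine relation to recover $u$), which both keeps things in $\mathbb H^0$ and makes $\widetilde{\mathbb F}^{0,f}_\infty$ manifestly continuous. A secondary technical point is ensuring measurability/progressive-measurability of all truncated drifts so that the representing measures genuinely lie in $\mathcal X^0$; this is routine but must be stated carefully.
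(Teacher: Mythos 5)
Your plan is essentially the paper's argument: the paper implements your ``regularize the entropy variable and invert the affine relation'' idea via a concrete remainder map $\mathrm{rem}^0_\varepsilon(\Xi^0,u)_t := -4J^0_t \overline H^* \Xi^{0,3}_t + \mathrm{reg}_{\mathbb T^2,\varepsilon}\bigl(4J^0_t \overline H^* \Xi^{0,3}_t + \ell^0_\infty(u)_t\bigr)$, which by construction satisfies $\ell^0_\infty(\mathrm{rem}^0_\varepsilon(\Xi^0,u)) = \mathrm{reg}_{\mathbb T^2,\varepsilon}(\ell^0_\infty(\Xi^0,u))$ (Lemma \ref{lem: bdry rem properties}), so that the $\mathcal H^0$-norm is controlled. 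One caveat: the paper uses the spatial mollifier $\mathrm{reg}_{\mathbb T^2,\varepsilon}$ rather than a hard truncation, and this matters — a nonlinear cutoff of $\ell^0_\infty(\Xi^0,u)$ would spoil both linearity (hence the weak-topology continuity that lets you take $\kappa\to0$ by dominated convergence) and, more subtly, progressive measurability of the inverted drift, so your ``truncating it in $\mathcal H^0$ and $\mathcal L^0$'' should be read as ``linearly smoothing it.'' Two further points you elide: the intermediate approximation by measures with $\mu_L$-a.s. bounded $\|u\|_{\mathcal L^0}\le L$ and convergent $L^4$ / $\mathcal L^0$ moments (Lemma \ref{lem: bdry lemma 14}) is what supplies the dominating function for the Vitali/dominated-convergence step after Skorokhod embedding, and the argument genuinely needs three nested limits ($T\to\infty$ first, then $L,n\to\infty$, then $\kappa\to0$), not a single diagonal pass, because the $T$-limit is established only for the fixed regularized drift with $\|\tilde u_L\|_{\mathcal L^0}\le L$ a.s. Apart from that, the structural insight — that $\ell^0_\infty$ is affine in $u$ with a fixed singular piece, so one should approximate on the entropy side — is exactly the mechanism the paper uses.
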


As a direct corollary of  Lemmas \ref{lem:bdry-approx-finite} and \ref{lem:bdry-approx-infty}, we the minimization can be taken over the closure at both $T<\infty$ and $T=\infty$. 
\begin{corollary} \label{cor: boundary min closure}
For every $T \in (0,\infty]$, we have that
\begin{equation}
\inf_{\mu \in \mathcal{X}^0} {\tilde{\mathbb F}}^{0,f}_T(\mu) = \inf_{\mu \in \overline{\mathcal{X}}^0} {\tilde{\mathbb F}}^{0,f}_T(\mu).
\end{equation}
\end{corollary}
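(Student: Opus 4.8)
\textbf{Proof proposal for Corollary \ref{cor: boundary min closure}.}

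The plan is to prove the two inequalities
\begin{equation}
\inf_{\mu \in \mathcal X^0}\tilde{\mathbb F}^{0,f}_T(\mu) \;\geq\; \inf_{\mu \in \overline{\mathcal X}^0}\tilde{\mathbb F}^{0,f}_T(\mu)
\qquad\text{and}\qquad
\inf_{\mu \in \mathcal X^0}\tilde{\mathbb F}^{0,f}_T(\mu) \;\leq\; \inf_{\mu \in \overline{\mathcal X}^0}\tilde{\mathbb F}^{0,f}_T(\mu)
\end{equation}
separately, for each fixed $T \in (0,\infty]$. The first inequality is immediate: since $\mathcal X^0 \subset \overline{\mathcal X}^0$, the infimum over the larger set $\overline{\mathcal X}^0$ is at most the infimum over $\mathcal X^0$. (Here one should note that $\tilde{\mathbb F}^{0,f}_T$ is a well-defined, possibly $+\infty$-valued functional on all of $\mathcal Y^0 \supset \overline{\mathcal X}^0$, using the almost sure regularity in $\mathcal H^0$ of $\Xi^0_T(X^0)$ for $T<\infty$, and for $T=\infty$ the fact that $\ell^0_\infty(\Xi^0_\infty(X^0),u)$ is well-defined in $\mathcal H^0$ for $u \in \mathbb L^0$ by the mapping properties of the boundary remainder map — so the right-hand infimum is meaningful.)

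For the reverse inequality, which is where the content lies, I would argue as follows. Fix $T \in (0,\infty]$ and let $\mu \in \overline{\mathcal X}^0$ be arbitrary with $\tilde{\mathbb F}^{0,f}_T(\mu) < \infty$ (if no such $\mu$ exists the inequality is vacuous). Apply Lemma \ref{lem:bdry-approx-finite} (in the case $T<\infty$) or Lemma \ref{lem:bdry-approx-infty} (in the case $T=\infty$) to produce, for each $K \in \mathbb N^*$, an approximant $\mu_K \in \mathcal X^0$ with $\mu_K \to \mu$ and
\begin{equation}
\big|\tilde{\mathbb F}^{0,f}_T(\mu) - \tilde{\mathbb F}^{0,f}_T(\mu_K)\big| < 1/K.
\end{equation}
Then
\begin{equation}
\inf_{\mu' \in \mathcal X^0}\tilde{\mathbb F}^{0,f}_T(\mu') \;\leq\; \tilde{\mathbb F}^{0,f}_T(\mu_K) \;\leq\; \tilde{\mathbb F}^{0,f}_T(\mu) + 1/K,
\end{equation}
and letting $K \to \infty$ gives $\inf_{\mu' \in \mathcal X^0}\tilde{\mathbb F}^{0,f}_T(\mu') \leq \tilde{\mathbb F}^{0,f}_T(\mu)$. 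Taking the infimum over $\mu \in \overline{\mathcal X}^0$ on the right-hand side yields the claim. Combining with the first inequality completes the proof.

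Since the two approximation lemmas are invoked as black boxes here, the corollary itself is essentially a two-line deduction; the genuine obstacle — deferred to Section \ref{sec: boundary technical} — is the construction of the approximants $\mu_K$ in Lemmas \ref{lem:bdry-approx-finite} and \ref{lem:bdry-approx-infty}. The subtlety there, as flagged in the text preceding the lemmas, is that elements of $\overline{\mathcal X}^0$ are by definition only approximable by elements of $\mathcal X^0$ with uniform control of the $\mathcal L^0$-norm (weak convergence plus bounded second moments), whereas the cost function $\tilde{\mathbb F}^{0,f}_T$ contains the term $\tfrac12\|\ell^0_T(\Xi^0,u)\|_{\mathcal H^0}^2$, which is \emph{not} continuous under mere weak $\mathcal L^0$-convergence; in the case $T=\infty$ one additionally loses the regularizing mapping property that $\ell^0_\infty$ enjoyed on $\mathcal H^0$. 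Handling this requires the finer analysis of the regularizing properties of $\ell^0_\infty$ (Lemma \ref{lem: bdry rem properties}) together with a careful mollification/truncation of the drift $u$ — but for the corollary as stated, all of this is absorbed into the cited lemmas.
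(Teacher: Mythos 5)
Your proof is correct and takes essentially the same approach as the paper, which simply states that the corollary is a direct consequence of Lemmas \ref{lem:bdry-approx-finite} and \ref{lem:bdry-approx-infty}; you merely spell out the straightforward two-inequality argument (inclusion one way, approximation the other) that the paper leaves implicit.
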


Let us now show that the infima of these functions concentrate on the same sequentially compact set, i.e.\ that they form an equicoercive family. For every $\mathrm C^0>0$, define the set $\mathcal{K}^0 = \mathcal K^0(\mathrm C^0) \subset \overline{\mathcal{X}}^0$ defined by
\begin{equation}
\mathcal{K}^0 := \{ \mu \in \overline{\mathcal{X}}^0 : \mathbb E_\mu[\|u\|_{\mathcal {L}^0}^2]\leq \mathrm C^0 \}.	
\end{equation}
We claim that this
is sequentially compact in $\overline{\mathcal X}^0$. In order to see this, let $(\mu_n) \subset \mathcal K^0$. Let $\boldsymbol{\mathfrak B}^0_{+}\subset \boldsymbol{\mathfrak B}^0$ denote the Banach space
\begin{equation}
\boldsymbol{\mathfrak B}^0_{+}	:= C_tL^4_\tau \mathcal C^{-1/2-\kappa_0'/2}_z \times C_t L^8_\tau \mathcal C^{-1/2-\kappa_0'/2}_z \times C_t L^1_\tau \mathcal C^{-\kappa_0'/2}_z
\end{equation} Note that the balls $B_{\boldsymbol{\mathfrak B}^0_+}(0,L) \times B_{\mathcal L^{0}}(0,L)$ are compact in $\boldsymbol{\mathfrak B}^0\times \mathcal{L}_w^0$ for every $L >0$ by standard compact embedding theorems for Besov spaces (see \cite{BCD11}) and the Banach-Alaouglu theorem. Hence by Markov's inequality, the family of measures $(\mu_n)$ are tight and therefore sequentially compact by Prokhorov's theorem \cite[Theorem 8.6.7]{B98}. Let $\mu$ be the subsequential limit in the weak topology. Clearly $\mu \in \mathcal K^0$ since $\mathbb E_\mu[\|u\|_{\mathcal L^{0}}^2] \leq \mathrm C^0$ by Fatou's lemma.

\begin{lemma}\label{lem: boundary equicoercive}
There exists $\mathrm C^0>0$ such that for every $T\in(0,\infty] $, 
\begin{equation}
\inf_{\mu \in \mathcal X^0} \tilde{\mathbb F}^{0,f}_T(\mu) = 	\inf_{\mu \in \mathcal K^0} \tilde{\mathbb F}^{0,f}_T(\mu),
\end{equation}
where $\mathcal K^0=\mathcal K^0(\mathrm C^0)$ is as above. 
\end{lemma}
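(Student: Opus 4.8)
\textbf{Proof strategy for Lemma \ref{lem: boundary equicoercive}.}

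The plan is to show that the infimum of $\tilde{\mathbb F}^{0,f}_T$ over $\mathcal X^0$ is unchanged if we restrict to the sublevel set $\mathcal K^0(\mathrm C^0)$ for a sufficiently large but $T$-independent constant $\mathrm C^0$. By Corollary \ref{cor: boundary min closure} it suffices to argue with the closure $\overline{\mathcal X}^0$, but in fact the key point is a uniform-in-$T$ lower bound on the cost of near-minimizers outside $\mathcal K^0$. First, I would exhibit an explicit competitor: take $u \equiv 0$ (equivalently the law $\mu_0 = {\rm Law}_{\mathbb P}(\Xi^0_\infty(X^0), 0)$), which lies in $\mathcal X^0$ and for which $\mathbb E_{\mu_0}[\|u\|_{\mathcal L^0}^2] = 0$. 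By Proposition \ref{prop: bdry UV stable} (or directly the upper bound argument in its proof, choosing $u$ with $\ell^0_T(u)=0$) we have $\tilde{\mathbb F}^{0,f}_T(\mu_0) \leq C_0$ for some $C_0$ independent of $T \in (0,\infty]$ and depending on $f$ only through $\|f\|_{H^{-1/2+\kappa_0'}_z}^2$. Hence $\inf_{\mu \in \mathcal X^0}\tilde{\mathbb F}^{0,f}_T(\mu) \leq C_0$ uniformly in $T$.

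Next I would produce the matching coercive lower bound on $\tilde{\mathbb F}^{0,f}_T(\mu)$ in terms of $\mathbb E_\mu[\|u\|_{\mathcal L^0}^2]$. Starting from \eqref{eq: bdry infimum bound} in the proof of Proposition \ref{prop: bdry UV stable}, for any $\delta>0$ small there exist $C_\delta>0$ and $p_0\in[1,\infty)$ with
\begin{equation}
\tilde{\mathbb F}^{0,f}_T(\mu) \geq -C_\delta \mathbb E[\|\Xi^0_\infty(X^0)\|^{p_0}_{\boldsymbol{\mathfrak B}^0}] - c\|f\|_{H^{-1/2+\kappa_0'}_z}^2 + (1-2\delta)\mathbb E_\mu\left[ \|\overline H Z^0_T(u)\|_{L^4}^4 + \tfrac12 \|\ell^0_T(\Xi^0,u)\|_{\mathcal H^0}^2 \right],
\end{equation}
where the $f$-term is handled as in that proof by Lemma \ref{lem: bdry a priori Z}, duality, and Young's inequality (absorbing $\langle f, Z^0_T(u)\rangle$ into the $\frac12\|\ell^0_T(u)\|_{\mathcal H^0}^2$ term up to a constant). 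Since $\mathbb E[\|\Xi^0_\infty(X^0)\|^{p_0}_{\boldsymbol{\mathfrak B}^0}] < \infty$ by Proposition \ref{prop: stochastic boundary v2}/Proposition \ref{prop: bdry stochastic}, the non-positive part is a finite constant $C_1$ uniform in $T$. It remains to convert the good term into control of $\|u\|_{\mathcal L^0}$. By definition $u_t = \ell^0_T(\Xi^0,u)_t - 4 J^0_t \overline H^* \Xi^{0,3}_t \mathbbm 1_{t\leq T}$, so by the triangle inequality and the mapping properties of $J^0_t \overline H^*$ (which, as used below \eqref{eqdef: bdry cov}, sends the $\Xi^{0,3}$-component into $\mathcal L^0$ with norm controlled by $\|\Xi^0\|_{\boldsymbol{\mathfrak B}^0}$), one has $\|u\|_{\mathcal L^0}^2 \leq 2\|\ell^0_T(\Xi^0,u)\|_{\mathcal H^0}^2 + C\|\Xi^0\|_{\boldsymbol{\mathfrak B}^0}^2$. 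Taking expectations, $\mathbb E_\mu[\|u\|_{\mathcal L^0}^2] \leq 2\mathbb E_\mu[\|\ell^0_T(\Xi^0,u)\|_{\mathcal H^0}^2] + C\mathbb E[\|\Xi^0_\infty(X^0)\|^2_{\boldsymbol{\mathfrak B}^0}]$, hence
\begin{equation}
\tilde{\mathbb F}^{0,f}_T(\mu) \geq -C_1 - c\|f\|_{H^{-1/2+\kappa_0'}_z}^2 + \tfrac{1-2\delta}{4}\,\mathbb E_\mu[\|u\|_{\mathcal L^0}^2] - C_2,
\end{equation}
with all constants uniform in $T \in (0,\infty]$.

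Combining the two displays: if $\mu \in \mathcal X^0$ satisfies $\tilde{\mathbb F}^{0,f}_T(\mu) \leq C_0 + 1$ (which is certainly true for any sequence of near-minimizers), then $\mathbb E_\mu[\|u\|_{\mathcal L^0}^2] \leq \tfrac{4}{1-2\delta}\big(C_0 + 1 + C_1 + C_2 + c\|f\|_{H^{-1/2+\kappa_0'}_z}^2\big) =: \mathrm C^0$, a constant independent of $T$. Therefore every minimizing sequence eventually lies in $\mathcal K^0(\mathrm C^0)$, and since $\mathcal K^0(\mathrm C^0)$ contains the competitor $\mu_0$ (indeed $0 \leq \mathrm C^0$), we conclude $\inf_{\mu \in \mathcal X^0}\tilde{\mathbb F}^{0,f}_T(\mu) = \inf_{\mu \in \mathcal K^0}\tilde{\mathbb F}^{0,f}_T(\mu)$ for every $T \in (0,\infty]$. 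The main obstacle I anticipate is bookkeeping the uniformity in $T$ through all the constants — in particular making sure the mapping bound for $J^0_t\overline H^*$ acting on $\Xi^{0,3}$ into $\mathcal L^0$ is genuinely $T$-independent (it is, since the indicator $\mathbbm 1_{t\leq T}$ only truncates) and that the $T=\infty$ case is covered, where $\ell^0_\infty$ loses regularity; here one uses that the lower bound only needs $\ell^0_T(\Xi^0,u) \in \mathcal H^0$ along admissible $u \in \mathbb L^0$, which is exactly the content built into the definition of $F^0_T$ and the remainder-map analysis referenced in Lemma \ref{lem:bdry-approx-infty}.
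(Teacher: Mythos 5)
The overall architecture of your argument — upper bound on the infimum, coercive lower bound, combine — is the same as the paper's, which applies precisely the two inequalities you invoke (the coercivity bound \eqref{eq: bdry infimum bound} and the conversion from $\|\ell^0_\infty(u)\|_{\mathcal H^0}$ to $\|u\|_{\mathcal L^0}$). However, your Step 1 contains a genuine error: the competitor $u\equiv 0$ does \emph{not} give a uniformly-in-$T$ bounded cost. For $u\equiv 0$ one has $\ell^0_T(\Xi^0,0)_t = 4J^0_t\overline H^*\Xi^{0,3}_t\mathbbm 1_{t\leq T}$, and by the identity \eqref{eq: bdry cubic 1} in the proof of Lemma \ref{lem: bdry cubic}, $\tfrac12\mathbb E\big[\|\ell^0_T(\Xi^0,0)\|_{\mathcal H^0}^2\big]$ equals $-\delta^0_T$, which diverges as $T\to\infty$ (this divergence is exactly what necessitates the shift ansatz in the first place). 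So $\tilde{\mathbb F}^{0,f}_T(\mu_0)$ is \emph{not} bounded uniformly in $T$, contrary to what you claim; the proof of Proposition \ref{prop: bdry UV stable} deliberately picks the drift $u_t = -4J^0_t\overline H^*\Xi^{0,3}_t\mathbbm 1_{t\leq T}$ so that $\ell^0_T(u)=0$, killing the entropy term, which is essentially the opposite choice from yours. That said, your conclusion "$\inf_{\mathcal X^0}\tilde{\mathbb F}^{0,f}_T\leq C_0$" is still correct because Proposition \ref{prop: bdry UV stable} gives it directly without reference to $\mu_0$; the error is only in asserting that $\mu_0$ realizes the bound. Note also that once you use the correct competitor, that drift need not have $\mathbb E[\|u\|_{\mathcal L^0}^2]=0$, so your final parenthetical remark "$\mathcal K^0$ contains $\mu_0$" is also moot — but fortunately it is not needed: the equality of infima requires only that near-minimizers lie in $\mathcal K^0$, which your Step 2 (which agrees with the paper's argument) does provide. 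With Step 1 corrected to cite Proposition \ref{prop: bdry UV stable} for the infimum bound (and not claim finiteness of the cost at $u\equiv 0$), the proof goes through in the same way as the paper's.
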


\begin{proof}
Note that by Sobolev embedding and Proposition \ref{prop: bdry stochastic}, there exists $C>0$ such that for every $T\in(0,\infty]$, we have
\begin{equation}
\mathbb E_\mu [\|u\|_{\mathcal {L}^0}^2] \leq C + \mathbb E_\mu [ \| \ell^0_\infty(u)\|_{\mathcal {H}^0}^2].
\end{equation}
Moreover, by Proposition \ref{prop: bdry unif}, particularly \eqref{eq: bdry infimum bound}, there exists $C'>0$ such that
\begin{equation}
\mathbb E_{\mu}[\|\ell^0_\infty(u)\|_{\mathcal {H}}^2] \leq 2\tilde{\mathbb F}^{0,f}_T(\mu) + C'.
\end{equation}
Hence we may deduce the existence of $\mathrm C^0$ and therefore the compact set $\mathcal K^0$.
\end{proof}

We now establish that the family of functions $(\tilde{\mathbb F}^{0,f}_T)$ $\Gamma$-converge to $\tilde{\mathbb F}^{0,f}_\infty$. This consists of proving two conditions: i) a Fatou-type inequality \eqref{eq:fatou}, and ii) existence of a recovery sequence \eqref{eq:recovery}. 

\begin{lemma} \label{lem: boundary fatou}
Let $\mu \in \overline{\mathcal X}^0$. For every sequence  $ (\mu_n)  \subset \overline {\mathcal{X}}^0$ such that $\mu_n \rightarrow \mu \in \overline {\mathcal X}^0$ and every sequence $(T_n): T_n \rightarrow \infty$,
\begin{equation}
\tilde{\mathbb F}^{0,f}_{\infty}(\mu) \leq \liminf_{n \rightarrow \infty} \tilde{\mathbb F}^{0,f}_{T_n}(\mu_n). 	\label{eq:fatou}
\end{equation}
Furthermore, for every sequence $(T_n): T_n \rightarrow \infty$ there exists a sequence $(\mu_n)\subset \overline{\mathcal X}^0 : \mu_n \rightarrow \mu$ such that
\begin{equation}
\tilde{\mathbb F}^{0,f}_\infty(\mu) \geq \limsup_{n \rightarrow \infty} \tilde{\mathbb{F}}^{0,f}_{T_n}(\mu_n). 	\label{eq:recovery}
\end{equation}
Thus $\tilde{\mathbb F}^{0,f}_T$ $\Gamma$-converges to $\tilde{\mathbb F}^{0,f}_{\infty}$.
\end{lemma}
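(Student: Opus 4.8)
\textbf{Proof strategy for Lemma \ref{lem: boundary fatou}.}

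The plan is to verify the two conditions of $\Gamma$-convergence separately, using the structural results on the residual boundary potential already established, most importantly Proposition \ref{prop: bdry unif} (in particular the joint continuity statement \eqref{eq: prop bdry unif 3} and the quantitative bound \eqref{eq: prop bdry unif 2}) together with the stochastic convergence of Proposition \ref{prop: bdry stochastic} and the deterministic estimate of Lemma \ref{lem: bdry a priori Z}. Throughout we write $(\Xi^0(X^0),u) \sim \mu$ and similarly $(\Xi^0(X^0),u_n) \sim \mu_n$, and we recall that on $\mathcal X^0$ (and hence by approximation on $\overline{\mathcal X}^0$ via Lemmas \ref{lem:bdry-approx-finite} and \ref{lem:bdry-approx-infty}) the first marginal is always ${\rm Law}_{\mathbb P}(\Xi^0_\infty(X^0))$.

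First I would prove the Fatou inequality \eqref{eq:fatou}. Split $F^{0,f}_{T_n}$ into the linear term $\langle f, Z^0_{T_n}(u_n)\rangle$, the residual boundary potential $\Phi^0_{T_n}$, the effective potential $\|\overline H Z^0_{T_n}(u_n)\|_{L^4}^4$, and the effective relative entropy $\tfrac12\|\ell^0_{T_n}(\Xi^0_{T_n}(X^0),u_n)\|_{\mathcal H^0}^2$. Since $\mu_n \rightarrow \mu$ means $u_n \rightharpoonup u$ in $\mathcal L^0_w$ with $\sup_n \mathbb E_{\mu_n}\|u_n\|_{\mathcal L^0}^2 < \infty$, and since $Z^0_{T_n}$ and $\ell^0_{T_n}$ converge to $Z^0_\infty$, $\ell^0_\infty$ on the stochastic terms (using Proposition \ref{prop: bdry stochastic} to replace $\Xi^0_{T_n}(X^0)$ by $\Xi^0_\infty(X^0)$ up to an error that vanishes in expectation, and the explicit formula for $\ell^0_T$), the linear term passes to the limit by weak convergence and the two positive quadratic/quartic terms are weakly lower semicontinuous — here one uses that $u \mapsto \|\overline H Z^0_\infty(u)\|_{L^4}^4$ and $u \mapsto \|\ell^0_\infty(\Xi^0_\infty(X^0),u)\|_{\mathcal H^0}^2$ are convex and strongly continuous, hence weakly l.s.c., and that expectation of a weakly l.s.c. convex functional is l.s.c. under the joint convergence $\mu_n \rightarrow \mu$ by a standard lower semicontinuity argument (e.g. via Skorokhod or via Fatou applied along a subsequence realizing the $\liminf$). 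The remaining term $\Phi^0_{T_n}(\Xi^0_{T_n}(X^0),u_n)$ is controlled using \eqref{eq: prop bdry unif 2}: for any $\delta>0$ it is bounded by $C_\delta\|\Xi^0_{T_n}(X^0)\|_{\boldsymbol{\mathfrak B}^0}^{p_0} + \delta(\text{positive terms})$, so it cannot cause a downward jump; more precisely, writing $\Phi^0_{T_n} \geq -C_\delta\|\Xi^0_{T_n}(X^0)\|^{p_0}_{\boldsymbol{\mathfrak B}^0} - \delta(\cdots)$ and using \eqref{eq: prop bdry unif 3} on a subsequence where the stochastic data converges almost surely (after Skorokhod embedding applied to $\mathrm{Law}(\Xi^0_{T_n}(X^0),u_n)$), one passes to the limit and lets $\delta \downarrow 0$. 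Taking $\mathbb E_{\mu_n}$ and using uniform integrability supplied by the moment bound and the polynomial bound \eqref{eq: prop bdry unif 2} gives \eqref{eq:fatou}.

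Next I would construct the recovery sequence for \eqref{eq:recovery}. Given $\mu \in \overline{\mathcal X}^0$ and any $(T_n)$ with $T_n \to \infty$: if $\widetilde{\mathbb F}^{0,f}_\infty(\mu) = +\infty$ there is nothing to prove, so assume it is finite, whence by Corollary \ref{cor: boundary min closure} (or directly) one may first use Lemma \ref{lem:bdry-approx-infty} to replace $\mu$ by a $\mu_K \in \mathcal X^0$ with $|\widetilde{\mathbb F}^{0,f}_\infty(\mu) - \widetilde{\mathbb F}^{0,f}_\infty(\mu_K)| < 1/K$ and, crucially, $\lim_{T\to\infty}\widetilde{\mathbb F}^{0,f}_T(\mu_K) = \widetilde{\mathbb F}^{0,f}_\infty(\mu_K)$ by \eqref{eq:T to infty}. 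Then take the constant sequence $\mu_n := \mu_K$ for each fixed $K$; this already gives $\limsup_n \widetilde{\mathbb F}^{0,f}_{T_n}(\mu_K) = \widetilde{\mathbb F}^{0,f}_\infty(\mu_K) \leq \widetilde{\mathbb F}^{0,f}_\infty(\mu) + 1/K$. A standard diagonal argument over $K$ (choosing $K = K(n) \to \infty$ slowly enough that $\widetilde{\mathbb F}^{0,f}_{T_n}(\mu_{K(n)}) \leq \widetilde{\mathbb F}^{0,f}_\infty(\mu) + 2/K(n)$ and $\mu_{K(n)} \rightarrow \mu$ in the sense of $\overline{\mathcal X}^0$) produces the required sequence; convergence $\mu_{K(n)} \rightarrow \mu$ holds because $\mu_K \rightarrow \mu$ by Lemma \ref{lem:bdry-approx-infty} and the second-moment bound is uniform along the approximation. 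This establishes \eqref{eq:recovery}, and together with \eqref{eq:fatou} we conclude that $\widetilde{\mathbb F}^{0,f}_T$ $\Gamma$-converges to $\widetilde{\mathbb F}^{0,f}_\infty$ as $T \to \infty$.

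The main obstacle is the lower bound \eqref{eq:fatou}: the residual potential $\Phi^0_T$ is only \emph{sign-indefinite} and merely polynomially bounded by \eqref{eq: prop bdry unif 2}, so passing to the liminf requires carefully coupling the absorption of the $\delta(\cdots)$ terms into the genuinely l.s.c. positive part of $F^0_T$ with the need for uniform integrability of $\|\Xi^0_{T_n}(X^0)\|_{\boldsymbol{\mathfrak B}^0}^{p_0}$ in expectation; this is where Proposition \ref{prop: bdry stochastic} (convergence of the stochastic enhancement in every $L^p$) does the essential work, and one must be attentive that the weak convergence $u_n \rightharpoonup u$ is only in $\mathcal L^0_w$ while the entropy term involves $\ell^0_\infty$, so the l.s.c. has to be read off from the convexity of $u \mapsto \|\ell^0_\infty(\Xi^0_\infty(X^0),u)\|_{\mathcal H^0}^2$ rather than from any compactness.
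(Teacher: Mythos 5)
Your proposal follows essentially the same route as the paper: the recovery sequence is produced from Lemma \ref{lem:bdry-approx-infty} plus a diagonal argument, and the $\liminf$ inequality is handled by passing to a subsequence realizing the $\liminf$, applying Skorokhod's theorem, invoking the pointwise convergence statement \eqref{eq: prop bdry unif 3} together with the absorption bound \eqref{eq: prop bdry unif 2} for Fatou applicability on $\Phi^0_T$, and using lower semicontinuity plus Fatou for the two positive terms. The one place you diverge is cosmetic but worth flagging: the paper first exploits $\sup_n \tilde{\mathbb F}^0_{T_n}(\mu_n)<\infty$ to extract a uniform $\mathcal H^0$ bound on $\ell^0_{T_n}$ and thereby upgrades the convergence of $\ell^0_{T_n}$ from $\mathcal L^0_w$ to $\mathcal H^0_w$, after which the entropy term is l.s.c. by the elementary weak l.s.c. of the Hilbert norm; you instead invoke weak l.s.c. on $\mathcal L^0_w$ of $u\mapsto\|\ell^0_\infty(\Xi^0,u)\|_{\mathcal H^0}^2$ by calling it "convex and strongly continuous." That functional is not strongly continuous on $\mathcal L^0$ (it takes the value $+\infty$ off $\mathcal H^0$-valued remainders); it is convex and strongly \emph{lower semicontinuous}, which still gives weak l.s.c. on $\mathcal L^0_w$, so the conclusion holds, but the paper's upgrade to $\mathcal H^0_w$ convergence is the cleaner way to see it and avoids having to argue this extended l.s.c. directly. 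For the $L^4$ term the paper also performs a small reformulation in $(\Xi,\ell)$ variables before applying Fatou, which is a little more careful than treating it as a convex functional of $u$ alone, but your version is fine as long as one notes that $\|\overline H Z^0_T(\cdot)\|_{L^4}^4$ is convex and strongly l.s.c. on $\mathcal L^0$ for each $T$, including $T=\infty$, by the same Sobolev/Lemma \ref{lem: bdry a priori Z} reasoning.
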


\begin{proof}

We will treat the case $f=0$ since the general case follows with minor modifications. Fix $\mu \in \overline{\mathcal X}^0$ Let us first show the existence of a recovery sequence \eqref{eq:recovery}. By Lemma \ref{lem:bdry-approx-infty} there exists a sequence  $(\mu_{K})\subset \mathcal X^0$ such that $\mu_K \rightarrow \mu$ and $\lim_{T\to \infty}|\tilde{\mathbb{F}}^0_{T}(\mu_{K})-\tilde{\mathbb{F}}^0_{\infty}(\mu_{K})|=0$ for every $K \in \mathbb N^*$ and $\lim_{K\to \infty}|\tilde{\mathbb F}^0_{\infty}(\mu_{K})-\tilde{\mathbb F}^0_{\infty}(\mu)|=0$. In particular, given any sequence $(T_n) : T_n \rightarrow \infty$, we can construct a subsequence of the $(\mu_K)$ satisfying \eqref{eq:recovery}.

We now turn to the Fatou-type inequality \eqref{eq:fatou}. Let $(\mu_n) \subset \overline{\mathcal X}^0$ be any sequence such that $\mu_n \rightarrow \mu$ and let us also fix $(T_n): T_n \rightarrow \infty$. Since $\mu_n \rightarrow \mu$, we have that ${\rm Law}_{\mu_n}(u) \rightarrow {\rm Law}_{\mu}(u)$ weakly on $\mathcal {L}^0_w$. Furthermore, the marginals on $\boldsymbol{\mathfrak B}^0$ are exactly the same for every $n$ -- in particular Proposition \ref{prop: bdry stochastic} applies. We need to address convergence of the remainder terms $\ell^0_T$ -- c.f. the conditions for pointwise convergence of the boundary residual potential in Proposition \ref{prop: bdry unif}. We may assume that $\liminf_{n \to\infty}\tilde{\mathbb{F}}_{T_n}^{0}(\mu_{n})<\infty$, otherwise the statement is trivial. 
Without loss of generality we may pass to a subsequence realizing the liminf (which we do not relabel). Thus we have that $\sup_n \tilde{\mathbb F}^0_{T_n}(\mu_n) < \infty$, which implies that
\begin{equation}
\sup_n \mathbb E_{\mu_n} \left[ \| \overline HZ^0_{T_n}(u) \|^4_{L^4([-1,1]\times \mathbb T^2)} + \|\ell^0_{T_n}(u)\|_{\mathcal H^0}^2\right]<\infty. 	
\end{equation}
Since we already have ${\rm Law}_{\mu_n}(\ell^0_T(u)) \rightarrow {\rm Law}_{\mu}(\ell^0_\infty(u))$ weakly in $\mathcal L^0_w$, by compactness we now have ${\rm Law}_{\mu_n}(\ell^0_T(u)) \rightarrow {\rm Law}_\mu(\ell^0_\infty(u))$ weakly in $\mathcal H^0_w$.

By Skorokhod's theorem \cite{J98}, up to a subsequence (not relabelled), there exists a probability space $(\tilde \Omega , \tilde{\mathcal F}, \tilde {\mathbb P})$ and random variables $\tilde \Xi^0_{T_n}, \tilde \Xi^0_\infty, \tilde u_n, \tilde u,   \tilde\ell^0_{T_n}, \tilde \ell^0_\infty$ such that we have
\begin{align}{\rm Law}_{\mu_n}( \Xi^0_{T_n}(X^0),u, \ell^0_{T_n}(u)) &= {\rm Law}_{\tilde{\mathbb P}}(\tilde \Xi^0_{T_n},\tilde u_n, \tilde\ell^0_{T_n}), \\ {\rm Law}_{\mu}(\Xi^0_\infty(X^0), u, \ell^0_\infty(u))&={\rm Law}_{\tilde{\mathbb P}}(\tilde \Xi^0_\infty, \tilde u, \tilde \ell^0_\infty),
\end{align}
and additionally such that $\tilde \Xi^0_{T_n} \rightarrow \tilde \Xi^0_\infty$ almost surely in $\boldsymbol{\mathfrak B}^0$, $\tilde u_n \rightarrow \tilde u$ almost surely in $\mathcal L^0_w$,  and $\tilde \ell^0_{T_n} \rightarrow \tilde\ell^0_\infty$ almost surely in $\mathcal H^0_w$. Furthermore, we have the following equalities almost surely. 
\begin{equation} \label{eq: bdry skorokhod drift}
	(\tilde u_n)_t:= -4J_t^0 \overline H^* (\tilde \Xi^{0,3}_{T_n})_t \mathbbm 1_{t \leq T_n} + (\tilde \ell^0_{T_n})_t, \quad  \tilde u_t:= -4J_t^0 \overline H^* (\tilde \Xi^{0,3}_\infty)_t + (\tilde \ell^0_\infty)_t.
\end{equation}
Observe that we have ${\rm Law}_{\mu_n}(u) = {\rm Law}_{\tilde {\mathbb P}}(\tilde u_n)$ and ${\rm Law}_{\mu}(u) = {\rm Law}_{\tilde{\mathbb P}}(\tilde u)$ and that $\tilde u_n \rightarrow \tilde u$ almost surely in $\mathcal L^0_w$.  

Writing $\tilde{\mathbb E}$ to denote expectation with respect to $\tilde{\mathbb P}$, we have that
\begin{equation}
\tilde{\mathbb F}^0_{T_n}(\mu_n)= \tilde{\mathbb E} \left[ \Phi^0_{T_n}(\tilde\Xi^0_{T_n}, \tilde u_n)+ \| \overline H Z^0_T (\tilde u_n) \|_{L^4([-1,1]\times \mathbb T^2)}^4+ \frac 12 \| \tilde \ell^0_{T_n}\|_{\mathcal H^0} ^2\right].
\end{equation}
By Proposition \ref{prop: bdry bd} and Fatou's lemma, we have that
\begin{equation}\label{eq: bd fatou 1}
\tilde{\mathbb E}[\Phi^0_\infty(\tilde \Xi_\infty^0, \tilde u)] \leq \liminf_{n\rightarrow \infty} \tilde{\mathbb E}[\Phi^0_{T_n}(\tilde\Xi^0_{T_n}, \tilde u_n)].
\end{equation}
Furthermore, by lower semicontinuity of the norm $\|\cdot\|_{\mathcal H^0}$ with respect to convergence in $\mathcal H^0_w$ and Fatou's lemma,
\begin{equation} \label{eq: bd fatou 2}
	\tilde{\mathbb E}[\|\tilde \ell^0_\infty\|_{\mathcal H^0}^2] \leq \liminf_{n \rightarrow \infty} \mathbb E[ \|\tilde\ell^0_{T_n}\|_{\mathcal H^0}^2].
\end{equation}
It remains to treat the $L^4$ term, the effective boundary potential. It is slightly awkward to consider it as a map of the $\tilde u_n$ directly, so let us instead use the equalities in law \eqref{eq: bdry skorokhod drift} and view it as a mapping $(\tilde \Xi^0_{T_n}, \tilde \ell^0_{T_n}) \mapsto \|\overline H Z^0_{T_n}(\tilde \Xi^0_{T_n}, \tilde \ell^0_{T_n})\|_{L^4([-1,1]\times \mathbb T^2)}^4$ instead. Observing that, by Sobolev embedding,
\begin{equation}
	\|\overline HZ_T^0(\tilde u_n) \|_{L^4([-1,1]\times \mathbb T^2)} \leq \left\| \int_0^T J_t^0 \overline H^* (\tilde \Xi^{0,3}_{T_n})_t dt \right\|_{L^4_z} + \|Z^0_T(\tilde \ell^0_{T_n}) \|_{H^{1/2}_z} 
\end{equation}
we conclude with help of Lemma \ref{lem: bdry a priori Z} that the aforementioned map is lower semicontinuous on $\boldsymbol{\mathfrak B}^0 \times \mathcal H^0_w$. Thus by Fatou's lemma, we have that
\begin{equation} \label{eq: bd fatou 3}
\tilde{\mathbb E}[\|\overline HZ_T^0(\tilde u)\|_{L^4([-1,1]\times\mathbb T^2)}^4] \leq \liminf_{n\rightarrow \infty} \tilde{\mathbb E}[ \|\overline HZ_T^0(\tilde u_n)\|_{L^4([-1,1]\times\mathbb T^2)}^4].
\end{equation}
Combining \eqref{eq: bd fatou 1}, \eqref{eq: bd fatou 2}, \eqref{eq: bd fatou 3} yields \eqref{eq:fatou}.  
\end{proof}

We will now prove our main theorem of the section.

\begin{proof}[Proof of Theorem  \ref{thm: boundary}]

By Lemma \ref{lem: boundary fatou}, we have that the functions $\tilde{\mathbb F}^{0,f}_T$ $\Gamma$-converge to $\tilde{\mathbb F}^{0,f}_\infty$. By Lemma \ref{lem: boundary equicoercive} and the\footnote{Strictly speaking, we have not specified a topology but only convergence of sequences. However, one can check that the proof of the fundamental theorem of $\Gamma$-convergence follows in our setting.} fundamental theorem of $\Gamma$-convergence \cite{D93}, we have that the minimizers converge. The theorem follows by the identity \eqref{eq: boundary v to mu} and Corollary \ref{cor: boundary min closure}.

Let us now turn our attention to items (i)-(iv). The fact that $\nu^{0}=\text{Law}_{\mu_\infty}(W_{\infty}^0+Z_\infty^0(u))$ and ${\rm Law}_{\mathbb P}(W^0_\infty) = \mu^0$, where $\mu_\infty$ is a minimizer of $\tilde{\mathbb{F}}^{0}_{\infty}$,
 follows by the same arguments as in \cite[Lemma 11 and Theorem 12]{BG23}. Item (iii) is implied by (iv), so let us establish that. 
Note that equicoercivity it follows that $\mathbb{E}_{\mu_\infty}[\|\ell^0_\infty(u)\|^{2}_{\mathcal H^0}]<\infty$ and that there exists $C>0$ such that  
\begin{equation}
   \| Z_\infty^0(u)\|_{ H^{1/2-\delta}_z} \leq C( \|\mathbb W^{0,3}_\infty\|_{\mathcal C^{1/2-\delta}_z} + \|\ell^{0}_\infty(u)\|_{\mathcal H^0}). 	
 \end{equation} 
 Item (iv) follows. 
 
\end{proof}

\subsection{Proof of approximation lemmas} \label{sec: boundary technical}

We now turn to the proofs of Lemmas \ref{lem:bdry-approx-finite} and \ref{lem:bdry-approx-infty}, thereby closing the proof of Theorem \ref{thm: boundary}. As a preliminary, we will need a technical result concerning approximation of measures in $\overline{\mathcal X}^0$ with measures of bounded support and sufficient moment properties. 

\begin{lemma} \label{lem: bdry lemma 14}
Let $T \in (0,\infty]$. Let $\mu \in \overline{\mathcal X}^0$ such that $\mathbb E_\mu[ \|Z_T^0(u)\|_{L^4_z}^4 + \|u\|_{\mathcal L^0}^2] < \infty$. Then there exist sequences $(\mu_L)_{L \in \mathbb N^*} \subset \overline{\mathcal X}^0$ and $(\mu_{L,n})_{n \in \mathbb N^*} \subset \mathcal X^0$ such that:
\begin{enumerate}
	\item[(i)] $\mu_L \rightharpoonup \mu$ on $\mathfrak S \times \mathcal L^0$ as $L \rightarrow \infty$ and $\mu_{L,n} \rightharpoonup \mu_L$ on $\mathfrak S \times \mathcal L_w^0$ as $n \rightarrow \infty$.
	\item[(ii)] For every $L\in \mathbb N^*$, we have that $\|u\|_{\mathcal L^0} \leq L$ $\mu_L$-almost surely. For every $n \in \mathbb N^*$, we have that $\|u\|_{\mathcal L^0} \leq L$ $\mu_{L,n}$-almost surely.
	\item[(iii)] We have the convergence properties: 
	\begin{align}
		\lim_{L \rightarrow \infty}\mathbb E_{\mu_L}[ \|\overline H Z_T^0(u)\|_{L^4([-1,1]\times\mathbb T^2) }^4] &= \mathbb E_\mu [ \|\overline HZ_T^0(u)\|_{L^4([-1,1]\times\mathbb T^2)}^4], \\ 
		\lim_{L \rightarrow \infty}\mathbb E_{\mu_L}[ \|u\|_{\mathcal L^0}^2] &= \mathbb E_\mu [ \|u\|_{\mathcal L^0}^2].
	\end{align}
\end{enumerate} 
\end{lemma}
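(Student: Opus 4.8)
The plan is to prove Lemma \ref{lem: bdry lemma 14} by a two-stage truncation-and-approximation procedure: first truncate the drift in $\mathcal{L}^0$-norm to obtain the sequence $(\mu_L)$ inside $\overline{\mathcal X}^0$, and then, for each fixed $L$, approximate $\mu_L$ by genuine elements $(\mu_{L,n})$ of $\mathcal X^0$ using the definition of the sequential closure. Throughout, recall that an element $\mu \in \mathcal X^0$ is the joint law of $(\Xi^0_\infty(X),u)$ for some progressively measurable $u$ with $\mathbb E[\|u\|_{\mathcal L^0}^2]<\infty$, and that the $\boldsymbol{\mathfrak B}^0$-marginal is always the fixed law of $\Xi^0_\infty(X)$.

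\textbf{Construction of $(\mu_L)$.} Since $\mu \in \overline{\mathcal X}^0$, by definition there is a sequence in $\mathcal X^0$ converging to it; in particular one can realize $\mu$ (by Skorokhod, as in the proof of Lemma \ref{lem: boundary fatou}) on a probability space carrying $(\Xi^0_\infty(X),u)$ with $u$ progressively measurable and $\mathbb E_\mu[\|u\|_{\mathcal L^0}^2 + \|Z_T^0(u)\|_{L^4_z}^4]<\infty$. Define $u^L := u \,\mathbbm{1}_{\{\|u\|_{\mathcal L^0}\le L\}}$, which is again progressively measurable since the event $\{\|u\|_{\mathcal L^0}\le L\}$ is measurable with respect to the terminal $\sigma$-algebra (and can be made progressive by a standard stopping-time truncation; more simply, since the cost functions only see the law, one may truncate $u$ at the first time its running $L^2_tL^2_z$-norm exceeds $L$, which is a stopping time — this keeps progressive measurability and $\|u^L\|_{\mathcal L^0}\le L$). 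Let $\mu_L := {\rm Law}(\Xi^0_\infty(X), u^L)$. Then $\mu_L \in \overline{\mathcal X}^0$ (in fact in $\mathcal X^0$ once one checks the moment bound, which holds since $\|u^L\|_{\mathcal L^0}\le L$), and $u^L \to u$ both $\mu$-almost surely in $\mathcal{L}^0$ and in $\mathcal{L}^0$-norm by dominated convergence using $\|u^L\|_{\mathcal L^0}\le \|u\|_{\mathcal L^0} \in L^2$. This gives $\mu_L \rightharpoonup \mu$ on $\boldsymbol{\mathfrak B}^0 \times \mathcal L^0$ (strong topology on the second factor, hence a fortiori weak), item (ii) for $\mu_L$, and by dominated convergence — using Lemma \ref{lem: bdry a priori Z} to control $\|\overline H Z^0_T(u^L)\|_{L^4}$ and $\|Z^0_T(u^L)\|_{H^{1/2}_z}$ by $\|u^L\|_{\mathcal L^0}$ together with the regularizing properties of $\overline H$ from Appendix \ref{appendix: harmonic extension estimates} — the two convergence statements in item (iii).

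\textbf{Construction of $(\mu_{L,n})$.} Fix $L$. The issue is that $\mu_L$ need not lie in $\mathcal X^0$ if the realization above does not have the right form, so we must approximate by elements whose drift is of the required type; but since $\|u^L\|_{\mathcal L^0}\le L$ is bounded, a mollification-in-time argument suffices: set $u^{L,n}_t := \int_0^t n\, e^{-n(t-s)} u^L_s\, ds$ (a progressively measurable smoothing), or more cleanly replace $u^L$ by its conditional expectation on a dyadic filtration refinement; each such $u^{L,n}$ is progressively measurable, satisfies $\|u^{L,n}\|_{\mathcal L^0}\le \|u^L\|_{\mathcal L^0} \le L$ by Jensen, lies in $\mathbb H^0$ if one additionally composes with a spatial Fourier truncation (needed to ensure $\mathbb E[\|u^{L,n}\|_{\mathcal H^0}^2]<\infty$), and $u^{L,n}\to u^L$ in $\mathcal L^0$ $\mu_L$-almost surely and in $L^2$. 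Thus $\mu_{L,n}:={\rm Law}(\Xi^0_\infty(X),u^{L,n}) \in \mathcal X^0$, $\mu_{L,n}\rightharpoonup \mu_L$ on $\boldsymbol{\mathfrak B}^0\times \mathcal L^0_w$, and $\|u^{L,n}\|_{\mathcal L^0}\le L$, giving items (i) and (ii); since we only need weak convergence of $\mu_{L,n}$ to $\mu_L$ (not the strengthened convergence $\rightarrow$), no further moment control is required at this stage.

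\textbf{Main obstacle.} The genuinely delicate point is \textbf{preserving progressive measurability} of the drift under all the truncations and mollifications simultaneously with the $\mathcal L^0$-norm bound in (ii): a naive pointwise indicator $\mathbbm{1}_{\{\|u\|_{\mathcal L^0}\le L\}}$ is only terminal-measurable, so one must instead truncate via the stopping time $\tau_L := \inf\{t : \|u\|_{L^2([0,t],L^2_z)}^2 > L^2\}$ and set $u^L := u\,\mathbbm{1}_{[0,\tau_L]}$, which is progressive with $\|u^L\|_{\mathcal L^0}\le L$ but now requires a short argument that $u^L \to u$ in $\mathcal L^0$ as $L\to\infty$ (true since $\tau_L \uparrow \infty$ $\mu$-a.s. when $\|u\|_{\mathcal L^0}<\infty$ a.s.) and that the associated $Z^0_T(u^L) \to Z^0_T(u)$, hence $\overline H Z^0_T(u^L)\to \overline H Z^0_T(u)$, well enough for the $L^4$ convergence in (iii); this last step uses Lemma \ref{lem: bdry a priori Z} and dominated convergence. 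Everything else is routine given the a priori estimate of Lemma \ref{lem: bdry a priori Z} and the harmonic-extension bounds of Appendix \ref{appendix: harmonic extension estimates}.
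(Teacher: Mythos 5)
The paper itself does not prove Lemma \ref{lem: bdry lemma 14}; it only cites \cite[Lemma 14]{BG20} and says the proof is a slight modification, so there is no in-paper argument to compare against.

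Your proposal has a genuine gap at its very first step. You claim that because $\mu \in \overline{\mathcal X}^0$, the Skorokhod embedding ``as in the proof of Lemma \ref{lem: boundary fatou}'' realizes $\mu$ on a probability space carrying $(\Xi^0_\infty(X),u)$ with $u$ \emph{progressively measurable}. This is false in general, and it is precisely the reason the space $\overline{\mathcal X}^0$ is strictly larger than $\mathcal X^0$. Skorokhod's theorem produces a coupling with almost-sure convergence of $(\Xi^0_{T_n}(X), u_n)$ to some pair $(\tilde\Xi, \tilde u)$, but it does not equip the new probability space with a filtration under which $\tilde u$ is adapted to a Brownian motion generating $\tilde \Xi$: adaptedness is not preserved under weak convergence of laws (this is the classical reason relaxed controls are needed in stochastic control). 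Indeed, in the proof of Lemma \ref{lem: boundary fatou} the paper never asserts $\tilde u$ is adapted — the $\Gamma$-liminf inequality is obtained from lower semicontinuity of the cost function, not from adaptedness of the limit.

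Once this premise is removed, the rest of the construction collapses: the stopping-time truncation $\tau_L$ and the time-mollification $u^{L,n}$ produce progressively measurable processes only because they \emph{start} from a progressively measurable $u$. For a generic $\mu \in \overline{\mathcal X}^0$ one has to work instead with an approximating sequence $\nu_n \in \mathcal X^0$ whose drifts \emph{are} adapted, truncate those, and then identify weak limits — this is structurally why the lemma asserts only $\mu_L \in \overline{\mathcal X}^0$ (not $\mathcal X^0$, which your argument would yield if it worked) while supplying a second family $\mu_{L,n} \in \mathcal X^0$. There is also an additional point your proposal does not confront even in the favorable case: the truncation $u \mapsto u\mathbbm{1}_{\{\|u\|_{\mathcal L^0}\le L\}}$ is not weakly continuous on $\mathcal L^0_w$ (the norm is only lower semicontinuous there), so pushing it through the approximating sequence requires a choice of $L$ at continuity points of ${\rm Law}_\mu(\|u\|_{\mathcal L^0})$ or a smooth cutoff. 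Finally, a minor slip: your $\tau_L$ stops the running $\mathcal H^0 = L^2_tL^2_z$ norm, which does not control $\|u^L\|_{\mathcal L^0}$ since $\mathcal L^0 = L^2_tW^{1/2-\kappa_0',2+\kappa_0''}_z$ is a strictly stronger norm; one would need to stop the $\mathcal L^0$ running norm instead.
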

\begin{proof}
The proof is a slight modification of the proof of \cite[Lemma 14]{BG20}, we omit the details. 
\end{proof}

We will also need the following regularizations. 
\begin{enumerate}
	\item[(i)] \textit{Space regularization.} Let $\eta^0 \in C^\infty_c(\mathbb T^2)$ and let $(\eta^0_\varepsilon)_{\varepsilon>0}$ denote the corresponding family of (spatial) mollifiers. Given $u \in \mathcal L^0$, Denote by ${\rm reg}_{\mathbb T^2,\varepsilon}:\mathcal L^0 \rightarrow \mathcal L^0$ the linear map defined for every $u \in \mathcal L^0$ by
	\begin{equation}\label{eqdef: space reg bdry}
		{\rm reg}_{\mathbb T^2,\varepsilon}(u):=u \ast_{\mathbb T^2} \eta^0_\varepsilon,
	\end{equation}
	where $*_{\mathbb T^2}$ denotes spatial convolution on $\mathbb T^2$. This map is continuous as a map from $\mathcal L^0 \rightarrow \mathcal H^0$ and from $\mathcal L^0_w \rightarrow \mathcal H^0_w$.
\item[(ii)]\textit{Space-time regularization.} Let $\zeta^0 \in C^\infty_c(\mathbb R_+\times \mathbb T^2)$ and let $(\zeta^0_\varepsilon)_{\varepsilon>0}$ be the corresponding family of (space-time) mollifiers. Denote by ${\rm reg}_{\mathbb R_+\times\mathbb T^2}:\mathcal L^0 \rightarrow \mathcal L^0$ the linear map defined for every $u \in \mathcal L^0$ by
\begin{equation}
{\rm reg}_{\mathbb R_+\times\mathbb T^2}(u)_t := e^{-\varepsilon t} u \ast_{\mathbb T^2} \zeta^0_\varepsilon(t,\cdot), \qquad \forall t \geq 0. 	
\end{equation}
This map is continuous as a map from $\mathcal L^0_w \rightarrow \mathcal H^0$. 
\end{enumerate}

We now turn to the proof of the approximation lemma when $T<\infty$. 

\begin{proof}[Proof of Lemma \ref{lem:bdry-approx-finite}]
Let $\mu \in \overline{\mathcal X}^0$ be such that $\tilde{\mathbb F}^{0,f}_T(\mu)<\infty$.  Recall that by \eqref{eq: bdry bd} for $T<\infty$, we have that 
\begin{equation} \label{eq: bdry cost fn T finite}
\tilde{\mathbb F}^{0,f}_T(\mu):= \mathbb E_\mu \left[ \langle f,W_T^0 + Z_T^0(u)\rangle_{L^2_z} + \mathcal V^0_T(W_T^0, Z_T^0(u)) + \frac 12 \|u\|_{\mathcal H^0}^2 \right].
\end{equation}
First, observe that by duality and Lemma \ref{lem: bdry a priori Z}
\begin{equation} \label{eq: bdry f term, t finite}
\mathbb E_{\mu} \left[ \langle f,W_T^0 + Z_T^0(u)\rangle_{L^2_z}\right] \geq - \|f\|_{H^{-1/2}_z}\|u\|_{\mathcal H^0}.
\end{equation}
Therefore, by Cauchy's inequality and since $\mathcal V_T^0(W_T,Z_T^0(u))$ is bounded from below, we deduce that $\tilde{\mathbb F}^{0,f}_T(\mu)<\infty$ implies $\mathbb E_\mu[ \|u\|_{\mathcal H^0}^2] < \infty$. Next, we may write $\mathcal V_T^0(W_T^0,Z_T^0(u)) = \Phi^{0}_T + \|\overline H Z_T^0(u)\|_{L^4([-1,1]\times\mathbb T^2)}^4$. For every $\varepsilon,\delta>0$ sufficiently small, by H\"older's inequality, there exists $C>0$ such that 
\begin{equation} \label{eq: bdry phi term, t finite}
|\Phi^{0}_T| \leq N_T  \| \overline H Z_T^0(u)\|_{L^4([-1,1]\times\mathbb T^2)}^3,
\end{equation}
where $N_T$ is an explicit positive random variable consisting of $L^p$ norms of the cutoff Wick powers (we are using that when $T<\infty$ this is smooth).

We now want to define a sufficient approximation of $\mu$ in $\mathcal X$ such that we have convergence of $\tilde{\mathbb F}^{0,f}$ along this sequence. We note that approximating sequences defined by the closure are insufficient to ensure this since it only gives us information about the $\mathcal L^0$-norm of $u$. Hence we opt to use Lemma~\ref{lem: bdry lemma 14}. Let $(\mu_L)_{L \in \mathbb N^*} \subset \overline{\mathcal X}^0$ and $(\mu_{L,n})_{n \in \mathbb N^*} \subset \mathcal X^0$ be the sequences constructed in Lemma \ref{lem: bdry lemma 14}. 

 We will regularize these two sequences and use that $\mathbb E_\mu[\|u\|_{\mathcal H^0}^2] < \infty$ to obtain the existence of two further sequences:
\begin{enumerate}
\item[(a)] $(\mu_L^{\varepsilon})\subset \overline{\mathcal X}^0$ such that 
\begin{equation} \label{eq: bdry Tfinite conv 1}
	\lim_{L \rightarrow \infty}\tilde{\mathbb F}^{0,f}_T(\mu_L^\varepsilon) = \tilde{\mathbb F}^{0,f}_T(\mu^\varepsilon), \quad \lim_{\varepsilon \rightarrow 0} \tilde{\mathbb F}^{0,f}_T(\mu^\varepsilon) = \tilde{\mathbb F}^{0,f}_T(\mu).
\end{equation}
\item[(b)] $(\mu_{L,n}^{\varepsilon,\delta}) \subset \mathcal X^0$ such that
\begin{equation} \label{eq: bdry Tfinite conv 2}
 \lim_{n \rightarrow \infty} \tilde{\mathbb F}^{0,f}_T(\mu_{L,n}^{\varepsilon,\delta}) = \tilde{\mathbb F}^{0,f}_T(\mu_L^{\varepsilon,\delta}), \quad \lim_{\delta \rightarrow 0}   \tilde{\mathbb F}^{0,f}_T(\mu_{L}^{\varepsilon,\delta}) = \tilde{\mathbb F}^{0,f}_T(\mu_L^\varepsilon).
\end{equation} 
\end{enumerate}
The desired sequence can then be constructed as a subsequence of $(\mu^{\varepsilon,\delta}_{L,n})$ by taking appropriate diagonal subsequences of the above limits.Let us now construct said sequences. We  begin by defining the regularizations:
\begin{align}
\mu^\varepsilon &:= ({\rm reg}_{\mathbb T^2, \varepsilon})_* \mu, &\mu^\varepsilon_L &:= ({\rm reg}_{\mathbb T^2, \varepsilon})_* \mu_L,
\\
\mu^{\varepsilon,\delta}_L&:= ({\rm reg}_{\mathbb R_+\times \mathbb T^2, \delta})_* \mu^\varepsilon_L, &\mu^{\varepsilon,\delta}_{L,n} &:= ({\rm reg}_{\mathbb R_+\times \mathbb T^2, \delta})_* ({\rm reg}_{\mathbb T^2, \varepsilon})_*\mu_{L,n}.
\end{align}

Let us first establish \eqref{eq: bdry Tfinite conv 1}. By the continuity properties of ${\rm reg}_{\mathbb T^2,\varepsilon}$, we have that $\mu_L^\varepsilon \rightharpoonup \mu^\varepsilon$ as $L \rightarrow \infty$ on $\boldsymbol{\mathfrak B}^0 \times \mathcal H^0$. By Skorokhod embedding we may write $\mu^\varepsilon_L = {\rm Law}_{\tilde{\mathbb P}}(\tilde{\Xi}^0,\tilde u_L^\varepsilon)$ and $\mu^\varepsilon = {\rm Law}_{\tilde{\mathbb P}}(\tilde{\Xi}^0,\tilde u^\varepsilon )$ on some probability space $(\tilde \Omega, \tilde{\mathbb P})$ such that $\tilde u_L^\varepsilon \rightarrow \tilde u^\varepsilon$ almost surely in $\mathcal H^0$. Writing
\begin{equation}
\tilde{\mathbb F}^{0,f}_T(\mu_L^\varepsilon) = \tilde{\mathbb E}[ F^{0,f}_T(\tilde\Xi^0, \tilde u_L^\varepsilon)], \quad \tilde{\mathbb F}^{0,f}_T(\mu^\varepsilon) = \tilde{\mathbb E}[ F^{0,f}_T(\tilde\Xi^0, \tilde u^\varepsilon)],
\end{equation}
the almost sure convergence of $(\tilde \Xi, \tilde u^\varepsilon_L) \rightarrow (\tilde\Xi, \tilde u^\varepsilon)$ ensures that $F_T^{0,f}(\tilde \Xi, \tilde u_L) \rightarrow F^{0,f}_T(\tilde \Xi, \tilde u)$ almost surely (thanks to the bounds established at the beginning of the proof).

We now wish to pass the limit inside the expectation. Recall that by construction, the  sequence $(\mu_L)$ satisfies
\begin{equation}
\lim_{L\rightarrow \infty} \mathbb E_{\mu_L}[\|Z_T^0(u)\|_{L^4}^4] = \mathbb E_\mu [ \|Z_T^0(u)\|_{L^4}^4], \quad 
\lim_{L\rightarrow \infty} \mathbb E_{\mu_L}[ \|u\|_{\mathcal {L}^0}^2] = \mathbb E_\mu[\|u\|_{\mathcal {L}^0}^2]. 
\end{equation}
By the continuity properties of ${\rm reg}_{\mathbb T^2,\varepsilon}$, we have that
\begin{equation}
\lim_{L\rightarrow \infty} \mathbb E_{\mu_L^\varepsilon}[\|Z_T^0(u)\|_{L^4}^4] = \mathbb E_{\mu^\varepsilon} [ \|Z_T^0(u)\|_{L^4}^4], \quad 
\lim_{L\rightarrow \infty} \mathbb E_{\mu_L^\varepsilon}[ \|u\|_{\mathcal H^{0}}^2] = \mathbb E_{\mu^\varepsilon}[\|u\|_{\mathcal H^{0}}^2]. 	
\end{equation}
Therefore, in order to conclude convergence of the cost functions, it is sufficient to show that the remainder terms in \eqref{eq: bdry cost fn T finite} are uniformly integrable. To do this, we show that they admit $1+\varepsilon$ moments, provided $\varepsilon$ is sufficiently small. Indeed, by Young's inequalities applied to \eqref{eq: bdry f term, t finite} and \eqref{eq: bdry phi term, t finite}, and using hypercontractivity, there exists $c_T>0$ such that
\begin{equation}
	\sup_L\tilde{\mathbb E}[|\langle f, W_T^0+Z_T^0(\tilde u^\varepsilon_L)\rangle_{L^2_z}|^{1+\varepsilon}] \leq c_T + \sup_L \tilde{\mathbb E}[\|\tilde u^\varepsilon_L\|_{\mathcal H^0}^2] < \infty,
\end{equation}
and
\begin{equation}
\sup_L \tilde{\mathbb E}[|\Phi^0_T|^{1+\varepsilon}]\leq c_T + \sup_L \tilde{\mathbb E}[\|\overline H Z_T^0(u)\|_{L^4([-1,1]\times\mathbb T^2)}^4] < \infty.	
\end{equation}
Thus, by Vitali's convergence theorem, we have that
\begin{equation} \label{eq: bdry Tfinite 1 1}
\lim_{L \rightarrow \infty} \tilde{\mathbb F}^{0,f}_T(\mu^\varepsilon_L) = \tilde{\mathbb F}_{\infty}^{0,f}(\mu^\varepsilon). 
\end{equation}

For the removal of $\varepsilon$, let us first observe that since $\|{\rm reg}_{\mathbb T^2,\varepsilon} u\|_{\mathcal H^0} \leq \|u \|_{\mathcal H^0}$ and that ${\rm reg}_{\mathbb T^2,\varepsilon} u \rightarrow u$ in $\mathcal H^0$, by dominated convergence we have that
\begin{equation}
\lim_{\varepsilon \rightarrow 0} \mathbb E_{\mu^\varepsilon}[\|u\|_{\mathcal H^0}^2] = \mathbb E_{\mu}[\|u\|_{\mathcal H^0}^2]. 
\end{equation}
For the convergence of the $L^4$ norm, note that $Z_T^0({\rm reg}_{\mathbb T^2,\varepsilon} u) = \eta^0_\varepsilon \ast_{\mathbb T^2} Z_T^0(u)$. Since $\| \eta^0_\varepsilon \ast_{\mathbb T^2} \overline H Z_T^0(u) - \overline H Z_T^0(u) \|_{L^4} \rightarrow 0$, and in particular $\|\eta^0_\varepsilon \ast_{\mathbb T^2} \overline H Z_T^0(u)\|_{L^4} \leq 2\| \overline H Z_T^0(u)\|_{L^4}$. Hence by dominated convergence we have that
\begin{equation}
\lim_{\varepsilon \rightarrow 0} \mathbb E_{\mu^\varepsilon}[\|\overline H Z_T^0(u)\|_{L^4([-1,1]\times\mathbb T^2 )}^4] = \mathbb E_\mu[ \|\overline H Z_T^0(u)\|_{L^4([-1,1]\times\mathbb T^2 )}^4].	
\end{equation}
Then by arguing verbatim as above (i.e.\ Skorokhod embedding and then a uniform integrability argument), we obtain
\begin{equation} \label{eq: bdry Tfinite 1 2}
\lim_{\varepsilon \rightarrow 0} \tilde{\mathbb F}^{0,f}_T(\mu^\varepsilon) = \tilde{\mathbb F}^{0,f}_T(\mu).
\end{equation}
Then \eqref{eq: bdry Tfinite 1 1} and \eqref{eq: bdry Tfinite 1 2} establish \eqref{eq: bdry Tfinite conv 1}. 

We now turn to \eqref{eq: bdry Tfinite conv 2}. Let us fix $L$ and $\varepsilon$. Note that
\begin{equation}
\sup_n \mathbb E_{\mu_{L,n}}[\|u\|_{\mathcal L^0}^2] < \infty \Rightarrow \sup_n \mathbb E_{\mu_{L,n}^\varepsilon}[\|u\|_{\mathcal H^0}^2] < \infty,
\end{equation}
and also that $\|u\|_{\mathcal H^0} \leq L$ $\mu_{L,n}^\varepsilon$-almost surely. Furthermore, by continuity properties of ${\rm reg}_{\mathbb T^2,\varepsilon}$, we have that $\mu_{L,n}^\varepsilon \rightharpoonup \mu_L^\varepsilon$ on $\boldsymbol{\mathfrak B}^0 \times \mathcal H^0_w$. We will need to upgrade this to convergence in the norm topology on $\mathcal{H}^0$, which is done by the space-time regularization map. Indeed, by properties of ${\rm reg}_{\mathbb R_+\times \mathbb T^2, \delta}$, we have that $\mu_{L,n}^{\varepsilon,\delta} \rightharpoonup \mu_{L,n}^\varepsilon$ on $\boldsymbol{\mathfrak B}^0 \times \mathcal H^0$. A similar Skorokhod embedding argument as above -- with a simplification due to the almost sure boundedness, thereby removing the need for uniform integrability -- then yields \eqref{eq: bdry Tfinite conv 2}. We omit the details. 
\end{proof}

We now turn to the case $T=\infty$. Here, the variational problem is singular and so we do not have the same regularizing properties (from $\mathcal L^0$ to $\mathcal H^0$). thus we will have to study the regularizing properties of the remainder map. The key properties we require are contained in the following lemma.

\begin{lemma} \label{lem: bdry rem properties}
For every $\varepsilon >0$, define the boundary remainder map ${\rm rem}^0_\varepsilon: \boldsymbol{\mathfrak B}^0 \times \mathcal L^0 \rightarrow \mathcal L^0$ for every $(\Xi^0,u) \in \boldsymbol{\mathfrak B}^0 \times \mathcal L^0$ by 
\begin{equation}
{\rm rem}^0_\varepsilon(\Xi^0, u)_t := - 4J_t^0 \overline{H}^* \Xi^{0,3}_t + {\rm reg}_{\mathbb T^2,\varepsilon}\Big( 4J_t^0 \overline{H}^* \Xi^{0,3}_t  + \ell^0_\infty(u)_t  \Big), \quad \forall t \geq 0.
\end{equation}
This a continuous linear map $\boldsymbol{\mathfrak B}^0 \times \mathcal L^0  \rightarrow \mathcal {L}^0$ and also as a map from $\boldsymbol{\mathfrak B}^0 \times \mathcal {L}^0_w \rightarrow \mathcal L^0_w$. It satisfies 
\begin{equation}\label{eq:rem-l}
\ell^{0}_\infty({\rm rem}_{\varepsilon}(\Xi^0, u))={\rm reg}_{\mathbb{T}^2,\varepsilon} \left( \ell^{0}_\infty(\Xi^0,u) \right).
\end{equation} 
Furthermore, the map satisfies the following properties: there exists $C>0$  and for every $\varepsilon > 0$ there exists $C_\varepsilon > 0$ such that:
\begin{enumerate}
\item[(i)] $\|{\rm rem}^0_\varepsilon(\Xi^0, u)\|_{\mathcal {L}^0} \leq C (\| \Xi^0\|_{\boldsymbol{\mathfrak B}^0} + \|u\|_{\mathcal L^0})$.
\item[(ii)] for $T \in (0,\infty]$, $\|\overline H Z^{0}_T({\rm rem}^0_\varepsilon(\Xi^0, u))\|_{L^4([-1,1]\times\mathbb T^2)} \leq C (\|\Xi^0\|_{\boldsymbol{\mathfrak B}^0} + \|\overline H Z^{0}_T(u)\|_{L^4([-1,1]\times\mathbb T^2)})$.\label{eq:bound-l-L4}
\item[(iii)] $\|\ell^0_\infty({\rm rem}_\varepsilon^0(\Xi^0, u))\|_{\mathcal H^0} \leq C_\varepsilon ( \|\Xi^0\|_{\boldsymbol{\mathfrak B}^0}+ \|u\|_{\mathcal {L}^0})$.\label{eq:bound-l-H}
\item[(iv)] $\|{\rm rem}^{0}_{\varepsilon}(\Xi^0,u)-u\|_{\mathcal{L}^0} \to 0$ if $u\in \mathcal{L}^0$ and $\|\ell^{0}_{\infty}({\rm rem}^{0}_{\varepsilon}(\Xi^0,u))-\ell_{\infty}^{0}(u)\|_{\mathcal{H}^0} \to 0$ if 
$\ell_{\infty}^{0}(u) \in \mathcal{H}^0$. 
\end{enumerate}
\end{lemma}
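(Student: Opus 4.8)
\textbf{Proof plan for Lemma \ref{lem: bdry rem properties}.}

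The plan is to verify all the listed assertions directly from the definitions, exploiting the key structural identity \eqref{eq:rem-l}. First I would observe that ${\rm rem}^0_\varepsilon$ is built from three ingredients: the linear map $\Xi^0 \mapsto (J^0_\bullet \overline H^* \Xi^{0,3}_\bullet)$, the linear map $\ell^0_\infty$, and the spatial mollification operator ${\rm reg}_{\mathbb T^2,\varepsilon}$. I would first check the continuity claims. The map $\Xi^{0,3}_t \mapsto J^0_t \overline H^* \Xi^{0,3}_t$ is continuous from $C_t L^1_\tau \mathcal C^{-1/2-\kappa_0'}_z$ into $\mathcal L^0 = L^2_t W^{1/2-\kappa_0',2+\kappa_0''}_z$: indeed $\overline H^*$ gains regularity and integrates in $\tau$ thanks to the exponential decay of the Poisson kernel (Appendix \ref{appendix: harmonic extension estimates}), and $J^0_t$ is bounded uniformly with $\int_0^\infty \|J^0_t\|^2 \lesssim 1$ (as in Lemma \ref{lem: bdry a priori Z}); this is exactly the computation already carried out in Lemma \ref{lem: bdry cubic}. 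Since ${\rm reg}_{\mathbb T^2,\varepsilon}: \mathcal L^0 \to \mathcal L^0$ is bounded (it even maps into $\mathcal H^0$) and $\ell^0_\infty(\Xi^0,u) = 4J^0_\bullet\overline H^*\Xi^{0,3}_\bullet + u$, writing
\begin{equation}
{\rm rem}^0_\varepsilon(\Xi^0,u) = -4J^0_\bullet \overline H^*\Xi^{0,3}_\bullet + {\rm reg}_{\mathbb T^2,\varepsilon}\big(4J^0_\bullet\overline H^*\Xi^{0,3}_\bullet + u\big)
\end{equation}
shows it is a composition and sum of continuous linear maps, hence continuous on $\boldsymbol{\mathfrak B}^0\times\mathcal L^0$; and since each piece is also weak-to-weak continuous (bounded linear maps are), it is weak-to-weak continuous. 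The identity \eqref{eq:rem-l} is then immediate: $\ell^0_\infty({\rm rem}^0_\varepsilon(\Xi^0,u)) = 4J^0_\bullet\overline H^*\Xi^{0,3}_\bullet + {\rm rem}^0_\varepsilon(\Xi^0,u) = {\rm reg}_{\mathbb T^2,\varepsilon}(4J^0_\bullet\overline H^*\Xi^{0,3}_\bullet+u) = {\rm reg}_{\mathbb T^2,\varepsilon}(\ell^0_\infty(\Xi^0,u))$, where the first equality uses that $\ell^0_\infty$ adds back the $4J^0\overline H^*\Xi^{0,3}$ term and the cancellation with the $-4J^0\overline H^*\Xi^{0,3}$ in the definition of ${\rm rem}^0_\varepsilon$.

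Next I would establish (i)--(iii). Claim (i) follows from the boundedness of ${\rm reg}_{\mathbb T^2,\varepsilon}$ on $\mathcal L^0$ with a constant independent of $\varepsilon$ (standard mollifier bound) together with the continuity of $\Xi^0 \mapsto J^0_\bullet\overline H^*\Xi^{0,3}_\bullet$ noted above: triangle inequality on the displayed formula gives $\|{\rm rem}^0_\varepsilon(\Xi^0,u)\|_{\mathcal L^0} \lesssim \|\Xi^0\|_{\boldsymbol{\mathfrak B}^0} + \|{\rm reg}_{\mathbb T^2,\varepsilon}(4J^0\overline H^*\Xi^{0,3}+u)\|_{\mathcal L^0} \lesssim \|\Xi^0\|_{\boldsymbol{\mathfrak B}^0} + \|u\|_{\mathcal L^0}$. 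For (ii), I would use the decomposition $\overline H Z^0_T({\rm rem}^0_\varepsilon(\Xi^0,u)) = -4\overline H\mathbb W^{0,3}_T + \overline H Z^0_T(\ell^0_\infty({\rm rem}^0_\varepsilon(\Xi^0,u)))$ (as in the proof of Proposition \ref{prop: bdry unif}), and then \eqref{eq:rem-l} gives $\ell^0_\infty({\rm rem}^0_\varepsilon(\Xi^0,u)) = {\rm reg}_{\mathbb T^2,\varepsilon}(\ell^0_\infty(\Xi^0,u)) = {\rm reg}_{\mathbb T^2,\varepsilon}(\ell^0_\infty(\cdot,u))$; since ${\rm reg}_{\mathbb T^2,\varepsilon}$ commutes with $Z^0_T$ and $\overline H$ (all act on the spatial $\mathbb T^2$ variable, with $Z^0_T$ and $J^0_t$ being Fourier multipliers in $z$) and is an $L^4$-contraction (up to a universal constant), we get $\|\overline H Z^0_T({\rm reg}_{\mathbb T^2,\varepsilon}(\ell^0_\infty(\Xi^0,u)))\|_{L^4} \leq C\|\overline H Z^0_T(\ell^0_\infty(\Xi^0,u))\|_{L^4} \leq C(\|\mathbb W^{0,3}_T\|_{\mathcal C^{1/2-\delta}_z} + \|\overline H Z^0_T(u)\|_{L^4})$, absorbing $\|\mathbb W^{0,3}_T\|$ into $\|\Xi^0\|_{\boldsymbol{\mathfrak B}^0}$ by Proposition \ref{prop: stochastic boundary v2}. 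For (iii), the point is that $\ell^0_\infty({\rm rem}^0_\varepsilon(\Xi^0,u)) = {\rm reg}_{\mathbb T^2,\varepsilon}(\ell^0_\infty(\Xi^0,u))$ and mollification at fixed scale $\varepsilon$ maps the distribution-valued object $\ell^0_\infty(\Xi^0,u) \in \mathcal L^0$ (regularity $1/2-\kappa_0'$ in $z$) into $\mathcal H^0$ (regularity $0$ in $z$, but higher $L^p$ integrability), at the cost of an $\varepsilon$-dependent constant $C_\varepsilon \sim \varepsilon^{-\kappa}$; combined with (i) this bounds $\|\ell^0_\infty({\rm rem}^0_\varepsilon(\Xi^0,u))\|_{\mathcal H^0}$ by $C_\varepsilon(\|\Xi^0\|_{\boldsymbol{\mathfrak B}^0}+\|u\|_{\mathcal L^0})$.

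Finally, (iv) is the convergence-as-$\varepsilon\to0$ statement. For the first part: ${\rm rem}^0_\varepsilon(\Xi^0,u) - u = ({\rm reg}_{\mathbb T^2,\varepsilon} - {\rm Id})(4J^0_\bullet\overline H^*\Xi^{0,3}_\bullet + u) = ({\rm reg}_{\mathbb T^2,\varepsilon}-{\rm Id})(\ell^0_\infty(\Xi^0,u))$, and since mollification converges strongly to the identity on $\mathcal L^0$, if $u \in \mathcal L^0$ (equivalently $\ell^0_\infty(\Xi^0,u) \in \mathcal L^0$, given $\Xi^0\in\boldsymbol{\mathfrak B}^0$) this tends to $0$ in $\mathcal L^0$. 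For the second part, using \eqref{eq:rem-l}, $\ell^0_\infty({\rm rem}^0_\varepsilon(\Xi^0,u)) - \ell^0_\infty(u) = {\rm reg}_{\mathbb T^2,\varepsilon}(\ell^0_\infty(\Xi^0,u)) - \ell^0_\infty(\Xi^0,u) + (\ell^0_\infty(\Xi^0,u) - \ell^0_\infty(u))$; wait — more carefully, since the hypothesis is $\ell^0_\infty(u) \in \mathcal H^0$ and we want convergence in $\mathcal H^0$, I would write $\ell^0_\infty({\rm rem}^0_\varepsilon(\Xi^0,u)) = {\rm reg}_{\mathbb T^2,\varepsilon}(\ell^0_\infty(\Xi^0,u))$ and note this equals ${\rm reg}_{\mathbb T^2,\varepsilon}$ applied to an element of $\mathcal H^0$ (interpreting $\ell^0_\infty(\cdot,u)$ appropriately — here the subtlety is that $\ell^0_\infty(\Xi^0,u)$ with the stochastic $\Xi^0$ coincides with the object whose $\mathcal H^0$-membership is assumed), so it converges to it strongly in $\mathcal H^0$ by the approximate identity property. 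The main obstacle I anticipate is purely bookkeeping: being careful about which precise norm ($\mathcal L^0$ vs $\mathcal H^0$, weighted-in-$\tau$ vs not) each quantity lives in, and ensuring the interchange of ${\rm reg}_{\mathbb T^2,\varepsilon}$ with $Z^0_T$, $\overline H$, $J^0_t$ and $\overline H^*$ is legitimate — all of these are Fourier multipliers or convolutions in the $z$-variable only, so they commute, but this should be stated explicitly. None of the estimates are deep; they all reduce to (a) the harmonic-extension/Poisson-kernel bounds of Appendix \ref{appendix: harmonic extension estimates}, (b) the uniform $\int\|J^0_t\|^2 \lesssim 1$ bound, (c) standard mollifier estimates, and (d) Proposition \ref{prop: stochastic boundary v2} to absorb stochastic norms into $\|\Xi^0\|_{\boldsymbol{\mathfrak B}^0}$.
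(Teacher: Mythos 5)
Your proposal follows the same route as the paper's proof, which is extremely terse — the paper simply says (i) and (ii) are trivial by linearity of ${\rm reg}_{\mathbb T^2,\varepsilon}$, (iii) follows from the identity \eqref{eq:rem-l} and the definition of ${\rm reg}_{\mathbb T^2,\varepsilon}$, and (iv) is checked from the definitions. Your more detailed verification is correct: the cancellation producing \eqref{eq:rem-l}, the triangle-inequality argument for (i), the use of commutation of ${\rm reg}_{\mathbb T^2,\varepsilon}$ with the $z$-variable Fourier multipliers $J^0_t$, $\overline H$, $Z^0_T$ for (ii), the use of ${\rm reg}_{\mathbb T^2,\varepsilon}:\mathcal L^0\to\mathcal H^0$ plus \eqref{eq:rem-l} for (iii), and the approximate-identity convergence for (iv) are all the steps implicit in the paper's one-sentence proof.

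One small remark: in your discussion of (iii) the parenthetical description of $\mathcal H^0$ as having ``higher $L^p$ integrability'' than $\mathcal L^0$ is backwards ($\mathcal H^0 = L^2_t L^2_z$ has \emph{lower} integrability in $z$ than $\mathcal L^0 = L^2_t W^{1/2-\kappa_0',2+\kappa_0''}_z$), but this side comment does not affect the argument, which rests only on the boundedness of ${\rm reg}_{\mathbb T^2,\varepsilon}$ from $\mathcal L^0$ to $\mathcal H^0$ with an $\varepsilon$-dependent constant, a property the paper states explicitly when defining the space regularization. Also note that in the paper's displayed definition of ${\rm rem}^0_\varepsilon$, the term written as $\ell^0_\infty(u)_t$ inside ${\rm reg}_{\mathbb T^2,\varepsilon}$ should almost certainly be read as $u_t$ (i.e., the argument of ${\rm reg}$ is exactly $\ell^0_\infty(\Xi^0,u)_t = 4J^0_t\overline H^*\Xi^{0,3}_t + u_t$); you made this interpretation implicitly and it is the only reading under which \eqref{eq:rem-l} holds, so you are on solid ground.
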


\begin{proof}
Properties (i) and (ii) are trivial by using linearity of ${\rm reg}_{\mathbb T^2, \varepsilon}$. Property (iii) follows from \eqref{eq:rem-l} and the definition of ${\rm reg_{\mathbb{T}^2,\varepsilon}}$. Property (iv) can be easily checked from the definitions. 
\end{proof}

In the case that $\Xi^0 = \Xi^0(X^0)$, we will omit it from notation in ${\rm rem}$. We now prove our main approximation theorem when $T=\infty$.

\begin{proof}[Proof of Lemma \ref{lem:bdry-approx-infty}]
  
For every $\kappa > 0$, let us define $\mu^\kappa:= ({\rm rem}^0_\kappa)_* \mu$. 
First we show the limit $\lim_{\kappa \to 0} \widetilde{\mathbb F}^{0,f}_\infty (\mu^{\kappa})=\widetilde{\mathbb F}^{0,f}_\infty (\mu)$. 
Note that we can represent the prelimit as 
\begin{align}
  \tilde{\mathbb F}^{0,f}_\infty(\mu^{\kappa}) &= \mathbb E_{\mu}\Big[ \langle f, Z^0_\infty({\rm rem}^0_\kappa u) \rangle_{L^2_z}+ \Phi^{0,f}_\infty( \Xi^0_\infty (X^0), {\rm rem}_{\kappa}^0u)
  \\ &\qquad \qquad +\|\overline H Z_\infty^0({\rm rem}_{\kappa}^0u)\|_{L^4([-1,1]\times\mathbb T^2)}^4 + \frac 12 \| \ell^0_\infty({\rm rem}_{\kappa}u)\|_{\mathcal H^0}^2 \Big].
\end{align}
Observe that $\Phi_{\infty}^{0,f}(\Xi^0_\infty,{\rm rem}_{\kappa}\, u) \to \Phi_{\infty}^{0,f}(\Xi^0_\infty,u)$ since it is a sum of multilinear bounded functional on $\boldsymbol{\mathfrak B}^0 \times \mathcal{H}^0$ (where we view $\Phi_{\infty}^0$ as a function of $\ell^{0}_{\infty}$ by a change of variables.)
From this we can see that the random variables under the expectation converge almost surely as $\kappa \to 0$ to 
\begin{equation}
 \langle f, Z^0_\infty(u) \rangle_{L^2_z}+ \Phi^{0,f}_\infty( \Xi^0_\infty, u)+\|\overline H Z_\infty^0(u)\|_{L^4([-1,1]\times\mathbb T^2)}^4 + \frac 12 \| \ell^0_\infty(u)\|_{\mathcal H^0}^2.
\end{equation}
By properties (ii),(iii), and (iv) of Lemma \ref{lem: bdry rem properties} we may thus apply dominated convergence and conclude that $\lim_{\kappa \to 0} \widetilde{\mathbb F}^{0,f}_\infty (\mu^{\kappa})=\widetilde{\mathbb F}^{0,f}_\infty (\mu)$.

Now let $(\mu_L)_{L \in \mathbb N^*} \subset \overline{\mathcal X}^0$ and $(\mu_{L,n})_{n \in \mathbb N^*} \subset \mathcal X^0$ be the sequence of measures constructed in Lemma \ref{lem: bdry lemma 14}. Let us define $\mu^\kappa_L:= ({\rm rem}^0_\kappa)_*\mu_L$ and $\mu^\kappa_{L,n}:= ({\rm rem}^0_\kappa)_* \mu_{L,n}$. 
 We will prove that  
$\lim_{L \to \infty} \widetilde{\mathbb F}^{0,f}_\infty (\mu_{L}^{\kappa})=\widetilde{\mathbb F}^{0,f}_\infty (\mu^{\kappa})$ -- by similar arguments one can also show $\lim_{n \to \infty} \widetilde{\mathbb F}^{0,f}_\infty (\mu_{L,n}^{\kappa})=\widetilde{\mathbb F}^{0,f}_\infty (\mu^{\kappa}_{L})$.
To the former claim, note that for every $\kappa > 0$ there exists $C_\kappa > 0$ such that   
\begin{multline}
  \|\overline H Z_\infty^{0}({\rm rem}^{0}_{\kappa}u)\|_{L^4([-1,1]\times\mathbb T^2)}^4+\|\ell_{\infty}^{0}({\rm rem}^{0}_{\kappa}u)\|_{\mathcal{H}^0}^2 \\\leq C_\kappa \left(  \|\overline H Z_\infty^{0}(u)\|_{L^4([-1,1]\times\mathbb T^2)}^4+\|u\|_{\mathcal{L}^0}^2 + \|\Xi^{0}_\infty \|^4_{\boldsymbol{\mathfrak B}^0}\right).\label{eq:gamma-bound-proof}\end{multline}
Recall also that 
\begin{equation}
\lim_{L \to \infty}\mathbb{E}_{\mu_{L}}\left[ \|\overline H Z_\infty^{0}(u)\|_{L^{4}([-1,1]\times\mathbb T^2)}^4+\|u\|_{\mathcal{L}^0}^2 \right] =\mathbb{E}_{\mu} \left[ \|\overline H Z_\infty^{0}(u)\|_{L^4([-1,1]\times\mathbb T^2)}^4+\|u\|_{\mathcal{L}^0}^2 \right].	
\end{equation}
 By Skorohod's theorem we can find a probability space $\tilde{\mathbb{P}}$, whose expectation we denote by $\tilde{\mathbb{E}}$, and random variables $\tilde{\Xi}^{0}_L, \tilde{u}_{L}$ such that ${\rm Law}(\tilde{\Xi}^{0}_L, \tilde{u}_{L})=\mu_{L}$ 
and $\lim_{L \to \infty}(\tilde{\Xi}^{0}_L, \tilde{u}_{L})=(\tilde{\Xi}^{0}, \tilde{u})$ almost surely in $\boldsymbol{\mathfrak B}^0\times \mathcal{L}^0$. Thus we have the representation 
\begin{align}
  \tilde{\mathbb F}^{0,f}_\infty(\mu^{\kappa}_{L}) &= \tilde{\mathbb E}\Bigg[ \langle f, Z^0_\infty({\rm rem}^0_\kappa \tilde u_L)\rangle_{L^2_z} +\Phi^{0}_\infty( \tilde{\Xi^{0}_L}, {\rm rem}_{\kappa}^0 \tilde{u}_{L})
  \\ &\qquad \qquad +\|\overline H Z_\infty^0({\rm rem}_{\kappa}^0 \tilde{u}_L)\|_{L^4([-1,1]\times\mathbb T^2)}^4 + \frac 12 \| \ell^0_\infty({\rm rem}^{0}_{\kappa}\tilde{u}_L)\|_{\mathcal H^0}^2 \Bigg],
\end{align}
Since $\|\overline H Z_\infty^0({\rm rem}_{\kappa}^0 \tilde{u}_L)\|_{L^4([-1,1]\times\mathbb T^2)}^4 + \frac 12 \| \ell^0_\infty({\rm rem}^{0}_{\kappa} \tilde{u}_{L})\|_{\mathcal H^0}^2$ is positive and converges and expectation it also converges in $L^1$, and so we may use it as a dominating function (recall that dominated convergence holds also if the dominating function is not fixed but converges in $L^1$).
Invoking Lemma \ref{lem: bdry quadratic} and \eqref{eq:gamma-bound-proof} we can thus apply dominated convergence to obtain $\lim_{L \to \infty} \widetilde{\mathbb F}^{0,f}_\infty (\mu_{L}^{\kappa})=\widetilde{\mathbb F}^{0,f}_\infty (\mu^{\kappa})$.

Finally it remains to prove 
$\lim_{T \to \infty} \widetilde{\mathbb F}^{0,f}_T (\mu_{L}^{\kappa})=\widetilde{\mathbb F}^{0,f}_\infty (\mu^{\kappa}_{L})$. Again we may represent 
\begin{align}
 \tilde{\mathbb F}^{0,f}_T(\mu^{\kappa}_{L}) &= \tilde{\mathbb E}\Bigg[ \langle f, Z^0_T({\rm rem}^0_\kappa \tilde u_L) \rangle_{L^2_z}+ \Phi^{0,f}_T( \tilde{\Xi_{L}^{0}}, {\rm rem}_{\kappa}^0 \tilde{u}_{L})
 \\ & \qquad \qquad \qquad +\|\overline H Z_T^0({\rm rem}_{\kappa}^0 \tilde{u}_L)\|_{L^4([-1,1]\times\mathbb T^2)}^4 + \frac 12 \| \ell^0_T({\rm rem}^{0}_{\kappa}\tilde{u}_L)\|_{\mathcal H^0}^2 \Bigg],
\end{align}
Now $Z_T^0({\rm rem}_{\kappa}^0 \tilde{u}_L), \ell^0_T({\rm rem}^{0}_{\kappa}\tilde{u}_L)$ converge to $Z_\infty^0({\rm rem}_{\kappa}^0 \tilde{u}_L), \ell^0_\infty({\rm rem}^{0}_{\kappa}\tilde{u}_L)$ in $L^4, \mathcal{H}$, respectively, almost surely in $\mu_{L}$ and the corresponding norms are bounded by $\|\tilde{u}_{L}\|_{\mathcal{L}^0}\leq L$, by the properties of ${\rm rem^0}$ and $Z_T^0$ (and harmonic extension). 
From the definition of $\Phi^{0,f}$ and the bounds from Lemma \ref{lem: bdry quadratic} it is not hard to see that $\lim_{T \to \infty}\Phi_T(\tilde{\Xi}^{0}_{L},\tilde u_{L})=\Phi_{\infty}(\tilde{\Xi}^{0}_L,\tilde{u}_{L})$ and $\Phi_{T}(\tilde{\Xi}_{L}^{0},\tilde u_{L})\leq C(\|\tilde{\Xi}^0_{L}\|_{\boldsymbol{\mathfrak B}^0}^{p_0}+\|\overline H Z^0_T(\tilde u_L) \|_{L^4}^4+1)$. We can also estimate the linear term similarly.  Thus we get the claim by dominated convergence and Vitali's convergence theorem. 
\end{proof}

\section{Algebraic renormalization of bulk amplitudes}
\label{sec: renormalization of bulk amplitudes}

The next four sections will be devoted to the proof of Theorem \ref{thm: bulk}. Without loss of generality, we will consider the case of $\sigma = \emptyset$, i.e.\ the amplitudes on $M$. In this section, our goal is to identify a \emph{renormalized variational representation} of the bulk amplitudes which is amenable to study in the limit $T \rightarrow \infty$. At the end of this section, we will give a brief discussion of how the next sections play a role in the proof.

\subsection{Bou\'e-Dupuis representation of bulk amplitudes}

We first recall the definition of cutoff amplitudes. 
Let $T>0$. Recall from Definition \ref{def: approx amplitude} that for every $f \in C^\infty(M)$ and $\varphi_-,\varphi_+ \in S'(\mathbb T^2)$, the cutoff amplitude is given by
\begin{equation}
	\mathcal A_T(f \mid \varphi_-,\varphi_+):= \mathcal A^{\rm free}(\varphi_-,\varphi_+) \mathcal E_{T}(\varphi_-,\varphi_+)  
	 \mathcal Z_T(f \mid \varphi_-,\varphi_+), 
\end{equation}	
where $\mathcal A^{\rm free}(\varphi_-,\varphi_+)$ is the free amplitude defined in \eqref{eqdef: free amplitude}, $\mathcal E_{T}(\varphi_-,\varphi_+)$ is defined in \eqref{eqdef: mathcal E term}, and $ \mathcal Z_T(f \mid \varphi_-,\varphi_+)$ is the unnormalized Laplace transform of $f$ under $\nu( \cdot \mid \varphi_-,\varphi_+)$. In studying the limit $T\rightarrow \infty$, the free amplitude is irrelevant. Thus we will consider the twisted amplitude 
\begin{equation}
	\mathsf A_T(f \mid \varphi_-,\varphi_+):= \mathcal E_{T}(\varphi_-,\varphi_+)  
	 \mathcal Z_T(f \mid \varphi_-,\varphi_+).
\end{equation}
To simplify further, we will consider the case $\mathsf A_T(f \mid \varphi_-,\varphi_+)$ with $f\equiv 0$ since the case $f \neq 0$ follows by straightforward modifications. We will henceforth just write $\mathsf A_T:= \mathsf A_T(0 \mid \varphi_-,\varphi_+)$ since the boundary fields $\varphi_-,\varphi_+$ are considered fixed unless stated otherwise.

We now turn to establishing a variational representation for $\mathsf A_T$. We will again use the Bou\'e-Dupuis formula as our starting point, just as in the preceding section and \cite{BG20}, so we will 

Let us fix $\kappa_0 > 0$ and consider the probability space $(\Omega,\mathcal{F},\mathbb{P})$, where the sample space is $\Omega = C(\mathbb{R}_+, \mathcal C^{-3/2-\kappa_0}(M))$ endowed with the Borel $\sigma$-algebra and under which the canonical coordinate process $X$ under $\mathbb P$ is an $L^2(M)$-Brownian motion. By abuse of notation, we will assume that $(\Omega, \mathcal F, \mathbb P)$ is $\mathbb P$-complete and the filtration $(\mathcal F)_{t \geq 0}$ generated by $X$ is augmented by $\mathbb P$-null sets.

\begin{remark}
Let us stress that we will assume that the process $X$ is independent of the process $X^0$ considered in Section \ref{sec: boundary}.	
\end{remark}

The process $X$ admits a Fourier decomposition. Recall the basis $\mathcal I_M = \{ \mathsf f_n : n \in \mathbb N^* \times \mathbb Z^2\}$ of Laplace eigenfunctions on $M$. Almost surely we have that
\begin{equation}
X_t = \sum_{n \in \mathbb N^* \times \mathbb Z^2} \mathsf f_n B_t^n,
\end{equation}
where $\{ B_t^n : n \in \mathbb N^* \times \mathbb Z^2 \}$ is a set of i.i.d. complex standard Brownian motions modulo the conjugacy constraint $B^{-n} = \overline {B^n}$. 

We will use the above representation to embed the GFF in this Wiener space. The regularization we will use imposes a cutoff in the $z$-direction and leaves the $\tau$-direction unregularized. Let $(\rho_t)_{t \geq 0}$ be as in the boundary theory and define the multiplier $J_t:\mathbb N^*\times \mathbb Z^2$ for every $(n_1,n_2,n_3) \in \mathbb N^* \times \mathbb Z^2$ by 
\begin{equation}
J_t(n_1,n_2,n_3) = \frac{1}{\sqrt{4\pi^2(n_1/2L)^2+4\pi^2(n_2^2+n_3^2)+m^2}} \sqrt{\partial_t \rho_t^2((n_2,n_3))}. 
\end{equation}
Here, by multiplier we are implicitly using that this acts on functions which can be periodized (such as those with value $0$ at the ends of $M$).

Let us now define the stochastic process $(W_t)_{t \geq 0}$ by
\begin{equation}
W_t(\tau,z) = \sum_{n \in \mathbb N^*\times  \mathbb Z^2} \mathsf f_n(\tau,z) \int_0^t J_s(n) dB_s^{n}. 	
\end{equation}
For every $t>0$, a covariance computation yields that the above series converges almost surely in $\mathcal C^{-1/2-\kappa_0}(M)$ and as $t\rightarrow \infty$ we have that $W_t \rightarrow W_\infty \in L^2(\mathcal C^{-1/2-\kappa_0}, \mathbb P)$. We have that ${\rm Law}_{\mathbb P}(W_\infty) = \mu$ and so $W_\infty$ is the GFF. Let us also note that for every $t\in(0,\infty]$, we have that the process in the $\tau$-variable has the following regularity:
\begin{equation}
\tau \mapsto W_t(\tau,\cdot) \in C^{ 1/2-\kappa_0}_\tau \mathcal C^{-1/2-\kappa_0}_z.	
\end{equation}
Finally, let us note that when $t<\infty$, $W_t \in C^{1/2-\alpha}(M)$ and in particular is a well-defined function.

The space of drifts in the variational problem consists of progressively measurable processes that are $\mathbb P$-a.s. in 
\begin{equation}
\mathcal H:=L^2_t L^2_x.	
\end{equation}
 We denote this space of processes by $\mathbb H$. For every $u \in \mathcal H$, consider the following map $u\mapsto Z_t(u)$,
\begin{equation}
	Z_t(u):= \int_0^t J_s(u) ds,
\end{equation}
where $J_s(u) = \sum_{n \in \mathbb N^* \times \mathbb Z^2} J_s(n) \hat u_s(n) \mathsf f_n$. Note that here we are abusing notation and identifying $u$ with its measure-zero modification on $M$ that makes it periodic.

The following is proposition is a direct consequence of the Bou\'e-Dupuis formula \cite{BD98, U14}. 
\begin{proposition} \label{prop: bulk bouedupuis}
For every $T>0$,
\begin{equation}
-\log \mathsf A_T = \inf_{u \in \mathbb H} \mathbb E \left[ \mathcal V_T(W_T,Z_T(u)) + \frac 12 \| u\|_{\mathcal H}^2 \right],	
\end{equation}
where
\begin{align}
\mathcal V_T(W_T,Z_T(u))&:= \int_{M} 4 \llbracket W_T^3 \rrbracket Z_T(u) + 6 \llbracket W_T^2 \rrbracket Z_T(u)^2 + 12 \llbracket W_T^2 \rrbracket H(\varphi_-,\varphi_+)_T Z_T(u) 
\\ &\qquad + 6 \llbracket W_T^2 \rrbracket \llbracket H(\varphi_-,\varphi_+)_T^2 \rrbracket 
 + 4 W_T Z_T(u)^3 + 12 W_T H(\varphi_-,\varphi_+)_T Z_T(u)^2 
 \\
 &\qquad + 12 W_T \llbracket H(\varphi_-,\varphi_+)_T^2 \rrbracket Z_T(u) + 4 W_T \llbracket H(\varphi_-,\varphi_+)_T^3 \rrbracket + Z_T(u)^4 
 \\
 &\qquad - 2\gamma_T W_T H(\varphi_-,\varphi_+)_T -2\gamma_T  W_T Z_T(u) -\gamma_T \llbracket H(\varphi_-,\varphi_+)_T^2 \rrbracket  
 \\
 &\qquad - 2 \gamma_T H(\varphi_-,\varphi_+)_T Z_T(u) -\gamma_T Z_T(u)^2 \,dx   - (\delta_T(\varphi_-,\varphi_+) - \Delta\delta_T^{0}) 
 \\
 &\quad + \int_M \llbracket H(\varphi_-,\varphi_+)_T^4 \rrbracket - \llbracket \overline H(\varphi_-)_T^4 \rrbracket - \llbracket \overline H(\varphi_+)_T^4 \rrbracket dx
 \\
 &
\quad + \int_{M \Delta \{ [-1,1]\times \mathbb T^2\}} \llbracket \overline H(\varphi_-)_T^4 \rrbracket + \llbracket \overline H(\varphi_+)_T^4 \rrbracket dx.
\end{align}
\end{proposition}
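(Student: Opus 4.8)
\textbf{Proof proposal for Proposition \ref{prop: bulk bouedupuis}.}

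The plan is to reduce the statement to a direct application of the Bou\'e--Dupuis variational formula in the tamed form of \cite{U14, BD98}, exactly as in \cite{BG20} and as already done for the boundary theory in Proposition \ref{prop: bdry bd}. First I would verify that $-\log \mathsf A_T$ can be written as $-\log \mathbb E_{\mathbb P}[e^{-\mathcal V_T(W_T, Z_T(\cdot))}\big|_{u=0}]$ with the random variable $e^{-\mathcal V_T(W_T,0)}$ tamed (i.e. having moments of all orders and its logarithm having sufficient integrability): recall $\mathsf A_T = \mathcal E_T(\varphi_-,\varphi_+)\mathcal Z_T(0\mid\varphi_-,\varphi_+)$, and by Definition \ref{def: approx amplitude} and the domain Markov computation leading to \eqref{eqdef: mathcal E term}, when we expand $\mathcal Z_T(0\mid\varphi_-,\varphi_+) = \mathbb E_{\mu}[\exp(-V_T(\varphi\mid\varphi_-,\varphi_+))]$ and absorb the commutator terms $E_T(a,\pm), E_T(b,\pm)$ and $\Delta\delta^0_T$ from $\mathcal E_T$, together with the residual energy renormalization $\delta_T(\varphi_-,\varphi_+) - \Delta\delta^0_T$, the exponent is precisely $-\mathcal V_T(W_T, 0)$ under the identification ${\rm Law}_{\mathbb P}(W_\infty) = \mu$, $W_T \overset{d}{=} \rho_T \ast \varphi$. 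Since $T<\infty$, $W_T$ is a genuine function of positive regularity ($C^{1/2-\kappa_0}(M)$) and the harmonic extensions $H(\varphi_\pm)_T$ are smooth for fixed boundary data, so $\mathcal V_T(W_T,0)$ is a polynomial (of degree $4$) in jointly Gaussian variables plus deterministic terms; tameness then follows from standard Gaussian hypercontractivity and the Nelson-type estimates recalled in the excerpt.

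Second, I would record the precise algebraic identity that underlies the form of $\mathcal V_T$. For a smooth function $\phi$, the shift $\phi \mapsto \phi + H(\varphi_-,\varphi_+)_T$ in the quartic and quadratic Wick-ordered potentials expands binomially; collecting all the cross terms (the degree-$3,2,1,0$ terms in $H(\varphi_-,\varphi_+)_T$ multiplying degree-$1,2,3,4$ terms in $\phi$, plus the $\gamma_T$-mass pieces) reproduces every line of the displayed $\mathcal V_T$, and the last three lines are exactly the bookkeeping needed to trade the full harmonic extension $H(\varphi_-,\varphi_+)_T$ for the infinite-cylinder extensions $\overline H(\varphi_\pm)_T$ on the reference domain $[-1,1]\times\mathbb T^2$ — i.e. the terms coming from $E_T(b)$ and from restricting the integral of $\llbracket \overline H(\varphi_\pm)^4_T\rrbracket$ to the symmetric difference $M \Delta \{[-1,1]\times\mathbb T^2\}$. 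This is a bounded deterministic rearrangement: no renormalization constants are lost, which uses the recombination property of the energy renormalizations in Remark \ref{remark: energy renormalization recombination} and the compatibility of the Wick constants built into the definitions of the Wick powers of $H W^0_T$ and $\overline H W^0_T$.

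Third, with tameness in hand, the Bou\'e--Dupuis formula gives directly
\begin{equation}
-\log \mathsf A_T = \inf_{u \in \mathbb H}\mathbb E\Big[ \mathcal V_T\big(W_T, Z_T(u)\big) + \tfrac12\|u\|_{\mathcal H}^2\Big],
\end{equation}
where the drift enters only through $W_T \rightsquigarrow W_T + Z_T(u)$ and one checks that replacing $\phi = W_T$ by $W_T + Z_T(u)$ in $\mathcal V_T(W_T,0)$ produces exactly the claimed $\mathcal V_T(W_T,Z_T(u))$ — here one must be careful that the Wick renormalizations act only on the $W_T$ part (the drift $Z_T(u)$ is $\mathcal H$-valued, hence not Wick-ordered), which is why the $Z_T(u)^k$ monomials appear without subtractions and why the mixed $\llbracket W_T^2\rrbracket Z_T(u)^2$, $W_T Z_T(u)^3$ etc. appear with binomial coefficients but no Wick constants. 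I do not expect a serious obstacle here; the only genuinely delicate point is the verification of tameness uniformly enough to apply the $U$-type version of the formula, and the careful matching of the deterministic commutator/renormalization terms so that $\delta_T(\varphi_-,\varphi_+) - \Delta\delta^0_T$ — rather than $\delta_T(\varphi_-,\varphi_+)$ — is the constant that appears. Both are routine given the setup but must be done explicitly, and the bookkeeping of the boundary-bulk terms (the last three lines of $\mathcal V_T$) is where a sign or domain error would most easily creep in.
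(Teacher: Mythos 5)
Your overall plan --- verify tameness, do the binomial rearrangement including the $\mathcal E_T$-commutator terms, then invoke the Bou\'e--Dupuis formula --- is the right one and matches the paper's (unstated) reasoning; the paper simply declares the proposition a direct consequence of \cite{BD98,U14}, and you are spelling out the steps.

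There is, however, one genuine gap and one piece of misleading phrasing. The gap: you claim that after absorbing the commutator terms, ``the exponent is precisely $-\mathcal V_T(W_T,0)$.'' This is false as written, because the Bou\'e--Dupuis exponent $F(W_T)$ contains the pure-$W$ Wick monomials $\int_M \llbracket W_T^4\rrbracket\,dx$, $\gamma_T\int_M \llbracket W_T^2\rrbracket\,dx$, and $4\int_M\llbracket W_T^3\rrbracket H(\varphi_-,\varphi_+)_T\,dx$, none of which appear in $\mathcal V_T$. These are not secretly absorbed anywhere; they are \emph{dropped because they are continuous $\mathbb P$-martingales in $T$ with mean zero}, and the variational functional only sees $\mathbb E[F(W_T+Z_T(u))]$. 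The paper makes exactly this observation in the Remark following Proposition \ref{prop: bdry bd} for the boundary case (where $\int \llbracket(\overline H W_T^0)^4\rrbracket$ is dropped for the same reason), and your proposal needs the analogous step explicitly: write $F(W_T+Z_T(u)) = \mathcal V_T(W_T,Z_T(u)) + M_T$, where $M_T$ collects those pure-$W$ martingale integrals that are independent of $u$, and observe $\mathbb E[M_T]=0$ so $\mathbb E[F(W_T+Z_T(u))] = \mathbb E[\mathcal V_T(W_T,Z_T(u))]$.

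The misleading phrasing compounds this: ``replacing $\phi=W_T$ by $W_T+Z_T(u)$ in $\mathcal V_T(W_T,0)$ produces exactly the claimed $\mathcal V_T(W_T,Z_T(u))$'' is the wrong operational description of the shift. The substitution $\varphi_T\mapsto W_T+Z_T(u)$ must be performed in the \emph{original} exponent $V_T(\cdot\mid\varphi_-,\varphi_+)$ (plus commutators), not in $\mathcal V_T(W_T,0)$; since $\mathcal V_T(W_T,0)$ has no $\llbracket W_T^4\rrbracket$ piece, shifting it would never generate the quartic drift term $Z_T(u)^4$ or the $4\llbracket W_T^3\rrbracket Z_T(u)$ term, both of which are present in the conclusion. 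Once you correct the order of operations --- shift the full exponent, binomially expand the Wick powers against the shifted Gaussian (keeping the Wick constants tied to $W_T$ as you correctly note), and only then discard the $u$-independent mean-zero martingale integrals --- the argument goes through and the result matches the displayed $\mathcal V_T$. Both corrections are small once you see them, but they are the load-bearing observations in an otherwise ``routine'' application of Bou\'e--Dupuis, so they need to be stated rather than implicitly appealed to.
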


\begin{remark}
We recall that the Wick ordering of the terms $H(\varphi_-,\varphi)_T$ involve the covariance $C^B_T$, whereas the Wick ordering of the terms $\overline H(\varphi_-,\varphi_+)_T$ involve the covariance $\overline C^B_T$.	
\end{remark}

\subsection{Setup for the renormalization}

The variational problem in Proposition \ref{prop: bulk bouedupuis} contains divergences and this is seen by a power-counting heuristic. On the one hand, the regularity of Wick powers of the GFF is well-known. On the other hand, the drift terms satisfy the following regularity estimate whose proof is similar to \cite[Lemma 2]{BG20}.
\begin{lemma}
For every $u \in \mathcal H=L^2_tL^2_x$, we have that
\begin{equation}
\sup_T \|Z_T(u)\|_{H^1}^2 \leq \|u\|_{L^2_tL^2_x}^2.	
\end{equation}	
\end{lemma}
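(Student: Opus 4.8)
The plan is to carry out the estimate mode-by-mode in the eigenbasis $\mathcal I_M=\{\mathsf f_n\}_{n\in\mathbb N^*\times\mathbb Z^2}$, exactly as in \cite[Lemma 2]{BG20}. Write $\lambda_n:=4\pi^2(n_1/2L)^2+4\pi^2(n_2^2+n_3^2)+m^2$ for the eigenvalue of $(-\Delta+m^2)$ attached to $\mathsf f_n$, so that (with the coefficients normalized so that Parseval holds with constant $1$, as in the series representations used above) one has $\|g\|_{H^1(M)}^2=\sum_n\lambda_n|\widehat g(n)|^2$ for $g\in H^1(M)$ — recall that $H^1(M)$ is the completion of smooth functions vanishing on $\partial M$, so only the $\mathsf f_n$ appear. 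Since $Z_T(u)=\int_0^T J_s(u)\,ds$ is, for each finite $T$, a function whose eigenfunction expansion only involves the $\mathsf f_n$, it will lie in $H^1(M)$ once the bound below is established. First I would write
\begin{equation}
\|Z_T(u)\|_{H^1(M)}^2=\sum_{n\in\mathbb N^*\times\mathbb Z^2}\lambda_n\Big|\int_0^T J_s(n)\,\widehat u_s(n)\,ds\Big|^2.
\end{equation}

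Next I would apply the Cauchy--Schwarz inequality in the time variable in each mode,
\begin{equation}
\Big|\int_0^T J_s(n)\,\widehat u_s(n)\,ds\Big|^2\le\Big(\int_0^T J_s(n)^2\,ds\Big)\Big(\int_0^T|\widehat u_s(n)|^2\,ds\Big),
\end{equation}
and compute the first factor exactly. Since $J_s(n)^2=\lambda_n^{-1}\,\partial_s\big(\widehat\rho_s(n_2,n_3)^2\big)$, with $\widehat\rho_s(n_2,n_3)=\rho(\langle(n_2,n_3)\rangle/s)\to 0$ as $s\downarrow 0$ (because $\rho$ is compactly supported) and $0\le\widehat\rho_s\le 1$, the fundamental theorem of calculus gives
\begin{equation}
\int_0^T J_s(n)^2\,ds=\lambda_n^{-1}\,\widehat\rho_T(n_2,n_3)^2\le\lambda_n^{-1}.
\end{equation}
Plugging this back in yields $\lambda_n\big|\int_0^T J_s(n)\widehat u_s(n)\,ds\big|^2\le\int_0^T|\widehat u_s(n)|^2\,ds$, and summing over $n$ and using Parseval once more (together with Tonelli to interchange the sum and the time integral),
\begin{equation}
\|Z_T(u)\|_{H^1(M)}^2\le\int_0^T\sum_{n\in\mathbb N^*\times\mathbb Z^2}|\widehat u_s(n)|^2\,ds=\int_0^T\|u_s\|_{L^2(M)}^2\,ds\le\|u\|_{L^2_tL^2_x}^2.
\end{equation}
Since the bound is uniform in $T$, taking the supremum over $T>0$ concludes the argument; extending the time integral to $\mathbb R_+$ also covers the limiting case $T=\infty$, which is harmless as all terms are non-negative.

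The argument is entirely elementary and I do not expect any genuine obstacle. The only points requiring care are bookkeeping: matching the normalization of $\{\mathsf f_n\}$ (and of the Fourier basis on $\mathbb T^2$) so that Parseval holds with constant exactly $1$, the measurability of $s\mapsto\widehat u_s(n)$ needed to apply Tonelli, and checking that $Z_T(u)$ genuinely sits in $H^1(M)$ with the correct Dirichlet behaviour — which is automatic since its expansion uses only the eigenfunctions $\mathsf f_n$ vanishing on $\partial M$.
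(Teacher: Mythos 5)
Your proof is correct and is precisely the mode-by-mode Cauchy--Schwarz argument that the paper points to via \cite[Lemma 2]{BG20}: diagonalize in the Dirichlet eigenbasis $\{\mathsf f_n\}$, apply Cauchy--Schwarz in time in each mode, use the fundamental theorem of calculus to evaluate $\int_0^T J_s(n)^2\,ds = \lambda_n^{-1}\widehat\rho_T(n)^2 \le \lambda_n^{-1}$, then resum. The only caveats are the ones you already flagged — normalizing $\{\mathsf f_n\}$ and taking the $H^1$ norm to be the $(-\Delta+m^2)$ quadratic-form norm so the constant comes out exactly $1$ — and the implicit fact (standard for this class of mollifiers) that $t\mapsto\widehat\rho_t(n)^2$ is nondecreasing, so that $J_t(n)$ is real; none of these affect the substance.
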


Below, we will analyze $\mathcal V_T$ term-by-term to isolate and renormalize divergences, leading us to a renormalized variational problem that we can handle. We will first identify \emph{bulk stochastic terms}, which produce divergences even in the case of smooth or zero boundary conditions $\varphi_-,\varphi_+$. In addition, we will use the regularity estimates developed in Section \ref{sec: boundary} to identify \emph{boundary-bulk stochastic terms}, which produce divergences because of the rough regularity of the boundary fields (recall the notion of admissible boundary condition). As in \cite{BG20}, we will use a combination of a paracontrolled ansatz on the drift, which induces a divergence in the drift entropy term, to twist these divergences into a form that we can renormalize by using the available mass and energy renormalization counter-terms. The goal is then to be left with remainder terms that be analytically estimated with the coercive terms, consisting of $\int_M Z_T(u)^4 \, dx$ and the residual drift entropy term left after the paracontrolled ansatz. See the end of the section, where we are more precise on the latter point. Let us emphasize that the required drift ansatz is more complicated than in the previous section on the boundary theory due to the need for mass renormalization in the $\varphi^4_3$ model. Furthermore, additional boundary-dependent terms are required in the drift ansatz, which produces extra difficulties and subtleties as compared with the periodic case \cite{BG20}. 

\begin{remark}
Let us emphasize that analyzing the boundary-bulk stochastic terms is a significant point of departure from \cite{BG20}. We will show that many of these terms admit a finite fractional moment -- measured in the appropriate norm -- uniformly in $T$. This will use the additional second moment requirement on the drift term $Z^0$ in the definition of admissible boundary conditions. As such, these norms will be bounded almost surely in the boundary data, and we will show that this is sufficient to obtain the convergence of the bulk amplitudes (again, almost surely in the boundary data). As a consequence, the quantitative bounds we obtain on the bulk amplitudes will depend in a non-trivial way on these norms -- in particular, we will obtain an exponential upper bound in terms of linear combinations of fractional moments of these norms.
\end{remark}

\begin{remark} \label{remark:boundarybulkstochastic}
On a technical level, in order to isolate divergences, we will use paraproduct and resonant product decompositions as done in \cite{BG20}. It will therefore  be convenient to do a periodization trick in \emph{some} integrands, where we replace the integral over $M$ by the integral over the torus $\mathcal M$, obtained from $M$ by identifying $\partial^-M$ and $\partial^+M$. Let us observe that $Z_T(v)$ and the Wick powers of $W_T$ are periodic thanks to the boundary values of $\{ \mathsf f_n \}$. For the harmonic extension terms, there is a discontinuity at the image of $\partial^- M$ and $\partial^+M$ under the identification -- thus we make a measure $0$ modification implicitly. As we shall see in later sections, this will only cause problems when we try to estimate these terms in strictly positive regularity function spaces.	
\end{remark}

Let us now turn to isolating the divergent integrals. By the power-counting heuristic mentioned above, the singular terms in $\mathcal V_T(W_T,Z_T(u))$ are given by the following terms, which are arranged in terms of joint homogenity in $Z_T(u)$ and $H(\varphi_-,\varphi_+)$. 
\begin{align}
	S_1 &:= \int_M 4 \llbracket W^3_T \rrbracket Z_T(u)\,  d x, \\
  S_2 &:= \int_{M} 12 \llbracket W^2_T \rrbracket H (\varphi_-, \varphi_+)_T Z_T(u)\, dx + \int_{\mathcal M} 6\llbracket W^2_T \rrbracket Z^2_T(u) \, dx, \\
  S_3 &:=  \int_{M} 12 W_T \llbracket H (\varphi_-, \varphi_+)_T^2 \rrbracket Z_T(u)
  dx +  4\llbracket H (\varphi_-, \varphi_+)^3_T \rrbracket Z_T(u) \, dx.
\end{align}
Let us stress here that we have used the periodization trick on the second term in $S_2$, since we will use paraproduct and resonant product decompositions.

We now turn to the counter-terms at our disposal. First, let us discuss mass renormalization. Recall that the renormalization constant $\gamma_T$ is defined in terms of the covariance of the periodic GFF. However, the covariance of the stochastic objects above are expressible in terms of the Dirichlet GFF and therefore we will consider the latter and estimate the error term coming from the replacement. Let us define, for every $x \in M$,
\begin{equation}
\gamma_T^M(x):= -3\cdot 4^2 \int_M C_T^M(x,y)^3	
\end{equation}
The mass renormalization counter-terms are contained in the term
\begin{align}
U_1 := &   \int_{\mathcal M} 
  - 2\gamma_T^M W_T H(\varphi_-,\varphi_+)_T -2\gamma_T^M  W_T Z_T(u) -\gamma_T^M \llbracket H(\varphi_-,\varphi_+)_T^2 \rrbracket  
 \\
 &\qquad - 2 \gamma_T^M H(\varphi_-,\varphi_+)_T Z_T(u) -\gamma_T^M Z_T(u)^2 dx.
\end{align}
For the energy renormalization, we will build up the required counter-term in an ad-hoc way as in \cite{BG20}, and show that the difference between this and the term $\delta_T(\varphi_-,\varphi_+)-\delta_0^T$ is finite. Thus, let us set 
\begin{equation}
 U_2 = -\delta_T^M,
\end{equation}
where $\delta_T^M$ is defined below and recalled in \eqref{eqdef: delta M}. Note that, just as for $\delta_T(\varphi_-,\varphi_+)$, the dependence of $\delta_T^M$ on the boundary data is only through whether $\varphi_-$ and $\varphi_+$ have nontrivial $W^0$-components, and thus we omit it from notation. 

The other terms in $\mathcal V_T$, minus the term involving $Z_T(u)^4$, are treated as remainder terms. This is because they do not contain divergences and can be estimated in terms of the coercive terms (which are discussed below) using analytic techniques\footnote{This is slightly inaccurate. The term $W_T Z_T^3$ requires use of the stochastic term $\mathbb W_T^{1\circ [3]}$. We stress that this term does not require additional renormalization to define. Thus the argument to estimate this term is mostly analytic after a double paraproduct decomposition. See the treatment of the analogous term in\cite{BG20}.}. We now list them.
\begin{align}
R_1 = & \int_M 12 W_T H (\varphi_-, \varphi_+)_T Z^2_T + 6
  \llbracket H (\varphi_-, \varphi_+)_T^2 \rrbracket Z^2_T \, dx, \\
  R_2 = &  \int_{ M} 4 W_T Z^3_T \mathd x + 4 H (\varphi_-, \varphi_+)_T Z^3_T \, dx, \\
  R_3 = & \int_M \llbracket H (\varphi_-, \varphi_+)_T^4 \rrbracket \mathd x -
  \int \llbracket \bar{H} (\varphi_-)_T^4 \rrbracket \mathd x - \int \llbracket
  \bar{H} (\varphi_+)_T^4 \rrbracket \mathd x \\
  &\quad + \int_{M \Delta \{ [-1,1]\times \mathbb T^2\}} \llbracket \overline H(\varphi_-)_T^4 \rrbracket + \llbracket \overline H(\varphi_+)_T^4 \rrbracket dx. 
  \\
  R_4 = &  - \int_M (\gamma_T - \gamma_T^M) \Big(2W_T Z_T + \llbracket H(\varphi_-,\varphi_+)_T^2\rrbracket + 2 H(\varphi_-,\varphi_+)_T Z_T + Z_T^2 \Big)
  \\
  & - (\delta_T(\varphi_-,\varphi_+) - \Delta\delta_T^0 - \delta_T^M).
\end{align}
Note that we will define additional remainder terms below that occur when handling the singular terms $S_1-S_3$.

The above definitions now allow us to rewrite the Bou\'e-Dupuis formula as in the following lemma. 

\begin{lemma}
For every $u \in \mathbb H$,
\begin{equation}
\mathbb E \left[ \mathcal V_T(W_T,Z_T(u)) + \frac 12 \| v\|_{\mathcal H}^2 \right]= \mathbb E\left[ \sum_{i=1}^3 S_i + \sum_{i=1}^2 U_i + \sum_{i=1}^4 R_i + \int_M Z_T(u)^4  + \frac 12 \| v\|_{\mathcal H}^2 \right]. 
\end{equation}
\end{lemma}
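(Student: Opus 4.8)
The final statement of the excerpt is the lemma rewriting the Bou\'e--Dupuis functional as a sum of the terms $S_i$, $U_i$, $R_i$, the quartic term, and the entropy term. I plan to prove it by direct bookkeeping, organized as follows.

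\textbf{Plan.} The starting point is Proposition \ref{prop: bulk bouedupuis}, which already gives $\mathbb E[\mathcal V_T(W_T,Z_T(u)) + \tfrac12\|u\|_{\mathcal H}^2]$ with $\mathcal V_T$ written out explicitly as a sum of bulk-Wick, boundary-bulk, mass-renormalization, energy-renormalization, and harmonic-extension-correction integrals. The lemma is then purely a reorganization of these terms: I would verify that the explicit integrand of $\mathcal V_T$ in Proposition \ref{prop: bulk bouedupuis} is, term by term, exactly the sum of the integrands defining $S_1,S_2,S_3$, $U_1,U_2$, $R_1,R_2,R_3,R_4$, and $\int_M Z_T(u)^4\,dx$. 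First I would collect the three genuinely singular contributions: $4\llbracket W_T^3\rrbracket Z_T(u)$ gives $S_1$; $12\llbracket W_T^2\rrbracket H(\varphi_-,\varphi_+)_T Z_T(u) + 6\llbracket W_T^2\rrbracket Z_T(u)^2$ gives $S_2$ (after the periodization modification of the second summand, which changes the value on a measure-zero set only); and $12 W_T\llbracket H(\varphi_-,\varphi_+)_T^2\rrbracket Z_T(u) + 4 W_T\llbracket H(\varphi_-,\varphi_+)_T^3\rrbracket$ — wait, reading $S_3$, the $4\llbracket H(\varphi_-,\varphi_+)^3_T\rrbracket Z_T(u)$ term should be checked against the $4 W_T \llbracket H(\varphi_-,\varphi_+)_T^3\rrbracket$ term of $\mathcal V_T$; I would match these carefully against the expansion of $(W_T + H + Z)^4$ with Wick ordering and with $\gamma_T$-corrections. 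Then I would collect the mass-renormalization terms: in $\mathcal V_T$ these appear with the constant $\gamma_T$ (the periodic constant), whereas $U_1$ is stated with $\gamma_T^M(x)$ (the Dirichlet version, a function of $x$); the discrepancy $\gamma_T - \gamma_T^M(x)$ applied to the five quadratic-type terms is precisely absorbed into the first line of $R_4$. Likewise the energy renormalization: $\mathcal V_T$ carries $-(\delta_T(\varphi_-,\varphi_+) - \Delta\delta_T^0)$, while $U_2 = -\delta_T^M$; the difference $-(\delta_T(\varphi_-,\varphi_+) - \Delta\delta_T^0 - \delta_T^M)$ is exactly the constant in the second line of $R_4$. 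Finally the remaining genuinely non-singular integrals $12 W_T H(\varphi_-,\varphi_+)_T Z_T(u)^2$, $6\llbracket H(\varphi_-,\varphi_+)_T^2\rrbracket Z_T(u)^2$, $4 W_T Z_T(u)^3$, $4 H(\varphi_-,\varphi_+)_T Z_T(u)^3$, and the three harmonic-extension-correction integrals (the $\llbracket H^4\rrbracket - \llbracket\overline H(\varphi_-)^4\rrbracket - \llbracket\overline H(\varphi_+)^4\rrbracket$ term plus the symmetric-difference integral) match $R_1,R_2,R_3$ verbatim. The leftover $\int_M Z_T(u)^4\,dx$ and the entropy $\tfrac12\|u\|_{\mathcal H}^2$ are carried through unchanged.

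\textbf{Key steps, in order.} (1) Write out $\mathcal V_T$ from Proposition \ref{prop: bulk bouedupuis}; (2) apply the periodization trick of Remark \ref{remark:boundarybulkstochastic} to the integrands that will be decomposed via paraproducts — i.e.\ the $6\llbracket W_T^2\rrbracket Z_T(u)^2$ term and the quadratic mass-renormalization terms — noting these modifications are on null sets and do not change the expectation; (3) regroup the Wick-power monomials by joint homogeneity in $(Z_T(u), H(\varphi_-,\varphi_+)_T)$ into $S_1,S_2,S_3$; (4) split the $\gamma$-terms as $\gamma_T = \gamma_T^M(x) + (\gamma_T - \gamma_T^M(x))$, routing the first piece to $U_1$ and the remainder to $R_4$; (5) split the energy constant as $\delta_T(\varphi_-,\varphi_+) - \Delta\delta_T^0 = \delta_T^M + (\delta_T(\varphi_-,\varphi_+) - \Delta\delta_T^0 - \delta_T^M)$, routing $\delta_T^M$ to $U_2$ and the remainder to $R_4$; (6) identify the residual non-singular integrals with $R_1,R_2,R_3$ and collect $\int_M Z_T(u)^4\,dx$; (7) take expectations, using that the pure fourth Wick power $\int_{[-1,1]\times\mathbb T^2}\llbracket(\overline H W_T^0)^4\rrbracket$-type and $\int_M\llbracket W_T^4\rrbracket$-type terms were already dropped in Proposition \ref{prop: bulk bouedupuis} (they are mean-zero martingales).

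\textbf{Main obstacle.} There is no analytic difficulty here — the lemma is an identity, and the only real work is ensuring the combinatorial coefficients (the binomial factors $6,12,4$ from expanding $(W+H+Z)^4$ and from Wick re-ordering of cross terms) are tracked correctly, and that the bookkeeping of \emph{which} covariance ($C_T^M$, $C_T^B$, or $\overline C_T^B$) appears in each Wick-ordered product is consistent with the definitions in the preceding subsections (cf.\ the remark after Proposition \ref{prop: bulk bouedupuis}). The subtlest point is the mass renormalization: the $\gamma_T$ appearing in $\mathcal V_T$ (via the potential $V_T$) is the \emph{constant} periodic renormalization, while the stochastic terms $S_1$–$S_3$ are most naturally renormalized against the \emph{Dirichlet} covariance $C_T^M$, which forces the introduction of the $x$-dependent $\gamma_T^M(x)$ in $U_1$ and the compensating term in $R_4$; I would need to state explicitly that $\gamma_T - \gamma_T^M(x)$ is bounded uniformly in $T$ and $x$ (this is the content deferred to the covariance estimates of Appendix \ref{sec: Dirichlet covariance estimates}, and is why $R_4$ qualifies as a genuine remainder rather than a divergent counterterm), but for the identity itself this boundedness is not needed — only the algebraic splitting is.
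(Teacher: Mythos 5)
Your proposal is correct and takes the same approach as the paper: the lemma carries no explicit proof in the text and follows immediately by regrouping the terms of $\mathcal V_T$ from Proposition~\ref{prop: bulk bouedupuis} according to the preceding definitions of $S_i,U_i,R_i$. Your bookkeeping plan correctly identifies the two splittings that must be performed (routing $\gamma_T^M$ to $U_1$ and $\gamma_T-\gamma_T^M$ to $R_4$; routing $\delta_T^M$ to $U_2$ and the residual energy constant to $R_4$), and you correctly observe that the periodization only alters the integrand on a null set.

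One remark to make your verification airtight: the mid-sentence hesitation you flag (``wait, reading $S_3$\dots'') is pointing at a real issue, but you do not fully resolve it. The displayed $\mathcal V_T$ in Proposition~\ref{prop: bulk bouedupuis} as typeset omits the three cross terms $6\llbracket H(\varphi_-,\varphi_+)_T^2\rrbracket Z_T(u)^2$, $4 H(\varphi_-,\varphi_+)_T Z_T(u)^3$, and $4\llbracket H(\varphi_-,\varphi_+)_T^3\rrbracket Z_T(u)$, which are nonetheless needed to produce the corresponding summands in $R_1$, $R_2$, and $S_3$. These terms are genuinely present once one Wick-expands $\llbracket(W_T+Z_T(u))^p\rrbracket$ and its products with $\llbracket H^q\rrbracket$ inside $V_T^\partial$ (e.g.\ $4\varphi_T\llbracket H^3\rrbracket=4W_T\llbracket H^3\rrbracket+4Z_T(u)\llbracket H^3\rrbracket$), so they should be regarded as transcription omissions rather than as an obstruction, but your proof should say so explicitly rather than leaving it at ``I would match these carefully.'' Conversely, note that some terms displayed in $\mathcal V_T$ do \emph{not} reappear on the right-hand side---namely $4W_T\llbracket H^3\rrbracket$, $6\llbracket W_T^2\rrbracket\llbracket H^2\rrbracket$, and the $(\gamma_T-\gamma_T^M)\cdot 2W_TH$ residual from $U_1/R_4$---and your accounting should record that these are precisely the terms of vanishing $\mathbb P$-expectation (odd in $W$, or a nontrivial Wick power of $W$ times a deterministic factor) so that they may be discarded under the outer $\mathbb E[\cdot]$ without affecting the identity. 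With these two clarifications the argument is complete.
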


\subsubsection{Paracontrolled ansatz and coercive terms}

We now define our paracontrolled ansatz. We begin by introducing the stochastic objects that will appear in our analysis. First, we set
\begin{equation}
\mathbb W_T^2:= 12\llbracket W_t^2\rrbracket, \, \mathbb W_T^3 = 4\llbracket W_T^3\rrbracket , \, \mathbb W_T^{[3]}=\int_0^T J_t^2 \mathbb W_t^3 dt.	
\end{equation}
Let us define
\begin{align}
G_T = & \int^T_0 - J^2_t (\mathbb{W}^2_t \succ Z^{\flat}_t) - J^2_t \left(
  \mathbb{W}^2_t H \left( \varphi_- {, \varphi_+}  \right)_T \right) \mathd t,
\end{align}
where the paraproduct $\succ$ is on $\mathcal M$, i.e.\ we are implicitly using the periodicity of the objects above. Let us emphasize that $H(\varphi_-,\varphi_+)_T$ is \emph{not} dependent on $t$. 

The term $Z_t^\flat$ above is defined as follows.  Let $(\theta_t)$ be a family of smooth maps $\mathbb R^2 \rightarrow \mathbb R_+$ such that $t\mapsto \theta_t$ is continuously differentiable and the following is satisfied. There exists $c>0$ such that for all $t \geq 0$,
\begin{equation}
	\theta_t(\xi) \sigma_s(\xi) = 0, \quad \forall t \leq s,
\end{equation}	
and for $t>T_0$ for some $T_0$ sufficiently large, 
\begin{equation}
\theta_t(\xi) = 1, \qquad |\xi| \leq  ct.
\end{equation}
See \cite[Section 4]{BG20} for a construction. For $T>0$ fixed and larger than $T_0$,  let us define $u \mapsto Z_t^\flat(u)$ to be the process on $[0,T]$ defined by:
\begin{equation}
Z_{T;t}^\flat(u):= \mathcal F^{-1}_{\mathbb T^2}(\theta_t) \ast_{\mathbb T^2} Z_T(u), \qquad \forall t \leq T,
\end{equation}
where above we have used the Fourier (inverse) transform and spatial convolution on the periodic direction $\mathbb T^2$. 
By the first property of $\theta_t$, we have that $Z_{T;t}^\flat(u) = \mathcal F^{-1}_{\mathbb T^2}(\theta_t) \ast_{\mathbb T^2} Z_{T'}(u)$ for any $T'\geq t$. Hence we simply write $Z_t^\flat(u)$ in the sequel. By Mikhlin multiplier theorem, 
\begin{equation}
\| Z_t^\flat(u)\|_{L^4} \leq \|Z_T(u)\|_{L^4},
\end{equation}
provided $T>T_0$. We will assume throughout that $T>T_0$ implicitly.

\begin{remark}
The relevance of the $\flat$ is exactly to control terms involving $t<T$ and higher than quadratic homogeneity in the drift using analytic estimates. This is because, in our coercive terms, the quartic term is $\|Z_T(u)\|_{L^4}$ and it is not clear how this term controls homogeneity for $t<T$. We refer to \cite{BG20} for more discussion. 	
\end{remark}

We define the full drift ansatz.
\begin{definition} \label{definition: full ansatz}
For every $T>0$ and every $u \in \mathbb H$, let us write $\ell_T^{\varphi_-,\varphi_+}(u)=\ell_T(u) \in \mathbb H$ to denote the drift defined by
\begin{align} 
   u_t - \ell_T (u)_t 
  &=  - J_t \mathbb{W}^3_t - J_t (\mathbb{W}^2_t \succ Z^{\flat}_t) - J_t
  (\mathbb{W}^2_t H (\varphi_-, \varphi_+)_T) \\
  &\qquad \qquad  - 12 J_t (W_t \llbracket H
  (\varphi_-, \varphi_+)_T^2 \rrbracket) - 4 J_t \llbracket H (\varphi_-,
  \varphi_+)_T^3 \rrbracket ,
\end{align}
when $t \leq T$, and $\ell_T(u) = 0$ for $t > T$.
\end{definition}

We now turn to the coercive terms. 
\begin{definition}
For every $T>0$ and every $u \in \mathbb H$, define
\begin{equation} \label{eqdef: CT-coercive}
\mathcal C_T(u):= \|Z_T(u)\|_{L^4_x}^4 + \frac 12 \|\ell_T(u) \|_{\mathcal H}^2.	
\end{equation}
\end{definition}

\subsection{Step-by-step renormalization}

We now proceed to do a step-by-step renormalization of the singular terms $S_1-S_3$. Below, we will often write $\equiv$ to mean equal up to mean zero terms (with respect to $\mathbb P$). Also, we will drop the dependency of the integral on the spatial domain when clear from context. We remind the reader that only the second term in $S_2$ is periodized, and otherwise the integration domain is $M$.

\subsubsection{Renormalization of $S_1$}

The singular term $S_1$ is controlled by an ansatz on the drift and a component of the energy renormalization $U_2$. 

\begin{lemma} \label{lemma: delta 1}
  Let
  \[ u_t = - J_t \mathbb{W}_t^3 + w_t \label{eq:firstansatz} \]
  and define
  \begin{equation}
  \delta^{1,M}_T:= -\frac 12 \mathbb E\left[ \int^T_0 \int (J_t \mathbb{W}^3_t)^2 \mathd x \mathd t \right].
  \end{equation}
  Then
  \begin{equation}
    S_1 + \frac{1}{2} \int^T_0 \| u \|^2_{L^2} \mathd t - \delta^{1,M}_T  \equiv  \frac{1}{2}
    \int^T_0 \| w \|^2_{L^2} \mathd t.
  \end{equation}
\end{lemma}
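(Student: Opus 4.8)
The plan is to compute the stochastic integral contribution of the ansatz term $-J_t\mathbb W_t^3$ in the drift entropy, using It\^o's isometry to extract exactly the divergent constant $\delta^{1,M}_T$, and to verify that all cross terms either cancel against $S_1$ or are mean zero. Concretely, with $u_t = -J_t\mathbb W_t^3 + w_t$, we expand the quadratic term
\begin{equation}
\frac12\int_0^T \|u_t\|_{L^2}^2\, dt = \frac12\int_0^T\|J_t\mathbb W_t^3\|_{L^2}^2\, dt - \int_0^T\langle J_t\mathbb W_t^3, w_t\rangle_{L^2}\, dt + \frac12\int_0^T\|w_t\|_{L^2}^2\, dt.
\end{equation}

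The first step is to handle the middle (cross) term. By the definition $Z_T(w) = \int_0^T J_s(w)\, ds$ and the fact that $J_t$ is self-adjoint, we rewrite $\int_0^T\langle J_t\mathbb W_t^3, w_t\rangle_{L^2}\, dt = \int_M \left(\int_0^T J_t^2\mathbb W_t^3\, dt\right) \cdot (\text{the "}w\text{" part reconstituted})\, dx$; more precisely, since $\mathbb W^{[3]}_T = \int_0^T J_t^2\mathbb W_t^3\, dt$ and $S_1 = \int_M 4\llbracket W_T^3\rrbracket Z_T(u)\, dx = \int_M \mathbb W_T^3 Z_T(u)\, dx$ (here I would double-check the normalization $\mathbb W_T^3 = 4\llbracket W_T^3\rrbracket$ against the definitions, noting $S_1$ is written with a factor $4$), the idea is that applying It\^o's formula to the product $\mathbb W_t^3 \, Z_t(u)$ — exactly as in the proof of Lemma \ref{lem: bdry cubic} in the boundary case — produces one term $\int_0^T \mathbb W_t^3 J_t(u_t)\, dt$ (which, after taking adjoints, recombines with $S_1$ to cancel the $-J_t\mathbb W_t^3$ contribution appearing inside $u$) and one term $\int_0^T Z_t(u)\, d\mathbb W_t^3$ which is a stochastic integral, hence mean zero. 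The term $-\int_0^T \langle J_t\mathbb W_t^3, \ell\text{-type remainder}\rangle$ is precisely what survives and is reabsorbed; the upshot is that $S_1$ plus the cross term, modulo mean-zero terms, cancels the "$-J_t\mathbb W_t^3$ squared against the rest" and one is left with $\frac12\int_0^T\|J_t\mathbb W_t^3\|_{L^2}^2\, dt$ plus $\frac12\int_0^T\|w_t\|_{L^2}^2\, dt$ plus martingale terms.

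The second step is then purely a matter of identification: $\frac12\int_0^T\|J_t\mathbb W_t^3\|_{L^2}^2\, dt$ is a positive random variable whose expectation is, by definition, $-\delta^{1,M}_T$ (note the sign: $\delta^{1,M}_T := -\frac12\mathbb E[\int_0^T\int_M (J_t\mathbb W_t^3)^2\, dx\, dt]$, so $\mathbb E[\frac12\int_0^T\|J_t\mathbb W_t^3\|_{L^2}^2] = -\delta^{1,M}_T$, and after subtracting $\delta^{1,M}_T$ we get $-2\delta^{1,M}_T$... here I would need to recheck whether the intended statement is in expectation or pathwise — the statement uses $\equiv$ meaning "equal up to $\mathbb P$-mean-zero terms", so the random variable $\frac12\int_0^T\|J_t\mathbb W_t^3\|_{L^2}^2 - \mathbb E[\frac12\int_0^T\|J_t\mathbb W_t^3\|_{L^2}^2]$ is itself mean zero and hence absorbed into $\equiv$, so subtracting $\delta^{1,M}_T$ rather than $2\delta^{1,M}_T$ is consistent with the sign bookkeeping). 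This identification requires knowing that $\int_0^T\|J_t\mathbb W_t^3\|_{L^2}^2\, dt$ has finite expectation, which follows from the standard stochastic estimates on Wick powers of the GFF (its third Wick power has a logarithmically divergent variance, matched by the logarithmic growth of $\delta^{1,M}_T$) — this can be cited from \cite{BG20} or the analogous computation in Section \ref{sec:stochastic}.

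The main obstacle I anticipate is the careful It\^o/adjoint bookkeeping in the first step: one must be precise about which objects are periodized, whether $Z_T^\flat$ or $Z_T$ appears (it should be $Z_T(u)$ here since $S_1$ is genuinely cubic-in-$W$ times linear-in-$Z$ with no $\flat$ cutoff), and the fact that $d\mathbb W_t^3$ is itself a well-defined martingale increment requires that $\llbracket W_t^3\rrbracket$ be realized as an iterated stochastic integral — this is standard (see \cite{N09}) but must be invoked. Once the martingale-differential structure is set up correctly, the cancellation is forced and the remaining calculation is routine, mirroring Lemma \ref{lem: bdry cubic} almost verbatim with $\overline H$ and the boundary covariance replaced by the identity and the bulk covariance $C^M_T$.
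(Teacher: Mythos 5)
Your strategy --- apply It\^o's formula to $S_1 = \int_M \mathbb W_T^3\, Z_T(u)\,dx$, observe that the stochastic integral $\int_0^T Z_t(u)\,d\mathbb W_t^3$ has mean zero, take adjoints to land on $\int_0^T \langle J_t\mathbb W_t^3, u_t\rangle\,dt$, plug in the ansatz, and complete the square --- is exactly the paper's proof, which is stated there in two sentences. But you have a sign error in the bookkeeping, and your step-two attempt to paper over it is not correct.

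Concretely: after It\^o (dropping the mean-zero martingale), $S_1 \equiv \int_0^T\langle J_t\mathbb W_t^3, u_t\rangle\,dt = -\int_0^T\|J_t\mathbb W_t^3\|_{L^2}^2\,dt + \int_0^T\langle J_t\mathbb W_t^3, w_t\rangle\,dt$. Adding $\tfrac12\int_0^T\|u_t\|^2\,dt = \tfrac12\int_0^T\|J_t\mathbb W_t^3\|^2 - \int_0^T\langle J_t\mathbb W_t^3, w_t\rangle + \tfrac12\int_0^T\|w_t\|^2$, the cross terms cancel and you are left with $-\tfrac12\int_0^T\|J_t\mathbb W_t^3\|^2\,dt + \tfrac12\int_0^T\|w_t\|^2\,dt$ --- note the \emph{minus} sign, whereas you wrote a plus. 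Now $\mathbb E\bigl[\tfrac12\int_0^T\|J_t\mathbb W_t^3\|^2\bigr] = -\delta_T^{1,M}$, so $-\tfrac12\int_0^T\|J_t\mathbb W_t^3\|^2 \equiv +\delta_T^{1,M}$, and subtracting $\delta_T^{1,M}$ kills it, giving the claimed $\tfrac12\int_0^T\|w_t\|^2$. With your positive sign, the same manipulation gives $+\tfrac12\int_0^T\|J_t\mathbb W_t^3\|^2 - \delta_T^{1,M} \equiv -\delta_T^{1,M} - \delta_T^{1,M} = -2\delta_T^{1,M}$, which is a divergent constant and certainly not absorbed by $\equiv$; so the claim in your second step that "subtracting $\delta_T^{1,M}$ rather than $2\delta_T^{1,M}$ is consistent" is wrong --- the apparent factor-of-two puzzle you noticed was a real symptom of the sign slip in step one, not a cosmetic issue resolved by the $\equiv$ convention. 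The fix is simply to correct the sign after the paraphrased "upshot" sentence; the rest of the argument (including the integrability remark needed to apply $\equiv$, and the resolution of the factor $4$ between $\mathbb W_T^3$ and $4\llbracket W_T^3\rrbracket$) is fine.
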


\begin{proof}
This is a direct consequence of It\^o's formula. Note that the stochastic integral $\int_0^T u_t \cdot d \mathbb W^3_t $ is mean zero when $T<\infty$ since $\mathbb E[\|u\|_{\mathcal H}^2]<\infty$. 

\end{proof}

\subsubsection{Renormalization of $S_2$}

We now turn our attention to the term $S_2$. The ansatz \eqref{eq:firstansatz} propagates in and thus we will have to analyze its effect on $S_2$. It is convenient to introduce the \emph{intermediate ansatz}
\begin{equation} \label{eq: intermediate ansatz}
Z_T(u) = - \mathbb W^{[3]}_T + K_T,	
\end{equation}
where $K_T = Z_T (w)$. Then we have that
\begin{align}
  S_2 = & -12 \int \llbracket W^2_T \rrbracket H (\varphi_-, \varphi_+)_T
  \mathbb{W}_T^{[3]} \mathd x + 12 \int \llbracket W^2_T \rrbracket H
  (\varphi_-, \varphi_+)_T K_T \mathd x 
  \\
  \quad &+ 6 \int \llbracket W^2_T \rrbracket
  (\mathbb{W}^{[3]}_T)^2 \mathd x - 12 \int \llbracket W^2_T \rrbracket
  \mathbb{W}^{[3]}_T K_T \mathd x + 6 \int \llbracket W^2_T \rrbracket K^2_T
  \mathd x.
\end{align}
Let us define
\begin{equation}
\delta^{2,M}_T := - 6 \mathbb E \left [ \int \llbracket W^2_T \rrbracket
  (\mathbb{W}^{[3]}_T)^2 \mathd x  \right].
\end{equation}
Using Wick's theorem to cancel the first term and $\delta^{2,M}_T$ to cancel the third, we obtain
\begin{equation}
S_2 - \delta^{2,M}_T \equiv \tilde{S}_2,
\end{equation}
where
\begin{equation}
 \tilde S_2:= 12 \int \llbracket W^2_T \rrbracket H (\varphi_-, \varphi_+)_T
   K_T \mathd x - 12 \int \llbracket W^2_T \rrbracket \mathbb{W}^{[3]}_T K_T
   \mathd x + 6 \int \llbracket W^2_T \rrbracket K^2_T \mathd x.
\end{equation}
These divergences will be treated using the counterterm $U_1$ containing the mass renormalization.

Let us begin by further isolating the divergences using a paraproduct decomposition, which we can use thanks to the periodization trick.
\begin{lemma}
  \begin{align} \label{eq: S2tilde}
  \begin{split}
   \tilde{S}_2 &= -12 \int \llbracket W^2_T \rrbracket \circ
     \mathbb{W}^{[3]}_T K_T \mathd x \\ &\qquad \qquad + 12 \left( \int (\llbracket W_T^2
     \rrbracket \succ Z^{\flat}_T) K_T + \int (\llbracket W_T^2 \rrbracket H
     (\varphi_-, \varphi_+)_T) K_T \mathd x \right) + R_5, 
   \end{split}
   \end{align}
  where
  \begin{align}
    R_5 
    := & - 12 \int (\llbracket W^2_T \rrbracket \prec \mathbb{W}^{[3]}_T) K_T
    \mathd x + 6 \int (\llbracket W^2_T \rrbracket \prec K_T) K_T \mathd x -
    6 {\rm Com}_1 (\llbracket W^2_T \rrbracket, K_T) \\
    & + 12 \int (\llbracket W^2_T \rrbracket \succ (Z_T - Z^{\flat}_T)) K_T
    \mathd x, 
  \end{align}
  and 
  \begin{equation}
  	 {\rm Com}_1 (\llbracket W^2_T \rrbracket, K_T) = \int (\llbracket W^2_T
     \rrbracket \succ K_T) K_T \mathd x - \int (\llbracket W^2_T \rrbracket
     \circ K_T) K_T \mathd x. 
   \end{equation}
\end{lemma}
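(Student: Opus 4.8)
The plan is to establish the stated identity as an \emph{exact algebraic identity}, valid for each fixed $T\in(0,\infty)$: at finite $T$ the objects $W_T$, $\mathbb{W}^{[3]}_T$, $K_T$ and $H(\varphi_-,\varphi_+)_T$ are genuine smooth functions on the torus $\mathcal M$ (recall the periodization convention of Remark \ref{remark:boundarybulkstochastic}), so every product and every Bony paraproduct decomposition below (see Appendix \ref{appendix:besov} for conventions) is unambiguous and no renormalization intervenes. First I would apply $fg = f\prec g + f\circ g + f\succ g$, with Littlewood--Paley blocks on $\mathcal M$, to the integrands $\llbracket W_T^2\rrbracket\,\mathbb{W}^{[3]}_T$ and $\llbracket W_T^2\rrbracket\,K_T$ appearing in the last two terms of $\tilde S_2$, while leaving the first term $12\int\llbracket W_T^2\rrbracket H(\varphi_-,\varphi_+)_T K_T\,\mathd x$ untouched, since it already appears verbatim on the right-hand side.

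For the term $-12\int\llbracket W_T^2\rrbracket\,\mathbb{W}^{[3]}_T K_T\,\mathd x$, the low--high piece $-12\int(\llbracket W_T^2\rrbracket\prec\mathbb{W}^{[3]}_T)K_T\,\mathd x$ is placed into $R_5$; the resonant piece $-12\int(\llbracket W_T^2\rrbracket\circ\mathbb{W}^{[3]}_T)K_T\,\mathd x$ becomes the leading resonant term on the right-hand side; and for the high--low piece I would insert the intermediate ansatz \eqref{eq: intermediate ansatz}, namely $\mathbb{W}^{[3]}_T = K_T - Z_T(u)$, to obtain
\begin{equation}
-12\!\int(\llbracket W_T^2\rrbracket\succ\mathbb{W}^{[3]}_T)K_T\,\mathd x = -12\!\int(\llbracket W_T^2\rrbracket\succ K_T)K_T\,\mathd x + 12\!\int(\llbracket W_T^2\rrbracket\succ Z_T(u))K_T\,\mathd x ,
\end{equation}
and then split $Z_T(u) = Z^\flat_T(u) + \big(Z_T(u)-Z^\flat_T(u)\big)$: the $Z^\flat_T(u)$ part contributes the term $12\int(\llbracket W_T^2\rrbracket\succ Z^\flat_T)K_T\,\mathd x$ on the right-hand side, and the remainder contributes the last summand of $R_5$.

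For the term $6\int\llbracket W_T^2\rrbracket K_T^2\,\mathd x$, I would decompose a single copy of $K_T$: the low--high piece $6\int(\llbracket W_T^2\rrbracket\prec K_T)K_T\,\mathd x$ goes into $R_5$, while the surviving resonant and high--low pieces $6\int(\llbracket W_T^2\rrbracket\circ K_T)K_T\,\mathd x + 6\int(\llbracket W_T^2\rrbracket\succ K_T)K_T\,\mathd x$ combine with the leftover $-12\int(\llbracket W_T^2\rrbracket\succ K_T)K_T\,\mathd x$ from the previous step to give $-6\big(\int(\llbracket W_T^2\rrbracket\succ K_T)K_T\,\mathd x - \int(\llbracket W_T^2\rrbracket\circ K_T)K_T\,\mathd x\big) = -6\,{\rm Com}_1(\llbracket W_T^2\rrbracket,K_T)$. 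Assembling the flagged pieces reproduces precisely the asserted expression for $R_5$, and the identity follows.

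There is no genuine analytic obstacle here: the proof is just sign- and constant-bookkeeping together with the single substitution $\mathbb{W}^{[3]}_T = K_T - Z_T(u)$, and the only point needing care is that the periodization is used consistently so that $\prec$, $\circ$, $\succ$ all refer to the same Littlewood--Paley decomposition on $\mathcal M$. The actual content of the lemma is the \emph{choice} of splitting rather than its verification: the resonant product $\llbracket W_T^2\rrbracket\circ\mathbb{W}^{[3]}_T$ is isolated because it is the one piece that cannot be estimated directly and will require a renormalization argument later; the two retained "good" terms $\int(\llbracket W_T^2\rrbracket\succ Z^\flat_T)K_T\,\mathd x$ and $\int(\llbracket W_T^2\rrbracket H(\varphi_-,\varphi_+)_T)K_T\,\mathd x$ are kept precisely so that the drift ansatz of Definition \ref{definition: full ansatz}, through the $-J_t(\mathbb{W}^2_t\succ Z^\flat_t)$ and $-J_t(\mathbb{W}^2_t H(\varphi_-,\varphi_+)_T)$ contributions to $\ell_T(u)$ (equivalently, to $G_T$), cancels them; and every remaining piece in $R_5$ is of subcritical regularity and will later be controlled against the coercive functional $\mathcal C_T(u)$.
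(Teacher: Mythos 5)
Your proof is correct and follows essentially the same route as the paper: a Bony decomposition of the $\llbracket W_T^2\rrbracket\,\mathbb W^{[3]}_T K_T$ and $\llbracket W_T^2\rrbracket K_T^2$ integrands, insertion of the intermediate ansatz to convert the high--low contributions into the $Z_T$ (then $Z^\flat_T$) paraproduct, and absorption of the remainder into $R_5$ via the commutator ${\rm Com}_1$. The only cosmetic difference is that you substitute $\mathbb W^{[3]}_T=K_T-Z_T(u)$ directly inside the $\succ$ piece and then form ${\rm Com}_1$ from the leftovers, whereas the paper combines the two $\succ$ pieces first to produce $12\int(\llbracket W_T^2\rrbracket\succ Z_T)K_T$; the two bookkeeping orders are algebraically identical.
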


\begin{proof}
The proof is by paraproduct decomposition and reorganization of terms. We first decompose
  \begin{align}
    & 12 \int \llbracket W^2_T \rrbracket \mathbb{W}^{[3]}_T K_T \mathd x
    \nonumber\\
    = & 12 \int (\llbracket W^2_T \rrbracket \succ \mathbb{W}^{[3]}_T) K_T
    \mathd x + 12 \int (\llbracket W^2_T \rrbracket \circ \mathbb{W}^{[3]}_T)
    K_T \mathd x + 12 \int (\llbracket W^2_T \rrbracket \prec
    \mathbb{W}^{[3]}_T) K_T \mathd x. \nonumber
  \end{align}
  The last term is contained in $R_5$. Furthermore
  \[ 6 \int \llbracket W^2_T \rrbracket K^2_T \mathd x = 12 \int (\llbracket
     W^2_T \rrbracket \succ K_T) K_T \mathd x + 6 \int (\llbracket W^2_T
     \rrbracket \prec K_T) K_T \mathd x - 6 {\rm Com}_1 (\llbracket W^2_T \rrbracket,
     K_T) . \]
  The last two terms are contained in $R_5$. Finally,
  \[ -12 \int (\llbracket W^2_T \rrbracket \succ \mathbb{W}^{[3]}_T) K_T \mathd
     x + 12 \int (\llbracket W^2_T \rrbracket \succ K_T) K_T \mathd x = 12
     \int (\llbracket W^2_T \rrbracket \succ Z_T) K_T \mathd x \]
  and
  \[ 12 \int (\llbracket W^2_T \rrbracket \succ Z_T) K_T \mathd x = 12 \int
     (\llbracket W^2_T \rrbracket \succ Z^{\flat}_T) K_T \mathd x + 12 \int
     (\llbracket W^2_T \rrbracket \succ (Z_T - Z^{\flat}_T)) K_T \mathd x. \]
  The last term is contained in $R_5$ as well. 
\end{proof}

We postpone treatment of the resonant product term in \eqref{eq: S2tilde} (see $\Upsilon_1$ below), and instead analyze the terms in the parentheses by using a second drift ansatz.

\begin{lemma}
  Let
  \begin{equation} w_t = - J_t (\mathbb{W}^2_t \succ Z^{\flat}_t) - J_t (\mathbb{W}^2_t H
     (\varphi_-, \varphi_+)_T) + v_s \label{eq:secondansatz} \end{equation}
  where $h_s$ is defined by this relation. Then 
  \begin{align}
    & 12 \left( \int (\llbracket W_T^2 \rrbracket \succ Z^{\flat}_T) K_T +
    \int \llbracket W_T^2 \rrbracket H (\varphi_-, \varphi_+)_T K_T \mathd x
    \right) + \frac{1}{2} \int^T_0 \| w_s \|^2_{L^2} \mathd x \\
    \equiv & - \frac 12\int^T_0 \int (J_t \mathbb{W}^2_t \succ Z^b_t + J_t \mathbb{W}^2_t H
    (\varphi_-, \varphi_+)_T)^2 \mathd x \mathd t + \frac{1}{2} \int^T_0 \| v_s
    \|^2_{L^2} \mathd x + R_6
  \end{align}
  where
  \[ R_6 := 12 \int^T_0 \int (\llbracket W_t^2 \rrbracket \succ
     \dot{Z}^{\flat}_t) K_t \mathd x \mathd t. \]
\end{lemma}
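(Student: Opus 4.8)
The statement is an identity involving It\^o's formula applied to the second drift ansatz \eqref{eq:secondansatz}, in exact parallel with Lemma \ref{lemma: delta 1} for $S_1$. The plan is to substitute the ansatz into the relative entropy term $\frac 12 \int_0^T \|w_s\|_{L^2}^2 \, ds$, expand the square, and identify the cross term produced with (a variant of) the two integrals in parentheses on the left-hand side, up to a stochastic integral that vanishes in expectation and an explicit remainder. First I would write $w_t = -a_t + v_t$ with $a_t := J_t(\mathbb W_t^2 \succ Z_t^\flat) + J_t(\mathbb W_t^2 H(\varphi_-,\varphi_+)_T)$, so that $\frac 12 \|w_t\|_{L^2}^2 = \frac 12 \|a_t\|_{L^2}^2 - \langle a_t, v_t\rangle_{L^2} + \frac 12 \|v_t\|_{L^2}^2$; the term $-\frac 12 \int_0^T \|a_t\|_{L^2}^2 \, dt$ is precisely the first term on the right-hand side of the claimed identity (note the claimed statement already writes it with the sign $-\frac 12$, i.e.\ it will come with the opposite sign and get moved over — one must be careful here with signs, as I explain below).

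The key step is to re-express the two integrals $12\int (\llbracket W_T^2\rrbracket \succ Z_T^\flat)K_T\, dx + 12 \int (\llbracket W_T^2\rrbracket H(\varphi_-,\varphi_+)_T) K_T \, dx$ using the definition $K_T = Z_T(w)$ together with the relation $Z_t^\flat(w)$ appearing through $a_t$. Concretely, by self-adjointness of $J_t$ and the fact that $12(\mathbb W_t^2 \succ Z_t^\flat)$, $12(\mathbb W_t^2 H(\varphi_-,\varphi_+)_T)$ integrate against $K_T$ to produce (after undoing the time integral defining $Z_T(w) = \int_0^T J_t w_t\, dt$ and an It\^o/stochastic-Fubini manipulation, and discarding a mean-zero stochastic integral) the pairing $\int_0^T \langle 12\, \overline{(\cdot)}, J_t w_t\rangle\, dt = \int_0^T \langle a_t, w_t \rangle \, dt$ up to the correction coming from the difference between $Z^\flat$ and $Z$ — this difference is exactly what is collected into $R_6 = 12\int_0^T \int (\llbracket W_t^2\rrbracket \succ \dot Z_t^\flat) K_t\, dx\, dt$. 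The factor $12 = 6\cdot 2$ and the various constants must be tracked carefully; the mechanism is identical to the treatment of the corresponding term in \cite{BG20}. Substituting $w_t = -a_t + v_t$ into $\int_0^T \langle a_t, w_t\rangle\, dt = -\int_0^T \|a_t\|_{L^2}^2\, dt + \int_0^T \langle a_t, v_t\rangle\, dt$ and combining with the entropy expansion above yields, after cancellation of the $\langle a_t, v_t\rangle$ terms, exactly the claimed identity with $\frac 12\int_0^T\|v_s\|_{L^2}^2\, dx$ remaining and $R_6$ as stated.

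The main obstacle — really the only non-routine point — is the bookkeeping of the $Z^\flat$ versus $Z$ discrepancy and of the time-dependence: one must justify that the integration-by-parts in time (stochastic Fubini) is legitimate, that the resulting stochastic integral $\int_0^T \langle \cdots, dX_t\rangle$ has zero expectation (which holds because $\mathbb E[\|w\|_{\mathcal H}^2] < \infty$ when $T < \infty$, as already used in Lemma \ref{lemma: delta 1}), and that the term $R_6$ is precisely the collected error; no renormalization constant appears here because the square $(J_t\mathbb W_t^2 \succ Z_t^\flat + J_t\mathbb W_t^2 H(\varphi_-,\varphi_+)_T)^2$ is kept as is (its divergence, if any, is absorbed later alongside the mass renormalization in $U_1$). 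I would also double-check the overall sign in the statement: as written, the right-hand side displays $-\frac 12\int_0^T \int(\cdots)^2$, which should match $-\frac 12\int_0^T\|a_t\|_{L^2}^2\, dt$ arising from $\langle a_t, w_t\rangle$ with $w = -a+v$; a sign slip would change this to $+\frac 12$, so consistency with the downstream use (where this square is combined with $U_1$) fixes the convention.
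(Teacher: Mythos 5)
Your proposal matches the paper's proof: apply It\^o's formula to the products $(\llbracket W_T^2\rrbracket \succ Z_T^\flat)K_T$ and $\llbracket W_T^2\rrbracket H(\varphi_-,\varphi_+)_T K_T$, discard the mean-zero martingale pieces, move $J_t$ by self-adjointness to reveal $\int_0^T\langle a_t,w_t\rangle\,dt$ where $a_t = J_t(\mathbb W_t^2\succ Z_t^\flat)+J_t(\mathbb W_t^2 H(\varphi_-,\varphi_+)_T)$, and complete the square against $\frac12\int_0^T\|w_t\|^2$. Two small slips worth noting: $R_6$ is the $\llbracket W_t^2\rrbracket \succ \dot Z_t^\flat$ cross-derivative term from the product rule, not a "$Z^\flat$ versus $Z$" discrepancy; and the paper splits the martingale piece in two, controlling the paraproduct part via the $L^4$ bound on $K_T$ rather than via $\mathbb E[\|w\|_{\mathcal H}^2]<\infty$ alone.
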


\begin{proof}
  Applying Ito's formula on the products $(\llbracket W_T^2 \rrbracket \succ Z^{\flat}_T) K_T$ and $\llbracket W_T^2 \rrbracket K_T$, we get
  
  \begin{align}
    & 12 \left( \int (\llbracket W_T^2 \rrbracket \succ Z^{\flat}_T) K_T +
    \int (\llbracket W_T^2 \rrbracket H (\varphi_-, \varphi_+)_T) K_T \mathd x
    \right) \nonumber\\
    = & 12 \left( \int_0^T \int (\llbracket W_t^2 \rrbracket \succ
    Z^{\flat}_t) J_t w_t \mathd t + \int^T_0 \int (\llbracket W_t^2 \rrbracket H
    (\varphi_-, \varphi_+)_T) J_t w_t \mathd x \right) + {\rm mart} +  R_6,
  \end{align}
  where
  \begin{equation}
  {\rm mart} := \sum_{i\leq j-2}\int_0^T \int K_t \Delta_i Z_t^\flat d( \Delta_j \mathbb W_t^2) + \int_0^T \int  K_t H(\varphi_-,\varphi_+)_T d\mathbb W_t^2.
  \end{equation}
  The first stochastic integral is a martingale thanks to finite $L^4$ norm to the fourth power after expectation. The second stochastic integral is a martingale and thus mean zero because $\mathbb E[\| u \|_{\mathcal H}^2]~<~\infty$. Hence, ${\rm mart} \equiv 0$.  
  
  Now from the definition of $v$ we get the statement by completing the
  square. 
\end{proof}

We further expand
\begin{eqnarray*}
  &  & \int^T_0 \int (J_t (\mathbb{W}_t^2 \succ Z^{\flat}_t) + J_t (\mathbb{W}_t^2 H
  (\varphi_-, \varphi_+)_T))^2 \mathd x \mathd t\\
  & = & \int^T_0 \int (J_t (\mathbb{W}_t^2 \succ Z^{\flat}_t))^2 \mathd x \mathd
  t + \int^T_0 \int (J_t (\mathbb{W}_t^2 H (\varphi_-, \varphi_+)_T))^2 \mathd x
  \mathd t\\
  &  & + 2 \int^T_0 \int J_t (\mathbb{W}_t^2 \succ Z^{\flat}_t)\,  J_t(
  \mathbb{W}_t^2 H (\varphi_-, \varphi_+)_T) \mathd x \mathd t.
\end{eqnarray*}
All in all we have proven that 
\begin{align}
    \mathbb{E} &[S_1 + S_2] - \delta^{1,M}_T - \delta^{2,M}_T + \frac 12 \int_0^T \|u_t\|_{L^2_x}^2 dt
    \\& = -\frac 12 \mathbb{E} \left[ \int^T_0 \int (J_t
    (\mathbb{W}^2_t \succ Z^{\flat}_t))^2 \mathd x \mathd t \right] -\frac 12\mathbb{E}
    \left[ \int^T_0 \int (J_t (\mathbb{W}^2_t H (\varphi_-, \varphi_+)_T))^2
    \mathd x \mathd t \right] \\
    & -\frac 12 \mathbb{E} \left[ \int^T_0 \int J_t (\mathbb{W}^2_t \succ Z^{\flat}_t) \, 
    J_t (\mathbb{W}_t H (\varphi_-, \varphi_+)_T) \mathd x \mathd t \right]
    \\
    &-12 \mathbb E \left[ \int \llbracket W^2_T \rrbracket \circ
     \mathbb{W}^{[3]}_T K_T \mathd x \right] 
    \\
    & + \mathbb E R_5 + \mathbb E R_6 + \frac 12 \int_0^T \|v_t\|_{L^2_x}^2 dt.
  \end{align}

The first term on the r.h.s needs renormalization, and we put it together with a term from $U_{1}$ to see the cancellation. But first we rewrite this as follows:     \begin{equation}
     \frac 12 \int^T_0 \int (J_t
    \mathbb{W}^2_t \succ Z^{\flat}_t)^2 \mathd x \mathd t = R_7 +\frac 12 \int^T_0 \int (J_t \mathbb{W}^2_t
     \circ J_t \mathbb{W}^2_t)  (Z^{\flat}_t)^2 \mathd x \mathd t,
     \end{equation}
     where
\begin{equation} 
R_7 := \frac 12 \int^T_0 \int (J_t (\mathbb{W}^2_t \succ
     Z^{\flat}_t))^2 \mathd x \mathd t -\frac 12 \int^T_0 \int (J_t \mathbb{W}^2_t
     \circ J_t \mathbb{W}^2_t)  (Z^{\flat}_t)^2 \mathd x \mathd t.
\end{equation}
We now treat the remaining term. Note that
\begin{equation}
\gamma_T^M(x):= - \mathbb E \left[ \int_0^T J_t \mathbb W_t^2 \cdot  J_t \mathbb W_t^2 (x) dt \right].
\end{equation}For every $T>0$, define  
\begin{equation}
\Upsilon_{1, T} : = - \int^T_0 \int (\frac 12 J_t \mathbb{W}^2_t \circ J_t
     \mathbb{W}^2_t + \dot \gamma_t^M)  (Z^{\flat}_t)^2 \mathd x \mathd t  - \int (\mathbb W_T^2 \circ \mathbb W_T^{[3]} + 2\gamma_T^M W_T )K_T. 
\end{equation}
This is the bulk stochastic term

We now analyze the boundary-dependent  remaining terms. Using the independence of the boundary conditions and $\mathbb P$, we have that 
\begin{multline}
    \mathbb{E} \left[ \int^T_0 \int (J_t (\mathbb{W}^2_t H (\varphi_-,
    \varphi_+)_T))^2 \mathd x \mathd t \right] \\
    =  3\cdot 4^2 \int \int C_T^M (x, y)^3 H (\varphi_-, \varphi_+)_T (x) H
    (\varphi_-, \varphi_+)_T (y) \mathd x \mathd y.
  \end{multline}
We will want to understand how the righthand side behaves as a random variable when $\varphi_-,\varphi_+ \sim \nu^0$. To this end, we introduce the following boundary-bulk stochastic term. For every $\varphi_-,\varphi_+ \in \mathcal C^{-1/2-\kappa_0}_z$, define  
\begin{align}
\Upsilon_{2, T}(\varphi_-,\varphi_+) :=& \frac{12^2}{3} \int \int C_T^M (x, y)^3 H
     (\varphi_-, \varphi_+)_T (x) H (\varphi_-, \varphi_+)_T (y) \mathd x \mathd y
     \\&- \int \gamma^M_T  \llbracket H (\varphi_-, \varphi_+)^2_T
     \rrbracket \mathd x - \delta_T^{3,M},
\end{align}
where
\begin{equation}
\delta_T^{3,M} :=  3\cdot 4^2 \int \int C_T^M (x, y)^3 \mathbb E_{\tilde\mu^0\otimes \tilde\mu^0} \left[ H
     (\varphi_-, \varphi_+)_T (x) H (\varphi_-, \varphi_+)_T (y) \right] \mathd x \mathd y,
\end{equation}
where we recall that $\tilde\mu^0$ is the boundary GFF coming from the domain Markov property.

We are now left with one boundary-bulk term, which we will treat as a remainder term.
\begin{equation}
R_8
    :=  \int^T_0 \int (J_t (\mathbb{W}^2_t \succ Z^{\flat}_t)) (J_t(
    \mathbb{W}^2_t \, H (\varphi_-, \varphi_+)_T)) \mathd x \mathd t - \int 2 \gamma^M_T H (\varphi_-, \varphi_+)_T Z_T \mathd x.
\end{equation}

\subsection{Renormalization of $S_3$}

Using the intermediate ansatz, we have
\begin{align}
  S_3 = & 12 \int W_T \llbracket H (\varphi_-, \varphi_+)^2_T \rrbracket K_T
  \mathd x + 4 \int \llbracket H (\varphi_-, \varphi_+)^3_T \rrbracket K_T
  \mathd x \\
  & - 12 \int W_T \llbracket H (\varphi_-, \varphi_+)^2_T \rrbracket
  \mathbb{W}^{[3]}_T \mathd x - 4 \int \llbracket H (\varphi_-, \varphi_+)^3_T
  \rrbracket \mathbb{W}^{[3]}_T \mathd x \nonumber
  \\
  &\equiv 12 \int W_T \llbracket H (\varphi_-, \varphi_+)^2_T \rrbracket K_T
  \mathd x + 4 \int \llbracket H (\varphi_-, \varphi_+)^3_T \rrbracket K_T
  \mathd x.
\end{align}
We will now renormalize these remaining singular terms by using the full drift ansatz to twist these divergences into a form that can be tamed using the the (remaining) counter-terms in the mass renormalization $U_1$. 

Let $L_T=Z_T(v)$. Then by \eqref{eq:secondansatz}, $K_T = L_T + G_T$, from which we obtain
\begin{equation} S_3 = 12 \int W_T \llbracket H (\varphi_-, \varphi_+)^2_T \rrbracket L_T
   \mathd x + 4 \int \llbracket H (\varphi_-, \varphi_+)^3_T \rrbracket L_T
   \mathd x + R_9,
 \end{equation}
  where
\begin{equation} 
R_9 := 12 \int W_T \llbracket H (\varphi_-, \varphi_+)^2_T \rrbracket G_T
   \mathd x + 4 \int \llbracket H (\varphi_-, \varphi_+)^3_T \rrbracket G_T
   \mathd x. 
   \end{equation}
 
 We will now do the further drift ansatz. Recall the definition of $\ell_T(u)$. Notice that 
\begin{equation} 
v_t = 12 J_t W_t \llbracket H (\varphi_-, \varphi_+)^2_T \rrbracket + 4
     J_t \llbracket H (\varphi_-, \varphi_+)^3_T \rrbracket + \ell_T(u)_t.
\end{equation}
 By It\^o's formula and completing the square, we therefore have the following lemma.
\begin{lemma}
For every $T>0$ and every $u \in \mathbb H$, 
  \begin{align}
    & 12\mathbb{E} \int W_T \llbracket H (\varphi_-, \varphi_+)^2_T \rrbracket
    L_T \mathd x + 4\mathbb{E} \int \llbracket H (\varphi_-, \varphi_+)^3_T
    \rrbracket L_T \mathd x + \frac{1}{2} \mathbb{E} \int^T_0 \| v_t \|^2
    \mathd t \nonumber\\
    = & -\frac 12 \mathbb{E} \int^T_0 \int  (12 J_t (W_t \llbracket H (\varphi_-,
    \varphi_+)^2_T \rrbracket) + 4 J_t \llbracket H (\varphi_-, \varphi_+)^3_T
    \rrbracket)^2 \mathd t \mathd x + \frac{1}{2} \int^T_0 \| \ell_T(u)_t \|^2 \mathd
    t \nonumber. 
  \end{align}
\end{lemma}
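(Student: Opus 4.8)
The plan is to prove this exactly by the ``It\^o plus complete the square'' mechanism already used above for $S_1$ and $S_2$, applied now to the two $t$-indexed processes $t\mapsto W_t\,\llbracket H(\varphi_-,\varphi_+)^2_T\rrbracket\,L_t$ and $t\mapsto \llbracket H(\varphi_-,\varphi_+)^3_T\rrbracket\,L_t$ on $[0,T]$, where $L_t:=Z_t(v)$. It is convenient to abbreviate
\[
a_t \;:=\; 12\,J_t\big(W_t\,\llbracket H(\varphi_-,\varphi_+)^2_T\rrbracket\big) + 4\,J_t\,\llbracket H(\varphi_-,\varphi_+)^3_T\rrbracket ,
\]
so that, unwinding Definition~\ref{definition: full ansatz} through the intermediate ansatz \eqref{eq: intermediate ansatz} and \eqref{eq:secondansatz}, the remaining drift $v$ satisfies $v_t + a_t = \ell_T(u)_t$ on $[0,T]$ (and $\ell_T(u)_t=0$ for $t>T$). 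The structural facts that make the computation go through are: for fixed boundary data $\varphi_-,\varphi_+$ and fixed mollification scale $T<\infty$, the harmonic-extension Wick powers $\llbracket H(\varphi_-,\varphi_+)^2_T\rrbracket$ and $\llbracket H(\varphi_-,\varphi_+)^3_T\rrbracket$ are deterministic, $t$-independent (indeed smooth, since $H(\varphi_-,\varphi_+)_T=\rho_T\ast H(\varphi_-,\varphi_+)$ is mollified) functions on $M$; and $L_t=\int_0^t J_s(v_s)\,ds$ is adapted and absolutely continuous in $t$, hence of zero quadratic variation.

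First I would apply It\^o's formula (mode-by-mode in the eigenbasis $\mathcal I_M$, or in its Hilbert-space-valued form) to the two products above, using $W_0=L_0=0$. Because $L$ has finite variation, there is no cross-variation term with $W$, and the second product has no stochastic part at all; integrating over $M$ and taking $\mathbb E$, the only surviving stochastic contribution (from the first product) is $\int_0^T\!\int_M \llbracket H(\varphi_-,\varphi_+)^2_T\rrbracket L_t\,J_t\,dX_t$, which is a genuine martingale with vanishing expectation. Checking this last point is the one step that is not purely algebraic: it needs $\mathbb E[\|v\|_{\mathcal H}^2]<\infty$, which is legitimate because the infimum in Proposition~\ref{prop: bulk bouedupuis} may be restricted to drifts $u$ with finite entropy, and $v$ differs from $\ell_T(u)$ only by the explicit terms in $a_t$, which for $T<\infty$ are square-integrable ($W_t$ is an honest $C^{1/2-\kappa_0}$-function and the harmonic extensions are smooth). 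Then, using that $J_t$ is a self-adjoint Fourier multiplier — which is where the periodization bookkeeping of Remark~\ref{remark:boundarybulkstochastic} is invoked for the harmonic-extension factors — I would move $J_t$ across, obtaining
\[
12\,\mathbb E\!\int_M W_T\llbracket H(\varphi_-,\varphi_+)^2_T\rrbracket L_T\,\mathd x + 4\,\mathbb E\!\int_M \llbracket H(\varphi_-,\varphi_+)^3_T\rrbracket L_T\,\mathd x \;=\; \mathbb E\!\int_0^T \langle a_t,\,v_t\rangle_{L^2_x}\,\mathd t .
\]

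Finally I would add $\tfrac12\,\mathbb E\!\int_0^T\|v_t\|_{L^2_x}^2\,\mathd t$ and complete the square: $\langle a_t,v_t\rangle_{L^2_x}+\tfrac12\|v_t\|_{L^2_x}^2=\tfrac12\|v_t+a_t\|_{L^2_x}^2-\tfrac12\|a_t\|_{L^2_x}^2$. Substituting $v_t+a_t=\ell_T(u)_t$ and recognising $\|a_t\|_{L^2_x}^2=\int_M\big(12\,J_t(W_t\llbracket H(\varphi_-,\varphi_+)^2_T\rrbracket)+4\,J_t\llbracket H(\varphi_-,\varphi_+)^3_T\rrbracket\big)^2\,\mathd x$ gives precisely the asserted identity (the term $\tfrac12\int_0^T\|\ell_T(u)_t\|^2\,\mathd t$ on the right carrying the expectation implicitly, since $\ell_T(u)$ depends on $W$). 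I do not expect any genuine obstacle here; the only points that require a line of justification are the mean-zero property of the martingale term (handled by the finite-entropy restriction on the drift, together with $T<\infty$ so that $W_t$ and the mollified harmonic extensions are functions) and the periodization convention needed to apply the multiplier $J_t$ to the harmonic-extension terms, which is already in force throughout this section.
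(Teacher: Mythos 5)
Your proof is correct and unpacks exactly what the paper's one-line justification (``By It\^o's formula and completing the square'') is asking for, with careful attention to the three points that genuinely require a comment: the absence of a cross-variation term because $L_t$ is absolutely continuous in $t$; the mean-zero property of the surviving stochastic integral, which indeed requires restricting to drifts of finite entropy (harmless, since the cost is $+\infty$ otherwise); and the periodization convention for moving the Fourier multiplier $J_t$ across the (non-Dirichlet) harmonic-extension factors, per Remark~\ref{remark:boundarybulkstochastic}. One small remark: unwinding Definition~\ref{definition: full ansatz} through \eqref{eq:firstansatz} and \eqref{eq:secondansatz} as you do gives $v_t = \ell_T(u)_t - a_t$, i.e.\ $v_t + a_t = \ell_T(u)_t$, and this is what the completed square requires. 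The displayed identity in the paper immediately preceding the lemma reads $v_t = 12\,J_t W_t\llbracket H^2\rrbracket + 4\,J_t\llbracket H^3\rrbracket + \ell_T(u)_t$, with the opposite sign on the $a_t$ terms; taken at face value that relation would not reproduce the lemma, so it appears to be a sign typo, and your sign convention is the one consistent with Definition~\ref{definition: full ansatz}.
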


The first two terms on the righthand side are singular, but they do not depend on the drift. We will therefore define a suitably renormalized version of them (using the remaining terms in $U_2$) as new boundary-bulk stochastic objects. We first expand and use that $\mathbb E[W_T]=0$ to obtain
\begin{align}
  & \frac 12 \mathbb{E} \int^T_0 \int  (12 J_t (W_t \llbracket H (\varphi_-,
  \varphi_+)^2_T \rrbracket) + 4 J_t\llbracket H (\varphi_-, \varphi_+)^3_T
  \rrbracket)^2 \mathd t \mathd x \\
  = & \frac{12^2}2\mathbb{E} \int^T_0 \int (J_t (W_t \llbracket H (\varphi_-,
  \varphi_+)^2_T \rrbracket))^2 \mathd t \mathd x + \frac{4^2}2 \mathbb{E} \int^T_0 \int
   (J_t\llbracket H (\varphi_-, \varphi_+)^3_T \rrbracket)^2 \mathd t \mathd x.
\end{align}
Let us define
\begin{align}
\delta^{4,M}_T&:= \frac{12^2}2 \mathbb E\,  \mathbb E_{\tilde\mu^0\otimes \tilde\mu^0}\left[ \int_0^T \int (J_t(\mathbb W_t \,\llbracket  H(\varphi_-,\varphi_+))^2_T \rrbracket )^2 dx dt \right],
\\
\delta^{5,M}_T&:= \frac{4^2}2   \mathbb E_{\tilde\mu^0\otimes \tilde\mu^0}\left[ \int_0^T \int J_t(\llbracket  H(\varphi_-,\varphi_+)^3_T\rrbracket )^2 dx dt \right], 
\end{align}
where we stress we that the expectations are once again taken with respect to the boundary GFF coming from the domain Markov property.
We now define the two boundary-bulk stochastic terms,
\begin{align}
\Upsilon_{3,T}&:= -\frac{12^2}2 \mathbb{E} \int^T_0 \int  (J_t(W_t \llbracket H
     (\varphi_-, \varphi_+)^2_T \rrbracket))^2 \mathd t \mathd x + \delta^{4,M}_T,
\\
	\Upsilon_{4,T} &:= -\frac{4^2}2  \mathbb{E} \int^T_0 \int 
     ( J_t\llbracket H (\varphi_-, \varphi_+)^3_T \rrbracket)^2 + \delta^{5,M}_T.
\end{align}

\subsection{Renormalized variational problem and strategy of proof} \label{subsec: renormalized var}

The following proposition summarizes the outcome of the computations above.

\begin{proposition} \label{prop: renormalized cost fcn}
For every $u \in \mathbb H$, 
\begin{equation}
\mathbb E \left [ \mathcal V_T(W,u) + \frac 12 \|u\|_{L^2_tL^2_x}^2 \right] = \mathbb E\left[ \sum_{i=1}^9 R_i + \sum_{i=1}^4\Upsilon_{i,T} + \mathcal C_T(u)\right]	
\end{equation}
\end{proposition}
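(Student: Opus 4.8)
\textbf{Proof proposal for Proposition \ref{prop: renormalized cost fcn}.}

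The plan is to assemble the identity by collecting the step-by-step computations that were carried out in the preceding subsections, which together account for \emph{every} term appearing in $\mathcal V_T(W_T,Z_T(u))$. The starting point is the Bou\'e--Dupuis representation of Proposition \ref{prop: bulk bouedupuis}, which expresses $\mathbb E[\mathcal V_T(W_T,Z_T(u))+\tfrac12\|u\|_{\mathcal H}^2]$, and the bookkeeping lemma immediately following it, which regroups $\mathcal V_T$ into $\sum_{i=1}^3 S_i + \sum_{i=1}^2 U_i + \sum_{i=1}^4 R_i + \int_M Z_T(u)^4\,dx$. The quartic term $\int_M Z_T(u)^4\,dx = \|Z_T(u)\|_{L^4_x}^4$ is already part of the coercive term $\mathcal C_T(u)$ in \eqref{eqdef: CT-coercive}, so it need not be touched. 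The substance of the proof is to track how the terms $S_1,S_2,S_3,U_1,U_2$ are transformed, under the drift ans\"atze \eqref{eq:firstansatz}, \eqref{eq:secondansatz} and Definition \ref{definition: full ansatz}, into (a) the remaining half of the coercive term $\tfrac12\|\ell_T(u)\|_{\mathcal H}^2$, (b) the bulk stochastic term $\Upsilon_{1,T}$ and the boundary-bulk stochastic terms $\Upsilon_{2,T},\Upsilon_{3,T},\Upsilon_{4,T}$, and (c) the additional remainder terms $R_5,\dots,R_9$ generated along the way.

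Concretely, I would proceed as follows. First, apply Lemma \ref{lemma: delta 1}: under the ansatz $u_t=-J_t\mathbb W^3_t+w_t$, one gets $S_1+\tfrac12\int_0^T\|u\|_{L^2}^2\,dt - \delta^{1,M}_T \equiv \tfrac12\int_0^T\|w\|_{L^2}^2\,dt$ modulo mean-zero terms. Second, feed the intermediate ansatz \eqref{eq: intermediate ansatz} into $S_2$, cancel divergent pieces using Wick's theorem and $\delta^{2,M}_T$, perform the paraproduct decomposition to peel off $R_5$, apply the second ans\"atz \eqref{eq:secondansatz} (generating $R_6$ and, after completing the square, $\tfrac12\int_0^T\|v\|_{L^2}^2\,dt$), expand the resulting square into a bulk square, a boundary-bulk square, and a cross term. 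The bulk square combined with the $\gamma^M_T$-term from $U_1$ and $R_7$ yields $\Upsilon_{1,T}$; the boundary-bulk square combined with the relevant $\gamma^M_T$-term from $U_1$ and $\delta^{3,M}_T$ yields $\Upsilon_{2,T}$; the cross term together with the last $H(\varphi_-,\varphi_+)_T Z_T$ piece of $U_1$ becomes $R_8$. Third, treat $S_3$: insert the intermediate ansatz, drop the mean-zero $\mathbb W^{[3]}_T$ contributions, replace $K_T=L_T+G_T$ (generating $R_9$), apply the full drift ans\"atz so that $v_t$ is expressed through $\ell_T(u)_t$, complete the square, and split the resulting square into $\Upsilon_{3,T}$ (with renormalization constant $\delta^{4,M}_T$) and $\Upsilon_{4,T}$ (with $\delta^{5,M}_T$). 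Fourth, verify the energy-renormalization bookkeeping: check that $\delta^{1,M}_T+\delta^{2,M}_T+\delta^{3,M}_T+\delta^{4,M}_T+\delta^{5,M}_T$ is precisely the $\delta^M_T$ appearing in $U_2=-\delta^M_T$ (referencing \eqref{eqdef: delta M}--\eqref{eqdef: delta M end}), and that the residual $\delta_T(\varphi_-,\varphi_+)-\Delta\delta_T^0-\delta^M_T$ has been absorbed into $R_4$. Finally, note that all the ``$\equiv$'' reductions discarded only $\mathbb P$-mean-zero stochastic integrals, which vanish under the outer expectation $\mathbb E[\cdot]$ since $\mathbb E[\|u\|_{\mathcal H}^2]<\infty$ guarantees the relevant integrands are in the right It\^o classes; therefore equality of expectations holds on the nose.

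The main obstacle is not any single estimate — it is the sheer combinatorial bookkeeping of ensuring that \emph{nothing is dropped and nothing is double-counted}: every one of the many terms in $\mathcal V_T$ (there are on the order of twenty, counting the explicit list in Proposition \ref{prop: bulk bouedupuis}) must land in exactly one of $\{R_1,\dots,R_9\}$, $\{\Upsilon_{1,T},\dots,\Upsilon_{4,T}\}$, the two halves of $\mathcal C_T(u)$, or be cancelled by one of the $\delta^{i,M}_T$. In particular the delicate point is matching the mass-renormalization counter-term $U_1$ (five separate summands) against the squares produced by the ans\"atze — here the choice $\gamma^M_T(x)=-\mathbb E[\int_0^T J_t\mathbb W_t^2\cdot J_t\mathbb W_t^2(x)\,dt]$ is exactly what makes $\Upsilon_{1,T}$ and $\Upsilon_{2,T}$ well-defined — and verifying the energy-renormalization identity of Remark \ref{remark: energy renormalization recombination} so that $U_2$ together with the five $\delta^{i,M}_T$ closes up with only the finite remainder $R_4$ left over. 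Since each of these manipulations is an instance of It\^o's formula plus a paraproduct identity already spelled out above, the proposition then follows simply by summing the displayed equalities and taking $\mathbb E[\cdot]$; I would present the proof as a short paragraph that says precisely this, with pointers to the constituent lemmas rather than re-deriving them.
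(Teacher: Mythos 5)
Your proposal reconstructs exactly the paper's argument: the proposition is presented in the paper as a summary of the step-by-step renormalization of Section 5.3, which is precisely the chain of manipulations you trace (Lemma \ref{lemma: delta 1} for $S_1$; the intermediate ansatz, paraproduct decomposition, and second ansatz for $S_2$, producing $R_5,R_6,R_7,R_8,\Upsilon_{1,T},\Upsilon_{2,T}$; the full ansatz for $S_3$, producing $R_9,\Upsilon_{3,T},\Upsilon_{4,T}$; the bookkeeping $\delta^M_T=\sum_{i=1}^5\delta^{i,M}_T$ absorbed into $U_2$, with the residual pushed into $R_4$; and the discarded mean-zero It\^o integrals). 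Since each step is an instance of It\^o's formula or a paraproduct identity already established, summing and taking expectation is all that remains, which is how the paper treats it.
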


As we shall see in the subsequent sections, the function on the righthand side does not contain further divergences and thus we have arrived at the renormalized cost function and renormalized variational problem, as formalized below. As preparation, we can now define the boundary-bulk enhancement. Let $\Xi^\partial_T(\varphi_-,\varphi_+)$ consist of the Wick powers of the harmonic extenstion of $\varphi_-,\varphi_+$, together with $\Upsilon_2,\Upsilon_3$, and $\Upsilon_4$.

\begin{definition} \label{def: renormalized cost fcn}
Let $T>0$. The \emph{renormalized cost function} is the map given, for every $u \in \mathbb H$, by
\begin{equation}
\mathbb F_T^{\Xi^\partial_T(\varphi_-,\varphi_+)}(u):= \mathbb E \left[ \Phi^{\varphi_-,\varphi_+}_T(u) + \mathcal C_T(u) \right],	
\end{equation}
where
\begin{equation}
\Phi_T^{\varphi_-,\varphi_+}(u) = 	\sum_{i=1}^9 R_i + \sum_{i=1}^4\Upsilon_{i,T}.
\end{equation}
The \emph{renormalized variational problem} is given by
\begin{equation}
\inf_{u \in \mathbb H} 	\mathbb F_T^{\Xi^\partial_T(\varphi_-,\varphi_+)}(u)
\end{equation}

\end{definition}

\begin{remark} \label{rem: renormalized cost fcn f}
The renormalized cost function can be trivially adapted to include the term $f$. In this case, we will denote the renormalized cost function $\mathbb F_T^{f; \Xi^\partial_T(\varphi_-,\varphi_+)}$ and the corresponding potential term is denoted $\Phi^{f; \varphi_-,\varphi_+}_T$.	
\end{remark}

Let us now describe briefly the organization of Sections \ref{sec:stochastic}-\ref{sec: convergence of bulk amplitudes} and thus the proof of Theorem \ref{thm: bulk}. The actual convergence argument for Theorem \ref{thm: bulk} is completed in Section \ref{sec: convergence of bulk amplitudes} via the theory of $\Gamma$-convergence. This splits into two parts: establishing $\Gamma$-convergence and establishing equicoercivity of the renormalized cost functions. The $\Gamma$-convergence proof is similar to the one in the boundary case, although the treatment of the remainder term at $T=\infty$ is more subtle. For the equicoercivity, we need to establish uniform bounds on the terms $R_i$ and $\Upsilon_i$ in terms of the coercive terms $\mathcal C_T(u)$ and constants. For the remainder terms, this follows by analytic arguments in Section \ref{sec:equicoercive}. The $\Upsilon_i$ terms are analyzed in Section \ref{sec:stochastic}, where we also establish Theorem \ref{thm: enhancement converge}. In the case of the boundary-bulk terms, as mentioned in Remark \ref{remark:boundarybulkstochastic}, we obtain that these terms are bounded in expectation in the bulk almost surely in the realization of the boundary data.

\section{Stochastic estimates}
\label{sec:stochastic}

In this section, we will estimate the stochastic terms $\Upsilon_{i,T}$ for $i=1,\dots,4$. In the case of the bulk terms $i=1$, we will define the \emph{bulk enhancement} of $W$ and the desired estimates will follow from estimates on this process. In the case of the boundary-bulk terms $i=2,3,4$, we will estimate them when $\varphi_-,\varphi_+$ are i.i.d. distributed according to an admissible boundary condition, although we will mainly apply this when their law is given by $\nu^0_T$. We will in fact prove convergence of these terms to a limiting enhanced boundary field. At the end, we will upgrade these bounds to convergence and use this, together with stochastic estimates developed in Section \ref{sec: boundary}, to prove Theorem \ref{thm: enhancement converge}.

\subsection{Bulk enhancement and $\Upsilon_{1,T}$} \label{subsec: ups1}

The \emph{bulk enhancement} of $W$ at scale $T$ is the vector of stochastic processes $\Xi_T(W)$ defined by
\begin{equation}
	\Xi_T(W):= \left( (W_t)_{t \leq T}, (\llbracket W_t^2 \rrbracket)_{t \leq T}, (\mathbb W_t^{[3]})_{t\leq T}, (\mathbb W_t^{1 \diamond [3]})_{t \leq T}, (\mathbb W_t^{2\diamond 2})_{t \leq T}, (\mathbb W_t^{2\diamond[3]})_{t \leq T} \right).
\end{equation}
Above, the first three stochastic processes on the righthand side have already been defined. The remaining ones are defined by 
\begin{equation}
\mathbb W_T^{1 \diamond [3]}:= W_T \circ \mathbb W_T^{[3]}, \, \mathbb W_T^{2 \diamond 2}:= J_t \mathbb W_t^2 \circ J_t \mathbb W_t^2 + 2\dot\gamma_t^M, \, \mathbb W_T^{2 \diamond [3]}:= \mathbb W_T^2 \circ \mathbb W_T^{[3]}+2\gamma_T^MW_T.  
\end{equation}
This vector naturally lives on a Banach space 
\begin{equation}
	\mathfrak S:= \mathcal C^{-1/2-\kappa}_x\times \mathcal C^{-1-\kappa}_x \times \mathcal C^{1/2-\kappa}_x \times \mathcal C^{-\kappa}_x \times L^1_t\mathcal C^{-\kappa}_x \times \mathcal C^{-1/2-\kappa}_x,
\end{equation}
where we endow $\mathfrak S$ with the sum of the norms of the individual spaces in the product.

The following proposition gives uniform in $T$ moment bounds on the components of $\Xi_T(W)$ and resulting convergence to a \emph{limiting bulk enhancement} of $W$ in probability. Its proof follows by straightforward albeit tedious modifications of \cite[Section 8]{BG20} into real space and uses the kernel estimates on Dirichlet Green functions found in Appendix \ref{sec: Dirichlet covariance estimates}. We omit the details.  
\begin{proposition} \label{prop: bulk enhancement}
For every $p \in [1,\infty)$, there exists $C>0$ such that for every $T>0$,
\begin{equation}
	\mathbb E[\|W_T\|_{\mathcal C^{-1/2-\kappa}_x}^p + \|\llbracket W_T^2\rrbracket \|_{\mathcal C^{-1-\kappa}_x}^p + \| \mathbb W_T^{[3]}\|_{\mathcal C^{1/2-\kappa}_x}^p] \leq C
\end{equation}
and
\begin{equation}
\mathbb E[\|\mathbb W_T^{1 \diamond [3]}\|_{\mathcal C^{-\kappa}_x}^p]+\mathbb E[\| \mathbb W_T^{2\diamond 2}  \|_{L^1_t \mathcal C^{-\kappa}_x}^p] + \mathbb E[\| \mathbb W_T^{2\diamond[3]}\|_{\mathcal C^{-1/2-\kappa}_x}^p] \leq C.	
\end{equation}
Furthermore, there exist random variables 
\begin{equation}
W_\infty, \, \llbracket W_\infty^2\rrbracket, \, \mathbb W_\infty^{[3]}, \, \mathbb W_\infty^{1 \diamond [3]}, \, \mathbb W_\infty^{2\diamond 2}, \, \mathbb W_\infty^{2\diamond[3]}	
\end{equation}
and limiting bulk enhancement
\begin{equation}
	\Xi_\infty(W):= \left( (W_t)_{t \leq \infty}, (\llbracket W_t^2 \rrbracket)_{t \leq \infty}, (\mathbb W_t^{[3]})_{t\leq \infty}, (\mathbb W_t^{1 \diamond [3]})_{t \leq \infty}, (\mathbb W_t^{2\diamond 2})_{t \leq \infty}, (\mathbb W_t^{2\diamond[3]})_{t \leq \infty} \right)
\end{equation}
 such that, for every $p \in [1,\infty)$,
 \begin{equation}
 	\lim_{T \rightarrow \infty} \mathbb E[\|\Xi_T(W)-\Xi_\infty(W)\|_{\mathfrak S}^p] = 0. 
 \end{equation}
\end{proposition}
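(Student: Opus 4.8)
\textbf{Proof plan for Proposition \ref{prop: bulk enhancement}.}

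The plan is to follow the well-established route for this kind of stochastic estimate: for each of the six components of $\Xi_T(W)$, one works in Fourier/Littlewood--Paley space, represents the object as a finite sum of iterated (multiple) stochastic integrals in the underlying family of complex Brownian motions $\{B^n\}$, computes the second moment of each Littlewood--Paley block, extracts the decay in the frequency parameter, and then upgrades to $L^p$ via hypercontractivity (Gaussian/Wiener-chaos equivalence of moments, see \cite{N09}) and to the stated Besov--H\"older spaces via the Besov embedding/Kolmogorov-type criterion. The first three components $W_T$, $\llbracket W_T^2\rrbracket$, $\mathbb W_T^{[3]}$ are classical and their bounds are already known (cf. \cite{BG20}); the genuinely new work is the three resonant products $\mathbb W_T^{1\diamond[3]}$, $\mathbb W_T^{2\diamond 2}$, $\mathbb W_T^{2\diamond[3]}$, where the subtracted counterterms $2\dot\gamma^M_t$ and $2\gamma^M_T W_T$ are exactly the Wick-type renormalizations that kill the diverging zeroth/first chaos contributions; after subtraction what remains is a sum of multiple integrals of strictly positive chaos order whose kernels are controlled by products of the Dirichlet Green's function $C^{M}_T$.

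Concretely, the main point is that all covariance kernels involved are built from $C^M_T(x,y) = \mathbb E_\mu[\varphi_T(x)\varphi_T(y)]$ and its time-derivative, for which one has the standard pointwise/Fourier bounds recorded in Appendix \ref{sec: Dirichlet covariance estimates}: essentially $|C^M_T(x,y)|\lesssim |x-y|^{-1}$ away from the boundary with the mollifier giving a uniform-in-$T$ cutoff, together with the regularizing estimates on $(-\Delta+m^2)^{-1}$ there. Using these, the resonant product $\llbracket W_T^2\rrbracket\circ\mathbb W^{[3]}_T$ — which, by the arithmetic of the Littlewood--Paley resonant term, is a sum over comparable dyadic blocks — has each block's second moment bounded (after Wick's theorem for multiple integrals, see \cite{N09}) by convolutions of powers of the Green's kernel; the power counting $2+3-2\cdot(\text{loop corrections})$ gives spatial regularity $-\kappa$, uniformly in $T$. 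The same bookkeeping gives $\mathcal C^{-1/2-\kappa}_x$ for $\mathbb W_T^{2\diamond[3]}$ and $L^1_t\mathcal C^{-\kappa}_x$ for $\mathbb W_T^{2\diamond 2}$ (the $L^1_t$ coming from integrability of $t\mapsto \dot\sigma_t$ against the logarithmically diverging spatial factor). The uniform-in-$T$ nature of the bounds is automatic once the kernel estimates are taken uniform in the cutoff, which is exactly how Appendix \ref{sec: Dirichlet covariance estimates} is set up.

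For the convergence statement, the standard trick is to show the family $(\Xi_T(W))_T$ is Cauchy in $L^p(\mathbb P;\mathfrak S)$: one estimates $\mathbb E[\|\Xi_T(W)-\Xi_{T'}(W)\|_{\mathfrak S}^p]$ for $T'\ge T$ by the same chaos-expansion method, now getting an extra small factor $T^{-\theta}$ (for some $\theta>0$) from the difference of the mollifier symbols $\hat\rho(\cdot/T)-\hat\rho(\cdot/T')$, at the (harmless) cost of slightly worsening the regularity exponent by $\kappa$; this uses that the target spaces have a small amount of room in the H\"older exponent, as already built into the definition of $\mathfrak S$. Completeness of $\mathfrak S$ then yields a limit $\Xi_\infty(W)$, and one identifies its components with the $L^2$-limits $W_\infty$, $\llbracket W_\infty^2\rrbracket$, etc., of the individual sequences (these limits exist by the same Cauchy argument componentwise). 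The main obstacle — and the reason the proof is "tedious" rather than hard — is the sheer number of terms produced by the paraproduct/resonant decompositions and by Wick's theorem, combined with the need to carry out every kernel estimate in real space with the Dirichlet (rather than periodic) Green's function near the boundary; but no new idea beyond the power-counting and the appendix estimates is required, which is why I would simply cite \cite[Section 8]{BG20} for the structure of the argument and Appendix \ref{sec: Dirichlet covariance estimates} for the kernel inputs, and omit the routine details.
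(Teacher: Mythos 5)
Your plan is correct and follows essentially the same route the paper indicates: the paper's own "proof" simply defers to a real-space adaptation of \cite[Section 8]{BG20} using the Dirichlet kernel estimates of Appendix \ref{sec: Dirichlet covariance estimates}, which is precisely the chaos-expansion / Littlewood--Paley second-moment / hypercontractivity / Besov-embedding scheme you describe, with the counterterms $2\dot\gamma_t^M$ and $2\gamma_T^M W_T$ killing the low-chaos divergences in the two resonant products that need renormalization. Your Cauchy-in-$L^p(\mathbb P;\mathfrak S)$ argument for the convergence, trading a factor $T^{-\theta}$ against a slight loss in the H\"older exponent, is also the standard mechanism implicit in the cited reference.
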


As a direct consequence of the preceding lemma together with duality and standard estimates, we obtain the following bound on $\Upsilon_{1,T}$.
\begin{lemma}
For every $\delta > 0$ sufficiently small, there exists $C=C(\delta) > 0$ such that for every $T>0$,
\begin{equation}
	\mathbb E[|\Upsilon_{1,T}|] \leq C_\delta + \delta \mathbb E[\mathcal C_T(u)].  
\end{equation}
\end{lemma}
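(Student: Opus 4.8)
The statement to prove is the bound
\begin{equation}
\mathbb E[|\Upsilon_{1,T}|] \leq C_\delta + \delta\, \mathbb E[\mathcal C_T(u)],
\end{equation}
where $\Upsilon_{1,T}$ was defined as
\begin{equation}
\Upsilon_{1,T} = -\int_0^T \int \Big(\tfrac 12 J_t\mathbb W_t^2 \circ J_t\mathbb W_t^2 + \dot\gamma_t^M\Big)(Z_t^\flat)^2 \,dx\, dt - \int \big(\mathbb W_T^2 \circ \mathbb W_T^{[3]} + 2\gamma_T^M W_T\big) K_T\, dx,
\end{equation}
and $\mathcal C_T(u) = \|Z_T(u)\|_{L^4_x}^4 + \tfrac 12\|\ell_T(u)\|_{\mathcal H}^2$. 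The plan is to treat the two summands of $\Upsilon_{1,T}$ separately using the components of the bulk enhancement $\Xi_T(W)$ controlled in Proposition \ref{prop: bulk enhancement}, namely $\mathbb W_T^{2\diamond 2} = J_t\mathbb W_t^2 \circ J_t\mathbb W_t^2 + 2\dot\gamma_t^M \in L^1_t\mathcal C^{-\kappa}_x$ and $\mathbb W_T^{2\diamond[3]} = \mathbb W_T^2\circ\mathbb W_T^{[3]} + 2\gamma_T^M W_T \in \mathcal C^{-1/2-\kappa}_x$, both with uniform-in-$T$ moments of all orders.

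First I would handle the first integral. Writing it as $-\tfrac 12\int_0^T\int \mathbb W_t^{2\diamond 2}(Z_t^\flat)^2\,dx\,dt$, I would bound the inner spatial integral by duality between $\mathcal C^{-\kappa}_x$ and $B^{\kappa}_{1,1} \supset W^{\kappa,q}_x$ (for suitable $q$), using the fractional Leibniz rule to estimate $\|(Z_t^\flat)^2\|_{W^{\kappa,q}_x} \lesssim \|Z_t^\flat\|_{H^1_x}\|Z_t^\flat\|_{L^{q'}_x}$ after interpolation and Sobolev embedding $H^1_x \hookrightarrow L^{q'}_x$ in $3$d. Since $\|Z_t^\flat(u)\|_{L^4_x}\leq \|Z_T(u)\|_{L^4_x}$ by the Mikhlin bound and $\|Z_t^\flat(u)\|_{H^1_x} \leq \|Z_T(u)\|_{H^1_x} \leq \|u\|_{\mathcal H}$ (uniformly in $t\leq T$), I get a bound of the form $\|\mathbb W_t^{2\diamond 2}\|_{\mathcal C^{-\kappa}_x}$ times a polynomial in $\|Z_T(u)\|_{L^4_x}$ and $\|Z_T(u)\|_{H^1_x}$ of total degree $2$, integrated in $t$ against the $L^1_t$ norm of $\mathbb W^{2\diamond 2}$. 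Crucially the degree in $Z_T(u)$ is only $2 < 4$, so Young's inequality $ab \leq C_\delta a^{p'} + \delta b^p$ with the right exponents lets me absorb the $Z_T(u)$ factors into $\delta(\|Z_T(u)\|_{L^4_x}^4 + \|\ell_T(u)\|_{\mathcal H}^2)$ — here one needs to recall that $Z_T(u)$ itself is not directly part of $\mathcal C_T(u)$, but via the drift decomposition $Z_T(u) = -\mathbb W_T^{[3]} + K_T$ with $K_T$ controlled by $\mathcal C_T$ plus enhancement norms (as in the intermediate ansatz \eqref{eq: intermediate ansatz} and the relation $K_T = L_T + G_T$), so $\|Z_T(u)\|_{H^1_x}$ contributes a term bounded by $\|\mathbb W_T^{[3]}\|_{\mathcal C^{1/2-\kappa}_x} + \|\ell_T(u)\|_{\mathcal H}$ plus lower-order boundary and stochastic terms. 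The remaining purely-stochastic factors are controlled in expectation by Proposition \ref{prop: bulk enhancement}, giving the constant $C_\delta$.

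Next I would handle the second integral $-\int(\mathbb W_T^2\circ\mathbb W_T^{[3]} + 2\gamma_T^M W_T)K_T\,dx = -\int \mathbb W_T^{2\diamond[3]} K_T\,dx$. By duality between $\mathcal C^{-1/2-\kappa}_x$ and $H^{1/2+\kappa}_x$ (recall $H^{1/2+\kappa}(M) \hookrightarrow \mathcal C^{-1/2-\kappa}(M)'$ in $3$d), this is bounded by $\|\mathbb W_T^{2\diamond[3]}\|_{\mathcal C^{-1/2-\kappa}_x}\|K_T\|_{H^{1/2+\kappa}_x}$, and then I interpolate $\|K_T\|_{H^{1/2+\kappa}_x} \lesssim \|K_T\|_{H^1_x}^{1/2+\kappa}\|K_T\|_{L^2_x}^{1/2-\kappa}$, using again that $K_T$ is controlled by $\mathcal C_T(u)$ up to enhancement-norm contributions. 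Since the resulting power of $\|K_T\|_{H^1_x}$ is strictly less than $1$ (and the total degree in the drift is $< 4$ and $< 2$ respectively for the two coercive pieces), Young's inequality with small parameter $\delta$ absorbs all drift-dependent factors into $\delta\,\mathbb E[\mathcal C_T(u)]$, while the stochastic norms contribute a finite constant in expectation via Proposition \ref{prop: bulk enhancement} and Cauchy–Schwarz.

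The main obstacle, and where care is genuinely needed rather than being routine, is bookkeeping the substitution $Z_T(u) = -\mathbb W_T^{[3]} + K_T$ (and the further $K_T = L_T + G_T$ from the second ansatz) so that the drift norms appearing are exactly $\|\ell_T(u)\|_{\mathcal H}$ and $\|Z_T(u)\|_{L^4_x}$ — the constituents of $\mathcal C_T(u)$ — and not some intermediate quantity; in particular one must check that $G_T$, which is linear in $\mathbb W_t^2 \succ Z_t^\flat$ and $\mathbb W_t^2 H(\varphi_-,\varphi_+)_T$, contributes only terms of the correct (sub-critical) homogeneity in the drift and is otherwise controlled by enhancement and boundary norms. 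This is exactly the kind of degree-counting that makes the Young's inequality absorption work, and it is the reason the $\flat$ cutoff was introduced (so that $\|Z_t^\flat\|_{L^4_x} \leq \|Z_T\|_{L^4_x}$ holds for all $t \leq T$); once the homogeneities are tracked correctly, the estimates themselves are standard paraproduct and Besov duality bounds of the type already used in \cite{BG20}.
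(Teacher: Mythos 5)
There is a genuine gap in your treatment of the first integral of $\Upsilon_{1,T}$. You estimate $\|(Z_t^\flat)^2\|_{W^{\kappa,q}_x} \lesssim \|Z_t^\flat\|_{H^1_x}\|Z_t^\flat\|_{L^{q'}_x}$ and then claim that $\|Z_T(u)\|_{H^1_x}$ can be absorbed into $\mathcal C_T(u)$ via the decomposition $Z_T(u) = -\mathbb W_T^{[3]} + K_T$, writing that $\|Z_T(u)\|_{H^1_x}$ is ``bounded by $\|\mathbb W_T^{[3]}\|_{\mathcal C^{1/2-\kappa}_x} + \|\ell_T(u)\|_{\mathcal H}$ plus lower-order \dots terms.'' This is false: $\mathbb W_T^{[3]}$ has regularity only $\mathcal C^{1/2-\kappa}$ (equivalently $H^{1/2-\kappa}$), so the decomposition shows that $Z_T(u)$ has coercive-controlled regularity at most $1/2-\kappa$, not $1$. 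In fact $\|\mathbb W_T^{[3]}\|_{H^1}$ diverges as $T\to\infty$, and equivalently $\|u\|_{\mathcal H}$ (which controls $\|Z_T(u)\|_{H^1}$) is precisely the quantity that the paracontrolled ansatz $u_t = \ell_T(u)_t + [\text{singular ansatz terms}]$ is designed to blow up, so it cannot appear on the right-hand side of your bound. Since only $\|Z_T(u)\|_{L^4}^4$ and $\|\ell_T(u)\|_{\mathcal H}^2$ are coercive, any estimate invoking $\|Z_T(u)\|_{H^1}$ or $\|u\|_{\mathcal H}$ cannot close.

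The correct mechanism — and the entire reason the $\flat$ cutoff was introduced (cf.\ the remark right after its definition: ``The relevance of the $\flat$ is exactly to control terms involving $t<T$ and higher than quadratic homogeneity in the drift using analytic estimates'') — is to exploit the frequency localization of $Z_t^\flat$. Since $Z_t^\flat$ is low-pass filtered at scale $\asymp t$ (in the periodic directions), Bernstein's inequality lets you gain the tiny $\kappa$ of extra regularity needed for the $\mathcal C^{-\kappa}_x$ duality at the cost of a factor $t^{c\kappa}$, so that $\|(Z_t^\flat)^2\|_{B^\kappa_{1,1}} \lesssim t^{c\kappa}\|Z_t^\flat\|_{L^4}^2 \leq t^{c\kappa}\|Z_T(u)\|_{L^4}^2$, with no $H^1$ norm of $Z_T(u)$ appearing. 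The slightly weighted $L^1_t$ integral $\int_0^T t^{c\kappa}\|\mathbb W_t^{2\diamond 2}\|_{\mathcal C^{-\kappa}_x}\,dt$ is then finite uniformly in $T$ by the $t$-decay of $\mathbb W_t^{2\diamond 2}$ (this is what is swept under ``standard estimates'' in the paper), and Young's inequality completes the absorption into $\delta\|Z_T(u)\|_{L^4}^4$. By contrast, your argument for the second integral — duality against $\mathbb W_T^{2\diamond[3]}\in \mathcal C^{-1/2-\kappa}_x$, followed by interpolation for $K_T$ — is essentially sound, modulo replacing $H^1$ with the $H^{1-\kappa}$ norm of $K_T$ that Lemma \ref{lem: KT estimate} actually controls.
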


\subsection{Stochastic estimates on $\Upsilon_{2,T}$}

We prove an estimate on the (almost) first moment of $\Upsilon_{2,T}$ when $\varphi_-,\varphi_+$ are i.i.d. according to an admissible law on boundary conditions. 
\begin{lemma} \label{lem: upsilon2}
Let $\mathbb P^0$ be a law on admissible boundary conditions $\phi^0:=(W^0,Z^0)$. Then for every $\alpha \in (0,1)$, there exists $C>0$ such that, for every $T>0$,
\begin{equation}
\mathbb E_{\mathbb P^0\otimes \mathbb P^0} [|\Upsilon_{2,T}(W^0_-+Z^0_-,W^0_++Z^0_+)|^\alpha]\leq C + \mathbb E_{\mathbb P^0\otimes \mathbb P^0}[\|Z^0_+\|_{H^{1/2-\kappa}_z}^2+\|Z^0_-\|_{H^{1/2-\kappa}_z}^2]. 
\end{equation}
Furthermore, the pushforward under the regularized boundary measure $(\Upsilon_{2, T})^* (\nu^0_T\otimes \nu^0_T)$ converges in law to a random variable 
  $(\Upsilon_{2,\infty})^* (\nu^0\otimes \nu^0)$.
  \end{lemma}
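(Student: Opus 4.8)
\textbf{Proof plan for Lemma \ref{lem: upsilon2}.}

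The plan is to split $\Upsilon_{2,T}$ into a piece that only sees the Gaussian components $W^0_-, W^0_+$ and a remainder that involves at least one factor of the rough drift components $Z^0_-,Z^0_+$. Recall that
\begin{equation}
H(\varphi_-,\varphi_+)_T = H(W^0_-)_T + H(W^0_+)_T + H(Z^0_-)_T + H(Z^0_+)_T,
\end{equation}
so that $H(\varphi_-,\varphi_+)_T(x)H(\varphi_-,\varphi_+)_T(y)$ expands into a sum of products of two harmonic extensions. Using the definitions of $\delta_T^{3,M}$ and of the Wick powers $\llbracket H(\varphi_-,\varphi_+)^2_T\rrbracket$, the purely $W^0$-bilinear part of $\Upsilon_{2,T}$ is exactly a (smoothed, spatially integrated against the kernel $C_T^M(x,y)^3$) second Wiener chaos random variable in the Gaussian $(W^0_-,W^0_+)$, with its mean subtracted. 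The key input here is that $\delta_T^{3,M}$ is defined using $\mathbb E_{\tilde\mu^0\otimes\tilde\mu^0}$ rather than $\mathbb E_{\mu^0\otimes\mu^0}$, but since $\tilde\mu^0$ and $\mu^0$ differ by the smoothing operator $\mathcal N_\infty$ (Lemma \ref{lemma: tilde mu and mu abs cont}), the discrepancy is an integral against a smooth kernel and contributes only a finite constant plus a bounded quadratic form in $W^0_\pm$, which is handled with Proposition \ref{prop: stochastic boundary v2} (the $i=2$ Wick power bound) and duality.

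First I would set up the chaos decomposition: write $\Upsilon_{2,T} = \Upsilon_{2,T}^{WW} + \Upsilon_{2,T}^{\mathrm{rem}}$, where $\Upsilon_{2,T}^{WW}$ is the $W^0$-only bilinear-minus-mean term and $\Upsilon_{2,T}^{\mathrm{rem}}$ collects all terms containing a factor $H(Z^0_\pm)_T$. For $\Upsilon_{2,T}^{WW}$: by hypercontractivity on the fixed (second) Wiener chaos generated by $(W^0_-,W^0_+)$, it suffices to bound the second moment, i.e.\
\begin{equation}
\mathbb E_{\mu^0\otimes\mu^0}\big[(\Upsilon_{2,T}^{WW})^2\big] = 2\cdot(3\cdot 4^2)^2\int_{M^4} C_T^M(x,y)^3 C_T^M(x',y')^3\, \Gamma_T(x,x')\Gamma_T(y,y')\,dx\,dy\,dx'\,dy' + \dots,
\end{equation}
where $\Gamma_T(x,x') = \mathbb E[H(W^0_-)_T(x)H(W^0_-)_T(x')] + (\text{same for }W^0_+) = C^{B,-}_T + C^{B,+}_T$. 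Using the covariance estimates on $C_T^M$ from Appendix \ref{sec: Dirichlet covariance estimates} (giving $C_T^M(x,y)^3$ an integrable $|x-y|^{-3}$-type singularity with a $T$-uniform bound) together with the boundary covariance/regularizing estimates on $C^B_T$ from Appendix \ref{appendix: harmonic extension} (which supply decay in the distance to the boundary and in $T$), this integral is bounded uniformly in $T$; the diagonal-subtraction from $\delta_T^{3,M}$ and the $\gamma^M_T\llbracket H^2_T\rrbracket$ counter-term remove the would-be divergent contractions, exactly as in the torus case \cite{BG20}. This is the main obstacle: carefully checking that the renormalization constants $\delta_T^{3,M}$, $\gamma_T^M$ are precisely the ones that cancel all divergent graphs, and that the remaining integrals converge $T$-uniformly — but it is a bounded-variation refinement of computations already carried out in \cite{BG20}, now with the extra harmonic-extension kernels, for which Appendices \ref{sec: Dirichlet covariance estimates} and \ref{appendix: harmonic extension} give all needed bounds. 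For $\Upsilon_{2,T}^{\mathrm{rem}}$: each such term is a product of one or more $H(Z^0_\pm)_T$ factors (which are deterministic given the boundary data, of regularity $H^{1/2-\kappa}_z$ after Appendix \ref{appendix: harmonic extension}) against (Wick powers of) $H(W^0_\pm)_T$, integrated against $C_T^M(x,y)^3$; by duality and Proposition \ref{prop: stochastic boundary v2}, $\mathbb E_{\mathbb P^0\otimes\mathbb P^0}$ of the $\alpha$-th power of each is controlled by $C + \mathbb E_{\mathbb P^0\otimes\mathbb P^0}[\|Z^0_-\|^2_{H^{1/2-\kappa}_z} + \|Z^0_+\|^2_{H^{1/2-\kappa}_z}]$, using $\alpha<1$ and Young's inequality to trade the high moments of the Gaussian factors for the square moment of the $Z^0$'s. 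Combining the two bounds via $(a+b)^\alpha\le a^\alpha+b^\alpha$ gives the first assertion.

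For the convergence in law statement: the uniform moment bound just established, applied with $\mathbb P^0\otimes\mathbb P^0 = \nu^0_T\otimes\nu^0_T$ and using that $\nu^0_T$ is admissible uniformly in $T$ with $\sup_T\mathbb E_{\nu^0_T}[\|Z^0\|^2_{H^{1/2-\kappa}_z}]<\infty$ by Theorem \ref{thm: boundary} (realized in the grand coupling so that $Z^0_T\to Z^0_\infty$ in $L^2$), gives tightness of $(\Upsilon_{2,T})_*(\nu^0_T\otimes\nu^0_T)$. To identify the limit, I would work in the grand coupling of $(\nu^0_T)_{T}$ and $\nu^0$ from Section \ref{sec: boundary} (Theorem \ref{thm: boundary}, Proposition \ref{prop: bdry stochastic}): there $W^0_T\to W^0_\infty$ and $Z^0_T\to Z^0_\infty$ in the appropriate spaces, the stochastic objects $\llbracket(\overline HW^0_T)^i\rrbracket$ converge by Proposition \ref{prop: stochastic boundary v2}, and the Gaussian chaos piece $\Upsilon_{2,T}^{WW}$ converges in $L^2$ by the same kernel estimates (now checking the differences $C_T^M - C^M$, $C^B_T - C^B$ vanish, using Appendices \ref{sec: Dirichlet covariance estimates}, \ref{appendix: harmonic extension}), while $\Upsilon_{2,T}^{\mathrm{rem}}\to\Upsilon_{2,\infty}^{\mathrm{rem}}$ in $L^\alpha$ by dominated convergence using the uniform bound as the envelope. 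Hence $\Upsilon_{2,T}\to\Upsilon_{2,\infty}$ in probability along the coupling, which upgrades tightness to convergence in law of the pushforwards to $(\Upsilon_{2,\infty})_*(\nu^0\otimes\nu^0)$.
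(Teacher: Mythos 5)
Your overall architecture matches the paper: the same splitting of $\Upsilon_{2,T}$ into a purely Gaussian (\,$W^0$-only\,) part and a remainder involving $Z^0_\pm$, the same second-moment-then-hypercontractivity reduction for the Gaussian part (the paper's $\Upsilon^a_{2,T}$), and the same coupling/martingale strategy for the convergence statement (cf.\ Lemma \ref{lem: upsiloni convergence}). Your treatment of $\Upsilon^b_{2,T}$ (duality, harmonic-extension regularity, and Young's inequality trading a fractional moment for the $H^{1/2-\kappa}_z$ square moments of $Z^0_\pm$) is also broadly in line with Lemma \ref{lem: upsilon2 b}.

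There is, however, a genuine gap in the Gaussian piece. You assert that after expanding the variance as a sum of graph integrals like
\begin{equation}
\int C_T^M(x_1,y_1)^3\, C_T^M(x_2,y_2)^3\, C^B_T(x_1,x_2)\, C^B_T(y_1,y_2)\,dx_1\,dy_1\,dx_2\,dy_2,
\end{equation}
these are ``bounded uniformly in $T$'' by the kernel estimates. But $C^M_T(x,y)^3 \asymp |x-y|^{-3}$ is exactly log-divergent when integrated over $M\subset\mathbb R^3$, so this graph is \emph{not} finite on the nose, and the naive power counting you appeal to does not close. The divergence must be cancelled, and the mechanism is not ``exactly as in the torus case'': in \cite{BG20} the analogous cancellation happens against drift-controlled paraproduct terms, whereas here $\Upsilon_{2,T}$ is a purely boundary-field quantity and a different idea is needed. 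The paper's key observation (proof of Lemma \ref{lem: upsilon2 a}) is that after absorbing the $\gamma_T^M\llbracket H^2\rrbracket$ counter-term into the bilinear integral, $\Upsilon^a_{2,T}$ becomes (the centred version of)
\begin{equation}
\frac{12^2}{3}\int\!\!\int C^M_T(x,y)^3\, H(W^0_-,W^0_+)_T(x)\,\bigl(H(W^0_-,W^0_+)_T(y) - H(W^0_-,W^0_+)_T(x)\bigr)\,dx\,dy,
\end{equation}
so the Wick contractions in the variance computation produce \emph{differences} $C^B_T(x_1,x_2)-C^B_T(x_1,y_2)$ rather than raw $C^B_T$ factors. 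This extra cancellation, combined with the gradient estimate on the Poisson kernel (Lemma \ref{lem: gradient Cb}) via the mean-value theorem, gains the fractional power of $|x_i-y_i|$ that removes the logarithm. Without identifying this difference structure and the gradient estimate, your plan for $\Upsilon^{WW}_{2,T}$ does not establish the uniform-in-$T$ second-moment bound, which is the crux of the lemma.
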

  
 Lemma \ref{lem: upsilon2} will follow from Lemmas \ref{lem: upsilon2 a} and \ref{lem: upsilon2 b}. We split $\Upsilon_{2,T}$ into two terms, one which depends purely on $W^0_-$ and $W^0_+$, and one which has dependency on the regular parts $Z^0_-,Z^0_+$, and the latter lemmas handle these parts separately. To this end, let us write
  \begin{equation}
  \Upsilon_{2,T}(W_-^0+Z^0_-,W^0_++Z^0_+)=:\Upsilon_{2,T}^a(W^0_-,W^0_+) + \Upsilon^b_{2,T}(W^0_-,Z^0_-,W^0_+,Z^0_+),	
  \end{equation}
  where
  \begin{align}
  \Upsilon_{2,T}^a(W^0_-,W^0_+)&:= \frac{12^2}3 \int \int C^M_T(x,y)^3 H(W^0_-,W^0_+)_T(x) H(W^0_-,W^0_+)_T(y) dx dy 
  \\
  &\quad - \int \gamma_T^M \llbracket H(\varphi_-,\varphi_+)^2_T\rrbracket dx - \delta^{3,M}_T.
  \end{align}

The first lemma handles $\Upsilon^a_{2,T}$, for which we can use Gaussian computations. We estimate its variance. Recall that $\mu^0$ is the marginal of $W^0$ under $\mathbb P^0$.
 \begin{lemma}\label{lem: upsilon2 a}
 There exists $C>0$ such that, for every $T>0$,
 \begin{equation} \label{eq: upsilon2 a 2ndmoment}
 	\mathbb E_{\mu^0\otimes \mu^0}[|\Upsilon_{2,T}^a|^2] \leq C. 
 \end{equation}
    
 \end{lemma}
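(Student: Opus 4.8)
The plan is to expand $\Upsilon^a_{2,T}$ into homogeneous Wiener chaoses with respect to the pair of boundary GFFs $(W^0_-,W^0_+)\sim\mu^0\otimes\mu^0$ and to estimate the second moment chaos-by-chaos. Write $H(W^0_-,W^0_+)_T = HW^0_{-,T} + HW^0_{+,T}$ as a sum of two independent terms (harmonic extensions from the two ends), so that products like $H(W^0_-,W^0_+)_T(x)H(W^0_-,W^0_+)_T(y)$ split into a purely-$(-)$ part, a purely-$(+)$ part, and a cross term $HW^0_{-,T}(x)HW^0_{+,T}(y) + HW^0_{-,T}(y)HW^0_{+,T}(x)$. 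The cross term is already Wick-ordered (product of independent centred Gaussians), and the diagonal terms decompose into a zeroth chaos piece (the covariance $C^B_T(x,y)$) plus a second chaos piece $\llbracket (HW^0_{\pm,T})^2\rrbracket$-type object. The definition of $\delta^{3,M}_T$ is precisely designed to cancel the zeroth chaos contribution: by Lemma \ref{lemma: covariance DMP decomp} and the relation between $C^B_T$ and the covariance of $HW^0_T$ up to a smoothing term, $\mathbb E_{\tilde\mu^0\otimes\tilde\mu^0}[H(\varphi_-,\varphi_+)_T(x)H(\varphi_-,\varphi_+)_T(y)]$ agrees with $C^B_T(x,y)$ in the relevant leading behaviour, and the $\int\gamma^M_T\llbracket H^2\rrbracket$ counter-term cancels the remaining divergent second-chaos diagonal contribution in combination with the definition of $\gamma^M_T$. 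After these cancellations, $\Upsilon^a_{2,T}$ is a finite sum of terms living in the second and (residual) chaoses whose kernels are explicit polynomial expressions in $C^M_T$, $C^B_T$ (equivalently $\overline C^B_T$ up to smoothing), and the Poisson kernels $\overline P$.

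The second step is the actual second-moment estimate. Using Wick's theorem (equivalently, orthogonality of chaoses and the explicit diagram formula, see the stochastic estimates in \cite{BG20}), $\mathbb E_{\mu^0\otimes\mu^0}[|\Upsilon^a_{2,T}|^2]$ is bounded by a finite sum of integrals of products of kernels over $M\times M\times M\times M$ (two ``$x,y$'' variables from one copy, two from the other, contracted pairwise). Each such integral is estimated by the Green function bounds on the Dirichlet Laplacian from Appendix \ref{sec: Dirichlet covariance estimates} — concretely $|C^M_T(x,y)|\lesssim |x-y|^{-1}$ uniformly in $T$ with the mollified versions obeying the same bound — together with the harmonic-extension/Poisson-kernel estimates from Appendix \ref{appendix: harmonic extension} which give, for the boundary covariance, $|\overline C^B_T(x,y)|\lesssim (|\tau_x|+|\tau_y|+|z_x-z_y|)^{-1}$ type behaviour with exponential decay in the distance from $B$. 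The key point is that the counter-terms have removed exactly the marginally-divergent (logarithmic) diagonal contractions, so every remaining integral is absolutely convergent by power counting in three dimensions, with the bound \emph{uniform in $T$}; this yields \eqref{eq: upsilon2 a 2ndmoment}.

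The third step — establishing the convergence of $\Upsilon^a_{2,T}$ as $T\to\infty$, needed for the ``furthermore'' part of Lemma \ref{lem: upsilon2} (via Lemma \ref{lem: upsilon2 b}) — follows the same template: one writes $\Upsilon^a_{2,T} - \Upsilon^a_{2,T'}$ as a telescoping difference, and the same kernel estimates, now exploiting that the mollifier symbols $\hat\rho_T(n)\to 1$ with quantitative rates, show that the differences go to zero in $L^2(\mu^0\otimes\mu^0)$; hence $(\Upsilon^a_{2,T})$ is Cauchy in $L^2$ and converges to a limit $\Upsilon^a_{2,\infty}$, measurable with respect to $(W^0_-,W^0_+)$. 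By hypercontractivity on a fixed (finite) sum of Wiener chaoses this also gives convergence in every $L^p$, $p<\infty$, and in particular the distributional convergence statement once combined with the admissible-law structure and the convergence of $\nu^0_T\to\nu^0$ from Theorem \ref{thm: boundary}.

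\textbf{Main obstacle.} The delicate point is not the raw power counting but verifying that the three counter-terms ($\delta^{3,M}_T$, the $\gamma^M_T\llbracket H^2\rrbracket$ term, and implicitly the difference between $C^B_T$ and the true covariance of $HW^0_T$) conspire to cancel \emph{all} divergent contractions — in particular the second-chaos logarithmic divergence coming from the resonance between the three-fold product $C^M_T(x,y)^3$ and the coincidence $x\to y$ against the harmonic extensions. Here one must use Remark \ref{remark: energy renormalization recombination} and Lemma \ref{lemma: covariance DMP decomp} carefully, keeping track of the $\tilde\mu^0$ versus $\mu^0$ discrepancy (which is only a smoothing perturbation, hence harmless but must be accounted for) and the fact that the integration domain for $\delta^{3,M}_T$ matches. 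Once the bookkeeping confirms that only absolutely convergent integrals survive, the estimate is routine; I expect roughly half the work to be in this algebraic/renormalization verification rather than in the analysis.
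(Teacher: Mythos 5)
Your high-level plan — exploit the renormalization to cancel the zeroth chaos, then estimate the second moment of what remains by Wick contractions and kernel bounds — is on the right track, but the execution hides a genuine gap. After the counter-terms are taken into account, the contractions you must bound are of the type
\begin{equation}
\int\int\int\int C^M_T(x_1,y_1)^3\, C^M_T(x_2,y_2)^3\, C^B_T(x_1,x_2)\, C^B_T(y_1,y_2)\,\mathd x_1\mathd y_1\mathd x_2\mathd y_2,
\end{equation}
and estimating the kernels by absolute values via $|C^M_T(x,y)|\lesssim|x-y|^{-1}$ and $|C^B_T|\lesssim|x-y|^{-1}$, as you propose, gives an integrand $\sim|x_1-y_1|^{-3}$ which is \emph{still} logarithmically divergent locally; ``power counting'' does not close the estimate. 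Removing the zeroth-chaos/diagonal contraction is necessary but not sufficient: the $\gamma^M_T$-counter-term and the $\delta^{3,M}_T$ subtraction must be used \emph{structurally}, not just additively, to turn the dangerous contraction into a difference of Poisson kernels. Concretely, the paper absorbs the $\gamma^M_T\llbracket H^2\rrbracket$ counter-term into the main integral so that the integrand is $C^M_T(x,y)^3\, H(x)\,(H(y)-H(x))$, and then subtracts the mean ($\delta^{3,M}_T=-\mathbb E[I_1]$), so that after Wick's theorem the surviving kernels appear as differences $C^B_T(x_1,x_2)-C^B_T(x_1,y_2)$. Those differences are then fed into a gradient estimate on the Poisson kernel, $|\nabla_y C^B(x,y)|\lesssim d(y,\partial M)^{-2-\delta} d(x,\partial M)^{-\delta}$ (Lemma \ref{lem: gradient Cb}), and the mean value theorem to extract a factor $|x_2-y_2|^\epsilon$ at the cost of boundary-distance weights; that extra $\epsilon$ of decay is precisely what renders the remaining convolution integrals $J_1,\dots,J_4$ absolutely convergent. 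Your sketch contains none of this — neither the difference structure nor the gradient/boundary-weight estimate — and without it the central inequality cannot be proved.

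On the decomposition: you propose splitting $H(W^0_-,W^0_+)_T=HW^0_{-,T}+HW^0_{+,T}$ and tracking chaoses for the two independent boundary Gaussians separately. The paper does not do this; it keeps the pair together and works directly with the product measure $\tilde\mu^0\otimes\tilde\mu^0$, computing a single variance. Your split is not wrong in principle, but it is unnecessary bookkeeping, and more importantly it obscures the place where the cancellation must be made manifest — the difference $H(y)-H(x)$ is a property of the paired integrand, not of the individual $\pm$ pieces. You also casually elevate $\mu^0$ to $\tilde\mu^0$ (``which is only a smoothing perturbation, hence harmless but must be accounted for''); that step is indeed harmless and the paper makes it too, so that is fine. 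The third step you outline (convergence in $L^2$, hence in every $L^p$, as $T\to\infty$) is reasonable and matches the spirit of Lemma \ref{lem: upsiloni convergence}, though for this lemma only the uniform-in-$T$ bound is required.

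In summary: the skeleton of your argument is compatible with the paper's, but the single most important analytic ingredient — the gradient estimate on $C^B$ and the use of the difference structure created by the $\gamma^M_T$ and $\delta^{3,M}_T$ renormalizations — is missing, and the replacement claim that absolute-value kernel bounds plus power counting suffice is incorrect. As written, the proposal does not prove \eqref{eq: upsilon2 a 2ndmoment}.
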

 
 \begin{proof}
Since the covariance of $\mu^0$ is equal to the covariance of $\tilde \mu^0$ up to an infinitely smoothing term, it is sufficient to do this estimate where the expectation is taken with respect to $\tilde \mu^0$. Recall the definitions of the covariances $C^M_T$ and $C^B_T$. Let us write $\Upsilon^{a}_{2,T}=I_1+I_2$, where 
\begin{align}
  I_1&:= \frac{12^2}{3} \int \int C_T^M (x, y)^3 H
     (\varphi_-, \varphi_+)_T (x) H (\varphi_-, \varphi_+)_T (y) dx dy  - \int   \gamma^M_T  H (\varphi_-, \varphi_+)_T^2
      \mathd x
      \\
      &= \frac{12^2}3 \int \int C^M_T(x,y)^3 H(\varphi_-,\varphi_+)_T(x) \left( H(\varphi_-,\varphi_+)_T(y) - H(\varphi_-,\varphi_+)_T(x) \right),
  \end{align}
  and
  \begin{align}
  I_2&:= \int \gamma_T^M \mathbb E_{\tilde\mu^0\otimes \tilde\mu^0}[ H(\varphi_-,\varphi_+)_T^2]  dx - \delta^{3,M}_T
  \\
  &= \frac{12^2}3 \int \int C^M_T(x,y)^3 \left( C^B_T(x,x)-C^B_T(x,y) \right)
  = - \mathbb E_{\tilde\mu^0\otimes \tilde\mu^0}[I_1].
  \end{align}
  Hence,
  \begin{equation}
  \mathbb E_{\tilde\mu^0\otimes \tilde\mu^0}[|\Upsilon_{2,T}^a|^2] = \mathrm {Var}_{\tilde\mu^0\otimes\tilde\mu^0}(I_1).
  \end{equation}

We now compute the second moment of $I_1$. By expanding and using Wick's theorem, 
  \begin{align}
  &\frac{3^2}{12^4}\mathbb E_{\tilde\mu^0\otimes\tilde\mu^0}[I_1^2] \\ &= \int \int \int \int  C^M_T(x_1,y_1)^3 C^M_T(x_2,y_2)^3 \mathbb E_{\tilde\mu^0 \otimes \tilde\mu^0} \Bigg[ H(\varphi_-,\varphi_+)(x_1)H(\varphi_-,\varphi_+)(x_2) 
  \\
  &\qquad \qquad 
  \times \Big( H(\varphi_-,\varphi_+)(y_1) - H(\varphi_-,\varphi_+)(x_1) \Big)\Big(H(\varphi_-,\varphi_+)(y_2) - H(\varphi_-,\varphi_+)(x_2) \Big) \Bigg] 
  \\
  &=\int \int \int \int  C^M_T(x_1,y_1)^3 C^M_T(x_2,y_2)^3 \Bigg\{ \left( C^B_T(x_1,y_1) - C^B_T(x_1,x_1) \right) \left( C^B_T(x_2,y_2) - C^B_T(x_2,x_2) \right)
  \\ & \qquad \qquad \qquad +C^B_T(x_1,x_2) \left( C^B_T(y_1,y_2) - C^B_T(x_1,y_2) - C^B_T(x_2,y_1) + C^B_T(x_1,x_2) \right)
  \\
  & \qquad \qquad \qquad 
   +  \left( C^B_T(x_1,y_2) - C^B_T(x_1,x_2) \right) \left( C^B_T(x_2,y_1) - C^B_T(x_1,x_2) \right) \Bigg\}.
  \end{align}
  However, note that
  \begin{align}
  \frac{3^2}{12^4}\mathbb E_{\tilde\mu^0\otimes\tilde\mu^0}[I_1]^2 =  \int \int \int \int & C^M_T(x_1,y_1)^3 C^M_T(x_2,y_2)^3
  \\ & \left( C^B_T(x_1,x_1) - C^B_T(x_1,y_1) \right) \left( C^B_T(x_2,x_2) - C^B_T(x_2,y_2) \right). 
  \end{align}
  Hence, by symmetry considerations
  \begin{align}
  &\frac{3^2}{12^4} \mathbb E_{\tilde\mu^0 \otimes \tilde\mu^0}[|\Upsilon_{2,T}^a|^2] \\
  &=\int \int \int \int  C^M_T(x_1,y_1)^3 C^M_T(x_2,y_2)^3 \Bigg\{ \left( C^B_T(x_1,y_2) - C^B_T(x_1,x_2) \right) \left( C^B_T(x_2,y_1) - C^B_T(x_1,x_2) \right) 
  \\ & \qquad \qquad +C^B_T(x_1,x_2) \left( C^B_T(y_1,y_2) - C^B_T(x_1,y_2) - C^B_T(x_2,y_1) + C^B_T(x_1,x_2) \right) \Bigg\} 
   \\
   &=\int \int \int \int  C^M_T(x_1,y_1)^3 C^M_T(x_2,y_2)^3 \Bigg\{\left( C^B_T(x_1,y_2) - C^B_T(x_1,x_2) \right) \left( C^B_T(x_2,y_1) - C^B_T(x_1,x_2) \right)
   \\ & \quad \quad \qquad + \left( C^B_T(x_1,x_2) - C^B_T(x_1,y_2) \right) \left( C^B_T(y_1,y_2) - C^B_T(x_1,y_2)\right) \Bigg\}
\\ 
 =2 \int &\int \int \int  C^M_T(x_1,y_1)^3 C^M_T(x_2,y_2)^3 \Bigg\{ \left( C^B_T(x_1,x_2) - C^B_T(x_1,y_2) \right) \left( C^B_T(y_1,y_2) - C^B_T(x_1,y_2)\right)\Bigg\}.
  \end{align}
  
  We denote the righthand side of the above as $J$. In order to estimate this, we will need the estimate on the gradient of the Poisson kernel $C^B_T$, see Lemma \ref{lem: gradient Cb}. Let us write $\ell(x_i,y_j)$ to denote the straight line connecting $x_i$ and $y_j$. Note that for all $\xi \in \ell(x_i,y_j)$, $d(\xi,B) \geq \min (d(x_i,B), d(y_j,B))$. Therefore, by the mean-value theorem and the gradient estimate, for every $\epsilon,\delta > 0$ sufficiently small, 
  \begin{align}
  	|C^B_T(x_1,x_2) - C^B_T(x_1,y_2)| &\leq C \left(\sup_{\xi \in \ell(x_2,y_2)}|\nabla_{e_2} C^B_T(x_1,\xi )||x_2-y_2| \right)^\varepsilon
  	\\
  	&\quad \quad \times \left( \frac{1}{|x_1-x_2|^{1-\epsilon}} + \frac{1}{|x_1-y_2|^{1-\epsilon}}\right)
  	\\
  	&\leq C \frac{|x_2-y_2|^\varepsilon}{{\rm min}(d(x_2,\partial M)^{(2-\delta)\epsilon},d(y_2,\partial M)^{(2-\delta)\epsilon}) \cdot   d(x_1,\partial M)^{\delta\epsilon}}  
  	\\
  	&\quad \quad \quad \times \left( \frac{1}{|x_1-x_2|^{1-\epsilon}} + \frac{1}{|x_1-y_2|^{1-\epsilon}}\right).
  	 \end{align}
  	 In particular, by redefining $\delta$, we obtain the estimate
  	 \begin{align}
  	 &|C^B_T(x_1,x_2) - C^B_T(x_1,y_2)| \\   \leq& C \frac{|x_2-y_2|^\delta}{{\rm min}(d(x_2,\partial M)^{\delta},d(y_2,\partial M)^{\delta}) \cdot   d(x_1,\partial M)^{\delta}}
  	\left( \frac{1}{|x_1-x_2|^{1-\epsilon}} + \frac{1}{|x_1-y_2|^{1-\epsilon}}\right).
  	 \end{align}
  Hence, we may bound

  \begin{align}
  |J|\\ \leq& C \int \int \int \int \frac{\max(d(x_2,\partial M)^{-\delta},d(y_2,\partial M)^{-\delta})\max (d(x_1,\partial M)^{-\delta},d(y_1,\partial M)^{-\delta})}{|x_1-y_1|^{3-\delta}|x_2-y_2|^{3-\delta}} 
  \\
  &\times d(x_1,\partial M)^{-\delta}d(y_2,\partial M)^{-\delta}
  \\
  &\times \left( \frac{1}{|x_1-x_2|^{1-\epsilon}} + \frac{1}{|x_1-y_2|^{1-\epsilon}}\right)\left( \frac{1}{|y_1-y_2|^{1-\epsilon}} + \frac{1}{|y_2-x_1|^{1-\epsilon}}\right)
  \\
  &=: J_1 + J_2 + J_3 + J_4,
  \end{align}
  where the $J_i$ are defined according to expanding the brackets in the last line above (see below). Note that, by symmetry, $J_2=J_3$ (these correspond to the cross terms in the expansion above). 
  
  We will first estimate $J_1$. Note that, also by symmetry,
  \begin{align}
  J_1&:= C \int\int\int\int \frac{\max(d(x_2,\partial M)^{-\delta},d(y_2,\partial M)^{-\delta})\max (d(x_1,\partial M)^{-\delta},d(y_1,\partial M)^{-\delta})}{|x_1-y_1|^{3-\delta}|x_2-y_2|^{3-\delta}|x_1-x_2|^{1-\delta}|y_1-y_2|^{1-\delta}}
  \\&\qquad\qquad\ \times  d(x_1,\partial M)^{-\delta}d(y_2,\partial M)^{-\delta}
  \\
  &\leq C' \int \int \int \int \left( \frac{1}{|x_1-y_1|^{3-\delta}|x_2-y_2|^{3-\delta}|x_1-x_2|^{1-\delta}|y_1-y_2|^{1-\delta}} \right)
  \\
  &\quad\quad \times  \Big( d(x_2,\partial M)^{-2\delta}d(x_1,\partial M)^{-2\delta} + d(y_2,\partial M)^{-2\delta}d(x_1,\partial M)^{-2\delta} 
  \\
  &\qquad \qquad \qquad + d(x_2,\partial M)^{-2\delta}d(y_1,\partial M)^{-2\delta} + d(y_2,\partial M)^{-2\delta}d(y_1,\partial M)^{-2\delta}\Big) 
  \\
  &= 2C' \int \int \int \int \left( \frac{d(x_1,\partial M)^{-2\delta}d(x_2,\partial M)^{-2\delta} + d(x_1,\partial M)^{-2\delta}d(y_2,\partial M)^{-2\delta}}{|x_1-y_1|^{3-\delta}|x_2-y_2|^{3-\delta}|x_1-x_2|^{1-\delta}|y_1-y_2|^{1-\delta}} \right)
  \\
  &=: J_1(a) +J_1(b).
  \end{align}
  We estimate $J_1(a)$ first. Integrating out $y_1$ and $y_2$ respectively, we obtain
  \begin{align}
  J_1(a) &\leq C'' \int \int \int \frac{d(x_1,\partial M)^{-2\delta}d(x_2,\partial M)^{-2\delta}}{|x_1-y_2|^{1-2\delta}|x_2-y_2|^{3-\delta}|x_1-x_2|^{1-\delta}}
  \\
  &\leq C'''\int \int \frac{d(x_1,\partial M)^{-2\delta}d(x_2,\partial M)^{-2\delta}}{|x_1-x_2|^{2-4\delta}} \leq C_4
  \end{align}
where the last inequality is by Young's convolution inequality.
We now estimate $J_1(b)$. Integrating out $y_1$ and then $x_2$, and then using Young's convolution inequality, we obtain
\begin{align}
J_1(b) &\leq C'' \int \int \int \frac{d(x_1,\partial M)^{-2\delta}d(y_2,\partial M)^{-2\delta}}{|x_1-y_2|^{1-2\delta}|x_2-y_2|^{3-\delta}|x_1-x_2|^{1-\delta}}
\\
&\leq C''' \int \int \frac{d(x_1,\partial M)^{-2\delta}d(y_2,\partial M)^{-2\delta}}{|x_1-y_2|^{2-4\delta}} \leq C_4.
\end{align}   

Let us now turn to $J_2$ (which in turn bounds $J_3$ by symmetry). As by the above manipulations (with an extra straightforward Cauchy-Schwarz step, c.f. the estimate of $J_4$ below), we obtain 
\begin{align}
J_2 &\leq C' \int \int \int \int \frac{d(x_1,\partial M)^{-4\delta} + d(x_2,\partial M)^{-4\delta} + d(y_1,\partial M)^{-4\delta} + d(y_2,\partial M)^{-4\delta}}{|x_1-y_1|^{3-\delta}|x_2-y_2|^{3-\delta}|x_1-x_2|^{1-\delta}|y_1-y_2|^{1-\delta}}	
\\
&=: J_2(a) + J_2(b) + J_2(c) + J_2(d).
\end{align}
We will only estimate $J_2(a)$, the other terms follow by similar considerations. Integrating out $y_2$ and then $y_1$, and then using Young's convolution inequality, 
\begin{align}
J_2(a) &\leq C'' \int \int \int \frac{d(x_1,\partial M)^{-4\delta}}{|x_1-y_1|^{3-\delta}|x_2-y_1|^{1-2\delta}|x_1-x_2|^{1-\delta}}
\\
&\leq C''' \int \int \frac{d(x_1,\partial M)^{-4\delta}}{|x_1-x_2|^{2-4\delta}} \leq C_4. 
\end{align}

Finally, let us estimate $J_4$. 
\begin{align}
J_4 &:= C \int\int\int\int \frac{\max(d(x_2,\partial M)^{-\delta},d(y_2,\partial M)^{-\delta})\max (d(x_1,\partial M)^{-\delta},d(y_1,\partial M)^{-\delta})}{|x_1-y_1|^{3-\delta}|x_2-y_2|^{3-\delta}|x_1-y_2|^{2-2\delta}}
\\
&\qquad \qquad \times d(x_1,\partial M)^{-\delta}d(y_2,\partial M)^{-\delta}
\\
  &\leq C' \int \int \int \int \frac{d(x_1,\partial M)^{-4\delta} + d(x_2,\partial M)^{-4\delta} + d(y_1,\partial M)^{-4\delta} + d(y_2,\partial M)^{-4\delta}}{|x_1-y_1|^{3-\delta}|x_2-y_2|^{3-\delta}|x_1-y_2|^{2-2\delta}}	
 \\
 &=: J_4(a) + J_4(b) + J_4(c) + J_4(d). 
\end{align}
We will analyze $J_4(a)$, the other terms follow from similar considerations. Integrating out $y_1$, then $x_2$ and then $y_2$, we obtain
\begin{align}
J_4(a) &\leq C'' \int \int \int \frac{d(x_1,\partial M)^{-\delta}}{|x_2-y_2|^{3-2\delta}|x_1-y_2|^{2-2\delta}}
\\
&\leq C''' \int \int  \frac{d(x_1,\partial M)^{-\delta}}{|x_1-y_2|^{2-2\delta}} \leq C_4 \int d(x_1,\partial M)^{-\delta} \leq C_5. 
\end{align}

 \end{proof}

\begin{lemma}\label{lem: upsilon2 b}
For every $\alpha \in (0,1)$, there exists $C>0$ such that, for every admissible boundary law $\mathbb P^0$ and every $T>0$,
\begin{equation} \label{eq: upsilon2 b estimate}
\mathbb E_{\mathbb P^0\otimes \mathbb P^0} [|\Upsilon_{2,T}^b(W^0_-,Z^0_-,W^0_+,Z^0_-)|^\alpha]\leq C + \mathbb E_{\mathbb P^0\otimes\mathbb P^0}[\|Z^0_+\|_{H^{1/2-\kappa}_z}^2+\|Z^0_-\|_{H^{1/2-\kappa}_z}^2]. 
\end{equation}
\end{lemma}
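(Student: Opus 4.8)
\textbf{Proof strategy for Lemma \ref{lem: upsilon2 b}.}

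The plan is to unpack $\Upsilon^b_{2,T}$ into pieces that are multilinear in the harmonic extensions $H(\varphi_-)_T$, $H(\varphi_+)_T$ and then separate each piece into a contribution that is purely in the Gaussian part $W^0$ and a contribution that picks up at least one factor of the regular part $Z^0$. Recall $H(\varphi_-,\varphi_+)_T = H(\varphi_-)_T + H(\varphi_+)_T$, and with $\varphi_\pm = W^0_\pm + Z^0_\pm$ we have $H(\varphi_\pm)_T = H(W^0_\pm)_T + H(Z^0_\pm)_T$. Since the two ends are independent and $\Upsilon^a_{2,T}$ already collects the term that is bilinear in $(H(W^0_-)_T, H(W^0_+)_T)$ together with the correct Wick counterterm $\gamma_T^M$ and the renormalization $\delta^{3,M}_T$, the remainder $\Upsilon^b_{2,T}$ is a finite sum of terms of the form
\begin{equation}
\int\int C^M_T(x,y)^3\, A_\pm(x)\, B_\pm(y)\, dx\, dy - (\text{Wick counterterm if both factors are }W^0),
\end{equation}
where each of $A,B$ is one of $H(W^0_\pm)_T$ or $H(Z^0_\pm)_T$ and at least one of them is an $H(Z^0)_T$ factor. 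The cross terms that are bilinear in $W^0_-$ and $W^0_+$ vanish in expectation by independence and mean-zero-ness, except the one already absorbed in $\Upsilon^a$; so effectively we are left with terms linear or quadratic in $Z^0$.

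Next I would bound each such term. The kernel $(x,y)\mapsto \int C^M_T(x,y)^3$, when integrated against one variable, is bounded by a fixed finite-regularity kernel uniformly in $T$ by the Dirichlet Green's function estimates of Appendix \ref{sec: Dirichlet covariance estimates} (this is exactly the content of the $\gamma_T^M$ bounds and the estimates underlying Proposition \ref{prop: bulk enhancement}). The terms with exactly one $Z^0$ factor are handled by duality: one puts $H(Z^0)_T$ in a positive-regularity space (using $Z^0 \in H^{1/2-\kappa}_z$ almost surely and the regularizing properties of the harmonic extension $H$, Appendix \ref{appendix: harmonic extension}), and pairs it with the remaining $H(W^0)_T$ factor smeared against the $C^M_T(x,y)^3$ kernel, which lands in a negative-regularity space with all moments bounded uniformly in $T$ by the stochastic estimates (the analogue of Proposition \ref{prop: stochastic boundary v2} for the single harmonic extension $H$ on $M$, or directly the Wick power bounds). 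Applying Hölder's inequality in the underlying probability space with exponents chosen so that the $W^0$-factor contributes its $L^p$ norm and the $Z^0$ factor contributes its $L^2$ norm in $H^{1/2-\kappa}_z$, and then Young's inequality $ab \leq a^\alpha/\alpha + \ldots$ to trade an $\alpha$-power for a full power, produces the claimed bound with the $\mathbb E_{\mathbb P^0\otimes\mathbb P^0}[\|Z^0_\pm\|^2_{H^{1/2-\kappa}_z}]$ terms on the right. The terms quadratic in $Z^0$ (necessarily both $Z^0$'s from the same end, or one from each end) are even easier: they are deterministic given the boundary data, bounded by $C\|Z^0_-\|^2_{H^{1/2-\kappa}_z} + C\|Z^0_+\|^2_{H^{1/2-\kappa}_z}$ pointwise by the same kernel estimate plus the harmonic extension bound, so taking expectations is immediate. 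The Wick counterterm $-\int \gamma_T^M \llbracket H(\varphi_-,\varphi_+)^2_T\rrbracket$ expands, via $\llbracket(\cdot)^2\rrbracket = (\cdot)^2 - C^B_T$; the $C^B_T$ constant is exactly matched by the subtraction in $\Upsilon^a$, and the $(H(W^0))(H(Z^0))$ and $(H(Z^0))^2$ cross pieces are absorbed into the terms above by the same duality estimate.

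The main obstacle is keeping track of the boundary regularity: the harmonic extensions of $Z^0$ and of $W^0$ are only controlled up to the boundary with weights in the longitudinal variable $\tau$ (blowing up like $|\tau|^{-\alpha}$ near $\partial M$), as reflected in Proposition \ref{prop: stochastic boundary v2}. One must therefore check that the kernel $C^M_T(x,y)^3$ has enough integrable decay near $\partial M$ to absorb these weights — this is the same delicate boundary-weight bookkeeping that appears in the proof of Lemma \ref{lem: upsilon2 a} (the distance-to-boundary factors $d(\cdot,\partial M)^{-\delta}$ there), and the resolution is identical: the weights are integrable powers, and Young's convolution inequality applied to the resulting kernel products closes the estimate with a $T$-uniform constant. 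Given this, combining the pieces yields \eqref{eq: upsilon2 b estimate}, and together with Lemma \ref{lem: upsilon2 a} this proves the first assertion of Lemma \ref{lem: upsilon2}; the convergence in law of the pushforwards follows by upgrading these uniform bounds to convergence exactly as in the proof of Proposition \ref{prop: bulk enhancement}, using the convergence of $\nu^0_T$ to $\nu^0$ from Theorem \ref{thm: boundary} and the continuity of the relevant multilinear maps.
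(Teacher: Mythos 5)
Your decomposition of $\Upsilon^b_{2,T}$ into pieces indexed by how many $Z^0$ factors appear is the right starting point, and your observation that the quadratic-in-$Z^0$ piece and the $W^0$-$W^0$ piece are handled by the earlier bounds is correct. However, the core of the estimate --- the term bilinear in $H(W^0)$ and $H(Z^0)$ --- is where your argument has a genuine gap. You propose to smear $H(W^0)(x)$ against the kernel $C^M_T(x,y)^3$ and pair the result with $H(Z^0)(y)$ by duality, asserting that the smeared object "lands in a negative-regularity space with all moments bounded uniformly in $T$". That is false: since $C^M_T(x,y)^3 \sim |x-y|^{-3}$ is borderline non-integrable in $d=3$, the object $\int C^M_T(x,y)^3 H(W^0)(x)\,dx$ has a $\log T$ divergence (this is precisely the divergence that the mass renormalization $\gamma_T^M \sim \log T$ is designed to cancel). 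Similarly, the counterterm piece $-\int \gamma^M_T\, H(W^0) H(Z^0)\,dx$ diverges on its own. You treat the counterterm as something to be "absorbed into the terms above by the same duality estimate", but that reverses the logic: neither piece is uniformly bounded on its own, and the duality estimate does not apply to either of them individually because Young's convolution inequality fails on the kernel $|x-y|^{-3-\epsilon}$.

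What makes the bound $T$-uniform is an explicit cancellation you do not identify. Writing $\gamma^M_T(x) = -c\int C^M_T(x,y)^3 dy$, the paper combines the double-integral term with the counterterm into
\begin{equation}
\int\int C^M_T(x,y)^3\, H(W^0)(x)\,\bigl(H(Z^0)(y) - H(Z^0)(x)\bigr)\, dx\, dy,
\end{equation}
and it is only the difference $H(Z^0)(y) - H(Z^0)(x)$ --- paying a factor of $|x-y|^{\theta}$ via the Sobolev/Slobodeckij regularity of $H(Z^0)$ --- that renders the kernel integrable. Concretely, after a Cauchy--Schwarz split, one factor becomes a Slobodeckij seminorm of $H(Z^0)$ (controlled by $\|Z^0\|_{H^{1/2-\kappa}_z}$ through the regularity gain of the harmonic extension), and the other factor is a weighted $L^2$ norm of $H(W^0)$ which, after absorbing the boundary weight $d(x,\partial M)^{\delta}$ from the covariance interpolation estimate, is controlled by $\|W^0_\pm\|_{\mathcal C^{-1/2-\kappa}_z}$. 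Without this algebraic recombination, the argument as you wrote it would produce a bound growing like $\log T$ rather than a $T$-uniform one. The rest of your plan (Hölder and Young in the probability space, trading an $\alpha$-power, and the treatment of the purely-$Z^0$ piece) then goes through; the missing idea is the cancellation trick, which is the actual content of the lemma.
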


\begin{proof}
By symmetry we have that $\Upsilon^b_{2,T}=N_1+N_2$, where 
\begin{align}
N_1&:= \frac{2\cdot 12^2}3 \int \int C_T^M(x,y)^3 H(W^0_-, W^0_+)(x) H(Z^0_-, Z^0_+)(y) dx dy
\\
&\quad \quad  - 2\int \gamma_T^M H(W^0_-,W^0_+)(x) H(Z^0_-,Z^0_+)(x) dx,
\\
N_2&:= \frac{12^2}3 \int \int C_T^M(x,y)^3 H(Z^0_-, Z^0_+)(x) H(Z^0_-, Z^0_+)(y) dx dy - \int \gamma_T^M H(Z^0_-,Z^0_+)(x)^2 dx.
\end{align}
We will only estimate $N_1$ as the estimate for $N_2$ follows by similar arguments. Recall the interpolation estimate of Proposition \ref{prop: M covariance interp bdry},
\begin{equation}
|C^M_T(x,y)| \leq C \frac{d(x,\partial M)^\delta }{|x-y|^{1+2\delta}}.
\end{equation}
Using this, the definition of $\gamma_T^M$, and the Cauchy-Schwarz inequality,
\begin{align}
|N_1| &= \frac{2\cdot 12^2}3 \left| \int \int C^M_T(x,y)^3 H(W^0_-,W^0_+)(x) \left( H(Z^0_-,Z^0_+)(y)-  H(Z^0_-,Z^0_+)(x)\right) \right|
\\
&\leq C \int \int d(x,\partial M)^\delta|H(W_-^0,W_+^0)(x)| \frac{|H(Z^0_-,Z^0_+)(y)-  H(Z^0_-,Z^0_+)(x)|}{|x-y|^{3+2\delta}} 
\\
&\leq C' \left( \int \int \frac{d(x,\partial M)^{2\delta} |H(W_-^0,W_+^0)(x)|^{2}}{|x-y|^{3-2\delta}} dx dy\right)^{1/2}
\\
&\qquad \qquad \left( \int \int \frac{| H(Z^0_-,Z^0_+)(y)-  H(Z^0_-,Z^0_+)(x)|}{|x-y|^{3+6\delta}} \right)^{1/2}
\\
&\leq C'' \| d(x,\partial M)^\delta H(W^0_-,W^0_+)\|_{L^2} \|H(Z^0_-,Z^0_+)\|_{H^{6\delta}}
\\ \label{eq: Upsilon2b deterministic}
&\leq C''' (\|W_-^0\|_{\mathcal C^{-1/2-\kappa}_z}^2+\|W^0_+\|_{\mathcal C^{-1/2-\kappa}_z}^2)+ \|Z^0_-\|_{H^{6\delta}_z}^2+\|Z^0_+\|_{H^{6\delta}_z}^2.
\end{align} 

The lemma now follows by combining the estimate above together with Proposition \ref{prop: bdry stochastic} to bound the moments of norms of $W^0_-,W^0_+$ that appear.
\end{proof}

\subsection{Stochastic estimates on $\Upsilon_{3,T}$}

Recall that 
\begin{equation}
\Upsilon_{3,T}(\varphi_-,\varphi_+):= -\frac{12^2}2 \mathbb{E} \int^T_0 \int  (J_t(W_t \llbracket H
     (\varphi_-, \varphi_+)^2_T \rrbracket))^2 \mathd t \mathd x + \delta^{4,M}_T.
\end{equation}
We will prove a \emph{fractional moment estimate} on $\Upsilon_{3,T}$ when $\varphi_-,\varphi_+$ are i.i.d. distributed according to an admissible law on boundary conditions.

\begin{lemma} \label{lem: upsilon3}
For every $\alpha > 0$ sufficiently small, there exists $C>0$  such that, for every admissible boundary law $\mathbb P^0$ and every $T>0$ 
  \[  \mathbb E_{\mathbb P^0\otimes \mathbb P^0}[ |\Upsilon_{3, T}(W^0_-+Z^0_-,W^0_++Z^0_+)|^\alpha ]\leq C+ \mathbb E_{\mathbb P^0\otimes \mathbb P^0}[\| Z^0_-\|_{H^{1/2-\kappa}_z}^2+\|Z^0_+\|_{H^{1/2-\kappa}_z}^2]. \]
  Furthermore, the pushforward under the regularized boundary measure $(\Upsilon_{3, T})^* (\nu^0_T\otimes \nu^0_T)$ converges in law to a random variable 
  $(\Upsilon_{3,\infty})^* (\nu^0\otimes \nu^0)$.
\end{lemma}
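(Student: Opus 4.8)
\textbf{Proof proposal for Lemma \ref{lem: upsilon3}.}

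The plan is to follow the same two-step strategy used for $\Upsilon_{2,T}$ in Lemmas \ref{lem: upsilon2 a} and \ref{lem: upsilon2 b}: first split $\Upsilon_{3,T}$ according to the decomposition $\varphi_\pm = W^0_\pm + Z^0_\pm$ of the admissible boundary field, then treat the purely Gaussian contribution (depending only on $W^0_-,W^0_+$) by a variance computation and the remaining contributions (involving the regular parts $Z^0_\pm$) by deterministic estimates combined with the stochastic bounds from Proposition \ref{prop: bdry stochastic}. Concretely, since $\llbracket H(\varphi_-,\varphi_+)^2_T \rrbracket$ expands multilinearly into a sum of terms $\llbracket H(W^0_\pm)^k_T \rrbracket$ against $H(Z^0_\pm)^{2-k}_T$ (using the definition of Wick powers of harmonic extensions of admissible boundary conditions), we can write $\Upsilon_{3,T} = \Upsilon^a_{3,T}(W^0_-,W^0_+) + \Upsilon^b_{3,T}$, where $\Upsilon^a_{3,T}$ is the term that only involves the second Wick power of the harmonic extension of $(W^0_-,W^0_+)$ and is renormalized by $\delta^{4,M}_T$, and $\Upsilon^b_{3,T}$ collects everything with at least one factor of $Z^0_\pm$.

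For $\Upsilon^a_{3,T}$, since the law of $W^0_\pm$ is $\mu^0$ (equivalently $\tilde\mu^0$ up to an infinitely smoothing term, which suffices since the bounds are uniform), I would write
\begin{equation}
-\frac{12^2}{2}\int_0^T \int \big(J_t(W_t \llbracket H(\varphi_-,\varphi_+)^2_T\rrbracket)\big)^2\,dx\,dt
\end{equation}
and use that for fixed $t$ the $\mathbb P$-expectation of $(J_t(W_t\, g))^2$ equals $\int\int \dot C^M_t(x,y)\, g(x) g(y)\,dx\,dy$ style kernels (here I am slightly abusing notation for the time-derivative covariance). The choice of $\delta^{4,M}_T$ exactly subtracts the $\tilde\mu^0\otimes\tilde\mu^0$-expectation of this quantity, so $\Upsilon^a_{3,T}$ is centered and one is left estimating its variance, i.e.\ a fourth-moment Gaussian computation in $H(W^0_-,W^0_+)$. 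Expanding via Wick's theorem produces a sum of integrals of products of $C^M_T$ kernels (from the bulk), their time-integrated/Wick-contracted counterparts, and $C^B_T$ kernels (from the boundary-harmonic-extension covariance); the finiteness of each such integral, uniformly in $T$, should follow from exactly the type of gradient/interpolation estimates on $C^M_T$ and $C^B_T$ invoked in the proof of Lemma \ref{lem: upsilon2 a} (Proposition \ref{prop: M covariance interp bdry}, Lemma \ref{lem: gradient Cb}), after exploiting that differences $C^B_T(x,y)-C^B_T(x,x)$ bring in extra $|x-y|^\delta$ gains at the cost of negative powers of $d(\cdot,\partial M)$, which are then integrable against the $|x-y|^{-(3-\delta)}$ singularities by Young's convolution inequality. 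This yields $\mathbb E_{\mu^0\otimes\mu^0}[|\Upsilon^a_{3,T}|^2]\leq C$ uniformly, hence the fractional-moment bound for this piece.

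For $\Upsilon^b_{3,T}$, I would bound it deterministically. Each of its terms has at least one factor of $H(Z^0_\pm)_T$ and at most one factor of $W_t$ or a Wick power thereof; the strategy is to estimate the time integral $\int_0^T\int (J_t(W_t\,\cdot))^2$ using the kernel representation and duality, putting the two boundary-harmonic-extension factors into a product of an $H^{6\delta}_z$-type norm of $H(Z^0_\pm)$ (controlled by $\|Z^0_\pm\|_{H^{1/2-\kappa}_z}$ via the regularizing properties of the harmonic extension, Appendix \ref{appendix: harmonic extension}) and a weighted $L^2$ norm of the purely-$W^0$ factor (controlled by $\|W^0_\pm\|_{\mathcal C^{-1/2-\kappa}_z}$), exactly as in \eqref{eq: Upsilon2b deterministic}. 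The bulk factor $W_t$ contributes $C^M_T$ kernels whose integrability is again handled by Proposition \ref{prop: M covariance interp bdry}; crucially the bound must be uniform in $T$, and the required gain comes from the $d(x,\partial M)^\delta|x-y|^{-(1+2\delta)}$ decay of $C^M_T$ which makes the $x,y$-integrals convergent. After taking $\mathbb E_{\mathbb P^0\otimes\mathbb P^0}$, the moments of $\|W^0_\pm\|_{\mathcal C^{-1/2-\kappa}_z}$ are finite (they are Gaussian), giving the stated bound with the explicit $\|Z^0_\pm\|_{H^{1/2-\kappa}_z}^2$ terms. The main obstacle I anticipate is the bookkeeping in the variance computation for $\Upsilon^a_{3,T}$: one must check that \emph{every} Wick contraction pattern produces an integral that is convergent uniformly in $T$, and in particular that the "diagonal" contractions do not leave an unrenormalized divergence — this is precisely what the choice of $\delta^{4,M}_T$ is designed to cancel, so verifying the cancellation carefully (as opposed to the more routine off-diagonal estimates) is the delicate point.

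Finally, for the convergence statement, I would argue as for $\Upsilon_{2,T}$: the fractional-moment bounds are uniform in $T$ and hold in particular with $\mathbb P^0 = \nu^0_T$ (using the uniform-in-$T$ moment bounds on $\|Z^0\|_{H^{1/2-\kappa}_z}$ under $\nu^0_T$ from Theorem \ref{thm: boundary} and its proof, equivalently Proposition \ref{prop: bdry UV stable} and the stochastic estimates of Section \ref{sec: boundary}), so the family $(\Upsilon_{3,T})_*(\nu^0_T\otimes\nu^0_T)$ is tight on $\mathbb R$; identification of the limit as $(\Upsilon_{3,\infty})_*(\nu^0\otimes\nu^0)$ follows from the almost-sure convergence $\llbracket H(\varphi_-)^i_{T}\rrbracket \to \llbracket H(\varphi_-)^i_\infty\rrbracket$ (Proposition \ref{prop: stochastic boundary v2}), the convergence of the bulk covariance kernels $C^M_T \to C^M$ with the uniform kernel bounds giving dominated convergence for the $x,y,t$-integrals, the convergence of the renormalization constant $\delta^{4,M}_T$ to its limit, and the weak convergence $\nu^0_T\otimes\nu^0_T \rightharpoonup \nu^0\otimes\nu^0$ together with a Skorokhod-embedding argument exactly as in the proof of Theorem \ref{thm: main}. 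Uniqueness of the limit point then upgrades tightness to convergence.
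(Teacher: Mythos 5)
Your split $\Upsilon_{3,T} = \Upsilon^a_{3,T} + \Upsilon^b_{3,T}$ according to whether the term sees only $W^0_\pm$ or at least one $Z^0_\pm$ factor, the variance computation via Wick's theorem for $\Upsilon^a_{3,T}$, and the combination of deterministic harmonic-extension estimates with the stochastic bounds of Proposition \ref{prop: bdry stochastic} for $\Upsilon^b_{3,T}$ are exactly the route the paper takes (Lemmas \ref{lem: upsilon3 a} and \ref{lem: upsilon3 b}). A minor imprecision: the $\Upsilon^b_{3,T}$ estimate is not ``exactly as in \eqref{eq: Upsilon2b deterministic}'' --- here the bulk kernel is $C^M_T(x,y)^2$ (singularity $|x-y|^{-2}$) rather than the $C^M_T(x,y)^3$ appearing in $\Upsilon_{2,T}$, so the Sobolev/weighted-$L^p$ bookkeeping in the paper is genuinely different (the paper uses $L^\infty_z$/$L^2_z$ splits in the $\tau$-slice, trading harmonic-extension regularity against boundary blow-up, and has to separately treat the $j=0$ case with no $Z^0$ on one factor). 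Also ``at most one factor of $W_t$ or a Wick power thereof'' is garbled: the bulk field $W_t$ is already integrated out to produce $C^M_T(x,y)^2$, and $\Upsilon^b_{3,T}$ contains Wick powers $\llbracket H(W^0_\pm)^k\rrbracket$ of the \emph{boundary} Gaussian up to order $k=2$, whose moments are controlled by Proposition \ref{prop: stochastic boundary v2}, not by ``$\|W^0_\pm\|_{\mathcal C^{-1/2-\kappa}_z}$ being Gaussian.'' These are fixable, but the details are more intricate than the proposal acknowledges.

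The genuine gap is in the convergence argument. You propose to identify the limit by almost-sure convergence of the Wick powers, dominated convergence for the $x,y,t$-integrals, and ``the convergence of the renormalization constant $\delta^{4,M}_T$ to its limit.'' But $\delta^{4,M}_T = \frac{12^2}{2}\int\int C^M_T(x,y)^2 C^B_T(x,y)^2\,dx\,dy$ diverges as $T\to\infty$: near the diagonal the integrand is of order $|x-y|^{-4}$ (with a boundary-distance weight that cannot simultaneously be made integrable), which is not summable in $d=3$. Correspondingly, the random integral $\int\int C^M_T(x,y)^2\,\llbracket H(W^0_-,W^0_+)^2_T\rrbracket(x)\,\llbracket H(W^0_-,W^0_+)^2_T\rrbracket(y)\,dx\,dy$ has divergent mean, and it is only the \emph{difference} (i.e., $\Upsilon^a_{3,T}$) that converges. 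There is no dominating function, so termwise dominated convergence does not apply to $\Upsilon^a_{3,T}$; in fact you already noticed that ``verifying the cancellation carefully'' is the delicate point for the variance estimate, and the same cancellation is what drives the convergence. The paper handles this by observing that $\Upsilon^a_{3,T}$, decomposed into homogeneous Wiener chaoses with respect to $\mu^0\otimes\mu^0$, is a (vector of) bounded martingale(s) in $T$ --- the variance bound of Lemma \ref{lem: upsilon3 a} is a uniform $L^2$ bound --- and then invokes the martingale convergence theorem (see the proof of Lemma \ref{lem: upsiloni convergence}). Your tightness + Skorokhod argument is fine for the $\Upsilon^b_{3,T}$ piece, where there is no unrenormalized divergence and the multilinear-continuity viewpoint works; you should replace your dominated-convergence reasoning for $\Upsilon^a_{3,T}$ with the chaos/martingale argument.
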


Lemma \ref{lem: upsilon3} follows from Lemmas \ref{lem: upsilon3 a} and \ref{lem: upsilon3 b} below. As before, we split $\Upsilon_{3,T}=:\Upsilon_{3,T}^a + \Upsilon_{3,T}^b$, where $\Upsilon_{3,T}^a$ depends only on $W^0_-$ and $W^0_+$. First note that
\begin{align}
\mathbb{E} \int^T_0 \int J_t (W_t \llbracket H
     (\varphi_-, \varphi_+)_T^2 \rrbracket)^2 \mathd t \mathd x	= \frac 12 \int \int  C^M_T(x,y)^2 \llbracket H
     (\varphi_-, \varphi_+)_T^2 \rrbracket^2(x) \llbracket H
     (\varphi_-, \varphi_+)_T^2 \rrbracket^2(y).
\end{align}
Thus, we define
\begin{equation}
\Upsilon^a_{3,T}:=-\frac{12^2}{4} \int \int C^M_T(x,y)^2 \llbracket H(W^0_-,W^0_+)^2_T\rrbracket ^2(x) \llbracket H(W^0_-,W^0_+)^2_T\rrbracket (y) + \delta^{4,M}_T. 
\end{equation}

\begin{lemma}\label{lem: upsilon3 a}
There exists $C>0$ such that, for every $T>0$,
\begin{equation} \label{eq: upsilon3 a estimate}
\mathbb E_{\mu^0\otimes \mu^0}[|\Upsilon^a_{3,T}|^2] \leq C.	
\end{equation}
\end{lemma}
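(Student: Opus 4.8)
\textbf{Proof proposal for Lemma \ref{lem: upsilon3 a}.}

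The plan is to follow the same blueprint used for Lemma \ref{lem: upsilon2 a}: reduce to a variance computation, expand via Wick's theorem, exploit the cancellation provided by the energy renormalization $\delta^{4,M}_T$, and then bound the resulting multiple integrals by crude kernel estimates. First I would replace $\mu^0$ by $\tilde\mu^0$ in the expectation, which is legitimate since the two covariances differ by an infinitely smoothing operator and the regularized Wick powers involve $C^B_T$ (not $\overline C^B_T$), so the renormalization is adapted to $\tilde\mu^0$ up to smooth corrections. Next, observe by the definition of $\delta^{4,M}_T = \frac{12^2}{2}\mathbb E\,\mathbb E_{\tilde\mu^0\otimes\tilde\mu^0}[\int_0^T\int (J_t(W_t\llbracket H(\varphi_-,\varphi_+)^2_T\rrbracket))^2 dx\,dt]$ that, after carrying out the bulk $\mathbb E$ as above, $\Upsilon^a_{3,T}$ is exactly $-\tfrac{12^2}{4}\int\int C^M_T(x,y)^2\big(\llbracket H(W^0_-,W^0_+)^2_T\rrbracket(x)\llbracket H(W^0_-,W^0_+)^2_T\rrbracket(y) - \mathbb E_{\tilde\mu^0\otimes\tilde\mu^0}[\cdots]\big)$, i.e.\ a centred quantity. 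Hence $\mathbb E_{\tilde\mu^0\otimes\tilde\mu^0}[|\Upsilon^a_{3,T}|^2] = \mathrm{Var}_{\tilde\mu^0\otimes\tilde\mu^0}$ of a degree-four polynomial in the Gaussian field $H(W^0_-,W^0_+)_T$, which lives in the sum of the zeroth through fourth Wiener chaoses; the renormalization kills the zeroth chaos, and because the integrand is a product of two Wick-squared objects (no odd powers survive), only the second and fourth chaos parts contribute to the variance.

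Then I would compute $\mathbb E_{\tilde\mu^0\otimes\tilde\mu^0}[(\Upsilon^a_{3,T})^2]$ by expanding the two copies of the Wick products over four integration variables $x_1,y_1,x_2,y_2$ and applying Wick's theorem, exactly as in the proof of Lemma \ref{lem: upsilon2 a}. After subtracting the square of the mean, the surviving contractions all pair at least one field from the first copy with one from the second; each such ``cross'' contraction produces a factor $C^B_T(x_i,z_j)$ with $z_j\in\{x_2,y_2\}$ (or the reflected combination), and the diagonal terms $C^B_T(x_i,x_i)$ arising from the Wick renormalization cancel against terms in $\mathbb E[\cdots]^2$. The net effect, as in Lemma \ref{lem: upsilon2 a}, is that every term in the expansion carries a \emph{difference} of Poisson-kernel values, e.g.\ $C^B_T(x_1,x_2)-C^B_T(x_1,y_2)$, on which I can use the gradient estimate for $C^B_T$ from Lemma \ref{lem: gradient Cb} together with the mean-value theorem to gain a factor $|x_2-y_2|^\delta$ at the cost of negative powers of the distances to $\partial M$. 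Combined with the interpolation estimate $|C^M_T(x,y)|\le C\, d(x,\partial M)^\delta/|x-y|^{1+2\delta}$ from Proposition \ref{prop: M covariance interp bdry} (used here with exponent tuned so $C^M_T(x,y)^2 \lesssim d(x,\partial M)^{2\delta}/|x-y|^{2+4\delta}$, an integrable singularity in three dimensions for small $\delta$), this reduces the whole variance to a finite sum of integrals of the shape $\int\int\int\int \prod (\text{dist to }\partial M)^{-c\delta} \cdot |x_1-y_1|^{-2-c'\delta}|x_2-y_2|^{-2-c'\delta}|x_1-x_2|^{-1+\delta}|y_1-y_2|^{-1+\delta}\,dx_1dy_1dx_2dy_2$ and permutations thereof.

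Finally I would bound each such integral by iterated application of Young's convolution inequality, integrating out $y_1$ and $y_2$ first against the kernels with the strongest singularity, then $x_2$ against the remaining one, reducing to $\int\int d(x_1,\partial M)^{-c\delta} d(x_2,\partial M)^{-c\delta}|x_1-x_2|^{-2+c''\delta}\,dx_1dx_2$, which is finite for $\delta$ small since the boundary-weight singularities $d(\cdot,\partial M)^{-c\delta}$ are integrable (the weight depends only on the $\tau$-coordinate and $c\delta<1$) and the remaining kernel is weakly singular. Uniformity in $T$ is automatic because all the kernel bounds invoked (Lemma \ref{lem: gradient Cb}, Proposition \ref{prop: M covariance interp bdry}) are uniform in $T$. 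The main obstacle I anticipate is purely bookkeeping: correctly enumerating the Wick contractions and checking that \emph{every} surviving term genuinely contains a kernel difference (so that the $|x-y|^\delta$ gain is always available) rather than a bare $C^B_T$, which would not be square-integrable against $C^M_T(x,y)^2$ near the boundary; the cancellation structure is identical to Lemma \ref{lem: upsilon2 a} but with one extra power of the Wick square, so the combinatorics is heavier. Once the fractional-moment-type bound \eqref{eq: upsilon3 a estimate} is in hand, it will also yield, via the convergence of $C^M_T, C^B_T$ and of the Wick powers (Proposition \ref{prop: stochastic boundary v2}), the uniform integrability needed for the stated distributional convergence in Lemma \ref{lem: upsilon3}, but that is handled together with Lemma \ref{lem: upsilon3 b}.
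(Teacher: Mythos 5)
Your blueprint is modelled on Lemma~\ref{lem: upsilon2 a}, but the cancellation structure in $\Upsilon^a_{3,T}$ is genuinely different, and the kernel--difference / mean-value-theorem machinery you invoke is neither available nor needed here. In $\Upsilon^a_{2,T}$ the boundary field $H$ appears \emph{bilinearly} ($H(x)H(y)$) and the $\gamma^M_T\llbracket H^2\rrbracket$ counterterm literally replaces $H(y)$ by $H(y)-H(x)$, which is why every surviving contraction carries a factor $C^B_T(x,\cdot)-C^B_T(\cdot,\cdot)$ and one can harvest $|x_2-y_2|^\epsilon$ from Lemma~\ref{lem: gradient Cb}. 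In $\Upsilon^a_{3,T}$ the field appears \emph{quartically} through $\llbracket H^2\rrbracket(x)\llbracket H^2\rrbracket(y)$, and $\delta^{4,M}_T$ only subtracts the mean $\frac{12^2}{2}\int\int (C^M_T)^2 (C^B_T)^2$. Carrying out Wick's theorem on $\mathbb E[\llbracket H^2\rrbracket(x_1)\llbracket H^2\rrbracket(y_1)\llbracket H^2\rrbracket(x_2)\llbracket H^2\rrbracket(y_2)]$ and subtracting $(\delta^{4,M}_T)^2$ leaves \emph{bare products} of Poisson kernels distributed over a $4$-cycle — schematically $C^B_T(x_1,x_2)^2\,C^B_T(y_1,y_2)^2$ and $C^B_T(x_1,y_1)C^B_T(x_2,y_2)C^B_T(x_1,x_2)C^B_T(y_1,y_2)$ — with no differences anywhere. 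So the step ``check that every surviving term genuinely contains a kernel difference'' will fail, and there is nothing for the mean-value theorem to act on.

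The reason the lemma still holds is elementary: the bulk kernel here is $(C^M_T)^2\sim|x-y|^{-2}$, which is locally integrable in $3$D, whereas in Lemma~\ref{lem: upsilon2 a} it is $(C^M_T)^3\sim|x-y|^{-3}$, which is not — that is precisely why the difference trick was necessary there and is unnecessary here. With the bare-product terms, the relevant graphs are $4$-cycles with total edge-weight $8$ (e.g.\ $|x_1-y_1|^{-2}|x_2-y_2|^{-2}|x_1-x_2|^{-2}|y_1-y_2|^{-2}$ for the first contraction type), comfortably below the divergence threshold $3(n-1)=9$. The only marginal contribution is the mixed term, which produces edges $|x_1-y_1|^{-3}$ and $|x_2-y_2|^{-3}$; that is handled not by a gradient estimate but by the boundary interpolation on $C^B_T$ (trading $|C^B_T(x,y)|\lesssim |x-y|^{-1}$ for $|C^B_T(x,y)|\lesssim d(y,\partial M)^{-\delta}|x-y|^{-1+\delta}$, see Appendix~\ref{appendix: harmonic extension}), which moves the singularity below critical at the cost of an integrable boundary weight. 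You actually apply the interpolation bound to the \emph{wrong} kernel in your writeup: Proposition~\ref{prop: M covariance interp bdry} gives $|C^M_T|\lesssim d^{\alpha}/|x-y|^{1+\alpha}$ with $\alpha\ge 0$, i.e.\ it \emph{worsens} the singularity and gains boundary decay, which is the opposite of what you want here; the operative estimate is the one on $C^B_T$ with a negative boundary exponent. So the correct plan is: reduce to $\tilde\mu^0$, expand, subtract $(\delta^{4,M}_T)^2$, identify the two surviving cycle graphs, apply the pointwise bounds $|C^M_T|\lesssim|x-y|^{-1}$ and $|C^B_T|\lesssim d(\cdot,\partial M)^{-\delta}|x-y|^{-1+\delta}$, and integrate out by Young — no gradient estimate, no $\Upsilon^a_{2,T}$-style telescoping.
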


\begin{proof}
As before, we may take expectations with respect to $\tilde \mu^0$ and use the infinitely smoothing correction to the covariance to extend the estimates to $\mu^0$. Expanding and using the definition of $\delta^{4,M}_T$ together with Wick's theorem, we have that
\begin{align}
&\mathbb E_{\tilde \mu^0\otimes \tilde \mu^0}[ |\Upsilon_{4,T}^a|^2]\\
&= \frac{3\cdot 12^3}2 \int \int \int \int C^M_T(x_1,y_1)^2C^M_T(x_2,y_2)^2 E_{\tilde \mu^0\otimes\tilde\mu^0} \Big[ \llbracket H
     (\varphi_-, \varphi_+)^2_T \rrbracket^2(x_1) \llbracket H
     (\varphi_-, \varphi_+)^2_T \rrbracket^2(y_1)  
    \\
     &\qquad \qquad \qquad \qquad \qquad \times \llbracket H
     (\varphi_-, \varphi_+)^2_T \rrbracket^2(x_2) \llbracket H
     (\varphi_-, \varphi_+)^2_T \rrbracket^2(y_2)\Big]- (\delta_T^{4,M})^2
\\
&= \frac{3\cdot 12^3}2 \int \int \int \int C^M_T(x_1,y_1)^2C^M_T(x_2,y_2)^2 \Bigg\{(2!)^2 C^B_T(x_1,x_2)^2 C^B_T(y_1,y_2)^2 
\\
&\qquad \qquad \qquad \qquad + 2 \cdot 2^2 C^B_T(x_1,y_1) C^B_T(x_2,y_2) C^B_T(x_1,x_2) C^B_T(y_1,y_2) \Bigg\}=: \Iota + \Iota\Iota.
\end{align}

By the covariance estimates on $C^M_T$ and $C^B_T$, see Appendices \ref{sec: Dirichlet covariance estimates} and \ref{appendix: harmonic extension}, together with standard bounds on convolutions, we have that
\begin{align}
\Iota &\leq C \int \int \int \int \frac{1}{|x_1-y_1|^2|x_2-y_2|^2|x_1-x_2|^2|y_1-y_2|^2}
\\
&\leq C' \int \int \int \frac{1}{|x_2-y_1||x_2-y_2|^2 |y_1-y_2|^2}\leq C'' \int \int  \frac{1}{|x_2-y_1|^2} \leq C'''. 
\end{align}
For the second term, we will additionally use the boundary covariance interpolation estimate of Proposition \ref{prop: M covariance interp bdry}. For every $\delta > 0$ sufficiently small,
\begin{align}
\Iota\Iota &\leq C \int \int \int \int \frac{d(y_1,\partial M)^{-\delta}d(y_2,\partial M)^{-\delta}}{|x_1-y_1|^{3-\delta}|x_2-y_2|^{3-\delta}|x_1-x_2||y_1-y_2|}
\\
&\leq C'\int \int \int \frac{d(y_1,\partial M)^{-\delta}d(y_2,\partial M)^{-\delta}}{|x_2-y_1|^{1-\delta}|x_2-y_2|^{3-\delta}|y_1-y_2|}\leq C'' \int \int \frac{d(y_1,\partial M)^{-\delta}d(y_2,\partial M)^{-\delta}}{|y_1-y_2|^{2-2\delta}} \leq C'''. 
\end{align}	
\end{proof}

We now turn to estimating $\Upsilon^b_{3,T}$. 

\begin{lemma}\label{lem: upsilon3 b}
For every $\alpha > 0$ sufficiently small, there exists $C>0$  such that, for every admissible boundary law $\mathbb P^0$ and every $T>0$ 
  \[  \mathbb E_{\mathbb P^0\otimes \mathbb P^0}[ |\Upsilon_{3, T}^b(W^0_-,Z^0_-,W^0_+,Z^0_+)|^\alpha ]\leq C+ \mathbb E_{\mathbb P^0\otimes \mathbb P^0}[\| Z^0_-\|_{H^{1/2-\kappa}_z}^2+\|Z^0_+\|_{H^{1/2-\kappa}_z}^2]. \]
Furthermore, the pushforward under the regularized boundary measure $(\Upsilon_{3, T}^b)^* (\nu^0_T\otimes \nu^0_T)$ converges in law to a random variable 
  $(\Upsilon_{3,\infty}^b)^* (\nu^0\otimes \nu^0)$.
\end{lemma}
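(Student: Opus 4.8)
\textbf{Plan of proof for Lemma \ref{lem: upsilon3 b}.}

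The idea is to decompose $\Upsilon^b_{3,T}$ into those contributions of $\llbracket H(W^0_-+Z^0_-,W^0_++Z^0_+)^2_T\rrbracket$ that involve at least one factor of the harmonic extension of the regular parts $Z^0_\pm$. Recall that
\begin{equation}
\llbracket H(\varphi_-,\varphi_+)_T^2\rrbracket = \llbracket H(W^0_-,W^0_+)_T^2\rrbracket + 2 H(W^0_-,W^0_+)_T\, H(Z^0_-,Z^0_+)_T + H(Z^0_-,Z^0_+)_T^2,
\end{equation}
so that $\Upsilon^b_{3,T} = \Upsilon_{3,T} - \Upsilon^a_{3,T}$ is a finite sum of terms of the schematic form $\int\int C^M_T(x,y)^2\, A(x) B(y)\, dx\, dy$ (minus a deterministic counterterm that only survives from the purely-Wick term and is therefore absorbed by $\Upsilon^a_{3,T}$), where each of $A,B$ is a product of the two Wick square $\llbracket H(W^0)^2_T\rrbracket$ type terms, the mixed term $H(W^0)_T H(Z^0)_T$, or the $H(Z^0)_T^2$ term, and at least one of them involves a $Z^0_\pm$ factor. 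The first step is to make this expansion precise and reduce to bounding each such term.

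The second step is a deterministic estimate: for each such term, I would mimic the computation at the end of the proof of Lemma \ref{lem: upsilon2 b} (see in particular \eqref{eq: Upsilon2b deterministic}), bounding the $C^M_T(x,y)^2$ kernel by the interpolation estimate of Proposition \ref{prop: M covariance interp bdry} (trading a power of $d(x,\partial M)$ for an integrable singularity $|x-y|^{-2-2\delta}$), then using Cauchy-Schwarz in the two variables to split off the $Z^0$-dependent factors, controlling those by $\|H(Z^0_-,Z^0_+)\|_{H^{6\delta}}$-type norms and in turn (via harmonic extension regularity, Appendix \ref{appendix: harmonic extension}) by $\|Z^0_-\|_{H^{1/2-\kappa}_z}^2 + \|Z^0_+\|_{H^{1/2-\kappa}_z}^2$. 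The $W^0$-dependent factors that remain produce, after taking the $\tilde\mu^0$-expectation (again using that the covariance of $\mu^0$ and $\tilde\mu^0$ differ by an infinitely smoothing operator, as in Lemmas \ref{lem: upsilon3 a} and \ref{lem: upsilon2 a}), Wick contractions bounded by convolutions of $C^M_T$ and $C^B_T$ kernels exactly of the type estimated in the proof of Lemma \ref{lem: upsilon3 a}; by Young's convolution inequality and the boundary-covariance interpolation estimates these are uniformly bounded in $T$. Taking first the $\alpha$-th power and using $|a+b|^\alpha \le |a|^\alpha + |b|^\alpha$, then Jensen to pass $\mathbb E^0_{\text{bulk}}$ inside the fractional power (since $\alpha<1$), then $\mathbb E_{\mathbb P^0\otimes\mathbb P^0}$, yields the claimed bound with the $\|Z^0_\pm\|_{H^{1/2-\kappa}}^2$ term on the right.

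For the convergence statement, I would first establish an \emph{enhanced} uniform bound: the same argument gives that the relevant stochastic products (the approximations of $\llbracket H(W^0)^2_\infty\rrbracket$ and the $\Upsilon^b$-building blocks), jointly with the bulk enhancement, converge in probability in an appropriate Banach space as $T\to\infty$, using Proposition \ref{prop: stochastic boundary v2} (or rather its analogue with two boundary fields and the $H$ extension), Proposition \ref{prop: bulk enhancement}, and Proposition \ref{prop: bdry stochastic}. Since the map $(\text{enhanced bulk objects},\varphi_-,\varphi_+)\mapsto \Upsilon^b_{3,T}$ is jointly continuous on the relevant spaces (it is multilinear in the stochastic factors and continuous in $Z^0_\pm$ by the deterministic estimates above), and since $\nu^0_T\otimes\nu^0_T \rightharpoonup \nu^0\otimes\nu^0$ with the $Z^0$-parts converging suitably by Theorem \ref{thm: boundary}, a Skorokhod embedding argument combined with the uniform fractional-moment bound (giving uniform integrability after passing to $\alpha'<\alpha$) yields convergence in law of $(\Upsilon^b_{3,T})^*(\nu^0_T\otimes\nu^0_T)$ to $(\Upsilon^b_{3,\infty})^*(\nu^0\otimes\nu^0)$. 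The main obstacle is keeping track of which $d(\cdot,\partial M)$-weights are available in each of the several schematic terms --- one must verify that in \emph{every} term the weights appearing after applying Proposition \ref{prop: M covariance interp bdry} are small enough (controlled by a single small $\delta$) to remain locally integrable on $M$ after all the convolutions are carried out, in the spirit of the $J_1,J_2,J_3,J_4$ case analysis in the proof of Lemma \ref{lem: upsilon2 a}; the worst terms are those linear in $Z^0$ where only one harmonic-extension smoothing factor is available to absorb a boundary weight.
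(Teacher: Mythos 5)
Your high-level strategy matches the paper's — expand $\Upsilon^b_{3,T}$ as a sum over pairs $(i,j)$ with $1\le i\le 2$, $0\le j\le 2$, trade a power of $C^M_T(x,y)^2$ for boundary weights via Proposition \ref{prop: M covariance interp bdry}, apply Young's convolution inequality, and take moments at the end — but there are two genuine gaps in the middle of the argument.

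First, your analogy to the computation \eqref{eq: Upsilon2b deterministic} in the proof of Lemma \ref{lem: upsilon2 b} does not transfer. That computation crucially exploits the $\gamma_T^M$-subtraction to produce a resonance difference $H(Z^0)(y)-H(Z^0)(x)$, which is what lets one extract a Sobolev norm of $H(Z^0)$ directly. There is no $\gamma_T^M$-counterterm inside $\Upsilon^b_{3,T}$, so no such resonance structure exists; after Cauchy--Schwarz one is stuck with quantities of the form $\|d(\cdot,\partial M)^{1/2-\delta}\,\llbracket H(W^0)^{2-i}\rrbracket\, H(Z^0)^i\|_{L^2}$ and $\|d(\cdot,\partial M)^{1/2+\delta}\,\llbracket H(W^0)^2\rrbracket\|_{L^2}$ in which the $W^0$- and $Z^0$-factors are still multiplied together inside the norm. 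The missing step — which is the technical heart of the paper's proof — is to split each such $L^2(M)$ norm into $L^2_\tau$ of $L^2_z$ (or $L^\infty_z$), separate the $W^0$- and $Z^0$-factors by Hölder on $z$-slices, and then trade $z$-regularity for $\tau$-blow-up via \eqref{tool: H reg decay}; only after that does each factor reduce to a pure $W^0$-norm (controlled by Proposition \ref{prop: stochastic boundary v2} in weighted $L^\infty_\tau \mathcal C^{-\bullet}_z$ spaces, not by hand-computed Wick contractions) times a $\|Z^0_\pm\|_{H^{1/2-\kappa}_z}$-norm. For the $j=0$ term in particular the bare Wick square $\llbracket H(W^0)^2\rrbracket$ only makes sense through those weighted $L^\infty_\tau$ estimates, and the convolution structure that arises is not the one in Lemma \ref{lem: upsilon3 a} (there the two Wick squares sit at distinct points and contract via $C^B_T(y_1,y_2)$; here, after your proposed Cauchy--Schwarz, you would face the coincident-point variance $C^B_T(y,y)^2$, with a different divergence profile near $\partial M$).

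Second, your final step — ``Jensen to pass $\mathbb E^0_{\text{bulk}}$ inside the fractional power'' — refers to an expectation that no longer exists: the bulk expectation was already integrated out when $\Upsilon_{3,T}$ was defined, so $\Upsilon_{3,T}^b$ is a deterministic functional of the boundary data alone. If instead you meant a conditional $\tilde\mu^0$-expectation over $W^0$ at fixed $Z^0$, note that admissibility does not assume $W^0$ and $Z^0$ independent, so that step needs replacing by Hölder/Young in the probability space together with the finite-moment statements (Proposition \ref{prop: bdry stochastic} for $W^0$-norms, Definition \ref{defn: admissible law}(iii) for $Z^0$); this is exactly what taking a small $\alpha>0$ buys, and is what the paper actually does. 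Finally, Proposition \ref{prop: bulk enhancement} plays no role in the convergence part — $\Upsilon_{3,T}^b$ is a pure boundary functional and the paper's Skorokhod argument for it goes through Proposition \ref{prop: stochastic boundary v2} and multilinearity only.
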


\begin{proof}
We will only prove the estimate since the convergence statement holds by similar arguments as with $\Upsilon^b_{2,T}$. We may write
\begin{align}
	\Upsilon^b_{3,T}&= -\frac{12^2}4\sum_{\substack{1 \leq i \leq 2 \\ 0 \leq j \leq 2}} \int \int C_T^M(x,y)^2  {2 \choose i} {2 \choose j}
	\\
	&\times \llbracket H(W^0_-,W^0_+)^{2-i}_T \rrbracket (x) H(Z^0_-,Z^0_+)^{i}_T(x) \llbracket H(W^0_-,W^0_+)^{2-j}_T \rrbracket (y) H(Z^0_-,Z^0_+)^{j}_T (y).
\end{align}
We will treat $j=0$ and $j\geq 1$ separately and call these $N_0$ and $N_{\geq 1}$ respectively. Additionally, we will split $N_{\geq 1} = \sum_{1 \leq i \leq 2, j\geq 1} N_{\geq 1}(i,j)$. Furthermore, let us drop the subscript $T$ from the harmonic extension terms for ease of notation. 

First, let us consider $j \geq 1$ and fix $i=1,2$. Then by the covariance interpolation inequality of Proposition \ref{prop: M covariance interp bdry} and Young's inequality,
\begin{align}
|N_{\geq 1}(i,j)| &\leq C \int \int \frac{1}{|x-y|^{3-2\delta}} d(x,\partial M)^{1/2-\delta }|\llbracket H(W^0_-,W^0_+)^{2-i} \rrbracket (x) H(Z^0_-,Z^0_+)^{i}(x)|
\\ &\qquad \qquad
\times d(y,\partial M)^{1/2-\delta}|\llbracket H(W^0_-,W^0_+)^{2-j} \rrbracket (y) H(Z^0_-,Z^0_+)^{j} (y)| dx dy
\\
&\leq C' \|d(\cdot,\partial M)^{1/2-\delta }\llbracket H(W^0_-,W^0_+)^{2-i} \rrbracket H(Z^0_-,Z^0_+)^{i}\|_{L^2}
\\
&\qquad \times \|d(\cdot,\partial M)^{1/2-\delta }\llbracket H(W^0_-,W^0_+)^{2-j} \rrbracket H(Z^0_-,Z^0_+)^{j}\|_{L^2}.
\end{align}
Now since $i,j \geq 1$, we are considering at most the first Wick power and can treat the terms symmetrically. Thus, by splitting into $\tau$ and $z$ variables, and applying a uniform bound on the integrals,
\begin{align}
&\|d(\cdot,\partial M)^{1/2-\delta }\llbracket H(W^0_-,W^0_+)^{2-i} \rrbracket H(Z^0_-,Z^0_+)^{i}\|_{L^2}
\\
\leq& C\Big\| d(\tau,\partial M)^{1/2-\delta} \|H(W^0_-,W^0_+)^{2-i}(\tau,\cdot)\|_{L^\infty_z} \|H(Z^0_-,Z^0_+)^i(\tau,\cdot) \|_{L^2_z} \Big\|_{L^2_\tau }.
\end{align}
We now use the interpolation estimates \eqref{tool: H reg decay}. For every $\tau$,
\begin{align}
d(\tau,\partial M)^{1/2-\delta} \|H(W^0_-,W^0_+)^{2-i}(\tau,\cdot)\|_{L^\infty_z} \leq C d(\tau,\partial M)^{-2\delta}(\|W^0_-\|_{\mathcal C^{-1/2-\delta}_z}+ \|W^0_+\|_{\mathcal C^{-1/2-\delta}_z}).
\end{align}
Hence, 
\begin{align}
\|d(\cdot,\partial M)^{1/2-\delta }&\llbracket H(W^0_-,W^0_+)^{2-i} \rrbracket H(Z^0_-,Z^0_+)^{i}\|_{L^2}
\\
&\leq C' (\|W^0_-\|_{\mathcal C^{-1/2-\delta}_z}+ \|W^0_+\|_{\mathcal C^{-1/2-\delta}_z}) \left\| d(\tau,\partial M)^{-2\delta} \|H(Z_-^0,Z_+^0)^i(\tau,\cdot) \|_{L^2_z}\right\|_{L^2_\tau}.
\end{align}
Observe that $$\left\| d(\tau,\partial M)^{-2\delta} \|H(Z_-^0,Z_+^0)^i(\tau,\cdot) \|_{L^2_z}\right\|_{L^2_\tau}=\left\| d(\tau,\partial M)^{-2\delta} \|H(Z_-^0,Z_+^0)(\tau,\cdot) \|_{L^{2i}_z}^i\right\|_{L^2_\tau}.$$ Thus, by Sobolev embedding and the fact that $2i \leq 4$, we have that
\begin{equation}\label{eq: upsilon3 b eq1}
\Big\| d(\tau,\partial M)^{-2\delta} \|H(Z_-^0,Z_+^0)^i(\tau,\cdot) \|_{L^2_z}\Big\|_{L^2_\tau}	 \leq C \left\| d(\tau,\partial M)^{-2\delta} \|H(Z_-^0,Z_+^0)(\tau,\cdot) \|_{H^{1/2}_z}^i \right\|_{L^2_\tau }. 
\end{equation}
Thus, by trading regularity for boundary blow-up (see \eqref{tool: H reg decay}, we have that
\begin{align}
\eqref{eq: upsilon3 b eq1} &\leq C'\left\| d(\tau,\partial M)^{-(2+i)\delta} \|H(Z_-^0,Z_+^0)(\tau,\cdot) \|_{H^{1/2-\delta}_z}^i \right\|_{L^2_\tau }
\\
&\leq C'' (\| Z_-^0\|_{H^{1/2-\delta}_z}^i+\|Z_+^0\|_{H^{1/2-\delta}_z}^i).	
\end{align}

It remains to treat the case $j=0$. By the kernel estimates in Appendix \ref{sec: Dirichlet covariance estimates} and Young's convolution inequality, we have that
\begin{align}
|N_0(i)| &\leq C \int \int \frac{1}{|x-y|^{3-\delta}} | d(x,\partial M)^{1/2-2\delta}  H(W_-^0,W_+^0)^{2-i}(x) H(Z^0_-,Z^0_+)^i(x) |
\\
&\qquad\qquad \qquad \qquad \times  |d(y,\partial M)^{1/2+\delta}\llbracket H(W^0_-,W^0_+)^2 \rrbracket(y)| 
\\
&\leq C' \| d(\cdot,\partial M)^{1/2-2\delta}  H(W_-^0,W_+^0)^{2-i} H(Z^0_-,Z^0_+)^i \|_{L^2} \| d(\cdot,\partial M)^{1/2+\delta} \llbracket H(W^0_-,W^0_+)^2 \|_{L^2}.
\end{align}
The first product on the righthand side we have done above, so we will only estimate the second term. Note that by splitting into $(\tau,z)$ integration, multiplying and dividing by $d(\tau,\partial M)$, we obtain
\begin{align}
 &\| d(\cdot,\partial M)^{1/2+\delta} \llbracket H(W^0_-,W^0_+)^2 \rrbracket  \|_{L^2} 
 \\
 &\leq \|d(\tau,\partial M)^{1/2+\delta} \| 	\llbracket H(W^0_-,W^0_+)^2 \rrbracket (\tau,\cdot) \|_{L^\infty_z} \|_{L^2_\tau }
 \\
 &\leq C' \| |d(\tau,\partial M)|^{1+\delta/2} \| \llbracket H(W^0_-,W^0_+)^2 \rrbracket (\tau,\cdot) \|_{L^\infty_z} \|_{L^\infty_\tau}.
\end{align}
The lemma now follows by combining the estimates above and using Proposition \ref{prop: bdry stochastic} to bound the moments of norms of $W^0_-,W^0_+$ that appear.
\end{proof}

\subsection{Stochastic estimates on $\Upsilon_{4,T}$}

Recall that
\begin{equation}
\Upsilon_{4,T} := -\frac{4^2}2  \int^T_0 \int 
     ( J_t\llbracket H (\varphi_-, \varphi_+)^3_T \rrbracket)^2 + \delta^{5,M}_T.	
\end{equation}
We will prove a fractional moment estimate on $\Upsilon_{4,T}$ when $\varphi_-,\varphi_+$ are i.i.d. distributed according to an admissible law on boundary conditions. 

\begin{lemma} \label{lem: upsilon4}
For every $\alpha > 0$ sufficiently small, there exists $C>0$  such that, for every admissible boundary law $\mathbb P^0$ and every $T>0$ 
\begin{equation}  \mathbb E_{\mathbb P^0\otimes \mathbb P^0}[ |\Upsilon_{4, T}(W^0_-+Z^0_-,W^0_++Z^0_+)|^\alpha ]\leq C+ \mathbb E_{\mathbb P^0\otimes \mathbb P^0}[\| Z^0_-\|_{H^{1/2-\kappa}_z}^2+\|Z^0_+\|_{H^{1/2-\kappa}_z}^2]. \end{equation}
\end{lemma}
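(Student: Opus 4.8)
The plan is to mirror the treatment of $\Upsilon_{2,T}$ and $\Upsilon_{3,T}$. First I would note that $\Upsilon_{4,T}$ carries no genuine dependence on the bulk Brownian motion: the only $t$-dependent factor under the integral is the multiplier $J_t$, and $\llbracket H(\varphi_-,\varphi_+)^3_T\rrbracket$ depends only on the boundary fields, so the expectation $\mathbb E$ is vacuous. Since $\int_0^T J_t^2\,dt = C^M_T$ as a Fourier multiplier in the periodic direction, one has
\begin{equation}
\Upsilon_{4,T}(\varphi_-,\varphi_+) = -8\Big( \langle C^M_T\,\llbracket H(\varphi_-,\varphi_+)^3_T\rrbracket,\ \llbracket H(\varphi_-,\varphi_+)^3_T\rrbracket\rangle_M - \mathbb E_{\tilde\mu^0\otimes\tilde\mu^0}\big[\langle C^M_T\,\llbracket H^3_T\rrbracket,\ \llbracket H^3_T\rrbracket\rangle_M\big]\Big).
\end{equation}
I would then split $\Upsilon_{4,T} = \Upsilon^a_{4,T} + \Upsilon^b_{4,T}$ using $\llbracket H(\varphi_-,\varphi_+)^3_T\rrbracket = \sum_{k=0}^3\binom{3}{k}\llbracket H(W^0_-,W^0_+)^k_T\rrbracket\, H(Z^0_-,Z^0_+)^{3-k}_T$, so that $\Upsilon^a_{4,T}$ is the $(k,k')=(3,3)$ cross-term minus the $\tilde\mu^0\otimes\tilde\mu^0$-subtraction, and $\Upsilon^b_{4,T}$ gathers the remaining cross-terms, each carrying at least one factor $H(Z^0_-,Z^0_+)_T$.

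For $\Upsilon^a_{4,T}$: as in Lemmas \ref{lem: upsilon2 a} and \ref{lem: upsilon3 a}, I would replace $\mathbb E_{\mu^0\otimes\mu^0}$ by $\mathbb E_{\tilde\mu^0\otimes\tilde\mu^0}$ up to an infinitely smoothing correction of the boundary covariance; under $\tilde\mu^0\otimes\tilde\mu^0$ the Wick ordering is the genuine one, so $\llbracket H(W^0_-,W^0_+)^3_T\rrbracket$ lies in the third homogeneous chaos. I would then compute $\mathbb E[|\Upsilon^a_{4,T}|^2]$ by Wick's theorem: the subtraction of the mean removes the zeroth-chaos part, so only connected pairings of the twelve legs survive and $\mathbb E[|\Upsilon^a_{4,T}|^2]$ is a finite sum of integrals $\int C^M_T(x_1,y_1)C^M_T(x_2,y_2)\prod(\text{six }C^B_T\text{-factors among }x_1,y_1,x_2,y_2)\,dx_1dy_1dx_2dy_2$. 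Each is bounded uniformly in $T$ using the Dirichlet Green-function bounds of Appendix \ref{sec: Dirichlet covariance estimates}, the boundary-covariance bounds of Appendix \ref{appendix: harmonic extension}, the interpolation estimate $|C^M_T(x,y)|\lesssim d(x,\partial M)^\delta|x-y|^{-1-2\delta}$ of Proposition \ref{prop: M covariance interp bdry}, and Young's convolution inequality — spending the factors $d(\cdot,\partial M)^\delta$ to make the $|x-y|$-singularities integrable while keeping the surviving powers of $d(\cdot,\partial M)^{-\delta}$ integrable after all convolutions, exactly as in the computation of $\Upsilon^a_{2,T}$.

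For $\Upsilon^b_{4,T}$: each summand has the form $\int\int C^M_T(x,y)\,[\llbracket H(W^0_-,W^0_+)^k_T\rrbracket H(Z^0_-,Z^0_+)^{3-k}_T](x)\,[\llbracket H(W^0_-,W^0_+)^{k'}_T\rrbracket H(Z^0_-,Z^0_+)^{3-k'}_T](y)\,dx\,dy$ with $(k,k')\neq(3,3)$. Using the interpolation bound for $C^M_T$, Cauchy–Schwarz in $(x,y)$, Young's convolution inequality, a splitting into the longitudinal and axial variables, the harmonic-extension decay/regularity estimates \eqref{tool: H reg decay} (Appendix \ref{appendix: harmonic extension}), and the two-dimensional Sobolev embeddings $H^{1/2-\kappa}(\mathbb T^2)\hookrightarrow L^p(\mathbb T^2)$, I expect a deterministic bound $|\Upsilon^b_{4,T}|\lesssim \sum\|W^0_\pm\|_{\mathcal C^{-1/2-\kappa}_z}^a\|Z^0_\pm\|_{H^{1/2-\kappa}_z}^b$ with total degree $a+b\le 6$ in each term — this is why $\Upsilon_4$, built from a cubic Wick power, is the most singular of the four, though it involves no resonant product and no renormalization beyond the single subtraction, so no extra paracontrolled structure is needed. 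Raising to the power $\alpha$, using subadditivity of $t\mapsto t^\alpha$, Cauchy–Schwarz in $\mathbb P^0\otimes\mathbb P^0$, Gaussian hypercontractivity for the $W^0_\pm$-factors, Proposition \ref{prop: bdry stochastic} for the $\|W^0_\pm\|_{\mathcal C^{-1/2-\kappa}_z}$-moments, and — for $\alpha$ small enough that $2\alpha b\le 2$ for every exponent $b$ that appears (e.g. $\alpha\le 1/6$) — the elementary bound $\mathbb E[\|Z^0_\pm\|_{H^{1/2-\kappa}_z}^{2\alpha b}]\le 1+\mathbb E[\|Z^0_\pm\|_{H^{1/2-\kappa}_z}^2]$ by Jensen and Young's inequality, then yields the claim.

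The main obstacle will be the $\Upsilon^a_{4,T}$ variance bound: enumerating the connected Wick pairings and verifying that every resulting four-fold spatial integral is finite uniformly in $T$, since the naive pointwise bounds give non-integrable singularities and one must trade the $|x-y|^{-1}$-type singularities of the Dirichlet kernel against the boundary decay in the interpolation estimate while tracking the boundary-blow-up exponents produced by the (boundary-singular) Poisson kernel $C^B_T$. This is the lengthiest part, but it is conceptually routine given the analogous computations in Lemmas \ref{lem: upsilon2 a} and \ref{lem: upsilon3 a}; and if convergence in law of $(\Upsilon_{4,T})_*(\nu^0_T\otimes\nu^0_T)$ is also wanted, it follows by running the same estimates on differences as in Lemmas \ref{lem: upsilon2} and \ref{lem: upsilon3}.
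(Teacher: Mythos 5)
Your proposal mirrors the paper's proof almost exactly: the same rewriting of $\int_0^T J_t^2\,dt$ as $C^M_T$ (Fubini plus self-adjointness of $J_t$), the same split $\Upsilon_{4,T}=\Upsilon^a_{4,T}+\Upsilon^b_{4,T}$ with $\Upsilon^a_{4,T}$ the pure-$W^0$ part, the same second-moment-by-Wick argument for $\Upsilon^a_{4,T}$ (with $\delta^{5,M}_T$ cancelling the disconnected pairing and the connected pairings estimated via the Dirichlet and Poisson kernel bounds of Appendices B and C plus Young's convolution inequality), and the same deterministic $\|W^0\|^a\|Z^0\|^b$-type bound for each cross-term $N(i,j)$ in $\Upsilon^b_{4,T}$ using the interpolation estimate for $C^M_T$, harmonic-extension decay, Sobolev embedding, and the moment bounds of Proposition~\ref{prop: bdry stochastic} for the $W^0$-factors. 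The only cosmetic imprecision is calling $C^M_T$ "a Fourier multiplier in the periodic direction"---it is a function of the full Dirichlet Laplacian, with only the regularization $\rho_T$ restricted to the $z$-variable---but this does not affect the argument.
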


Lemma \ref{lem: upsilon4} follows from Lemmas \ref{lem: upsilon4 a} and \ref{lem: upsilon4 b} below. As before, we split $\Upsilon_{4,T}=:\Upsilon_{4,T}^a + \Upsilon_{4,T}^b$, where $\Upsilon_{4,T}^a$ depends only on $W^0_-$ and $W^0_+$. As a preliminary, note that by Fubini's theorem and the self-adjointedness of $J_t$, we have that
\begin{equation}
\int_0^T \int  ( J_t \llbracket H (\varphi_-, \varphi_+)_T^3 \rrbracket)^2 dt = \int \int C_T^M(x,y)   \llbracket H (\varphi_-, \varphi_+)_T^3 \rrbracket(x)  \llbracket H (\varphi_-, \varphi_+)_T^3 \rrbracket(y).
\end{equation}
Hence let us define
\begin{equation}
\Upsilon^{a}_{4,T}:= -\frac{4^2}{2} \int \int C_T(x,y) \llbracket H(W^0_-,W^0_+)_T^3\rrbracket (x) \llbracket H(W^0_-,W^0_+)^3_T\rrbracket (y) + \delta^{5,M}_T.  	
\end{equation}

\begin{lemma}\label{lem: upsilon4 a}
There exists $C>0$ such that, for every $T>0$,
\begin{equation} \label{eq: upsilon4 a estimate}
\mathbb E_{\mu^0\otimes \mu^0}[|\Upsilon^a_{4,T}|^2] \leq C.	
\end{equation}
\end{lemma}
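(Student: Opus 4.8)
The plan is to follow exactly the Gaussian computation strategy used in the proof of Lemma \ref{lem: upsilon3 a} (and before that Lemma \ref{lem: upsilon2 a}), since $\Upsilon^a_{4,T}$ is again a polynomial in a Gaussian field with its mean (a constant counterterm $\delta^{5,M}_T$) subtracted. First I would reduce to the boundary GFF $\tilde\mu^0$ using that the covariance of $\mu^0$ differs from that of $\tilde\mu^0$ by an infinitely smoothing operator, so all the kernel estimates below are unaffected. Then, by definition, $\mathbb E_{\tilde\mu^0\otimes\tilde\mu^0}[\Upsilon^a_{4,T}]=0$, so $\mathbb E[|\Upsilon^a_{4,T}|^2]$ equals the variance of the quantity $-\tfrac{4^2}{2}\int\int C^M_T(x,y)\llbracket H(W^0_-,W^0_+)^3_T\rrbracket(x)\llbracket H(W^0_-,W^0_+)^3_T\rrbracket(y)\,dx\,dy$.

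Next I would expand the fourth moment of the product of two third Wick powers using Wick's theorem. After subtracting the disconnected contractions (which reproduce $(\delta^{5,M}_T)^2$ and cancel), one is left with a finite sum of terms, each an integral over four bulk variables $x_1,y_1,x_2,y_2$ of the product $C^M_T(x_1,y_1)\,C^M_T(x_2,y_2)$ times a product of exactly six factors of $C^B_T$ connecting the four points (since each Wick cube contributes three legs, giving $6$ boundary propagators total in each connected pairing). Schematically these are of the form $\bigl(C^B_T\bigr)^{a_{12}}_{x_1x_2}\bigl(C^B_T\bigr)^{a_{1y_2}}_{x_1y_2}\cdots$ with the exponents $a_{\bullet\bullet}\geq 0$ summing to $6$. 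I would then bound each such term using: the Dirichlet Green function estimate $|C^M_T(x,y)|\lesssim |x-y|^{-1}$ (Appendix \ref{sec: Dirichlet covariance estimates}); the boundary covariance estimate $|C^B_T(x,y)|\lesssim |x-y|^{-1}$ and, where a boundary propagator connects a point to itself or where the naive power count is borderline, the interpolation estimate $|C^B_T(x,y)|\lesssim d(x,\partial M)^{-\delta}|x-y|^{-1+\delta}$ from Proposition \ref{prop: M covariance interp bdry}, exactly as done for the term $\Iota\Iota$ in Lemma \ref{lem: upsilon3 a}. Integrating out the variables one at a time via Young's convolution inequality (and, where needed, a Cauchy--Schwarz split to symmetrize the boundary-distance weights as in the proof of Lemma \ref{lem: upsilon2 a}) then yields a finite bound uniform in $T$. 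The total power of $|x_i-y_j|^{-1}$ in each term is $2 + 6 = 8$ spread over four integrations in a $3$-dimensional space, and one checks termwise that after peeling off boundary weights $d(\cdot,\partial M)^{-\delta}$ with $\delta$ small the remaining singularities are integrable; the boundary-distance weights are then absorbed since $\tau\mapsto d(\tau,\partial M)^{-C\delta}\in L^1([-L,L])$ for $\delta$ small, exactly as in the estimates of $J_1,\dots,J_4$ in Lemma \ref{lem: upsilon2 a}.

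The main obstacle will be the bookkeeping: organizing the Wick contractions of $\mathbb E[\llbracket H^3\rrbracket(x_1)\llbracket H^3\rrbracket(y_1)\llbracket H^3\rrbracket(x_2)\llbracket H^3\rrbracket(y_2)]$ into the relevant connected pieces and verifying that in \emph{every} resulting term there is a way to distribute the boundary-distance losses $d(\cdot,\partial M)^{-\delta}$ (coming from applying Proposition \ref{prop: M covariance interp bdry} to the $C^M_T$ factors when the plain $|x-y|^{-1}$ bound is too weak after convolving) onto at most finitely many points with total exponent $< 1$ per $\tau$-slice, so that the final $\tau$-integrations converge. Since there are only two $C^M_T$ factors (not three as in $\Upsilon_2$), the power counting is in fact milder here than in Lemma \ref{lem: upsilon2 a}, and I expect each term to reduce, after integrating out two of the four variables, to a two-fold integral of the shape $\int\int d(x,\partial M)^{-C\delta}d(y,\partial M)^{-C\delta}|x-y|^{-2+C\delta}\,dx\,dy$, which is finite. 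I would therefore present the proof by treating one representative connected term in detail and remarking that the others follow by identical considerations, mirroring the structure of the proofs of Lemmas \ref{lem: upsilon2 a} and \ref{lem: upsilon3 a}.

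\begin{proof}[Proof sketch of Lemma \ref{lem: upsilon4 a}]
As in the proof of Lemma \ref{lem: upsilon3 a}, it suffices to bound the second moment with expectation taken with respect to $\tilde\mu^0\otimes\tilde\mu^0$, the difference in covariances being infinitely smoothing. By construction $\mathbb E_{\tilde\mu^0\otimes\tilde\mu^0}[\Upsilon^a_{4,T}]=0$, so
\begin{equation}
\mathbb E_{\tilde\mu^0\otimes\tilde\mu^0}[|\Upsilon^a_{4,T}|^2] = \frac{4^4}{4}\,\mathrm{Var}\Big(\int\int C^M_T(x,y)\llbracket H(\varphi_-,\varphi_+)^3_T\rrbracket(x)\llbracket H(\varphi_-,\varphi_+)^3_T\rrbracket(y)\,dx\,dy\Big).
\end{equation}
Expanding the fourth moment of the two cubes by Wick's theorem and subtracting the fully disconnected pairings (which cancel $(\delta^{5,M}_T)^2$), one obtains a finite sum of connected terms, each of the form
\begin{equation}
\int\int\int\int C^M_T(x_1,y_1)\,C^M_T(x_2,y_2)\,\prod_{\{p,q\}} C^B_T(p,q)^{a_{pq}}\,dx_1\,dy_1\,dx_2\,dy_2,
\end{equation}
where $p,q$ range over $\{x_1,y_1,x_2,y_2\}$ and $\sum a_{pq}=6$. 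Using the estimates $|C^M_T(x,y)|\leq C|x-y|^{-1}$, $|C^B_T(x,y)|\leq C|x-y|^{-1}$, and, where the plain bounds are borderline, the interpolation estimate $|C^M_T(x,y)|\leq C\,d(x,\partial M)^{\delta}|x-y|^{-1-2\delta}$ of Proposition \ref{prop: M covariance interp bdry} together with $|C^B_T(x,y)|\leq C\,d(x,\partial M)^{-\delta}|x-y|^{-1+\delta}$, each term is dominated (after a Cauchy--Schwarz split to symmetrize the boundary-distance weights, exactly as for $J_1,\dots,J_4$ in the proof of Lemma \ref{lem: upsilon2 a}) by an integral of the form
\begin{equation}
C\int\int\int\int \frac{d(x_1,\partial M)^{-C\delta}+\cdots+d(y_2,\partial M)^{-C\delta}}{|x_1-y_1|^{1+C\delta}|x_2-y_2|^{1+C\delta}|x_1-x_2|^{1-C\delta}|y_1-y_2|^{1-C\delta}}\,dx_1\,dy_1\,dx_2\,dy_2.
\end{equation}
Integrating out the variables one at a time via Young's convolution inequality reduces this to a two-fold integral of the form $C\int\int d(x,\partial M)^{-C\delta}d(y,\partial M)^{-C\delta}|x-y|^{-2+C\delta}\,dx\,dy$, which is finite for $\delta>0$ sufficiently small since the remaining $\tau$-integrations of $d(\cdot,\partial M)^{-C\delta}$ converge. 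Collecting the finitely many terms yields \eqref{eq: upsilon4 a estimate}.
\end{proof}
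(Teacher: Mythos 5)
Your proposal takes the same route as the paper: pass to $\tilde\mu^0$ using the smoothing difference of covariances, expand the variance via Wick's theorem into connected contraction terms, bound those using the Dirichlet Green-function estimate $|C^M_T|\lesssim|x-y|^{-1}$, the Poisson-kernel estimate $|C^B_T|\lesssim|x-y|^{-1}$, and the boundary-interpolation bound $|C^B_T(x,y)|\lesssim d(x,\partial M)^{-\delta}|x-y|^{-1+\delta}$, and close with Young's convolution inequality. The strategy is correct and matches the paper.

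There is, however, a substantive imprecision in the schematic dominated integrand you display. For the most singular connected contraction --- the one in which all six boundary propagators concentrate on the two cross pairs, $C^B_T(x_1,x_2)^3\,C^B_T(y_1,y_2)^3$ --- interpolation yields a factor of the form $d(x_2,\partial M)^{-\delta}d(y_2,\partial M)^{-\delta}|x_1-x_2|^{-3+\delta}|y_1-y_2|^{-3+\delta}$, not $|x_1-x_2|^{-1+C\delta}|y_1-y_2|^{-1+C\delta}$. Your displayed exponents sum to $4$, but the total singularity order must be $8$ (two bulk propagators plus six boundary propagators), so that estimate cannot serve as a uniform dominant across all terms. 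This cubed-propagator term is precisely the one whose naive power count $|x-y|^{-3}$ is not locally integrable in $\mathbb R^3$ and for which the $d(\cdot,\partial M)^{-\delta}$ trade-off is essential; your plan to present one representative term in full and remark that the others follow similarly is fine, but the chosen representative must be this one, or the sketch silently skips the only genuine obstacle. The paper, for its part, isolates three connected contraction types (including the one above) and estimates each explicitly by exactly the mechanism you describe, so a completed version of your argument would look essentially identical.
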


\begin{proof}
Again, it is sufficient to  prove \eqref{eq: upsilon4 a estimate} with the expectation taken with respect to $\tilde \mu^0\otimes \tilde \mu^0$. By Wick's theorem and the definition of $\delta^{5,M}_T$,
\begin{align}
\mathbb E_{\tilde \mu^0\otimes \tilde \mu^0} [ \left(\Upsilon_{4,T}^{a}\right)] &=  4^3 \int \int \int \int C^M_T (x_1, y_1) C^M_T (x_2, y_2) 
\\
&\qquad \qquad \mathbb E_{\tilde\mu^0\otimes \tilde\mu^0} \Bigg[  \prod_{i=1}^2  \llbracket H (W^0_-, W^0_+)_T^3 \rrbracket(x_i) \prod_{i=1}^{2}  \llbracket H (W^0_-, W^0_+)_T^3 \rrbracket(y_i)\Bigg]- (\delta^{5,M}_T)^2 
\\
&=  4^3 \int \int \int \int C^M_T (x_1, y_1) C^M_T (x_2, y_2) \Bigg\{   2 \cdot (3!)^2 C^B_T (x_1, x_2)^3 C^B_T (y_1, y_2)^3 
    \\
    &\qquad  \qquad + 2 \cdot 3^2\cdot 2^2 C^B_T(x_1,y_1) C^B_T(x_2,y_2) C^B_T(x_1,x_2)^2 C^B_T(y_1,y_2)^2 
    \\
    &\qquad \qquad  + 2\cdot (3!)^2 C^B_T(x_1,y_1)^2 C^B_T(x_2,y_2)^2  C^B_T(x_1,x_2)C^B_T(y_1,y_2) 
     \Bigg\}
    \\
    &=: \Iota + \Iota\Iota + \Iota\Iota\Iota.
\end{align}
Note that above, subtracting $(\delta^{5,M}_T)^2$ corresponds we have subtracting the most divergent term
  \[ 4^3 \cdot (3!)^2 \int \int \int \int C^M_T (x_1, y_1) C^M_T (x_2, y_2) C^B_T (y_1, x_1)^3 C^B_T
     (x_2, y_2)^3.\]
     
We begin with the estimate of $\Iota$. Let $\delta > 0$ be sufficiently small. Using the kernel estimates on the Poisson kernel $C^B_T$ and Dirichlet covariance $C^M_T$ contained in Appendices \ref{sec: Dirichlet covariance estimates} and \ref{appendix: harmonic extension}, together with standard bounds on convolutions, we have that
\begin{align}
\Iota &\leq C \int \int \int \int \frac{d(x_2,\partial M)^{-\delta}d(y_2,\partial M)^{-\delta}}{|x_1-y_1||x_2-y_2||x_1-x_2|^{3-\delta}|y_1-y_2|^{3-\delta}}	
\\
&\leq C' \int \int \int \frac{d(x_2,\partial M)^{-\delta}d(y_2,\partial M)^{-\delta}}{|y_1-x_2|^{1-\delta}|x_2-y_2||y_1-y_2|^{3-\delta}}
\leq C_2 \int \int  \frac{d(x_2,\partial M)^{-\delta}d(y_2,\partial M)^{-\delta}}{|x_2-y_2|^{2-2\delta}} \leq C_3. 
\end{align}

  Now consider $\Iota\Iota$. Again, by the covariance estimates ,
  \begin{align}
  \Iota\Iota &\leq C \int \int \int \int \frac{1}{|x_1-y_1|^2|x_2-y_2|^2 |x_1-x_2|^{2} |y_1-y_2|^{2}}	
  \\
  &\leq C' \int \int \int \frac{1}{|y_1-x_2|^{1}|x_2-y_2|^2 |y_1-y_2|^{2}}
 \leq C'' \int \int \frac{1}{|x_2-y_2|^{2}} \leq C_3. 
  \end{align}
  
  We now turn to $\Iota\Iota\Iota $. By the covariance estimates, for $\delta > 0$ sufficiently small we have
\begin{align}
    \Iota\Iota\Iota &\leq C
     \int \int \int \int \frac{d (y_1, \partial M)^{- \delta}d (y_2, \partial M)^{- \delta}}{| x_1 -
    y_1 |^{3 - \delta}| x_2 - y_2 |^{3 -
    \delta}| x_1 - x_2 || y_1 - y_2 |} 
    \\
    &\leq C' \int \int \int \frac{d (y_1, \partial M)^{- \delta}d (y_2, \partial M)^{- \delta}}{| x_2 -
    y_1 |^{1 - \delta}| x_2 - y_2 |^{3 -
    \delta}| y_1 - y_2 |}
     \leq C_2 \int \int  \frac{d (y_1, \partial M)^{- \delta}d (y_2, \partial M)^{- \delta}}{| y_1 - y_2 |^{2-2\delta}} \leq C_3.
  \end{align}

\end{proof}

We now turn to estimating $\Upsilon^b_{4,T}$.

\begin{lemma}\label{lem: upsilon4 b}
Let $\mathbb P^0$ be an admissible law on boundary conditions $(W^0,Z^0)$. For every $\alpha > 0$ sufficiently small, there exists $C>0$  such that, for every $T>0$ 
  \begin{equation}  \mathbb E_{\mathbb P^0\otimes \mathbb P^0}[ |\Upsilon_{4, T}^b(W^0_-,Z^0_-,W^0_+,Z^0_+)|^\alpha ]\leq C+ \mathbb E_{\mathbb P^0\otimes \mathbb P^0}[\| Z^0_-\|_{H^{1/2-\kappa}_z}^2+\|Z^0_+\|_{H^{1/2-\kappa}_z}^2]. \end{equation}
\end{lemma}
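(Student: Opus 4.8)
The plan is to follow exactly the same pattern used for $\Upsilon^b_{2,T}$ and $\Upsilon^b_{3,T}$: first expand the quartic-type object $\Upsilon^b_{4,T}$ in terms of Wick powers of $H(W^0_-,W^0_+)$ and ordinary powers of $H(Z^0_-,Z^0_+)$ via the binomial expansion of the cube, then estimate each resulting integral deterministically in terms of $\|W^0_\pm\|_{\mathcal C^{-1/2-\kappa}_z}$ and $\|Z^0_\pm\|_{H^{1/2-\delta}_z}$, and finally take expectations using that $Z^0$ is (by admissibility) $H^{1/2-\kappa}_z$-valued with a second moment and that $W^0$ has all moments of its $\mathcal C^{-1/2-\kappa}_z$ norm (Proposition \ref{prop: bdry stochastic}). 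Recall that after the Fubini reduction at the start of this subsection, $\Upsilon^b_{4,T}$ is built from $\int\int C^M_T(x,y)\,\llbracket H(W^0_-,W^0_+)^{3-i}_T\rrbracket(x) H(Z^0_-,Z^0_+)^i_T(x)\,\llbracket H(W^0_-,W^0_+)^{3-j}_T\rrbracket(y) H(Z^0_-,Z^0_+)^j_T(y)$ with $1\le i\le 3$, $0\le j\le 3$ (the pure $i=j=0$ term being subtracted off in $\Upsilon^a$), so I would isolate the worst case and handle the rest by the same bookkeeping.

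First I would split the sum over $(i,j)$ into the diagonal-type pieces where both $i,j\ge 1$ and the pieces where one index is $0$. For $i,j\ge 1$: since then we only ever meet the zeroth, first, or second Wick power of $H(W^0_-,W^0_+)$, I would use the covariance interpolation estimate of Proposition \ref{prop: M covariance interp bdry} to trade the mild singularity of $C^M_T$ for boundary blow-up, $|C^M_T(x,y)|\le C d(x,\partial M)^\delta d(y,\partial M)^\delta |x-y|^{-1-2\delta}$ (symmetrized), and bound the double integral by a product of two $L^2$-norms of $d(\cdot,\partial M)^{1/2-\delta}\llbracket H(W^0_-,W^0_+)^{3-i}_T\rrbracket H(Z^0_-,Z^0_+)^i_T$ via the same convolution/Young argument as in Lemma \ref{lem: upsilon3 b}. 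Splitting into $(\tau,z)$ variables, putting the Wick power of $\overline HW^0$ in $L^\infty_z$ using \eqref{tool: H reg decay}, putting $H(Z^0_-,Z^0_+)^i$ in $L^{2i}_z$ and then embedding $H^{1/2}_z\hookrightarrow L^{2i}_z$ (valid since $2i\le 6$, noting we need the marginally weaker $H^{1/2-\delta}_z$ after again trading regularity for boundary blow-up exactly as in \eqref{eq: upsilon3 b eq1}), we get a bound by $(\|W^0_-\|_{\mathcal C^{-1/2-\delta}_z}+\|W^0_+\|_{\mathcal C^{-1/2-\delta}_z})^{6-i-j}(\|Z^0_-\|^{i+j}_{H^{1/2-\delta}_z}+\|Z^0_+\|^{i+j}_{H^{1/2-\delta}_z})$. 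For the $j=0$ (or $i=0$) pieces one of the two factors involves the second Wick power $\llbracket H(W^0_-,W^0_+)^2_T\rrbracket$; here I would use the sharper Dirichlet-kernel estimate $|C^M_T(x,y)|\le C d(x,\partial M)^{\delta}|x-y|^{-1+2\delta}$-type bound from Appendix \ref{sec: Dirichlet covariance estimates} to put one factor in a weighted $L^2$ and the second-Wick-power factor in a weighted $L^\infty_\tau L^\infty_z$, exactly mirroring the $N_0(i)$ estimate in the proof of Lemma \ref{lem: upsilon3 b}, using that Proposition \ref{prop: stochastic boundary v2} gives finite moments of $\||\tau|^{\alpha}\llbracket(\overline HW^0_T)^2\rrbracket(\tau,\cdot)\|_{\mathcal C^{-1+\alpha-\delta}_z}$ uniformly in $T$ (and similarly for the double harmonic extension).

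Having the deterministic bounds, I would conclude by taking $\mathbb E_{\mathbb P^0\otimes\mathbb P^0}[|\cdot|^\alpha]$ with $\alpha$ small enough that all the powers of $W^0$-norms and $Z^0$-norms that appear become integrable: the $W^0$-norms have all moments by Proposition \ref{prop: bdry stochastic}, so by Hölder it suffices to arrange that $Z^0$ enters with power $\alpha(i+j)\le 2$, which holds for $\alpha$ sufficiently small since $i+j\le 6$; this produces the stated bound $C+\mathbb E_{\mathbb P^0\otimes\mathbb P^0}[\|Z^0_-\|^2_{H^{1/2-\kappa}_z}+\|Z^0_+\|^2_{H^{1/2-\kappa}_z}]$ after Young's inequality to separate the $W^0$ and $Z^0$ contributions. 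The main obstacle, as in Lemmas \ref{lem: upsilon3 b} and \ref{lem: upsilon4 a}, is the careful weight bookkeeping: one must check that after applying Proposition \ref{prop: M covariance interp bdry} and the interpolation inequalities \eqref{tool: H reg decay} the resulting boundary-distance weights are of the form $d(\cdot,\partial M)^{-c\delta}$ with $c\delta<1/2$ so that the $\tau$-integrals (and the convolution integrals in the $x,y$ variables) actually converge; this is the step where the precise exponents must be tracked, but it is structurally identical to what was already carried out for $\Upsilon^b_{3,T}$, so no genuinely new difficulty arises. I therefore expect this lemma to be a routine, if somewhat tedious, adaptation of the preceding arguments.
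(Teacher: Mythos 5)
Your overall strategy --- binomial expansion into $(i,j)$--pieces, covariance interpolation to trade singularity for boundary decay, Young's convolution inequality to produce weighted $L^2\times L^2$ bounds, then $\tau$--$z$ splitting and the $H^{1/2-\kappa}_z\to$ boundary--blow--up trade for the $Z^0$ factors, finally Hölder and the admissibility moments --- is exactly the paper's approach, so the plan is sound in outline. However, there is a genuine quantitative gap in the interpolation step that would make the argument fail as written.

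You propose to bound the kernel by $|C^M_T(x,y)|\le C\,d(x,\partial M)^\delta d(y,\partial M)^\delta|x-y|^{-1-2\delta}$ and then run ``the same convolution/Young argument as in Lemma \ref{lem: upsilon3 b}'' to get a product of $L^2$--norms weighted by $d(\cdot,\partial M)^{1/2-\delta}$. This is the wrong scaling for $\Upsilon_4$. In $\Upsilon_3$ the kernel is $C^M_T(x,y)^2\sim |x-y|^{-2}$ and interpolation with exponents roughly $1/2-\delta$ each upgrades it to $|x-y|^{-3+2\delta}$, which is exactly the strength needed for the product--of--$L^2$'s convolution estimate. In $\Upsilon_4$ the kernel is a single $C^M_T(x,y)\sim|x-y|^{-1}$, so to reach $|x-y|^{-3+2\delta}$ one must take $\alpha_1=\alpha_2=1-\delta$ in Proposition \ref{prop: M covariance interp bdry}; the resulting boundary weight on each factor is $d(\cdot,\partial M)^{1-\delta}$, not $d(\cdot,\partial M)^{1/2-\delta}$. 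With the kernel you wrote, $|x-y|^{-1-2\delta}$, Young's inequality does \emph{not} yield an $L^2\times L^2$ bound (the required third Lebesgue exponent drops below $1$), and the weight $d^\delta$ from the kernel is nowhere near enough to split into two $d^{1/2-\delta}$ factors. This is not merely a typo: the larger weight $d^{1-\delta}$ is what makes the $i=3$ sub-case close. There you must embed $H(Z^0_-,Z^0_+)^3$ into $L^6_z$, which requires $H^{2/3}_z$ regularity per factor; since $Z^0\in H^{1/2-\kappa}_z$ only, each factor costs roughly $d(\tau,\partial M)^{-1/6-\kappa}$ via \eqref{tool: H reg decay}, a total blow-up $\sim d(\tau,\partial M)^{-1/2-3\kappa}$. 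This is absorbed by $d^{1-\delta}$ but \emph{not} by $d^{1/2-\delta}$. So the proposed weight leaves a non-integrable singularity on the $\tau$--integral for $i=3$. The paper's proof uses the stronger interpolation and then splits $d^{1-\delta}$ asymmetrically case by case (e.g.\ $d^{1+\delta}$ on the $\llbracket H(W^0)^2\rrbracket$ factor and $d^{-2\delta}$ on $H(Z^0)$ when $i=1$, $d^{1/2+\delta}\cdot d^{1/2-2\delta}$ when $i=2$, etc.), which is the bookkeeping that you would need to rework. One further small slip: the $L^\infty_z$ regularity of the Wick powers of $\overline HW^0$ comes from Proposition \ref{prop: stochastic boundary v2}, not from \eqref{tool: H reg decay}, which is the deterministic harmonic-extension smoothing estimate.
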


\begin{proof}
Let us write
\begin{align}
\Upsilon^b_{4,T}=- \sum_{\substack{1 \leq i \leq 3 \\ 0 \leq j \leq 3}} {3 \choose i}{3\choose j} \frac{4^2}{2} N(i,j),
\end{align}
where 
\begin{align}
N(i,j) &:= \int \int C_T^M(x,y) \llbracket H(W^0_-,W^0_+)_T^{3-i} \rrbracket(x) H(Z^0_-, Z^0_+)_T^i(x) 
\\
&\qquad \qquad \qquad \llbracket H(W^0_-,W^0_+)_T^{3-j} \rrbracket (y) H(Z^0_-,Z^0_+)_T^j(y) dx dy.
\end{align}
We will drop the subscript $T$ in the harmonic extension terms below for ease of notation.

We begin by estimating when $j\geq 1$. By the covariance estimates of Appendix \ref{sec: Dirichlet covariance estimates} and Young's convolution inequality,
\begin{align}
|N(i,j)| &\leq C \int \int \frac{d(x,\partial M)^{1-\delta}d(y,\partial M)^{1-\delta}}{|x-y|^{3-2\delta}} |\llbracket H(W^0_-,W^0_+)^{3-i} \rrbracket(x) H(Z^0_-, Z^0_+)^i(x)|
\\
&\qquad \qquad | \llbracket H(W^0_-,W^0_+)^{3-j} \rrbracket (y) H(Z^0_-,Z^0_+)^j(y)| dx dy
\\
&\leq C' \|d(\cdot,\partial M)^{1-\delta}\llbracket H(W^0_-,W^0_+)^{3-i} \rrbracket H(Z^0_-, Z^0_+)^i \|_{L^2} 
\\
&\qquad \qquad \times \|d(\cdot,\partial M)^{1-\delta}\llbracket H(W^0_-,W^0_+)^{3-j} \rrbracket H(Z^0_-, Z^0_+)^j \|_{L^2}.
\end{align}
Due to the symmetric roles of $i$ and $j$ when $j \geq 1$, the following estimates are sufficient to conclude. Let us first assume $i=1$. Then
\begin{align}
 \|d(\cdot,\partial M)^{1-\delta}&\llbracket H(W^0_-,W^0_+)^{2} \rrbracket H(Z^0_-, Z^0_+) \|_{L^2}	
 \\
 &\leq C \| d(\cdot,\partial M)^{1+\delta}\llbracket H(W^0_-,W^0_+)^{2} \rrbracket \|_{L^\infty} \| d(\cdot,\partial M)^{-2\delta} H(Z^0_-, Z^0_+)  \|_{L^2}.
\end{align}
The first factor on the righthand side can be treated by using Proposition \ref{prop: stochastic boundary v2} and properties of $H$. For the second factor, arguing as in preceding estimates, we have that
\begin{equation}
\| d(\cdot,\partial M)^{-2\delta} H(Z^0_-, Z^0_+)  \|_{L^2} 	\leq C ( \|Z^0_-\|_{H^{1/2-\delta}_z} + \|Z^0_+\|_{H^{1/2-\delta}_z}).
\end{equation}

Let us now assume $i=2$. In this case, we estimate
\begin{align}
\|d(\cdot,\partial M)^{1-\delta}& H(W^0_-,W^0_+) H(Z^0_-, Z^0_+)^2 \|_{L^2}
\\
&\leq \|d(\cdot,\partial M)^{1/2+\delta} H(W^0_-,W^0_+) \|_{L^\infty} \|d(\cdot,\partial M)^{1/2-2\delta} H(Z^0_-,Z^0_+)^2 \|_{L^2}.
\end{align}
As before, the first term on the righthand side can be estimated by properties of $H$. For the second term, notice that by splitting into $(\tau,z)$ coordinates, using the 2d Sobolev embedding and using the positive exponent on the distance to boundary, we have that
\begin{equation}
\|d(\cdot,\partial M)^{1/2-2\delta} H(Z^0_-,Z^0_+)^2 \|_{L^2} \leq C\|d(\cdot,\partial M)^{1/2-2\delta} \|H(Z^0_-,Z^0_+)(\tau,\cdot) \|_{H^{1/2}}^2\|_{L^{2}_{
  \tau}},
\end{equation}
and this can be estimated as above. 

Finally, we assume $i=3$. By splitting into $(\tau,z)$ integration and using Sobolev embedding, we have that
\begin{align}
\begin{split} \label{eq: upsilon4 b eq1}
\|&d(\cdot,\partial M)^{1-\delta} H(Z^0_-, Z^0_+)^3\|_{L^2}
	\\
	&= \|d(\tau,\partial M)^{1-\delta} \|H(Z^0_-,Z^0_+)(\tau,\cdot)\|_{L^6_z}^3 \|_{L^2_\tau} 
	\leq C \|d(\tau,\partial M)^{1-\delta} \|H(Z^0_-,Z^0_+)(\tau,\cdot)\|_{H^{2/3}_z}^3 \|_{L^2_\tau}.
\end{split}	
\end{align}
Hence by exchanging regularity for boundary blow-up as above, we have that
\begin{equation}
\eqref{eq: upsilon4 b eq1}
	\leq C'' (\|Z^0_-\|_{H^{1/2-\delta}_z}^3 + \|Z^0_+\|_{H^{1/2-\delta}_z}^3).	
\end{equation}

Finally, we estimate when $j=0$, allowing for arbitrary $i \geq 1$. Then we have, by interpolating with the boundary blow-up as above,
\begin{align}
|N(i,0)| 
&\leq C'   \| |d(\cdot,\partial M)|^{1+\delta}\llbracket H(W^0_-,W^0_+)^{3-i} \rrbracket \|_{L^\infty}\, \times  \||d(\cdot ,\partial M)|^{3/2+\delta}\llbracket H(W^0_-,W^0_+)^3 \rrbracket \|_{L^\infty}
\\
&\qquad \qquad \times \int \int \frac{d(x,\partial M)^{-1+\delta}d(y,\partial M)^{-1+\delta}}{|x-y|^{3/2+4\delta}} |H(Z^0_-,Z^0_+)^i(x)| 
\end{align} 
It remains to estimate the integral since the terms involving $W^0_-,W^0_+$ are sufficient for our purposes. Let us write $x=(\tau_x,z_x)$ and $y = (\tau_y,z_y)$. By splitting into $(\tau,z)$ integration and using that $|x-y| \geq |z_x - z_y|$, then integrating $z_y$ and $\tau_y$ first we have that
\begin{align}
\int \int &\frac{d(x,\partial M)^{-1+\delta}d(y,\partial M)^{-1+\delta}}{|x-y|^{3/2+4\delta}} |H(Z^0_-,Z^0_+)^i(x)| dx dy
\\
&\leq C \int d(\tau_x,\partial M)^{-1+\delta} \int  |H(Z^0_-,Z^0_+)^i(\tau_x,z_x)|
\\
&\qquad \qquad \times  \int d(\tau_y,\partial M)^{-1+\delta}\int \frac{1}{|z_x - z_y|^{3/2+4\delta}} dz_y d\tau_y dz_x d\tau_x
\\
&\leq C' \int d(\tau,\partial M)^{-1+\delta} \|H(Z^0_-,Z^0_+)(\tau_x,\cdot)\|_{L^i_z}^i d\tau 
\leq C'' ( \|Z^0_-\|_{H^{1/2-\kappa}_z}^3 + \|Z^0_+\|_{H^{1/2-\kappa_z}}^3),
\end{align}
where we have used that $i \leq 3$ so that by (the 2d) Sobolev embedding, $H^{1/2-\kappa}_z \hookrightarrow L^{i}_z$. 

The lemma now follows by combining the estimates above together with Proposition \ref{prop: bdry stochastic} to estimate the moments of the norms of the $W^0_-,W^0_+$ that appear. 
\end{proof}

\subsection{Proof of Theorem \ref{thm: enhancement converge}}

We begin with convergence in law of the random variables $\Upsilon_{i,T}$. We will only consider the cases where the boundary fields are i.i.d. according to $\nu^0_T$ or $\nu^0$, but one can generalize it to sequences of admissible boundary conditions that converge appropriately.

\begin{lemma} \label{lem: upsiloni convergence}
For every $i=2,3,4$, we have that $\Upsilon_{i,T}(\varphi_-^T,\varphi_+^T)$, where $\varphi_-^T,\varphi_+^T$ are i.i.d. according to $\nu^0_T$, converges in law to a random variable $\Upsilon_i^\infty$. 

The same convergence statement to the same random variables hold if $\varphi^T_-=(\varphi_-)_T$ and $(\varphi_+)_T$, where $\varphi_-,\varphi_+$ are i.i.d. according to $\nu^0$. 
	
\end{lemma}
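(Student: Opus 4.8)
\textbf{Proof plan for Lemma \ref{lem: upsiloni convergence}.}

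The plan is to leverage the stochastic estimates already established for the terms $\Upsilon_{i,T}$, $i=2,3,4$, together with the convergence of the boundary theory from Theorem \ref{thm: boundary}, realized in a common probability space via Skorokhod's embedding theorem. First I would fix $i \in \{2,3,4\}$ and recall the decomposition $\Upsilon_{i,T}(\varphi_-,\varphi_+) = \Upsilon_{i,T}^a(W^0_-,W^0_+) + \Upsilon_{i,T}^b(W^0_-,Z^0_-,W^0_+,Z^0_+)$ used in Lemmas \ref{lem: upsilon2 a}--\ref{lem: upsilon4 b}, where an admissible boundary condition is written $\varphi^0 = W^0+Z^0$. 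The key point is that the estimates in those lemmas are \emph{quantitative and uniform in $T$}: the $\Upsilon^a$ parts are bounded in $L^2$ uniformly, and the $\Upsilon^b$ parts are bounded (in fractional moment) by a constant plus $\mathbb E[\|Z^0_-\|_{H^{1/2-\kappa}_z}^2 + \|Z^0_+\|_{H^{1/2-\kappa}_z}^2]$, which is finite and uniformly controlled for the boundary laws $\nu^0_T$ by Theorem \ref{thm: boundary} (in particular parts (iii)-(iv), which also give uniform second moments of $Z^0_{\infty}$, and Proposition \ref{prop: bdry stochastic} for the $W^0$-dependent pieces). From this uniform integrability, tightness of the laws of $\Upsilon_{i,T}(\varphi^T_-,\varphi^T_+)$ on $\mathbb R$ follows immediately.

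Next I would identify the limit. By Theorem \ref{thm: boundary}, the regularized boundary measures $\nu^0_T$ converge weakly to $\nu^0$, and moreover both are represented in terms of $(W^0,Z^0)$ via the renormalized variational problem; in particular Proposition \ref{prop: bdry stochastic} gives convergence of the enhanced Gaussian data $\Xi^0_T(X^0) \to \Xi^0_\infty(X^0)$ in $\boldsymbol{\mathfrak B}^0$ in $L^p$, and the drift part of $\nu^0_T$ converges to that of $\nu^0$ (this is precisely what is extracted in the proof of Theorem \ref{thm: boundary} from $\Gamma$-convergence of the cost functions and Skorokhod embedding). I would then use Skorokhod's embedding theorem to realize, on a common probability space, the joint convergence of the boundary data $(W^0_{-,T}, Z^0_{-,T}, W^0_{+,T}, Z^0_{+,T}) \to (W^0_{-,\infty}, Z^0_{-,\infty}, W^0_{+,\infty}, Z^0_{+,\infty})$ almost surely in the appropriate spaces (Gaussian-chaos data in $\boldsymbol{\mathfrak B}^0$, and the drifts $Z^0$ in $H^{1/2-\kappa}_z$, say). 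On this probability space I would show that $\Upsilon_{i,T}^a$ and $\Upsilon_{i,T}^b$ converge almost surely to limiting random variables $\Upsilon_{i,\infty}^a$, $\Upsilon_{i,\infty}^b$: this requires checking that the explicit multilinear expressions — integrals against $C^M_T$ and the Wick powers of the (double) harmonic extensions, appropriately renormalized by $\delta^{j,M}_T$ — are continuous functionals of the converging data. The covariance convergences $C^M_T \to C^M$ (Appendix \ref{sec: Dirichlet covariance estimates}) and $C^B_T \to C^B$ (Appendix \ref{appendix: harmonic extension}), together with the deterministic continuity estimates already used in the proofs of Lemmas \ref{lem: upsilon2 b}, \ref{lem: upsilon3 b}, \ref{lem: upsilon4 b} (e.g.\ \eqref{eq: Upsilon2b deterministic}) and the stochastic convergence of Wick powers from Proposition \ref{prop: stochastic boundary v2}, handle this. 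Finally, combining almost sure convergence with the uniform integrability from the moment bounds and Vitali's convergence theorem upgrades this to convergence in law (indeed in $L^1$) of $\Upsilon_{i,T}(\varphi^T_-,\varphi^T_+)$ to $\Upsilon_{i,\infty} := \Upsilon_{i,\infty}^a + \Upsilon_{i,\infty}^b$.

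For the second statement, where $\varphi^T_- = (\varphi_-)_T = \rho_T \ast \varphi_-$ and $\varphi^T_+ = (\varphi_+)_T$ with $\varphi_-,\varphi_+$ i.i.d.\ according to the fixed limiting measure $\nu^0$, the argument is the same but simpler: now the underlying boundary data $(W^0_-, Z^0_-, W^0_+, Z^0_+)$ is fixed (drawn once from $\nu^0 \otimes \nu^0$), and only the regularization parameter $T$ varies. The mollified data $\rho_T \ast W^0_\pm$ and $\rho_T \ast Z^0_\pm$ converge almost surely (in the relevant spaces) to $W^0_\pm$ and $Z^0_\pm$ as $T \to \infty$, and the enhanced Gaussian data converges by Proposition \ref{prop: stochastic boundary v2} (the regularization there is exactly $\rho_T$). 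Hence the same continuity-of-functionals argument applies verbatim and, since the limiting random variable is built from the same deterministic functionals evaluated on data with the \emph{same} law $\nu^0 \otimes \nu^0$, the limit coincides with $\Upsilon_{i,\infty}$ from the first part. I expect the main obstacle to be the bookkeeping in the first part: one must carefully track that the renormalization constants $\delta^{j,M}_T$, which are defined via the Gaussian $\tilde\mu^0$ coming from the domain Markov property, cancel the divergent pieces uniformly along the Skorokhod-coupled sequence, so that the surviving expressions are genuinely continuous functionals of the converging data rather than merely bounded. This is essentially the content already verified term-by-term in Lemmas \ref{lem: upsilon2 a}--\ref{lem: upsilon4 b}, so the work is to assemble those pieces; no genuinely new estimate should be needed.
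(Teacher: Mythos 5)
Your overall architecture — Skorokhod-couple the boundary data, split $\Upsilon_{i,T}$ into the purely-Gaussian part $\Upsilon^a$ and the mixed part $\Upsilon^b$, handle $\Upsilon^b$ as a deterministic multilinear functional continuous on the relevant Banach spaces, conclude convergence in law — matches the paper. However, there is a genuine gap in how you treat the $\Upsilon^a_{i,T}$ parts. You propose to prove their almost-sure convergence on the Skorokhod space by viewing them as continuous functionals of the converging Gaussian data (citing the convergence of Wick powers from Proposition \ref{prop: stochastic boundary v2}), and you even flag as a potential obstacle whether the expressions ``appropriately renormalized by $\delta^{j,M}_T$'' are genuinely continuous functionals rather than merely bounded. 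The answer is that they are \emph{not} continuous functionals of $W^0_-,W^0_+$ in the $\mathcal{C}^{-1/2-\kappa}_z$ topology: $\Upsilon^a_{2,T}$, for instance, is a double integral of $C^M_T(x,y)^3$ against products of (renormalized) harmonic extensions, which is a genuine second-order Wiener-chaos object whose well-posedness rests on cancellation, not continuity. Convergence of the individual Wick powers $\llbracket (\overline H W^0_T)^i \rrbracket$ does not give convergence of this quantity, since it is a non-local bilinear form in those objects with a singular kernel. The correct tool — and the one the paper uses — is to observe that each $\Upsilon^a_{i,T}$ lives in a fixed finite-order Wiener chaos, is a martingale in $T$ with respect to the Gaussian filtration, and is uniformly bounded in $L^2$ by Lemmas \ref{lem: upsilon2 a}, \ref{lem: upsilon3 a}, \ref{lem: upsilon4 a}; the $L^2$-martingale convergence theorem then gives the limit. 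So your uniform $L^2$ estimates are exactly the right ingredient, but you should invoke martingale convergence on the chaos decomposition rather than continuity.

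A minor further comment: you claim convergence ``in $L^1$'' via Vitali, but the $\Upsilon^b$ estimates only give uniform fractional moments (of order $\alpha<1$), which do not yield uniform integrability. This step is unnecessary anyway: once you have almost-sure convergence on the Skorokhod space, convergence in law follows immediately, which is all the lemma asserts. Your treatment of $\Upsilon^b$ as a bounded multilinear functional of the $W^0$'s, $Z^0$'s, and the covariance $C^M_T$ (using the deterministic estimates such as \eqref{eq: Upsilon2b deterministic}) is correct and is precisely what the paper does; the second statement of the lemma is also handled as you describe.
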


\begin{proof}	
Without loss of generality, let us consider $i=2$, the other statements follow by similar arguments. We will also only prove the first statement in the lemma, as the other follows similarly. We will work in the coupling measures where $\nu^0_T$ is represented as the sum of $(W^0_T, Z^0_T)$ and $\nu^0$ is represented as the sum of $(W^0_\infty, Z^0_\infty)$. By Skorokhod embedding, we may take a subsequence (not relabelled) and assume that these random variables are defined on the same probability space and converge almost surely. 

First of all, recall the splitting 
\begin{equation}
\Upsilon_{2,T}(W^0_{-,T}+Z^0_{-,T}, )=\Upsilon_{2,T}^a(W^0_{-,T},W^0_{+,T})+\Upsilon_{2,T}^b(W^0_{-,T},Z^0_{-,T},W^0_{+,T},W^0_{-,T}).	
\end{equation}
For the first term, the convergence statement follows from \eqref{eq: upsilon2 a 2ndmoment} and martingale convergence upon splitting $\Upsilon^a_{2,T}$ into terms in homogeneous Wiener chaoses. 

We now turn our attention to the second term. To do this, we view
$\Upsilon_{2,T}^b$ as a \emph{fixed} multilinear functional of the $W^0$'s, $Z^0$'s, \emph{and} the covariance $C_T^M$. Note that this can be done for every $\Upsilon_{i,T}$ -- the $\delta$ renormalizations are absorbed in the $\Upsilon_{i,T}^a$ term, and the mass renormalization term in $\Upsilon_{3,T}^b$ can be rewritten in terms of $C^M_T$. By using the analytic arguments used to establish \eqref{eq: upsilon2 b estimate}, see \eqref{eq: Upsilon2b deterministic}, we therefore have that these multilinear maps are bounded on the relevant Banach spaces. Thus, it follows from pointwise convergence of the $W^0$'s and $Z^0$'s, thanks to the Skorokhod embedding. The limiting random variable obtained is unique due to convergence to $W^0_\infty, Z^0_\infty$, and thus we obtain the full convergence in law as required.

\end{proof}

We now prove Theorem \ref{thm: enhancement converge}.

\begin{proof}[Proof of Theorem \ref{thm: enhancement converge}] This follows from Proposition \ref{prop: stochastic boundary v2} and Lemma \ref{lem: upsiloni convergence}. 
	
\end{proof}

\section{Estimates on remainder terms}
\label{sec:equicoercive}
In this section, we will prove estimates on the remainder terms $R_i$. Recall that the bulk enhancement at scale $T$ is the vector of stochastic processes $\Xi_T(W)$ defined in Section \ref{subsec: ups1}. We stress that below, $\mathbb E$ is  expectation with respect to bulk fields and not boundary fields.

\begin{proposition}\label{prop: remainder terms}
 For every $\delta>0$ and for every $(\varphi_-,\varphi_+)$ admissible boundary conditions, there exists $p=p(\delta)>0$ and $C=C(\delta)>0$ such that, for every $i=1,\dots,8$ and every $T>0$,
\begin{equation} \label{eq: ri estimates}
\mathbb E[|R_i|] \leq C(1+\|\Xi^\partial_T(\varphi_-,\varphi_+)\|_{\boldsymbol{\mathcal B}}^p) + \delta \mathbb E[\mathcal C_T(u)], \qquad \forall u \in \mathcal H.  	
\end{equation}
\end{proposition}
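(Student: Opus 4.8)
\textbf{Proof strategy for Proposition \ref{prop: remainder terms}.}

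The plan is to estimate each remainder term $R_1, \dots, R_8$ by the coercive quantity $\mathcal C_T(u) = \|Z_T(u)\|_{L^4_x}^4 + \frac 12 \|\ell_T(u)\|_{\mathcal H}^2$ plus a polynomial in $\|\Xi^\partial_T(\varphi_-,\varphi_+)\|_{\boldsymbol{\mathcal B}}$ and in the bulk enhancement norm $\|\Xi_T(W)\|_{\mathfrak S}$, the latter having finite moments uniformly in $T$ by Proposition \ref{prop: bulk enhancement}, so that after taking $\mathbb E[\cdot]$ it contributes a harmless constant $C$. The basic mechanism is the same throughout: one uses duality pairings between Besov/H\"older spaces, the fractional Leibniz rule, paraproduct and resonant-product estimates (Appendix \ref{appendix:besov}), the a priori bound $\sup_T \|Z_T(u)\|_{H^1_x} \leq \|u\|_{\mathcal H}$ together with the intermediate ansatz $Z_T(u) = -\mathbb W_T^{[3]} + K_T$ and the second ansatz $K_T = L_T + G_T$ with $L_T = Z_T(v)$, $v$ differing from $\ell_T(u)$ by bounded (in $\mathcal H$) stochastic-times-boundary terms (so $\|v\|_{\mathcal H} \lesssim \|\ell_T(u)\|_{\mathcal H} + \|\Xi_T(W)\|_{\mathfrak S} + \|\Xi^\partial_T\|_{\boldsymbol{\mathcal B}}$-type bounds), and finally Young's inequality to absorb the top-degree factors of $\|Z_T(u)\|_{L^4}$ or $\|\ell_T(u)\|_{\mathcal H}$ into $\delta \mathcal C_T(u)$ at the cost of a large constant $C(\delta)$. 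The periodization trick (Remark \ref{remark:boundarybulkstochastic}) is used in the terms where a paraproduct decomposition is needed, paying attention that the harmonic extensions are only estimated in negative-regularity spaces there.

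The terms $R_1, R_2, R_4$ are the most routine: $R_1$ is bilinear in $Z_T$ with coefficients $W_T H(\varphi_-,\varphi_+)_T$ and $\llbracket H(\varphi_-,\varphi_+)_T^2\rrbracket$, handled by duality with $\|Z_T\|_{H^{1/2}}^2 \leq \|Z_T\|_{L^4}\|Z_T\|_{H^1}$-type interpolation and the regularity of the harmonic-extension Wick powers from Proposition \ref{prop: stochastic boundary v2} (which, via the harmonic extension estimates of Appendix \ref{appendix: harmonic extension}, are controlled by $\|\Xi^\partial_T\|_{\boldsymbol{\mathcal B}}$); $R_2$ contains $W_T Z_T^3$ which (as in \cite{BG20}) needs the already-defined stochastic datum $\mathbb W_T^{1\diamond[3]}$ after a double paraproduct decomposition but no new renormalization, plus $H(\varphi_-,\varphi_+)_T Z_T^3$ which is purely analytic; $R_4$ is finite because $\gamma_T - \gamma_T^M$ is bounded in $T$ (it is the error from replacing the periodic covariance by the Dirichlet one, controlled in Appendix \ref{sec: Dirichlet covariance estimates}) and the residual constant $\delta_T(\varphi_-,\varphi_+) - \Delta\delta_T^0 - \delta_T^M$ is bounded — this last claim should be verified by a covariance computation comparing the three energy renormalizations, which is the bookkeeping flagged in the strategy-of-proof section. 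The term $R_3$, the discrepancy of quartic harmonic-extension Wick powers on $M$ versus $[-1,1]\times\mathbb T^2$ and the covariance change $C^B_T \to \overline C^B_T$, is a deterministic computation: on $M \Delta ([-1,1]\times\mathbb T^2)$ the harmonic extension is smooth (being away from the boundary where the data sits) and the Wick constants converge, while the covariance-change error is infinitely smoothing (Appendix \ref{appendix: harmonic extension}); this gives a bound polynomial in $\|\Xi^\partial_T\|_{\boldsymbol{\mathcal B}}$ with no $\mathcal C_T(u)$ needed.

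The main obstacle will be the terms $R_5, R_6, R_7, R_8$, which arise from the paraproduct reorganizations in the renormalization of $S_2$ and involve genuine commutators and high-low interactions. For $R_5$ one needs the commutator estimate ${\rm Com}_1(\llbracket W_T^2\rrbracket, K_T)$ — controlled by $\|\llbracket W_T^2\rrbracket\|_{\mathcal C^{-1-\kappa}} \|K_T\|_{H^{1+\kappa'}}$ via a standard commutator lemma, then $\|K_T\|_{H^1} \lesssim \|Z_T\|_{H^1} + \|\mathbb W_T^{[3]}\|_{\mathcal C^{1/2-\kappa}}$ — together with the estimate on $\succ$ of $(Z_T - Z_T^\flat)$, which is where the $\flat$-truncation pays off: $Z_T - Z_T^\flat$ is supported on frequencies $\gtrsim t$ and the loss is compensated by the $J_t^2$ factor integrated in time (exactly as in \cite[Section 4]{BG20}). $R_6 = 12\int_0^T\int (\llbracket W_t^2\rrbracket \succ \dot Z_t^\flat) K_t\, dx\, dt$ requires a time-integrated estimate exploiting $\dot\theta_t$ and the Mikhlin bound $\|Z_t^\flat\|_{L^4} \leq \|Z_T\|_{L^4}$. $R_7$ is the paraproduct-minus-resonant defect $\frac12\int_0^T\int (J_t(\mathbb W_t^2 \succ Z_t^\flat))^2 - (J_t\mathbb W_t^2 \circ J_t\mathbb W_t^2)(Z_t^\flat)^2$, estimated by the same kind of paracontrolled commutator analysis plus the diagonal bound on $\mathbb W_t^2 \circ \mathbb W_t^2$; and $R_8$ is the mixed bulk-boundary term $\int_0^T\int J_t(\mathbb W_t^2 \succ Z_t^\flat) J_t(\mathbb W_t^2 H(\varphi_-,\varphi_+)_T) - \int 2\gamma_T^M H(\varphi_-,\varphi_+)_T Z_T$, whose divergent part is exactly the subtracted mass-renormalization term, leaving a remainder bilinear in $Z_t^\flat$ and $H(\varphi_-,\varphi_+)_T$ with coefficient the renormalized resonant product $\mathbb W_t^2\circ\mathbb W_t^2 + 2\dot\gamma_t^M$ — this is the $\mathbb W_T^{2\diamond 2}$ datum, controlled in $L^1_t\mathcal C^{-\kappa}$ by Proposition \ref{prop: bulk enhancement}; pairing against $Z_t^\flat$ (in $H^1$, uniformly) and $H(\varphi_-,\varphi_+)_T$ (in the harmonic-extension space from Proposition \ref{prop: stochastic boundary v2}) closes it. In all of $R_5$–$R_8$ the final step is to collect the factors of $\|Z_T\|_{L^4}$, $\|Z_T\|_{H^1}\leq \|u\|_{\mathcal H}$ and (after unwinding the ansatz) $\|\ell_T(u)\|_{\mathcal H}$, apply Young's inequality to trade the top powers for $\delta\mathcal C_T(u)$, and take expectations using the uniform-in-$T$ moment bounds on $\|\Xi_T(W)\|_{\mathfrak S}$ and the almost-sure-in-boundary-data bounds encoded in $\|\Xi^\partial_T(\varphi_-,\varphi_+)\|_{\boldsymbol{\mathcal B}}$. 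The most delicate single point is keeping track that the $\mathcal H$-norm that appears after Young's inequality is the norm of $\ell_T(u)$ (not $u$ itself), which is what makes $\mathcal C_T(u)$ genuinely coercive in the subsequent $\Gamma$-convergence argument of Section \ref{sec: convergence of bulk amplitudes}.
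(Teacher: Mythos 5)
Your overall strategy (duality, fractional Leibniz, paraproducts, the intermediate ansatz $Z_T = -\mathbb W_T^{[3]}+K_T$, Young's inequality to absorb top powers into $\delta\mathcal C_T(u)$, and stochastic bounds on $\Xi_T(W)$, $\Xi_T^\partial$) is the right framework, and your assessments of $R_2$, $R_3$, $R_5$--$R_7$ are essentially correct. But there are two concrete gaps in the boundary-involving terms, and your emphasis on where the difficulty lies is inverted: in the paper's proof $R_5$--$R_7$ are the routine terms (carried over from \cite{BG20} with cosmetic changes), while the genuinely new analysis is in $R_1$, $R_3$, $R_4$, and $R_8$.

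For $R_1$ your proposed interpolation $\|Z_T\|_{H^{1/2}}^2 \leq \|Z_T\|_{L^4}\|Z_T\|_{H^1}$ is unweighted, but the Wick powers $\llbracket H(\varphi_-,\varphi_+)_T^2\rrbracket$ in $\boldsymbol{\mathfrak B}$ only have the regularity $\mathcal C^{-1+\alpha-\kappa}_z$ at the price of a $d(\tau,\partial M)^\alpha$ weight (Proposition \ref{prop: stochastic boundary v2}); at fixed $\tau$ near the boundary the regularity degenerates to $\mathcal C^{-1-\kappa}_z$, against which the pairing with $Z_T^2$ does not close (one would need roughly $Z_T^2 \in H^{1+\kappa}_z$ in the trace sense, which is well beyond what $Z_T \in H^1(M)$ gives after trading $1/2$ derivative for the trace). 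The essential mechanism that makes $R_1$ (and $R_9$) close is the weighted Hardy--Sobolev interpolation of Lemma \ref{lemma: weight Sobolev}, $\|d(\cdot,\partial M)^{-\alpha}f\|_{L^2_\tau H^\beta_z} \lesssim \|f\|_{H^{\alpha+\beta+\delta}}$, which exploits the Dirichlet boundary conditions of $Z_T(u)$ and $K_T$ to move the boundary blow-up of the Wick powers onto the bulk regularity of the drift. Without this trick the estimate fails on power counting.

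For $R_8$ you rely only on negative-regularity control of the harmonic extension (Proposition \ref{prop: stochastic boundary v2}), even flagging in your general strategy paragraph that ``the harmonic extensions are only estimated in negative-regularity spaces.'' In fact the paraproduct-commutator estimates for $R_8$ (the trilinear commutators ${\rm com}_1,{\rm com}_2$ and the lower-order pieces $I_1',I_1''$) require $H(\varphi_-,\varphi_+)$ itself in a Besov space of strictly positive regularity $B^s_{p,p}(\mathcal M)$, viewed as a periodic object on the torus $\mathcal M$. This is genuinely delicate because $H(\varphi_-,\varphi_+)$ is discontinuous at the seam $\partial^-M \sim \partial^+M$ under periodization; the paper's Lemma \ref{lem: hext pos reg} handles the singular integral near $\tau=0$ explicitly, and the requirement $p<2$ is precisely so that the trace-type singularity $\tau^{-(1/2+\kappa+s)p}$ remains integrable. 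Omitting this positive-regularity estimate leaves the $R_8$ estimate unsupported.
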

Proposition \ref{prop: remainder terms} will follow from Lemmas \ref{lemma: regularity estimates bdry field}-\ref{lemma: R9} below. We will only prove the required estimates on the terms $R_1, R_3, R_4, R_8, R_9$. The terms $R_5-R_7$ contain purely bulk terms and can be estimated as in \cite{BG20} with straightforward modifications. The term $R_2$ can also be estimated as in \cite{BG20} with straightforward modifications.

\begin{remark}
Let us mention that the constant term on the righthand side of \eqref{eq: ri estimates} comes from contributions that can be bounded by $\mathbb E[\|\Xi_T(W)\|_{\mathfrak S}^p]$. This yields a constant uniform in $T$ in light of Proposition \ref{prop: bulk enhancement}. Strictly speaking our estimates are not fully deterministic, unlike for the periodic case \cite{BG20}. 
\end{remark}

As preparation for these lemmas, we first establish regularity estimates on the boundary fields and on the drift term $K_T$ induced by the intermediate ansatz.

\subsection{Regularity estimates on harmonic extensions of boundary fields}

The main result of this subsection, Lemma \ref{lemma: regularity estimates bdry field} below, is a collection of fine regularity estimates on the boundary fields. This is a quantitative extension of Proposition \ref{prop: stochastic boundary v2} to account for the $Z^0$ term in the coupling representation of the boundary field. We will make heavy use of the regularizing properties of the harmonic extension described in Appendix \ref{appendix: harmonic extension}.

\begin{lemma} \label{lemma: regularity estimates bdry field}
Let $i\in{1,2,3}$. For every
	 $\alpha\geq 0$ such that $\alpha \in ({\rm max}(0,\frac{i-2}2), i/2]$, and $\kappa>0$ sufficiently small, there exists $C,p,\kappa'>0$ where $\kappa'$ is sufficiently small such that the following estimate holds. For every admissible boundary condition $(W^0,Z^0)$ and for every $T\in (0,\infty]$, 
	\begin{align}
		&\sup_{\tau \in [-L,L]} |d(\tau,\partial M)|^\alpha \|\llbracket H(W^0+Z^0)_T^{i} \rrbracket \|_{\mathcal{C}_{z}^{-i/2+\alpha-\kappa}} \\
		& \qquad \qquad \leq C \sum_{j=1}^3 \sup_{\tau \in [-L,L]} |d(\tau,\partial M)|^\alpha \|\llbracket H(W^0)_T^{j} \rrbracket \|^p_{\mathcal{C}^{-j/2+\alpha-\kappa'}_z} + \|(Z^0)_T\|^{p}_{H^{1/2-\kappa'}_z}.
	\end{align}
\end{lemma}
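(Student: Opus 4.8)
The proof starts from the algebraic definition of the Wick powers of the harmonic extension of an admissible boundary condition. Writing $\varphi^0=W^0+Z^0$ and using that both the mollification $\rho_T\ast(\cdot)$ and $H$ are linear, one has
\[
\llbracket H(W^0+Z^0)^i_T\rrbracket=\sum_{k=0}^i\binom ik\,\llbracket (HW^0_T)^k\rrbracket\,(HZ^0_T)^{i-k},
\]
exactly as in the definition introduced in Section \ref{sec: amplitudes and gluing}. Hence it suffices to bound each summand in the weighted norm $\sup_\tau |d(\tau,\partial M)|^\alpha\|\cdot\|_{\mathcal C^{-i/2+\alpha-\kappa}_z}$. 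The term $k=i$ is, up to the binomial constant, exactly the $j=i$ contribution to the first sum on the right-hand side, so there is nothing to prove there; the terms $0\le k\le i-1$ will account for $\|Z^0_T\|^p_{H^{1/2-\kappa'}_z}$ together with the lower-order contributions to the first sum.

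For a fixed $k$ with $0\le k\le i-1$ the plan is to estimate, at each slice $\tau$, the product of the distribution $\llbracket (HW^0_T)^k\rrbracket(\tau,\cdot)$ against the function $(HZ^0_T)^{i-k}(\tau,\cdot)$, and then take the supremum in $\tau$. The two key inputs are Proposition \ref{prop: stochastic boundary v2} (together with the $L^p$-based variants that follow from hypercontractivity), which controls $\llbracket (HW^0_T)^k\rrbracket$ on each slice in a negative-regularity space with a power weight in $|d(\tau,\partial M)|$, and the regularising/decay estimate for the harmonic extension from Appendix \ref{appendix: harmonic extension}, in the form \eqref{tool: H reg decay}, namely $\|HZ^0_T(\tau,\cdot)\|_{H^s_z}\lesssim |d(\tau,\partial M)|^{-(s-1/2+\kappa')}\|Z^0_T\|_{H^{1/2-\kappa'}_z}$ for every $s\ge 1/2-\kappa'$ (with $\|Z^0_T\|_{H^{1/2-\kappa'}_z}\le\|Z^0\|_{H^{1/2-\kappa'}_z}$ by the mollification), which permits trading any amount of $z$-regularity for a controlled power of $|d(\tau,\partial M)|^{-1}$. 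Combining these with Sobolev embeddings $H^s_z\hookrightarrow L^p_z$ on $\mathbb T^2$, the algebra property of $W^{\sigma,q}_z$ for $\sigma>2/q$ (to pass to the $(i-k)$-th power), and the paraproduct/resonant-product estimates of Appendix \ref{appendix:besov}, one estimates each summand. A point that makes this feasible is that the target regularity $-i/2+\alpha-\kappa$ is negative: for the purely-$Z^0$ term ($k=0$) it is cheapest to place $HZ^0_T(\tau,\cdot)$ in a space $L^p_z$, take the $i$-th power into $L^{p/i}_z$, and use the Besov embedding $L^{p/i}_z\hookrightarrow\mathcal C^{-i/2+\alpha-\kappa}_z$; for $1\le k\le i-1$ one puts $(HZ^0_T)^{i-k}(\tau,\cdot)$ into a space whose regularity lies just above the (possibly negative) roughness of $\llbracket (HW^0_T)^k\rrbracket$, so that the product is well defined and embeds into $\mathcal C^{-i/2+\alpha-\kappa}_z$. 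One may also use that, near $\partial M$, $H(W^0)_T$ is comparable to $W^0_T$, so that $\|H(W^0)_T(\tau,\cdot)\|_{\mathcal C^{-1/2-\kappa'}_z}$ is bounded uniformly in $\tau$ by a multiple of the first sum on the right-hand side.

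The actual work lies in the bookkeeping of the powers of $|d(\tau,\partial M)|$: the prefactor $|d(\tau,\partial M)|^\alpha$ must dominate the product of the blow-up carried by the $W^0$-factor and the blow-up $|d(\tau,\partial M)|^{-\mu_k}$ incurred by the regularity traded for the $Z^0$-factor, where the choice of integrability and regularity exponents (and of which admissible weight to use for the $W^0$-Wick power) is optimised. Here the hypothesis $\alpha>\max(0,\tfrac{i-2}2)$ is used in an essential way — it is exactly the slack needed to absorb the blow-up of the $Z^0$-powers — and the small parameters $\kappa,\kappa'$ are tuned ($\kappa'$ small compared with $\kappa$) to swallow the finitely many embedding losses. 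I expect this weight accounting, and keeping all the intermediate exponents mutually consistent for the mixed terms $1\le k\le i-1$ when $i=3$, to be the main obstacle; once the exponents are fixed each individual estimate is routine. The case $T=\infty$ requires no separate argument, since Proposition \ref{prop: stochastic boundary v2} and the harmonic-extension estimates hold uniformly in $T\in(0,\infty]$ and $Z^0_\infty=Z^0$.
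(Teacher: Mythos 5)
Your proposal is correct and matches the paper's strategy: start from the binomial expansion of the Wick power of $H(W^0+Z^0)_T$, then bound each summand slice-by-slice using the weighted stochastic estimates of Proposition~\ref{prop: stochastic boundary v2} for the $W^0$-Wick-powers, the regularity-for-blow-up trade \eqref{tool: H reg decay} for the $Z^0$-factors, and paraproduct/resonant-product decompositions in $z$ together with Besov/Sobolev embeddings and the fractional Leibniz rule to control the mixed products. The only real work left implicit in your sketch is the same work the paper carries out explicitly for $i=2,3$, namely the careful choice of intermediate Besov exponents and small parameters so that the accumulated blow-up from the $Z^0$-factors is indeed dominated by the prefactor $|d(\tau,\partial M)|^\alpha$.
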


\begin{proof}
We will drop $T$ from the notation when clear from context. The case $i=1$ follows from Proposition \ref{prop: stochastic boundary v2} and the regularity properties of $Z^0$ and harmonic extension. 

We now treat the case $i=2$, in which case $\alpha \in (0,1]$. We split according to 
\begin{equation}
\llbracket H(W^0)^2\rrbracket + 2 H( W^0)H(Z^0) + H(Z^0)^2. 
\end{equation}
The desired estimate on the first term follows from the stochastic regularity estimates of Proposition \ref{prop: stochastic boundary v2}. 

For the mixed term, we will do a paraproduct decomposition in the $z$-direction, denoted $\succ_z$ and also $\circ_z$ for the resonant product. Provided $\alpha>\kappa$, we have
\begin{align*}
&|d(\tau,\partial M)|^\alpha \|H(W^0)(\tau,\cdot) \succ_z H(Z^0)(\tau,\cdot)\|_{\mathcal C^{-1+\alpha-\kappa}_z} \\
&\leq C \Big(|d(\tau,\partial M)|^\alpha \| H(W^0)(\tau,\cdot) \|_{\mathcal C^{-1/2+\alpha-\kappa/2}_z} \Big) \|H(Z^0)(\tau,\cdot)\|_{\mathcal C^{-1/2-\kappa/2}_z},\\
&\leq C' \Big(|d(\tau,\partial M)|^\alpha \| H(W^0)(\tau,\cdot) \|_{\mathcal C^{-1/2+\alpha-\kappa/2}_z} \Big)^2+ \|H(Z^0)(\tau,\cdot)\|^2_{ H^{1/2-\kappa/4}_z},
\end{align*} 
where in the last step we have used the 2d Besov embedding $H^{s+\kappa'}_z \hookrightarrow \mathcal C^{s-1}_z$ for $\kappa'$ arbitrarily small. The desired estimate now follows by properties of the harmonic extension.
 For the resonant product term, we may then use the Besov embedding and resonant product estimate to obtain
\begin{align*}
	&|d(\tau,\partial M)|^\alpha  \|H(W^0)(\tau,\cdot) \circ_z H(Z^0)(\tau,\cdot)\|_{\mathcal C^{-1+\alpha-\kappa}_z} \\
&\leq C|d(\tau,\partial M)|^{\alpha} \|H(W^0)(\tau,\cdot) \circ_z H(Z^0)(\tau,\cdot)\|_{H^{\alpha-2\kappa}_z} \\
&\leq C'|d(\tau,\partial M)|^{\alpha} \|H(W^0)(\tau,\cdot)\|_{\mathcal C_z^{-1/2+\alpha-\kappa}} \|H(Z^0)(\tau,\cdot)\|_{H^{1/2-\kappa}_z}.
\end{align*}
The desired estimate then follows. The  remaining paraproduct term can be estimated by similar arguments.

 We now turn to the term quadratic in $Z^0$ (still in the case $i=2$). For every $\kappa'>0$ sufficiently small, by Besov embedding there exists $C>0$ such that
\begin{equation}
|d(x,\partial M)|^\alpha \|H(Z^0)(\tau,\cdot)^2\|_{\mathcal C^{-1+\alpha-\kappa}_{z}} \leq C |d(x,\partial M)|^\alpha \|H(Z^0)(\tau,\cdot)^2\|_{B^{1/2+\alpha}_{4/3-\kappa',1,z}}. 
\end{equation}
Furthermore, by the fractional Leibniz rule and Sobolev embedding we have that for every $\kappa'''> 0$ sufficiently small and $\kappa''>0$ sufficiently small depending on $\kappa'''$, there exists $C',C''>0$ such that 
\begin{align}
|d(x,\partial M)|^\alpha &\|H(Z^0)(\tau,\cdot)^2\|_{B^{1/2+\alpha}_{4/3-\kappa',1,z}} \\ &\leq C' \|Z^0\|_{H^{1/2-\kappa''}_{z}} \|H(Z^0)(\tau,\cdot)\|_{L^{4-\kappa''}_{z}}\leq C''\|Z^0\|^2_{H^{1/2-\kappa'''}_{z}}.
\end{align}
The desired estimate follows.

Finally, let us do the estimates when $i=3$, in which case $\alpha \in (1/2,3/2]$. We expand:
\begin{equation} \label{eq: hw3 term}
\llbracket H(W^0)^3 \rrbracket + 3\llbracket H(W^0)^2 \rrbracket H(Z^0) + 3H(W^0) H(Z^0)^2 + H(Z^0)^3.
\end{equation}
The desired estimate on the first term in \eqref{eq: hw3 term} is a consequence of Proposition \ref{prop: stochastic boundary v2}. For the second term in \eqref{eq: hw3 term}, we will again do a ($z$-direction) paraproduct decomposition. First observe that, by Besov embedding, for every $\kappa'>0$ sufficiently small, there exists $C>0$ such that 
\begin{align}
&|d(\tau,\partial M)|^\alpha  \|\llbracket H(W^0)(\tau,\cdot)^2 \rrbracket \succ_{z} H(Z^0)(\tau,\cdot)\|_{\mathcal C^{-3/2+\alpha-\kappa}_z}  
\\
&\leq C |d(\tau,\partial M)|^\alpha  \|\llbracket H(W^0)(\tau,\cdot)^2 \rrbracket \succ_{z} H(Z^0)(\tau,\cdot)\|_{B^{-1+\alpha-\kappa'}_{4-\kappa',4-\kappa', z}}. 	
\end{align}
Furthermore, by paraproduct estimates for every $\kappa''>0$ sufficiently small, there exists $C>0$ such that
\begin{align}
&|d(\tau,\partial M)|^{\alpha} \|\llbracket H(W^0)(\tau,\cdot)^2 \rrbracket \succ_{z} H(Z^0)(\tau,\cdot)\|_{B^{-1+\alpha-\kappa'}_{4-\kappa',4-\kappa', z}} 
\\
&\leq |d(\tau,\partial M)|^{\alpha} \| \llbracket H(W^0)(\tau,\cdot)^2 \rrbracket \|_{\mathcal C^{-1+\alpha-\kappa'}_z} \|H(Z^0)(\tau,\cdot)\|_{L^{4-\kappa''}_{z}}.
\end{align}
The desired estimate follows since, by Sobolev embedding, for every $\kappa'''>0$ sufficiently small, there exists $C>0$ such that $\|H(Z^0)(\tau,\cdot)\|_{L^{4-\kappa''}_z} \leq C \|H(Z^0)(\tau,\cdot)\|_{H^{1/2-\delta}_z}$. 

For the resonant product term, by Besov embedding, for every $\kappa'>\kappa$ sufficiently close to $\kappa$, there exists $C>0$ such that
\begin{align}
&|d(\tau,\partial M)|^\alpha  \|\llbracket H(W^0)(\tau,\cdot)^2 \rrbracket \circ_z H(Z^0)(\tau,\cdot) \|_{\mathcal C_{z}^{-3/2+\alpha-\kappa}}
\\&\leq C|d(\tau,\partial M)|^\alpha  \|\llbracket H(W^0)(\tau,\cdot)^2 \rrbracket \circ_z H(Z^0)(\tau,\cdot) \|_{H_{z}^{-1/2+\alpha-\kappa'}}.
\end{align}
Furthermore by resonant product estimates, there exists $C>0$ such that
\begin{align}
&|d(\tau,\partial M)|^\alpha  \|\llbracket H(W^0)(\tau,\cdot)^2 \rrbracket \circ_z H(Z^0)(\tau,\cdot) \|_{H^{-1/2+\alpha-\kappa'}_{z}} 
\\&\leq C|d(\tau,\partial M)|^\alpha \|\llbracket H(W^0)(\tau,\cdot)^2 \rrbracket \|_{\mathcal C^{-1+\alpha-\kappa'/2}_{z}} \|H(Z^0)\|_{H^{1/2-\kappa'/2}_{z}}.
\end{align}
The desired estimate now follows by arguments similar to above.  The remaining paraproduct term follows by similar estimates as above.

We now turn to the third term in \eqref{eq: hw3 term} and do another paraproduct decomposition. By paraproduct estimates, note that given $\kappa'>0$ sufficiently small, for every $\kappa''>0$ sufficiently small, there exists $C>0$ such that 
\begin{align}
&|d(\tau,\partial M)|^\alpha \| H(W^0)(\tau,\cdot) \succ_z H(Z^0)(\tau,\cdot)^2 \|_{B^{-1/2-\kappa'+\alpha}_{2-\kappa',z}} 
\\ &\leq C |d(\tau,\partial M)|^\alpha  \| H(W^0)(\tau,\cdot)\|_{\mathcal C^{-1/2-\kappa'/2+\alpha}_z} \| H(Z^0)(\tau,\cdot)^2\|_{L_{z}^{2-\kappa''}}.
\end{align}
The lefthand side can be estimated (as above) using Besov embedding. The desired estimate holds by straightforward arguments. For the resonant product term, by Besov embedding and resonant product estimates, we have that for every $\kappa'>0$ sufficiently small there exists $C,C'>0$ such that
\begin{align}
	&|d(\tau,\partial M)|^\alpha \| H(W^0)(\tau,\cdot) \circ_z H(Z^0)(\tau,\cdot)^2 \|_{\mathcal C^{-3/2-\kappa+\alpha}_z} \\
	&\leq C|d(\tau,\partial M)|^\alpha \| H(W^0)(\tau,\cdot) \circ_z H(Z^0)(\tau,\cdot)^2 \|_{B^{-\kappa/2+\alpha}_{4/3-\kappa',z}}\\
&\leq C'|d(\tau,\partial M)|^\alpha \|H(W^0)(\tau.\cdot)\|_{\mathcal C^{-1/2-\kappa/4+\alpha}_z}\|H(Z^0)(\tau,\cdot)^2\|_{B^{1/2-\kappa/4}_{4/3-\kappa',z}}.
\end{align}
From here we may proceed as above. Finally, let us estimate the last term. First note that by Besov embedding, for every $\kappa'>0$ sufficiently small, there exists $C>0$ such that
\begin{equation}
|d(\tau,\partial M)|^\alpha \|H(Z^0)(\tau,\cdot)^3\|_{\mathcal C^{-3/2+\alpha-\kappa}_z}
\leq C |d(\tau,\partial M)|^\alpha \|H(Z^0)(\tau,\cdot)^3\|_{B^{\alpha}_{4/3-\kappa',z}}.	
\end{equation}
By the fractional Leibniz estimate, interpolation estimate \eqref{tool: H reg decay}, and Sobolev embedding, for every $\kappa''>0$ and $\kappa '''>0$ sufficiently small, there exists $C,C'>0$ such that 
\begin{align}
|d(\tau,\partial M)|^\alpha \|H(Z^0)(\tau,\cdot)^3\|_{B^{\alpha}_{4/3-\kappa',z}} &\leq C |d(\tau,\partial M)|^\alpha \|H(Z^0)(\tau,\cdot)\|_{B^\alpha_{4-\kappa''}} \|Z^0\|_{L^4_z}^2
\\ &\leq C' \|Z^0\|_{H^{1/2-\kappa'''}_z}^3.	
\end{align}
The desired estimate follows.

\end{proof}

\subsection{Estimate on $K_T$}

Recall in \eqref{eq: intermediate ansatz}  we introduced an intermedate ansatz $Z_T(u) = -\mathbb W_T^{[3]} + K_T(u)$, where
\begin{align}
\begin{split} \label{eq: kt definition}
K_T(u)&=\int^T_0 J^2_t (\mathbb{W}_t^2 \succ Z^{\flat}_t) + J^2_t
    ( \mathbb{W}_t^2 H (\varphi_-, \varphi_+)_T) + 12 J^2_t W_t \llbracket H
     (\varphi_-, \varphi_+)_T^2 \rrbracket 
     \\ &\qquad + 4J^2_t \llbracket H (\varphi_-,
     \varphi_+)_T^3 \rrbracket \mathd t + Z_T (\ell_T(u)).
     \end{split}	
\end{align}
The intermediate ansatz results in a loss of $H^1$ regularity for $Z_T(u)$ as $T\rightarrow \infty$  since the term $\mathbb W_T^{[3]}$ has almost sure regularity $\mathcal C^{1/2-\kappa}$. It turns out that the remainder term $K_T(u)$ \emph{almost} has the optimal regularity, as we shall now demonstrate.
\begin{lemma} \label{lem: KT estimate}
There exists $C>0$ and $\kappa'>0$ sufficiently small such that, for every $\varphi_-,\varphi_+ \in \mathfrak X$ and for every $T>0$,
\begin{align}
&\mathbb E[\|K_T\|_{H^{1-\kappa}}^2] 
\\&\qquad \leq C \Big(  1+  \| H(\varphi_-,\varphi_+)_T\|_{L^{2-\kappa'}}^2 + \| d(\cdot,\partial M)^{1/2} \llbracket H(\varphi_-,\varphi_+)_T^2 \rrbracket \|_{L^{2-\kappa'}}^2 
\\ &\qquad \qquad  +\| d(\cdot,\partial M) \llbracket H(\varphi_-,\varphi_+)^3 \rrbracket \|_{L^{2-\kappa'}}^2 \Big) + \mathbb E\left[ \|Z_T\|_{L^4}^4 + \frac 12 \int_0^T \| \ell_T(u)\|_{L^2_x}^2 dt \right].
\end{align}
\end{lemma}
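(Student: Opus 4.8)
\textbf{Proof proposal for Lemma \ref{lem: KT estimate}.}

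The plan is to estimate each of the four integral contributions to $K_T$ separately in $H^{1-\kappa}$, using the smoothing of the operator $\int_0^T J_t^2(\,\cdot\,)\,dt$ together with the regularity estimates on Wick powers of the harmonic extensions from Proposition \ref{prop: stochastic boundary v2}, Lemma \ref{lemma: regularity estimates bdry field}, and the bulk enhancement bounds of Proposition \ref{prop: bulk enhancement}; the drift term $Z_T(\ell_T(u))$ is handled by Lemma \ref{lem: bdry a priori Z}'s bulk analogue which gives $\|Z_T(\ell_T(u))\|_{H^1}\le\|\ell_T(u)\|_{\mathcal H}$, already appearing on the right-hand side. First I would recall that $\int_0^T J_t^2\,dt$ acts, up to the regularizing cutoff $\rho_t$, like $(-\Delta+m^2)^{-1}$, so it gains two derivatives in the periodic $z$-direction; combined with the fact that the operator is an integral over $t$ of positive-definite kernels, one expects it to map $C_t\mathcal C^{s}_z$-type spaces (or their weighted-in-$\tau$ variants) into $H^{1-\kappa}$-type spaces. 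This is exactly the strategy used for the analogous term in \cite{BG20}, so the bulk-only pieces follow by straightforward modification; the novelty is the boundary-dependent pieces.

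The key steps, in order: (1) For the term $\int_0^T J_t^2(\mathbb W_t^2\succ Z_t^\flat)\,dt$, use a paraproduct estimate to bound $\mathbb W_t^2\succ Z_t^\flat$ in a negative-regularity space uniformly in $t$ after taking expectation (using Proposition \ref{prop: bulk enhancement} for $\mathbb W_t^2$ and $\|Z_t^\flat\|_{L^4}\le\|Z_T\|_{L^4}$), then apply the two-derivative smoothing of $\int_0^T J_t^2(\cdot)\,dt$; the resulting bound contributes the $\mathbb E[\|Z_T\|_{L^4}^4]$ term (via Young's inequality) plus constants absorbed into the "$1$". (2) For $\int_0^T J_t^2(\mathbb W_t^2\, H(\varphi_-,\varphi_+)_T)\,dt$, note $H(\varphi_-,\varphi_+)_T$ is $t$-independent; bound the product $\mathbb W_t^2\cdot H(\varphi_-,\varphi_+)_T$ using the paraproduct/resonant decomposition in $z$, estimating $H(\varphi_-,\varphi_+)_T$ in $L^{2-\kappa'}$ (which is where the second term in the right-hand side comes from, since $H(\varphi_-,\varphi_+)_T\in\mathcal C^{-1/2-\kappa}$ in $z$ but the harmonic extension's decay away from $\partial M$ — see Appendix \ref{appendix: harmonic extension} — lets one trade the distributional regularity for an $L^p$-in-$x$ bound). (3) For $12\int_0^T J_t^2(W_t\llbracket H(\varphi_-,\varphi_+)_T^2\rrbracket)\,dt$ and $4\int_0^T J_t^2\llbracket H(\varphi_-,\varphi_+)_T^3\rrbracket\,dt$, use Lemma \ref{lemma: regularity estimates bdry field} with the weight $|d(\tau,\partial M)|^\alpha$ for $\alpha=1$ and $\alpha=3/2$ respectively; the point is that $\llbracket H(\varphi_-,\varphi_+)_T^2\rrbracket$ blows up like $d(\cdot,\partial M)^{-1}$ and $\llbracket H^3\rrbracket$ like $d(\cdot,\partial M)^{-3/2}$ near the boundary, but these are integrable-to-a-power singularities, so multiplying by $d(\cdot,\partial M)^{1/2}$ resp. $d(\cdot,\partial M)$ and measuring in $L^{2-\kappa'}$ is finite — this produces the third and fourth terms on the right-hand side. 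The smoothing operator then restores the $H^{1-\kappa}$ regularity since one only loses $\kappa$ from the near-boundary weight mismatch. (4) Collect all pieces, apply Young's inequality wherever a quartic $\|Z_T\|_{L^4}$ or quadratic $\|\ell_T(u)\|_{\mathcal H}$ appears with a small constant that can be absorbed, and take expectations.

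The main obstacle I anticipate is Step (3): controlling the smoothing of $\int_0^T J_t^2(\cdot)\,dt$ acting on a function that is singular near $\partial M$ in the $\tau$-direction. The operator only smooths in $z$ (it leaves $\tau$ essentially untouched modulo the $(-\Delta+m^2)^{-1}$-type gain), so one must carefully check that the near-boundary $\tau$-singularity of $\llbracket H(\varphi_-,\varphi_+)_T^k\rrbracket$ is not worsened and that the combination of the weighted bound from Lemma \ref{lemma: regularity estimates bdry field} with the mixed $L^p_\tau$–Sobolev-in-$z$ structure genuinely lands in $H^{1-\kappa}(M)$. In particular one needs the kernel estimates of Appendix \ref{sec: Dirichlet covariance estimates} (specifically that $C_T^M(x,y)$ and its regularized versions have Dirichlet-type boundary behavior, so that testing against a boundary-singular function still yields an $L^{2-\kappa'}$ bound) and the harmonic extension decay estimates of Appendix \ref{appendix: harmonic extension} to convert the $\mathcal C^{-j/2+\alpha-\kappa}_z$-valued, weighted-in-$\tau$ control into the claimed $L^{2-\kappa'}(M)$-norms on the right-hand side. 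Verifying that the loss is only $\kappa$ (and not, say, a full derivative) is the delicate bookkeeping point, but it is analogous to the periodic case and should go through.
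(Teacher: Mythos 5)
Your plan matches the paper for the term $\int_0^T J_t^2(\mathbb W_t^2\succ Z_t^\flat)\,dt$ and for the drift remainder $Z_T(\ell_T(u))$. For the boundary-dependent pieces (your steps (2)--(4)), however, your route diverges from the paper's and has a genuine gap.

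The paper does not use a paraproduct decomposition of $\mathbb W_t^2\cdot H(\varphi_-,\varphi_+)_T$, nor does it invoke Lemma~\ref{lemma: regularity estimates bdry field}. Instead it takes the expectation of the squared $H^{1-\kappa}$ norm right away, writes the Sobolev norm as a double integral of the kernel of $(-\Delta^{\rm per}+m^2)^{-1+\kappa}$ against the integrand, and then evaluates the resulting covariance $\mathbb E[\mathbb W_t^2(x)\mathbb W_t^2(y)]$ (resp.\ $\mathbb E[W_t(x)W_t(y)]$) explicitly via Fubini. This is necessary: a deterministic paraproduct decomposition of $\mathbb W_t^2\cdot H(\varphi_-,\varphi_+)_T$ founders because the regularities of $\mathbb W_t^2\in\mathcal C^{-1-\kappa}$ and $H(\varphi_-,\varphi_+)_T\in\mathcal C^{-1/2-\kappa}_z$ sum to a negative number, so the resonant product is ill-defined at that level; the product only becomes tractable after expectation. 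Likewise, for $\int_0^T J_t^2\llbracket H(\varphi_-,\varphi_+)_T^3\rrbracket\,dt$ there is no random bulk factor at all, so paraproduct machinery is not applicable; the paper observes that $\int_0^T J_t^2\,dt=C^M_T$ and estimates the kernel of $(C^M_T)^{1/2}(-\Delta^{\rm per}+m^2)^{-\kappa}(C^M_T)^{1/2}$ directly via Appendix~\ref{sec: Dirichlet covariance estimates}.

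Second, you attribute the weights $d(\cdot,\partial M)^{1/2}$ and $d(\cdot,\partial M)$ to the integrability of the Wick-power singularity near the boundary, but in the paper's proof they have a different origin: they arise from the Dirichlet boundary vanishing of the bulk covariance $C^M_T$, via the interpolation kernel estimates of Appendix~\ref{sec: Dirichlet covariance estimates} (of the form $d(x,\partial M)^\alpha d(y,\partial M)^\alpha/|x-y|^{1+2\alpha}$), followed by Young's convolution inequality which produces the specific weighted $L^{2-\kappa'}$ norms. Without this mechanism, converting Lemma~\ref{lemma: regularity estimates bdry field}'s weighted $\mathcal C_z$-bounds into the weighted $L^{2-\kappa'}(M)$ norms on the right-hand side does not work: for $i=2$ that lemma's range $\alpha\le 1$ only gives control in $\mathcal C^{-\kappa}_z$, which dominates no $L^p_z$ norm, and the lemma caps $\alpha$ so one cannot push the regularity positive. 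So the obstacle you flag at the end is real, but the tools you propose do not resolve it; the missing input is the explicit covariance kernel computation in expectation combined with the Dirichlet boundary-vanishing estimates.
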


\begin{proof}
For the first term in \eqref{eq: kt definition}, we may estimate as in \cite[Section 7]{BG20} to obtain
\begin{equation}
\left\| \int^T_0 J^2_t (\mathbb{W}_t^2 \succ Z^{\flat}_t) dt \right\|_{H^{1-\kappa}} \leq \|\mathbb W_t^2 \|_{L^\infty_t \mathcal C^{-1-\kappa/2}} \|Z_T\|_{L^2},
\end{equation}
which can then be bounded by Young's inequality and stochastic estimates. Here, we have implicitly used the regularizing properties of $J_t$ established in Appendix \ref{appendix: regularizing jt}. 
	
For the second term in \eqref{eq: kt definition}, first observe that by the properties of $J_t$ (again, see Appendix \ref{appendix: regularizing jt}) and the Cauchy-Schwarz inequality,
\begin{equation}
\left\| \int_0^T J^2_t( \mathbb{W}_t^2 H (\varphi_-, \varphi_+)_T) dt \right\|_{H^{1-\kappa}}^2 \leq C \int_0^T \| J_t ( \mathbb{W}_t^2 H (\varphi_-, \varphi_+)_T) \|^2_{H^{-\kappa/2}} dt.
\end{equation}
Hence, by Fubini's theorem, regularizing properties of $J_t$, the integral kernel representation of Sobolev norms\footnote{Any function with Dirichlet boundary conditions is periodic and hence can be viewed as an element of the torus. Integral representations of (inhomogeneous) Sobolev norms using the kernel of the periodic Laplacian is standard. We have implicitly used the decay of the kernel in this estimate}, and kernel bounds on $C^M_T$ (see Appendix \ref{sec: Dirichlet covariance estimates}), we have that
\begin{align}
& \int_0^T \mathbb E \left[ \|J_t(\mathbb W_t^2 H(\varphi_-,\varphi_+)_T)\|_{H^{-\kappa/2}}^2 \right]
\\
&\leq C \int_0^T \int \frac{|H(\varphi_-,\varphi_+)_T(x)H(\varphi_-,\varphi_+)_T(y)| \mathbb E[\mathbb W_t^2(x) \mathbb W_t^2(y)]}{|x-y|^{1-\kappa/2}}  \frac{dt}{\langle t \rangle^{1+\kappa/4}}
\\
&\leq C \int_0^T \int \frac{|H(\varphi_-,\varphi_+)_T(x)H(\varphi_-,\varphi_+)_T(y)|}{|x-y|^{3-\kappa/2}}  \frac{dt}{\langle t \rangle^{1+\kappa/4}}\leq C \| H(\varphi_-,\varphi_+)_T\|_{L^{2-\epsilon(\kappa)}}^2,
\end{align}
where the last inequality is by Young's convolution inequality and $\epsilon$ is a positive function of $\kappa$ that is $o(1)$ as $\kappa \downarrow 0$.

For the third term in \eqref{eq: kt definition}, by properties of $J_t$ and the Cauchy-Schwarz inequality,
\begin{equation}
\mathbb E \Bigg[ \left\| \int_0^T J^2_t (W_t \llbracket H (\varphi_-,  \varphi_+)^2 _T \rrbracket ) dt \right\|_{H^{1-\kappa}}^2 \Bigg] 
    \leq C \int_0^T \langle t \rangle^{-1-\kappa/2} \mathbb E [ \| W_t \llbracket H (\varphi_-,  \varphi_+)^2_T \rrbracket  \|_{H^{-1-\kappa/2}}^2] dt.
\end{equation}
It suffices to bound the expectation in the integrand uniformly in $t$. Note that, by Fubini's theorem and the integral representation of Sobolev norms,
\begin{align}
&\mathbb E[	\| W_t \llbracket H (\varphi_-,  \varphi_+)^2_T \rrbracket  \|_{H^{-1-\kappa/2}}^2] 
\\&\quad = \int \int (-\Delta^{\rm per}+m^2)^{-1-\kappa/2}(x,x') C_t^M(x,x')  \llbracket H (\varphi_-,  \varphi_+)^2_T \rrbracket (x)  \llbracket H (\varphi_-,  \varphi_+)^2_T \rrbracket (x') dx dx'.
\end{align}
Thus by standard covariance estimates on the periodic Laplacian and the covariance estimates on $C^M_T$ in Appendix \ref{sec: Dirichlet covariance estimates}, for every $\alpha\in (0,1)$, there exists $C>0$ such that
\begin{equation}
|(-\Delta^{\rm per}+m^2)^{-1-\kappa}(x,x') C_t^M(x,x') | \leq C \frac{d(x,\partial M)^\alpha d(x',\partial M)^\alpha}{|x-x'|^{1+2\alpha}|x-x'|^{1-\kappa}}. 
\end{equation}
Hence, 
\begin{multline}
\mathbb E [ \| W_t \llbracket H (\varphi_-,  \varphi_+)^2_T \rrbracket  \|_{H^{-1-\kappa/2}}^2]
\\
\leq C \int \int \frac{d(x,\partial M)^\alpha d(x',\partial M)^\alpha}{|x-x'|^{2+2\alpha-\kappa}}  \llbracket H (\varphi_-,  \varphi_+)^2_T \rrbracket(x) \llbracket H (\varphi_-,  \varphi_+)^2_T \rrbracket(x') dx dx'.
\end{multline}
Taking $\alpha = 1/2$, then there exists $\epsilon = \epsilon(\kappa) = o(1)$ as $\kappa \downarrow 0$ such that, by Young's convolution inequality,
\begin{align}
\int_0^T \mathbb E & [ \| J_t (W_t \llbracket H^2 (\varphi_-,  \varphi_+) ) \|_{H^{-\kappa/2}}^2] dt	
\leq C \| d(\cdot,\partial M)^{1/2} \llbracket H^2(\varphi_-,\varphi_+) \rrbracket \|_{L^{2-\epsilon}}^2,	
\end{align}
as required.

For the fourth term in \eqref{eq: kt definition}, let us recall that $C^M_T= \partial_t\rho_t^2\ast C^M_\infty$ is a positive operator with square root $\sqrt{\partial_t \rho_t^2} \ast (C^M_\infty)^{1/2}$. We will use the regularizing properties of $(C^M_\infty)^{1/2}$ stated in Appendix \ref{subsec: regularizing cthalf}. By computing the $t$-integral and using regularizing properties of $(C_\infty^M)^{1/2}$, there exists $C>0$ such that
\begin{align}
\Bigg\| \int_0^T J_t^2 \llbracket H(\varphi_-,\varphi_+)^3_T \rrbracket dt \Bigg\|_{H^{1-\kappa}}^2 &=  \left\|  C_T^M\left[  \llbracket H(\varphi_-,\varphi_+)^3_T \rrbracket(y) \right]\right\|_{H^{1-\kappa}}^2 
\\&\leq C \left\| (C_T^M)^{1/2} \left[\llbracket H(\varphi_-,\varphi_+)^3_T \rrbracket \right] \right \|_{H^{-\kappa}}^2.
\end{align}
Let us define the kernel
\begin{equation}
\mathcal C(x,x'):= (C_\infty^M)^{1/2} \ast (-\Delta^{\rm per}+m^2)^{-\kappa}\ast (C_\infty^M)^{1/2}(x,x'). 
\end{equation}
By the kernel estimates on $(C_\infty^M)^{1/2}$ together with standard kernel estimates on $(-\Delta^{\rm per}+m^2)^{-\kappa}$, we have that
\begin{align}
|\mathcal C(x,x')| &\leq C \int \int \frac{d(x,\partial M)^\alpha d(x',\partial M)^\beta dydy'}{|x-y|^{2+\alpha}|y-y'|^{3-\kappa}|y'-x'|^{2+\beta}} 
\leq C'\int  \frac{d(x,\partial M)^\alpha d(x',\partial M)^\beta dy}{|x-y|^{2+\alpha}|y-x'|^{2+\beta-\kappa}}
\\ 
&\leq C''\frac{d(x,\partial M)^\alpha d(x',\partial M)^\beta }{|x-x'|^{1+\alpha+\beta-\kappa}}.
\end{align}
We choose $\alpha=\beta = 1-\varepsilon$ for $\varepsilon$ small enough depending on $\kappa$. Hence, by using the integral representation of Sobolev norms the definition of $\mathcal C(x,x')$, we have
\begin{align}
&\left\| (C_T^M)^{1/2} \left[ \llbracket H(\varphi_-,\varphi_+)^3_T \rrbracket \right] \right \|_{H^{-\kappa}}^2
\\
&\qquad \leq C' \int\int   |\mathcal C(x,x')|\left|\llbracket H(\varphi_-,\varphi_+)^3 \rrbracket(x) \llbracket H(\varphi_-,\varphi_+)^3 \rrbracket(x') \right| dxdx'
\\
&\qquad \leq C'' \int \int \frac{d(x,\partial M)^{1-\varepsilon} d(x',\partial M)^{1-\varepsilon}}{|x-x'|^{3-\kappa}}\left| \llbracket H(\varphi_-,\varphi_+)^3 \rrbracket(x) \llbracket H(\varphi_-,\varphi_+)^3 \rrbracket(x') \right| dx dx'
\\
&\qquad = C''' \| d(\cdot,\partial M)^{1-\varepsilon} \llbracket H(\varphi_-,\varphi_+)^3 \rrbracket \|_{L^{2-\epsilon'(\kappa)}}^2,
\end{align}
where the last line follows by Young's convolution inequality with $\epsilon'(\kappa) = o(1)$ as $\kappa \downarrow 0$. 

The estimate on $Z_T(\ell_T(u))$ follows by straightforward bounds on $Z_T$. 
\end{proof}

\subsection{Estimate of $R_1$}

Recall that 
\begin{equation}
R_1 =  \int_{M} 12 W_T H (\varphi_-, \varphi_+)_T Z^2_T + 6 
  \llbracket H (\varphi_-, \varphi_+)_T^2 \rrbracket Z^2_T \, dx =: R_1(a) + R_1(b).
\end{equation}

We will begin with a useful interpolation estimate between Sobolev regularity and boundary blow-up. 
\begin{lemma}\label{lemma: weight Sobolev}
For every $\delta>0$, for every $\alpha, \beta >0$ such that $\alpha+\beta<1$, there exists a constant $C$ such that, for every $f\in H^{\alpha+\beta}(M)$,
\begin{equation}
\|d(x,\partial M)^{-\alpha}f\|_{L^2_{\tau} H^{\beta}_{z}} \leq C \|f\|_{H^{\alpha+\beta+\delta}_x}	
\end{equation}
\end{lemma}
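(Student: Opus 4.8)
\textbf{Proof proposal for Lemma \ref{lemma: weight Sobolev}.}

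The plan is to reduce the claim to a one-dimensional weighted Hardy-type inequality in the $\tau$-variable combined with a standard Sobolev interpolation in the $z$-variable, and then to patch the two halves $[-L,0]$ and $[0,L]$ of the cylinder together (since $d(\tau,\partial M)$ only depends on $\tau$, after a translation we may assume $d(\tau,\partial M) = \tau$ on $[0,L]$ near one endpoint, and symmetrically at the other). First I would use the mixed-norm characterization of the Sobolev space: for $f \in H^{\alpha+\beta+\delta}(M)$ with Dirichlet boundary conditions, periodizing in $z$ and using the interpolation inequality $H^s_x \hookrightarrow L^2_\tau H^\beta_z$ for $\beta \le s$, it suffices to control $\|d(\tau,\partial M)^{-\alpha} f(\tau,\cdot)\|_{H^\beta_z}$ in $L^2_\tau$. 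For each fixed Fourier mode $n$ in the $z$-direction, write $g_n(\tau) := \widehat{f}(\tau,n)$; the estimate to prove becomes, after summing $\langle n\rangle^{2\beta}$-weighted,
\begin{equation}
\sum_n \langle n \rangle^{2\beta} \int_{-L}^L d(\tau,\partial M)^{-2\alpha} |g_n(\tau)|^2 \, d\tau \le C \sum_n \int_{-L}^L \Big( \langle n\rangle^{2(\alpha+\beta+\delta)} |g_n(\tau)|^2 + \langle n\rangle^{2\beta}|g_n'(\tau)|^2 + \dots \Big)\, d\tau,
\end{equation}
where the right-hand side is a (mode-by-mode) expression equivalent to $\|f\|_{H^{\alpha+\beta+\delta}_x}^2$ via the spectral decomposition in the $\mathsf f_n$-basis.

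The core of the argument is then the following: since $g_n$ vanishes at the endpoints $\tau = \pm L$ (Dirichlet condition), the classical Hardy inequality in one dimension gives $\int_0^L \tau^{-2\alpha}|g_n(\tau)|^2\,d\tau \le C_\alpha \int_0^L |g_n'(\tau)|^2 \tau^{2-2\alpha}\,d\tau \le C_\alpha' \int_0^L (|g_n'|^2 + |g_n|^2)\,d\tau$ valid for $\alpha < 1/2$, and by interpolation/Gagliardo–Nirenberg between this and the trivial $L^2$ bound one upgrades it to a fractional statement costing $\langle n\rangle^{2\alpha}$ of $z$-regularity (equivalently $2\alpha$ derivatives traded in the $\tau$-direction, but it is cleaner to trade them in $z$ since we have a product structure): namely interpolating the weighted estimate with $\|f\|_{L^2}$ produces the extra $\langle n\rangle^{2\alpha}$ at the cost of the small loss $\delta$. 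I would carry this out by first establishing the integer/endpoint cases $\alpha \to 0$ (trivial) and $\alpha + \beta \to 1^-$, $\alpha < 1$ (here one uses the full strength of Hardy plus a further derivative, handling $1/2 \le \alpha < 1$ by differentiating once and reapplying Hardy to $g_n'$), and then obtaining the general $\alpha$ by real interpolation of the bounded linear map $f \mapsto d(\cdot,\partial M)^{-\alpha} f$ between these endpoint spaces, with the $+\delta$ absorbing the failure of the endpoint when $\alpha + \beta = 1$ exactly.

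The main obstacle I anticipate is the bookkeeping near $\alpha = 1/2$ and the clean splitting of the budget of derivatives between the $\tau$- and $z$-directions so that the mixed norm $L^2_\tau H^\beta_z$ comes out correctly rather than an isotropic $H^{\alpha+\beta}_x$ bound. The weighted Hardy inequality itself is completely standard, but matching its output (which naturally lives in anisotropic spaces with a weight in one variable) to the isotropic Sobolev norm $\|f\|_{H^{\alpha+\beta+\delta}_x}$ on the right requires care: one needs the embedding $H^{\alpha+\beta+\delta}_x \hookrightarrow L^2_\tau H^{\beta}_z \cap H^{\alpha+\delta'}_\tau L^2_z$ (with a slightly smaller $\delta'$), which follows from the definition of $H^s_x$ via the spectral multiplier $(4\pi^2(n_1/2L)^2 + 4\pi^2|n_2,n_3|^2 + m^2)^{s/2}$ and the elementary inequality $(a+b)^{s} \gtrsim a^{s} + b^{s}$, but it must be invoked explicitly. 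A secondary, minor point is that $d(\tau,\partial M)$ is only comparable to $\tau$ (resp. $L-\tau$) near the two ends and is bounded below by a constant in the interior, so the interior contribution is harmless and one genuinely only needs the estimate on two collar neighborhoods, handled identically by symmetry.
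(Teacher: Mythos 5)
Your route is genuinely different from the paper's, and heavier. The paper's proof is a single display: after the mixed-norm observation $\|f\|_{H^{\alpha}_{\tau}H^{\beta}_{z}}\leq\|f\|_{H^{\alpha+\beta}_x}$ (your $(a+b)^{s}\gtrsim a^{s}+b^{s}$ in Fourier, specialized to $s=\alpha+\beta$), it applies H\"older's inequality in the $\tau$-variable to
$\int d(\tau,\partial M)^{-2\alpha}\|f(\tau,\cdot)\|_{H^{\beta}_{z}}^2\,d\tau$, choosing the conjugate exponents so that $\int d(\tau,\partial M)^{-2\alpha p}\,d\tau$ is a finite constant, and then closes the argument with the one-dimensional Sobolev embedding $H^{\alpha+\delta}_{\tau}\hookrightarrow L^{2q}_{\tau}$ on the remaining factor. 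There is no Hardy inequality, no mode-by-mode decomposition in $z$, no real interpolation between endpoint weights, and crucially no use of the Dirichlet boundary condition. In the regime $\alpha<1/2$ your Hardy-plus-interpolation machinery is therefore overkill; you should just H\"older the weight away. What your approach buys, however, is robustness when $\alpha\geq 1/2$: the H\"older step demands $2\alpha p<1$ with $p>1$, i.e.\ $\alpha<1/2$, and for $\alpha\geq 1/2$ the weight $d(\tau,\partial M)^{-2\alpha p}$ is never locally integrable so the H\"older argument simply does not apply. In that range — which the paper does use later, e.g.\ with $\alpha=1-\kappa$ and $\alpha=1/2+\delta$ in the $R_1$ estimates — the boundary vanishing encoded in $H^{\alpha+\beta}(M)$ is essential, and a Hardy-type inequality of exactly the form you invoke (vanishing at $\tau=\pm L$, integrate the gradient) is the right tool, with interpolation supplying the fractional case. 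A minor imprecision: the classical Hardy inequality $\int_0^L\tau^{-2\alpha}|g|^2\,d\tau\lesssim\|g\|_{H^1}^2$ with $g(0)=0$ holds for all $\alpha<1$ on a bounded interval; the threshold $\alpha=1/2$ is not where Hardy fails but where the estimate begins to require the boundary vanishing, so phrasing the lemma as ``valid for $\alpha<1/2$'' understates its scope.
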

\begin{proof}
Note that $\langle n_1 \rangle^{\alpha} \langle (n_2,n_3) \rangle^{\beta} \leq  \langle (n_1,n_2,n_3) \rangle^{\alpha+\beta}$. Hence 
\begin{equation} \label{eq: HtauHzHx}
\|f\|_{H^{\alpha}_{\tau}H^{\beta}_{z}} \leq \|f\|_{H^{\alpha+\beta}}.	
\end{equation}
 Therefore, by H\"older's inequality and Sobolev embedding,
\begin{align}
\|d(x,\partial M)^{-\alpha}f\|_{L^2_{\tau} H^{\beta}_{z}} &\leq \left(\int d(\tau,\partial M)^{-\frac{\alpha}{\alpha+\delta}}d\tau\right)^{\frac{1}{2\alpha+2\delta}}  \|\|f\|^2_{H^{\beta}_{z}}\|_{L^{\frac{2}{1-2\alpha-2\delta}}_\tau}\\
	&\leq C  \|\|f\|^2_{H^{\beta}_{z}}\|_{H^{\alpha+\delta}_{\tau}} \leq C'  \|f\|_{H^{\alpha+\beta+\delta}}.
\end{align}
\end{proof}

We will estimate $R_1(b)$ first since it is the more straightforward term to analyze.

\begin{lemma}
There exists $\kappa >0$ sufficiently small such that, for every $\delta> 0 $ sufficiently small, there exists $C > 0$  such that, for every $T>0$,
  \begin{align} 
  \mathbb E [| R_1(b) |]&\leq C  \Big( 1+ \| d^{1-\kappa}(\cdot,\partial M) \llbracket H(\varphi_-,\varphi_+)^2_T \rrbracket \|_{L^4_\tau L^\infty_z}^4  \\&\qquad +  \| d(\cdot,\partial M)^{1/2-\kappa}  \llbracket H(\varphi_-,\varphi_+) ^2_T \rrbracket \|_{L^{4/3}_\tau L^\infty_z}^4 \Big) + \delta (\| K_T
     \|^2_{H^{1 - \kappa}} + \| K_T \|^4_{L^4} ).
\end{align}
\end{lemma}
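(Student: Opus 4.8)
The plan is to recall the intermediate ansatz $Z_T(u) = -\mathbb{W}^{[3]}_T + K_T$ and split $R_1(b)$ accordingly into a term with two factors of $K_T$, a mixed term, and a term with two factors of $\mathbb{W}^{[3]}_T$. The last of these is a pure stochastic contribution: using duality between $\mathcal{C}^{-1-\kappa}_z$ and $B^{1+\kappa}_{1,1,z}$ on each $\tau$-slice together with the regularity $\llbracket H(\varphi_-,\varphi_+)^2_T\rrbracket \in L^\infty_\tau(\text{weighted})\mathcal{C}^{-1+\kappa}_z$ and the fact that $\mathbb{W}^{[3]}_T \in \mathcal{C}^{1/2-\kappa}$ almost surely with uniform moments (Proposition \ref{prop: bulk enhancement}), it is bounded in expectation by a constant plus a power of $\|\llbracket H(\varphi_-,\varphi_+)^2_T\rrbracket\|$ in an appropriate weighted space; this produces the constant $1$ and is absorbed into the boundary-field norms on the right-hand side. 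The main work is the term $\int 6\,\llbracket H(\varphi_-,\varphi_+)^2_T\rrbracket K_T^2\,dx$ and the mixed term; the mixed term is handled by the same scheme as the $K_T^2$ term after one application of Young's inequality in $K_T$ versus $\mathbb{W}^{[3]}_T$, so I focus on the $K_T^2$ piece.

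For $\int \llbracket H(\varphi_-,\varphi_+)^2_T\rrbracket K_T^2\,dx$ I would first split the spatial integral into the $\tau$-integral over $[-L,L]$ and the $\mathbb{T}^2$-slice integral. On each slice, estimate
$\big|\int_{\mathbb{T}^2}\llbracket H(\varphi_-,\varphi_+)^2_T\rrbracket(\tau,\cdot)\,K_T(\tau,\cdot)^2\,dz\big| \le \|\llbracket H(\varphi_-,\varphi_+)^2_T\rrbracket(\tau,\cdot)\|_{L^\infty_z}\,\|K_T(\tau,\cdot)\|_{L^2_z}^2$, then distribute the weight $d(\tau,\partial M)$: write $\|\llbracket H(\varphi_-,\varphi_+)^2_T\rrbracket(\tau,\cdot)\|_{L^\infty_z} = d(\tau,\partial M)^{-a}\cdot\big(d(\tau,\partial M)^{a}\|\llbracket H(\varphi_-,\varphi_+)^2_T\rrbracket(\tau,\cdot)\|_{L^\infty_z}\big)$ for a suitable exponent $a$ (either $1-\kappa$ or $1/2-\kappa$ depending on which of the two right-hand-side norms one targets — the two appear because of the two exponents $4/3$ and $4/3$ resp.\ $4$ allowed for $K_T$), and then apply Hölder in $\tau$: the boundary-field factor lands in $L^4_\tau$ (resp.\ $L^{4/3}_\tau$) and the remaining $d(\tau,\partial M)^{-a}\|K_T(\tau,\cdot)\|_{L^2_z}^2$ factor in the conjugate exponent. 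To control the latter I would invoke Lemma \ref{lemma: weight Sobolev}: $\|d(\cdot,\partial M)^{-a}\|K_T(\tau,\cdot)\|_{L^2_z}\|_{L^{q}_\tau} \lesssim \|K_T\|_{H^{a+\delta}_x}$ for appropriate $q$ — actually, since we have a square of the $L^2_z$ norm, I'd interpolate $\|K_T(\tau,\cdot)\|_{L^2_z}^2$ against $\|K_T\|_{L^4}^2$ and $\|d(\cdot,\partial M)^{-2a}\|K_T(\tau,\cdot)\|_{L^2_z}^2\|$ controlled by $\|K_T\|_{H^{2a+\delta}}$ via the lemma (picking $\beta=0$, $\alpha=2a$), so that $2a<1$, i.e.\ $a<1/2$, with the boundary exponent taken slightly below $1/2$; the other branch with $a$ close to $1$ uses a different $L^p_z$ bound and interpolation, which is why the two different weighted norms of the boundary field appear in the statement.

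Finally, I would collect the terms and apply Young's inequality to separate the power of $\|K_T\|_{H^{1-\kappa}}$ (or $\|K_T\|_{L^4}$) with small coefficient $\delta$ from the power of the boundary-field norms with a large constant $C_\delta$; taking expectations, the boundary-field norms are deterministic in the boundary data so they pass through, and the terms containing $\mathbb{W}^{[3]}_T$ after Young produce, via Proposition \ref{prop: bulk enhancement}, a uniform-in-$T$ constant — this is the source of the additive $1$. The main obstacle I anticipate is the bookkeeping of the weight exponents: one must verify that the weight $d(\tau,\partial M)^{-a}$ is integrable at the required $\tau$-exponent while simultaneously $2a<1$ so Lemma \ref{lemma: weight Sobolev} applies with the loss-of-$\kappa$ regularity budget of $K_T$ (which only has $H^{1-\kappa}$ from Lemma \ref{lem: KT estimate}), and that the $L^4_\tau$ versus $L^{4/3}_\tau$ integrability of the boundary field (which follows from the weighted stochastic estimates of Proposition \ref{prop: stochastic boundary v2} / Lemma \ref{lemma: regularity estimates bdry field} with $i=2$) is genuinely available for both exponents $a$; everything else is routine duality, Hölder, Young, and the already-established bulk stochastic bounds.
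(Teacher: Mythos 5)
Your overall scaffolding — intermediate ansatz $Z_T(u)=-\mathbb W^{[3]}_T+K_T$, trinomial split of $R_1(b)$, weight redistribution between the boundary field and $K_T$, H\"older in $\tau$, Lemma \ref{lemma: weight Sobolev}, then Young — is exactly the paper's plan. Two points in the execution need correction.

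First, your opening slice estimate $\bigl|\int_{\mathbb T^2}\llbracket H^2\rrbracket K_T^2\bigr|\le\|\llbracket H^2\rrbracket\|_{L^\infty_z}\|K_T\|_{L^2_z}^2$ is what forces the weight $d^{-2a}$ onto a single $K_T$ factor and creates the constraint $2a<1$ you worry about. The paper instead does a three-factor H\"older on each slice, $L^\infty_z\cdot L^2_z\cdot L^4_z$, and then inserts the weight on the $L^2_z$ factor only:
\begin{equation}
\int \llbracket H^2\rrbracket K_T^2\,dz \le \bigl(d(\tau,\partial M)^{1-\kappa}\|\llbracket H^2\rrbracket(\tau,\cdot)\|_{L^\infty_z}\bigr)\bigl(d(\tau,\partial M)^{-(1-\kappa)}\|K_T(\tau,\cdot)\|_{L^2_z}\bigr)\|K_T(\tau,\cdot)\|_{L^4_z}.
\end{equation}
Now the weight on $K_T$ is $a=1-\kappa$, not $2a$, so Lemma \ref{lemma: weight Sobolev} with $\alpha=1-\kappa$, $\beta=0$ applies with only the constraint $1-\kappa<1$, i.e.\ $\kappa>0$, and the unweighted $L^4_z$ factor becomes the coercive $\|K_T\|_{L^4}$ after H\"older in $\tau$ with exponents $(4,2,4)$. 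This yields the $L^4_\tau L^\infty_z$ boundary norm with weight $d^{1-\kappa}$. Your interpolation $\|K_T\|_{L^2_z}^2\le\|K_T\|_{L^2_z}\|K_T\|_{L^4_z}$ (if that is what you meant by ``interpolate against $L^4$'') is effectively the same trick, but as written your proposal simultaneously imposes $2a<1$, so the two pieces of your argument contradict each other; the $a<1/2$ restriction is an artifact of the $L^2_z\cdot L^2_z$ choice and disappears once the three-factor H\"older is used.

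Second, the attribution of the two boundary-field norms is off. You suggest both arise from two branches of the $K_T^2$ term. In the paper, $\|d^{1-\kappa}\llbracket H^2\rrbracket\|_{L^4_\tau L^\infty_z}$ comes from the $K_T^2$ term via the three-way H\"older just described, and $\|d^{1/2-\kappa}\llbracket H^2\rrbracket\|_{L^{4/3}_\tau L^\infty_z}$ comes from the mixed term $\int\llbracket H^2\rrbracket\,\mathbb W^{[3]}_TK_T$, where one factors out $\|d^{-1/2+\kappa}\mathbb W^{[3]}_T\|_{L^\infty_x}$ (finite by Proposition \ref{prop: bulk enhancement} since $\mathbb W^{[3]}_T\in\mathcal C^{1/2-\kappa}$) and places $K_T$ in $L^4_x$, leaving $L^{4/3}_\tau$ for the boundary field. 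The third term $\int\llbracket H^2\rrbracket(\mathbb W^{[3]}_T)^2$ uses $\|d^{-1/2+\kappa}\mathbb W^{[3]}_T\|_{L^\infty_x}^2$ and puts the boundary field in $L^1_\tau L^\infty_z$ with weight $d^{1-2\kappa}$, which is absorbed by the stated norms. Correcting both points closes the argument.
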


\begin{proof}
Recalling the intermediate ansatz \eqref{eq: intermediate ansatz}, we further decompose $R_1(b)$ into 
\begin{equation}
R_1(b) = \int \llbracket H(\varphi_-,\varphi_+)_T^2 \rrbracket (K_T^2 + 2 \mathbb W_T^{[3]} K_T + (\mathbb W_T^{[3]})^2 ) =: \sum_{i=1}^3 R_1(b;i). 
\end{equation}

Let us first estimate $R_1(b,1)$.  
Let $\kappa > 0$ be sufficiently small. We split into $(\tau,z)$ coordinates. By H\"older's inequality, symmetry, followed by H\"older's inequality again, we have
\begin{align}
|R_1&(b,1)| 
\\ &\leq \int \| d(\tau,\partial M)^{1-\kappa} \llbracket H(\varphi_-,\varphi_+) ^2_T \rrbracket (\tau,\cdot)\|_{L^\infty_z} \| d(\tau,\partial M)^{-1+\kappa} K_T(\tau,\cdot)\|_{L^2_z} \|K_T(\tau,\cdot)\|_{L^4_z} d\tau  \\
&\leq \| d^{1-\kappa}(\cdot ,\partial M) \llbracket H(\varphi_-,\varphi_+)_T^2 \rrbracket \|_{L^4_\tau L^\infty_z}  \| d(\cdot, \partial M)^{-1+\kappa} K_T\|_{L^2_x} \|K_T\|_{L^4_x}
\\
&\leq
C \| d^{1-\kappa}(\cdot,\partial M) \llbracket H(\varphi_-,\varphi_+)^2_T \rrbracket \|_{L^4_\tau L^\infty_z}^4 + \delta \left( \| d(\cdot, \partial M)^{-1+\kappa} K_T\|_{L^2_x}^2 +  \|K_T\|_{L^4_x}^4 \right).
\end{align}
The desired estimate holds by using Lemma \ref{lemma: weight Sobolev} on the penultimate term of the righthand side.

For the second term, we proceed similarly. By H\"older's inequality,   
\begin{align}
|R_1(b,2)| \leq \| d(\cdot,\partial M)^{1/2-\kappa}  \llbracket H(\varphi_-,\varphi_+) ^2_T \rrbracket \|_{L^{4/3}_\tau L^\infty_z} \| d(\cdot,\partial M)^{-1/2+\kappa} \mathbb W^{[3]}_T\|_{L^\infty_x} \| K_T \|_{L^4_x}.
\end{align}
Similarly, for the final term we have
\begin{equation}
|R_1(b,3)| \leq \|d^{1-2\kappa}(\cdot,\partial M)  \llbracket H(\varphi_-,\varphi_+)_T ^2 \rrbracket  \|_{L^1_\tau L^\infty_z} \| d^{-1/2+\kappa}(\cdot,\partial M) \mathbb W_T^{[3]} \|_{L^\infty_x}.
\end{equation}
The desired estimates follow by straightforward arguments. 
 \end{proof}

We now turn to the estimate of $R_1(a)$. 
 
 \begin{lemma}
 There exists $p>0$ such that, for every $\delta > 0$ sufficiently small, there exists $C=C(\delta)>0$ such that, for every $T > 0$, 
  \begin{align} 
  \mathbb E [| R_1(a) |] &\leq C  \Big(1+\| H(\varphi_-,\varphi_+)_T\|_{L^{8/5}}^2 + \| d (\cdot,
    \partial M)^{1 / 2 + 2 \kappa} H (\varphi_-, \varphi_+)_T 
    \|_{L^\infty}^p
    \\
    &\qquad + \| d (\cdot, \partial M)^{1 + 2 \kappa} H (\varphi_-, \varphi_+)_T) \|_{L_{\tau}^{\infty} \mathcal{C}_z^{1 / 2 + \kappa}}^p\Big) + \delta (\| K_T
     \|^2_{H^{1 - \delta}} +  \| K_T \|^4_{L^4} ).
\end{align}

 \end{lemma}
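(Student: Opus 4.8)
The plan is to estimate
\[
R_1(a) = 12\int_M W_T\, H(\varphi_-,\varphi_+)_T\, Z_T^2 \, dx
\]
by the same strategy as for $R_1(b)$: substitute the intermediate ansatz $Z_T = -\mathbb W_T^{[3]} + K_T$ to expand $Z_T^2 = K_T^2 + 2\mathbb W_T^{[3]} K_T + (\mathbb W_T^{[3]})^2$, producing three sub-terms $R_1(a;i)$. For each of these, the product $W_T\, H(\varphi_-,\varphi_+)_T$ carries the ``bad'' bulk roughness $\mathcal C^{-1/2-\kappa}_x$ (from $W_T$), which is not integrable against itself, so the first step is a paraproduct decomposition in the spatial variable of $W_T \cdot H(\varphi_-,\varphi_+)_T$: write it as $W_T \succ H(\varphi_-,\varphi_+)_T + W_T \prec H(\varphi_-,\varphi_+)_T + W_T \circ H(\varphi_-,\varphi_+)_T$. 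The resonant term $W_T\circ H(\varphi_-,\varphi_+)_T$ is the delicate one — it requires $H(\varphi_-,\varphi_+)_T$ to have regularity strictly above $1/2$, which it does \emph{away from the boundary} by the regularizing properties of the harmonic extension (Appendix \ref{appendix: harmonic extension}, estimate \eqref{tool: H reg decay}), at the cost of a boundary blow-up weight $d(\cdot,\partial M)^{-1/2-\kappa}$ or so; this accounts for the appearance of $\|d(\cdot,\partial M)^{1+2\kappa} H(\varphi_-,\varphi_+)_T\|_{L^\infty_\tau \mathcal C^{1/2+\kappa}_z}$ in the statement.

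After the paraproduct split I would, for each of the three homogeneity sub-terms, peel off the two $Z$-factors ($K_T^2$, $\mathbb W_T^{[3]}K_T$, or $(\mathbb W_T^{[3]})^2$): the pure stochastic piece $(\mathbb W_T^{[3]})^2$ is bounded using Proposition \ref{prop: bulk enhancement} and contributes to the constant; the mixed piece and the $K_T^2$ piece are estimated by duality, the fractional Leibniz rule, and interpolation, splitting into $(\tau,z)$ coordinates exactly as in the $R_1(b)$ lemma. At each step the strategy is to absorb $\|K_T\|_{H^{1-\delta}}$ and $\|K_T\|_{L^4}$ factors into the small-parameter terms $\delta(\|K_T\|_{H^{1-\delta}}^2 + \|K_T\|_{L^4}^4)$ via Young's inequality, using Lemma \ref{lemma: weight Sobolev} to trade the negative boundary weights $d(\cdot,\partial M)^{-\alpha}$ appearing on $K_T$ against a small loss of Sobolev regularity (here one needs $\alpha$ small enough that $\alpha + \beta < 1$, which is why only $\|K_T\|_{H^{1-\delta}}$ and not $\|K_T\|_{H^1}$ appears). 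The terms involving only $W_T$ and $H(\varphi_-,\varphi_+)_T$ that remain — after all $K_T$-factors are absorbed — are then bounded by finite powers of the norms $\|H(\varphi_-,\varphi_+)_T\|_{L^{8/5}}$, $\|d(\cdot,\partial M)^{1/2+2\kappa} H(\varphi_-,\varphi_+)_T\|_{L^\infty}$, and $\|d(\cdot,\partial M)^{1+2\kappa} H(\varphi_-,\varphi_+)_T\|_{L^\infty_\tau \mathcal C^{1/2+\kappa}_z}$; their moments (and those of $\|W_T\|_{\mathcal C^{-1/2-\kappa}_x}$) are controlled uniformly in $T$ by Propositions \ref{prop: bulk enhancement} and \ref{prop: bdry stochastic} together with harmonic extension estimates, yielding the constant $C$ and the stated power $p$. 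The exponents $8/5$, $1/2+2\kappa$, $1+2\kappa$, and which H\"older triple one uses in the $(\tau,z)$-split should be tracked carefully so that all the $\tau$-integrals of negative powers of $d(\tau,\partial M)$ converge and the $z$-norms close under the 2d Sobolev embeddings.

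\textbf{Main obstacle.} The hard part is the resonant sub-term $W_T \circ H(\varphi_-,\varphi_+)_T$ multiplied by $K_T^2$: this is the only place where one genuinely needs $H(\varphi_-,\varphi_+)_T$ at regularity $> 1/2$, which forces a singular boundary weight, and one must then check that the remaining weight budget is compatible with (i) Lemma \ref{lemma: weight Sobolev} applied to the $K_T$ factors (requiring the total negative weight exponent on $K_T$ to stay below $1$, hence the $H^{1-\delta}$ rather than $H^1$), and (ii) integrability in $\tau$ of the product of all the negative boundary powers. Balancing these three competing weight constraints simultaneously — boundary blow-up of the harmonic extension, the $\flat$-free higher-homogeneity term $K_T$, and the $L^1_\tau$/$L^2_\tau$ H\"older exponents — is the delicate bookkeeping that makes this lemma nontrivial compared to the periodic case of \cite{BG20}.
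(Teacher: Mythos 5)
There is a genuine gap: your plan would run into trouble on two of the three sub-terms, precisely because of the extra rough factor $W_T$ that distinguishes $R_1(a)$ from $R_1(b)$.

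First, you propose to bound the $(\mathbb W_T^{[3]})^2$ piece ``using Proposition \ref{prop: bulk enhancement} ... contributing to the constant,'' i.e.\ a pointwise estimate. The paper instead observes that $\mathbb E[W_T(\mathbb W_T^{[3]})^2]=0$ by Wick's theorem (the product lives in odd Wiener chaoses, and $H(\varphi_-,\varphi_+)_T$ is independent of the bulk Gaussian), so the whole sub-term $\int W_T\,H(\varphi_-,\varphi_+)_T\,(\mathbb W_T^{[3]})^2\,dx$ vanishes in the $\mathbb E$ of the cost function. Bounding it directly would force you to make sense of a resonant-type product $W_T\circ(\mathbb W_T^{[3]})^2$ and then pair it against the harmonic extension, which is not one of the controlled stochastic objects in $\mathfrak S$ and adds nothing given that the term simply disappears. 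Second, and more seriously: for the cross term $\int W_T\,H(\varphi_-,\varphi_+)_T\,\mathbb W_T^{[3]}K_T\,dx$ you propose to ``estimate by duality, fractional Leibniz, and interpolation ... exactly as in the $R_1(b)$ lemma'' after a paraproduct split of $W_T H$. But in $R_1(b)$ the only rough field in play is $\llbracket H^2\rrbracket$ and the cross-term factor $\mathbb W_T^{[3]}K_T$ consists of positively-regular objects, so pointwise H\"older suffices. Here $W_T\in\mathcal C^{-1/2-\kappa}$ is correlated with $\mathbb W_T^{[3]}$ and $K_T$, and the paper does \emph{not} treat this cross term by a paraproduct of $W_T H$; instead it uses duality to split off $K_T$ in $H^{1-\kappa}$ and then computes $\mathbb E[\|W_T\,H(\varphi_-,\varphi_+)_T\,\mathbb W_T^{[3]}\|^2_{H^{-1+\kappa}}]$ directly via Wick's theorem applied to $W_T$ and $\mathbb W_T^{[3]}$, using the covariance kernel estimates of Appendix \ref{sec: Dirichlet covariance estimates}. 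This stochastic second-moment calculation is precisely where the factor $\|H(\varphi_-,\varphi_+)_T\|_{L^{8/5}}^2$ in the statement comes from, and your plan never produces it. The paraproduct decomposition of $W_T H$ (and it is a $z$-only paraproduct $\succ_z$, $\preccurlyeq_z$, not a full paraproduct in $x$, which matters because $H(\tau,\cdot)$ is smooth in $z$ only away from $\partial M$) is used by the paper \emph{only} for the remaining $K_T^2$ sub-term, where your description of the weight bookkeeping and the role of Lemma \ref{lemma: weight Sobolev} is essentially correct.
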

 
 \begin{proof}
By Wick's theorem, $\mathbb E[W_T (\mathbb W_T^{[3]})^2]=0$. Hence by the intermediate ansatz \eqref{eq: intermediate ansatz}, we obtain
  \begin{align} \label{eq: R_1a}
    R_1(a) 
    \equiv  & 
    \int 12 W_T  H (\varphi_-, \varphi_+) \mathbb{W}_T^{[3]}K_T \mathd x + \int
    12 W_T H (\varphi_-, \varphi_+) K^2_T \mathd x.
  \end{align}
  For the first term in \eqref{eq: R_1a}, by duality and the Cauchy-Schwarz inequality, we have that for every $\delta > 0$, there exists $C=C(\delta) > 0$ such that
  \begin{equation}
  \left| \int W_T H (\varphi_-, \varphi_+) \mathbb{W}_T^{[3]} K_T dx \right|  \leq  \mathbb E\left [ C \| W_T H (\varphi_-, \varphi_+) \mathbb{W}_T^{[3]}
    \|^2_{H^{- 1 + \kappa}} + \delta \| K_T \|_{H^{1 - \kappa}}^2 \right]. 
  \end{equation}
 We analyze the first term on the righthand side above. Recall that $\Delta^{\rm per}$ is the Laplacian on $\mathcal M$. By the integral representation of Sobolev norms, we may write 
  \begin{multline}
  \mathbb E \left[ \| W_T H (\varphi_-, \varphi_+) \mathbb{W}_T^{[3]}
    \|^2_{H^{- 1 + \kappa}} \right] \\= \int \int (-\Delta^{\rm per}+m^2)^{-1+\kappa} H(\varphi_-,\varphi_+)(x) H(\varphi_-,\varphi_+)(y) \mathbb E[\mathcal E(x,y)],
  \end{multline}
  where $\mathcal E(x,y):= W_T(x) \mathbb W_T^{[3]}(x) W_T(y) \mathbb W_T^{[3]}(y)$. Notice that 
  \begin{equation}
  \mathbb E[\mathcal E(x,y)] = \int_0^T \int_0^T \int \int  \dot{C}_t^M(x,z)  \dot{C}_s^M(y,z') \mathbb  E\left[ W_T(x) W_T(y) \mathbb W_t^3(z) \mathbb W_s^3(z')  \right] dz dz' dt ds.
  \end{equation}
  By Wick's theorem, we have that
  \begin{align}
  \mathbb  E\left[ W_T(x) W_T(y) \mathbb W_t^3(z) \mathbb W_t^3(z')  \right] = \mathcal S_1(x,y,z,z') + \mathcal S_2(x,y,z,z') + \mathcal S_3(x,y,z,z'),
  \end{align}
  where
  \begin{align}
  \mathcal S_1(x,y,z,z') &= 6C_T^M(x,y) C_{t\wedge s}^M(z,z')^3, \\
  \mathcal S_2(x,y,z,z') &= 18 C_t^M(x,z)C_{t\wedge s}^M(z,z')^2 C_s^M(y,z'),
  \\
  \mathcal S_3(x,y,z,z') &= 18 C_{s}^M(x,z') C_{t\wedge s}^M(z,z')^2 C_t^M(y,z). 
  \end{align}
  For each $i$, define 
  \begin{equation}
  \mathcal S_i'(x,y):= \int_0^T \int_0^T \int  \int  \dot{C}_t^M(x,z) \dot{C}_s^M(y,z') \mathcal S_i dz dz' dt ds. 
  \end{equation}
By the interpolation estimates on the covariances, see Propositions \ref{prop: M covariance interp T} and \ref{prop: M derivative covariance interp}, and standard convolution bounds, we have
\begin{align}
&\mathcal S_1'(x,y)\\
 &\leq C \int_0^T \int_0^T \int \int \frac{1}{\langle t\rangle ^{1+2\delta} \langle s \rangle^{1+2\delta}  |x-z|^{1+2\delta }|y-z'|^{1+2\delta} }
 \\ 
 &\hspace{60mm} \times \frac{1}{|x-y|} \frac{(t\wedge s)^\delta \log\left( 1 + \frac{|z-z'|^2}{(z_1-z_1)^2} \right) }{|z-z'|^{3-\delta}} dzdz'dsdt
\\
&\leq C' \frac{1}{|x-y|} \int \frac{1}{|x-z|^{1+2\delta}|y-z|^{1+\delta}} dz \leq \frac{C''}{|x-y|}.
\end{align}
Similarly, integrating out the $t$ and $s$ variables and $z'$, we obtain
\begin{align}
\mathcal S_2'(x,y) &\leq C \int_0^T \int_0^T \int \int \frac{1}{\langle t\rangle ^{1+2\delta} \langle s \rangle^{1+2\delta}  |x-z|^{2+2\delta }|y-z'|^{2+2\delta} } \frac{1}{|z-z'|^2} dz dz' ds dt
\\
&\leq C' \int \frac{1}{|x-z|^{2+2\delta}|y-z|^{1+2\delta}}dz \leq \frac{C''}{|x-y|^{4\delta}}.  
\end{align}
By symmetry, we also have
\begin{equation}
\mathcal S_3'(x,y) \leq \frac{C''}{|x-y|^{4\delta}}. 
\end{equation}
Putting these together and using standard bounds on the Bessel potential kernel, we obtain
\begin{align}
&\mathbb E \left[ \| W_T H (\varphi_-, \varphi_+)_T \mathbb{W}_T^{[3]}
    \|^2_{H^{- 1 + \kappa}} \right] 
    \\&\leq C \int \int \frac{|(-\Delta+m^2)^{-1+\kappa}(x,y)|}{|x-y|} |H(\varphi_-,\varphi_+)_T(x)| |H(\varphi_-,\varphi_+)_T(y)| dx dy
    \\
    &\leq C' \int \int \frac{1}{|x-y|^{2+2\kappa}}	|H(\varphi_-,\varphi_+)_T(x)| |H(\varphi_-,\varphi_+)_T(y)|\leq C'' \| H(\varphi_-,\varphi_+)_T\|_{L^{8/5}}^2,
\end{align}
where the last inequality comes from using Young's convolution inequality (duality form) with exponents $(p,q)$ such that $2/p+1/q=2$ (in this case, we choose $q=4/3$ and $p=8/5$).

  We now estimate the second term in the term $R_1(a)$ given in \eqref{eq: R_1a}. By a paraproduct decomposition in the $z$-variable, 
  \begin{align} \label{eq: R_1a para decomp}
    W_T H (\varphi_-, \varphi_+)_T & = W_T \succ_z H (\varphi_-, \varphi_+)_T +
    W_T \preccurlyeq_z H (\varphi_-, \varphi_+)_T
  \end{align}
  We will need the following preliminary estimates on these terms. For the first term in \eqref{eq: R_1a para decomp}, by paraproduct estimates, for any $\kappa > 0$, there exists $C=C(\kappa)>0$ such that
  \begin{multline}
  \begin{split} \label{eq: R1_a prelim 1}
    \sup_\tau \| d (\tau, \partial M)^{1 / 2 + 2 \kappa} W_T \succ_z H (\varphi_-, \varphi_+)_T
    (\tau, \cdot) \|_{\mathcal{C}_z^{- 1 / 2 - \kappa}}  
    \\ \leq  C\sup_\tau \| W_T (\tau, \cdot) \|_{\mathcal{C}_z^{- 1 / 2 - \delta}} \| d (\tau,
    \partial M)^{1 / 2 + 2 \kappa} H (\varphi_-, \varphi_+)_T (\tau, \cdot)
    \|_{L_z^{\infty}} .
    \end{split}
  \end{multline}
   For the second term in \eqref{eq: R_1a para decomp}, by para- and resonance product estimates and similar symmetry reasons, 
  \begin{align}
  \begin{split} \label{eq: R1_a prelim 2}
     \sup_\tau &\| d (\tau, \partial M)^{1 + 2 \delta} (W_T \preccurlyeq_z H (\varphi_-, \varphi_+)_T
    (\tau, \cdot)) \|_{L_z^{\infty}} \\
    &\leq  C \| W_T  \|_{L_{\tau}^{\infty} \mathcal{C}_z^{- 1 / 2 -
    \kappa}} {\| d (\cdot, \partial M)^{1 + 2 \kappa} H (\varphi_-, \varphi_+)_T) \|_{L_{\tau}^{\infty} \mathcal{C}_z^{1 / 2 + \kappa}}}. 
    \end{split}  
  \end{align}
  
By using a paraproduct decomposition and inserting weight factors, we may write
  \begin{equation}
  \int W_T H(\varphi_-,\varphi_+) K_T^2 = R_1(a,2,1) + R_1(a,2,2),
  \end{equation}
  where
  \begin{align}
  R_1(a,2,1) &:=  \int d (x, \partial M)^{1/2 + 2\kappa} (W_T \succ_z H (\varphi_-, \varphi_+)_T) (d
    (x, \partial M)^{-1 / 4 - \kappa} K_T)^2 \mathd x, 
    \\
    R_1(a,2,2) &:= \int d (x, \partial M)^{1 + 2 \kappa} (W_T \preccurlyeq_z H (\varphi_-, \varphi_+)_T)
    (d (x, \partial M)^{-1 / 2 - \kappa} K_T)^2 \mathd x.	
  \end{align}
  By duality, the fractional Leibniz rule, and the preliminary estimate \eqref{eq: R1_a prelim 1}
  \begin{align}
  |R_1(a,2,1)| &\leq C  \sup_\tau  \| d (\tau , \partial M)^{1 / 2 + 2 \delta} W_T \succ_z H (\varphi_-, \varphi_+)_T
    (\tau, \cdot) \|_{\mathcal{C}_z^{- 1 / 2 - \kappa}} \\ &\qquad \times  \| (d (\cdot, \partial M)^{-1 / 4 -
    \kappa} K_T) \|^2_{L^2_\tau H^{1 / 2 + \kappa }_z}.
  \end{align}
  By similar reasoning and \eqref{eq: R1_a prelim 2},
  \begin{align}
  |R_1(a,2,2)|&\leq C \| d (\cdot, \partial M)^{1 + 2 \kappa} W_T \preccurlyeq_z H (\varphi_-,
    \varphi_+)_T  \|_{L^{\infty}} \| (d (\cdot, \partial M)^{-1 / 2 - \delta}
    K_T) \|^2_{L^2}. 
  \end{align}
  Observe that by Lemma \ref{lemma: weight Sobolev},
  \begin{equation}
  \max(\| (d (\cdot, \partial M)^{-1 / 4 -
    \kappa} K_T) \|^2_{L^2_\tau H^{1 / 2 + \kappa }_z} ,\| (d (\cdot, \partial M)^{-1 / 2 - \delta}
    K_T) \|^2_{L^2}) \leq \|K_T\|_{H^{3/4+\kappa}}^2. 
  \end{equation}
The rest of the lemma now follows by a straightforward interpolation argument and we omit the details.
 
 \end{proof}

\subsection{Estimate of $R_3$}

Recall that
\begin{align}
R_3 = & \int_M \llbracket H (\varphi_-, \varphi_+)_T^4 \rrbracket -
  \llbracket \bar{H} (\varphi_-)_T^4 \rrbracket - \llbracket
  \bar{H} (\varphi_+)_T^4 \rrbracket \mathd x \\
  &\qquad \qquad \qquad \qquad + \int_{M \Delta \{ [-1,1]\times \mathbb T^2\}} \llbracket \overline H(\varphi_-)_T^4 \rrbracket + \llbracket \overline H(\varphi_+)_T^4 \rrbracket dx.
\end{align}

\begin{lemma}
For every $\kappa > 0$ sufficiently small, there exists $C>0$ such that for every $T>0$ and $\varphi_-,\varphi_+ \in \mathcal C^{-1/2-\kappa}_z$, 
\begin{equation}
|R_3| \leq \sum_{\sigma \in \{\pm \}}C \Big(1+ \|\varphi_\sigma \|_{\mathcal C^{-1/2-\kappa}_z}^4 + \sum_{i=1}^3  \sup_{\tau \in M} | d(\tau, \partial M)^{1-\kappa}\| \llbracket (\bar{H} (\varphi_\sigma))^i \rrbracket (\tau, \cdot)
    \|_{\mathcal{C}^{- 1 + \kappa/2}_z} |^4 \Big)
\end{equation}

\end{lemma}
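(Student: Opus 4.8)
The plan is to exploit two pieces of structure: first, the term $R_3$ is \emph{deterministic} in the boundary fields (no bulk Gaussian $W$ appears), so the expectation in \eqref{eq: ri estimates} plays no role and we may argue pointwise in $(\varphi_-,\varphi_+)$; second, the integrand is a difference of fourth Wick powers of harmonic extensions, which is lower-order in a suitable sense. The key observation is that by linearity of the harmonic extension, $H(\varphi_-,\varphi_+) = H(\varphi_-,0) + H(0,\varphi_+)$, and we have already set up (in Section \ref{sec: renormalization of bulk amplitudes}) the convention that Wick powers of $H(\varphi_\pm)_T$ are taken with the covariance $C^B_T$ while those of $\overline H(\varphi_\pm)_T$ use $\overline C^B_T$. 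So the first integral over $M$ decomposes, via the binomial/Wick expansion of $\llbracket H(\varphi_-,\varphi_+)_T^4\rrbracket$ into terms $\llbracket H(\varphi_-)_T^k\rrbracket\llbracket H(\varphi_+)_T^{4-k}\rrbracket$, into: (a) the two ``diagonal'' pieces $\llbracket H(\varphi_\sigma)_T^4\rrbracket$, $\sigma\in\{\pm\}$, minus their $\overline H$ counterparts restricted to $M$; (b) genuinely mixed cross-terms $\llbracket H(\varphi_-)_T^k\rrbracket\llbracket H(\varphi_+)_T^{4-k}\rrbracket$ with $1\le k\le 3$. The second integral over the symmetric difference $M\,\Delta\,\{[-1,1]\times\mathbb T^2\}$ is a clean single-harmonic-extension term on a fixed bounded region.

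First I would handle the cross-terms in (b). On $M_-=[-L,0)\times\mathbb T^2$, the field $H(\varphi_+)$ satisfies $H(\varphi_+)|_{\partial^-M}=0$, hence is \emph{smooth up to $\partial^- M$} and in fact — by the infinite-cylinder decomposition $H(\varphi_+) = \overline H\varphi_+|_{M}+\Sigma^+(\varphi_+)$ with exponential decay of $\overline H\varphi_+$ away from $\partial^+M$ and smoothness of $\Sigma^+$ — it is very regular and small near $\partial^-M$; symmetrically $H(\varphi_-)$ is regular near $\partial^+M$. So in each cross-term one of the two factors carries a large reservoir of distance-to-boundary weight that compensates for the singularity of the other factor's Wick power near its own boundary component. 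Concretely, for the term $\int_M \llbracket H(\varphi_-)_T^k\rrbracket\, \llbracket H(\varphi_+)_T^{4-k}\rrbracket\,dx$ I would split into $(\tau,z)$-coordinates, bound the $\llbracket H(\varphi_-)_T^k\rrbracket(\tau,\cdot)$ factor in $\mathcal C^{-k/2+\alpha_k-\kappa}_z$ against the weighted $L^\infty_\tau$ norm appearing in the statement (using Proposition \ref{prop: stochastic boundary v2} / Lemma \ref{lemma: regularity estimates bdry field} — though since $R_3$ only involves the purely-$W^0$-flavoured objects, Proposition \ref{prop: stochastic boundary v2} suffices), pair it by duality with $\llbracket H(\varphi_+)_T^{4-k}\rrbracket(\tau,\cdot)\in\mathcal C^{(4-k)/2-\kappa}_z$ (positive regularity since $4-k\ge 1$ and the harmonic extension of a $\mathcal C^{-1/2-\kappa}_z$ function into the bulk gains regularity, with blow-up/decay controlled by Appendix \ref{appendix: harmonic extension}), and then integrate in $\tau$: the weight $d(\tau,\partial M)^{-\alpha_k}$ from the first factor and the positive-power decay $d(\tau,\partial^- M)^{\beta}$ (with $\beta$ as large as we like, paying regularity) from $H(\varphi_+)$ near $\partial^-M$ combine to an integrable function of $\tau$. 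This yields a bound of the claimed form, with the $\mathcal C^{-1/2-\kappa}_z$-norm-to-the-fourth contributions coming from the $\overline H$ regularity estimates used to control the $H(\varphi_+)$ factors.

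Next I would handle the diagonal differences in (a): $\int_M\big(\llbracket H(\varphi_\sigma)_T^4\rrbracket-\llbracket\overline H(\varphi_\sigma)_T^4\rrbracket\big)dx$ together with the correction $\int_{M\,\Delta\,[-1,1]\times\mathbb T^2}\llbracket\overline H(\varphi_\sigma)_T^4\rrbracket dx$. Here I would use that $H(\varphi_\sigma) = \overline H\varphi_\sigma|_{M_\sigma}+\Sigma^\sigma(\varphi_\sigma)$ with $\Sigma^\sigma(\varphi_\sigma)$ smooth, and that correspondingly $C^B_T$ and $\overline C^B_T$ differ by an infinitely smoothing term (uniformly in $T$), so that the difference $\llbracket H(\varphi_\sigma)_T^4\rrbracket-\llbracket\overline H(\varphi_\sigma)_T^4\rrbracket$ telescopes into a sum of products of lower Wick powers of $\overline H\varphi_\sigma$ (of order $\le 3$) against smooth, uniformly-bounded factors (either powers of $\Sigma^\sigma$ or the smoothing difference of covariances). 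Each such product is then estimated exactly as the cross-terms: pair a $\mathcal C^{-i/2+\alpha-\kappa}_z$ Wick power (weighted in $\tau$) against a smooth factor, integrate in $\tau$ over the bounded cylinder. The exponential decay of $\overline H\varphi_\sigma$ makes the integral over $M\,\Delta\,[-1,1]\times\mathbb T^2$ (which is contained in $\{|\tau|\ge 1\}$, away from $\partial^\sigma M$) trivially bounded by $\|\varphi_\sigma\|_{\mathcal C^{-1/2-\kappa}_z}^4$ times a constant. Summing over the finitely many terms and over $\sigma\in\{\pm\}$ gives the stated estimate.

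\textbf{Main obstacle.} The genuinely delicate point is the bookkeeping of weights near the boundary in the cross-terms and diagonal differences: one must check that in every term, whenever a Wick power $\llbracket H(\varphi_\sigma)_T^i\rrbracket$ forces a negative power $d(\tau,\partial^\sigma M)^{-\alpha}$ with $\alpha$ possibly as large as $i/2\le 3/2$, the compensating factor supplies a \emph{strictly larger} positive power so that the resulting function of $\tau$ is integrable on $[-L,L]$ — and crucially that this can be arranged while keeping all $z$-regularities in the admissible ranges of Proposition \ref{prop: stochastic boundary v2} (i.e. $\alpha\in(\max(0,\tfrac{i-2}{2}),i/2]$ and $\kappa$ small). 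This is where the regularizing/decay properties of the harmonic extension from Appendix \ref{appendix: harmonic extension} (in particular that $\overline H$ trades bulk regularity for integrable boundary blow-up, and decays exponentially away from the relevant face) do the real work; the constant and the power $p$ in the final bound are determined by how much regularity one must spend. Everything else is routine paraproduct/duality estimation of the type already carried out for $R_1$.
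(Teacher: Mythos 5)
Your proposal is correct and matches the paper's proof in all essentials: binomial/Wick expansion of the fourth Wick power, separating mixed cross-terms from smooth-correction terms, exploiting that $\llbracket\overline H(\varphi_\sigma)^i\rrbracket$ is singular only near $\partial^\sigma M$ where the other factor is smooth, splitting $M$ into bulk and boundary collar regions, and pairing the singular factor (in a weighted $\sup_\tau d(\tau,\partial M)^{1-\kappa}\|\cdot\|_{\mathcal C^{-1+\kappa/2}_z}$ norm) against the regular factor by duality. The only cosmetic difference is the order of operations: the paper decomposes $H(\varphi_-,\varphi_+)=\overline H(\varphi_-)+\overline H(\varphi_+)+S$ \emph{before} applying the binomial theorem for Wick powers (so the cross-terms are directly in $\overline H$ and the smooth correction $S$ appears at the top level), whereas you first binomial-expand in $H(\varphi_\sigma)$ and then telescope each $H(\varphi_\sigma)\to\overline H(\varphi_\sigma)$ against the smooth remainder — both routes lead to the same collection of estimates.
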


\begin{proof}
Let us first observe that the bound on the integral supported on $M\Delta \{[-1,1]\times \mathbb T^2\}$ is straightforward to estimate, therefore we only estimate the first integral. We will drop the subscript $T$ for convenience.

  Note that we can write
\begin{equation}
H(\varphi_-,\varphi_+) = \overline H(\varphi_-) + \overline H(\varphi_+) + S(\varphi_-,\varphi_+),	
\end{equation}
where $S(\varphi_-,\varphi_+)$ is $m$-harmonic with boundary data $\bar H(\varphi_-)|_{\partial^+ M}$ and $\bar H(\varphi_+)|_{\partial^- M}$, and therefore $S(\varphi_-,\varphi_+)$ is smooth up to the boundary. Using this decomposition, we may write
  \begin{align} 
  \begin{split}\label{eq: r3 eq1}
 \llbracket H &(\varphi_-, \varphi_+)^4 \rrbracket \\& = \sum_{0 \leq i \leq 3}
     \binom{4}{i} \llbracket (\bar{H} (\varphi_-) + \bar{H} (\varphi_+))^i
     \rrbracket \cdot S (\varphi_-, \varphi_+)^{4-i} + \llbracket (\bar{H} (\varphi_-) +
     \bar{H} (\varphi_+))^4 \rrbracket.
   \end{split}
  \end{align}
  
  We begin by estimating 
  \begin{equation}
  \int \llbracket (\bar{H} (\varphi_-) +
     \bar{H} (\varphi_+))^4 \rrbracket -  \llbracket \bar{H} (\varphi_-)^4 \rrbracket  - 
       \llbracket \bar{H} (\varphi_+)^4 \rrbracket.
  \end{equation}
  By the Binomial theorem for Wick powers, 
  \begin{multline}
    \llbracket (\bar{H} (\varphi_-) + \bar{H} (\varphi_+))^4 \rrbracket -
    \llbracket (\bar{H} (\varphi_-))^4 \rrbracket - \llbracket (\bar{H}
    (\varphi_+))^4 \rrbracket \\ = \sum_{1 \leq j \leq 3} \binom{4}{j}
    \llbracket (\bar{H} (\varphi_-))^j \rrbracket \llbracket (\bar{H}
    (\varphi_+))^{4 - j} \rrbracket. 
      \end{multline}
  We now estimate the integrals above for each $j \in \{1,2,3\}$ in a unified way. Let $c>0$ be sufficiently small and write $M=M^{\rm bulk}(c) \sqcup M^{\rm bdry}(c)$, where ${\rm dist}(M^{\rm bulk}(c), \partial M)\geq c$. Then we split the integral to bulk and boundary contributions as follows:
  \begin{multline}
    \int \llbracket (\bar{H} (\varphi_-))^j \rrbracket \llbracket (\bar{H}
    (\varphi_+))^{4 - j} \rrbracket \mathd x 
    \\=  \int_{M^{\rm bulk}(c)} \llbracket (\bar{H} (\varphi_-))^i
    \rrbracket \llbracket (\bar{H} (\varphi_+))^{4 - i} \rrbracket \mathd x +
    \int_{M^{\rm bdry}(c)} \llbracket (\bar{H} (\varphi_-))^i \rrbracket
    \llbracket (\bar{H} (\varphi_+))^{4 - i} \rrbracket \mathd x.   
  \end{multline}
  The first integral, corresponding to the bulk contribution, is clearly bounded by $\| \varphi_- \|^4_{\mathcal{C}^{- 1 /
  2 - \delta}_z} + \| \varphi_+ \|^4_{\mathcal{C}^{- 1 / 2 - \delta}_z}$ due to
  the smoothing of the harmonic extension and H\"older's inequality. For the second term -- the boundary contribution -- it is convenient to split it further to $M^{\rm bdry}_\pm(c)$ according to which boundary component is nearest. Without loss of generality, we estimate $M^{\rm bdry}_-(c)$. By duality and smoothing properties of the $m$-harmonic extension, there exists $C=C(c)>0$ such that
  \begin{align}
    &\left| \int_{M^{\rm bdry}_-(c)} \llbracket (\bar{H} (\varphi_-))^j \rrbracket
    \llbracket (\bar{H} (\varphi_+))^{4 - j} \rrbracket\right| 
    \\&
    \leq \sup_{\tau \in M^{\rm bdry}_-(c)} \| \llbracket (\bar{H} (\varphi_+))^{4
    - j}(\tau,\cdot) \rrbracket \|_{\mathcal{C}^{1 - \kappa/2}_z} \\ &\qquad \qquad \qquad \times  \int  \frac{d(\tau, \partial M_-)^{1-\kappa}}{d(\tau, \partial M_-)^{1-\kappa}}\| \llbracket (\bar{H} (\varphi_-))^j \rrbracket (\tau, \cdot)
    \|_{\mathcal{C}^{- 1 + \kappa/2}_z} d\tau
    \\
    &\leq C\|\varphi_-\|^{4-j}_{\mathcal C_z^{-1/2-\kappa}} \sup_{\tau \in M^{\rm bdry}_-(c)} | d(\tau, \partial M_-)^{1-\kappa}\| \llbracket (\bar{H} (\varphi_-))^j \rrbracket (\tau, \cdot)
    \|_{\mathcal{C}^{- 1 + \kappa/2}_z} |.
  \end{align} 
  
 We now must estimate the integrals corresponding to the terms $i \in \{0,1,2,3\}$ in \eqref{eq: r3 eq1}, that is
  \begin{equation}
  \int \sum_{0 \leq i \leq 3} \binom{4}{i} \llbracket (\bar{H} (\varphi_-) + \bar{H}
  (\varphi_+))^i \rrbracket S (\varphi_-, \varphi_+)^{4-i}
  \end{equation}
  In the case $i=0$, we use the fact
  that by elliptic regularity theory and smoothing properties of $\bar H$,
  \begin{equation}
  \|S(\varphi_-,\varphi_+)\|_{C^2} \leq C (\|\varphi_-\|_{\mathcal C^{-1/2-\delta}_z}+\|\varphi_+\|_{\mathcal C^{-1/2-\delta}_z}).
  \end{equation}
Hence\footnote{Note that in this case, the desired estimate simply follows from the maximum principle. We will use the stronger regularity estimate below. }, 
  \begin{equation}
 \left| \int S(\varphi_-,\varphi_+)^{4} \right| \leq C'{\rm max}\left( \|\varphi_-\|_{\mathcal C^{-1/2-\delta}_z}, \|\varphi_+\|_{\mathcal C^{-1/2-\delta}_z} \right)^4. 
  \end{equation}
  For the other terms, we expand further
   \begin{align}
    \sum_{1 \leq i \leq 3} \binom{4}{i} &\llbracket (\bar{H} (\varphi_-) + \bar{H}
    (\varphi_+))^i \rrbracket S (\varphi_-, \varphi_+)^{4-i}
    \\ &
    =\sum_{1 \leq i \leq 3} \sum_{k = 0}^i \binom{4}{i} \binom{i}k \llbracket (\bar{H}
    (\varphi_-))^k \rrbracket \llbracket \bar{H} (\varphi_+)^{i-k}
    \rrbracket S (\varphi_-, \varphi_+)^{4-i}  
    \end{align}
    We will estimate this integrals in a unified way. Let $c>0$ be sufficiently small and $M^{\rm bulk}(c)$, $M^{\rm bdry}_-(c)$ and $M^{\rm bdry}_+(c)$ be as above. We split the integrand into boundary and bulk contributions in the obvious way:
    \begin{align}
    \int \llbracket (\bar{H} (\varphi_-))^k \rrbracket \llbracket \bar{H}
    (\varphi_+)^{i-k} \rrbracket S (\varphi_-, \varphi_+)^{4-i} = I^{\rm bulk}+I^{\rm bdry}_- + I^{\rm bdry}_+. 	
    \end{align}
    The desired estimate on $I^{\rm bulk}$ is trivial because all terms are smooth. By symmetry, we only consider $I^{\rm bdry}_-$. By duality 
    \begin{align}
    |I^{\rm bdry}_-| &\leq \sup_{\tau \in M^{\rm bdry}_-(c)} \| 	\llbracket \bar{H} (\varphi_+)^{i-k}
    \rrbracket S (\varphi_-, \varphi_+)^{4-i}(\tau,\cdot)\|_{H^{1-\kappa}_z} 
    \\
    &\qquad \times \int_{M_-^{\rm bdry}(c)} \frac{d(\tau,\partial M_-)^{1-\kappa}}{d(\tau,\partial M_-)^{1-\kappa}} \| \llbracket \overline H(\varphi_-)^k\rrbracket \|_{H^{-1+\kappa}_z} d\tau 
    \\
    &\leq C {\rm max}( \|\varphi_-\|^{4-k}_{\mathcal C^{-1/2-2\kappa}_z}, \|\varphi_+\|^{4-k}_{\mathcal C^{-1/2-2\kappa}_z}) 
    \\&\qquad \times \sup_{\tau \in M^{\rm bdry}_-(c)}|d(\tau,\partial M_-)^{1-\kappa} \| \llbracket \overline H(\varphi_-)^k\rrbracket \|_{H^{-1+\kappa}_z}|. 
    \end{align}
    The desired estimate follows.
 \end{proof}

\subsection{Estimate of $R_4$}

We write $R_4 = R_4(a) + R_4(b)$, where 
\begin{align}
 R_4(a) &=  - \int_M (\gamma_T - \gamma_T^M) \Big(2W_T Z_T + \llbracket H(\varphi_-,\varphi_+)^2_T\rrbracket + 2 H(\varphi_-,\varphi_+)_T Z_T + Z_T^2 \Big) \, dx,\\
  R_4(b) &= - \delta_T(\varphi_-,\varphi_+) + \Delta\delta^0_T + \delta^M_T. 
\end{align}

In order to estimate $R_4(a)$, we begin with a preliminary estimate on the function $\gamma_T - \gamma_T^M$. Recall that $C_T^{M^{\rm per}}$ is the covariance of the massive GFF on $M$ with periodic boundary conditions and that
\begin{equation}
\gamma_T^M(x) = -\frac{2\cdot 12^2}3 \int  C_T^M(x,y)^3 dy,\qquad  
\gamma_T(x) = -\frac{2\cdot 12^2}{3} \int C_T^{M^{\rm per}}(x,y)^3 dy.
\end{equation}
\begin{lemma} \label{lemma: gammat diff}
For every $p \in [1,\infty)$, there exists $C=C(p) > 0$ such that, for every $T > 0$,
  \begin{align}
   \| \gamma_T  - \gamma_T^M \|_{L^p} \leq C.
   \end{align}
\end{lemma}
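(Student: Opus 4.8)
The plan is to compare the two covariance kernels $C_T^{M^{\rm per}}$ and $C_T^M$ and show that their difference is a smooth kernel, uniformly in $T$, after which the cube expansion and H\"older's inequality close the estimate. First I would recall the standard decomposition of the periodic covariance on $M^{\rm per}$ in terms of the Dirichlet covariance $C^M$ via the method of images (reflecting across the two boundary components $\partial^\pm M$). More precisely, writing $G^{\rm per}_T = \partial_t\rho_t^2\ast(-\Delta^{\rm per}+m^2)^{-1}$ and $G^M_T = \partial_t\rho_t^2 \ast (-\Delta_M + m^2)^{-1}$ mollified only in the periodic directions, both kernels have the \emph{same} short-distance singularity $\sim |x-y|^{-1}$ inherited from the heat kernel on $\mathbb R\times\mathbb T^2$ (the regularization in the $z$-variable being identical), and the difference $D_T := C_T^{M^{\rm per}} - C_T^M$ is the kernel of a solution of the (mollified) Helmholtz equation in each variable with boundary data given by the nearest reflected singularity. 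Since the reflected images sit at distance bounded below by a constant (twice the distance to $\partial M$), $D_T$ is smooth on $M\times M$ with bounds uniform in $T\in(0,\infty]$; this is exactly the type of statement collected in Appendix~\ref{sec: Dirichlet covariance estimates} and Appendix~\ref{appendix: harmonic extension}. In particular there exists $C$ such that $\sup_{x,y\in M}|D_T(x,y)| \le C$ uniformly in $T$, and moreover the same kernel bounds $|C_T^M(x,y)|, |C_T^{M^{\rm per}}(x,y)| \le C|x-y|^{-1-\kappa}$ hold (again uniformly in $T$, up to the diagonal).

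Next I would use the algebraic identity $a^3 - b^3 = (a-b)(a^2+ab+b^2)$ with $a = C_T^{M^{\rm per}}(x,y)$ and $b = C_T^M(x,y)$ to write, pointwise in $x$,
\begin{equation}
\gamma_T(x) - \gamma_T^M(x) = -\frac{2\cdot 12^2}{3}\int_M D_T(x,y)\Big( C_T^{M^{\rm per}}(x,y)^2 + C_T^{M^{\rm per}}(x,y)C_T^M(x,y) + C_T^M(x,y)^2 \Big)\, dy.
\end{equation}
Using $|D_T(x,y)| \le C$ and the square-integrability of $|x-y|^{-1-\kappa}$ over the bounded set $M$ (for $\kappa$ small, since $2(1+\kappa) < 3$), each of the three terms in the bracket is integrable in $y$ uniformly in $x$ and $T$: $\int_M |x-y|^{-2-2\kappa}\, dy \le C$ since $2+2\kappa < 3 = \dim M$. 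Hence $|\gamma_T(x) - \gamma_T^M(x)| \le C$ uniformly in $x\in M$ and $T>0$, i.e.\ $\|\gamma_T - \gamma_T^M\|_{L^\infty(M)} \le C$, which immediately yields $\|\gamma_T - \gamma_T^M\|_{L^p(M)} \le C|M|^{1/p} \le C'$ for every $p\in[1,\infty)$ since $M$ has finite measure.

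I expect the main obstacle — or at least the only nontrivial point — to be verifying the uniform-in-$T$ smoothness and boundedness of the difference kernel $D_T = C_T^{M^{\rm per}} - C_T^M$. The subtlety is that neither $C_T^{M^{\rm per}}$ nor $C_T^M$ is bounded (both blow up logarithmically on the diagonal as $T\to\infty$, which is precisely why $\gamma_T$ diverges like $\log T$), so one cannot estimate $D_T$ by the triangle inequality; one genuinely needs the cancellation coming from the method of images, together with the observation that the regularization acts only in the periodic directions and therefore is common to both kernels and does not interfere with the reflection argument in the $\tau$-direction. Once the bound $\sup_{x,y}|D_T(x,y)| \le C$ is in hand (it should follow from the image representation combined with the Poisson-kernel/Green-function estimates of Appendices~\ref{sec: Dirichlet covariance estimates} and~\ref{appendix: harmonic extension}), the rest is the routine H\"older computation above. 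A minor bookkeeping point is to make sure the regularization $\partial_t\rho_t^2$ does not spoil positivity or the image identity, but since $\rho_T$ acts as a Fourier multiplier in $z$ only, it commutes with the reflection and causes no difficulty.
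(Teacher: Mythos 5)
Your factorization idea is workable, but the central claim on which it rests is false. You assert that the difference kernel $D_T := C_T^{M^{\rm per}} - C_T^M$ is ``smooth on $M\times M$ with bounds uniform in $T$'' and in particular $\sup_{x,y\in M}|D_T(x,y)| \leq C$, justifying this by saying the reflected images ``sit at distance bounded below by a constant (twice the distance to $\partial M$).'' Read that parenthesis again: twice the distance to $\partial M$ is not a constant --- it vanishes as $x$ or $y$ approach $\partial M$, and so does the separation from the nearest image. By the domain Markov decomposition of Lemma~\ref{lemma: covariance DMP decomp}, $D_T$ is exactly $C_T^B$, the covariance of the harmonic extension of the boundary GFF, and this genuinely blows up near $\partial M$: the relevant uniform-in-$T$ estimate (Appendix~\ref{appendix: harmonic extension}, Proposition~\ref{prop: M covariance interp bdry}) is of the form $|C_T^B(x,y)| \lesssim d(x,\partial M)^{-\alpha}|x-y|^{-(1-\alpha)}$, so $C_T^B(x,x) \sim d(x,\partial M)^{-1}$. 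In consequence $\gamma_T(x)-\gamma_T^M(x)$ itself diverges (at least logarithmically) as $x\to\partial M$, and the $L^\infty$ bound you deduce at the end is not true; only the $L^p$ bound for $p<\infty$, as stated, is.

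The good news is that your factorization $a^3-b^3 = (a-b)(a^2+ab+b^2)$ survives once you replace the false uniform bound on $D_T$ by the correct weighted bound: combining $|D_T(x,y)| \lesssim d(x,\partial M)^{-\kappa}|x-y|^{-(1-\kappa)}$ with $|C_T^{M^{\rm per}}|, |C_T^M| \lesssim |x-y|^{-1}$ gives $|\gamma_T(x)-\gamma_T^M(x)| \lesssim d(x,\partial M)^{-\kappa}$ after integrating in $y$ over the bounded domain, and then $\|d(\cdot,\partial M)^{-\kappa}\|_{L^p(M)} < \infty$ for $\kappa < 1/p$. This is essentially the paper's route: it writes $C_T^{M^{\rm per}} = C_T^B + C_T^M$, expands the cube binomially into terms $\int C_T^B(x,y)^i C_T^M(x,y)^{3-i}\,dy$ with $1\leq i\leq 3$, and bounds each one by $d(x,\partial M)^{-\kappa}$ using the same interpolation estimate. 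So you and the paper are using the same cancellation (images vs.\ Markov decomposition, which identify the same object $D_T = C_T^B$), and the same final integration; the missing ingredient in your proposal is precisely the boundary weight in the bound on $D_T$.
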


\begin{proof}
It is sufficient to show that for every $\kappa > 0$, there exists $C=C(\kappa)>0$ such that for every $x \in M$,
\begin{equation}
|\gamma_T(x) - \gamma_T^M(x)| \leq C d(x, \partial M)^{-\kappa}.
\end{equation}
  Recall that from the Markov property (see Lemma \ref{lemma: covariance DMP decomp}), $C_T^{M^{\rm per}}(x,y) = C_T^{B}(x,y) + C_T^M(x,y)$. Therefore, we have
  \begin{equation}
    \gamma_T 
    = -\frac{2\cdot 12^2}3\sum_{1 \leq i \leq 3} \binom{3}{i} \int C_T^B (x, y)^i C_T^M (x,
    y)^{3 - i} \mathd y + \gamma_T^M (x).
  \end{equation}
  We now use the kernel estimates on $C_T^M$ and $C_T^B$ contained in Appendices \ref{sec: Dirichlet covariance estimates} and \ref{appendix: harmonic extension} to obtain the following. For every $\kappa > 0$, there exists $C=C(\kappa)>0$ such that for every $i \in \{1,2,3\}$,
  \begin{equation}
  	\left| \int C_T^B (x, y)^i C_T^M (x, y)^{3 - i} dy \right|\leq C\left| \int \frac{1}{| x - y |^{3 - i}} \frac{d (x,
    \partial M)^{- \kappa}}{| x - y |^{i - \kappa}} d y \right| \leq C' d(x,\partial M)^{-\kappa}.
  \end{equation}
  
  The desired estimate follows. 
\end{proof}

We now estimate $R_4(a)$.

\begin{lemma}
 For every $\delta >0$, there exists $C>0$ such that 
  \begin{align}
    \mathbb{E} | R_4(a) | &\leq  C \left( 1 +  \| H (\varphi_-, \varphi_+) \|^2_{L_{\tau}^{\infty}
    \mathcal{C}_z^{- 1 / 2 - \kappa}} + \| \llbracket (H (\varphi_-,
    \varphi_+))^2 \rrbracket \|^2_{L^2_\tau \mathcal{C}_z^{- 1 -
    \kappa}} \right) 
    \\ &\qquad \qquad  + \delta \left( \| K_T \|^2_{H^{1 - \kappa}} + \| Z_T \|^4_{L^4} \right).
  \end{align}
\end{lemma}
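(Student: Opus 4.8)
The plan is to estimate $R_4(a)$ term-by-term, recalling the intermediate ansatz $Z_T = -\mathbb{W}_T^{[3]} + K_T$ and the decomposition $R_4(a) = -\int_M (\gamma_T - \gamma_T^M)\big(2W_T Z_T + \llbracket H(\varphi_-,\varphi_+)^2_T\rrbracket + 2H(\varphi_-,\varphi_+)_T Z_T + Z_T^2\big)\, dx$. The crucial input is Lemma \ref{lemma: gammat diff}, which tells us that $\gamma_T - \gamma_T^M$ is bounded uniformly in $T$ in every $L^p(M)$, so morally we are integrating the quadratic-in-drift and quadratic-in-field terms against a bounded (in $L^p$) function. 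This converts all the divergence-cancellation worries into routine H\"older/duality estimates; no further renormalization is needed here because the mass-renormalization divergence has already been extracted into $U_1$ and the bulk stochastic term $\Upsilon_1$.

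\textbf{Key steps.} First I would handle the term $\int (\gamma_T - \gamma_T^M)\,\llbracket H(\varphi_-,\varphi_+)^2_T \rrbracket$: by duality between $\mathcal{C}^{-1-\kappa}_z$ and $\mathcal{C}^{1+\kappa}_z$ on each $\tau$-slice, and H\"older in $\tau$, this is bounded by $\|\gamma_T - \gamma_T^M\|_{L^2_\tau \mathcal{C}^{1+\kappa}_z}$ (which is finite by Lemma \ref{lemma: gammat diff} and smoothness of the kernel product away from the diagonal, giving a constant) times $\|\llbracket H(\varphi_-,\varphi_+)^2_T \rrbracket\|_{L^2_\tau \mathcal{C}^{-1-\kappa}_z}$, which is one of the terms in the claimed bound. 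Next, the $\int (\gamma_T - \gamma_T^M)\, Z_T^2$ term: write $Z_T^2 = K_T^2 - 2K_T\mathbb{W}_T^{[3]} + (\mathbb{W}_T^{[3]})^2$ using the intermediate ansatz; the $K_T^2$ piece is controlled by $\|\gamma_T-\gamma_T^M\|_{L^{2+\epsilon}}\|K_T\|_{L^4}^2 \le C + \delta\|K_T\|_{L^4}^4$ after Young, the cross term by $\|\gamma_T-\gamma_T^M\|_{L^p}\|K_T\|_{L^4}\|\mathbb{W}_T^{[3]}\|_{L^q}$ with a Young split, and the $(\mathbb{W}_T^{[3]})^2$ piece purely by stochastic estimates (Proposition \ref{prop: bulk enhancement}), contributing to the constant $C$. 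For the $\int(\gamma_T-\gamma_T^M)\,W_T Z_T$ term I would insert $Z_T = -\mathbb{W}_T^{[3]} + K_T$: the $W_T \mathbb{W}_T^{[3]}$ part is stochastic and bounded; for $W_T K_T$, use a paraproduct decomposition $W_T K_T = W_T \prec K_T + W_T \succ K_T + W_T \circ K_T$, estimate by duality pairing $(\gamma_T-\gamma_T^M)$ against these products — the gain of $H^{1-\kappa}$ regularity on $K_T$ (Lemma \ref{lem: KT estimate}) is exactly what makes $W_T \circ K_T \in \mathcal{C}^{-\kappa}$ and the whole pairing with the bounded $\gamma_T-\gamma_T^M$ converge, giving $\le C + \delta(\|K_T\|_{H^{1-\kappa}}^2 + \dots)$ after Young. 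Finally the $\int(\gamma_T-\gamma_T^M)\,H(\varphi_-,\varphi_+)_T Z_T$ term is treated the same way, again splitting $Z_T$ and using duality/H\"older, with the boundary-dependent factor absorbed into $\|H(\varphi_-,\varphi_+)\|_{L^\infty_\tau \mathcal{C}^{-1/2-\kappa}_z}$.

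\textbf{Main obstacle.} The routine part is genuinely routine given Lemma \ref{lemma: gammat diff}; the one place requiring a little care is the $W_T Z_T$ term, where $W_T$ has negative regularity $\mathcal{C}^{-1/2-\kappa}_z$ and one must verify that the regularity gained on $K_T$ (only $H^{1-\kappa}$, not full $H^1$, because of the $\mathbb{W}_T^{[3]}$ subtraction) suffices for the resonant product $W_T \circ K_T$ to land in a negative H\"older space that still pairs with $\gamma_T - \gamma_T^M \in L^p$. Since $(-1/2-\kappa) + (1-\kappa) = 1/2 - 2\kappa > 0$, the resonant product is well-defined and lies in $\mathcal{C}^{1/2-2\kappa}_z \subset L^\infty_z$, so the pairing with $L^p_\tau L^{p'}_z$ functions is fine; but one should double-check the $\tau$-integrability, which follows by Lemma \ref{lem: KT estimate} controlling $\|K_T\|_{H^{1-\kappa}(M)}$ in a space-time norm and Lemma \ref{lemma: gammat diff} in $L^p(M)$. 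Then $R_4(b) = -\delta_T(\varphi_-,\varphi_+) + \Delta\delta^0_T + \delta^M_T$ is a deterministic constant that I would show is bounded uniformly in $T$ by a direct comparison of the explicitly-defined energy renormalizations (this is the "residual energy renormalization suffices" point flagged in the proof strategy of Theorem \ref{thm: main}), contributing only to the constant $C$. Collecting all contributions yields the stated estimate.
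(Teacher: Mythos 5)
Your overall strategy—treat $R_4(a)$ term-by-term using Lemma~\ref{lemma: gammat diff} to bound $\gamma_T-\gamma_T^M$ in $L^p$, then H\"older, duality, and Young—matches the paper's. The route you take for the crucial $W_T K_T$ term, however, both differs from the paper's and contains a regularity gap.

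\textbf{Where the paper simplifies.} The paper observes that by translation invariance in the axial direction, $\gamma_T-\gamma_T^M$ depends only on $\tau$. For the terms $\int(\gamma_T-\gamma_T^M)\,W_T K_T$ and $\int(\gamma_T-\gamma_T^M)\,H(\varphi_-,\varphi_+)_T K_T$, this turns the three-dimensional pairing into a one-dimensional integral in $\tau$ of a slice-wise duality pairing $\langle W_T(\tau,\cdot),K_T(\tau,\cdot)\rangle_z$ in $\mathcal{C}^{-1/2-\kappa}_z\times H^{1-\kappa}_z$, plus an application of \eqref{eq: HtauHzHx}. No paraproduct decomposition is used or needed. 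You instead set up a full paraproduct/resonant-product decomposition of $W_T K_T$, which is more machinery than the problem requires.

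\textbf{Where your resonant-product bookkeeping goes wrong.} You claim $W_T\circ K_T\in\mathcal{C}^{1/2-2\kappa}_z\subset L^\infty_z$. The arithmetic $(-1/2-\kappa)+(1-\kappa)>0$ is correct, but the output space is not: $K_T(\tau,\cdot)\in H^{1-\kappa}_z$ (in an $L^2_\tau$ sense, via \eqref{eq: HtauHzHx}), not $\mathcal{C}^{1-\kappa}_z$, so the resonant product lands in $H^{1/2-2\kappa}_z$, which in two spatial dimensions does \emph{not} embed into $L^\infty_z$. Therefore the claimed ``pairing with $L^p_\tau L^{p'}_z$ functions is fine'' does not follow from $L^\infty$ as you assert. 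What does save the pairing is exactly the observation you did not use: $\gamma_T-\gamma_T^M$ is constant in $z$, so it pairs against any $H^s_z$-valued function. But once you invoke that observation you can drop the paraproducts entirely, which is what the paper does.

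\textbf{Two smaller discrepancies.} For $\int(\gamma_T-\gamma_T^M)Z_T^2$, you decompose $Z_T^2$ through the intermediate ansatz and pick up a $\|K_T\|_{L^4}^4$ term that is not in the statement (you would need a further step to convert it to $\|Z_T\|_{L^4}^4$ plus a constant); the paper instead applies Cauchy--Schwarz directly, $|\int(\gamma_T-\gamma_T^M)Z_T^2|\le\|\gamma_T-\gamma_T^M\|_{L^2}\|Z_T\|_{L^4}^2$, landing on the stated term after Young. And for the $W_T\mathbb{W}_T^{[3]}$ contribution inside $W_T Z_T$, you call it ``stochastic and bounded''; in fact its expectation (against $\gamma_T-\gamma_T^M$) vanishes by orthogonality of the first and third Wiener chaoses, which is why the paper replaces $W_T Z_T$ by $W_T K_T$ at the outset using $\equiv$ (equality up to mean-zero terms) rather than estimating it.
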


\begin{proof}
By the intermediate ansatz \eqref{eq: intermediate ansatz},
\begin{equation} \label{eq: R4a}
 R_4(a) \equiv   - \int (\gamma_T - \gamma_T^M) \Big( 2W_T K_T + \llbracket H(\varphi_-,\varphi_+)^2\rrbracket + 2 H(\varphi_-,\varphi_+) K_T +Z_T^2  \Big).
\end{equation}
Note that the last term in \eqref{eq: R4a} can be bounded by the Cauchy-Schwarz and Young inequalities: for every $\delta > 0$, there exists $C>0$ such that
\begin{equation}
\left| \int (\gamma_T - \gamma_T^M) Z_T^2 \right| \leq \| \gamma_T - \gamma_T^M\|_{L^2} \| Z_T \|_{L^4}^2 \leq C\|\gamma_T-\gamma_T^M\|_{L^2}^2 + \delta \|Z_T\|_{L^2}^4.
\end{equation}
The required estimate follows from Lemma \ref{lemma: gammat diff}.

For the first and third terms in \eqref{eq: R4a}, we will split $\tau$-slices. Note that by translation invariance in the axial (z) direction, we have that  \[(\gamma_T - \gamma^M_T) (x) = (\gamma_T - \gamma^M_T) (\tau).\]
Therefore, for every $\delta > 0$, there exists $C'>0$ such that
  \begin{align}
    &\Big| \int (\gamma_T - \gamma^M_T) (W_T + H (\varphi_-, \varphi_+))
    K_T d x \Big| 
    \\
    &\leq 
    C  (\| W_T 
    \|_{L_{\tau}^{\infty} \mathcal{C}_z^{- 1 / 2 - \kappa}} + \| H (\varphi_-,
    \varphi_+) \|_{L_{\tau}^{\infty} \mathcal{C}_z^{- 1 / 2 -
    \kappa}})  \int |(\gamma_T - \gamma^M_T) (\tau)| \| K_T(\tau,\cdot) \|_{H_z^{1 - \kappa}} d\tau
    \\
    &\leq 
    C' \left(  \| (\gamma_T - \gamma^M_T) \|^4_{L_{\tau}^2} + \| W_T\|^4_{L_{\tau}^{\infty} \mathcal{C}_z^{- 1 / 2 - \kappa}} + \| H
    (\varphi_-, \varphi_+) \|^4_{L_{\tau}^{\infty} \mathcal{C}_z^{-
    1 / 2 - \kappa}} \right) + \delta \| K_T \|^2_{L_{\tau}^2 H_z^{1 - \kappa}}. 
  \end{align}
  We then use \eqref{eq: HtauHzHx} to conclude.
      
 For the second term in \eqref{eq: R4a}, by splitting the integral, using duality and uniformly bounding the integral, we obtain
   \begin{equation}
    \left| \int (\gamma_T - \gamma^M_T) \llbracket (H (\varphi_-,
    \varphi_+))^2 \rrbracket \mathd x \right|
   \leq C \| \gamma_T - \gamma^M_T \|^2_{L_{\tau}^2} + \delta \| \llbracket (H
    (\varphi_-, \varphi_+))^2  \rrbracket \|^2_{L_{\tau}^2
    \mathcal{C}_z^{- 1 - \kappa}}.
  \end{equation}
  \end{proof}
  
We now turn to the energy renormalization terms $R_4(b)$. The term $\Delta\delta^0_T$ is bounded by the following lemma.
\begin{lemma}
We have that
\begin{equation}
\delta^0_T = -2 \cdot 4! \int \int C_T^B(x,y)^4 dx dy + O(1).
\end{equation}	
\end{lemma}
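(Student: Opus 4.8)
\noindent\textit{Plan of proof.} The plan is to first use a reflection symmetry of the infinite‑cylinder harmonic extension to rewrite $\delta^0_T$ with the prefactor $-2\cdot 4!$, and then to show that inside the quartic integral one may replace $\overline C^B_T$ by $C^B_T$ at the cost of an error that is bounded uniformly in $T$.

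First I would record the symmetry. Since the reflected function $(\tau,z)\mapsto\overline H\varphi^0(-\tau,z)$ solves the same boundary value problem on $(\mathbb R\times\mathbb T^2)\setminus B$ and the mollifier $\rho_T$ acts only in the $z$–variable, the regularized extension $\overline H\varphi^0_T$ is even in $\tau$, and hence $\overline C^B_T((\tau_x,z_x),(\tau_y,z_y))$ is invariant under $\tau_x\mapsto-\tau_x$ and under $\tau_y\mapsto-\tau_y$ separately. By Fubini and these two symmetries,
\begin{equation}
\delta^0_T = -3\cdot 4\int_{([-1,1]\times\mathbb T^2)^2}\overline C^B_T(x,y)^4\,dx\,dy = -2\cdot 4!\int_{([0,1]\times\mathbb T^2)^2}\overline C^B_T(x,y)^4\,dx\,dy ,
\end{equation}
using $(-3\cdot 4)\cdot 4 = -48 = -2\cdot 4!$.

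Next I would compare $\overline C^B_T$ with $C^B_T$ on $([0,1]\times\mathbb T^2)^2$, viewed inside $M^+\times M^+$. By the decomposition $H^+(\varphi^0,0)=\overline H\varphi^0|_{M^+}+\Sigma^+(\varphi^0)$ from Section \ref{sec: gaussian}, where $\Sigma^+(\varphi^0)$ is $m$–harmonic on $M^+$ with infinitely smoothing (in $\varphi^0$) boundary data, $\Sigma^+(\varphi^0)$ is smooth up to $\partial M^+$ with $\|\Sigma^+(\varphi^0)\|_{C^k(M^+)}\lesssim_k\|\varphi^0\|_{\mathcal C^{-1/2-\kappa}_z}$, and these bounds persist after mollification. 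Together with the fact, from Lemma \ref{lemma: tilde mu and mu abs cont}, that the covariances $(-\mathcal N)^{-1}$ of $\tilde\mu^0$ and $(-\mathcal N_0)^{-1}$ of $\mu^0$ differ by an infinitely smoothing operator, this shows that $r_T:=C^B_T-\overline C^B_T$ extends to a kernel on $M^+\times M^+$ with $\sup_{T>0}\|r_T\|_{L^\infty(M^+\times M^+)}<\infty$ (in fact smooth with uniform bounds). Writing $(\overline C^B_T)^4-(C^B_T)^4 = -r_T\sum_{j=0}^3(\overline C^B_T)^j(C^B_T)^{3-j}$ and invoking the kernel estimates of Appendix \ref{appendix: harmonic extension}, which give $|\overline C^B_T(x,y)|+|C^B_T(x,y)|\lesssim(\tau_x+\tau_y+|z_x-z_y|)^{-1}$ uniformly in $T$ for $x=(\tau_x,z_x),y=(\tau_y,z_y)\in M^+$, one obtains
\begin{equation}
\bigl|\,\overline C^B_T(x,y)^4-C^B_T(x,y)^4\,\bigr|\ \lesssim\ \Phi(x,y):=\bigl(\tau_x+\tau_y+|z_x-z_y|\bigr)^{-3}.
\end{equation}
A short computation gives $\int_{([0,1]\times\mathbb T^2)^2}\Phi(x,y)\,dx\,dy<\infty$: $\Phi$ is bounded away from the set $\{\tau_x=\tau_y=0,\ z_x=z_y\}$, and near it, in the variables $(s,t,w)=(\tau_x,\tau_y,z_x-z_y)$, one has $\int_0^\epsilon\!\!\int_0^\epsilon\!\!\int_{|w|\le\epsilon}(s+t+|w|)^{-3}\,dw\,dt\,ds<\infty$. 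Hence $\int_{([0,1]\times\mathbb T^2)^2}\overline C^B_T{}^4 = \int_{([0,1]\times\mathbb T^2)^2}C^B_T{}^4 + O(1)$ uniformly in $T$, and combining with the previous display yields the claim, with the $C^B_T$–integral over $([0,1]\times\mathbb T^2)^2$; since $C^B_T$ is bounded uniformly in $T$ on any region bounded away from $B$, the integration domain may equivalently be taken to be $M^+\times M^+$ or $M^-\times M^-$, which only changes the right‑hand side by $O(1)$.

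The main obstacle is quantitative: one must check that the correction $r_T$ removes a full power of the singularity, so that a \emph{cubic} power of the still‑singular kernel, multiplied by a bounded factor, is integrable in the effectively four‑dimensional geometry near the boundary slice $B$ — this is precisely what keeps the error $O(1)$ rather than $O(\log T)$. This rests on the sharp $|x-y|^{-1}$–type blow‑up of $C^B_T$ and $\overline C^B_T$ from Appendix \ref{appendix: harmonic extension} and on the uniform‑in‑$T$ boundedness of $r_T$, which in turn comes from the smoothing nature of $\Sigma^+$ and of $\mathcal N_\infty$. The reflection‑symmetry step that turns $-3\cdot 4$ into $-2\cdot 4!$ is routine.
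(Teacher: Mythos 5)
Your proof is correct and follows essentially the same route as the paper's: both rest on the fact that the finite-cylinder harmonic extension differs from the infinite-cylinder one by a correction that is smooth up to the boundary (together with $\mathcal N-\mathcal N_0$ being smoothing), so that the kernel difference $C^B_T-\overline C^B_T$ is bounded uniformly in $T$, and the difference of fourth powers then consists of at most cubic powers of the $|x-y|^{-1}$-type singularity multiplied by a bounded kernel, which is integrable near $B$. The paper packages this as a Wick-theorem computation for the variance of the difference of the integrated Wick powers (its terms $\overline C_T^{B\,i}\,\bar K^{S\,4-i}_T$, etc., are exactly your telescoped products), whereas you telescope the kernels directly and, in addition, make explicit the reflection-symmetry factor of $4$ that converts $-3\cdot 4$ into $-2\cdot 4!$ — a point the paper's proof leaves implicit.
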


\begin{proof}
Note that we may replace the integral over $[-1,1]\times \mathbb T^2$ by an integral over $M$ up to an $O(1)$ term by smoothing properties of the harmonic extension $\bar H$. Let us compute
\begin{align}
E &= \left( \int \llbracket \bar H\varphi_T(x)^4 \rrbracket dx \right)^2 - \left( \int \llbracket  H\varphi_T(x)^4 \rrbracket dx \right)^2
\\
&= \left( \int \llbracket \overline H \varphi_T(x)^4 \rrbracket - \llbracket H \varphi_T(x)^4 \rrbracket  \right) \left( \int \llbracket \overline H \varphi_T(x)^4 \rrbracket + \llbracket H \varphi_T(x)^4 \rrbracket  \right) =: E_1 \times E_2. 
\end{align}
Writing $H \varphi_T = \bar H \varphi_T + S \varphi_T $, by the Binomial theorem for Wick powers we have
\begin{equation}
E_1 = \sum_{0 \leq i \leq 3}\int \binom 4i \llbracket \overline H \varphi_T(x)^{i} \rrbracket (S\varphi_T)^{4-i}. 
\end{equation}
By Wick's theorem,
\begin{align}
\sum_{0 \leq i \leq 3} &\binom 4i \int \int \mathbb E \left[ \llbracket \overline H \varphi_T(x)^{i} \rrbracket (S\varphi_T)^{4-i}  \llbracket \overline H \varphi_T(x')^4 \rrbracket \right] + \mathbb E \left[ \llbracket \overline H \varphi_T(x)^{i} \rrbracket (S\varphi_T)^{4-i}   \llbracket H \varphi_T(x')^4 \rrbracket\right]	
\\
&=
\sum_{0 \leq i \leq 3} 4! \binom 4i \int \int \overline C_T^B(x,x')^i \bar K^S_T(x,x')^{4-i} + K_T^{H,\bar H}(x,x')^i K_T^S(x,x')^{4-i},
\end{align}
where
\begin{align}
\bar K^S_T(x,x') &= \mathbb E[ S\varphi_T(x) \bar H\varphi_T(x')],
\\
K^S_T(x,x') &= \mathbb E[ S\varphi_T(x) H\varphi_T(x')],
\\
K_T^{H,\overline H}(x,x') &= \mathbb E[H \varphi_T(x) \overline H \varphi_T(x')].
\end{align}
We claim that there exists $C>0$ such that for every $T > 0$,
\begin{equation} \label{eq: kernel KS smooth claim}
\sup_{x,x'} {\rm max}\left( K_T^S(x,x'), \bar K_T^S(x,x') \right) \leq C. 
\end{equation}
The lemma then follows since the $(C_T^B)^i$ and $(K_T^{H, \bar H})^i$ are locally integrable for $i =1,2,3$.  See the covariance estimates in Appendix \ref{appendix: harmonic extension}.

We now prove the claim. Without loss of generality, let us consider $K_T^S$. Writing in terms of the convolution kernels, we have that
\begin{equation}
K_T^S(x,x') = \int_{\mathbb T^2} \int_{\mathbb T^2} S(x,y) P(x',y') C_T^0(y,y') dy dy'.
\end{equation}
The term $S(x,y)$ is the kernel  of $S$ and is smooth. The Poisson kernel is of order $P(x',y') \asymp \frac{1}{|x'-y'|}$. The kernel of $(-\Delta_{\mathbb T^2}+m^2)^{-1}$ is of order $C_T^0(y,y') \asymp \frac{1}{|y-y'|}$. By these considerations together with translation invariance,
\begin{equation}
|K_T^S(x,x')| \leq C \|S\|_\infty \int_{\mathbb T^2} \frac{1}{|x'-y'|} \int_{\mathbb T^2}  \frac{1}{|y-y'|} dy dy' \leq C'. 
\end{equation}
\end{proof}

It remains to estimate $\delta_T(\varphi_-,\varphi_+)-\delta_T^M$. Recall that 
\begin{equation}\label{eqdef: delta M}
\delta_T^M := \sum_{i=1}^5 \delta_T^{i,M},	
\end{equation}
where
\begin{align}\label{eqdef: delta M 1}
\delta_T^{1,M}&:=- 12 \int \int C_T^{M} (x, y)^4 \mathd x \mathd y,
\\ \label{eqdef: delta M 2}
\delta_T^{2,M}&:= 6 \cdot 4^2 \cdot 3^2 \cdot 2 \int \int \int
    C_T^{M} (x, y)^2 C_T^{M} (y, y')^2 C_T^{M}
    (y', x)^2 \mathd x \mathd y \mathd y',
\\ \label{eqdef: delta M 3}
\delta_T^{3,M}&:=\frac{12^2}{3} \int \int C_T^M (x, y)^3 \mathbb E_{\mu^0\otimes \mu^0} \left[ H
     (\varphi_-, \varphi_+) (x) H (\varphi_-, \varphi_+) (y) \right] \mathd x \mathd y,
\\ \label{eqdef: delta M 4}
\delta^{4,M}_T&:= \frac{12^2}2 \mathbb E\otimes \mathbb E_{\mu^0\otimes \mu^0}\left[ \int_0^T \int J_t(\mathbb W_t \,\llbracket  H(\varphi_-,\varphi_+)^2 \rrbracket )^2 dx dt \right],
\\ \label{eqdef: delta M 5}
\delta^{5,M}_T&:= \frac{4^2}2   \mathbb E_{\mu^0\otimes \mu^0}\left[ \int_0^T \int J_t(\llbracket  H(\varphi_-,\varphi_+)^3\rrbracket )^2 dx dt \right]. 
\end{align}

\begin{lemma}
There exists $C>0$ such that, for every $T >0$,
\begin{equation}
|\delta_T(\varphi_-,\varphi_+)-\delta_T^M| \leq C. 
\end{equation}	
\end{lemma}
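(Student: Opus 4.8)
The claim is that $|\delta_T(\varphi_-,\varphi_+)-\delta_T^M|$ is bounded uniformly in $T$, and by the discussion immediately preceding the statement this reduces to bounding the integral
\begin{equation}
\mathcal D_T := \int\int\int \left| C_T^{M^{\rm per}}(x,y)^2 C_T^{M^{\rm per}}(y,y')^2 C_T^{M^{\rm per}}(y',x)^2 - C_T^M(x,y)^2 C_T^M(y,y')^2 C_T^M(y',x)^2 \right| dx\, dy\, dy'.
\end{equation}
The plan is to exploit the domain Markov decomposition $C_T^{M^{\rm per}}(x,y) = C_T^M(x,y) + C_T^B(x,y)$ (Lemma \ref{lemma: covariance DMP decomp}, in the periodic version), telescope the product over $M^{\rm per}$ into the pure-Dirichlet product plus error terms, each of which carries at least one factor of $C_T^B$. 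Concretely, expanding $(C^M+C^B)^2 = (C^M)^2 + 2C^MC^B + (C^B)^2$ on each of the three edges $(x,y),(y,y'),(y',x)$ and subtracting the term with all three edges $(C^M)^2$, every surviving summand contains at least one factor of the form $C_T^B$ on some edge. The key point is that $C_T^B$, being the covariance of the harmonic extension, has better regularity/integrability near the diagonal than the bulk covariance: by the kernel estimates in Appendix \ref{appendix: harmonic extension} one has $|C_T^B(x,y)| \lesssim d(x,\partial M)^{-\kappa}|x-y|^{-1+\kappa}$ for $\kappa>0$ arbitrarily small, with an analogous bound exchanging the roles of $x$ and $y$, while $|C_T^M(x,y)| \lesssim |x-y|^{-1}$ uniformly in $T$ (Appendix \ref{sec: Dirichlet covariance estimates}), and in fact $|C_T^M(x,y)| \lesssim d(x,\partial M)^\delta |x-y|^{-1-2\delta}$ by the interpolation estimate of Proposition \ref{prop: M covariance interp bdry}.

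The execution then proceeds term by term: for a summand where edge $e$ carries $(C^B)$-type factors and the other two edges carry bulk factors, one replaces each bulk factor by its $|x-y|^{-1}$ bound (or, where a factor of $|x-y|^{-1}$ is not integrable after the gluing, a $d(\cdot,\partial M)^\delta|x-y|^{-1-2\delta}$ bound to gain integrability), uses the $C^B$ bound on the remaining factor, and then integrates successively — first the "interior" variable via Young's convolution inequality, reducing a triple integral to a double integral with a combined power of the distance, then the remaining variables, absorbing the distance-to-boundary weights $d(\cdot,\partial M)^{-c\kappa}$ using that $\tau\mapsto d(\tau,\partial M)^{-c\kappa} \in L^1([-L,L])$ for $c\kappa<1$. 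This is exactly the style of computation carried out in the proofs of Lemmas \ref{lem: upsilon2 a}, \ref{lem: upsilon3 a}, and \ref{lem: upsilon4 a} above, and one checks in each case that the exponents work out to give a finite bound independent of $T$ — the total homogeneity of the six kernel factors is $-6$ in a $9$-dimensional integration domain, and the gluing along shared vertices plus Young's inequality keeps the running exponents below the critical threshold $3$ at each integration step, with the small losses $\kappa,\delta$ absorbed by the boundary weights.

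The main obstacle — and the reason this is not completely immediate — is bookkeeping the interplay between the two competing needs: one wants to use the pure $|x-y|^{-1}$ bound on bulk factors to avoid introducing boundary weights, but at certain edges the resulting integral is logarithmically or polynomially divergent and one is forced to trade into the $d(\cdot,\partial M)^\delta|x-y|^{-1-2\delta}$ form, which then introduces a boundary weight that must be paired against a $C^B$-type boundary singularity on an adjacent edge; one must check these weights never pile up to a non-integrable power $d(\tau,\partial M)^{-1}$ on any single $\tau$-slice. A clean way to organize this is to treat the two cases "the $C^B$ factor sits on an edge incident to the same vertex whose boundary weight we need" versus "it sits on the opposite edge" separately, mirroring the $J_1,J_2,J_3,J_4$ split in the proof of Lemma \ref{lem: upsilon2 a}. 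Since every individual error term is of strictly lower "degree of singularity" than the critical case (each $C^B$ factor is morally half a derivative smoother than $C^M$), every such term converges, and summing the finitely many contributions gives the uniform bound $\mathcal D_T \leq C$, completing the proof.
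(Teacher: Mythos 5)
Your proposal follows the same strategy as the paper: decompose $C_T^{M^{\rm per}} = C_T^M + C_T^B$, expand the triple product so each error term carries at least one $C_T^B$ factor, use the pointwise bound $|C_T^M|,|C_T^B| \lesssim |x-y|^{-1}$ together with the boundary interpolation $|C_T^B(x,y)|\lesssim d(x,\partial M)^{-\alpha}|x-y|^{-(1-\alpha)}$ on a single $C^B$ factor, and then integrate out the vertices successively via convolution estimates, absorbing the lone boundary weight $d(\cdot,\partial M)^{-\alpha}$ by its integrability for $\alpha<1$. That is exactly the paper's argument.

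One small point is stated in the wrong direction and is in any case unnecessary: you say that where $|x-y|^{-1}$ fails to be integrable one should trade a bulk factor $|C^M_T(x,y)|\lesssim |x-y|^{-1}$ for the boundary-weighted form $d(\cdot,\partial M)^\delta|x-y|^{-1-2\delta}$ ``to gain integrability.'' That substitution makes the diagonal singularity \emph{worse} (from exponent $1$ to $1+2\delta$), not better; it can only help by producing a positive power of $d(\cdot,\partial M)$ to cancel a negative one elsewhere. But no such cancellation is needed here. The single interpolation applied to one $C^B_T$ factor already lowers the exponent on its edge from $2$ to $2-\alpha$, and after integrating out the free vertex by convolution (powers $2+2>3$ give $|x-y|^{-1}$) the remaining integral is $\int |x-y|^{-(3-\alpha)}\,d(x,\partial M)^{-\alpha}$, which is subcritical in $|x-y|$ and integrable in the boundary weight, with no pairing of weights ever required. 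Your related worry that boundary weights might ``pile up'' on a single $\tau$-slice does not arise: at most one weight $d(\cdot,\partial M)^{-\alpha}$ with $\alpha<1$ is produced. These are cosmetic issues that would have resolved themselves on execution; the core of the argument is correct and coincides with the paper's.
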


\begin{proof}
We may rewrite $\delta^{3,M}_T,\delta^{4,M}_T$, and $\delta^{5,M}_T$ in terms of kernels as follows:
\begin{align} \label{eqdef: delta M 6}
\delta^{3,M}_T &=  \frac{12^2}{3} \int \int C_T^M (x, y)^3  C_T^B(x,y)  \mathd x \mathd y,
\\ \label{eqdef: delta M 7}
\delta^{4,M}_T &= \frac{12^2}2 \int \int C_T^M(x,y)^2 C_T^B(x,y)^2 dx dy,
\\ \label{eqdef: delta M end}
\delta^{5,M}_T &= 3! \cdot \frac{4^2}2 \int \int C_T^M(x,y) C_T^B(x,y)^3 dx dy. 
\end{align}
By Remark \ref{remark: energy renormalization recombination} (or direct inspection), it thus follows that
\begin{align}
|\delta_T&(\varphi_-,\varphi_+)-\delta_T^M|\\&\leq C \int \int \int \left| C_T^{M^{\rm per}} (x, y)^2 C_T^{M^{\rm per}} (y, y')^2
     C_T^{M^{\rm per}} (y', x)^2-  C_T^M (x,
     y)^2 C_T^M (y, y')^2 C_T^M (y', x)^2 \right|.
\end{align}
It suffices to show that
\begin{equation}
\int \int \int \left| C_T^{M^{\rm per}} (x, y)^2 C_T^{M^{\rm per}} (y, y')^2
     C_T^{M^{\rm per}} (y', x)^2-  C_T^M (x,
     y)^2 C_T^M (y, y')^2 C_T^M (y', x)^2 \right| \leq C.	
\end{equation}
Since $C^{M^{\rm per}}_{T}(x,y)=C_{T}^{M}(x,y)+C_{T}^{B}(x,y)$, it is sufficient to show that there exists $C>0$ such that for every $1 \leq i \leq 2$ and $0 \leq j,k \leq 2$, and every $T>0$,
  \begin{equation} 
    \int \int \int \left| {C^M_T}  (x, y)^{2 - i} {C^B_T}  (x,
    y)^i C_T^M \left( y, y' \right)^{2 - j} C_T^B \left( y, y' \right)^j 
    C_T^M (x, y')^{2 - k} C_T^M (x, y')^k \right|
    \leq C.
  \end{equation}
In order to establish this, recall the pointwise covariance estimates of Appendices \ref{sec: Dirichlet covariance estimates} and \ref{appendix: harmonic extension}:
\begin{equation}
 |C_T^M(x,y)|, |C_T^B(x,y)| \leq \frac{C}{|x-y|}.
\end{equation}
Furthermore, by the covariance interpolation estimates (see Appendix \ref{appendix: harmonic extension}), for every $0 < \alpha < 1$ we have 
\begin{equation}
|C^B_T(x, y)| \leq 
  C\frac{1}{| x - y |^{1 - \alpha}} \frac{1}{d (x, \partial M)^{\alpha}}.	
\end{equation}
  Thus we get for any $1 \leq i \leq 2$ and $ 0 \leq j, k \leq 2$,
  \begin{align}
   \int \int \int &\left| {C^M_T}  (x, y)^{2 - i} {C^B_T}  (x,
    y)^i C_T^M \left( y, y' \right)^{2 - j} C_T^B \left( y, y' \right)^j 
    C_T^M (x, y')^{2 - k} C_T^M (x, y')^k \right| \mathrm{d}x\mathrm{d}y\mathrm{d}y'
    \\
    &\leq C  \int \int \int \frac{1}{| x - y |^{2 - \alpha}}
    \frac{1}{d (x, \partial M)^{\alpha}} \frac{1}{| y - y' |^2} \frac{1}{| x - y' |^2}\mathrm{d}x\mathrm{d}y\mathrm{d}y'
    \\
   &\leq C \int \int \frac{1}{| x - y |^{2 - \alpha}} \frac{1}{d
    (x, \partial M)^{\alpha}} \frac{1}{| x - y |} \mathrm{d}x\mathrm{d}y \leq \int \frac{1}{d (x, \partial M)^{\alpha}}\mathrm{d}x \leq C.
  \end{align} 
\end{proof}

\subsection{Estimate of $R_8$}

Recall that
\begin{equation}
R_8= \int^T_0 \int (J_t (\mathbb{W}^2_t \succ Z^{\flat}_t)) (J_t(
    \mathbb{W}^2_t \, H (\varphi_-, \varphi_+)_T)) \mathd x \mathd t - \int 2 \gamma^M_T H (\varphi_-, \varphi_+)_T Z_T \mathd x.
\end{equation}
We will prove the following estimate on $R_8$. 
\begin{lemma} \label{lem: r8 bound}
For every $\delta > 0$ sufficiently small, there exists $C_\delta > 0$ such that
  \begin{align}
    \mathbb E[|R_8|]\leq 
    C_\delta(1+\|H(\varphi_-,\varphi_+)\|_{B^{s}_{p,p}}^{r}) +  \delta \mathbb E [\mathcal{C}_T (u)],
  \end{align}
  where $p \in [1,2)$ is sufficiently close to $2$, $s$ is sufficiently small, and  is as below and $r \in [1,\infty)$ is sufficiently large.\end{lemma}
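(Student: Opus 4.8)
\textbf{Proof proposal for Lemma \ref{lem: r8 bound}.}

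The plan is to follow the scheme used for $R_1$ and $R_4$: split $R_8$ into the two displayed contributions, use the intermediate ansatz $Z_T(u) = -\mathbb W_T^{[3]} + K_T$ to isolate the drift-dependent pieces, and then absorb what remains into the coercive terms $\mathcal C_T(u) = \|Z_T(u)\|_{L^4}^4 + \tfrac12\|\ell_T(u)\|_{\mathcal H}^2$ plus a constant controlled (in expectation) by $\mathbb E[\|\Xi_T(W)\|_{\mathfrak S}^p]$ and a polynomial in a Besov norm of $H(\varphi_-,\varphi_+)$. First I would recall that $\gamma_T^M(x) = -\mathbb E[\int_0^T J_t\mathbb W_t^2 \cdot J_t\mathbb W_t^2(x)\,dt]$, which is exactly the expectation of the resonant product $J_t\mathbb W_t^2 \circ J_t\mathbb W_t^2$ integrated in $t$; so after a paraproduct decomposition of $\mathbb W_t^2 \succ Z_t^\flat$ against $\mathbb W_t^2 H(\varphi_-,\varphi_+)_T$ and an It\^o/Fubini manipulation (as in the derivation of $\Upsilon_{1,T}$ and $R_8$ in Section \ref{sec: renormalization of bulk amplitudes}), the most singular part of the first term in $R_8$ is precisely cancelled by $\int 2\gamma_T^M H(\varphi_-,\varphi_+)_T Z_T\,dx$, up to a renormalized resonant-product term of the form $\int (\mathbb W_T^{2\diamond 2}\text{-type object}) \cdot H(\varphi_-,\varphi_+)_T \cdot Z_T^\flat$ and commutator remainders. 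These renormalized objects live in $L^1_t\mathcal C^{-\kappa}_x$ (they are components of the bulk enhancement $\Xi_T(W)$, cf.\ Proposition \ref{prop: bulk enhancement}), so by duality they pair with $H(\varphi_-,\varphi_+)_T \in \mathcal C^{-1/2-\kappa}_z$ (with a distance-to-boundary weight, handled via Lemma \ref{lemma: regularity estimates bdry field} and the harmonic extension estimates of Appendix \ref{appendix: harmonic extension}) and with $Z_T^\flat$, which is controlled by $\|Z_T(u)\|_{L^4} \le \mathcal C_T(u)^{1/4}$ via the Mikhlin bound $\|Z_t^\flat\|_{L^4}\le \|Z_T(u)\|_{L^4}$.

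The key steps, in order: (i) paraproduct-decompose both factors inside the $t$-integral and identify the resonant piece $\int_0^T (J_t\mathbb W_t^2 \circ J_t\mathbb W_t^2)(Z_t^\flat)(H(\varphi_-,\varphi_+)_T) + \dot\gamma_t^M(Z_t^\flat)(H(\varphi_-,\varphi_+)_T)\,dt$ plus para-remainders; (ii) observe that the term $\int 2\gamma_T^M H(\varphi_-,\varphi_+)_T Z_T$, after substituting the intermediate ansatz and using $Z_T = Z_T^\flat + (Z_T - Z_T^\flat)$ (the difference being a higher-regularity correction handled as in \cite{BG20}, Section 7), cancels the $\dot\gamma_t^M$ piece exactly; (iii) bound the renormalized resonant remainder by duality using $\|\mathbb W^{2\diamond 2}_\bullet\|_{L^1_t\mathcal C^{-\kappa}_x}$ (finite $p$-th moments by Proposition \ref{prop: bulk enhancement}) against a product of a weighted $\mathcal C^{1/2+\kappa}_z$-norm of $H(\varphi_-,\varphi_+)_T$ and $\|Z_T^\flat\|_{L^4}$ — here I would use the fractional Leibniz rule and the weighted interpolation Lemma \ref{lemma: weight Sobolev} to trade a little regularity of $H$ for integrable boundary blow-up; (iv) estimate the para-remainders and the commutator terms by similar duality arguments, each producing a product of a (finite-moment) stochastic norm, a Besov norm of $H(\varphi_-,\varphi_+)$, and a factor of $\|Z_T(u)\|_{L^4}$ or $\|\ell_T(u)\|_{\mathcal H}$; (v) apply Young's inequality with a small parameter $\delta$ to every such product so the drift-dependent factor is absorbed into $\mathcal C_T(u)$, and take expectation, using Proposition \ref{prop: bulk enhancement} to turn the stochastic norms into a constant and collecting the $H(\varphi_-,\varphi_+)$ dependence into a single power $\|H(\varphi_-,\varphi_+)\|_{B^{s(p)}_{p,p}}^{q(p)}$ via Besov embeddings $\mathcal C^{-1/2-\kappa}_z \hookleftarrow H^{-1/2+\kappa}_z$ and the harmonic extension mapping properties.

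The main obstacle I expect is \textbf{the boundary-bulk resonance in step (iii)}: the factor $H(\varphi_-,\varphi_+)_T$ is only $\mathcal C^{-1/2-\kappa}$ in the periodic direction and genuinely singular at $\partial M$ (this is the periodization-trick subtlety flagged in Remark \ref{remark:boundarybulkstochastic}), so the resonant product $(J_t\mathbb W_t^2\circ J_t\mathbb W_t^2)\cdot H(\varphi_-,\varphi_+)_T$ does not make sense by naive multiplication — one must first split $H(\varphi_-,\varphi_+)_T = \overline H(\varphi_-)_T + \overline H(\varphi_+)_T + S(\varphi_-,\varphi_+)_T$ (with $S$ smooth up to the boundary, as in the $R_3$ estimate), then exploit that the resonant product lands in $\mathcal C^{-\kappa}_x$ with $\kappa$ small and that the harmonic extension of an $H^{1/2-\kappa}_z$ (or $\mathcal C^{-1/2-\kappa}_z$) boundary datum gains regularity away from $\partial M$ at the cost of a weight $d(\cdot,\partial M)^{-\alpha}$, $\alpha$ small. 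Getting the weights to balance — so that the boundary blow-up of $H$ is integrable against the (only mildly weighted) stochastic factor and the $L^4$ factor — is the delicate bookkeeping; but this is exactly the type of argument already carried out for $R_1(a)$, $R_1(b)$, and $\Upsilon^b_{3,T}$, so it should go through with $p \in [1,2)$ close to $2$ and $s(p) > 0$ small, as claimed.
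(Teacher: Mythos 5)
Your proposal is essentially the paper's argument: decompose $I_1$ by a double paraproduct in $t$ (peeling $J_t$ onto $\mathbb W_t^2$ and then paraproducting against $H(\varphi_-,\varphi_+)_T$, each step leaving a $J_t$-commutator error), pair the surviving resonant term with the $\gamma_T^M$ counterterm to produce the renormalized $\mathbb W^{2\diamond 2}_t$ object plus the $\gamma_t^M \dot Z_t^\flat$ and $\gamma_T^M(Z_T - Z_T^\flat)$ residuals, estimate those and the quadrilinear ${\rm com}_1, {\rm com}_2$ commutators by duality, and close with Young's inequality. The one place you are slightly imprecise is in saying the counterterm ``cancels the $\dot\gamma_t^M$ piece exactly'' and in leaning on the $R_1$-style boundary weighting: in the paper the counterterm does not produce an exact cancellation but rather completes $J_t\mathbb W_t^2 \circ J_t\mathbb W_t^2$ into the mean-corrected $\mathbb W^{2\diamond 2}_t$ (leaving the two residuals $I_2'(b)$ to estimate separately), and the boundary singularity of $H(\varphi_-,\varphi_+)$ is absorbed not via distance-to-boundary weights but via Lemma \ref{lem: hext pos reg}, which is precisely what forces $p \in [1,2)$ in the statement.
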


As a first step, we prove a regularity estimate on $H(\varphi_-,\varphi_+)$ measured in a Besov norm of strictly positive regularity $s>0$ taken sufficiently small.  The difficulty comes from the discontinuity at the image of $\partial M_-$ and $\partial M_+$ under the identification.

\begin{lemma}\label{lem: hext pos reg}
Let $\varphi_-,\varphi_+ \in \mathcal C^{-1/2-\kappa}_z$. Then for every $p \in [1,2)$, there exists $s>0$ sufficiently small and $C>0$ such that
\begin{equation}
	\|H(\varphi_-,\varphi_+)\|_{B^s_{p,p}} \leq C( 1 + \|\varphi_-\|_{\mathcal C^{-1/2-\kappa}_z} +\|\varphi_+\|_{\mathcal C^{-1/2-\kappa}_z}).
\end{equation}	
\end{lemma}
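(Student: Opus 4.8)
\textbf{Proof proposal for Lemma \ref{lem: hext pos reg}.}

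The plan is to decompose the double harmonic extension $H(\varphi_-,\varphi_+)$ into the sum of the two infinite-cylinder harmonic extensions $\overline H(\varphi_-)$ and $\overline H(\varphi_+)$ plus a smooth correction. Recall from Section \ref{sec: gaussian} that $H(\varphi_-,\varphi_+) = \overline H(\varphi_-)|_M + \overline H(\varphi_+)|_M + S(\varphi_-,\varphi_+)$, where $S(\varphi_-,\varphi_+)$ is $m$-harmonic on $\mathrm{Int}(M)$ with boundary data $-\overline H(\varphi_+)|_{\partial^- M}$ on $\partial^- M$ and $-\overline H(\varphi_-)|_{\partial^+ M}$ on $\partial^+M$. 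Since $\overline H(\varphi_\pm)$ decays exponentially away from the boundary component on which $\varphi_\pm$ sits, the boundary data for $S(\varphi_-,\varphi_+)$ on each face is the trace of a function harmonic in a neighbourhood of the \emph{opposite} face; hence by interior elliptic regularity for the operator $-\Delta+m^2$ on the infinite cylinder, this data is smooth with all norms controlled by $\|\varphi_\pm\|_{\mathcal C^{-1/2-\kappa}_z}$. Applying elliptic regularity again to $S(\varphi_-,\varphi_+)$ itself (it solves a homogeneous equation with smooth boundary data) gives $\|S(\varphi_-,\varphi_+)\|_{C^2(M)} \leq C(\|\varphi_-\|_{\mathcal C^{-1/2-\kappa}_z} + \|\varphi_+\|_{\mathcal C^{-1/2-\kappa}_z})$, which in particular is bounded in $B^s_{p,p}(M)$ for any $s \leq 2$, $p \in [1,2)$. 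Thus it remains to bound $\|\overline H(\varphi_-)\|_{B^s_{p,p}(M)}$ (and symmetrically the $\varphi_+$ term).

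For the single infinite-cylinder extension $\overline H(\varphi_-)$, the point is that it is genuinely smooth in $\mathrm{Int}(M)$ and the only obstruction to positive Besov regularity on $M$ (viewed periodically, per Remark \ref{remark:boundarybulkstochastic}) is the jump discontinuity at the gluing seam identifying $\partial^- M$ with $\partial^+M$. But $\overline H(\varphi_-)|_{\partial^+M}$ is exponentially small and $C^\infty$ (same interior-regularity argument as above), so the seam jump is in fact smooth, not merely bounded; the only true singularity is the blow-up of $\overline H(\varphi_-)$ and its derivatives as $\tau \to -L$, governed by the estimates in Appendix \ref{appendix: harmonic extension}. Concretely, using the weighted bounds $\|\overline H(\varphi_-)(\tau,\cdot)\|_{\mathcal C^{s'}_z} \lesssim d(\tau,\partial M)^{-s'-1/2-\kappa}\|\varphi_-\|_{\mathcal C^{-1/2-\kappa}_z}$ for $s' \geq -1/2-\kappa$ (and likewise for $\tau$-derivatives via the equation $\partial_\tau^2 \overline H = (-\Delta_{\mathbb T^2}+m^2)\overline H$), one has, for any $p \in [1,2)$,
\begin{equation}
\int_{-L}^{L} \|\overline H(\varphi_-)(\tau,\cdot)\|_{W^{s,p}_z}^p \, \big(1 + |\partial_\tau \overline H(\varphi_-)(\tau,\cdot)|^p\big)\, d\tau \lesssim \|\varphi_-\|_{\mathcal C^{-1/2-\kappa}_z}^p \int_{-L}^{L} d(\tau,\partial M)^{-(s+3/2+\kappa)p}\, d\tau,
\end{equation}
which is finite provided $(s+3/2+\kappa)p < 1$, i.e. provided $p < 2/3$ — too restrictive. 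So instead of brute-forcing through a full $W^{1,p}$ norm one uses the fractional Besov characterization directly: $\|g\|_{B^s_{p,p}(M)}^p \asymp \|g\|_{L^p(M)}^p + \int\int_{|x-y|\leq 1} \frac{|g(x)-g(y)|^p}{|x-y|^{d+sp}}\,dx\,dy$ with $d=3$, split the double integral into the region where both points are at distance $\geq \lambda|x-y|$ from $\partial M$ (handled by the gradient bound on that region) and the near-boundary region (handled by the $L^p$ bound, using that $d(\cdot,\partial M)^{-(1/2+\kappa)} \in L^p(M)$ for $p<2$ and trading a small amount of the $s$-regularity for a better distance-weight exponent). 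Optimizing, one finds that for any $p \in [1,2)$ there is $s = s(p) > 0$ small enough that all pieces converge, with the constant proportional to $1 + \|\varphi_-\|_{\mathcal C^{-1/2-\kappa}_z}$.

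The main obstacle is precisely this last bookkeeping: the harmonic extension blows up like $d(\tau,\partial M)^{-1/2-\kappa}$ (and its normal derivative like $d^{-3/2-\kappa}$), so one cannot afford the full first-order Besov-space description and must carefully balance the Sobolev/Besov exponent $s$ against the integrability exponent $p$ and the boundary-distance weight — this is where the hypotheses $p<2$ and $s(p)$ small are forced. Everything else (the decomposition $H = \overline H + \overline H + S$, smoothness of the seam jump, $C^2$ control of $S$) is a routine application of interior elliptic regularity plus the exponential decay of $\overline H$ away from its supporting face, combined with the weighted kernel bounds catalogued in Appendix \ref{appendix: harmonic extension}.
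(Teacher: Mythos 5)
Your proposal is correct in outline and arrives at the right constraint, but it takes a genuinely different technical route from the paper, and it contains one factual misstatement along the way that deserves flagging.

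The paper does not work with the full three-dimensional increment characterization of $B^s_{p,p}(M)$. Instead it first uses the embedding $B^s_{p,p,\tau}B^s_{p,p,z}\hookrightarrow B^s_{p,p}$ to reduce to a mixed space, then estimates the Sobolev--Slobodeckij seminorm in the $\tau$-variable with $B^s_{p,p,z}$-valued increments. After shifting the seam to $\tau=0$, it splits the double integral into $\Sigma_1 = \{\tau<\tau'<0\}$ and $\Sigma_2 = \{\tau<0<\tau'\}$, and on $\Sigma_1$ it uses the precise interpolation
\[
|f(\tau)-f(\tau')|\leq |f(\tau)-f(\tau')|^{1-\delta}\Bigl(\int_\tau^{\tau'}|\partial_{\tau''}f|\,d\tau''\Bigr)^{\delta},
\]
together with the weighted decay $\|\overline H(\varphi)(\tau,\cdot)\|_{B^s_{p,p,z}}\lesssim |\tau|^{-1/2-\kappa-s}$ and the identity $\partial_\tau\overline H=\mathcal N_0\overline H$ (which gives $\|\partial_\tau\overline H(\tau,\cdot)\|_{B^s_{p,p,z}}\lesssim|\tau|^{-3/2-\kappa-s}$). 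Tuning $\delta$ and $s$ then produces the constraint $(1/2+\kappa+2s)p<1$, forcing $p<2$. Your approach instead keeps the $d=3$ increment integral and handles it with a near/away split relative to the boundary distance: in the "away" region where both points lie at distance $\gtrsim|x-y|$ from the seam you can use the pointwise gradient bound $|\nabla \overline H|\lesssim d(\cdot,\partial M)^{-3/2-\kappa}$ (the segment between the points stays in the good region), and in the "near" region you just use $|\overline H(x)|\lesssim d(x,\partial M)^{-1/2-\kappa}$ together with $d(\cdot,\partial M)^{-(1/2+\kappa)}\in L^p$ for $p<2$. Working this out gives the same threshold $(1/2+\kappa+s)p<1$, so the two arguments land on the same constraint. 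The paper's route is a bit cleaner because the $z$-direction regularity never interacts with the $\tau$-direction fractional estimate, and the $\delta$-interpolation makes the bookkeeping transparent; your route is more self-contained in that it never invokes the mixed-norm embedding, but the "optimizing, one finds\ldots" step is doing real work that you have not written out.

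One genuine error in your write-up, although it does not propagate: the claim that "the seam jump is in fact smooth" is false. After periodizing $\overline H(\varphi_-)$ on $M$, the jump across the seam equals $\varphi_- - \overline H(\varphi_-)|_{\partial^+M}$, and while the subtracted term is smooth, $\varphi_-\in\mathcal C^{-1/2-\kappa}_z$ is not, so the jump inherits exactly the roughness of $\varphi_-$. The correct statement is only that the contribution from the far face is smooth, so the entire singularity sits on one side of the seam (the blow-up as $\tau\downarrow -L$). Your subsequent argument does not rely on the jump being smooth — the near-seam region is handled by the $L^p$ bound, which is agnostic to the jump — so this is a wording error rather than a structural one, but the sentence as written would mislead a reader. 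A second cosmetic issue: the displayed inequality with $\|\cdot\|_{W^{s,p}_z}^p\bigl(1+|\partial_\tau\overline H|^p\bigr)$ should be a sum, not a product, of terms (and $\partial_\tau\overline H$ should appear inside a $z$-norm); this is clearly a typo in a heuristic display meant only to show the full first-order route forces $p<2/3$.

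Finally, the decomposition $H=\overline H(\varphi_-)+\overline H(\varphi_+)+S$ and the $C^2$ control of $S$ by elliptic regularity are both used in the paper as well (the paper invokes the decomposition to estimate $\partial_\tau H$ in the $\Sigma_1$ piece), so that part of your argument matches the paper exactly.
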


\begin{proof}
Note that $B^s_{p,p,\tau}B^s_{p,p,z} \hookrightarrow B^s_{p,p}$. Thus, by this and standard embeddings, it is sufficient to establish the above bound for $W^s_{p,\tau}B^s_{p,p,z}$. Recall that
\begin{equation}
\| H(\varphi_-,\varphi_+)\|_{W^s_{p,\tau}B^s_{p,p,z}}^p =\int_{L\mathbb T} \int_{L\mathbb T} \frac{\|H(\varphi_-,\varphi_+)(\tau,\cdot)-H(\varphi_-,\varphi_+)(\tau',\cdot)\|^p_{B^s_{p,p,z}}}{|\tau-\tau'|^{d+sp}} d\tau d\tau' =: I.
\end{equation}
We may assume that we shift by $+L$ so that $ H(\varphi_-,\varphi_+)$ is continuous, in fact smooth, as a function of $\tau$ except possibly at $\tau=0$. It is therefore sufficient to estimate this integral on the set $\Sigma:=(-\varepsilon,\varepsilon)^2$. 

We first estimate the integral $I$ on the set $\Sigma_1:= \{ \tau < \tau' < 0 : \tau,\tau' \in \Sigma \}$. Let $\delta>0$ be sufficiently small. Then
\begin{align}
I_1&:= \int_{\Sigma_1}  \frac{\|H(\varphi_-,\varphi_+)(\tau,\cdot)-H(\varphi_-,\varphi_+)(\tau',\cdot)\|^p_{B^s_{p,p,z}}}{|\tau-\tau'|^{1+sp}} d\tau d\tau'
\\
&\leq \int_{\Sigma_1}  \frac{\|H(\varphi_-,\varphi_+)(\tau,\cdot)-H(\varphi_-,\varphi_+)(\tau',\cdot)\|^{p(1-\delta)}_{B^s_{p,p,z}}}{|\tau-\tau'|^{1+sp}} 
\\
&\qquad \qquad \qquad \qquad \times \left(\int_\tau^{\tau'} \| \partial_{\tau''}H(\varphi_-,\varphi_+)(\tau'',\cdot)\|_{B^s_{p,p,z}}d\tau''\right)^{p\delta} d\tau d\tau'.
\end{align}
By properties of the harmonic extension (see Appendix \ref{appendix: harmonic extension}), we have that
\begin{equation}
	\|H(\varphi_-,\varphi_+)(\tau,\cdot)\|_{B^s_{p,p,z}} \leq c_1 \tau^{-1/2-\kappa-s} \|H(\varphi_-,\varphi_+)\|_{L^\infty_\tau B^{-1/2-\kappa}_{p,p,z}}.
\end{equation}
Thus, we may bound
\begin{align}
	I_1 &\leq c_2 \|H(\varphi_-,\varphi_+)\|^{p(1-\delta)}_{L^\infty_\tau B^{-1/2-\kappa}_{p,p,z}} 
	\\ &\qquad \qquad \times  \int_{\Sigma_1} \frac{\tau^{-(1/2+\kappa+s)p(1-\delta)}}{|\tau-\tau'|^{1+sp}} \left(\int_\tau^{\tau'} \| \partial_{\tau''}H(\varphi_-,\varphi_+)(\tau'',\cdot)\|_{B^s_{p,p,z}}d\tau''\right)^{p\delta} d\tau d\tau'.
\end{align}
The condition $p<2$ is necessary here to ensure that the singularity at $\tau=0$ is integrable. 

Let us now estimate the derivative term. Recall that $H(\varphi_-,\varphi_+) = \overline H(\varphi_-) + \overline H(\varphi_+) + S(\varphi_-,\varphi_+)$. Upon identication of $-L$ with $L$, and the shift of $+L$, in the regime $\tau < 0$ we have that,
\begin{equation}
	\|\partial_\tau H(\varphi_-,\varphi_+)(\tau,\cdot)\|_{B^s_{p,p,z}} \leq \|\partial_\tau \overline H(\varphi_-)(\tau,\cdot)\|_{B^s_{p,p,z}} + \|\partial_\tau \overline H(\varphi_+)(\tau,\cdot)\|_{B^s_{p,p,z}}. 
\end{equation}
By the explicit expression for $\overline H$, we have that
\begin{equation}
	\partial_\tau \overline H(\varphi_+) = \mathcal N_0 \overline H(\varphi_+),
\end{equation}
where we recall $\mathcal N_0 = -\sqrt{-\Delta_{\mathbb T^2}+m^2}$. On the set $\tau'' \in (\tau,\tau')$, we may estimate
\begin{equation}
	\| \mathcal N_0 \overline H(\varphi_+)(\tau'',\cdot) \|_{B^s_{p,p,z}} \leq c_3 \|\mathcal N_0 \overline H(\varphi_+)\|_{L^\infty_\tau B^{-3/2-\kappa}_{p,p,z}} \tau^{-3/2-\kappa-s}.
\end{equation}
Thus,
\begin{align}
&\left(\int_\tau^{\tau'} \| \partial_{\tau''}H(\varphi_-,\varphi_+)(\tau'',\cdot)\|_{B^s_{p,p,z}}d\tau''\right)^{p\delta} 
\\
&\leq c_3 (\|\mathcal N_0 \overline H(\varphi_+)\|_{L^\infty_\tau B^{-3/2-\kappa}_{p,p,z}}^{p\delta}+\|\mathcal N_0 \overline H(\varphi_+)\|_{L^\infty_\tau B^{-3/2-\kappa}_{p,p,z}}^{p\delta}) \tau^{-(3/2+\kappa+s)p\delta}|\tau-\tau'|^{p\delta}.
\end{align}
Thus the bound  on $I_1$ reduces to estimating 
\begin{equation}
\int_{\Sigma_1} \frac{\tau^{-(1/2+\kappa+s)p(1-\delta)} \tau^{-(3/2+\kappa+s)p\delta}}{|\tau-\tau'|^{1+(s-\delta)p}} d\tau d\tau.
\end{equation}
By first fixing $p<2$, then $\delta$ and $s$ sufficiently small such that $s<\delta$, we find that the above integrals are integrable near $\tau=0$ and thus we obtain the desired bound.

By symmetry considerations, to finish the lemma it remains to estimate the integral $I$ on the set $\Sigma_2:= \{\tau < 0 < \tau' : \tau,\tau' \in \Sigma \}$ --- we call this $I_2$. By similar estimates as above, we have that
\begin{align}
I_2 &:= \int_{\Sigma_2}  \frac{\|H(\varphi_-,\varphi_+)(\tau,\cdot)-H(\varphi_-,\varphi_+)(\tau',\cdot)\|^p_{B^s_{p,p,z}}}{|\tau-\tau'|^{1+sp}} d\tau d\tau'
\\
&\,\leq c_5 \|H(\varphi_-,\varphi_+)\|^{p}_{L^\infty_\tau B^{-1/2-\kappa}_{p,p,z}}  \int_{\Sigma_2} \frac{\tau^{-(1/2+\kappa+s)p}}{(\tau'-\tau)^{1+sp}} d\tau d\tau'.
\end{align}
Integrating first in $\tau'$ and then in $\tau$, we have that the integral on the righthand side above is finite provided $p<2$ and $s,\kappa>0$ is sufficiently small.

\end{proof}

We now turn to estimate on $R_8$.

\begin{proof}[Proof of \ref{lem: r8 bound}]
The first integral of $R_8$ is given by 
\begin{equation}
I_1:= \int^T_0 \int (J_t (\mathbb{W}^2_t \succ Z^{\flat}_t)) (J_t(
    \mathbb{W}^2_t \, H (\varphi_-, \varphi_+)_T)) \mathd x \mathd t 	
\end{equation}
We will begin by putting this in a more tractable form, by moving the $J_t$ on to $\mathbb W_t^2$. Let us define $I_1'$ by the decomposition
\begin{equation} \label{eq: r8 i1}
I_1= \int_0^T \int  (J_t \mathbb{W}^2_t \succ
     Z^{\flat}_t ) (J_t
    (\mathbb{W}^2_t \, H (\varphi_-, \varphi_+)_T)) dx dt +  I_1',
\end{equation}
We first estimate $I_1'$. By the commmutator lemma applied to $J_t$ and regularizing properties of $J_t$, both found in Appendix \ref{sec: Dirichlet covariance estimates}, we have that
 \begin{equation} \| J_t (\mathbb{W}^2_t \succ Z^{\flat}_t) - J_t \mathbb{W}^2_t \succ
     Z^{\flat}_t \|_{H^{1 / 4 - 3 \kappa}} \leq  \frac{C}{\langle t\rangle ^{1 / 2 +
     \kappa}} \| \mathbb{W}^2_t \|_{\mathcal{C}^{- 1 - \kappa}} \| Z^{\flat}_t
     \|_{H^{1 / 4 - \kappa}} . 
 \end{equation}
 Hence
\begin{equation}
|I_1'| \leq C \int_0^T \frac{1}{\langle t\rangle ^{1 / 2 +
     \kappa}} \| \mathbb{W}^2_t \|_{\mathcal{C}^{- 1 - \kappa}} \| Z^{\flat}_t
     \|_{H^{1 / 4 - \kappa}} \|J_t(
    \mathbb{W}^2_t \, H (\varphi_-, \varphi_+)_T)\|_{H^{-1/4+3\kappa}}  dt.
\end{equation}
In order to estimate the integral on the righthand side, observe that
\begin{multline}
\| J_t (\mathbb{W}^2_t \, H (\varphi_-, \varphi_+)_T)\|_{H^{-1/4+3\kappa}} \\ \leq \frac{C}{\langle t\rangle ^{7/4-3\kappa}} \|\mathbb{W}^2_t \, H (\varphi_-, \varphi_+)_T \|_{L^2} \leq \frac{C}{\langle t\rangle ^{7/4-3\kappa}}\|\mathbb W_t^2 \|_{L^\infty} \|H(\varphi_-,\varphi_+)_T\|_{B^s_{p,p}}.
\end{multline}
Above, the first inequality follows by the regularizing properties of $J_t$, and the second inequality follows by H\"older's inequality and Sobolev embedding (where $s>0$ is sufficiently small so that $p<2$ is sufficiently close to 2). 
Adding this back into the estimate of $I_1'$ and using properties of the $\flat$, we obtain that
\begin{equation}
|I_1'| \leq C  \| \langle t \rangle^{-9/4+2\kappa} \|\mathbb W_t^2 \|_{L^\infty_x} \|\mathbb W_t^2\|_{\mathcal C^{-1-\kappa}_x} \|_{L^1_t} \cdot \|Z_T\|_{H^{1/4-\kappa}} \|H(\varphi_-,\varphi_+)_T \|_{B^s_{p,p}},
\end{equation}
which can then be estimated by Young's inequality.

We now estimate the first term on the rightahnd side of \eqref{eq: r8 i1}. By a paraproduct decomposition,
\begin{multline}
\int_0^T \int  (J_t \mathbb{W}^2_t \succ
     Z^{\flat}_t ) (J_t
    \mathbb{W}^2_t \, H (\varphi_-, \varphi_+)_T) dx dt\\= \int_0^T \int (J_t \mathbb{W}^2_t \succ
     Z^{\flat}_t ) ( J_t \mathbb{W}^2_t \succ H (\varphi_-, \varphi_+)_T  )dx dt + I_1''.	
\end{multline}
By using commutator estimates as above, taking $p<2$ (for Lemma 
\ref{lem: hext pos reg} to apply), we have 
  \begin{multline}
   \| J_t (\mathbb{W}^2_t \succ H (\varphi_-, \varphi_+)_T)  -
    J_t \mathbb{W}^2_t \succ H (\varphi_-, \varphi_+)_T  \|_{B_{p,
    p}^{\kappa}} 
    \\ \leq  \frac{C}{\langle t\rangle ^{1 / 2 + \kappa}} \| \mathbb{W}^2_t \|_{\mathcal{C}^{- 1 -
    \kappa}} \| H (\varphi_-, \varphi_+)_T \|_{B_{p, p}^{3\kappa}}. 
  \end{multline}
  On the other hand, by para and resonant product estimates,
  \begin{align}
    \| J_t (\mathbb{W}^2_t \preccurlyeq H (\varphi_-, \varphi_+)_T) \|_{B_{p,
    p}^{\kappa}} \leq C \frac{1}{\langle t\rangle ^{3 / 2 + \kappa}} \| \mathbb{W}^2_t
    \|_{L^\infty} {\| H
    (\varphi_-, \varphi_+)_T \|_{B_{p, p}^{2\kappa}}}.
  \end{align}
Writing $q$ as the H\"older conjugate of $p$,
\begin{align}
|I_1''| &:= \left|\int_0^T \int  (J_t \mathbb{W}^2_t \succ
     Z^{\flat}_t ) (J_t(
    \mathbb{W}^2_t \, H (\varphi_-, \varphi_+)_T) - J_t (\mathbb{W}^2_t \succ H (\varphi_-, \varphi_+)_T) ) dx dt \right|	
    \\
    &\leq C \int_0^T   \frac{1}{\langle t \rangle ^{1 / 2 + \kappa}} \|J_t \mathbb{W}^2_t \succ
     Z^{\flat}_t \|_{B^{-2\kappa}_{q,q}}(\| \mathbb{W}^2_t
    \|_{\mathcal{C}^{- 1 - \kappa}} + \langle t\rangle^{-1}\|\mathbb W_t^2\|_{L^\infty}) {\| H
    (\varphi_-, \varphi_+)_T \|_{B_{p, p}^{3\kappa}}}.
\end{align}
Provided $p<2$ is sufficiently close to 2, then $q$ can be taken sufficiently close to $2$ as well (we need $q<4$). By properties of $\flat$ and a paraproduct estimate, we obtain that 
\begin{align}
|I_1''|&\leq C' \|Z_T\|_{L^4}\|H(\varphi_-,\varphi_+)_T\|_{B^{3\kappa}_{p,p}} 
\\
&\qquad \qquad \times (\| \langle t\rangle^{-3/2-\kappa} \| \mathbb W_t^2\|_{\mathcal C^{-1-\kappa}_x}^2 \|_{L^1_t} + \| \langle t \rangle^{-5/2-\kappa} \|\mathbb W_t^2 \|_{L^\infty_x} \|\mathbb W_t^2\|_{\mathcal C^{-1-\kappa}_x} \|_{L^1_t}).
\end{align}
This can then be estimated using Young's inequality.

To summarize, we have written
\begin{equation}
I_1 = \int_0^T \int (J_t \mathbb{W}^2_t \succ
     Z^{\flat}_t ) ( J_t \mathbb{W}^2_t \succ H (\varphi_-, \varphi_+)_T )dx dt +I_1'+I_1'',	
\end{equation}
and have obtained estimates on $I_1'$ and $I_1''$. We now turn to estimating the final form of the integrand together with the remaining term in $R_8$, which is the renormalization counter term. To this end, we define
 \begin{equation}
 I_2:= \int_0^T \int (J_t \mathbb{W}^2_t \succ
     Z^{\flat}_t ) ( J_t \mathbb{W}^2_t \succ H (\varphi_-, \varphi_+)_T  )dx-\int 2\gamma_T^M H(\varphi_-,\varphi_+)_T Z_T dx.
 \end{equation}
 By It\^o's formula on the product $\gamma_T^M Z_T$ and reorganizing terms, we may write
 \begin{equation}
 	I_2 = I_3+I_2',
 \end{equation}
 where
 \begin{equation}
 I_3:= 	\int^T_0 \int (J_t \mathbb{W}^2_t \succ Z^{\flat}_t)
    (J_t \mathbb{W}^2_t \succ H (\varphi_-, \varphi_+)_T)
    -(J_t \mathbb{W}^2_t \circ J_t \mathbb{W}^2_t)
    Z^{\flat}_t H (\varphi_-, \varphi_+)_T dx dt,
 \end{equation}
 and where
 \begin{align}
	I_2' &:= I_2'(a) + I_2'(b),
	\\
	I_2'(a) &:=  \int^T_0 \int (J_t \mathbb{W}^2_t \circ J_t \mathbb{W}^2_t
    + 2\dot{\gamma}^M_t) Z^{\flat}_t H (\varphi_-, \varphi_+)_T dx dt,
    \\
    I_2'(b)&:= \int^T_0 \int 2 \gamma_t^M \dot{Z}^{\flat}_t H (\varphi_-,
    \varphi_+)_T dxdt + \int 2\gamma_T^M (Z_T - Z^{\flat}_T)
    H (\varphi_-, \varphi_+)_T dx. 
\end{align}

We start by estimating $I_2'(a)$. Recall that
  \begin{equation}
  	\mathbb W_t^{2 \diamond 2}:= J_t \mathbb W_t^2 \circ J_t \mathbb W_t^2 + 2\dot\gamma_t^M.
  \end{equation} 
  Thus, by duality and fractional Leibniz, followed by Young's inequality,
  \begin{equation}
   |I_2'(a)|\leq  C\|\mathbb W_t^{2\diamond 2} \|_{L_t^1 \mathcal{C}^{- \kappa}} \| Z^{\flat}_t
    \|_{L_t^{\infty} B_{5 / 2, 5 / 2}^{\kappa}} \| H (\varphi_-, \varphi_+)_T
    \|_{B_{5 / 3, 5 / 3}^{\kappa}},
\end{equation}
from which the desired estimate follows.

We now estimate $I_2'(b)$. Note that $\gamma_T^M(\tau,\cdot)=\gamma_T^M(\tau)$ is constant for every $\tau$. By splitting into $\tau$-slices, we have that
\begin{align}
    \Bigg| \int &\gamma_T^M (Z_T - Z^{\flat}_T) H (\varphi_-, \varphi_+)_T dx \Bigg| \\
    &\leq C \int \frac{|\gamma_T^M(\tau)|}{T^{1/4-\kappa}} \|Z_T(\tau,\cdot)-Z_T^\flat(\tau,\cdot)\|_{H^{1/2-\kappa}_z} \|H(\varphi_-,\varphi_+)(\tau,\cdot)_T\|_{H^{-1/4}_z} d\tau 
   \\    &\leq  T^{- 1 / 4 + \kappa} \| \gamma_T^M \|_{L^\infty} (\| Z_T \|^2_{L^2_\tau H^{1 / 2 -
    \kappa}_z} + \| H (\varphi_-, \varphi_+)_T \|^2_{L_{\tau}^2 H^{- 1 / 4}_z}),  
    \end{align}
    where the first line is by duality and Bernstein's inequality (using spectral support considerations of $Z_T-Z_T^\flat$ in the $z$-variable), and the second line is by the Cauchy-Schwarz and Young inequalities. Using that $\|\gamma_T^M\|_{L^\infty_x} = \|\gamma_T^M\|_{L^\infty_\tau}= O(\log T)$, the desired estimate on this term follows.
    
  For the other integral in $I_2'(b)$, we will estimate it up to a mean-zero term. By Wick's theorem it is sufficient to estimate the integral with $\dot Z_t^\flat$ replaced by $\dot K_t^\flat$. Recall the $\flat$ interpolation estimate from \cite{BG20}: for every sufficiently regular function, 
  \begin{equation}
  \| \dot f_t^\flat \|_{B^s_{p,p}} \leq \frac{C}{\langle t \rangle^{1+s-s'}} \|f_T\|_{B^{s'}_{p,p}}.
  \end{equation}
  Then, by splitting into $(\tau,z)$ variables and using duality,
  \begin{multline}
  	\int_0^T \|\gamma_t^M\|_{L^\infty} \|\dot K_t^\flat \|_{H^\kappa} \|H(\varphi_-,\varphi_+)_T\|_{L^2_\tau H^{-\kappa}_z} dt \\\leq C' \int_0^T \|\gamma_t^M\|_{L^\infty} \frac{1}{\langle t\rangle ^{2}}\|K_T\|_{H^{1-\kappa}} \|H(\varphi_-,\varphi_+)_T\|_{L^2_\tau H^{-\kappa}_z} dt,
  \end{multline}
  which can then be estimated by using that $\|\gamma_t^M\|_{L^\infty} = O(\log t)$, Young's inequality, and the estimates on $K_T$ from Lemma \ref{lem: KT estimate}.

Finally, we estimate $I_3$. We will slightly adapt the quadrilinear estimate in \cite[Proposition 11]{BG20}. First, observe that for every $f,g,h$,
  \begin{align}
  \int_0^T \int (f_t \succ g_t)(f_t \succ h_t) &= \int_0^T \int (f_t \circ (f_t \succ h_t)) g) dx dt + {\rm com}_1 \\ &= \int_0^T \int (f_t\circ f_t)g_th_t + {\rm com}_2 + {\rm com}_1,
  \end{align}
  where 
  \begin{align}
  	{\rm com}_1 &:= \int_0^T \int (f_t \succ g_t) (f_t \succ h_t) - (f_t \circ (f_t \succ h_t))g_t dx dt,
  	\\
  	{\rm com}_2 &:= \int_0^T \int \Big( (f_t \circ (f_t \succ h_t)) - (f_t\circ f_t)h_t \Big) g_t dx dt.
  \end{align}
  We will set $f_t=J_t \mathbb W_t^2$, $g_t=Z_t^\flat$ and $h_t=H(\varphi_-,\varphi_+)_T$.

  Let us first estimate ${\rm com}_1$. Let $p^{-1}+q^{-1}=1$. Then by the trilinear estimate \cite[Proposition 10]{BG20} and paraproduct estimate,
  \begin{align}
  |{\rm com}_1| &\leq C \int_0^T \|J_t \mathbb W_t^2 \|_{\mathcal C^{-\kappa}}\| Z_t^\flat \|_{B^{2\kappa}_{q,q}} \|J_t\mathbb W_t^2 \succ H(\varphi_-,\varphi_+)_T \|_{B^{-\kappa}_{p,p}} dt
  \\
  &\leq C' \int_0^T \|J_t \mathbb W_t^2 \|_{\mathcal C^{-\kappa}}^2 \|Z_t^\flat\|_{B^{2\kappa}_{q,q}} \|H(\varphi_-,\varphi_+)\|_{L^p} dt.
  \end{align}
  Thus, taking $q>2$ sufficiently close to $2$ such that there exists $\kappa'>0$ such that $H^{1/2-\kappa'} \hookrightarrow B^{2\kappa}_{q,q}$, we have that (using properties of $\flat$) 
  \begin{equation}
  	|{\rm com}_1| \leq C'' \|Z_T\|_{H^{1/2-\kappa'}} \|H(\varphi_-,\varphi_+)_T\|_{L^p} \int_0^T \|J_t \mathbb W_t^2 \|_{\mathcal C^{-\kappa}}^2 dt, 
  \end{equation}
  which can then be put in the desired form by standard arguments using the regularizing properties of $J_t$ in Appendix \ref{sec: Dirichlet covariance estimates}. 
  
 We now turn to ${\rm com}_2$. For the second term, by the trilinear estimate \cite[Proposition 8]{BG20} and paraproduct estimates, 
 \begin{equation}
 |{\rm com}_2|\leq C \int_0^T \| J_t \mathbb W_t^2 \|_{\mathcal C^{-\kappa}}^2 \|H(\varphi_-,\varphi_+)_T\|_{B^{3\kappa}_{p,p}} \|Z_t^\flat\|_{L^q} dt. 	
 \end{equation}
 The desired inequality follows by similar arguments as above provided $p<2$ is chosen sufficiently close to $2$ such that $q\leq 4$.

\end{proof}

\subsection{Estimate of $R_9$}

Recall that
\begin{equation}
R_9 = 12 \int W_T \llbracket H (\varphi_-, \varphi_+)_T^2 \rrbracket G_T
   \mathd x + 4 \int \llbracket H (\varphi_-, \varphi_+)^3_T \rrbracket G_T
   \mathd x =: R_9(a) + R_9(b),
\end{equation}
where
\begin{equation}
G_T = \int^T_0 - J^2_t (\mathbb{W}^2_t \succ Z^{\flat}_t) - J^2_t \left(
  \mathbb{W}^2_t H \left( \varphi_- {, \varphi_+}  \right)_T \right) \mathd t.
\end{equation}

\begin{lemma} \label{lemma: R9}
Let $\epsilon > 0$ be arbitrarily small and $p = 4/3(1+\epsilon)$ and let $q = 4(1+\epsilon)/(3\epsilon)$. 
For every $\delta > 0$, there exists $C>0$ such that
  \begin{align}
    \mathbb E[| R_9 |] \leq &C \Big( \| d (x, \partial M)^{1 / 2 + 2\kappa} \llbracket H^3
    (\varphi_-, \varphi_+) \rrbracket \|^p_{L_{\tau}^{\infty}
    H_z^{- 1 + \kappa}} 
    \\
    &\quad + \| d (x,\partial M)^{1 / 2 - \kappa} \llbracket H^2 (\varphi_-,
    \varphi_+) \rrbracket \|^2_{L^{4/3}_\tau L^{\infty}_{z}} \Big)      \\
    &\quad +\mathbb E \left[C  \| \mathbb{W}^2_t \|^q_{L^\infty_t \mathcal C^{-1-\kappa}} + \delta \| Z_T \|^4_{L^4} \right]. 
  \end{align}
  \end{lemma}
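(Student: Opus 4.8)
\textbf{Proof plan for Lemma \ref{lemma: R9}.}

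The plan is to estimate $R_9(a)$ and $R_9(b)$ separately, in each case first passing the $J_t^2$ that sits inside $G_T$ onto the companion Wick power by self-adjointness, then handling the two pieces of $G_T$ (the one with $Z^\flat_t$ and the one with $H(\varphi_-,\varphi_+)_T$) with slightly different tools. Throughout I would work in $(\tau,z)$-slices, inserting powers of $d(\cdot,\partial M)$ to trade the boundary blow-up of the Wick powers of the harmonic extension (as quantified in Lemma \ref{lemma: regularity estimates bdry field} / Proposition \ref{prop: stochastic boundary v2}) against the mild loss coming from the harmonic extension estimates of Appendix \ref{appendix: harmonic extension}, and finishing each term with a duality (fractional Leibniz) bound followed by Young's inequality so that the drift-dependent factor is always absorbed into either $\|Z_T\|_{L^4}^4$ or $\|\mathbb W_t^2\|_{L^1_t\mathcal C^{-1-\kappa}}$. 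The target $L^p$, $L^q$ exponents in the statement dictate the Sobolev embedding exponents: $p=4/3(1+\epsilon)$ comes from the embedding $L^{4/3(1+\epsilon)}_\tau \hookrightarrow$ the relevant $B^0_{\cdot}$ space on the one-dimensional $\tau$-axis, and $q=4(1+\epsilon)/(3\epsilon)$ is its Hölder conjugate weighted against the $L^4$ drift term.

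For $R_9(b)=4\int \llbracket H(\varphi_-,\varphi_+)^3_T\rrbracket G_T\,dx$, I would write $G_T$'s two summands. For the $Z^\flat_t$ piece, after moving $J_t^2$ onto $\llbracket H^3\rrbracket$ and using the regularizing properties of $J_t$ from Appendix \ref{sec: Dirichlet covariance estimates} together with properties of $\flat$, a duality pairing in $H^{-1+\kappa}_z$ versus $H^{1-\kappa}_z$ on each $\tau$-slice, with weights $d(\tau,\partial M)^{1/2+2\kappa}$ and $d(\tau,\partial M)^{-1/2-2\kappa}$ inserted, produces the factor $\|d(\cdot,\partial M)^{1/2+2\kappa}\llbracket H^3(\varphi_-,\varphi_+)\rrbracket\|_{L^\infty_\tau H^{-1+\kappa}_z}^p$ times $\int_0^T\|\mathbb W_t^2\|_{\mathcal C^{-1-\kappa}}\|Z_t^\flat\|\,dt$, and the latter is bounded by $\|Z_T\|_{L^4}$ and the $L^1_t$-norm of $\|\mathbb W_t^2\|_{\mathcal C^{-1-\kappa}}^q$ after Young. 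For the $H(\varphi_-,\varphi_+)_T$ piece of $G_T$, which is $t$-independent in its boundary factor, I would use the second moment bound: take $\mathbb E$, expand the covariance $\mathbb E[\mathbb W_t^2(x)\mathbb W_t^2(y)]\lesssim |x-y|^{-2-\kappa}$ via Appendix \ref{sec: Dirichlet covariance estimates}, integrate in $t$ to get an inverse Bessel-type kernel, and close with Young's convolution inequality, yielding the weighted $L^{4/3}_\tau L^\infty_z$-norm of $\llbracket H^2\rrbracket$ (the resonance/paraproduct bookkeeping here is exactly as for the analogous bulk term in \cite{BG20}).

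For $R_9(a)=12\int W_T\llbracket H^2(\varphi_-,\varphi_+)_T\rrbracket G_T\,dx$, I would again split $G_T$; a paraproduct decomposition in the $z$-variable of $W_T\llbracket H^2\rrbracket$ — as in the treatment of $R_1(a)$ — reduces matters to terms where one can pair $W_T$ (regularity $\mathcal C^{-1/2-\kappa}$) against the rough harmonic-extension factor in $\mathcal C^{-1+\kappa}_z$ up to a weight, against $G_T$ which lives in $H^{1-\kappa}$-type spaces (using the $K_T$-style estimate, Lemma \ref{lem: KT estimate}, or the direct $J_t^2$ smoothing). The $Z^\flat$ piece of $G_T$ then contributes $\|d(\cdot,\partial M)^{1/2-\kappa}\llbracket H^2\rrbracket\|^2_{L^{4/3}_\tau L^\infty_z}$ after Young, while the commutators produced by sliding $J_t$ through paraproducts are handled by the commutator lemma of Appendix \ref{sec: Dirichlet covariance estimates} exactly as in the $R_8$ proof. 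I expect the main obstacle to be the bookkeeping of weights and Sobolev/Besov exponents in the $R_9(a)$ paraproduct decomposition: one must verify that the positive weight powers on the $\llbracket H^i\rrbracket$ factors are compatible (via Lemma \ref{lemma: regularity estimates bdry field}) with the negative weight powers needed to make the $G_T$ (equivalently $Z_T$, $\mathbb W_t^2$) factors integrable in $\tau$ near the boundary, while simultaneously keeping $p$ strictly below $4/3\cdot 2$ and $q$ finite. Once the exponent arithmetic is checked, every term is of the claimed shape and Young's inequality gives the result.
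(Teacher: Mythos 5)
There is a genuine gap: you have missed the paper's opening observation that the $H(\varphi_-,\varphi_+)_T$-piece of $G_T$ contributes only a \emph{mean-zero} random variable, so (under the convention $\equiv$ announced at the start of Section~5, ``equal up to mean zero terms'') $R_9(a)$ and $R_9(b)$ are replaced by $\tilde R_9(a)$ and $\tilde R_9(b)$ in which $G_T$ is reduced to its $Z^\flat$-piece. This is a short Wick-theorem observation: $\mathbb E[\mathbb W_t^2]=0$ kills the $H$-piece of $G_T$ against $\llbracket H^3\rrbracket$, and $\mathbb E[W_T\,\mathbb W_t^2]=0$ kills it against $W_T\llbracket H^2\rrbracket$. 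Since the renormalized cost function only involves $\mathbb E[R_9]$, dropping these mean-zero pieces is all that is needed, and it is what makes the rest of the estimate close with the norms appearing in the statement. Without this step, your plan to estimate the $H$-piece ``directly by second moments'' is not only unnecessary but does not land in the asserted form: for $R_9(b)$ the resulting double integral carries $\llbracket H^3\rrbracket(x)\llbracket H^3\rrbracket(y)H(x)H(y)$ against the $\mathbb W^2$-covariance, not a weighted $L^{4/3}_\tau L^\infty_z$-norm of $\llbracket H^2\rrbracket$ (you appear to attribute the wrong Wick power), and the products of harmonic-extension factors near the boundary are not controlled by anything in the statement.

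For the $Z^\flat$-piece, your organization also diverges from the paper in a way that costs effort. For $\tilde R_9(a)$ the paper does \emph{not} paraproduct-decompose $W_T\llbracket H^2\rrbracket$ à la $R_1(a)$, nor does it use the $K_T$-estimate: it just pairs $W_T(\tau,\cdot)\llbracket H^2\rrbracket(\tau,\cdot)$ against $\int_0^T J_t^2(\mathbb W_t^2\succ Z_t^\flat)\,dt(\tau,\cdot)$ in $H^{-1+\kappa}_z\times H^{1-\kappa}_z$ slice-by-slice, uses a Besov embedding and $J_t^2$-smoothing to bound the $G_T$-factor by $\|\mathbb W_t^2\|_{L^\infty_t\mathcal C^{-1-\kappa/2}}\|Z_T\|_{L^4}$, and then closes with a three-way Young inequality $abc\leq \tfrac1p a^p+\tfrac1q b^q+\tfrac14 c^4$ with $\tfrac1p+\tfrac1q+\tfrac14=1$ (which is exactly where $p=\tfrac43(1+\epsilon)$ and $q=\tfrac{4(1+\epsilon)}{3\epsilon}$ come from, not from a $\tau$-Besov embedding as you suggest). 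The remaining $\|W_T\llbracket H^2\rrbracket\|_{L^{4/3}_\tau H^{-1+\kappa}_z}^p$ factor is handled by hypercontractivity and Minkowski, reducing to a second moment on each slice which is then bounded via the $C^M$-kernel estimate and the boundary weight, producing the $\|d(\cdot,\partial M)^{1/2-\kappa}\llbracket H^2\rrbracket\|^2_{L^{4/3}_\tau L^\infty_z}$ term. Your plan for $R_9(a)$ would likely produce a correct bound eventually, but with substantially more machinery than the paper uses.

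\end{document}
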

\begin{proof}
Observe that, by Wick's theorem,
\begin{align}
R_9(a) &\equiv \tilde R_9(a) := - 12 \int W_T \llbracket H(\varphi_-, \varphi_+)_T^2 \rrbracket  \int_0^T J^2_t (\mathbb{W}^2_t \succ Z^\flat_t) \mathrm{d}t \mathrm{d}x,
\\
R_9(b) &\equiv \tilde R_9(b) := -4 \int \llbracket H (\varphi_-, \varphi_+)^3_T \rrbracket \int_0^T J^2_t (\mathbb{W}^2_t \succ Z^\flat_t) \mathrm{d}t \mathrm{d}x.
\end{align}

We estimate $\tilde R_9(a)$ first. We will split into $(\tau,z)$ variables. By duality and H\"older's inequality,
\begin{align}
|\tilde R_9(a)|	&\leq \int \|  W_T (\tau, \cdot) \llbracket H (\varphi_-, \varphi_+)_T^2
    \rrbracket (\tau, \cdot) \|_{H_z^{- 1 + \kappa}} \left\| \int_0^T J^2_t (\mathbb{W}^2_t \succ Z^\flat_t)(\tau,\cdot) dt\right\|_{H^{1-\kappa}_z}   d\tau
    \\
&\leq \| W_T  \llbracket H (\varphi_-, \varphi_+)^2_T
    \rrbracket \|_{L^{4/3}_\tau H^{-1+\kappa}_z}   \left\| \int_0^T J^2_t (\mathbb{W}^2_t \succ Z^\flat_t) dt\right\|_{L^4_\tau H^{1-\kappa}_z}. 
\end{align}
In order to estimate the second term on the RHS, note that
\begin{equation}
  \left\| \int_0^T J^2_t (\mathbb{W}^2_t \succ Z^b_t) dt\right\|_{L^4_\tau H^{1-\kappa}_z} 
  \leq   \left\| \int_0^T J^2_t (\mathbb{W}^2_t \succ Z^b_t)dt\right\|_{B^{1-\kappa}_{4,1}} \leq C \| \mathbb W_t^2 \|_{L^\infty_t \mathcal C^{-1-\kappa/2}} \|Z_T\|_{L^4}.
\end{equation}

We now turn our attention to the remaining term. Let $q<4$ and $p>4/3$ be H\"older conjugate. By Young's inequality and the above estimate, we thus must estimate
\begin{equation}
I_0(\varphi_-,\varphi_+)=I_0:= \mathbb E\left [  \| W_T  \llbracket H (\varphi_-, \varphi_+)^2_T
    \rrbracket \|_{L^{4/3}_\tau H^{-1+\kappa}_z} ^p \right].
\end{equation}
By hypercontractivity and Minkowski's integral inequality, we may bound this by
\begin{align}
I_0 &\leq C \mathbb E  \left [  \| W_T  \llbracket H (\varphi_-, \varphi_+)^2_T
    \rrbracket \|_{L^{4/3}_\tau H^{-1+\kappa}_z} ^2 \right]^{p/2}	\\&\leq \left\| \mathbb E \left [  \| W_T  \llbracket H (\varphi_-, \varphi_+)^2_T
    \rrbracket \|_{H^{-1+\kappa}_z} ^2 \right]^{1/2} \right\|^p_{L^{4/3}_\tau}.\label{eq:boundWH}
\end{align}
Now to bound the expectation, observe that $(m^2-\Delta_{per})^{-(1+2\kappa)}$ on $\mathbb{T}^2$ has a kernel bounded by $|z-z'|^{-4\kappa}$. 
Thus for fixed $\tau$ we can bound 
\begin{align}
  &\mathbb E \left [  \| W_T  \llbracket H (\varphi_-, \varphi_+)^2_T
    \rrbracket \|_{H^{-1+\kappa}_z} ^2 \right]\\
\leq &\int  \int  \frac{1}{|z-z'|^{4\kappa}} C^{M}(x,x') |\llbracket H (\varphi_-, \varphi_+)^2_T(\tau,z) \rrbracket \llbracket H (\varphi_-, \varphi_+)^2_T(\tau,z')  \rrbracket | \mathrm{d} z \mathrm{d} z'\\
\leq& \int  \int \frac{1}{|z-z'|^{4\kappa}} \frac{d(\tau,\partial M)^{1-6\kappa}}{|z-z'|^{2-6\kappa}} \mathrm{d}z \mathrm{d}z' \|\llbracket H (\varphi_-, \varphi_+)^2_T(\tau,\cdot)  \rrbracket\|_{L^\infty_{z}}^2 \\
 \leq & C \| d(\tau,\partial M) ^{1/2-3\kappa} \llbracket H (\varphi_-, \varphi_+)^2_T(\tau,\cdot)  \rrbracket\|^2_{L^\infty_{z}}
\end{align}
Now taking this to the power $2/3$ and integrating in $\tau$ we obtain the desired bound.

We now turn to $\tilde R_9(b)$. As above, it is sufficient to estimate, with $p$ as above,
\begin{equation}
I:= \int \| \llbracket  H(\varphi_-,\varphi_+)^3 \rrbracket (\tau,\cdot) \|_{H^{-1+\kappa}_z}^p d\tau.
\end{equation}
Hence we may interpolate with the distance to the boundary to obtain 
\begin{equation}
|I| \leq C \| d(\tau,\partial M)^{\frac{1+4\kappa}{2}}\llbracket  H(\varphi_-,\varphi_+)^3 \rrbracket (\tau,\cdot) \|_{L^\infty_\tau H^{-1+\kappa}_z}^p \int d(\tau,\partial M)^{-\frac{p(1+4\kappa)}2} d\tau.
\end{equation}
The integral in $\tau$ on the righthand side is finite provided $p$ is sufficiently close to $4/3$.

 \end{proof}

\section{Convergence of bulk amplitudes}

\label{sec: convergence of bulk amplitudes}

In this section, we show that the variational representation of the bulk amplitudes converge to a limiting variational representation using the theory of $\Gamma$-convergence. As a consequence, we prove Theorem \ref{thm: bulk}. Our proof of convergence follows the $\Gamma$-convergence argument of Section \ref{sec: boundary} (and as a result, of \cite{BG20}) with the crucial difference coming in the handling of the remainder map.

\subsection{Relaxation of the variational problem}

We begin by relaxing the variational problem on an appropriate space of joint laws that has sufficient compacity properties to establish $\Gamma$-convergence. Recall that $\mathcal H = L^2_t L^2_x$. Define
\begin{equation}
	\mathcal L:=L^2_t W^{-1/2-\delta,3}_x . 
\end{equation}
This choice is motivated by the fact that the mapping $u\to Z(u)$ is a compact map $\mathcal{L} \mapsto L^{\infty}_t L^{4}_x$.

Recall the limiting bulk enhancement $\Xi_\infty(W)$ defined in Proposition \ref{prop: bulk enhancement}. We consider the following spaces of joint laws:
\begin{align}
\mathcal Y&:= \{ \mu \in P(\boldsymbol{\mathfrak S}\times \mathcal L) : \mathbb E_{\mu}[\|u\|_{\mathcal L}^2]<\infty \},	
\\
\mathcal X &:= \{ \mu \in \mathcal Y: {\rm Law}_\mu(\Xi) = {\rm Law}_{\mathbb P}(\Xi_\infty(W)) \}.
\end{align}
We endow $\mathcal Y$ and $\mathcal X$ with the following notion of sequential convergence. We say that a sequence $(\mu_n)$ converges to $\mu$ in $\mathcal Y$ if
\begin{enumerate}
	\item[(i)] $\mu_n \rightharpoonup \mu$ on $\boldsymbol{\mathfrak S}\times \mathcal L_w$, 
	\item[(ii)] $\sup_n \mathbb E_{\mu_n}[ \|u\|_{\mathcal L}^2] <\infty$.
\end{enumerate}
Let $\overline{\mathcal X}$ denote the sequential closure of $\mathcal X$ in $\mathcal Y$ under this notion of convergence. Let us stress that, since the marginal of the first components are fixed for any sequence in $\mathcal X$, the marginal of the first component is also fixed in $\overline{\mathcal X}$.

We now define our cost functions. For $T \in (0,\infty]$, define the functional on $\overline{\mathcal X}$
\begin{equation} \label{eqdef: tilde FF}
	\tilde{\mathbb F}^{f; \Xi_T^\partial (\varphi_-,\varphi_+)}_T(\mu)
 =\mathbb E_{\mu}\left[\Phi^{f;\varphi_-,\varphi_+}_T(u)+\mathcal C_T(u)\right], \qquad  \mu \in \overline{\mathcal X},
\end{equation}
where $\mathbb{E}_{\mu}$ denotes the expectation with respect to the measure $\mu$, $\Phi_T^{f;\varphi_-,\varphi_+}$ is as in Definition \ref{def: renormalized cost fcn} (see also Proposition \ref{prop: renormalized cost fcn} and Remark \ref{rem: renormalized cost fcn f}), and $\mathcal C_T(u)$ is as in \eqref{eqdef: CT-coercive}.

\begin{remark} \label{remark: cost fcn tinfty}
We stress that the function $u \mapsto \Phi_T^{f; \varphi_-,\varphi_+}(u) + \mathcal C_T(u)$ is deterministically well-defined for $u \in \mathcal L$. It does not require progressive measurability of $u$, which is not necessarily valid anymore. Furthermore, let us emphasize that we have implicitly extended this function to include the case $T=\infty$. Since $u \in \mathcal L$, the definition of $\mathcal C_\infty(u)$ is straightforward. Furthermore, by the stochastic estimates in Section \ref{sec:stochastic} the terms $\Upsilon^i_T$ in $\Phi_T^{f; \varphi_-,\varphi_+}$ can be extended in a straightforward manner to include $T=\infty$. It remains to justify why we may define the $R_i$ terms at $T=\infty$. This follows from viewing the $R_i$ as (sums of) multilinear maps and deducing continuity of these maps from the estimates used to prove Proposition \ref{prop: remainder terms}. We omit further details.
\end{remark}

The following proposition is an immediate consequence of the definition \eqref{eqdef: tilde FF}, Definition \ref{def: renormalized cost fcn}, and Remarks \ref{rem: renormalized cost fcn f} and \ref{remark: cost fcn tinfty}. 

\begin{proposition} \label{prop: relaxed renormalized cost fcn}
Let $\mu = {\rm Law}_{\mathbb P}(\Xi, u)$ for $v$ progressively measurable with respect to $W$ and almost surely in $\mathcal L$ . Then for every $T \in (0,\infty]$,
\begin{equation}
\tilde{\mathbb F}^{f; \Xi^\partial_T(\varphi_-,\varphi_+)}_T(\mu) = \mathbb F^{f; \Xi^\partial_T(\varphi_-,\varphi_+)}_T(v).	
\end{equation}
	
\end{proposition}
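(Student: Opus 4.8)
Proposition \ref{prop: relaxed renormalized cost fcn} is essentially a consistency statement: it asserts that the relaxed functional $\tilde{\mathbb F}^{f;\Xi^\partial_T(\varphi_-,\varphi_+)}_T$ restricted to joint laws of the form ${\rm Law}_{\mathbb P}(\Xi_\infty(W),v)$, with $v$ progressively measurable and almost surely in $\mathcal L$, agrees with the renormalized cost function $\mathbb F^{f;\Xi^\partial_T(\varphi_-,\varphi_+)}_T(v)$ of Definition \ref{def: renormalized cost fcn} (and Remark \ref{rem: renormalized cost fcn f}). The plan is to unwind both definitions and observe they are literally the same expectation, with the only subtlety being the identification of the $\Xi_\infty(W)$-marginal with the stochastic objects appearing implicitly inside the definition of $\mathbb F^{f;\Xi^\partial_T}_T$.

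First I would recall from \eqref{eqdef: tilde FF} that $\tilde{\mathbb F}^{f;\Xi^\partial_T(\varphi_-,\varphi_+)}_T(\mu) = \mathbb E_\mu[\Phi^{f;\varphi_-,\varphi_+}_T(u) + \mathcal C_T(u)]$, where under the product space the pair $(\Xi,u)$ has law $\mu$; and from Definition \ref{def: renormalized cost fcn} (with the $f$-modification of Remark \ref{rem: renormalized cost fcn f}) that $\mathbb F^{f;\Xi^\partial_T(\varphi_-,\varphi_+)}_T(v) = \mathbb E[\Phi^{f;\varphi_-,\varphi_+}_T(v) + \mathcal C_T(v)]$, the expectation now being with respect to $\mathbb P$. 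The second step is to note that for $\mu = {\rm Law}_{\mathbb P}(\Xi_\infty(W),v)$, the pushforward of $\mathbb P$ under $\omega \mapsto (\Xi_\infty(W)(\omega),v(\omega))$ is exactly $\mu$, so by the change-of-variables formula for pushforward measures $\mathbb E_\mu[F(\Xi,u)] = \mathbb E[F(\Xi_\infty(W),v)]$ for any bounded measurable (or integrable) $F$. Since by Remark \ref{remark: cost fcn tinfty} the map $(\Xi,u) \mapsto \Phi^{f;\varphi_-,\varphi_+}_T(u) + \mathcal C_T(u)$ — with its implicit dependence on the bulk enhancement components carried by $\Xi$ — is a well-defined deterministic (measurable) functional on $\boldsymbol{\mathfrak S} \times \mathcal L$ for every $T \in (0,\infty]$, the change of variables applies directly. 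Applying it yields $\tilde{\mathbb F}^{f;\Xi^\partial_T(\varphi_-,\varphi_+)}_T(\mu) = \mathbb E[\Phi^{f;\varphi_-,\varphi_+}_T(v) + \mathcal C_T(v)] = \mathbb F^{f;\Xi^\partial_T(\varphi_-,\varphi_+)}_T(v)$.

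The one point that deserves care is to confirm that the stochastic objects $W_t$, $\llbracket W_t^2\rrbracket$, $\mathbb W_t^{[3]}$, $\mathbb W_t^{1\diamond[3]}$, $\mathbb W_t^{2\diamond 2}$, $\mathbb W_t^{2\diamond[3]}$ entering the definitions of the $R_i$ and $\Upsilon_{i,T}$ in $\Phi^{f;\varphi_-,\varphi_+}_T$ (cf. Proposition \ref{prop: renormalized cost fcn} and the constructions of Sections \ref{sec: renormalization of bulk amplitudes}-\ref{sec:stochastic}) are precisely the components of $\Xi_\infty(W)$ (for $T<\infty$ they are the corresponding truncated processes, which are measurable functions of $\Xi_\infty(W)$ via the cutoff multiplier), and that $K_T$ and $Z_T(v)$, $\ell_T(v)$ are measurable functions of the pair $(\Xi_\infty(W),v)$. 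All of this is built into the construction in Section \ref{sec: renormalization of bulk amplitudes} and into the Banach space $\boldsymbol{\mathfrak S}$; hence the functional $F_T \colon \boldsymbol{\mathfrak S}\times\mathcal L \to \mathbb R$, $F_T(\Xi,u) := \Phi^{f;\varphi_-,\varphi_+}_T(u) + \mathcal C_T(u)$ is genuinely a fixed measurable map. I do not anticipate a serious obstacle here: the proof is a one-line application of the pushforward change-of-variables formula once these bookkeeping identifications are in place, and no delicate estimate is required — the hardest part is merely being precise about which argument each object depends on, which is exactly what Remark \ref{remark: cost fcn tinfty} already arranges.

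\begin{proof}[Proof of Proposition \ref{prop: relaxed renormalized cost fcn}]
Fix $T \in (0,\infty]$ and let $v$ be progressively measurable with respect to $W$ and almost surely in $\mathcal L$, and set $\mu = {\rm Law}_{\mathbb P}(\Xi_\infty(W),v)$. By Remark \ref{remark: cost fcn tinfty}, the map
\begin{equation}
F_T \colon \boldsymbol{\mathfrak S}\times\mathcal L \to \mathbb R, \qquad F_T(\Xi,u) := \Phi^{f;\varphi_-,\varphi_+}_T(u) + \mathcal C_T(u),
\end{equation}
is a well-defined measurable functional, where the dependence of $\Phi^{f;\varphi_-,\varphi_+}_T$ and $\mathcal C_T$ on the bulk stochastic objects $W_t$, $\llbracket W_t^2\rrbracket$, $\mathbb W_t^{[3]}$, $\mathbb W_t^{1\diamond[3]}$, $\mathbb W_t^{2\diamond 2}$, $\mathbb W_t^{2\diamond[3]}$ (and, for $T<\infty$, their cutoff versions, which are measurable functions of the components of $\Xi_\infty(W)$ via the multiplier truncation) is carried by the first argument $\Xi$, and the dependence on $Z_T(u)$, $\ell_T(u)$, $K_T$ is through the pair $(\Xi,u)$ as constructed in Section \ref{sec: renormalization of bulk amplitudes}. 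With this notation, \eqref{eqdef: tilde FF} reads
\begin{equation}
\tilde{\mathbb F}^{f;\Xi^\partial_T(\varphi_-,\varphi_+)}_T(\mu) = \mathbb E_\mu[F_T(\Xi,u)],
\end{equation}
while Definition \ref{def: renormalized cost fcn} together with Remark \ref{rem: renormalized cost fcn f} gives
\begin{equation}
\mathbb F^{f;\Xi^\partial_T(\varphi_-,\varphi_+)}_T(v) = \mathbb E[F_T(\Xi_\infty(W),v)].
\end{equation}
Since $\mu$ is by definition the pushforward of $\mathbb P$ under the measurable map $\omega \mapsto (\Xi_\infty(W)(\omega),v(\omega))$, the change-of-variables formula for pushforward measures yields
\begin{equation}
\mathbb E_\mu[F_T(\Xi,u)] = \mathbb E[F_T(\Xi_\infty(W),v)].
\end{equation}
Combining the three displays gives $\tilde{\mathbb F}^{f;\Xi^\partial_T(\varphi_-,\varphi_+)}_T(\mu) = \mathbb F^{f;\Xi^\partial_T(\varphi_-,\varphi_+)}_T(v)$, as claimed.
\end{proof}
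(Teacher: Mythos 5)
Your proof is correct and takes the same route as the paper, which treats this proposition as an immediate consequence of \eqref{eqdef: tilde FF}, Definition \ref{def: renormalized cost fcn}, and Remarks \ref{rem: renormalized cost fcn f} and \ref{remark: cost fcn tinfty}: one simply unwinds the two definitions and applies the change-of-variables formula for the pushforward law $\mu = {\rm Law}_{\mathbb P}(\Xi_\infty(W),v)$. The measurability bookkeeping you single out — that the cutoff objects for $T<\infty$ and the derived quantities $Z_T(u)$, $\ell_T(u)$, $K_T$ are genuine deterministic functions of the pair $(\Xi,u)$, with Remark \ref{remark: cost fcn tinfty} handling the $T=\infty$ extension — is precisely what those cited remarks are there to dispose of.
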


\subsection{Approximation properties of the functionals}

We begin with some technical approximation properties of the renormalized cost functions involved in the variational problem, analogous to those in the case of the boundary theory. 

The first technical property concerns approximation of the cost functions when $T<\infty$. Its proof is similar to the proof of Lemma \ref{lem:bdry-approx-finite} with minor adaptations required, and thus we omit it.

\begin{lemma}\label{lem:bulk-law-technical-1}
	Let $T > 0$. For every $\mu \in \overline{\mathcal X}$ and every $K \in \mathbb N^*$, there exists $\mu_K \in \mathcal X$ such that $\mu_K \rightarrow \mu$ and
\begin{equation}\label{eq: bulk mu-to-muK-Tfinite}
|\tilde{\mathbb F}^{f;\varphi_0,\varphi_+}_T(\mu) - \tilde{\mathbb F}^{f;\varphi_0,\varphi_+}_T(\mu_K)| < 1/K.
\end{equation}
\end{lemma}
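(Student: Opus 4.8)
\textbf{Proof proposal for Lemma \ref{lem:bulk-law-technical-1}.}

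The plan is to follow closely the architecture of the proof of Lemma \ref{lem:bdry-approx-finite}, exploiting the fact that for $T<\infty$ the variational problem is non-singular: it is supported on $\mathbb H$ (progressive measurability with almost sure regularity in $\mathcal H = L^2_t L^2_x$), and not merely on the relaxed space $\mathcal L = L^2_t W^{-1/2-\delta,3}_x$. Concretely, when $T<\infty$ the drift ansatz terms (the Wick powers $\llbracket W_T^k\rrbracket$, the paracontrolled correction $\mathbb W_T^{[3]}$, the harmonic extension terms) are all smooth or of positive regularity, so the remainder map $u\mapsto \ell_T(u)$ is a genuine continuous map $\mathcal H \to \mathcal H$, and the cost function $\tilde{\mathbb F}_T^{f;\varphi_-,\varphi_+}(\mu) = \mathbb E_\mu[\Phi_T^{f;\varphi_-,\varphi_+}(u) + \mathcal C_T(u)]$ is finite only when $\mathbb E_\mu[\|u\|_{\mathcal H}^2]<\infty$ (this follows by combining the lower bounds on the $R_i$ and $\Upsilon_{i,T}$ terms from Proposition \ref{prop: remainder terms} and Section \ref{sec:stochastic} with Cauchy--Schwarz, exactly as the analogous deduction at the start of the proof of Lemma \ref{lem:bdry-approx-finite}).

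First I would record, assuming $\tilde{\mathbb F}_T^{f;\varphi_-,\varphi_+}(\mu)<\infty$ (otherwise the statement is vacuous), the a priori bounds: $\mathbb E_\mu[\|u\|_{\mathcal H}^2]<\infty$ and $\mathbb E_\mu[\|Z_T(u)\|_{L^4_x}^4]<\infty$, and note that $|\Phi_T^{f;\varphi_-,\varphi_+}(u)|$ is dominated by a fixed (in $T$) positive random variable built from norms of the bulk enhancement $\Xi_T(W)$ and the boundary enhancement $\Xi^\partial_T(\varphi_-,\varphi_+)$, times low powers of $\|Z_T(u)\|_{L^4_x}$ and $\|u\|_{\mathcal H}$, which furnishes a $1+\varepsilon$ integrable majorant. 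Then I would invoke the analogue of Lemma \ref{lem: bdry lemma 14} — which is a straightforward transcription of \cite[Lemma 14]{BG20} to the present bulk setting — to produce sequences $(\mu_L)_{L}\subset\overline{\mathcal X}$ and $(\mu_{L,n})_n\subset\mathcal X$ with $\|u\|_{\mathcal L}\le L$ almost surely and with convergence of the quantities $\mathbb E_{\mu_L}[\|Z_T(u)\|_{L^4_x}^4]$ and $\mathbb E_{\mu_L}[\|u\|_{\mathcal L}^2]$ to their values under $\mu$. Next I would apply the two regularizations: the spatial mollifier $\mathrm{reg}_{M,\varepsilon}$ (convolving in the periodic $z$-variable, continuous $\mathcal L\to\mathcal H$ and $\mathcal L_w\to\mathcal H_w$) and the space-time mollifier $\mathrm{reg}_{\mathbb R_+\times M,\delta}$ (continuous $\mathcal L_w\to\mathcal H$), producing regularized measures $\mu^\varepsilon, \mu_L^\varepsilon, \mu_L^{\varepsilon,\delta}, \mu_{L,n}^{\varepsilon,\delta}$. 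For each pair of limits one uses Skorokhod embedding to realize the weak convergence of drifts as almost sure convergence in $\mathcal H$, uses the continuity of $\Phi_T^{f;\varphi_-,\varphi_+}$ and $\mathcal C_T$ on $\boldsymbol{\mathfrak S}\times\mathcal H$ together with the uniform $1+\varepsilon$ moment bounds to pass the limit inside the expectation via Vitali's convergence theorem, and for the removal of $\varepsilon,\delta$ uses that $\|\mathrm{reg}\, u\|_{\mathcal H}\le\|u\|_{\mathcal H}$ together with dominated convergence. Finally the desired $\mu_K$ is extracted as a diagonal subsequence of $(\mu_{L,n}^{\varepsilon,\delta})$ making $|\tilde{\mathbb F}_T(\mu)-\tilde{\mathbb F}_T(\mu_K)|<1/K$ and $\mu_K\to\mu$.

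The main obstacle, relative to the boundary case, is bookkeeping the boundary-dependent terms: the cost function $\Phi_T^{f;\varphi_-,\varphi_+}(u)$ contains the boundary-bulk remainders $R_1,R_3,R_4,R_8,R_9$ and the boundary-bulk stochastic terms $\Upsilon_{2,T},\Upsilon_{3,T},\Upsilon_{4,T}$, whose estimates (Proposition \ref{prop: remainder terms}, Lemmas \ref{lem: upsilon2}--\ref{lem: upsilon4}) involve norms of $\Xi^\partial_T(\varphi_-,\varphi_+)$ rather than purely bulk quantities. Since here $\varphi_-,\varphi_+$ are fixed, these appear as (large but finite) deterministic constants multiplying the coercive terms, so the a priori bounds and the $1+\varepsilon$-integrable majorant go through with constants depending on $\|\Xi^\partial_T(\varphi_-,\varphi_+)\|_{\boldsymbol{\mathfrak B}}$; one only needs to check that the mollifications $\mathrm{reg}$ act on $u$ alone and commute harmlessly with the $H(\varphi_-,\varphi_+)_T$ factors (which they do, since these factors are $t$-independent and untouched). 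Apart from that, the argument is a routine adaptation and I would keep the exposition brief, referring to the proof of Lemma \ref{lem:bdry-approx-finite} for the shared details.
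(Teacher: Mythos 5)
Your proposal is correct and takes essentially the same route as the paper intends: the paper itself states that the proof is ``similar to the proof of Lemma~\ref{lem:bdry-approx-finite} with minor adaptations required'' and omits it, and you have carried out precisely those adaptations — the a priori bound $\mathbb E_\mu[\|u\|_{\mathcal H}^2]<\infty$ from finiteness of the cost function, the $(1+\varepsilon)$-integrable majorant for $\Phi_T^{f;\varphi_-,\varphi_+}$ built from $\Xi_T(W)$ and $\Xi^\partial_T(\varphi_-,\varphi_+)$ (the latter being a fixed deterministic constant here), the bulk analogue of Lemma~\ref{lem: bdry lemma 14}, the two-stage regularization plus Skorokhod plus Vitali, and the diagonal extraction.

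One small slip: your ``spatial mollifier $\mathrm{reg}_{M,\varepsilon}$'' should convolve in the full spatial variable $x=(\tau,z)\in\mathcal M$ (the periodization of $M$), not merely in $z$. In the bulk setting the drift $u_t$ lives on $M$, $\mathcal L = L^2_t W^{-1/2-\delta,3}_x$, and $\mathcal H = L^2_t L^2_x$, so to gain the needed regularity from $W^{-1/2-\delta,3}_x$ to $L^2_x$ the mollification must act in $\tau$ as well as $z$ — convolution in $z$ alone leaves the $\tau$-direction rough. This is consistent with the regularization $\mathrm{reg}_{\varepsilon;x}$ already used in the definition of the bulk remainder map in the proof of Lemma~\ref{lem:bulk-rem}, which convolves $u$, viewed as a periodic distribution on $\mathcal M$, in all spatial directions. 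With this correction the argument goes through exactly as you describe.
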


We now turn to approximation properties of the cost function when $T=\infty$.

\begin{lemma} \label{lem: bulk law-technical-2}
The limiting renormalized cost function has the following approximation properties. 	
\begin{enumerate}
\item[(i)] For every $\mu \in \overline{\mathcal X}$ and every $K \in \mathbb N^*$, there exists $\mu_K\in \mathcal X$ such that
\begin{equation}\label{eq: bulk mu-to-muK}
|\widetilde{\mathbb F}^{f;\varphi_0,\varphi_+}_\infty (\mu) - \widetilde{\mathbb F}^{f;\varphi_0,\varphi_+}_\infty(\mu_K)| < 1/K.
\end{equation}
\item[(ii)] Let $\mu \in \mathcal X$. If $\Xi^{\infty}(\phi_{0}^T,\phi_{+}^T)\to \Xi^{\infty}(\phi_{0}^\infty,\phi_{+}^\infty)$ in $\boldsymbol{\mathcal B}$, then
\begin{equation} \label{eq: bulk mu_K T}
\lim_{T \rightarrow \infty} \widetilde {\mathbb F}^{f;\varphi^T_0,\varphi^T_+}_T(\mu) = \widetilde{\mathbb F}^{f;\varphi_0,\varphi_+}_\infty(\mu). 
\end{equation}
\end{enumerate}
\end{lemma}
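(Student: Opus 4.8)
The plan is to prove Lemma \ref{lem: bulk law-technical-2} by closely following the argument for the boundary theory, namely Lemmas \ref{lem:bdry-approx-infty} and \ref{lem: bdry lemma 14}, adapted to the bulk setting with the bulk remainder map playing the role of ${\rm rem}^0_\varepsilon$. The key conceptual point is that, just as in the boundary case, the variational problem at $T=\infty$ is \emph{singular} --- the remainder map $\ell_\infty$ loses regularity (it maps into $\mathcal L$ rather than $\mathcal H$), so we cannot exploit the smoothing from $\mathbb L$ to $\mathbb H$ that is available at finite $T$. We therefore introduce a bulk analogue of the boundary remainder map, ${\rm rem}_\varepsilon(\Xi, u)$, which acts by leaving the singular ``explicit'' part of the drift ansatz untouched and mollifying (in the $x$-variable) only the genuinely free component $\ell_\infty(u)$; by construction it satisfies $\ell_\infty({\rm rem}_\varepsilon(\Xi,u)) = {\rm reg}_\varepsilon(\ell_\infty(\Xi,u))$, and it enjoys the analogues of properties (i)--(iv) of Lemma \ref{lem: bdry rem properties} --- continuity $\boldsymbol{\mathfrak S}\times\mathcal L \to \mathcal L$ and $\boldsymbol{\mathfrak S}\times\mathcal L_w \to \mathcal L_w$, uniform control of $\|Z_T({\rm rem}_\varepsilon(\Xi,u))\|_{L^4}$ and $\|Z_T({\rm rem}_\varepsilon(\Xi,u))\|_{L^4}$-to-the-fourth by $\|\Xi\|_{\boldsymbol{\mathfrak S}}$ plus $\|Z_T(u)\|_{L^4}$, an $\varepsilon$-dependent gain of $\mathcal H$-regularity, and the convergence ${\rm rem}_\varepsilon(\Xi,u) \to u$ as $\varepsilon\to 0$ when $u$ (resp.\ $\ell_\infty(u)$) already lies in $\mathcal L$ (resp.\ $\mathcal H$). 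These follow from linearity of ${\rm reg}_\varepsilon$ and the regularity estimates in Section \ref{sec:equicoercive}, in particular Lemma \ref{lem: KT estimate} and Proposition \ref{prop: remainder terms}.

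For part (i): given $\mu \in \overline{\mathcal X}$, I would first use the bulk analogue of Lemma \ref{lem: bdry lemma 14} --- an approximation by measures $(\mu_L) \subset \overline{\mathcal X}$ of bounded support (i.e.\ $\|u\|_{\mathcal L} \le L$ a.s.) and $(\mu_{L,n}) \subset \mathcal X$ with $\mu_{L,n} \rightharpoonup \mu_L$, such that the $L^4$-norm of $Z_\infty(u)$ to the fourth power and the $\mathcal L$-norm of $u$ squared converge in expectation along $(\mu_L)$ to their values under $\mu$. Then I would push forward by ${\rm rem}_\varepsilon$ to obtain $\mu^\varepsilon := ({\rm rem}_\varepsilon)_*\mu$, $\mu_L^\varepsilon$, $\mu_{L,n}^\varepsilon$, and argue that $\tilde{\mathbb F}^{f;\varphi_0,\varphi_+}_\infty(\mu^\varepsilon) \to \tilde{\mathbb F}^{f;\varphi_0,\varphi_+}_\infty(\mu)$ as $\varepsilon \to 0$, that $\tilde{\mathbb F}^{f;\varphi_0,\varphi_+}_\infty(\mu_L^\varepsilon) \to \tilde{\mathbb F}^{f;\varphi_0,\varphi_+}_\infty(\mu^\varepsilon)$ as $L \to \infty$, and similarly in $n$. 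In each limit the strategy is: invoke Skorokhod's embedding theorem \cite{J98} to realize weak convergence as almost sure convergence on a common probability space; use continuity of $\Phi_\infty^{f;\varphi_-,\varphi_+}$ as a sum of multilinear bounded maps on $\boldsymbol{\mathfrak S}\times\mathcal H$ (viewing it as a function of $\ell_\infty(u)$ after the change of variables) and Remark \ref{remark: cost fcn tinfty} to get almost sure convergence of the integrands; and then pass the limit through the expectation by dominated convergence, where the positive terms $\|Z_\infty(u)\|_{L^4}^4 + \tfrac12\|\ell_\infty(u)\|_{\mathcal H}^2$ serve as a dominating function that itself converges in $L^1$ (recall dominated convergence permits a non-fixed dominating function converging in $L^1$) --- exactly as in the proof of Lemma \ref{lem:bdry-approx-infty}. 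The bound \eqref{eq:gamma-bound-proof} has a direct bulk analogue via Lemma \ref{lem: KT estimate} and Proposition \ref{prop: remainder terms}. A diagonal subsequence then yields the desired $\mu_K \in \mathcal X$ with $|\tilde{\mathbb F}^{f;\varphi_0,\varphi_+}_\infty(\mu) - \tilde{\mathbb F}^{f;\varphi_0,\varphi_+}_\infty(\mu_K)| < 1/K$.

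For part (ii): here $\mu \in \mathcal X$ is fixed and we vary $T$ while the boundary data $\Xi^\partial_T(\varphi_0^T,\varphi_+^T)$ converges in $\boldsymbol{\mathcal B}$ to $\Xi^\partial_\infty(\varphi_0^\infty,\varphi_+^\infty)$. Writing $\mu = {\rm Law}_{\mathbb P}(\Xi_\infty(W), u)$, I would use Proposition \ref{prop: bulk enhancement} (so $\Xi_T(W) \to \Xi_\infty(W)$ in $L^p(\boldsymbol{\mathfrak S})$, whence a.s.\ along a subsequence) together with the assumed convergence of the boundary enhancement, to get that $\Phi_T^{f;\varphi_0^T,\varphi_+^T}(u) + \mathcal C_T(u) \to \Phi_\infty^{f;\varphi_0^\infty,\varphi_+^\infty}(u) + \mathcal C_\infty(u)$ almost surely --- this uses continuity in $T$ of the maps $Z_T(u), Z_T(u), \ell_T(u)$ and of the stochastic terms $\Upsilon_{i,T}$ (Lemma \ref{lem: upsiloni convergence} and Proposition \ref{prop: bulk enhancement}), and the multilinear structure of the $R_i$ terms. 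Then uniform integrability --- furnished by the uniform bounds of Proposition \ref{prop: remainder terms} and the stochastic moment estimates, combined with the $1+\varepsilon$-moment / Vitali argument as in the $T<\infty$ proof of Lemma \ref{lem:bdry-approx-finite} --- allows passing to the limit inside $\mathbb E_\mu$, giving \eqref{eq: bulk mu_K T}. The main obstacle I anticipate is the uniform-in-$T$ control of the remainder and boundary-bulk terms so as to justify interchanging limit and expectation in part (ii): one must make sure the exponential-type bounds in Proposition \ref{prop: remainder terms} and the fractional-moment bounds on the $\Upsilon_{i,T}$ (Lemmas \ref{lem: upsilon2}--\ref{lem: upsilon4}) are uniform in $T$ and strong enough to dominate, and that the subtle $T=\infty$ extensions of the $R_i$ (Remark \ref{remark: cost fcn tinfty}) are genuinely continuous in $T$; this is where the paracontrolled structure and the $\flat$-truncation must be used carefully, exactly as in Section \ref{sec:equicoercive}.
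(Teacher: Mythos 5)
Your overall architecture---push forward through a regularized remainder map, invoke a bulk analogue of Lemma~\ref{lem: bdry lemma 14}, realize weak convergence via Skorokhod embedding, and pass limits through the expectation via dominated convergence with the coercive terms as the (non-fixed, $L^1$-convergent) dominating function---matches the paper's strategy. The gap is in your construction of the regularized remainder map, which is precisely where the paper flags the bulk case as ``requiring some significant modifications.''

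You assert that ${\rm rem}_\varepsilon(\Xi,u)$ can be defined by ``leaving the singular explicit part of the drift ansatz untouched and mollifying only the genuinely free component $\ell_\infty(u)$,'' and that ``by construction it satisfies $\ell_\infty({\rm rem}_\varepsilon(\Xi,u)) = {\rm reg}_\varepsilon(\ell_\infty(\Xi,u))$.'' This is exactly what works in the boundary case, because there $u\mapsto\ell^0_\infty(u)$ is an \emph{affine translation} by a $u$-independent term $4J_t^0\overline H^*\Xi^{0,3}_t$, so the map is trivially invertible. But the bulk drift ansatz (Definition~\ref{definition: full ansatz}) contains the paracontrolled term $J_t(\mathbb W^2_t\succ Z^\flat_t(u))$, which depends on the drift $u$ itself through $Z^\flat_t(u)$. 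Requiring $\ell_\infty({\rm rem}_\varepsilon u)={\rm reg}_\varepsilon(\ell_\infty(u))$ therefore reduces to a \emph{fixed-point equation}
\begin{equation}
{\rm rem}_\varepsilon u = -\mathbb U - J\bigl(\mathbb W^2\succ Z^\flat({\rm rem}_\varepsilon u)\bigr) + {\rm reg}_\varepsilon\bigl(\ell_\infty(u)\bigr),
\end{equation}
and the operator $\tilde u\mapsto -J(\mathbb W^2\succ Z^\flat(\tilde u))$ is not a contraction on $\mathcal L$ uniformly in the realization of $\mathbb W^2$. The paper handles this by splitting $\mathbb W^2_t$ into a low-frequency part $\mathbb W^{2,\leq}_t$ and a high-frequency part $\mathbb W^{2,>}_t$ with threshold depending on $\|\mathbb W^2_t\|_{\mathcal C^{-1-\kappa/2}}$, so that by Bernstein $\|\mathbb W^{2,>}_t\|_{\mathcal C^{-1-\kappa}}$ is \emph{uniformly small}; the fixed-point equation is then posed only with the high-frequency piece (making it contractive), and the low-frequency piece is absorbed into a modified remainder $\tilde\ell(u)$. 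In particular, even with this construction the resulting map does \emph{not} satisfy the identity you claim: one gets instead $\ell_\infty({\rm rem}_\varepsilon u)=J_t(\mathbb W^{2,\leq}_t\succ Z^\flat_t({\rm rem}_\varepsilon u))+{\rm reg}_\varepsilon(\tilde\ell(u))$, with an extra unmollified paracontrolled piece. Without this fixed-point construction and the accompanying frequency split, properties (i)--(iv) of the bulk analogue of Lemma~\ref{lem: bdry rem properties} (i.e.\ Lemma~\ref{lem:bulk-rem}) cannot be established, and the rest of your argument has nothing to run on. The remainder of the proposal, including part (ii), is otherwise sound in outline.
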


In order to prove Lemma \ref{lem: bulk law-technical-2} it is necessary to have a bound on $\mathbb{E}[\|\ell_{\infty}^{\varphi_-,\varphi_+}(u)\|^{2}_{\mathcal{H}}]$. Since the ansatz map is more complicated now, this will require some significant modifications as compared with the proof of Lemma \ref{lem:bdry-approx-infty}, which is the analogous result for the boundary theory. For convenience, let us write
\begin{equation}
\mathbb U_t:= J_t \mathbb W_t^3 + J_t (\mathbb{W}^2_t H
     (\varphi_-, \varphi_+))+ 12 J_t W_t \llbracket H (\varphi_-, \varphi_+)^2 \rrbracket + 4
     J_t \llbracket H (\varphi_-, \varphi_+)^3 \rrbracket
\end{equation}
so that, we have
\begin{equation}
\ell(u)_t:= \ell^{\varphi_-,\varphi_+}_\infty(u)_t= u_t+ \mathbb U_t + J_t(\mathbb W_t^2 \succ Z_t^\flat(u)).
\end{equation}

We will now regularize $\ell(u)$. In order for our regularization to exist, we will need to modify the paracontrolled part of the ansatz. For $\eta > 0$, define
\begin{equation}
\mathbb W_t^{2,\leq }:=  P_{|k|\leq \|\mathbb W_t^2\|_{\mathcal C^{-1-\kappa/2}}^\eta}, \, \mathbb W_t^{2,>}:= \mathbb W_t^2 - \mathbb W_T^{2,\leq},
\end{equation}
where $P_{|k|\leq C}$ denotes the projection on the fourier modes $\leq C$ on $\mathbb{T}^2\times L\mathbb{T}$. Note that here we are implicitly viewing these objects as periodic distributions.
We will choose $\eta$ so that, by Bernstein's inequality, there exists $c>0$ sufficiently small such that
\begin{equation}
\| \mathbb W_t^{2,>}\|_{\mathcal C^{-1-\kappa}} \leq c.
\end{equation}
Define the regularized ansatz map 
\begin{equation}
\tilde \ell(u) := - J_t ( \mathbb W_t^{2,\leq} \succ Z_t^\flat (u) )+\ell^\infty(u),
\end{equation}
corresponding to taking the full remainder of $u$ and subtracting the low frequency part of the paracontrolled term.  Furthermore, by using the definition and the triangle inequality, we have that there exists $C>0$ such that
\begin{equation}
\|\tilde \ell (u)\|_{\mathcal L} \leq C\|u\|_{\mathcal L} + C.
\end{equation}

We now define the regularized remainder map ${\rm rem}_{\epsilon}u$ as the implicit solution of the equation
\begin{equation}
{\rm rem}_\varepsilon \, u
=
- \mathbb U_t - J_t (\mathbb W_t^{2,>}\succ Z_t^\flat({\rm rem}_\epsilon u ))+ {\rm reg}_{\varepsilon;x} \Big( \tilde \ell (u) \Big),
\end{equation}
where, for every $\tilde u \in \mathcal L$, ${\rm reg}_{\varepsilon; x} \tilde u$ is the convolution of $\tilde u$ (viewed as a periodic distribution) in space at scale $\varepsilon$ -- c.f. \eqref{eqdef: space reg bdry}.
The fact that this equation has a solution follows immediately from standard properties of convolution, paraproduct estimates, and the Banach contraction mapping theorem in the space $\mathcal L$. We omit the details.

\begin{remark} 
The use of a contraction mapping argument in the regularization justifies the need for the high frequency decomposition of $\mathbb W_t^2$ depending on the norm, and for $c$ has to be chosen sufficiently small for this to work).	
\end{remark}

Lemma \ref{lem: bulk law-technical-2} follows from the following properties of the regularized remainder map by adapting the arguments used in the proof of Lemma \ref{lem:bdry-approx-infty} using Lemma \ref{lem: bdry rem properties}.

\begin{lemma}\label{lem:bulk-rem}
There exists $C>0$ such that for every $\varepsilon > 0$, the following statements hold.
\begin{enumerate}
\item[(i)] For every $u \in \mathcal L$, 
\begin{equation}
\|{\rm rem}_\varepsilon(u)\|_{\mathcal L} \leq C \left(\| \Xi_\infty(W)\|_{\mathfrak S} + \|\Xi^\partial_\infty(\varphi_-,\varphi_+)\|_{\boldsymbol{
\mathcal B}}+ \|u\|_{\mathcal L}\right).
\end{equation}
\item[(ii)] For every $T \in (0,\infty]$ and $u \in \mathcal L$, 
\begin{equation}
\|Z_T({\rm rem}_\varepsilon(u))\|_{L^4} \leq \left(\|\Xi_\infty(W)\|_{\mathfrak S} + \|\Xi^\partial_\infty(\varphi_-,\varphi_+)\|_{\boldsymbol{
\mathcal B}} + \|Z_T(u)\|_{L^4}\right).
\end{equation}
\item[(iii)] For every $\varepsilon > 0$, there exists $c_\varepsilon > 0$ such that, for every $u \in \mathcal L$,
\begin{equation}
	\|\ell^{\varphi_-,\varphi_+}_\infty({\rm rem}_\varepsilon(u))\|_{\mathcal H}^2 \leq c_\varepsilon \left( (1+\|\Xi_\infty (W)\|_{\mathfrak S} + \|\Xi^\partial_\infty(\varphi_-,\varphi_+)\|_{\boldsymbol{
\mathcal B}})^4 + \|Z_\infty(u)\|_{L^4}^4 + \|u\|_{\mathcal L}^2 \right).
\end{equation}
Furthemore, the map $u \to \ell_{\infty}({\rm rem}_{\epsilon})$ is a continuous from $\mathcal{L}_{w} \to \mathcal{H}$
\end{enumerate}
\end{lemma}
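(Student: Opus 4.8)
\textbf{Proof proposal for Lemma \ref{lem:bulk-rem}.}

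The plan is to prove the three estimates in order, using at each step the implicit equation defining $\mathrm{rem}_\varepsilon(u)$, namely
\begin{equation}
\mathrm{rem}_\varepsilon(u)_t = - \mathbb U_t - J_t(\mathbb W_t^{2,>}\succ Z_t^\flat(\mathrm{rem}_\varepsilon(u))) + \mathrm{reg}_{\varepsilon;x}(\tilde\ell(u))_t,
\end{equation}
together with the defining relation $\ell_\infty^{\varphi_-,\varphi_+}(v)_t = v_t + \mathbb U_t + J_t(\mathbb W_t^2\succ Z_t^\flat(v))$. First I would record the two algebraic identities that make everything work: applying $\ell_\infty$ to $\mathrm{rem}_\varepsilon(u)$ and using that $\mathbb W_t^2 = \mathbb W_t^{2,\leq} + \mathbb W_t^{2,>}$, one finds that the high-frequency paracontrolled contributions cancel, leaving
\begin{equation}
\ell_\infty^{\varphi_-,\varphi_+}(\mathrm{rem}_\varepsilon(u))_t = \mathrm{reg}_{\varepsilon;x}(\tilde\ell(u))_t + J_t(\mathbb W_t^{2,\leq}\succ Z_t^\flat(\mathrm{rem}_\varepsilon(u))),
\end{equation}
which is the analogue of \eqref{eq:rem-l} in Lemma \ref{lem: bdry rem properties}. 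Similarly $\tilde\ell(u) = -J_t(\mathbb W_t^{2,\leq}\succ Z_t^\flat(u)) + \ell_\infty(u)$. These identities reduce the three claims to estimates on $\mathrm{reg}_{\varepsilon;x}$, on the low-frequency paraproduct $J_t(\mathbb W_t^{2,\leq}\succ\cdot)$, and on $Z_t^\flat$, all of which are available from the paraproduct and commutator estimates in Appendix \ref{appendix:besov} and the regularizing properties of $J_t$ in Appendix \ref{sec: Dirichlet covariance estimates}.

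For (i): the contraction estimate used to solve the defining equation gives, by the smallness $\|\mathbb W_t^{2,>}\|_{\mathcal C^{-1-\kappa}}\leq c$ and the bound $\|J_t(\mathbb W_t^{2,>}\succ Z_t^\flat(w))\|_{\mathcal L}\leq \tfrac12\|w\|_{\mathcal L}$ (this is precisely why $c$ was chosen small), that $\|\mathrm{rem}_\varepsilon(u)\|_{\mathcal L}\leq 2\|\mathbb U\|_{\mathcal L} + 2\|\mathrm{reg}_{\varepsilon;x}(\tilde\ell(u))\|_{\mathcal L}$. Since $\mathrm{reg}_{\varepsilon;x}$ is a contraction on $\mathcal L$ and $\|\tilde\ell(u)\|_{\mathcal L}\leq C\|u\|_{\mathcal L}+C$, while $\|\mathbb U\|_{\mathcal L}$ is bounded by $C(\|\Xi_\infty(W)\|_{\mathfrak S}+\|\Xi^\partial_\infty(\varphi_-,\varphi_+)\|_{\boldsymbol{\mathcal B}}+1)$ using the regularity of the bulk and boundary-bulk enhancement terms and properties of $J_t$ (the term $\|d(\cdot,\partial M)^{?}\llbracket H(\varphi_-,\varphi_+)^i\rrbracket\|$-type norms appearing are all components of $\boldsymbol{\mathcal B}$), the claim follows. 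For (ii): write $Z_T(\mathrm{rem}_\varepsilon(u)) = -Z_T(\mathbb U) - Z_T(J_\bullet(\mathbb W_\bullet^{2,>}\succ Z^\flat(\mathrm{rem}_\varepsilon(u)))) + Z_T(\mathrm{reg}_{\varepsilon;x}\tilde\ell(u))$, estimate the first term in $L^4$ via the kernel bounds (it is, up to the notational conventions, the term $\mathbb W^{[3]}$ plus boundary contributions already controlled in Section \ref{sec:equicoercive}), the second term by $\tfrac12\|Z_T(\mathrm{rem}_\varepsilon(u))\|_{L^4}$ using the $\flat$-bound $\|Z_t^\flat\|_{L^4}\leq\|Z_T\|_{L^4}$ and absorbing, and the third by $\|Z_T(\tilde\ell(u))\|_{H^{1/2}}\lesssim\|Z_T(u)\|_{L^4}+(\text{enhancement norms})$ using that $Z_T$ maps $\mathcal H\to H^1$ and interpolating; crucially $\mathrm{reg}_{\varepsilon;x}\tilde\ell(u)\in\mathcal H$ (not just $\mathcal L$) so the $L^4$ norm of $Z_T$ of it is controlled uniformly in $T$.

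For (iii): using the identity for $\ell_\infty(\mathrm{rem}_\varepsilon(u))$ above, it suffices to bound $\|\mathrm{reg}_{\varepsilon;x}(\tilde\ell(u))\|_{\mathcal H}$ and $\|J_t(\mathbb W_t^{2,\leq}\succ Z_t^\flat(\mathrm{rem}_\varepsilon(u)))\|_{\mathcal H}$. For the first, $\mathrm{reg}_{\varepsilon;x}$ gains arbitrary spatial smoothness, so $\|\mathrm{reg}_{\varepsilon;x}\tilde\ell(u)\|_{\mathcal H}\leq c_\varepsilon\|\tilde\ell(u)\|_{\mathcal L}\leq c_\varepsilon(C\|u\|_{\mathcal L}+C)$ by (i)-type bounds. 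For the second, the low-frequency projection $\mathbb W_t^{2,\leq}$ has spectral support up to $\|\mathbb W_t^2\|_{\mathcal C^{-1-\kappa/2}}^\eta$, so by Bernstein $\|J_t(\mathbb W_t^{2,\leq}\succ Z_t^\flat(w))\|_{L^2_x}\lesssim \langle t\rangle^{-1}\|\mathbb W_t^2\|^{\eta(1+\kappa)}_{\mathcal C^{-1-\kappa/2}}\|\mathbb W_t^2\|_{\mathcal C^{-1-\kappa/2}}\|Z^\flat_t(w)\|_{L^4}$ and, integrating in $t$, one gets a bound by $c_\varepsilon\,\mathcal C(\|\mathbb W^2\|_{L^\infty_t\mathcal C^{-1-\kappa/2}})\|Z_\infty(w)\|_{L^4}$ with $w=\mathrm{rem}_\varepsilon(u)$, then apply (ii) and Young. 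Combining and using $\|\Xi_\infty(W)\|_{\mathfrak S}$ to control the $\mathbb W^2$-norm gives the stated fourth-power/quadratic bound. Finally, continuity of $u\mapsto\ell_\infty(\mathrm{rem}_\varepsilon(u))$ from $\mathcal L_w$ to $\mathcal H$ follows because $\mathrm{reg}_{\varepsilon;x}$ is compact $\mathcal L_w\to\mathcal H$ and the low-frequency paraproduct term, being a fixed finite-band multiplier applied to the compact map $w\mapsto Z_\infty(w)$ (compact $\mathcal L_w\to L^\infty_tL^4_x$) composed with the continuous solution map $u\mapsto\mathrm{rem}_\varepsilon(u)$ in $\mathcal L_w$, is likewise continuous into $\mathcal H$; one also checks the solution map $\mathcal L_w\to\mathcal L_w$ is continuous by a standard fixed-point-stability argument.

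\textbf{Main obstacle.} The delicate point is (iii): getting a \emph{quantitative} $\mathcal H$-bound on $\ell_\infty(\mathrm{rem}_\varepsilon(u))$ requires that the only non-smoothed contribution to $\ell_\infty(\mathrm{rem}_\varepsilon(u))$ is the low-frequency paraproduct $J_t(\mathbb W_t^{2,\leq}\succ Z_t^\flat(\cdot))$, whose regularity gain is bounded but whose spectral truncation parameter $\eta$ must be tuned so that Bernstein's inequality produces a \emph{finite power} of $\|\mathbb W_t^2\|_{\mathcal C^{-1-\kappa/2}}$ that is still integrable in $t$ against the $\langle t\rangle^{-1}$-decay from $J_t$; verifying this compatibility (and that the same $\eta$ makes $\|\mathbb W_t^{2,>}\|_{\mathcal C^{-1-\kappa}}\leq c$) is the crux. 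The continuity statement into $\mathcal H$ is then a soft consequence of the compactness of $\mathrm{reg}_{\varepsilon;x}$ and of $w\mapsto Z_\infty(w)$.
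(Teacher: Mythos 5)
Your proof follows essentially the same approach as the paper. The paper's own proof of this lemma is extremely terse: it dismisses (i) and (ii) as "straightforward consequences of the definition of $\mathrm{rem}$ and $\mathrm{reg}$," derives exactly the structural identity you derive for $\ell_\infty(\mathrm{rem}_\varepsilon(u))$ — namely $\ell_\infty(\mathrm{rem}_\varepsilon u) = J_t(\mathbb W_t^{2,\leq}\succ Z_t^\flat(\mathrm{rem}_\varepsilon u)) + \mathrm{reg}_{\varepsilon;x}(\tilde\ell(u))$ — and then closes (iii) with "the desired result follows by standard estimates." Your proposal supplies the estimates the paper omits, and the structural backbone (the cancellation of the high-frequency paraproduct in $\ell_\infty\circ\mathrm{rem}_\varepsilon$, the use of the contraction to control $\|\mathrm{rem}_\varepsilon(u)\|_{\mathcal L}$, and the Bernstein/low-frequency argument to land in $\mathcal H$) is the intended one. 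Two small cautions on the details you added: in (ii) your bound via $\|Z_T(\tilde\ell(u))\|_{H^{1/2}}$ is not quite right since $\tilde\ell(u)$ contains $u$ itself, which only gives $Z_T(u)\in L^4$, not $H^{1/2}$; the cleaner route is to use that $\mathrm{reg}_{\varepsilon;x}$ commutes with $Z_T$ and is a contraction on $L^4$, then decompose $Z_T(\tilde\ell(u))$ directly in $L^4$. In (iii), the polynomial degree in $\|\mathbb W^2_t\|$ produced by the Bernstein/truncation argument depends on $\eta$ and need not be exactly 4; this is not an issue for the application (any finite power of the enhancement norm suffices), but worth flagging as a point where the exponent in the lemma statement should be read as some fixed $p$ rather than literally 4.
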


\begin{proof}
Property (i) and (ii) are straightforward consequences of the definition of ${\rm rem}$ and ${\rm reg}$. Let us turn to property  (iii). Note that
\begin{align}
\ell^\infty({\rm rem}_\epsilon u) &= \ell^\infty \left( - \mathbb U_t - J_t (\mathbb W_t^{2,>}\succ Z_t^\flat({\rm rem}_\epsilon u ))+ {\rm reg}_{\varepsilon;x} \Big( \tilde \ell (u) \Big) \right)
\\
&=  - \mathbb U_t - J_t (\mathbb W_t^{2,>}\succ Z_t^\flat({\rm rem}_\epsilon u ))+ {\rm reg}_{\varepsilon;x} \Big( \tilde \ell (u) \Big)
\\
&\qquad + \mathbb U_t + J_t \left( \mathbb W_t^2 \succ Z_t^\flat \left( - \mathbb U_t - J_t (\mathbb W_t^{2,>}\succ Z_t^\flat({\rm rem}_\epsilon u ))+ {\rm reg}_{\varepsilon;x} \Big( \tilde \ell (u) \Big)\right)  \right) 
\\
&= J_t ( \mathbb W_t^{2,\leq } \succ Z_t^\flat({\rm rem}_\epsilon u)) + {\rm reg}_{\epsilon;x} (\tilde\ell(u)).
\end{align}
Then the desired result follows by standard estimates.
\end{proof}

\subsection{Convergence and the proof of Theorem \ref{thm: bulk}}

We now turn to the proof of Theorem \ref{thm: bulk}. The key inputs are the uniform equicoercivity and $\Gamma$-convergence of the bulk cost functions (which represent the amplitudes). Using the uniform bounds on the remainder terms together with the approximation lemmas above, the following two lemmas follow by identical arguments as in the proofs of Lemmas \ref{lem: boundary equicoercive} and \ref{lem: boundary fatou} in the boundary case -- we omit the details.

\begin{lemma}[Equicoercivity] \label{lem: bulk equicoercivity}
Let $(\Xi^\partial_T)_{T>0}$ converge to $\Xi^\partial_\infty$ in $\boldsymbol{\mathcal B}$. There exists a sequentially compact set $\mathcal K = \mathcal K(\sup_{0 < T \leq \infty}\|\Xi^\partial_T\|_{\boldsymbol{\mathcal B}})\subset \overline{\mathcal X}$ such that, for every $T \in (0,\infty]$,
\begin{equation}
\inf_{\mu \in \overline{\mathcal X}} \tilde{\mathbb F}^{f; \Xi_T^\partial }_T(\mu) = \inf_{\mu \in \mathcal K} \tilde{\mathbb F}^{f; \Xi^\partial_T}_T(\mu). 
\end{equation}
\end{lemma}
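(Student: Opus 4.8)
\textbf{Proof proposal for Lemma~\ref{lem: bulk equicoercivity}.}
The plan is to follow the proof of Lemma~\ref{lem: boundary equicoercive} verbatim at the structural level: the whole content is to show that \emph{any} near-minimizer $\mu$ of $\tilde{\mathbb F}^{f;\Xi_T^\partial}_T$ obeys a bound $\mathbb E_\mu[\|u\|_{\mathcal L}^2]\le \mathrm C$ with $\mathrm C$ depending only on $\mathrm R:=\sup_{0<T\le\infty}\|\Xi_T^\partial\|_{\boldsymbol{\mathcal B}}$, so that the infimum is attained on the sublevel set $\mathcal K=\mathcal K(\mathrm R):=\{\mu\in\overline{\mathcal X}:\mathbb E_\mu[\|u\|_{\mathcal L}^2]\le\mathrm C\}$, which one then checks is sequentially compact. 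Recall that under any $\mu\in\overline{\mathcal X}$ the marginal of the first coordinate is the fixed law $\mathrm{Law}_{\mathbb P}(\Xi_\infty(W))$, so all expectations of polynomials of $\|\Xi_\bullet(W)\|$ appearing below are finite constants.

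The first step is a uniform-in-$T$ lower bound of the form
\begin{equation}\label{eq:proposal-lb}
\tilde{\mathbb F}^{f;\Xi_T^\partial}_T(\mu)\ \ge\ -C\bigl(1+\mathrm R^p\bigr)+\tfrac14\,\mathbb E_\mu[\mathcal C_T(u)],\qquad \mu\in\overline{\mathcal X},
\end{equation}
for some $C,p>0$ independent of $T\in(0,\infty]$. Writing $\tilde{\mathbb F}^{f;\Xi_T^\partial}_T(\mu)=\mathbb E_\mu\bigl[\sum_{i=1}^9 R_i+\sum_{i=1}^4\Upsilon_{i,T}+\mathcal C_T(u)\bigr]+(\text{linear term in }f)$, I would: bound $\mathbb E[|R_i|]$ using Proposition~\ref{prop: remainder terms} and Lemma~\ref{lemma: R9}, choosing the free parameter $\delta$ there small, and absorbing the $\mathbb E[\|\Xi_T(W)\|^{p}_{\mathfrak S}]$-type contributions into a constant uniformly in $T$ via Proposition~\ref{prop: bulk enhancement}; bound $\Upsilon_{1,T}$ by $C_\delta+\delta\,\mathbb E[\mathcal C_T(u)]$ as in Section~\ref{sec:stochastic}; and note that $|\Upsilon_{i,T}|\le\|\Xi^\partial_T(\varphi_-,\varphi_+)\|_{\boldsymbol{\mathcal B}}$ for $i=2,3,4$, since these are by construction components of the enhanced boundary field $\Xi^\partial_T$ (cf. Lemmas~\ref{lem: upsilon2}, \ref{lem: upsilon3}, \ref{lem: upsilon4}). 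The linear term in $f$ is handled exactly as in the proof of Proposition~\ref{prop: bdry UV stable}, using $\langle f,W_T\rangle$ being mean-zero, $Z_T(u)=-\mathbb W^{[3]}_T+K_T$, the $K_T$ estimate of Lemma~\ref{lem: KT estimate}, and Young's inequality. Summing and taking $\delta$ small enough yields \eqref{eq:proposal-lb}.

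The second step is the reverse-type control
\begin{equation}\label{eq:proposal-uc}
\mathbb E_\mu[\|u\|_{\mathcal L}^2]\ \le\ C\bigl(1+\mathrm R^p\bigr)+\tfrac12\,\mathbb E_\mu[\mathcal C_T(u)],
\end{equation}
obtained by inverting the full drift ansatz of Definition~\ref{definition: full ansatz}: pointwise in $t$,
\[
u_t=\ell_T(u)_t-J_t\mathbb W^3_t-J_t(\mathbb W^2_t\succ Z^{\flat}_t)-J_t(\mathbb W^2_t H(\varphi_-,\varphi_+)_T)-12 J_t(W_t\llbracket H(\varphi_-,\varphi_+)^2_T\rrbracket)-4 J_t\llbracket H(\varphi_-,\varphi_+)^3_T\rrbracket,
\]
and estimating each term in $\mathcal L=L^2_tW^{-1/2-\delta,3}_x$: the first via the (three-dimensional) embedding $\mathcal H\hookrightarrow\mathcal L$, hence by $\tfrac12\|\ell_T(u)\|_{\mathcal H}^2\le\mathcal C_T(u)$; the $J_t\mathbb W^3_t$ and mixed bulk--boundary terms via the regularizing properties of $J_t$ (Appendix~\ref{sec: Dirichlet covariance estimates}) together with the stochastic bounds of Proposition~\ref{prop: bulk enhancement} and the harmonic-extension regularity estimates of Lemma~\ref{lemma: regularity estimates bdry field}, all bounded by powers of $\|\Xi_\bullet(W)\|_{\mathfrak S}$ and $\|\Xi^\partial_\bullet(\varphi_-,\varphi_+)\|_{\boldsymbol{\mathcal B}}$; and the paracontrolled term $J_t(\mathbb W^2_t\succ Z^{\flat}_t)$ via the $\flat$-estimates, $\|Z^{\flat}_t\|_{L^4}\le\|Z_T(u)\|_{L^4}$ and $\|Z_T(u)\|_{L^4}^2\le 1+\mathcal C_T(u)$. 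Taking expectations and using the fixed $\Xi$-marginal gives \eqref{eq:proposal-uc}, uniformly in $T$; for $T=\infty$ one uses instead the regularized ansatz/remainder map ${\rm rem}_\varepsilon$ and the bounds of Lemma~\ref{lem:bulk-rem}. Combining \eqref{eq:proposal-lb} and \eqref{eq:proposal-uc}: \eqref{eq:proposal-lb} shows $\inf_{\overline{\mathcal X}}\tilde{\mathbb F}^{f;\Xi_T^\partial}_T>-\infty$, and for any $\mu$ with $\tilde{\mathbb F}^{f;\Xi_T^\partial}_T(\mu)\le\inf_{\overline{\mathcal X}}\tilde{\mathbb F}^{f;\Xi_T^\partial}_T+1$ the lower bound forces $\mathbb E_\mu[\mathcal C_T(u)]\le 8(1+C+\mathrm R^p)$, and then \eqref{eq:proposal-uc} gives $\mathbb E_\mu[\|u\|_{\mathcal L}^2]\le\mathrm C(\mathrm R)$; hence every minimizing sequence eventually lies in $\mathcal K$, proving the claimed equality of infima.

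Finally, sequential compactness of $\mathcal K$: the marginal on $\boldsymbol{\mathfrak S}$ is the single fixed law $\mathrm{Law}_{\mathbb P}(\Xi_\infty(W))$, hence tight; the marginal on $\mathcal L$ is uniformly norm-bounded, hence tight in the weak topology $\mathcal L_w$ by reflexivity of $\mathcal L$ and the Banach--Alaoglu theorem; by Prokhorov's theorem $\mathcal K$ is sequentially precompact in $\boldsymbol{\mathfrak S}\times\mathcal L_w$, and it is closed under the notion of convergence on $\overline{\mathcal X}$ since $u\mapsto\|u\|_{\mathcal L}^2$ is lower semicontinuous with respect to weak convergence and one may apply Fatou's lemma. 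I expect the main obstacle to be the second step \eqref{eq:proposal-uc}: unlike in the boundary theory, the drift ansatz now simultaneously mixes the bulk paracontrolled term with several boundary--bulk contributions, so each has to be estimated in the negative-regularity norm of $\mathcal L$ with constants uniform in $T$, and the singular case $T=\infty$ must be routed through the regularized remainder map of Lemma~\ref{lem:bulk-rem} rather than the naive inversion.
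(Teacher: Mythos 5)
Your argument matches the paper's, which runs parallel to the boundary-case Lemma~\ref{lem: boundary equicoercive}: the coercivity lower bound (remainder and $\Upsilon_i$ estimates absorbing a small fraction of $\mathbb E_\mu[\mathcal C_T(u)]$, with constants uniform over $T$ and over bounded $\|\Xi_T^\partial\|_{\boldsymbol{\mathcal B}}$), inversion of the paracontrolled ansatz to control $\mathbb E_\mu[\|u\|_{\mathcal L}^2]$ by $\mathbb E_\mu[\mathcal C_T(u)]$ plus a constant, and Prokhorov compactness of the resulting sublevel set are exactly the content. The only cosmetic departures are your use of tightness of the single fixed $\boldsymbol{\mathfrak S}$-marginal for Prokhorov (versus the compact-embedding-plus-Markov argument spelled out in Section~\ref{sec: boundary}), and the unnecessary invocation of ${\rm rem}_\varepsilon$ at $T=\infty$ --- equicoercivity only needs the direct rearrangement of the ansatz defining $\ell_\infty$, not the regularized fixed-point inversion.
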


\begin{lemma}[$\Gamma$-convergence] \label{lem: bulk gamma convergence}
Let $f\in C^\infty(M)$ and $\Xi^\partial_\infty \in \boldsymbol{\mathcal B}$.
\begin{enumerate}
\item[(i)] (Fatou) For every $\mu \in \overline{\mathcal X}$, for every sequence $\mu_n \rightarrow \mu$ in $\overline{\mathcal {X}}$, every sequence $(T_n) \rightarrow \infty$, and every sequence $\Xi^\partial_{T_n} \rightarrow \Xi^\partial_\infty $ in $\boldsymbol{\mathcal B}$,
\begin{equation}
\tilde{\mathbb F}^{f;\Xi^\partial_\infty}_\infty(\mu) \leq \liminf_{n\rightarrow \infty} \tilde {\mathbb F}^{f;\Xi^\partial_{T_n}}_{T_n}(\mu_n). 
\end{equation}
\item[(ii)] (Recovery sequence) For every $\mu \in \overline{ \mathcal X}$, for every sequence $(T_n) \rightarrow \infty$, and every sequence $\Xi^\partial_{T_n} \rightarrow \Xi^\partial_\infty$ in $\boldsymbol{\mathcal B}$, there exists a sequence $\mu_n \rightarrow \mu$ in $\overline {\mathcal X}$ such that
\begin{equation}
	\tilde{\mathbb F}^{f;\Xi^\partial_\infty}_\infty( \mu) \geq \limsup_{n\rightarrow \infty} \tilde {\mathbb F}^{f;\Xi^\partial_{T_n}}_{T_n}(\mu_n).
\end{equation} 
\end{enumerate}	
\end{lemma}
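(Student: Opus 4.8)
\textbf{Proof proposal for Lemma \ref{lem: bulk gamma convergence}.}

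The plan is to mimic the $\Gamma$-convergence argument carried out for the boundary theory in Lemma \ref{lem: boundary fatou}, using the uniform remainder estimates of Proposition \ref{prop: remainder terms}, the stochastic estimates on the $\Upsilon_{i,T}$ from Section \ref{sec:stochastic}, and the convergence of the bulk enhancement from Proposition \ref{prop: bulk enhancement}. Throughout we fix $f$ and a sequence $\Xi^\partial_{T_n}\to\Xi^\partial_\infty$ in $\boldsymbol{\mathcal B}$; since $\sup_n\|\Xi^\partial_{T_n}\|_{\boldsymbol{\mathcal B}}<\infty$, all the constants appearing in Proposition \ref{prop: remainder terms} are uniformly bounded along the sequence.

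\emph{Recovery sequence (ii).} This is the easier direction. Given $\mu\in\overline{\mathcal X}$, apply the approximation Lemma \ref{lem: bulk law-technical-2}(i) to produce a sequence $(\mu_K)\subset\mathcal X$ with $\mu_K\to\mu$ and $\widetilde{\mathbb F}^{f;\Xi^\partial_\infty}_\infty(\mu_K)\to\widetilde{\mathbb F}^{f;\Xi^\partial_\infty}_\infty(\mu)$. For each fixed $K$, Lemma \ref{lem: bulk law-technical-2}(ii) gives $\lim_{T\to\infty}\widetilde{\mathbb F}^{f;\Xi^\partial_T}_T(\mu_K)=\widetilde{\mathbb F}^{f;\Xi^\partial_\infty}_\infty(\mu_K)$ whenever $\Xi^\partial_T\to\Xi^\partial_\infty$ in $\boldsymbol{\mathcal B}$ (here one also uses Proposition \ref{prop: bulk enhancement} for the bulk enhancement marginal, which is the same for every element of $\mathcal X$). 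A diagonal extraction over $(K,T_n)$ then yields a subsequence $(\mu_{K(n)})$ with $\mu_{K(n)}\to\mu$ and $\limsup_n\widetilde{\mathbb F}^{f;\Xi^\partial_{T_n}}_{T_n}(\mu_{K(n)})\le\widetilde{\mathbb F}^{f;\Xi^\partial_\infty}_\infty(\mu)$, as required.

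\emph{Fatou inequality (i).} Let $\mu_n\to\mu$ in $\overline{\mathcal X}$ and $T_n\to\infty$. We may assume $\liminf_n\widetilde{\mathbb F}^{f;\Xi^\partial_{T_n}}_{T_n}(\mu_n)<\infty$, pass to a subsequence realizing the liminf, and (using Proposition \ref{prop: remainder terms} and the stochastic estimates of Section \ref{sec:stochastic} to absorb the remainder and $\Upsilon$ terms into the coercive terms up to uniformly bounded constants) deduce $\sup_n\mathbb E_{\mu_n}[\|Z_{T_n}(u)\|_{L^4}^4+\|\ell_{T_n}(u)\|_{\mathcal H}^2]<\infty$; in particular the marginals ${\rm Law}_{\mu_n}(\ell_{T_n}(u))$ converge weakly in $\mathcal H_w$. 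Then invoke Skorokhod's theorem to realize the joint convergence of $(\Xi_{T_n}(W),\Xi^\partial_{T_n}(\varphi_-,\varphi_+),u,\ell_{T_n}(u))$ almost surely on a common probability space (the enhancement converging in $\boldsymbol{\mathfrak S}\times\boldsymbol{\mathcal B}$, $u$ in $\mathcal L_w$, and $\ell_{T_n}(u)$ in $\mathcal H_w$), with the full ansatz relation of Definition \ref{definition: full ansatz} preserved in the limit. Lower semicontinuity of $\|\cdot\|_{\mathcal H}^2$ along weak convergence handles the entropy term; lower semicontinuity of $u\mapsto\|Z_\infty(u)\|_{L^4}^4$ on $\boldsymbol{\mathfrak S}\times\mathcal H_w$ (compactness of $u\mapsto Z(u)$ as in Lemma \ref{lem: bdry a priori Z}) handles the effective potential; and the remaining terms $\Phi^{f;\varphi_-,\varphi_+}_T=\sum R_i+\sum\Upsilon_{i,T}$ converge because each $R_i$ and $\Upsilon_{i,T}$ is a (sum of) multilinear continuous maps in the relevant arguments (continuity being precisely what the estimates of Section \ref{sec:stochastic}--\ref{sec:equicoercive} establish, c.f.\ Remark \ref{remark: cost fcn tinfty}) together with Fatou's lemma for the positive pieces. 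Combining these bounds yields $\widetilde{\mathbb F}^{f;\Xi^\partial_\infty}_\infty(\mu)\le\liminf_n\widetilde{\mathbb F}^{f;\Xi^\partial_{T_n}}_{T_n}(\mu_n)$.

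\emph{Main obstacle.} The delicate point—exactly as flagged in the text before Lemma \ref{lem:bulk-rem}—is the behaviour of the paracontrolled remainder map $\ell_T(u)$ in the singular limit $T\to\infty$, since $\ell_\infty$ is no longer a bounded operator on $\mathcal H$ and the paracontrolled part $J_t(\mathbb W_t^2\succ Z_t^\flat)$ is genuinely present. Concretely, justifying that the uniform $L^4$/entropy bound along $(\mu_n)$ propagates to weak $\mathcal H$-convergence of $\ell_{T_n}(u)$ and that the limit satisfies the $T=\infty$ ansatz requires the regularized remainder map and the high-frequency decomposition $\mathbb W_t^{2,\le},\mathbb W_t^{2,>}$ of Section \ref{sec: convergence of bulk amplitudes}, via Lemma \ref{lem:bulk-rem}; the contraction-mapping construction of ${\rm rem}_\varepsilon$ and the $\varepsilon\to 0$ limit must be threaded carefully through the Skorokhod representation. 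Everything else is a routine transcription of the boundary-theory argument.
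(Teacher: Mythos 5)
Your proposal is essentially the argument the paper intends: it says the lemma "follows by identical arguments as in the proofs of Lemmas \ref{lem: boundary equicoercive} and \ref{lem: boundary fatou}" given Proposition \ref{prop: remainder terms} and the approximation Lemmas \ref{lem:bulk-law-technical-1}--\ref{lem: bulk law-technical-2}, and your two steps (recovery sequence via Lemma \ref{lem: bulk law-technical-2} with a diagonal extraction; Fatou via a priori entropy/$L^4$ bounds, Skorokhod, and lower semicontinuity/multilinear continuity of the pieces of the cost function) transcribe exactly that. One small imprecision in your ``Main obstacle'' paragraph: the regularized remainder map ${\rm rem}_\varepsilon$ and the high-frequency split $\mathbb W^{2,\le}_t,\mathbb W^{2,>}_t$ enter only through Lemma \ref{lem:bulk-rem}, which underpins the approximation Lemma \ref{lem: bulk law-technical-2} and hence the \emph{recovery sequence} direction; in the Fatou direction they are not needed --- the ansatz relation of Definition \ref{definition: full ansatz} passes to the limit along the Skorokhod coupling because the only $u$-dependence in the right-hand side, through $Z_t^\flat(u)$, is continuous from $\mathcal L_w$ to $L^\infty_t L^4_x$, and the remaining pieces are fixed multilinear functions of the bulk and boundary enhancements which converge by Proposition \ref{prop: bulk enhancement} and the hypothesis $\Xi^\partial_{T_n}\to\Xi^\partial_\infty$.
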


\begin{proof}[Proof of Theorem \ref{thm: bulk}]
Part I follows from Propositions \ref{prop: renormalized cost fcn} and \ref{prop: relaxed renormalized cost fcn}. Part III follows from the bounds established on the $\Upsilon^i$ in Section \ref{sec:stochastic} and the $R_i$ in Section \ref{sec:equicoercive} (see Proposition \ref{prop: remainder terms}),  and the extension of these estimates to $T=\infty$ (see Remark \ref{remark: cost fcn tinfty}).

It remains to establish Part II, the convergence of the amplitudes. As a consequence of the $\Gamma$-convergence of Lemma \ref{lem: bulk gamma convergence}, the equicoercivity of Lemma \ref{lem: bulk equicoercivity}, and the fundamental theorem of $\Gamma$-convergence\cite{D93}, we have that for every $f$ and every sequence $\Xi^\partial_T \rightarrow \Xi^\partial_\infty$ in $\boldsymbol{\mathcal B}$,
\begin{equation}
\lim_{T \rightarrow \infty} \min_{\mu \in \overline{\mathcal X}}{\tilde{\mathbb F}}^{f;\Xi^\partial_{T}} _{T}(\mu) =  \min_{\mu \in \overline{\mathcal X}}{\tilde{\mathbb F}}^{f; \Xi^\partial_{T}}_{\infty}(\mu). 
\end{equation}
Consequently, if $\Xi^\partial_{T} \to \Xi^\partial_{\infty}$ in $\boldsymbol{\mathcal{B}}$, then  
\begin{align}
\lim_{T \rightarrow \infty} \boldsymbol{\mathcal{A}}_T^\sigma(f,\Xi_{T}^\partial )
=
\boldsymbol{\mathcal{A}}_\infty^\sigma(f,\Xi_{\infty}^\partial), \qquad \forall f \in C^\infty(M),
\end{align}
as required.  
\end{proof}

\section{Smallfield reduction}
\label{sec: smallfield reduction}

In this section, we prove Lemma \ref{lem: large field} which in turn fills the remaining unproven claim used in the proof of Theorem \ref{thm: main}.

\begin{proof}[Proof of Lemma \ref{lem: large field}]
For every $\varphi_-,\varphi_+$ i.i.d. distributed according to $\nu^0$, let us define
\begin{align}
E_T(\varphi_-,\varphi_+)&:= E_{\nu^0_T} \Big[ \mathcal A_T^-(f \mid \rho_T \ast \varphi_-, \varphi^0)\mathcal A^+_T(f \mid \varphi^0, \rho_T \ast \varphi_+)
\\
&\qquad\qquad \qquad  \times  e^{c \|\Xi_T^\partial (\rho_T \ast \varphi_-,\varphi^0)\|_{\boldsymbol{\mathfrak B}'}^\alpha +c\|\Xi^\partial_T(\varphi^0,\rho_T \ast \varphi_+)\|_{\boldsymbol{\mathfrak B}'}^\alpha }\Big].
\end{align}
Recall that we wish to establish that $\nu^0 \otimes \nu^0$-almost surely, there exists $C>0$ such that\begin{equation}
\sup_T E_T(\varphi_-,\varphi_+)	\leq C
\end{equation} 

Let $T>0$ be fixed. By Definition \ref{def: approx amplitude} and the Markov property for $\mu$, Proposition \ref{prop: domain markov} --- see the calculations in the proof of Proposition \ref{prop: gluing-cutoff} --- we have that $E_T(\varphi_-,\varphi_+)$ is equal to
\begin{equation}
\mathcal A^{\rm free}(\rho_T \ast \varphi_-, \rho_T \ast \varphi_+) \mathcal E_T(\varphi_- \ast \rho_T, \varphi_+ \ast \rho_T) \mathbb E_{\mu}\left[\exp\left( \langle f, \varphi \rangle - U_T^\alpha(\varphi_T \mid \varphi_-,\varphi_+) \right) \right].
\end{equation}
Above, the potential $U_T^\alpha(\varphi, \varphi_-,\varphi_+)$ is equal by definition to the quantity
\begin{equation}
V_T(\varphi_T \mid \rho_T \ast \varphi_-,\rho_T \ast \varphi_+ )+ c \|\Xi_{T}^\partial(\rho_T\ast \varphi_-,(\varphi_T)_0)\|^{\alpha}_{\boldsymbol{\mathcal{B}}'}+c \|\Xi_{T}^\partial((\varphi_T)_0,\rho_T \ast \varphi_+)\|^{\alpha}_{\boldsymbol{\mathcal{B}}'},
\end{equation}
where $(\varphi_T)_0=\varphi_T(0,\cdot)$ is the restriction of $\varphi_T$ to $\tau=0$. 

Since the constant may depend on $\varphi_-,\varphi_+$ and also $f$, we can ignore the contribution of the free amplitude and the term $\mathcal E_T(\varphi_- \ast \rho_T, \varphi_+ \ast \rho_T)$. It is sufficient to analyze the expectation term, which we denote by $E_{2,T}$.

The term $E_{2,T}$ can be analyzed similarly as the bulk amplitudes in Sections \ref{sec: renormalization of bulk amplitudes}-\ref{sec: convergence of bulk amplitudes}. Applying the Bou\'e-Dupuis formula and using the same notation as before, the desired estimate on $E_{2,T}$ amounts to establishing that there exists $C>0$ such that for every $T>0$,
\begin{equation} \label{eq: smallfield claim}
	\inf_{v \in \mathbb H} \mathbb F_{2,T}(v)\geq -C,
\end{equation}
where the cost function $\mathbb F_{2,T}(v)$ is given by
\begin{align}
\mathbb F_{2,T}(v) &= \mathbb E \Bigg[ \mathcal V_T(W_T,Z_T(v))  + c\| \Xi_T^\partial(\rho_T\ast\varphi_-, (W_T+Z_T(v)+H(\varphi_-,\varphi_+)_T)_0)\|^\alpha  
\\
&\qquad + c\| \Xi_T^\partial(\rho_T \ast \varphi_+, (W_T+Z_T(v)+H(\varphi_-,\varphi_+)_T)_0 \|^\alpha + \frac 12 \| v\|_{L^2_t L^2_x}^2 \Bigg].
\end{align}
We note that, since we are only interested in bounds, a $\Gamma$-convergence argument to deduce full convergence is not necessary.

The potential term $\mathcal V_T(W_T,Z_T(v))$ is treated identically as in Section \ref{sec: renormalization of bulk amplitudes} and it remains to modify the arguments to estimate the second and third terms. By symmetry, we will estimate the second term, i.e.\ the one with $\varphi_-$. First of all, note that $(W_T)_0$ exists by the Markov property and its law is given by boundary Gaussian with covariance given by the Dirichlet-to-Neumann kernel mollified at scale $T$, i.e.\ the pushforward of $\tilde \mu^0$ under the regularization map $\varphi^0 \mapsto (\varphi^0)_T$. For the term $Z_T(v)$, applying the ansatz established in Section \ref{sec: renormalization of bulk amplitudes}, by the intermediate ansatz \eqref{eq: intermediate ansatz} we have that $Z_{T}(v)=-\mathbb W_t^{[3]}+K_{T}$. The first component can be restricted to the $\tau=0$ slice with regularity in $\mathcal C^{1/2-}_z$, and by trace theorem we have that the restriction of the $H^{1-}$ component belongs to $H^{1/2-}$. Finally, the harmonic extension term is smooth away from $\partial M$ and hence smooth on $B$. Thus,
\begin{equation}
(Z_T(v)+H(\varphi_-,\varphi_+)_T)_0 \in H^{1/2-}(B).
\end{equation}Combining the above, we have that $(W_T + Z_T(v) + H(\varphi_-,\varphi_+)_T)_0$ is an admissible law on boundary conditions under the measure $\mathbb P$ arising from the Bou\'e-Dupuis formula. By abuse of notation, we will therefore write this term as $W_T^0 + Z_T^0$. 

\begin{remark}
Although in the definition of admissible law, $W_T^0$ is distributed according to the boundary Gaussian field $\mu^0$, we may replace $\mu^0$ by the boundary Gaussian with Dirichlet-to-Neumann covariance $\tilde\mu^0$ with straightforward modifications. One way to see this is to use the Bou\'e-Dupuis representation of $(W_T)_0$ in terms of a stochastic series. By direct calculation, one can then show that $(W_T)_0$ is equal almost surely to a random variable distributed according to $\mu^0$ plus a smoothing term of Sobolev regularity greater than $\frac 12$. 
\end{remark}

Recall that $\Xi^\partial_T(\rho_T \ast \varphi_-, W_T^0 + Z_T^0)$ consists of the Wick powers of harmonic extensions the individual boundary fields as well as the random variables $\{ \Upsilon_{2,T}, \Upsilon_{3,T}, \Upsilon_{4,T}, \Upsilon_{5,T}\}$ defined in Section \ref{sec: renormalization of bulk amplitudes}. We emphasize from the outset that the term $\rho_T \ast \varphi_-$ is treated as deterministic in this lemma. For the Wick powers, we may directly apply Proposition \ref{prop: stochastic boundary v2}. For the random vairables $\Upsilon_{i,T}$, we have shown in Section \ref{sec:stochastic} that there exists $\alpha > 0$ such that, for every $\delta > 0$, there exist constants $c_1,c_2>0$ --- depending on $\varphi_-$ --- such that
\begin{equation}
\mathbb E[|\Upsilon_{i,T}|^\alpha] \leq c_1+\delta \mathbb E[\|Z^0_T\|_{H^{1/2-}_z}^2] \leq c_2 + \delta \left( \mathbb E \left[ \|Z_T(v)\|_{L^4_x}^4 + \frac 12 \|\ell_T^{\varphi_-,\varphi_+}(v)\|_{L^2_tL^2_x}^2\right] \right),
\end{equation}
where $\ell^{\varphi_-,\varphi_+}_T(v)$ is the residual drift entropy induced by the full drift ansatz, see Definition \ref{definition: full ansatz}.
Thus we have established that, with $\alpha$ chosen as above, for every $\delta > 0$ there exists $C>0$ depending on $\varphi_-$, such that
\begin{equation}
	\mathbb{E}[\| \Xi_T^\partial(\rho_T \ast \varphi_-, (W_T+Z_T(v)+H(\varphi_-,\varphi_+))_0)\|_{\boldsymbol{\mathcal B}'}^\alpha] 
\leq C+\delta \left(\mathbb{E} \|Z_{T}(v)\|^4_{L^4}+\frac 12 \|\ell_{T}^{\varphi_-,\varphi_+}(v)\|^2_{L^2_tL^2_x}\right).
\end{equation}
Adding these estimates to the estimates developed in Sections \ref{sec:stochastic} and \ref{sec:equicoercive} establishes \eqref{eq: smallfield claim}. 

\end{proof}

\section{The $\varphi^4_3$ Hamiltonian} \label{sec:spectral}

In this section we prove Theorem \ref{thm: hamiltonian}. Namely, we will construct the $\varphi^4_3$ Hamiltonian and prove some fundamental spectral properties such as discreteness of spectrum.  

\subsection{Tightness of the ground state of regularized $\varphi^4_3$ Hamiltonians} \label{subsec: tightness ground state}

We begin by proving a technical result that will be crucial to our strategy to construct the $\varphi^4_3$ Hamiltonian. For every $T\geq 0$, we will introduce a cutoff Hamiltonian with discrete spectrum and which has a simple lowest eigenvalue whose associated normalized eigenvector is positive almost surely. These are called ground states for the cutoff Hamiltonian. We will then prove that these ground states form a tight sequence.  

Let $T \in [0,\infty)$. Recall the cutoff boundary measure $\nu^0_T$. Define the Hilbert space $\mathcal H_T := L^2(\nu^0_T)$. 
  Let $(\mathbf P_\tau^T)_{\tau \geq 0}$ be the semigroup acting on bounded functions $f:\mathcal H_T \rightarrow \mathbb R$ by
\begin{equation}
\mathbf P_\tau^T f(\varphi) := \int f(\varphi') \mathcal A_\tau^T(\varphi,\varphi') \nu^0_T(d\varphi'), \qquad \forall \varphi \in \mathcal H_T,	\qquad \forall \tau > 0,
\end{equation}
 where $\mathcal A_\tau^T$ is the regularized $\varphi^4_3$ amplitude on $[0,\tau]\times\mathbb T^2$ defined in Definition \ref{def: approx amplitude}. Furthermore, let $\mathbf P^T_0 = {\rm Id}_{\mathcal H_T}$. 
 
 The following proposition follows from the classical theory of Schr\"odinger operators with semibounded potentials. We refer to \cite[Chapter 11-12]{RS78} for further detail (see also the proof of discrete spectrum in Section \ref{subsec: spectral properties of H}).
 
 \begin{proposition} \label{proposition: cutoff hamiltonian}
 Let $T\in[0,\infty)$. There exists a self-adjoint linear map $\mathbf H_T:D(\mathbf H_T)\rightarrow \mathcal H_T$, where $D(\mathbf H_T) \subset \mathcal H_T$ is a dense linear subspace, such that $\mathbf H_T$ is the infinitesimal generator of $(\mathbf P^T_\tau)_{\tau \geq 0}$. The linear map $\mathbf H_T$ has discrete spectrum $\sigma(\mathbf H_T)\subset [E_0^T, \infty)$ where $E_0^T \in \mathbb R$ is the lowest eigenvalue which is simple and its associated normalized eigenvector $e_0^T$ can be chosen such that $e_0^T>0$ $\nu^0_T$-almost surely.
 \end{proposition}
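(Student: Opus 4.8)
\textbf{Proof plan for Proposition \ref{proposition: cutoff hamiltonian}.}

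The plan is to recognize that for fixed $T<\infty$, the regularized boundary measure $\nu^0_T$ is equivalent to the Gaussian $\mu^0$ with an essentially bounded-below log-density, and that the amplitudes $\mathcal A^T_\tau$ are generated (via the gluing property of Proposition \ref{prop: gluing-cutoff}) by a perturbation of the Ornstein-Uhlenbeck semigroup on $L^2(\mu^0)$ by a multiplicative potential that is semibounded. Concretely, first I would use the Bou\'e-Dupuis / Feynman-Kac representation of $\mathcal A^T_\tau$ developed in Section \ref{sec: renormalization of bulk amplitudes} (specialized to smooth cutoff $T<\infty$, where all the Wick powers and harmonic extension terms are genuine functions) to identify $(\mathbf P^T_\tau)_{\tau\geq 0}$ with the semigroup $e^{-\tau \mathbf H_T}$ where, heuristically, $\mathbf H_T = \mathbf H_{0,T} + \mathbf V_T$ with $\mathbf H_{0,T}$ the number operator (generator of the OU semigroup associated to $\nu^0_T$, or equivalently the Dirichlet form operator) and $\mathbf V_T$ a multiplication operator by the (renormalized, cutoff) $\varphi^4_3$ interaction energy of a thin slice. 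The key structural facts to record are: (i) $\mathbf V_T$ is $\nu^0_T$-a.s.\ finite and bounded below (at finite $T$ the renormalized potential $V^0_T$ and the slice potential are polynomials in smooth Gaussian fields, hence in $L^p(\nu^0_T)$ for all $p$ and bounded below by a constant depending on $T$), and (ii) $\mathbf P^T_\tau$ is positivity preserving and $L^2(\nu^0_T)$-bounded, which follows because $\mathcal A^T_\tau(\varphi,\varphi')>0$ pointwise and the bounds on torus partition functions from \cite{BG20} (combined with the gluing property) give $\|\mathbf P^T_\tau\|_{\mathcal H_T \to \mathcal H_T} < \infty$.

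Second, I would invoke the classical theory of Schr\"odinger semigroups: since $\mathbf H_{0,T}$ is self-adjoint, nonnegative with a spectral gap (it is the Dirichlet-form generator for the Gaussian measure $\nu^0_T$ restricted to its OU structure, or one can work directly with the form $\mathcal E_T(F) = \int |\nabla F|^2 d\nu^0_T$ plus the potential term), and $\mathbf V_T$ is $\nu^0_T$-measurable, bounded below, and in every $L^p(\nu^0_T)$, the form sum $\mathbf H_T := \mathbf H_{0,T} \dotplus \mathbf V_T$ is well-defined, self-adjoint, bounded below, and generates $(\mathbf P^T_\tau)_{\tau\geq 0}$ by the Trotter product formula (or directly by the Feynman-Kac formula, which is exactly the amplitude representation). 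Strong continuity of $(\mathbf P^T_\tau)_{\tau\geq 0}$ on $\mathcal H_T$ at finite $T$ is the easy case of the issue discussed in the introduction --- it holds because, with a smooth cutoff, the slice amplitude converges to the identity as $\tau\downarrow 0$ in the relevant sense, so Hille-Yosida applies directly. For discreteness of spectrum I would use a compact resolvent argument: the embedding of the form domain $Q(\mathbf H_T)$ into $\mathcal H_T$ is compact because the form domain of $\mathbf H_{0,T}$ (a Sobolev-type space with respect to the Gaussian $\nu^0_T$, built on finitely-or-infinitely many modes with the number-operator weighting) embeds compactly into $L^2(\nu^0_T)$ --- this is the standard fact that the number operator on Gaussian space has compact resolvent --- and adding a bounded-below potential only shrinks the form domain. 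This is exactly the "compact resolvent argument as in \cite{GGV24}" referenced in the text, and I would cite it, adapting the statement from \cite[Chapter 11-12]{RS78} and \cite{GGV24} rather than redoing it.

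Third, for the Perron-Frobenius statements --- simplicity of $E_0^T$ and strict positivity of $e_0^T$ --- I would verify that $(\mathbf P^T_\tau)_{\tau>0}$ is \emph{positivity improving} on $\mathcal H_T$, i.e.\ for every nonnegative nonzero $F \in \mathcal H_T$ and every $\tau>0$ one has $\mathbf P^T_\tau F > 0$ $\nu^0_T$-a.s. This follows from the strict pointwise positivity of the kernel $\mathcal A^T_\tau(\varphi,\varphi') > 0$ for $\nu^0_T\otimes\nu^0_T$-a.e.\ $(\varphi,\varphi')$, which in turn comes from the Bou\'e-Dupuis variational representation (the amplitude is an exponential of a bounded-below quantity at finite $T$, hence strictly positive) together with irreducibility of the underlying GFF bridge measure. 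Given positivity improving, the standard Perron-Frobenius theorem for semigroups (see \cite[Theorem XIII.44]{RS78}) yields that the bottom of the spectrum $E_0^T = \inf\sigma(\mathbf H_T)$ is a simple eigenvalue with a strictly positive eigenvector, which we normalize to get $e_0^T$. I expect the main obstacle to be the bookkeeping needed to rigorously identify $(\mathbf P^T_\tau)$ with a Feynman-Kac semigroup of the form $e^{-\tau(\mathbf H_{0,T} + \mathbf V_T)}$ --- in particular, matching the renormalization constants $\gamma^\sigma_T, \delta^\sigma_T$ and the boundary-bulk terms in the amplitude with an honest self-adjoint operator, and checking the semigroup property at finite $T$ is consistent with the gluing formula --- but this is routine given the machinery already set up in Sections \ref{sec: amplitudes and gluing}--\ref{sec: convergence of bulk amplitudes}, and everything genuinely singular has already been dealt with there; at finite $T$ the analysis is classical Schr\"odinger operator theory on Gaussian space.
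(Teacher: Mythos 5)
Your overall plan --- identify $(\mathbf P^T_\tau)$ with a Feynman--Kac semigroup $e^{-\tau\mathbf H_T}$, use semiboundedness of the finite-cutoff interaction to get self-adjointness, get discreteness by a compact-resolvent argument, and get simplicity and positivity of the ground state by showing the semigroup is positivity improving --- is the right approach and is what the paper's brief citation to Reed--Simon and \cite{GGV24} is alluding to. But there are two issues worth pointing out.

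First, the lemma you invoke for compactness of resolvent is false as stated: ``the number operator on Gaussian space has compact resolvent'' is wrong when the underlying one-particle Hilbert space is infinite-dimensional (as it is here, $L^2(\mathbb T^2)$). The number operator $N = d\Gamma(I)$ has eigenvalues $0,1,2,\dots$ and the $n$-particle eigenspace is infinite-dimensional, so $(N+1)^{-1}$ is not compact. The relevant correct fact is that the free field Hamiltonian $\mathbf H_0 = d\Gamma\bigl(\sqrt{-\Delta_{\mathbb T^2}+m^2}\bigr)$ has compact resolvent, and the reason is that the one-particle operator $\sqrt{-\Delta_{\mathbb T^2}+m^2}$ is bounded below by $m>0$ and has compact resolvent (discrete eigenvalues $\sqrt{4\pi^2|n|^2+m^2}\to\infty$, $n\in\mathbb Z^2$), so that every energy level of $\mathbf H_0$ below a given threshold is attained by only finitely many occupation configurations. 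Your argument as written would prove nothing, because the form domain of $N$ does \emph{not} embed compactly into $L^2$; you must use the second quantization of $\sqrt{-\Delta_{\mathbb T^2}+m^2}$ instead. With that correction the compactness argument goes through.

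Second, there is a cleaner route that sidesteps the $\mathbf H_{0,T}+\mathbf V_T$ form-sum bookkeeping on the non-Gaussian Hilbert space $\mathcal H_T=L^2(\nu^0_T)$ --- which is the approach the paper itself takes for the $T=\infty$ case in Section \ref{subsec: spectral properties of H} (Proposition \ref{proposition: spectrum H discrete}) and which adapts verbatim to finite $T$. Namely, one shows directly from the gluing property (Proposition \ref{prop: gluing-cutoff}) and the bounds on the torus partition function that $\mathcal A^T_\tau \in L^2(\nu^0_T\otimes\nu^0_T)$, so that $\mathbf P^T_\tau$ is Hilbert--Schmidt for every $\tau>0$. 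Since $(\mathbf P^T_\tau)$ is also self-adjoint (symmetry of $\mathcal A^T_\tau$) and strongly continuous, the resolvent is obtained from $\mathbf P^T_\tau \mathbf R_\alpha \to \mathbf R_\alpha$ and hence compact, giving discrete spectrum without needing to identify the free Hamiltonian, the measure change from $\mu^0$ to $\nu^0_T$, or any compactness of form-domain embeddings. The positivity-improving step then proceeds exactly as you describe, and that part of your argument is fine. Your route works after the number-operator fix and after being more careful about the change of base measure, but the Hilbert--Schmidt route is shorter and is the one the paper signals with ``see also the proof of discrete spectrum in Section \ref{subsec: spectral properties of H}''.
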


\begin{remark}
Our regularized $\varphi^4_3$ Hamiltonian $\mathbf H_T$ differs from the Hamiltonian considered in \cite{G68}, where a spectral cutoff is used directly in (the quadratic form of) the Hamiltonian.
\end{remark}

The bounds on the amplitude derived in Theorem \ref{thm: bulk} imply that the lowest eigenvalues are uniformly bounded in $T\geq 0$, as formalized in the proposition below. The proof follows from a simplification\footnote{Here, the simplification comes from the fact we already know the spectrum is discrete, so no spectral measure decomposition is required.} of the arguments used below to prove the corresponding result when $T=\infty$, see Proposition \ref{proposition: lower bound spectrum H}.

\begin{proposition} \label{proposition: cutoff spectral lower bound}
There exists $C >0$ such that
\begin{equation}
-C \leq \inf_T E_0^T \leq \sup_T E_0^T \leq C.	
\end{equation}
\end{proposition}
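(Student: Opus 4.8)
\textbf{Proof proposal for Proposition \ref{proposition: cutoff spectral lower bound}.}

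The plan is to reduce the two-sided bound on $E_0^T$ to bounds on the regularized torus partition functions $\mathcal Z^{\rm per}_T$ (equivalently, on the periodic amplitudes $\mathcal A^{\rm per}_T$), which are controlled uniformly in $T$ by the construction in \cite{BG20} together with the amplitude bounds of Theorem \ref{thm: bulk}. The key identity is the one coming from gluing the cylinder into a torus, cf. Corollary \ref{cor: tori gluing} in the prelimit: by iterating the gluing property of Proposition \ref{prop: gluing-cutoff} around the circle of circumference $\tau$, the periodic amplitude on the torus $\mathcal M_\tau := (\mathbb R/\tau\mathbb Z)\times \mathbb T^2$ satisfies
\begin{equation}
\mathcal A^{\rm per}_{T}(\mathcal M_\tau) = \mathbb E_{\nu^0_T}[\mathcal A^T_\tau(\varphi^0,\varphi^0)] = \operatorname{Tr}_{\mathcal H_T}(\mathbf P^T_\tau),
\end{equation}
where the last equality is because $\mathbf P^T_\tau$ has kernel $\mathcal A^T_\tau$ with respect to $\nu^0_T$ and is trace-class (its spectrum is discrete and, as will follow from the estimates, decays fast enough). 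Since $\sigma(\mathbf H_T) \subset [E_0^T,\infty)$ is discrete, $\operatorname{Tr}(\mathbf P^T_\tau) = \sum_{j} e^{-\tau \lambda_j^T} \geq e^{-\tau E_0^T}$, so
\begin{equation}
E_0^T \geq -\frac{1}{\tau}\log \mathcal A^{\rm per}_{T}(\mathcal M_\tau),
\end{equation}
and conversely $\operatorname{Tr}(\mathbf P^T_\tau) \geq e^{-\tau E_0^T}$ does not directly give an upper bound, so for the upper bound on $E_0^T$ I would instead use the variational (Rayleigh quotient) characterization $E_0^T = \inf_{F} \langle F, \mathbf H_T F\rangle/\|F\|^2_{\mathcal H_T}$, testing against the constant function $F \equiv 1$: this gives $e^{-\tau E_0^T} \geq \langle 1, \mathbf P^T_\tau 1\rangle_{\mathcal H_T} = \mathbb E_{\nu^0_T\otimes\nu^0_T}[\mathcal A^T_\tau(\varphi^0_1,\varphi^0_2)]$, hence
\begin{equation}
E_0^T \leq -\frac{1}{\tau}\log \mathbb E_{\nu^0_T\otimes\nu^0_T}[\mathcal A^T_\tau(\varphi^0_1,\varphi^0_2)].
\end{equation}
Thus, fixing any $\tau \in (0,\infty)$, say $\tau = 1$, it suffices to show that both $\mathcal A^{\rm per}_T(\mathcal M_1)$ and $\mathbb E_{\nu^0_T\otimes\nu^0_T}[\mathcal A^T_1(\varphi^0_1,\varphi^0_2)]$ are bounded above and below away from zero, uniformly in $T$.

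The upper bound on $\mathbb E_{\nu^0_T\otimes\nu^0_T}[\mathcal A^T_1(\varphi^0_1,\varphi^0_2)]$ and on $\mathcal A^{\rm per}_T(\mathcal M_1) = \mathbb E_{\nu^0_T}[\mathcal A^T_1(\varphi^0,\varphi^0)]$ follows from Theorem \ref{thm: bulk}(III) — which gives $\mathcal A^T_1(f=0,\Xi^\partial) \leq C\exp(c\|\Xi^\partial\|^p_{\boldsymbol{\mathfrak B}})$ uniformly in $T$ — combined with the large-field estimate of Lemma \ref{lem: large field} (applied with the diagonal $\varphi_- = \varphi_+ = \varphi^0$, and its obvious analogue for two independent copies), which shows $\mathbb E_{\nu^0_T\otimes\nu^0_T}[C\exp(c\|\Xi^\partial_T\|^p_{\boldsymbol{\mathfrak B}'})] \leq C'$ uniformly in $T$. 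The lower bounds are the more delicate point. For $\mathbb E_{\nu^0_T\otimes\nu^0_T}[\mathcal A^T_1]$ one can use Jensen's inequality: writing $\mathcal A^T_1 = e^{-\log(1/\mathcal A^T_1)}$ and noting $\mathcal A^T_1 > 0$ pointwise (it is a ratio of strictly positive unnormalized Laplace transforms times a strictly positive free amplitude and a strictly positive $\mathcal E_T$-factor), Jensen gives $\mathbb E[\mathcal A^T_1] \geq \exp(-\mathbb E[\log(1/\mathcal A^T_1)]) = \exp(\mathbb E[\log \mathcal A^T_1])$, so it suffices to bound $\mathbb E_{\nu^0_T\otimes\nu^0_T}[\log \mathcal A^T_1]$ from below uniformly in $T$. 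By the renormalized Bou\'e–Dupuis representation of Section \ref{sec: renormalization of bulk amplitudes}, $-\log \mathsf A^T_1 = \inf_{u}\mathbb F_T^{\Xi^\partial_T}(u)$, and the uniform bounds on the $R_i$ and $\Upsilon_{i,T}$ terms (Proposition \ref{prop: remainder terms} and the stochastic estimates of Section \ref{sec:stochastic}) give $\inf_u \mathbb F_T^{\Xi^\partial_T}(u) \leq C(1 + \|\Xi^\partial_T\|^p_{\boldsymbol{\mathfrak B}})$ after choosing $u$ so that $\ell_T(u) = 0$; taking the expectation over $\nu^0_T\otimes\nu^0_T$ and again invoking Lemma \ref{lem: large field} (or just the $\alpha$-moment bounds on the $\Upsilon_{i,T}$ plus Proposition \ref{prop: bdry UV stable} for the boundary normalization) controls the $\mathcal E_T$ and free-amplitude prefactors as well. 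A parallel argument handles $\log \mathcal A^{\rm per}_T(\mathcal M_1)$: here one uses $\mathbb E_{\nu^0_T}[\mathcal A^T_1(\varphi^0,\varphi^0)] \geq \exp(\mathbb E_{\nu^0_T}[\log \mathcal A^T_1(\varphi^0,\varphi^0)])$, and then the Bou\'e–Dupuis upper bound on $-\log$ of the diagonal amplitude, which is of exactly the same structure, with the boundary-diagonal field $\varphi^0$ playing the role of both $\varphi_-$ and $\varphi_+$; alternatively one can directly quote the uniform lower bound on the regularized torus partition function from \cite{BG20}, adjusted by the finite ratio of determinants and the finite energy renormalization discrepancy tracked in Section \ref{sec:equicoercive} (the $R_4(b)$ estimates and the lemmas bounding $\delta_T(\varphi_-,\varphi_+) - \delta_T^M$ and $\Delta\delta^0_T$).

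I expect the main obstacle to be making the trace identity $\operatorname{Tr}_{\mathcal H_T}(\mathbf P^T_\tau) = \mathbb E_{\nu^0_T}[\mathcal A^T_\tau(\varphi^0,\varphi^0)]$ rigorous — i.e. verifying that $\mathbf P^T_\tau$ is indeed trace-class with the integral kernel $\mathcal A^T_\tau$ relative to $\nu^0_T$, and that the trace of an integral operator equals the integral of its kernel on the diagonal. This requires knowing the heat kernel $\mathcal A^T_\tau$ is sufficiently regular and square-integrable (Hilbert–Schmidt) so that $\mathbf P^T_{\tau} = \mathbf P^T_{\tau/2}\mathbf P^T_{\tau/2}$ is trace-class, together with a version of Mercer's theorem or a direct spectral-decomposition argument; in the cutoff setting $T < \infty$ this should be available from the classical Schr\"odinger-operator theory underlying Proposition \ref{proposition: cutoff hamiltonian} (the potential is bounded below and the Gaussian reference measure has a spectral gap, so $\mathbf P^T_\tau$ is in fact Hilbert–Schmidt for every $\tau>0$), but the uniformity in $T$ of the Hilbert–Schmidt / trace norms needs the gluing identity $\mathcal A^T_\tau(\varphi,\varphi') = \mathbb E_{\nu^0_T}[\mathcal A^T_{\tau/2}(\varphi,\varphi^0)\mathcal A^T_{\tau/2}(\varphi^0,\varphi')]$ (Proposition \ref{prop: gluing-cutoff}) plus the uniform amplitude bounds above. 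Once this bookkeeping is in place the rest is a short combination of Jensen's inequality and the uniform Bou\'e–Dupuis estimates already established in Sections \ref{sec:stochastic}–\ref{sec:equicoercive}.
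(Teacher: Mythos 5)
Your reduction of both bounds on $E_0^T$ to uniform-in-$T$ control of the periodic amplitude is exactly the right idea, and this is indeed what the paper intends (the proof is deferred to a ``simplification'' of the argument for Proposition \ref{proposition: lower bound spectrum H}). Where your route diverges is in the lower bound on $E_0^T$: you invoke the trace identity $\operatorname{Tr}_{\mathcal H_T}(\mathbf P^T_\tau) = \mathbb E_{\nu^0_T}[\mathcal A^T_\tau(\varphi^0,\varphi^0)]$ and then $\operatorname{Tr}(\mathbf P^T_\tau) \geq e^{-\tau E_0^T}$, whereas the paper's argument in Proposition \ref{proposition: lower bound spectrum H} applies Cauchy--Schwarz twice to the quadratic form together with the gluing identity of Proposition \ref{prop: gluing-cutoff}, which gives directly $\|\mathbf P^T_\tau\|_{\mathcal L(\mathcal H_T)} \leq \|\mathbf P^T_\tau\|_{HS} = (\mathcal A^{\rm per,T}_{2\tau})^{1/2}$, and hence, since the spectrum is discrete, $E_0^T = -\tau^{-1}\log\|\mathbf P^T_\tau\|_{\mathcal L(\mathcal H_T)} \geq -(2\tau)^{-1}\log\mathcal A^{\rm per,T}_{2\tau}$. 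This buys you exactly what you flagged as your main obstacle: the Hilbert--Schmidt norm identity follows from a single application of the gluing formula and requires no trace-class or Mercer-type regularity of the kernel, so the bookkeeping you were worried about simply does not arise. For the upper bound $\sup_T E_0^T \leq C$, your Rayleigh-quotient argument with $F\equiv 1$ and the Jensen/Bou\'e--Dupuis estimate for the lower bound on $\mathbb E_{\nu^0_T\otimes\nu^0_T}[\mathcal A^T_\tau]$ is essentially the available route; the uniform bound it needs is of the same flavour as Proposition \ref{prop: bdry UV stable} and the remainder estimates in Sections \ref{sec:stochastic}--\ref{sec:equicoercive}. So: your proposal is correct in substance and identifies the right ingredients, but the trace-class digression is avoidable, and the cleaner Hilbert--Schmidt version is what the paper's referenced proof uses.
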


We now turn to "tightness" for the \emph{ground states} $ e_0^T$. One difficulty is that our bounds on the amplitudes in Theorem \ref{thm: bulk} are not strong enough to use $L^p$ estimates on eigenvectors for $p>2$. We therefore use an interpolation space called the Orlicz space. We recall some standard theory, see \cite{HH19}. Given a convex, lower semicontinuous function $\Psi: \mathbb R_{\geq 0} \rightarrow \mathbb R_{\geq 0}$, the Orlicz space associated to $\Psi$  is the Banach space with norm 
\begin{equation}
	\inf \left\{ k \geq 0 : \int \Psi(|f|/k) d\nu^0 \leq 1 \right \},
\end{equation}
and the space is the set of functions such that $\int \Psi(|f|) d\nu^0 < \infty$. 
We will exclusively work with $\Psi(t) = \Psi_{p,\alpha}(t) := t^p (\log_+ t)^\alpha$, when $p, \alpha \geq 1$. In this case, we write the space as $L^p (\log_+ L)^\alpha$. We begin with an elementary observation: there exists $C>0$ such that, for every $f \in L^p (\log_+ L)^\alpha$,
\begin{equation} \label{eq: orlicz norm bound}
	\|f\|_{L^p (\log_+ L)^ \alpha} \leq  \max \left \{ \left(\int |f|^p (\log_+ |f|)^\alpha d\nu^0 \right)^{1/p}, 1 \right\}. 
\end{equation}
To see this, let us choose 
\begin{equation}
k = \max \left \{ \left(\int |f|^p (\log_+ |f|)^\alpha d\nu^0 \right)^{1/p}, 1 \right\} \geq 1
\end{equation}
Then, since $k \geq 1$, 
\begin{equation}
	\int |f/k|^p (\log_+ |f| -\log_+ k)^\alpha d\nu^0 \leq \frac 1{k^p} \int |f|^p (\log_+|f|)  \leq 1,
\end{equation} 
which establishes \eqref{eq: orlicz norm bound}.

We begin by showing that the ground state eigenvector $L^2 (\log_+ L)^\alpha$ Orlicz norm is uniformly bounded as $T \rightarrow \infty$ for every $\alpha \geq 1$.  
\begin{lemma}\label{lemma: eigenvector bounds}

For every $\alpha \geq 1$, there exists $c_0>0$ such that
    \begin{equation}
        \sup_{T} \int (e^T_0)^{2} (\log_+ e_0^T)^\alpha d\nu^0_T \leq c_0.
    \end{equation}
\end{lemma}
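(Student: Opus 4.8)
The plan is to reduce the $L^q$ bound on the ground state $e_0^T$ to a bound on the partition functions of the periodic $\varphi^4_3$ model with a spectral cutoff, via the probabilistic (amplitude) representation of the semigroup. The starting observation is that, since $e_0^T$ is an eigenvector with $\mathbf{P}^T_\tau e_0^T = e^{-E_0^T\tau} e_0^T$, we may write, for any $\tau>0$ and $\nu^0_T$-almost every $\varphi$,
\begin{equation}
e_0^T(\varphi) = e^{E_0^T\tau}\,\mathbf{P}^T_\tau e_0^T(\varphi) = e^{E_0^T\tau}\int e_0^T(\varphi')\,\mathcal{A}^T_\tau(\varphi,\varphi')\,\nu^0_T(d\varphi').
\end{equation}
The idea is then to raise this identity to a power $q>2$, integrate against $\nu^0_T$, and use the gluing property (Proposition \ref{prop: gluing-cutoff}, Corollary \ref{cor: tori gluing} in the cutoff setting) together with the fact that $\|e_0^T\|_{L^2(\nu^0_T)}=1$. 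Concretely, I would first establish the case $q=2$ is trivially one, and then bootstrap: apply H\"older in the integral representation with exponents chosen so that one factor is $\|e_0^T\|_{L^2(\nu^0_T)}=1$ and the other factor becomes an $L^{q'}$-norm of the amplitude kernel $\mathcal{A}^T_\tau(\varphi,\cdot)$ in the second variable; then raising to the power $q$ and integrating in $\varphi$ produces an expression controlled by $\iint \mathcal{A}^T_\tau(\varphi,\varphi')^{q'}\,\nu^0_T(d\varphi)\nu^0_T(d\varphi')$ up to powers, and by the gluing property applied iteratively this double integral is comparable to a cutoff periodic partition function on a torus of circumference of order $\tau$ (with the amplitudes raised to the power $q'$ corresponding, after a change of measure, to a $\varphi^4_3$-type measure with a modified coupling). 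This is the same mechanism by which one shows $\mathbf{P}^T_\tau$ is bounded on $\mathcal{H}_T$, pushed one exponent further.

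The key technical input is hypercontractivity-type bounds on the amplitudes that are uniform in $T$. Here I would invoke the almost sure bounds of Theorem \ref{thm: bulk}(III), namely $\mathcal{A}^\sigma_T(f,\Xi^\partial) \leq C\exp(c\|\Xi^\partial\|_{\boldsymbol{\mathfrak{B}}}^p)$ together with the moment bounds on $\Xi^\partial_T$ (via Proposition \ref{prop: bdry stochastic} and the stochastic estimates of Section \ref{sec:stochastic}), which give that $\int \mathcal{A}^T_\tau(\varphi,\varphi')^{q'}\,\nu^0_T(d\varphi')$ is, for $q'$ sufficiently close to $1$ (equivalently $q$ sufficiently close to $2$), bounded uniformly in $T$ and in $\varphi$ in $L^{q}(\nu^0_T)$ as a function of $\varphi$. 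The crucial point is that one only needs $q$ slightly bigger than $2$: a finite exponential moment of $\|\Xi^\partial_T\|_{\boldsymbol{\mathfrak{B}}}^p$ suffices to absorb a power $q'$ of the amplitude slightly above $1$, and then Lemma \ref{lem: large field} (the large-field / small-field reduction) gives exactly the uniform-in-$T$ integrability of products of two such amplitudes against $\nu^0_T$ that is needed to close the estimate. Combining this with the uniform bound on $E_0^T$ from Proposition \ref{proposition: cutoff spectral lower bound} — so that $e^{qE_0^T\tau}$ is a constant independent of $T$ for $\tau$ fixed — yields the claimed uniform bound $\sup_T \int (e_0^T)^q\,d\nu^0_T \leq c_0$.

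I expect the main obstacle to be making the bootstrap/self-improvement step rigorous: the estimate sketched above controls $\|e_0^T\|_{L^q(\nu^0_T)}$ by something involving $\|e_0^T\|_{L^r(\nu^0_T)}$ for some $r$, and one must choose the exponents so that either $r\leq 2$ (so the bound is closed using $\|e_0^T\|_{L^2}=1$ and Jensen) or so that a genuine iteration with a fixed point can be run; getting the interplay between the H\"older exponent on the amplitude kernel and the power one is trying to bound to actually close, uniformly in $T$, is the delicate combinatorial/functional-analytic point. A secondary subtlety is that the double integral $\iint \mathcal{A}^T_\tau(\varphi,\varphi')^{q'}$ is not literally a $\varphi^4_3$ partition function but rather the partition function of a measure with the quartic coupling multiplied by $q'$; one must check that the construction and uniform bounds of \cite{BG20} (as quoted in the discussion after Theorem \ref{thm: hamiltonian}) and of Theorem \ref{thm: bulk} apply to this rescaled coupling, which they do since, as remarked in the introduction, all results carry through for arbitrary $\lambda>0$. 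Once these two points are handled, the remaining steps are routine applications of H\"older's inequality, the gluing identity, and the uniform moment/exponential-moment bounds already established.
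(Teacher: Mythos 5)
Your proposal starts on the same track as the paper --- the eigenvector identity, Cauchy--Schwarz against $\|e_0^T\|_{L^2(\nu^0_T)}=1$, and the cutoff gluing formula --- but you do not push the Cauchy--Schwarz step far enough to see that it \emph{closes} the estimate without any bootstrap. Squaring the identity
\begin{equation}
e_0^T(\varphi)=e^{E_0^T\tau}\int \mathcal{A}^T_\tau(\varphi,\varphi')\,e_0^T(\varphi')\,\nu^0_T(d\varphi')
\end{equation}
and using $\|e_0^T\|_{L^2}=1$ together with the gluing of Proposition \ref{prop: gluing-cutoff} (so that $\int (\mathcal{A}^T_\tau(\varphi,\cdot))^2\,d\nu^0_T = \mathcal{A}^T_{2\tau}(\varphi,\varphi)$) gives the \emph{pointwise} bound $e_0^T(\varphi)^2\le c\,\mathcal{A}^T_{2\tau}(\varphi,\varphi)$ with $c$ uniform in $T$ by Proposition \ref{proposition: cutoff spectral lower bound}. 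This bound has no $e_0^T$ on the right, so your worry about a self-referential bootstrap is misplaced: the problem is now purely about integrability of $\mathcal{A}^T_{2\tau}(\varphi,\varphi)$.

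The second gap concerns how to absorb the extra power $q-2=\delta>0$. You propose to raise the amplitude kernel to a power $q'>1$ and interpret the resulting double integral as the partition function of a $\varphi^4_3$ measure with coupling constant multiplied by $q'$; this is not accurate, since $\mathbb E[e^{-V}]^{q'}$ is not $\mathbb E[e^{-q'V}]$, and this interpretation would not connect cleanly to the uniform bounds you have. The paper instead decomposes $e_0^T(\varphi)^{2+\delta}=e_0^T(\varphi)^{2}\cdot e_0^T(\varphi)^{\delta}$: the square is bounded by $c\,\mathcal{A}^T_{2\tau}(\varphi,\varphi)$ (kept as a \emph{single} power of the amplitude), and the remaining $\delta$ power is bounded, via Theorem \ref{thm: bulk}(III) applied to $\mathcal{A}^T_{2\tau}(\varphi,\varphi)^{1/2}$, by $C\exp\bigl(\delta c'\|\Xi^\partial_T(\varphi,\varphi)\|_{\boldsymbol{\mathfrak B}}^{p}\bigr)$. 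The integrand then has exactly the form $\mathcal{A}^T\cdot\exp(c\|\Xi^\partial_T\|^\alpha)$ treated (after diagonal identification of the two boundary fields) by the small-field reduction of Section \ref{sec: smallfield reduction}, rather than a power of the amplitude itself. Once this decomposition is made, no iteration is needed and the rest is the routine application of Lemma \ref{lem: large field}-type estimates that you correctly anticipate.
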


\begin{proof}
Recall that by definition of $e_0^T$, for every $\tau > 0$, almost surely in $\varphi$ we have \begin{equation}
        e_0^T(\phi)=e^{E^T_0 \tau}\int \mathcal{A}^T_\tau(\phi,\phi') e_{0}^T(\phi')\nu_{T}^0(d\varphi').
    \end{equation} 
   By Proposition \ref{proposition: cutoff spectral lower bound}, the term $e^{E_0^T \tau}$ is uniformly bounded in $T$ (provided $\tau$ is fixed). 
   
   Let us now consider the integral term. By the Cauchy-Schwarz inequality and gluing property (with cutoff) of Proposition \ref{prop: gluing-cutoff}, there exists $c_1>0$ such that for every $T>0$,
    \begin{equation}
        \left(\int \mathcal{A}^T_\tau(\phi,\phi') e^T_{0}(\phi')\nu_{T}^0(d\varphi')\right)^2 \leq c_1\mathcal{A}^T_{2\tau}(\phi,\phi)
    \end{equation}
    We deduce that
    \begin{equation} \label{eq: evec decay squared}
    	e_0^T(\varphi)^2 \leq c_2 \mathcal A_{2\tau}^T(\varphi,\varphi). 
    \end{equation}
    Furthermore, from the bounds on the amplitudes in Theorem \ref{thm: bulk}, we deduce the existence of $c_3,p>0$ such that 
    \begin{equation} \label{eq: evec decay ptwise}
    e_0^T(\varphi) \leq c_2 \exp(\| \Xi_T^\partial(\varphi,\varphi)\|_{\boldsymbol{\mathfrak B}([0,2\tau]\times \mathbb T^2)}^p).
    \end{equation}

 By \eqref{eq: evec decay squared} and \eqref{eq: evec decay ptwise}, and the bound on the amplitude, for every $\alpha > 0$ there exists $c_3>0$ such that, uniformly in $T$,
    \begin{equation} \label{eq: evec decay lq}
    \int e_0^T(\varphi)^{2} (\log_+ e_0^T(\varphi))^\alpha	\nu^0_T(d\varphi) \leq c_3 \int \mathcal{A}^T_{2\tau}(\phi,\phi) \|\Xi^\partial_T(\varphi,\varphi)\|_{\boldsymbol{\mathfrak B}([0,2\tau]\times \mathbb T^2)}^{p\alpha } \nu^0_T(d\varphi).
    \end{equation}
    The right-hand side of \eqref{eq: evec decay lq} can then be estimated by a straightforward adaptation of the reasoning in Section \ref{sec: smallfield reduction} to prove Lemma \ref{lem: large field}. 
   
\end{proof}

\begin{remark}
In the proof of Lemma \ref{lemma: eigenvector bounds} we use that $e_0^T$ is an eigenvector heavily. However, we only use that it is associated to the lowest eigenvlaue $E_0^T$ when we assert that $e^{E_0^T \tau}$ is bounded uniformly in $T$. For higher order eigenvalues, the bound may in principle depend on $T$.	We will instead apply this same argument to the eigenvectors of $\mathbf H$ \emph{after} we have proved that its spectrum is discrete.
\end{remark}

\begin{remark}
The estimate on $(\log_+ e_0^T)^\alpha$ could be replaced by $\exp ( (\log_+ e_0^T)^{\alpha'})$ for $\alpha '$ small enough. However, the function $t \mapsto \exp( (\log_+ t)^\alpha)$ does not necessarily satisfy convexity for arbitrarily small $\alpha$.
\end{remark}

Let us also point out that the bounds above, particularly \eqref{eq: evec decay lq}, together with the gluing property can be used to estimate the amplitude in a higher order (Orlicz) norm. 
\begin{corollary} \label{corollary: lp bound amplitude}
For every $\alpha \geq 1$, there $c_\alpha>0$ such that
\begin{align}
\sup_{T \in [0,\infty)}  \|\mathcal A_\tau^T (\varphi,\varphi')	\|_{L^2(\log_+ L)^\alpha (\nu^0_T; L^2( \nu^0_T))} &\leq c_\alpha, \\
\sup_{T \in [0,\infty)} \|A^T_\tau (\varphi,\varphi)\|_{L^2(\log_+L)^\alpha } &\leq c_\alpha.
\end{align}

\end{corollary}

In order to upgrade Lemma \ref{lemma: eigenvector bounds} to a true tightness result on the eigenvectors, we will express $e_0^T$ in terms of the simultaneous coupling of the boundary fields $(\nu^0_T)_{T \geq 0}$ and $\nu^0=\nu^0_\infty$.   

Recall that $\nu^0_T ={\rm Law}_{\mathbb P^0}( W^0_T+Z^0_T)$ and $\nu^0 ={\rm Law}_{\mathbb P^0}( W^0_\infty+Z^0_\infty)$, where $(\Omega^0, \mathcal F^0, \mathbb P^0)$ is the probability space and $(W^0+Z^0)$ is the process constructed in Theorem \ref{thm: boundary} and Section \ref{sec: boundary}.  We implicitly redefine our Hilbert spaces and operators as acting on this coupling space. Thus have ${\rm Law}_{\nu^0_T}(e_0^T) = {\rm Law}_{\mathbb P^0}(e_0^T(W^0_T+Z^0_T))$.

We now turn to tightness of the ground states.

\begin{proposition} \label{prop: tightness eigenvectors}
There exists a random variable $e_0$ defined on the coupling space that is a measurable function of $W^0_\infty+Z^0_\infty$, i.e.\ $e_0=e_0(W^0_\infty+Z^0_\infty)$, such that $\|e_0\|_{L^2(\mathbb P^0)}=1$ and, up to a subsequence, 
\begin{equation}e_0^T(W_T^0+Z^0_T) \rightarrow e_0(W^0_\infty+Z^0_\infty),	
\end{equation}
 where the convergence can be taken as either $\mathbb P^0$-almost sure or in $L^2(\mathbb P^0)$. Furthermore, $e_0>0$ almost surely with respect to $\mathbb P^0$ and for every $\alpha \geq 1$, there exists $C>0$ such that
\begin{equation}\|e_0\|_{L^2 (\log_+L)^\alpha (\mathbb P^0)} \leq C.
\end{equation} 
\end{proposition}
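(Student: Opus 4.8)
\textbf{Proof proposal for Proposition \ref{prop: tightness eigenvectors}.}

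The plan is to combine the uniform $L^q$ bound of Lemma \ref{lemma: eigenvector bounds} with a precompactness input coming from the variational/amplitude structure, in exactly the spirit of \cite{GGV24}. First I would transfer everything to the coupling space $(\Omega^0,\mathcal F^0,\mathbb P^0)$ of Theorem \ref{thm: boundary}, so that $e_0^T(W^0_T+Z^0_T)$ and the limiting objects live on one probability space and $W^0_T+Z^0_T \to W^0_\infty+Z^0_\infty$ almost surely (and in $L^2$) by Theorem \ref{thm: boundary} and Proposition \ref{prop: bdry stochastic}. The starting point for compactness is the eigenvalue relation $e_0^T(\varphi) = e^{E_0^T\tau}\int \mathcal A_\tau^T(\varphi,\varphi')\,e_0^T(\varphi')\,\nu^0_T(d\varphi')$ for a fixed $\tau>0$, together with the uniform bound $\sup_T\|E_0^T\|<\infty$ from Proposition \ref{proposition: cutoff spectral lower bound} and the uniform $L^q$ control $\sup_T\|e_0^T\|_{L^q(\nu^0_T)}\le c_0$ from Lemma \ref{lemma: eigenvector bounds}. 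I would write $e_0^T(W^0_T+Z^0_T) = e^{E_0^T\tau}\,\mathbb E^0\!\big[\mathcal A_\tau^T(W^0_T+Z^0_T,\,\widetilde W^0_T+\widetilde Z^0_T)\,e_0^T(\widetilde W^0_T+\widetilde Z^0_T)\big]$ over an independent copy, and use the lift from Theorem \ref{thm: bulk} and the convergence of the enhanced boundary field from Theorem \ref{thm: enhancement converge} to pass to the limit inside this expectation along a subsequence: the amplitudes $\mathcal A_\tau^T$ converge (as lifted amplitudes) to $\mathcal A_\tau^\infty$, and the large-field bound of Lemma \ref{lem: large field} (equivalently Corollary \ref{corollary: lp bound amplitude}) provides the uniform integrability needed to exchange limit and integral. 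This produces a limit point $e_0 = e_0(W^0_\infty+Z^0_\infty)$ (the measurability in $W^0_\infty+Z^0_\infty$ is automatic since each $e_0^T$ is a function of its argument and the integral representation is preserved in the limit), with the $L^q$ bound $\|e_0\|_{L^q(\mathbb P^0)}\le C$ following from Fatou's lemma applied to $\sup_T\|e_0^T\|_{L^q}\le c_0$, for any $q>2$ in the admissible range of Lemma \ref{lemma: eigenvector bounds}.

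Next I would upgrade ``limit point'' to genuine $L^2$ convergence and normalization. The uniform $L^q$ bound with $q>2$ gives uniform integrability of $(e_0^T)^2$ along the coupling, so the a.s. convergence along the subsequence (obtained from the integral representation, or alternatively from Skorokhod embedding applied to the joint laws of the enhanced boundary fields and the $e_0^T$'s) upgrades to $L^2(\mathbb P^0)$ convergence by Vitali's theorem. Since $\|e_0^T\|_{L^2(\nu^0_T)}=\|e_0^T(W^0_T+Z^0_T)\|_{L^2(\mathbb P^0)}=1$ for every $T$, the $L^2$ convergence forces $\|e_0\|_{L^2(\mathbb P^0)}=1$; in particular $e_0\not\equiv 0$.

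Finally, strict positivity $e_0>0$ $\mathbb P^0$-a.s. I would argue as in \cite{GGV24}. On the one hand $e_0\ge 0$ a.s., being an $L^2$ (hence a.s.\ along the subsequence) limit of the nonnegative functions $e_0^T\ge 0$ (Proposition \ref{proposition: cutoff hamiltonian}). On the other hand, passing to the limit in the integral representation gives, for a fixed $\tau>0$ and a.s.\ in $\varphi=W^0_\infty+Z^0_\infty$,
\begin{equation}
e_0(\varphi) = e^{E_0\tau}\,\mathbb E^0\!\big[\mathcal A_\tau^\infty(\varphi,\cdot)\,e_0\big],
\end{equation}
where $E_0=\lim E_0^T$ along the subsequence. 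Since $\mathcal A_\tau^\infty>0$ a.s.\ (it is a limit of strictly positive amplitudes, and strict positivity of the limit can be read off from the variational representation of Theorem \ref{thm: bulk}: the lifted amplitude is $\exp$ of a finite quantity) and $e_0\ge 0$ with $\int e_0^2\,d\nu^0=1$ so $e_0>0$ on a set of positive measure, the right-hand side is strictly positive for a.e.\ $\varphi$; hence $e_0>0$ $\mathbb P^0$-a.s.

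The main obstacle I anticipate is the passage to the limit inside the expectation $\mathbb E^0[\mathcal A_\tau^T(\cdot,\cdot)e_0^T]$: one must control simultaneously the convergence of the kernels $\mathcal A_\tau^T\to\mathcal A_\tau^\infty$ (which lives at the level of the enhanced boundary fields, Theorems \ref{thm: bulk}--\ref{thm: enhancement converge}, so only distributional convergence of $e_0^T$ is available a priori) and the uniform integrability needed to absorb the (exponentially growing in the enhanced norm) amplitude bounds against the $L^q$ mass of $e_0^T$. This is precisely where Lemma \ref{lem: large field}/Corollary \ref{corollary: lp bound amplitude} and the choice $q>2$ are essential; I would set up a truncation-in-$\|\Xi^\partial\|_{\boldsymbol{\mathfrak B}'}$ argument identical to the one in the proof of Theorem \ref{thm: main} to make the exchange rigorous, with the truncated part handled by the bounded/continuous convergence of lifted amplitudes and the tail controlled by the exponential Chebyshev estimate of Lemma \ref{lem: large field}.
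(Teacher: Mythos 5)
Your proposal assembles the right tools (uniform $L^q$ bound, Skorokhod embedding, Vitali, the eigenvalue integral equation, Fatou) but there is a genuine logical gap in the first step. You propose to \emph{produce} the limit $e_0$ by passing to the limit in
\begin{equation}
e_0^T(W^0_T+Z^0_T)=e^{E_0^T\tau}\,\mathbb E^0\!\left[\mathcal A_\tau^T(W^0_T+Z^0_T,\cdot)\,e_0^T(\cdot)\right],
\end{equation}
but $e_0^T$ appears on both sides: you cannot take the limit inside the expectation without first knowing that $e_0^T$ converges in some sense, which is precisely what you are trying to establish. The truncation-in-$\|\Xi^\partial\|_{\boldsymbol{\mathfrak B}'}$ machinery you invoke controls the convergence of the kernels $\mathcal A_\tau^T$, not of the unknown $e_0^T$, so it does not resolve this circularity. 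You implicitly acknowledge the problem (``only distributional convergence of $e_0^T$ is available a priori'') and mention Skorokhod embedding as an alternative route, but that needs to come \emph{first}, not as a parenthetical aside.

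The paper's proof is cleaner on exactly this point. It extracts the limit candidate from Banach--Alaoglu: since $\|e_0^T(W^0_T+Z^0_T)\|_{L^2(\mathbb P^0)}=1$ for all $T$, one first takes a weak-$*$ limit point $\tilde e_0$ in $L^2(\mathbb P^0)$. Tightness (Cauchy--Schwarz) gives convergence in law along a further subsequence, Skorokhod embedding gives a.s.\ convergence, and Vitali with the uniform $L^q$ bound ($q>2$) upgrades to $L^2$ convergence; since the norms are all $1$ this forces $\|\tilde e_0\|_{L^2}=1$ and hence strong convergence. Only \emph{then} is the integral representation used --- and only via Fatou's lemma to obtain the one-sided inequality
\begin{equation}
e_0(W^0_\infty+Z^0_\infty)\geq e^{-C\tau}\int\mathcal A_\tau^\infty(W^0_\infty+Z^0_\infty,\cdot)\,e_0\,d\mathbb P^0>0,
\end{equation}
which suffices for strict positivity and avoids the uniform-integrability step your equality-based argument would additionally require. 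Your measurability remark is also a bit hand-wavy where the paper instead observes that each $e_0^T$ is measurable with respect to the $\sigma$-algebra generated by $(W^0+Z^0)_{t\geq T}$, so the limit is measurable with respect to the intersection. If you reorder the steps as above --- weak compactness first, then identification and positivity via Fatou --- the proposal becomes essentially the paper's proof.
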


\begin{proof}
For convenience, let us write $\tilde e_0^T := e_0^T(W_T^0+Z_T^0)$. Since $\|\tilde e_0^T\|_{L^2(\mathbb P^0)}=1$ for every $T$, the sequence $(\tilde e_0^T)_{T>0}$ is compact in the weak$-*$ topology of $L^2(\mathbb P^0)$. Let us write $\tilde e_0$ to denote a limit point. We wish to show that $\|\tilde e_0\|_{L^2(\mathbb P^0)}=1$, from which we can deduce that, for the associated subsequence,  $\|\tilde e_0^T-\tilde e_0\|_{L^2(\mathbb P^0)} \rightarrow 0$ (i.e.\ weak$-*$ plus convergence of norms implies strong convergence).  Upon taking another subsequence, we obtain the almost sure convergence. Let us also observe that $\tilde e_0 = e_0(W_\infty^0+Z_\infty^0)$, for some $e_0$,  in terms of measurability. In order to see this, note that each $e_0^T$ is measurable with respect to the $\sigma$-algebra generated by $(W^0+Z^0)_{t \geq T}$, and hence the intersection of these $\sigma$-algebras. We thus define the $e_0$ of the proposition to be equal to $\tilde e_0$. 

 Consider now a subsequence $(\tilde e^{T_n}_0)_n$ with limit point $\tilde e_0$ in the weak$-*$ topology. Recall the $\|\cdot\|_{L^2(\mathbb P^0)}$-norms are equal to $1$ along the whole subsequence. By the Cauchy-Schwarz inequality the sequence as random variables are tight and thus converge in law up to a further subsequence --- not relabelled ---  to a random variable $\hat e_0$, perhaps defined on a different probability space, whose law coincides with the law of $\tilde e_0$ under $\mathbb P^0$. By Skorokhod embedding, we may assume without loss of generality that $\tilde e_0^{T_n} \rightarrow \hat e_0$ almost surely and thus, thanks to the uniform $L^p$ boundedness of $(\tilde e_0^{T_n})$ of Lemma \ref{lemma: eigenvector bounds} for some $p>2$ and Vitali's convergence theorem, we have that $\tilde e_0^{T_n} \rightarrow \hat e_0$ in $L^2$. In particular, we have that the original limiting random variable we consider, $\tilde e_0$, satisfies $\|\tilde e_0\|_{L^2(\mathbb P^0)}=1$.

 Let us now turn to the prove that $\tilde e_0>0$ almost surely. Recall that for every $T<\infty$,
 \begin{equation}
 e_0^T(W^0_T+Z^0_T) = e^{E_0^T \tau } \int \mathcal A_\tau^T(W^0_T+Z^0_T, \tilde W^0_T+\tilde Z^0_T) e_0^T(\tilde W^0_T+\tilde Z^0_T) d\mathbb P^0(\tilde W^0+\tilde Z^0). 	
 \end{equation}
Up to another subsequence, we have that $(\tilde e_0^{T_n})$ converges almost surely to $\tilde e_0$. Taking limits on both sides of above, by Fatou's lemma, the convergence of the amplitudes in Theorem \ref{thm: bulk}, and Proposition \ref{proposition: cutoff spectral lower bound}, we have that there exists $C>0$ such that
\begin{equation}
e_0(W^0_\infty+Z^0_\infty) \geq e^{-C \tau} \int \mathcal A^\infty_\tau (W^0_\infty+Z^0_\infty, \tilde W^0_\infty + \tilde Z^0_\infty) e_0(\tilde W^0_\infty + \tilde Z^0_\infty) d\mathbb P(\tilde W^0 + \tilde Z^0) > 0. 
\end{equation}
Thus $e_0((W^0+Z^0)_\infty) > 0$ almost surely. 

Finally, we prove the $L^2(\log_+L)^\alpha$ bound. This follows immediately by Fatou's lemma together with the uniform bound on Orlicz $L^2 (\log_+ L)^\alpha$ norms of the approximations in Lemma \ref{lemma: eigenvector bounds}. 
\end{proof}

\subsection{Construction of the Hamiltonian}

In this subsection, we will construct the $\varphi^4_3$ Hamiltonian on $\mathbb T^2$ as an unbounded linear map $\mathbf H$ acting on a dense domain $D(\mathbf H) \subset \mathcal H:= L^2(\nu^0)$. To begin with, let us define the semigroup $(\mathbf P_\tau)_{\tau \geq  0}$ acting on $\mathcal H$ such that $\mathbf P_0 = {\rm Id}$ and for every $\tau > 0$ and for every bounded function $f:\mathcal H \rightarrow \mathbb R$,
\begin{equation}
\mathbf P_\tau f(\varphi):= \int f(\varphi') \mathcal A_\tau (\varphi,\varphi') \nu^0(d\varphi'), \qquad \forall \varphi \in \mathcal H.  
\end{equation}
Above, $\mathcal A_\tau$ is an abuse of notation to refer to the $\varphi^4_3$ amplitude on the cylinder $[0,\tau]\times \mathbb T^2$. We will construct $\mathbf H$ as the generator of $(\mathbf P_\tau)_{\tau \geq 0}$, i.e.\ $\mathbf P_\tau = \exp(-\tau\mathbf H)$ for every $\tau \geq 0$.

\begin{proposition} \label{prop: hille yosida}
There exists a self-adjoint linear map $\mathbf H$ acting on a dense domain $D(\mathbf H)\subset \mathcal H$ such that
\begin{equation}
\mathbf H f:= -\lim_{\tau\downarrow 0} \frac 1\tau  ( \mathbf P_\tau 	- {\rm Id}_{\mathcal H}) f, \quad \forall f \in D(\mathbf H),
\end{equation}
where the convergence is in norm.
\end{proposition}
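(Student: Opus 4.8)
\textbf{Proof plan for Proposition \ref{prop: hille yosida}.}
The plan is to apply the Hille--Yosida theorem (in its form for strongly continuous contraction or quasi-contraction semigroups of self-adjoint operators). The three ingredients to verify are: (a) each $\mathbf P_\tau$ is a bounded self-adjoint operator on $\mathcal H = L^2(\nu^0)$; (b) $(\mathbf P_\tau)_{\tau\geq 0}$ is a semigroup, i.e.\ $\mathbf P_{\tau_1}\mathbf P_{\tau_2} = \mathbf P_{\tau_1+\tau_2}$; and (c) $(\mathbf P_\tau)_{\tau\geq 0}$ is strongly continuous at $\tau = 0$, i.e.\ $\|\mathbf P_\tau f - f\|_{\mathcal H} \to 0$ as $\tau \downarrow 0$ for every $f \in \mathcal H$. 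Given (a)--(c), quasi-contractivity follows from the uniform spectral lower bound (Proposition \ref{proposition: cutoff spectral lower bound} and its $T=\infty$ analogue Proposition \ref{proposition: lower bound spectrum H}, which guarantees $\|\mathbf P_\tau\|_{\mathcal H\to\mathcal H} \leq e^{C\tau}$), and Hille--Yosida produces the generator $\mathbf H$ with the stated norm-limit characterization on its domain $D(\mathbf H)$, which is automatically dense.

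First I would establish (a) and the boundedness of $\mathbf P_\tau$ on $\mathcal H$. Boundedness as a map $\mathcal H \to \mathcal H$ uses Corollary \ref{corollary: lp bound amplitude} (or rather its $T=\infty$ version, obtained from Proposition \ref{prop: eigenvector bounds limiting}): the amplitude kernel $\mathcal A_\tau$ lies in $L^q(\nu^0; L^2(\nu^0))$ for some $q>2$, which by the Schur-type test / interpolation against the trivial $L^\infty$-type bound on one variable gives $\mathbf P_\tau : \mathcal H \to \mathcal H$ bounded. Self-adjointness is the symmetry $\mathcal A_\tau(\varphi,\varphi') = \mathcal A_\tau(\varphi',\varphi)$, which is manifest from the definition of the amplitudes (the underlying $\varphi^4_3$ measure on $[0,\tau]\times\mathbb T^2$ is invariant under reflection $s \mapsto \tau - s$). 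The semigroup property (b) is exactly Corollary \ref{cor: tori gluing} / the gluing property of Theorem \ref{thm: main} applied to the decomposition $[0,\tau_1+\tau_2] = [0,\tau_1]\cup[\tau_1,\tau_1+\tau_2]$: integrating the amplitude on the first cylinder against the amplitude on the second over the interface field sampled from $\nu^0$ produces the amplitude on the glued cylinder.

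The hard part is (c), strong continuity, and this is where I would follow the strategy outlined in the introduction (Section \ref{sec: intro: proof strat}). The idea is to pass through the regularized objects. On $\mathcal H_T = L^2(\nu^0_T)$ the semigroup $(\mathbf P^T_\tau)$ is generated by the honest Schr\"odinger operator $\mathbf H_T$ (Proposition \ref{proposition: cutoff hamiltonian}), so it is strongly continuous there; performing the ground-state transform by $e_0^T$ turns $(\mathbf P^T_\tau)$ into a Markovian semigroup $(\overline{\mathbf P}^T_\tau)$, which gives rise to a Markov process whose tightness one proves in a suitable path space. Working in the grand coupling of $(\nu^0_T)_T$ and $\nu^0$ (constructed in Section \ref{sec: boundary}), one has $e_0^T \to e_0$ in $L^2$ and $\nu^0$-a.s.\ with $e_0 > 0$ a.s.\ (Proposition \ref{prop: tightness eigenvectors}). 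Then, on the dense subset of $\mathcal H$ consisting of functions of the form $g\, e_0$ with $g$ bounded (dense precisely because $e_0 > 0$ $\nu^0$-a.s.), one writes $\mathbf P_\tau(g e_0) - g e_0$, approximates $\mathbf P_\tau$ by $\mathbf P^T_\tau$ using convergence of amplitudes (Theorem \ref{thm: bulk}) together with the uniform-in-$T$ integrability bounds (Lemma \ref{lem: large field} / Corollary \ref{corollary: lp bound amplitude}) to control the passage to the limit, transfers via the isometry implementing the ground-state transform to $\overline{\mathbf P}^T_\tau$, and uses the uniform (Markov, hence contraction) bound on $\overline{\mathbf P}^T_\tau$ plus its strong continuity to conclude $\limsup_{\tau\downarrow 0}\|\mathbf P_\tau(ge_0) - ge_0\|_{\mathcal H} = 0$. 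Strong continuity on all of $\mathcal H$ then follows from the uniform bound $\|\mathbf P_\tau\|_{\mathcal H\to\mathcal H} \leq e^{C\tau}$ and a standard $\varepsilon/3$ density argument. With (a), (b), (c) and the quasi-contraction bound in hand, Hille--Yosida (see \cite{RS78}) yields $\mathbf H$ with $D(\mathbf H)$ dense and $\mathbf H f = -\lim_{\tau\downarrow 0}\tau^{-1}(\mathbf P_\tau - \mathrm{Id})f$ for $f \in D(\mathbf H)$, as claimed.
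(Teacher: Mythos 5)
Your proposal follows the paper's proof essentially exactly: quasi-contraction bound (Proposition \ref{proposition: semigroup norm bounds}) plus strong continuity at $\tau=0$ (Proposition \ref{proposition: semigroup strong continuity}, proved via the ground-state transform, the grand coupling, and uniform bounds on $\overline{\mathbf P}^T_\tau$), then Hille--Yosida. The only small point worth flagging is that the dense subset used in the paper is $\{e_0 g : g \in {\rm Lip}_b\}$ with $g$ \emph{Lipschitz} and bounded, not merely bounded — the Lipschitz hypothesis is what makes the key estimate in Lemma \ref{lem: strong cont technical} (H\"older continuity of the bridge process beats the Lipschitz modulus of $g$) go through, so it is not a cosmetic choice.
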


\begin{definition} \label{defn:phi43hamiltonian}
The $\varphi^4_3$ Hamiltonian is the linear map $\mathbf H$. 	
\end{definition}

 In order to prove Proposition \ref{prop: hille yosida}, we will use the classical Hille-Yosida theorem. We will therefore need to show strong continuity of $(\mathbf P_\tau)_{\tau \geq 0}$ at $\tau=0$ and an exponential bound (in $\tau$) on their operator norms.

We will use the regularized semigroups considered in the preceding subsection in order to prove strong continuity of the semigroup $(\mathbf P_\tau)_{\tau \geq 0 }$. Recall that by Proposition \ref{proposition: cutoff hamiltonian}, the ground state $e_0^T$ of $\mathbf H_T$ is strictly positive almost surely. Therefore, we may conjugate the semigroup $(\mathbf P^T_\tau)_{\tau \geq 0}$ by $(e_0^T)^{-1}$. This gives rise to a so-called \emph{ground state-transformed semigroup} that has been considered in the context of one-dimensional Schr\"odinger operators, see \cite{LHB11} and references therein. Let $\overline \nu_T^0(d\varphi):= e_0^T(\varphi)^2 \nu_T^0(d\varphi)$ and denote $\overline{\mathcal H}_T:=L^2(\overline \nu^0_T)$. For every $\tau > 0$, define the map $\overline {\mathbf P}^T_\tau$ acting on bounded functions $f: \overline{\mathcal H}_T\rightarrow \mathbb R$ by 
\begin{equation}
    \overline{\mathbf P}^T_{\tau}(f)(\phi)=e^{-E_0^T \tau}\int \frac{\mathcal{A}_\tau^T(\phi,\phi')}{e^T_{0}(\phi)e^T_{0}(\phi')}f(\phi')  \overline \nu^0_T (d\phi'), \qquad \forall \varphi \in \overline{\mathcal H}_T. 
\end{equation} 
Furthermore, let $\overline{\mathbf P}_0 := {\rm Id}_{\overline{\mathcal H}_T}$. 

The semigroup $(\overline{\mathbf P}_\tau^T)_{\tau \geq 0}$ has a generator that is a positive linear map acting on $\overline{\mathcal H}_T$. One way to see this is that, for every $\tau \geq 0$, $\overline{\mathbf P}_\tau^T$ is obtained from $\mathbf P_t^T$ by conjugating with the multiplication map by $(e_0^T)^{-1}$ and shifting by $e^{-E_0^T \tau}$. Thus,
\begin{equation}
\overline{\mathbf P}_\tau = (e_0^T)^{-1} e^{-\tau(\mathbf H_T+E_0^T)} (e_0^T)^{-1}.	
\end{equation}
From this and the fact that $e_0^T$ is strictly positive almost everywhere, we have that the generator of $(\overline{\mathbf P}^T_\tau)_{\tau \geq 0}$ exists and is equal to $(e_0^T)^{-1} (\mathbf H_T+E_0^T)(e_0^T)^{-1}$. We deduce that its spectrum satisfies $\sigma(\overline{\mathbf H}_T) \subset [0,\infty)$. Thus, for every $\tau>0$ we have that the operator norm satisfies $\| \overline{\mathbf P}_\tau^T \|_{\overline{\mathcal H}_T}\leq 1$.

In the sequel, we will exploit that $(\overline{\mathbf P}^T_{\tau})_{\tau \geq 0}$ can be associated to a Markov process. This is formalized in the lemma below, which is a consequence of \cite[Section 3.10.2]{LHB11}. 

\begin{lemma} \label{lemma: gs markov}
For every $T \in [0,\infty)$, $(\overline{\mathbf P}^T_\tau)_{\tau \geq 0} $ is the transition semigroup of a Markov process $\tilde{X}_{T}$ with  continuous sample paths and stationary measure $\overline\nu^0_T$.
\end{lemma}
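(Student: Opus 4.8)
\textbf{Proof proposal for Lemma \ref{lemma: gs markov}.}

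The plan is to invoke the standard theory relating positivity-preserving, conservative $L^2$-semigroups to associated Hunt/Markov processes, which is precisely the content of \cite[Section 3.10.2]{LHB11}. The argument has three ingredients that I would verify in order. First, I would check that $(\overline{\mathbf P}^T_\tau)_{\tau \geq 0}$ is \emph{Markovian} in the sense that it is positivity preserving, $\overline{\mathbf P}^T_\tau \mathbbm 1 = \mathbbm 1$, and $\overline{\mathbf P}^T_\tau$ is a contraction on $L^\infty(\overline\nu^0_T)$. Positivity preservation is immediate since the kernel $\mathcal A_\tau^T(\varphi,\varphi')/(e_0^T(\varphi)e_0^T(\varphi'))$ is nonnegative (the amplitude is a weighted Laplace transform, hence nonnegative, and $e_0^T>0$ $\nu^0_T$-a.s.\ by Proposition \ref{proposition: cutoff hamiltonian}). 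The conservativity $\overline{\mathbf P}^T_\tau \mathbbm 1 = \mathbbm 1$ is exactly the eigenvalue equation: $e^{-E_0^T\tau}\int \mathcal A^T_\tau(\varphi,\varphi') e_0^T(\varphi')\,\nu^0_T(d\varphi') = e_0^T(\varphi)$, which upon dividing by $e_0^T(\varphi)$ and absorbing one factor of $e_0^T$ into $\overline\nu^0_T$ gives the claim. The $L^\infty$-contraction property then follows from positivity and conservativity. Stationarity of $\overline\nu^0_T$ follows from self-adjointness of $\overline{\mathbf H}_T$ on $\overline{\mathcal H}_T$ (equivalently, symmetry of the kernel together with the symmetry of $\mathcal A_\tau^T$ under swapping its two boundary arguments, which holds by the explicit form of the amplitude).

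Second, I would record strong continuity of $(\overline{\mathbf P}^T_\tau)_{\tau\geq 0}$ on $\overline{\mathcal H}_T$ at $\tau = 0$, which is automatic here: since $\overline{\mathbf P}^T_\tau = (e_0^T)^{-1} e^{-\tau(\mathbf H_T + E_0^T)}(e_0^T)^{-1}$ and $(e^{-\tau \mathbf H_T})_{\tau\geq 0}$ is the strongly continuous semigroup generated by the (self-adjoint, semibounded) operator $\mathbf H_T$ from Proposition \ref{proposition: cutoff hamiltonian}, conjugation by the fixed multiplication operator $(e_0^T)^{-1}$ (an isometry $\overline{\mathcal H}_T \to \mathcal H_T$) transports strong continuity. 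With a symmetric, Markovian, strongly continuous semigroup in hand, the Beurling--Deny criteria apply and $\overline{\mathbf H}_T$ is the generator of a (symmetric) Dirichlet form on $L^2(\overline\nu^0_T)$; the general theory (e.g.\ Fukushima--Oshima--Takeda, as cited through \cite{LHB11}) then yields an associated $\overline\nu^0_T$-symmetric Markov process $\tilde X_T$ with $\overline\nu^0_T$ as stationary (indeed reversible) measure and transition semigroup $(\overline{\mathbf P}^T_\tau)$.

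Third, and this is the only point requiring genuine input specific to our setting, I would establish \emph{path continuity} of $\tilde X_T$. This is where \cite[Section 3.10.2]{LHB11} is used directly: in the Gibbs-measures-relative-to-Brownian-motion framework there, continuity of sample paths is obtained from a local energy / Nash-type estimate on the form domain, or equivalently from the fact that the Dirichlet form is \emph{strongly local} (no jump part and no killing part). Strong locality here is inherited from the Gaussian base measure $\mu^0$ together with the fact that the tilting density $\exp(-V_T^0(\varphi^0))$ defining $\nu^0_T$ (and hence, after the further ground-state tilt, $\overline\nu^0_T$) is a local, finite-energy perturbation: the approximate potential $V_T^0$ is, at finite $T$, a smooth cylinder functional, so no non-local terms are introduced. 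I would spell this out by checking that the carré-du-champ operator of $\overline{\mathbf H}_T$ agrees with that of the free (Ornstein--Uhlenbeck-type) generator on a core, hence is a derivation, hence the form is strongly local; continuity of paths then follows from the standard criterion for diffusions. The main obstacle is precisely verifying strong locality rigorously in this infinite-dimensional setting — identifying a suitable core of cylinder functions on which $\overline{\mathbf H}_T$ acts, and checking the derivation property of its carré-du-champ there — but since $T<\infty$ all objects are smooth and this is a routine (if technical) adaptation of the arguments in \cite{LHB11}, which is why we are content to cite it.
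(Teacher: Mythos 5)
Your three-part structure (Markovianity of the semigroup, strong continuity, path continuity) is sound, and the first two parts are correct as you argue them. But your route for the third part --- path continuity via the Beurling--Deny decomposition and strong locality of the Dirichlet form --- is genuinely different from, and in this setting harder than, what the paper has in mind when it cites \cite[Section 3.10.2]{LHB11}, and your acknowledged gap (identifying a core of cylinder functions and verifying the derivation property of the carr\'e-du-champ) is precisely where the abstract Dirichlet-form route becomes delicate in an infinite-dimensional state space.

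The construction the paper is invoking is the more concrete one from the Gibbs-measures-relative-to-Brownian-motion framework: the ground-state-transformed process is obtained \emph{directly} as the slice process of the (ground-state-tilted) $\varphi^4_3$ field on the infinite cylinder, i.e.\ by a Feynman--Kac-type disintegration of the field measure. Concretely, one starts from the fact that the free GFF on $\mathbb{R}\times\mathbb{T}^2$ restricted to slices $\{\tau\}\times\mathbb{T}^2$ is an Ornstein--Uhlenbeck-type process with continuous paths in negative Sobolev spaces, and then observes that for fixed $T<\infty$ the interacting measure is, on any compact time-interval, absolutely continuous with respect to the free one (the density is $\Phi_0(X_a)\Phi_0(X_b)e^{E_0^T(b-a)}e^{-\int_a^b V^0_T\,ds}$ against the free bridge measure, all factors bounded and smooth at fixed $T$). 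Path continuity and the DLR/consistency structure --- which the paper uses immediately after in Lemma \ref{lemma: gs bridge} to identify the bridge laws with conditional $\varphi^4_3$ measures --- come for free from this construction, rather than having to be extracted from locality of a form. Both routes can in principle work, but the direct one avoids entirely the infinite-dimensional Dirichlet-form technicalities you flag, and it is what makes Lemma \ref{lemma: gs bridge} an immediate corollary rather than a separate fact.

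One small correction that propagates from the paper's own display: the normalization in the eigenvalue identity should be $e^{+E_0^T\tau}\int\mathcal{A}^T_\tau(\varphi,\varphi')e_0^T(\varphi')\,\nu^0_T(d\varphi')=e_0^T(\varphi)$ (since $\mathbf P^T_\tau e_0^T = e^{-E_0^T\tau}e_0^T$), matching what the paper writes in the proof of Lemma \ref{lemma: eigenvector bounds}; your conservativity computation silently uses the wrong sign from the paper's definition of $\overline{\mathbf P}^T_\tau$, which is a typo --- the intended operator is $(e_0^T)^{-1}e^{-\tau(\mathbf H_T - E_0^T)}\,e_0^T\,(\cdot)$.
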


\begin{remark}
By an abuse of notation, we will assume that $\tilde X_T$ is defined on some probability space labelled $(\Omega, \mathcal F, \mathbb P)$. Given $\varphi \in H^{-1/2-\kappa}_z$, we will write $\mathbb P_\varphi$ to denote the process $\tilde X_T$ conditioned on $\tilde X_T(0,\cdot) = \varphi(\cdot)$. Finally, we will further abuse notation and write $\tilde X_T$ to denote the two-sided process obtained, conditionally on $\tilde X_T(0,\cdot)$, by concatenating two independent copies. 
\end{remark}

Below, we will consider bridge measures associated to the two-sided process $\tilde X_T$ by singularly conditioning the process on the two endpoints. We will only need the case of $[-L,L]$ for an arbitrary $L>0$. Let $\varphi_-,\varphi_+$ be sampled independently according to $\overline{\nu}^0_T$. Almost surely, we may define a stochastic process $X_T^{\varphi_-,\varphi_+} \in L^2([-L,L], H^{-1/2-\kappa}_z)$ whose law is characterized\footnote{We will abuse notation and write the probability space as $(\Omega, \mathcal F, \mathbb P)$.} by the following Laplace transform. For every $f \in L^2_\tau H^{1/2+\kappa}_z$, 
\begin{multline} \label{eq: Xtphi LT}
\mathbb{E}\left [\exp\left (-\langle f,X_T^{\phi_-,\phi_+} \rangle_{L^2_{\tau}H^{-1/2-\kappa}_z}\right )\right ]
\\:=\frac{1}{Z_T(\varphi_-,\varphi_+)}\mathbb{E}_{\mu}[\exp(-\langle f,\phi \rangle_{L^2_{\tau}(H_{z}^{-1/2-\epsilon})}-V_{T}(\phi+H(\phi_{-},\phi_{+})))].
\end{multline}
\begin{remark} \label{rem: Xtphi general LT}
By approximation, we may allow for the term $\langle f, X_T^{\varphi_-,\varphi_+} \rangle$ to be replaced by $f(X_T^{\varphi_-,\varphi_+})$ for continuous functions $f$ of linear growth. 	
\end{remark}

The following lemma asserts that the processes defined above are bridge processes derived from $\tilde X_T$. It is a consequence of the DLR property of the measures on the righthand side of \eqref{eq: Xtphi LT}. See \cite[Equation (4.1.17) and Proposition 4.1]{LHB11}. 

\begin{lemma} \label{lemma: gs bridge}
For every $T \in [0,\infty)$ and almost surely for every $\varphi_-,\varphi_+ \in H^{-1/2-\kappa}_z$, 
\begin{equation}
{\rm Law}_{\mathbb P}(\tilde{X}_T|_{[-L,L]}  \mid  \tilde{X}_T(-L,\cdot) =\phi_-, \tilde{X}_T(L,\cdot) =\phi_+)= {\rm Law}_{\mathbb P}(X_T^{\phi_-,\phi_+})
\end{equation}
\end{lemma}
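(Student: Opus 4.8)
\textbf{Proof plan for Lemma \ref{lemma: gs bridge}.}

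The plan is to identify both laws via their Laplace transforms and use the Markov/DLR structure of the ground-state--transformed semigroup $(\overline{\mathbf P}^T_\tau)_{\tau\geq 0}$. First I would recall from Lemma \ref{lemma: gs markov} that $\tilde X_T$ is a stationary continuous Markov process with transition semigroup $(\overline{\mathbf P}^T_\tau)_{\tau\geq 0}$ and invariant measure $\overline\nu^0_T$, and that, after extending to the two-sided process by concatenating two independent copies, the finite-dimensional distributions of $\tilde X_T|_{[-L,L]}$ on time slices $-L = \tau_0 < \tau_1 < \dots < \tau_n = L$ are given by the kernel $\prod_{i} \overline{\mathbf P}^T_{\tau_{i}-\tau_{i-1}}$ acting on $\overline\nu^0_T$. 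Unwinding the definition of $\overline{\mathbf P}^T$ in terms of the amplitudes $\mathcal A^T_\tau$ and the ground state $e_0^T$, and using the gluing property of Proposition \ref{prop: gluing-cutoff} to collapse the chain of amplitudes $\mathcal A^T_{\tau_1-\tau_0} \star \dots \star \mathcal A^T_{\tau_n - \tau_{n-1}}$ into a single amplitude $\mathcal A^T_{2L}$ on $[-L,L]\times \mathbb T^2$ after integrating out the intermediate boundary fields against $\nu^0_T$, the factors of $e_0^T$ at interior slices cancel in pairs, leaving only the endpoint factors $e_0^T(\varphi_-)e_0^T(\varphi_+)$ and the shift $e^{-2LE_0^T}$. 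This shows that the joint law of the two endpoints of $\tilde X_T|_{[-L,L]}$ has density proportional to $\mathcal A^T_{2L}(\varphi_-,\varphi_+)$ with respect to $\nu^0_T\otimes\nu^0_T$, which is consistent with $\varphi_-,\varphi_+$ being i.i.d.\ $\overline\nu^0_T$ (by the gluing/Corollary \ref{cor: tori gluing}--type normalization), and that the conditional law given the endpoints is obtained by removing exactly these endpoint and shift factors.

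Next I would compute the Laplace transform of $\tilde X_T|_{[-L,L]}$ conditioned on the endpoints, tested against $f \in L^2_\tau H^{1/2+\kappa}_z$. Expanding the conditioned finite-dimensional distributions as above and taking the continuum limit in the time partition, the amplitude $\mathcal A^T_{2L}$ with an extra source term $f$ is, by Definition \ref{def: approx amplitude} and the domain Markov property (Proposition \ref{prop: domain markov}), proportional to the unnormalized Laplace transform $\mathcal Z_T(f \mid \varphi_-,\varphi_+) = \mathbb E_\mu[\exp(\langle f,\varphi\rangle - V_T(\varphi + H(\varphi_-,\varphi_+)))]$ — here one uses that the free-amplitude and $\mathcal E_T$ prefactors, as well as the $e_0^T$ and $e^{-2LE_0^T}$ factors, are all independent of $f$ and hence cancel between numerator and denominator. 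Dividing by the $f=0$ value gives exactly the right-hand side of \eqref{eq: Xtphi LT}, i.e.\ the Laplace transform of $X_T^{\varphi_-,\varphi_+}$. Since the class of test functions $f \in L^2_\tau H^{1/2+\kappa}_z$ is rich enough to determine a probability measure on $L^2([-L,L],H^{-1/2-\kappa}_z)$ (and, via Remark \ref{rem: Xtphi general LT}, one can also match bounded continuous functionals), the two conditional laws coincide almost surely in $(\varphi_-,\varphi_+)$, as claimed.

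The main obstacle I anticipate is making rigorous the passage from the discrete-time Markov chain structure of $\tilde X_T$ to the continuum bridge description, together with the justification that singular conditioning on the two endpoints is well-defined and compatible with the DLR/domain-Markov decomposition — this is precisely the content invoked from \cite[Equation (4.1.17) and Proposition 4.1]{LHB11}, and the work is in checking the regularity hypotheses (continuity of sample paths from Lemma \ref{lemma: gs markov}, and the a.s.\ existence of the restriction of the GFF to the slices $\{\pm L\}\times\mathbb T^2$, which follows from the domain Markov property) so that the disintegration exists and the gluing identity of Proposition \ref{prop: gluing-cutoff} can be applied inside the conditional expectation. A secondary technical point is ensuring the cancellation of the normalization constants $\mathsf C(\Delta_{\mathbb T^2})$, the zeta-regularized determinant ratios, and the $\mathcal Z^0_T(I^\sigma)$ factors is exact — but this is already handled in the proof of Proposition \ref{prop: gluing-cutoff}, so it can be quoted directly.
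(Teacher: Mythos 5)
Your overall strategy is essentially the same as the paper's: the paper proves this lemma by citing the DLR/bridge-measure result in \cite[Equation (4.1.17) and Proposition 4.1]{LHB11}, and you have spelled out the computation behind that citation (finite-dimensional distributions of $\tilde X_T$ via $\overline{\mathbf P}^T_\tau$, telescoping of the $e_0^T$ factors, gluing via Proposition \ref{prop: gluing-cutoff}, matching Laplace transforms with \eqref{eq: Xtphi LT}) and correctly flagged the continuum/disintegration step as exactly what the cited reference handles. In that sense this is a useful, more explicit reconstruction of the same argument rather than a genuinely different route.

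Two details in your write-up are imprecise. First, you assert that the free-amplitude prefactor $\mathcal A^{\rm free}$ is independent of $f$ and therefore cancels; it is not, since by \eqref{eqdef: free amplitude} it carries the factor $\exp(-\langle f, H(\varphi_-,\varphi_+)\rangle)$. This factor does not cancel against the $f=0$ normalization; rather, it is the deterministic shift $H(\varphi_-,\varphi_+)$ that distinguishes the full bridge field from the Dirichlet fluctuation in the Bou\'e--Dupuis picture, and one must match it against the precise convention used for $X_T^{\varphi_-,\varphi_+}$ in \eqref{eq: Xtphi LT} (whether $X_T^{\varphi_-,\varphi_+}$ denotes the full field $\varphi + H(\varphi_-,\varphi_+)$ or the fluctuation $\varphi$). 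Second, the parenthetical claim that the pair of endpoints of $\tilde X_T|_{[-L,L]}$ is ``consistent with $\varphi_-,\varphi_+$ being i.i.d.\ $\overline\nu^0_T$'' is not correct: by reversibility, the joint endpoint law has density $\overline{\mathbf P}^T_{2L}(\varphi_-,\varphi_+)$ with respect to $\overline\nu^0_T\otimes\overline\nu^0_T$, so each marginal is $\overline\nu^0_T$ but the joint law is not a product. Neither slip breaks the argument, since the conditional law given both endpoints (which is what the lemma concerns) only depends on the interior density $\prod_i \mathcal A^T_{\tau_i-\tau_{i-1}}\,\nu^0_T(d\varphi_i)$ divided by $\mathcal A^T_{2L}(\varphi_-,\varphi_+)$, which you computed correctly; but the endpoint-law remark should be deleted or corrected, and the $\exp(-\langle f, H\rangle)$ factor should be tracked explicitly when matching to \eqref{eq: Xtphi LT}.
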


Let us consider the set ${\rm Lip}_b:= \{ f: H^{-1/2-\kappa} \rightarrow \mathbb R : f \text{ is Lipschitz and bounded}\} \subset \overline{\mathcal H}_T$. We have that ${\rm Lip}_b$ is dense linear subspace of $\overline{\mathcal H}_T$ for every $T>0$. The following lemma contains the key technical estimate that we will need in order to prove strong continuity of the $\varphi^4_3$ semigroup.

\begin{lemma}\label{lem: strong cont technical}
For every $f \in {\rm Lip}_b$, we have that 
    \begin{equation}\lim_{\tau\rightarrow  0}\sup_{T}\|\overline{\mathbf P}^T_{\tau}f-f\|_{\overline{\mathcal H}_T}=0.	
    \end{equation}
\end{lemma}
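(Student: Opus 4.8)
The plan is to reduce the uniform-in-$T$ estimate for $\overline{\mathbf P}^T_\tau f - f$ to a statement about the continuity of sample paths of the ground-state transformed Markov processes $\tilde X_T$, exploiting the probabilistic representation provided by Lemmas \ref{lemma: gs markov}, \ref{lemma: gs bridge}, and the representation \eqref{eq: Xtphi LT}. First I would fix $f \in {\rm Lip}_b$ with Lipschitz constant $L_f$ and sup-norm $M_f$. Since $\overline\nu^0_T$ is the stationary measure of $\tilde X_T$ and $\overline{\mathbf P}^T_\tau$ its transition semigroup, I would write
\begin{equation}
\|\overline{\mathbf P}^T_\tau f - f\|_{\overline{\mathcal H}_T}^2 = \mathbb E\left[ \left| \mathbb E\left[ f(\tilde X_T(\tau,\cdot)) - f(\tilde X_T(0,\cdot)) \mid \tilde X_T(0,\cdot) \right] \right|^2 \right] \leq \mathbb E\left[ \left| f(\tilde X_T(\tau,\cdot)) - f(\tilde X_T(0,\cdot)) \right|^2 \right],
\end{equation}
using Jensen's inequality and stationarity. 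Then since $f$ is bounded and Lipschitz (with respect to the $H^{-1/2-\kappa}_z$ norm, the natural topology here), the right-hand side is bounded by $\mathbb E[\min(4M_f^2, L_f^2 \|\tilde X_T(\tau,\cdot) - \tilde X_T(0,\cdot)\|_{H^{-1/2-\kappa}_z}^2)]$, so it suffices to show that the modulus of continuity of $\tilde X_T$ at $0$ is uniformly small in $T$, i.e. that $\mathbb E[\min(1,\|\tilde X_T(\tau,\cdot)-\tilde X_T(0,\cdot)\|_{H^{-1/2-\kappa}_z}^2)] \to 0$ as $\tau \to 0$ uniformly in $T$.

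The core of the argument is then a uniform-in-$T$ quantitative equicontinuity estimate for the two-sided stationary process $\tilde X_T$ near $\tau = 0$. To get this I would use Lemma \ref{lemma: gs bridge}: conditionally on the endpoint values the restriction of $\tilde X_T$ to a window is the bridge process $X_T^{\varphi_-,\varphi_+}$ whose Laplace transform is \eqref{eq: Xtphi LT}, which is exactly (a ground-state weighting of) the $\varphi^4_3$ measure on a short cylinder with boundary conditions sampled from $\overline\nu^0_T$. Averaging over the stationary boundary data, $\tilde X_T$ restricted to $[0,1]\times\mathbb T^2$ has the law of the $\varphi^4_3$ measure on $[0,1]\times\mathbb T^2$ with i.i.d. $\overline\nu^0_T$ boundary conditions, reweighted by $e_0^T(\varphi_-)^{-1}e_0^T(\varphi_+)^{-1}$ times $e^{-E_0^T}$; since $e_0^T > 0$ a.s., the $L^q$ bounds of Lemma \ref{lemma: eigenvector bounds} on $e_0^T$, together with the uniform bounds on amplitudes of Theorem \ref{thm: bulk} and the uniform spectral bound of Proposition \ref{proposition: cutoff spectral lower bound}, let me control moments of functionals of $\tilde X_T$ uniformly in $T$ via Hölder's inequality (absorbing the $e_0^T$ weights with a small loss of integrability exponent). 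Concretely I would establish a uniform-in-$T$ bound of the form $\mathbb E[\|\tilde X_T(\tau,\cdot) - \tilde X_T(\sigma,\cdot)\|_{H^{-1/2-\kappa}_z}^{2}] \leq C|\tau-\sigma|^{\theta}$ for some $\theta > 0$ and $\tau,\sigma$ in a fixed bounded window — this is a quantitative temporal regularity estimate for $\varphi^4_3$ fields on the cylinder, which can be extracted from the variational/Bou\'e–Dupuis representation used throughout the bulk analysis (Proposition \ref{prop: bulk bouedupuis} and the stochastic and remainder estimates of Sections \ref{sec:stochastic}–\ref{sec:equicoercive}), noting that the free-field part $W_T$ has H\"older-$1/2-\kappa$ regularity in $\tau$ and the drift corrections $Z_T(u)$, the harmonic extensions, and the paracontrolled pieces are all at least as regular in $\tau$ on the relevant window. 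Once such a modulus estimate is in hand, Chebyshev gives $\mathbb E[\min(1,\|\tilde X_T(\tau,\cdot)-\tilde X_T(0,\cdot)\|^2)] \leq C\tau^\theta$ uniformly in $T$, which closes the argument.

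The main obstacle I anticipate is the uniformity in $T$ of this temporal modulus of continuity, in particular handling the two subtleties that (i) the stationary measure of $\tilde X_T$ is the ground-state weighted measure rather than $\nu^0_T$ itself, so one must trade integrability to absorb the $(e_0^T)^{-1}$ weights — this is where Lemma \ref{lemma: eigenvector bounds}'s $L^q$ bound with $q$ strictly bigger than $2$ is essential, and one needs the modulus estimate for the unweighted $\varphi^4_3$ cylinder measure to hold with a high enough moment to survive the Hölder loss; and (ii) the boundary data for the bridge, sampled from $\overline\nu^0_T$, is only $H^{1/2-\kappa}_z$-regular (admissible but singular with respect to the boundary GFF), so the temporal regularity of the harmonic extension $H(\varphi_-,\varphi_+)$ near the endpoints degrades and one must work on an interior window $[\epsilon, 1-\epsilon]$ and use the regularizing properties of the harmonic extension (Appendix \ref{appendix: harmonic extension}), combined with the regularity estimates on boundary fields from Lemma \ref{lemma: regularity estimates bdry field} and Proposition \ref{prop: stochastic boundary v2}, to get an estimate that is uniform in $T$. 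A secondary but routine point is passing from the a priori $L^2(\overline{\mathcal H}_T)$ estimate to the uniform supremum over $T$, which follows immediately once the modulus bound has an explicit $T$-independent constant. Modulo these regularity inputs — all of which are established or readily derivable from results already in the paper — the reduction via Jensen, stationarity, and the bridge representation is clean and the statement follows.
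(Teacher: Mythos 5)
Your reduction is the same as the paper's: Jensen/Cauchy--Schwarz plus stationarity, then the bridge representation via Lemma \ref{lemma: gs bridge}, then a temporal modulus-of-continuity estimate on the bridge process extracted from the Bou\'e--Dupuis formula. The main structural difference is that the paper does \emph{not} attempt a uniform-in-$T$ bound $\mathbb E[\|\tilde X_T(\tau)-\tilde X_T(0)\|^2]\leq C\tau^\theta$ with the expectation taken over all boundary data; it first splits the outer $\overline\nu^0_T\otimes\overline\nu^0_T$-integral by a cutoff $\mathbbm 1_{\|\Xi_T^\partial(\varphi_-,\varphi_+)\|_{\boldsymbol{\mathfrak B}}\leq K}$. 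On the small-field event the conditional H\"older estimate $\mathbb E[\|X_T^{\varphi_-,\varphi_+}\|_{\mathcal C^\alpha_\tau H^{-1/2-\kappa}_z}^2]$ is integrated to give a constant $c_2(K)$ depending on $K$; on the large-field event the term is controlled simply by boundedness of $f$, Markov's inequality in the enhanced-boundary norm, and the $L^q$ bounds on $e_0^T$ from Lemma \ref{lemma: eigenvector bounds}, yielding a decay $K^{-\delta}$. Then one sends $\tau\to 0$ before $K\to\infty$.

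This cutoff is not a cosmetic difference but the crux of the argument: the modulus estimate you propose cannot be expected to hold uniformly over the boundary data, because the Bou\'e--Dupuis bounds on the bridge process (via Theorem \ref{thm: bulk} part III) grow exponentially in $\|\Xi_T^\partial(\varphi_-,\varphi_+)\|_{\boldsymbol{\mathfrak B}}$, and the moments of that norm with respect to $\overline\nu^0_T\otimes\overline\nu^0_T$ are only controlled at small fractional powers, not enough to make such an exponential integrable uniformly in $T$. The $\min(4M_f^2,\cdot)$ truncation you insert controls the process-level fluctuation but does not by itself tame the dependence of the modulus constant on the boundary data. You gesture towards a H\"older-loss to absorb the $(e_0^T)^{-1}$ weights and towards needing a ``high enough moment'' of the modulus, but that high moment is precisely what is unavailable without the boundary-data cutoff; the paper avoids it by conceding a $K$-dependent constant on the small-field set and compensating on the complementary set with the cruder $\|f\|_\infty$ bound. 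Incorporating this two-scale truncation (first in the enhanced boundary field, then $\tau\to 0$, then $K\to\infty$) would close the gap in your proposal.
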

\begin{proof}
Let $\tau, T>0$.  Recall the definition of $\overline{\mathbf P}_t^T$ and its associated Markov process defined by Lemma \ref{lemma: gs markov}. By the Cauchy-Schwarz inequality, and the tower property of conditional expectation, we have that
\begin{equation} \label{eq: overline p bound 1}
  \int (\overline{\mathbf P}^T_{\tau}f-f)^2 d \overline \nu^0_T \leq \mathbb E\left [ \left( f(\tilde X_T(\tau)) - f(\tilde X_T(0)) \right)^2 \right].
\end{equation}
Hence, by conditioning on the values of the process at $-L$ and $L$, and using Lemma \ref{lemma: gs bridge}, we have that 
\begin{equation}
\eqref{eq: overline p bound 1} \leq I_1 + I_2,	
\end{equation}
where
\begin{align}
I_1 =  \int \int \mathbbm {1}_{\|\Xi_T^\partial (\phi_-,\phi_+)\|_{\boldsymbol{\mathfrak B}}\leq K}\mathbb{E}\left[\left(f(X_T^{\phi_-,\phi_+}(\tau))-f(X_T^{\phi_-,\phi_+}(0))\right)^2\right]d\overline\nu^0_T d\overline\nu^0_T,
\\
I_2 =  \int \int \mathbbm {1}_{\|\Xi_T^\partial (\phi_-,\phi_+)\|_{\boldsymbol{\mathfrak B}}> K}\mathbb{E}\left[\left(f(X_T^{\phi_-,\phi_+}(\tau))-f(X_T^{\phi_-,\phi_+}(0))\right)^2\right]d\overline\nu^0_T d\overline\nu^0_T.
\end{align}

To estimate $I_1$, first note that by the fact that $f$ is Lipschitz continuous there exists $c_1>0$ such that, for every $\alpha \in (0,1)$,
\begin{equation}
I_1 \leq c_1 \tau^{2\alpha} \int \int \mathbbm {1}_{\|\Xi_T^\partial (\phi_-,\phi_+)\|_{\boldsymbol{\mathfrak B}} \leq K} \mathbb E \left[\|X^{\varphi_-,\varphi_+}_T\|_{\mathcal C^\alpha_\tau H^{-1/2-\kappa}_z}^2 \right] d\overline\nu^0_T d\overline \nu^0_T.  
\end{equation}
We claim that, provided $\alpha < 1/2$, there exists $c_2(K)>0$ such that
\begin{equation} \label{eq: overline p bound 2}
\int \int  \mathbbm {1}_{\|\Xi_T^\partial (\phi_-,\phi_+)\|_{\boldsymbol{\mathfrak B}} \leq K} \mathbb E \left[\|X^{\varphi_-,\varphi_+}_T\|_{\mathcal C^\alpha_\tau H^{-1/2-\kappa}_z}^2 \right] d\overline\nu^0_T d\overline \nu^0_T \leq c_2(K).
\end{equation}
Hence, uniformly in $T$,
\begin{equation}
I_1 \leq c_1 c_2(K) \tau^{2\alpha}. 	
\end{equation}

In order to establish the H\"older continuity estimate \eqref{eq: overline p bound 2}, we will apply the Bou\'e-Dupuis formula to the right-hand side of \eqref{eq: Xtphi LT} applied to 
\begin{equation}f(X_T^{\varphi_-,\varphi_+}) = \|X_T^{\varphi_-,\varphi_+}\|_{C^{1/2-\kappa}_\tau H^{-1/2-\kappa}_z},	
\end{equation}
see Remark \ref{rem: Xtphi general LT}. We may then repeat the analysis of the previous sections with the following observations to handle the norm term. Recall the intermediate ansatz on the drift.  By 1d Sobolev embedding, for every $\alpha < 1/2$, for every $\kappa > 0$ sufficiently small, there exists $\kappa'>0$ such that, for $c_3>0$,
    \begin{equation}
    \|Z_T(v)\|_{\mathcal C^\alpha_\tau H^{-1/2-\kappa}_z} \leq c_3 \|Z_T(v)\|_{H^{1-\kappa'}}.	
    \end{equation}
    This can then be estimated as usual.
    Furthermore, we have that for every $p \in [1,\infty)$, there exists $c_4>0$ such that
    \begin{equation}
    \mathbb E\left[\|\mathbb W_T^{[3]}\|_{\mathcal C^{1/2-\kappa}_\tau H^{-1/2-\kappa}_z}^p\right ] \leq \mathbb E[\|\mathbb W_T^{[3]}\|_{\mathcal C^{1/2-\kappa}_x}^p] \leq c_4. 
    \end{equation}
The rest of the argument follows as in Sections \ref{sec: renormalization of bulk amplitudes}-\ref{sec: convergence of bulk amplitudes} (without the need for the $\Gamma$-convergence argument).

In order to estimate $I_2$, since $f$ is bounded,  it suffices to estimate $\mathbb E_{\overline\nu^0_T\otimes \overline\nu^0_T}[\mathbbm {1}_{\|\Xi_T^\partial (\phi_-,\phi_+)\|_{\boldsymbol{\mathfrak B}}> K}]$. By Markov's inequality and changing the base measure, for every $\delta>0$ sufficiently small, there exists $c_5>0$ such that
\begin{equation}
\overline \nu^0_T \otimes \overline \nu^0_T \left( \|\Xi_T^\partial (\phi_-,\phi_+)\|_{\boldsymbol{\mathfrak B}}> K \right) \leq c_5 K^{-\delta} \mathbb E_{\nu^0_T\otimes \nu^0_T}[\|\Xi_T^\partial (\phi_-,\phi_+)\|_{\boldsymbol{\mathfrak B}}^\delta e_0^T(\varphi_-)^2 e_0^T(\varphi_+)^2].	
\end{equation}
By the bounds on the stochastic norms in Proposition \ref{prop: stochastic boundary v2} and the $(\Upsilon_i)_{i \geq 2}$ terms in Section \ref{sec:stochastic}, for $\alpha'>0$ sufficiently small we have that 
\begin{equation}\sup_T \mathbb E_{\nu^0_T \otimes \nu^0_T}[\|\Xi_T^{\partial}(\varphi_-,\varphi_+)\|_{\boldsymbol{\mathfrak B}}^{\alpha'}] < \infty.
\end{equation}
Thus, by H\"older's inequality, the above estimates, and the $L^q$ uniform in $T$ bounds on the ground states in Lemma \ref{lemma: eigenvector bounds}, there exists $c_6>0$ such that, uniformly in $T$,
 \begin{equation}
 I_2 \leq c_6 K^{-\delta}.	
 \end{equation}
 
 The lemma now follows from the estimates on $I_1$ and $I_2$ by taking $\tau \rightarrow 0$ and then $K \rightarrow \infty$. 
 \end{proof}

We now return to the problem of showing strong continuity of $(\mathbf P_\tau)_{\tau \geq 0}$. Once again, we will work with the simultaneous coupling. Let us define $\boldsymbol{\mathcal H}:=L^2(\mathbb P^0)$. We will isomorphically map the linear maps $\mathbf P_\tau$ and $\mathbf P_\tau^T$ on to this space as follows. Let $T \in [0,\infty]$. Re-define, for every bounded $f:\boldsymbol{\mathcal H}\rightarrow \mathbb R$,
\begin{equation}
\mathbf P_\tau^T f((W^0+Z^0)_T):= \int f((W^0+Z^0)_T) \mathcal A^T_\tau \left( (W^0+Z^0)_T, (\tilde W^0+\tilde Z^0)_T \right) \, d\mathbb P^0(\tilde W^0+\tilde Z^0).
\end{equation}
These isomorphisms preserve the corresponding operator norms. The results we obtain can then be translated back to the semigroup acting on $\mathcal H$ by the isomorphism. 

\begin{remark}
From now on, we only take the limit $T \rightarrow \infty$ along the subsequence for which Proposition \ref{prop: tightness eigenvectors} is valid. 	
\end{remark}

We begin with a preliminary estimate on the operator norm of $\mathbf P_\tau$. 

\begin{lemma} \label{lemma: semigroup convergence in coupling}
Let $\tau \geq 0$. For every $f \in {\rm Lip}_b$, 
\begin{equation}
\lim_{T\rightarrow \infty}\|\mathbf P_\tau (e_0 f) - \mathbf P_\tau^T (e_0^T f) \|_{\boldsymbol{\mathcal H}}	 = 0.
\end{equation}
\end{lemma}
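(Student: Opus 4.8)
The goal is to show that, along the convergent subsequence provided by Proposition \ref{prop: tightness eigenvectors}, the operators $\mathbf P^T_\tau$ (acting on $\boldsymbol{\mathcal H}$ via the stated isomorphism) applied to $e_0^T g$ converge in $\boldsymbol{\mathcal H}$ to $\mathbf P_\tau$ applied to $e_0 g$, for $g \in {\rm Lip}_b$. First I would write out the difference
\begin{equation}
\mathbf P_\tau(e_0 g)((W^0+Z^0)_\infty) - \mathbf P^T_\tau(e_0^T g)((W^0+Z^0)_T)
\end{equation}
as an integral over the coupling space against the kernel $\mathcal A^T_\tau$ (resp. $\mathcal A^\infty_\tau$), evaluated at the regularized boundary fields $(\tilde W^0+\tilde Z^0)_T$ (resp. $(\tilde W^0+\tilde Z^0)_\infty$). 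The plan is to split this difference into three pieces: (i) the change of outer argument $(W^0+Z^0)_T \to (W^0+Z^0)_\infty$; (ii) the change of amplitude kernel $\mathcal A^T_\tau \to \mathcal A^\infty_\tau$ together with the change of inner integration argument $(\tilde W^0+\tilde Z^0)_T \to (\tilde W^0+\tilde Z^0)_\infty$; and (iii) the change of the factor $e_0^T \to e_0$ inside the integrand. For (ii) the key inputs are Theorem \ref{thm: bulk} (convergence of the lifted amplitudes along with the almost sure convergence of the enhanced boundary field $\Xi^\partial_T \to \Xi^\partial_\infty$ from Theorem \ref{thm: enhancement converge}) and Proposition \ref{prop: bdry stochastic} (convergence of $(W^0+Z^0)_T \to (W^0+Z^0)_\infty$ in the relevant spaces); for (iii) the input is the $L^2(\mathbb P^0)$ (and indeed $L^q$) convergence $e_0^T \to e_0$ from Proposition \ref{prop: tightness eigenvectors}.

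The second step is to justify passing these pointwise (almost sure) convergences through the $\boldsymbol{\mathcal H}$-norm, i.e.\ an $L^2(\mathbb P^0)$ statement. Here I would invoke Vitali's convergence theorem: it suffices to show that the family of random variables
\begin{equation}
\left( \int \mathcal A^T_\tau\big((W^0+Z^0)_T,(\tilde W^0+\tilde Z^0)_T\big) e_0^T((\tilde W^0+\tilde Z^0)_T) g \, d\mathbb P^0(\tilde W^0+\tilde Z^0) \right)^2
\end{equation}
is uniformly integrable in $T$. This is where the quantitative bounds are used: by Cauchy--Schwarz in the inner integral and the gluing property of Proposition \ref{prop: gluing-cutoff}, the inner integral squared is bounded by $\|g\|_\infty^2\, \mathbf P^T_\tau (e_0^T)^2(\varphi) \lesssim \mathcal A^T_{2\tau}(\varphi,\varphi)$ up to the uniformly bounded factor $e^{-2E_0^T\tau}$ (Proposition \ref{proposition: cutoff spectral lower bound}), exactly as in the proof of Lemma \ref{lemma: eigenvector bounds}. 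Raising to a power $1+\delta$ and controlling $\mathbb E_{\mathbb P^0}[\mathcal A^T_{2\tau}((W^0+Z^0)_T,(W^0+Z^0)_T)^{1+\delta}]$ uniformly in $T$ is then a direct application of the small-field reduction argument of Section \ref{sec: smallfield reduction} (cf.\ Lemma \ref{lem: large field}), which gives $1+\delta$ moments and hence uniform integrability. Combining this with the almost sure convergence from the first step yields the $\boldsymbol{\mathcal H}$-convergence.

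The main obstacle, I expect, is bookkeeping the interaction between the three simultaneous limits — outer field, kernel, and ground state — inside a single integral against the (random) boundary measure, while only having \emph{almost sure} convergence of the enhanced boundary fields and \emph{$L^2$} convergence of $e_0^T$. The cleanest route is to first fix a large field cutoff $K$ (truncating $\|\Xi^\partial_T\|_{\boldsymbol{\mathfrak B}'}$, as in the proof of Theorem \ref{thm: main}), use the bounded/continuity statements of Theorem \ref{thm: bulk} parts I--II together with Skorokhod embedding to get convergence of the truncated integrals, and then use the uniform-integrability estimate above (via Lemma \ref{lem: large field} and the $L^q$ bound on $e_0^T$ from Lemma \ref{lemma: eigenvector bounds}, applied with Hölder's inequality) to show the tail contribution vanishes as $K \to \infty$ uniformly in $T$. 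Letting $K \to \infty$ after $T \to \infty$ closes the argument. No genuinely new estimate is required — everything needed is already contained in Theorems \ref{thm: bulk} and \ref{thm: enhancement converge}, Propositions \ref{prop: gluing-cutoff}, \ref{proposition: cutoff spectral lower bound}, \ref{prop: tightness eigenvectors}, Lemmas \ref{lemma: eigenvector bounds}, \ref{lem: large field}, and Proposition \ref{prop: bdry stochastic}.
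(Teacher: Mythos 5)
Your proposal is correct and follows essentially the same approach as the paper: show almost sure convergence of the integrand using Theorems \ref{thm: bulk}, \ref{thm: enhancement converge}, and Proposition \ref{prop: tightness eigenvectors}, and then upgrade to $\boldsymbol{\mathcal H}$-convergence via Vitali, with the uniform integrability coming from Cauchy--Schwarz plus the gluing identity $\|\mathcal A_\tau^T(\varphi,\cdot)\|_{L^2}^2 = \mathcal A_{2\tau}^T(\varphi,\varphi)$ and uniform moment bounds on the amplitudes and on $e_0^T$. Two small slips worth noting but not affecting the argument: the intermediate expression $\mathbf P_\tau^T(e_0^T)^2(\varphi)$ in your Cauchy--Schwarz step is not quite what appears (the bound goes directly to $\|g\|_\infty^2\,\|e_0^T\|_{L^2}^2\,\mathcal A_{2\tau}^T(\varphi,\varphi)$), and Proposition \ref{prop: tightness eigenvectors} gives $L^2$ convergence of $e_0^T$ together with a uniform $L^q$ bound rather than $L^q$ convergence, which is all that is used.
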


\begin{proof}
The case $\tau = 0$ is a consequence of Proposition \ref{prop: tightness eigenvectors}. Let $\tau >0$. For compacity of notation, let us write $\boldsymbol \varphi = (W^0+Z^0)$. Let us define the integral
\begin{equation}
I_T(\boldsymbol\varphi):=  \int \left( e_0(\boldsymbol\varphi') f (\boldsymbol\varphi' ) \mathcal A_\tau (\boldsymbol\varphi, \boldsymbol\varphi'  ) - e_0^T(\boldsymbol\varphi_T')  f(\boldsymbol\varphi_T') \mathcal A_\tau^T(\boldsymbol\varphi_T,\boldsymbol\varphi_T')\right)^q d\mathbb P^0( \boldsymbol{\varphi}'). 
\end{equation}
To prove the lemma, it is equivalent to show that
\begin{equation} \label{eq: it convergence}
\lim_{T \rightarrow \infty}\int I_T(\boldsymbol\varphi)^2 d\mathbb P^0(\boldsymbol\varphi) = 0. 
\end{equation}
We will do this by a Vitali's convergence theorem argument.

To begin with, let us show that $I_T(\boldsymbol{\varphi}) \rightarrow 0$ almost surely. We will do this in two parts. First, we show that that the integrand of $I_T(\boldsymbol{\varphi)}$, denoted $\iota_T^{\boldsymbol{\varphi}}(\boldsymbol{\varphi}')$, converges to $0$ almost surely.  The amplitudes in the integrand converge thanks to Theorem \ref{thm: bulk}, which is applicable since since the enhancement of $\boldsymbol\varphi'_T$ converges to the enhancement of $ \boldsymbol\varphi'$ by Theorem \ref{thm: enhancement converge}. Furthermore, since the limit $T\rightarrow \infty$ is taken along the subsequence for which Proposition \ref{prop: tightness eigenvectors} holds, we have the almost sure convergence of $e_0^T(\boldsymbol{\varphi}'_T)$ to $e_0(\boldsymbol{\varphi}')$. Hence, $\iota_T(\boldsymbol{\varphi}') \rightarrow 0$ since $f$ is continuous and $\boldsymbol\varphi'_T\rightarrow \boldsymbol\varphi'$.

We now turn to showing $I_T(\boldsymbol{\varphi})$ converges to $0$ almost surely. In order to do this, let us compute
\begin{equation} \label{eq: iota orlicz}
\int |\iota_T^{\boldsymbol{\varphi}}(\boldsymbol{\varphi}')| \log_+ |\iota_T^{\boldsymbol{\varphi}}(\boldsymbol{\varphi}')| d\mathbb P^0(\boldsymbol{\varphi')}.
\end{equation}
Note that for $a,b \geq$, $a+b \leq 2 ab$ and hence we have that 
\begin{equation}
	\log_+(|x-y|) \leq \log 2 + \log_+|x| + \log_+ |y|. 
\end{equation}
Thus, we may bound \eqref{eq: iota orlicz} by a sum of terms which, considering only the $T=\infty$ terms (since the $T<\infty$ can be treated symmetrically), consists of  the integrals:
\begin{align}
& \int |e_0(\boldsymbol{\varphi}') f(\boldsymbol{\varphi}') \mathcal A_\tau(\boldsymbol{\varphi},\boldsymbol{\varphi}')| \, \log 2 d\mathbb P^0(\boldsymbol{\varphi}')\\
&\int |e_0(\boldsymbol{\varphi}') f(\boldsymbol{\varphi}') \mathcal A_\tau(\boldsymbol{\varphi},\boldsymbol{\varphi}')| \log_+ (|e_0(\boldsymbol{\varphi}')|)d\mathbb P^0(\boldsymbol{\varphi}') \\
&\int |e_0(\boldsymbol{\varphi}') f(\boldsymbol{\varphi}') \mathcal A_\tau(\boldsymbol{\varphi},\boldsymbol{\varphi}')| \log_+ (|f(\boldsymbol{\varphi}')|)d\mathbb P^0(\boldsymbol{\varphi}') \\
&\int |e_0(\boldsymbol{\varphi}') f(\boldsymbol{\varphi}') \mathcal A_\tau(\boldsymbol{\varphi},\boldsymbol{\varphi}')| \log_+ (|\mathcal A_\tau(\boldsymbol{\varphi},\boldsymbol{\varphi}')(\boldsymbol{\varphi}')|)d\mathbb P^0(\boldsymbol{\varphi}').
\end{align}
We apply the Cauchy-Schwarz inequality to all of the integrals above and use the uniform boundedness of the Orlicz norms on $e_0^T$ and $e_0$ of Lemma \ref{lemma: eigenvector bounds} and Proposition \ref{prop: tightness eigenvectors}, and the gluing property of Theorem \ref{thm: main}. The most tricky to treat is the last term, containing log of the amplitude, so we do the calculation explicitly there. By the Cauchy-Schwarz inequality and bounds on the amplitudes, there exists $\alpha'>0$ such that \begin{equation}
(\dots) \leq C \|f\|_{L^\infty} \|e_0\|_{L^2} \left( \int \mathcal A_\tau(\boldsymbol{\varphi}, \boldsymbol{\varphi'})A_\tau(\boldsymbol{\varphi'}, \boldsymbol{\varphi}) \| \Xi^\partial (\boldsymbol{\varphi},\boldsymbol{\varphi'})\|_{\boldsymbol{\mathfrak B}}^{\alpha'} d\mathbb P^0(\boldsymbol{\varphi}')  \right)^{1/2}.
\end{equation} 
The righthand side, in particular the amplitude expectation $E_A(\boldsymbol{\varphi})$, can be estimated as in the proof of Lemma \ref{lem: large field}. All in all, we obtain that
\begin{align}
\eqref{eq: iota orlicz} &\leq C'\|f\|_{L^\infty}(1+ \|\log_+|f|\|_{L^\infty})\max_{S \in \{T,\infty\}} \mathcal A_{2\tau}^S (\boldsymbol{\varphi}, \boldsymbol{\varphi})^{1/2} ( 1 + E_A(\boldsymbol{\varphi})^{1/2}). 
\end{align}
Thus the family $\iota_T^{\boldsymbol{\varphi}}(\boldsymbol{\varphi')}$ is uniformly integrable and hence $I_T(\boldsymbol{\varphi})$ converges to $0$ almost surely.

Finally, we turn to establishing \eqref{eq: it convergence}. By arguing as above, we have that
\begin{equation}
	I_T(\boldsymbol{\varphi}) \leq C\|f\|_{L^\infty} \max_{S \in \{T,\infty\}} \mathcal A^S_{2\tau}(\boldsymbol{\varphi}, \boldsymbol{\varphi})^{1/2}. 
\end{equation} 
Hence,
\begin{align}
\int &|I_T(\boldsymbol{\varphi})|^2 \log_+|I_T(\boldsymbol\varphi)| d\mathbb P^0(\boldsymbol{\varphi})
\\&\leq C\|f\|_{L^\infty} (1+ \| \log_+ |f| \|_{L^\infty}) 
\\
&\qquad \times \max_{S \in \{T,\infty\}} \int \mathcal A_{2\tau}^S(\boldsymbol{\varphi}, \boldsymbol{\varphi}) (1+\log_+ \mathcal A_{2\tau}^S(\boldsymbol{\varphi}, \boldsymbol{\varphi}) d\mathbb P^0(\boldsymbol{\varphi}).
\end{align}
By the bounds on the amplitudes in Theorem \ref{thm: bulk}, we have that for some $\alpha'>0$,
\begin{align}
\int \mathcal A_{2\tau}^S(\boldsymbol{\varphi}, \boldsymbol{\varphi}) (1+\log_+ \mathcal A_{2\tau}^S(\boldsymbol{\varphi}, \boldsymbol{\varphi}) d\mathbb P^0(\boldsymbol{\varphi}) \leq \int \mathcal A_\tau (\boldsymbol{\varphi},\boldsymbol{\varphi})^2 \|\Xi^\partial_S(\boldsymbol{\varphi},\boldsymbol{\varphi})\|_{\boldsymbol{\mathfrak B}}^{\alpha'}  d\mathbb P^0(\boldsymbol{\varphi}).
\end{align}
This can be estimated as in Lemma \ref{lem: large field}. Thus, by Vitali's convergence theorem, we have \eqref{eq: it convergence}.

\end{proof}

A simplification of the proof (redoing the estimates without the eigenvector) yields the convergence of  $\|\mathbf P_\tau^T\|_{\mathcal L(\mathcal H_T)}$ to $\|\mathbf P_\tau \|_{\mathcal L(\mathcal H)}$. Thus we get the required exponential growth bound on the operator norms required to apply the Hille-Yosida theorem. We formalize this in the following proposition.
 
\begin{proposition}\label{proposition: semigroup norm bounds}
There exists $C>0$ such that
\begin{equation}
\|\mathbf P_\tau \|_{\mathcal L(\mathcal H)} \leq e^{C \tau }.	
\end{equation}
\end{proposition}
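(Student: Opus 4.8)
The plan is to derive Proposition \ref{proposition: semigroup norm bounds} by exactly mirroring the argument of Lemma \ref{lemma: semigroup convergence in coupling}, but stripped of all the ground-state factors. Recall that $\mathbf P^T_\tau$ and $\mathbf P_\tau$ (after the isometric transfer to $\boldsymbol{\mathcal H}_T = L^2(\mathbb P^0)$ and $\boldsymbol{\mathcal H} = L^2(\mathbb P^0)$) act via the regularized and limiting amplitudes $\mathcal A^T_\tau$ and $\mathcal A_\tau = \mathcal A^\infty_\tau$ as integral kernels against $\mathbb P^0$. The first observation is that it suffices to bound $\|\mathbf P_\tau\|_{\mathcal L(\mathcal H)}$ directly; since $\mathbf P_\tau$ is self-adjoint and positivity-preserving on $\mathcal H = L^2(\nu^0)$, its operator norm equals its spectral radius, and by the semigroup property $\|\mathbf P_\tau\|_{\mathcal L(\mathcal H)} = \|\mathbf P_1\|_{\mathcal L(\mathcal H)}^\tau$ for $\tau$ along the dyadics (and by continuity for all $\tau$, which is precisely what we are about to establish via Hille--Yosida — so instead I will argue the bound for $\mathbf P_1$ and then use submultiplicativity $\|\mathbf P_{\tau+\sigma}\|\le\|\mathbf P_\tau\|\|\mathbf P_\sigma\|$ together with $\sup_{\tau\in[0,1]}\|\mathbf P_\tau\|<\infty$, which gives $\|\mathbf P_\tau\|\le e^{C\tau}$ with $C = \log^+\|\mathbf P_1\| + \log\sup_{\tau\in[0,1]}\|\mathbf P_\tau\|$).

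First I would establish the key finite-ness: for each fixed $\tau>0$ there is $c(\tau)<\infty$ with $\|\mathbf P_\tau\|_{\mathcal L(\mathcal H)}\le c(\tau)$, and $\sup_{\tau\in[0,1]}c(\tau)<\infty$. By the Cauchy--Schwarz inequality, for $g \in \mathcal H$,
\begin{equation}
\|\mathbf P_\tau g\|_{\mathcal H}^2 = \int \Big(\int \mathcal A_\tau(\varphi,\varphi') g(\varphi')\,\nu^0(d\varphi')\Big)^2 \nu^0(d\varphi) \leq \|g\|_{\mathcal H}^2 \int\!\!\int \mathcal A_\tau(\varphi,\varphi')^2\,\nu^0(d\varphi')\,\nu^0(d\varphi).
\end{equation}
By the gluing property of Theorem \ref{thm: main} (applied with $f=0$, cutting $[0,2\tau]\times\mathbb T^2$ into $[0,\tau]$ and $[\tau,2\tau]$), the double integral equals $\int \mathcal A_{2\tau}(\varphi,\varphi)\,\nu^0(d\varphi)$ up to a fixed multiplicative constant; and by Corollary \ref{corollary: lp bound amplitude} (or more simply by the almost-sure bound of Theorem \ref{thm: bulk} part (III) combined with the smallfield reduction used in Lemma \ref{lem: large field}, integrated against $\nu^0$), this is finite, uniformly for $\tau$ in a compact set. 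This gives $c(\tau) = \big(C_M\int\mathcal A_{2\tau}(\varphi,\varphi)\nu^0(d\varphi)\big)^{1/2}<\infty$ with a uniform bound on $[0,1]$. The same computation verbatim, with $\nu^0_T$ in place of $\nu^0$ and $\mathcal A^T$ in place of $\mathcal A$ and Proposition \ref{prop: gluing-cutoff} in place of Theorem \ref{thm: main}, shows $\sup_T\|\mathbf P^T_\tau\|_{\mathcal L(\mathcal H_T)}\le c_T(\tau)$ with $\sup_T\sup_{\tau\in[0,1]}c_T(\tau)<\infty$, using the uniform-in-$T$ amplitude bounds (Theorem \ref{thm: bulk}, the stochastic estimates of Section \ref{sec:stochastic}, and the bounds on torus partition functions from \cite{BG20}).

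Second, I would convert this into the advertised exponential bound. Having $\sup_{\tau\in[0,1]}\|\mathbf P_\tau\|_{\mathcal L(\mathcal H)} =: M < \infty$, submultiplicativity of operator norms under the semigroup property (which holds for $(\mathbf P_\tau)$ purely from the gluing identity of Theorem \ref{thm: main}, independently of strong continuity) gives for any $\tau\ge 0$, writing $\tau = n + r$ with $n = \lfloor\tau\rfloor$ and $r\in[0,1)$,
\begin{equation}
\|\mathbf P_\tau\|_{\mathcal L(\mathcal H)} \le \|\mathbf P_1\|_{\mathcal L(\mathcal H)}^{n}\,\|\mathbf P_r\|_{\mathcal L(\mathcal H)} \le M^{n+1} \le M\, e^{(\log^+ M)\tau},
\end{equation}
so $C := \log^+ M + \log M$ works (or simply $C = 2\log(1+M)$). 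This is exactly the hypothesis on the semigroup norms needed for the Hille--Yosida theorem, which in combination with the strong continuity established via Lemma \ref{lem: strong cont technical} and Lemma \ref{lemma: semigroup convergence in coupling} yields Proposition \ref{prop: hille yosida} and hence Theorem \ref{thm: hamiltonian}. The parenthetical remark in the excerpt — that ``a simplification of the proof of Lemma \ref{lemma: semigroup convergence in coupling} (redoing the estimates without the eigenvector) yields the convergence of $\|\mathbf P^T_\tau\|_{\mathcal L(\mathcal H_T)}$ to $\|\mathbf P_\tau\|_{\mathcal L(\mathcal H)}$'' — is an alternative route: one shows $\mathbf P^T_\tau g_T \to \mathbf P_\tau g$ in $\boldsymbol{\mathcal H}$ for $g\in\mathrm{Lip}_b$ (dense) by Vitali, using Theorem \ref{thm: enhancement converge} for kernel convergence and Corollary \ref{corollary: lp bound amplitude} for uniform integrability, and then transfers the uniform operator-norm bound on $\mathbf P^T_\tau$ (which one has for free from $\sigma(\mathbf H_T)\subset[E_0^T,\infty)$ and Proposition \ref{proposition: cutoff spectral lower bound}, giving $\|\mathbf P^T_\tau\|\le e^{-E_0^T\tau}\le e^{C\tau}$) to the limit. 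I would present this second route as the main line since it is the shortest, invoking the first (gluing + Cauchy--Schwarz) computation only as the mechanism that makes the limiting operators well-defined and bounded.

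The main obstacle is ensuring the uniform-in-$T$ control: one must check that the constant in $\sup_T\|\mathbf P^T_\tau\|_{\mathcal L(\mathcal H_T)}\le e^{C\tau}$ genuinely does not degenerate, which requires both the uniform lower bound $E_0^T\ge -C$ (Proposition \ref{proposition: cutoff spectral lower bound}) and, if one goes through the kernel estimate instead, the uniform $L^q$ bound on $\mathcal A^T_{2\tau}(\varphi,\varphi)$ against $\nu^0_T$ — the latter resting on the uniform stochastic estimates of Section \ref{sec:stochastic} and the smallfield reduction of Section \ref{sec: smallfield reduction} applied with both boundary fields equal. Everything else is a routine repackaging of already-proven ingredients.
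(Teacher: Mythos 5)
Your declared main line (what you call the second route) is precisely the paper's argument: transfer the uniform bound $\|\mathbf P^T_\tau\|_{\mathcal L(\mathcal H_T)}\le e^{-E_0^T\tau}\le e^{C\tau}$ (from Proposition \ref{proposition: cutoff spectral lower bound}) to the limit using the convergence $\mathbf P^T_\tau\to\mathbf P_\tau$ on a dense set, which is what the paper obtains from the ``simplification of Lemma \ref{lemma: semigroup convergence in coupling} without the eigenvector''. That part is correct and is essentially the paper's proof of Proposition \ref{proposition: semigroup norm bounds}.

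Your first route, however, has a gap that is worth flagging. You claim that the Cauchy--Schwarz bound
\begin{equation}
\|\mathbf P_\tau\|_{\mathcal L(\mathcal H)} \le \Big(C_M \int \mathcal A_{2\tau}(\varphi,\varphi)\,\nu^0(d\varphi)\Big)^{1/2}
\end{equation}
gives $\sup_{\tau\in[0,1]}\|\mathbf P_\tau\|<\infty$, which you then feed into submultiplicativity. But the right-hand side is the Hilbert--Schmidt norm of $\mathbf P_\tau$, and this \emph{diverges} as $\tau\downarrow 0$: the kernel $\mathcal A_\tau$ must converge to a $\delta$-kernel (since $\mathbf P_\tau\to\mathrm{Id}$), so $\|\mathcal A_\tau\|_{L^2(\nu^0\otimes\nu^0)}\to\infty$ while $\|\mathbf P_\tau\|_{\mathcal L(\mathcal H)}\to 1$. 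Concretely, $\int\mathcal A_{2\tau}(\varphi,\varphi)\,\nu^0(d\varphi)$ is (up to a constant) the periodic torus amplitude $\mathcal A^{\rm per}_{2\tau}$, whose normalization (in particular the zeta-regularized free determinant on a cylinder of vanishing length) blows up as $\tau\to 0$. The paper is aware of exactly this obstruction: in the proof of Proposition \ref{proposition: lower bound spectrum H} the Cauchy--Schwarz/gluing bound is only invoked for $\tau\ge 1$, and the extension to small $\tau$ is done a posteriori by functional calculus once $\mathbf H$ exists. So your first route cannot by itself deliver the uniform bound on $[0,1]$ that your submultiplicativity step needs. One way to rescue it would be to use self-adjointness and positivity, namely $\|\mathbf P_\tau\|=\|\mathbf P_{2^n\tau}\|^{1/2^n}$, to push small $\tau$ up to a compact interval away from $0$ where the Hilbert--Schmidt bound is genuinely finite; but as written the step ``this is finite, uniformly for $\tau$ in a compact set'' silently asserts the false uniform bound on $[0,1]$.
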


\begin{proof}
By Lemma \ref{lemma: semigroup convergence in coupling} and the fact that the isomorphisms of the semigroups on to $\boldsymbol{\mathcal H}$ preserves operator norms, we have that
\begin{equation}
\lim_{T \rightarrow \infty} \| \mathbf P_\tau^T \|_{\mathcal L(\mathcal H_T)} = \|\mathbf P_\tau \|_{\mathcal L(\mathcal H)}. 
\end{equation}
Hence, by the fact that $\|\mathbf P_\tau^T \|_{\mathcal L(\mathcal H_T)} \leq e^{-E_0^T \tau}$ for every $\tau \geq 0$, and the uniform bounds of Proposition \ref{proposition: cutoff spectral lower bound}, there exists $C>0$ such that
\begin{equation}
\| \mathbf P_\tau \|_{\mathcal L(\mathcal H)} \leq e^{C\tau}. 
\end{equation}
\end{proof}

We now prove strong continuity of $(\mathbf P_\tau)_{\tau \geq 0}$.

\begin{proposition} \label{proposition: semigroup strong continuity}
For every $f \in \mathcal H$, 
\begin{equation} \label{eq: strong continuity}
\lim_{\tau \rightarrow 0} \| \mathbf P_\tau f- f\|_{\mathcal H} 	= 0. 
\end{equation}
\end{proposition}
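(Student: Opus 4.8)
The plan is to deduce strong continuity of $(\mathbf P_\tau)_{\tau \geq 0}$ on all of $\mathcal H$ from the uniform-in-$T$ strong continuity of the ground-state-transformed semigroups $(\overline{\mathbf P}^T_\tau)_{\tau \geq 0}$ established in Lemma \ref{lem: strong cont technical}, transported through the isometric correspondences between $\mathcal H$, $\mathcal H_T$, $\overline{\mathcal H}_T$, and the coupling space $\boldsymbol{\mathcal H}$. The main tool is the standard fact that a semigroup of uniformly (exponentially) bounded operators is strongly continuous as soon as it is strongly continuous on a dense subset; Proposition \ref{proposition: semigroup norm bounds} supplies the uniform bound, so it suffices to prove \eqref{eq: strong continuity} for $f$ ranging over a dense subset of $\mathcal H$.

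The dense subset I would use is $\mathcal D := \{ e_0 g : g \in {\rm Lip}_b \}$, where $e_0$ is the limiting positive ground state from Proposition \ref{prop: tightness eigenvectors}. This is dense in $\mathcal H$: since $e_0 > 0$ $\nu^0$-almost surely and $e_0 \in L^q(\nu^0)$ for some $q > 2$, multiplication by $e_0$ maps ${\rm Lip}_b$ (dense in $\mathcal H$) into $\mathcal H$ with dense range — one checks that if $h \in \mathcal H$ is orthogonal to every $e_0 g$ then $e_0 h = 0$ a.e., hence $h = 0$. Now fix $f = e_0 g$ with $g \in {\rm Lip}_b$. The key estimate is the triangle inequality
\begin{equation}
\| \mathbf P_\tau (e_0 g) - e_0 g \|_{\mathcal H} \leq \| \mathbf P_\tau(e_0 g) - \mathbf P^T_\tau(e_0^T g) \|_{\boldsymbol{\mathcal H}} + \| \mathbf P^T_\tau(e_0^T g) - e_0^T g \|_{\boldsymbol{\mathcal H}} + \| e_0^T g - e_0 g \|_{\boldsymbol{\mathcal H}},
\end{equation}
where we have used the isometry onto $\boldsymbol{\mathcal H} = L^2(\mathbb P^0)$. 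By Lemma \ref{lemma: semigroup convergence in coupling} the first term tends to $0$ as $T \to \infty$ (for each fixed $\tau$), and by Proposition \ref{prop: tightness eigenvectors} (convergence of $e_0^T$ to $e_0$ in $L^2(\mathbb P^0)$, combined with boundedness of $g$) the third term tends to $0$ as $T \to \infty$, uniformly in $\tau$. For the middle term, note that $\mathbf P^T_\tau(e_0^T g)$ corresponds under the isometry $\overline{\mathcal H}_T \to \mathcal H_T \to \boldsymbol{\mathcal H}$ (multiplication by $e_0^T$) to $e_0^T \cdot \overline{\mathbf P}^T_\tau g$, so
\begin{equation}
\| \mathbf P^T_\tau(e_0^T g) - e_0^T g \|_{\boldsymbol{\mathcal H}} = \| e_0^T (\overline{\mathbf P}^T_\tau g - g) \|_{\boldsymbol{\mathcal H}} \leq \| e_0^T \|_{L^{2r'}(\mathbb P^0)} \, \| \overline{\mathbf P}^T_\tau g - g \|_{L^{2r}(\overline\nu^0_T)}
\end{equation}
for conjugate exponents $r, r'$, using Hölder; here I would use that $g$ is bounded to reduce $\| \overline{\mathbf P}^T_\tau g - g \|_{L^{2r}(\overline\nu^0_T)}$ to a power of $\| \overline{\mathbf P}^T_\tau g - g \|_{L^2(\overline\nu^0_T)}$ (interpolating the bounded difference against the $L^2$ bound, e.g.\ $\|\cdot\|_{L^{2r}}^{2r} \leq (2\|g\|_\infty)^{2r-2}\|\cdot\|_{L^2}^2$), and then invoke $\sup_T \|e_0^T\|_{L^q(\nu^0_T)} \leq c_0$ from Lemma \ref{lemma: eigenvector bounds}. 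Thus the middle term is bounded by $C \big( \sup_T \| \overline{\mathbf P}^T_\tau g - g \|_{\overline{\mathcal H}_T} \big)^{1/r}$, which by Lemma \ref{lem: strong cont technical} tends to $0$ as $\tau \to 0$ uniformly in $T$.

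Putting these together: given $\epsilon > 0$, first choose $\tau_0$ so small that the middle term is $< \epsilon$ for all $T$ and all $\tau < \tau_0$ (Lemma \ref{lem: strong cont technical}); then, since the first and third terms vanish as $T \to \infty$ for each fixed $\tau$, take $T \to \infty$ in the triangle inequality to conclude $\| \mathbf P_\tau (e_0 g) - e_0 g \|_{\mathcal H} \leq \epsilon$ for all $\tau < \tau_0$. This proves \eqref{eq: strong continuity} on $\mathcal D$, and density plus Proposition \ref{proposition: semigroup norm bounds} (a standard $3\epsilon$-argument: $\|\mathbf P_\tau f - f\| \leq \|\mathbf P_\tau(f - f')\| + \|\mathbf P_\tau f' - f'\| + \|f' - f\|$ with $f' \in \mathcal D$ close to $f$ and $\|\mathbf P_\tau\|_{\mathcal L(\mathcal H)} \leq e^{C\tau}$ bounded near $\tau = 0$) extends it to all of $\mathcal H$. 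The main obstacle is the careful bookkeeping of the three isometries and the order of limits — one must genuinely take $\tau \to 0$ \emph{uniformly in} $T$ in the ground-state-transformed picture (which is exactly what Lemma \ref{lem: strong cont technical} provides) \emph{before} sending $T \to \infty$, since no direct strong-continuity statement is available for the untransformed $(\mathbf P^T_\tau)$ or the limiting $(\mathbf P_\tau)$; the Hölder step trading the $L^q$ control of the eigenvectors against the $L^2$ semigroup estimate is the other place where the quantitative inputs (Lemma \ref{lemma: eigenvector bounds}, Proposition \ref{prop: tightness eigenvectors}) are essential.
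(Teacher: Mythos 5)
Your proof takes essentially the same route as the paper: density of $\{e_0 g : g \in {\rm Lip}_b\}$ via the multiplication isometry, approximation by the cutoff semigroups $(\mathbf P^T_\tau)$ using Lemma \ref{lemma: semigroup convergence in coupling} (at $\tau$ and at $\tau = 0$) to control the two boundary terms, and Lemma \ref{lem: strong cont technical} for uniform-in-$T$ control of the middle term. The careful interchange of the $\tau \to 0$ and $T \to \infty$ limits at the end is precisely what the paper (more tersely, via the claimed limiting equality) does.

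The one thing I would streamline is the Hölder--interpolation treatment of the middle term; it is more machinery than you need. By the very definition $\overline\nu^0_T = (e_0^T)^2\nu^0_T$, multiplication by $e_0^T$ is an isometry $\overline{\mathcal H}_T \to \mathcal H_T \cong \boldsymbol{\mathcal H}$, so one has the \emph{exact} identity $\|e_0^T(\overline{\mathbf P}^T_\tau g - g)\|_{\boldsymbol{\mathcal H}} = \|\overline{\mathbf P}^T_\tau g - g\|_{\overline{\mathcal H}_T}$ (after absorbing the harmless factor $e^{E_0^T\tau}$, uniformly near $1$ by Proposition \ref{proposition: cutoff spectral lower bound}), to which Lemma \ref{lem: strong cont technical} applies directly --- no Hölder, no $L^q$ eigenvector bound, no $L^\infty$--$L^2$ interpolation. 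Your Hölder inequality as stated is also a bit slippery: naive Hölder on $L^2(\mathbb P^0) \cong L^2(\nu^0_T)$ produces $\|e_0^T\|_{L^{2r'}}\|\overline{\mathbf P}^T_\tau g - g\|_{L^{2r}(\nu^0_T)}$, with $\nu^0_T$ rather than $\overline\nu^0_T$ on the second factor, so the interpolation that follows would not connect to the $\overline{\mathcal H}_T$-norm controlled by Lemma \ref{lem: strong cont technical}. The inequality you displayed is nonetheless true, but only because of the weighted split $\int (e_0^T)^2 h^2\,d\nu^0_T = \int (e_0^T)^{2/r'}\bigl[(e_0^T)^{2/r}h^2\bigr]\,d\nu^0_T \leq \|h\|_{L^{2r}(\overline\nu^0_T)}^2$ (which returns the sharper constant $1$, not $\|e_0^T\|_{L^{2r'}}$), and this weighted bound is itself just a weak form of the exact isometry.
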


\begin{proof}
Recall the construction of $e_0$ in Proposition \ref{prop: tightness eigenvectors}. Since $e_0>0$ almost surely, we may define the measure $\overline \nu^0$ by the density $\overline\nu^0(d\varphi):= e_0(\varphi)^2\nu^0(d\varphi)$ and let $\overline{\mathcal H}:= L^2(\overline\nu^0)$. Again, since $e_0 > 0$, the map $\mathcal I_\infty: \overline{\mathcal H} \rightarrow \mathcal H$, $f \mapsto e_0 f$ is an isometric bijection. Recall the definition of ${\rm Lip}_b$ above. Since ${\rm Lip}_b \subset \overline{\mathcal H}$ is dense, we have that the set
\begin{equation}
B:= \{ e_0 f: f\in {\rm Lip}_b \} \subset \mathcal H	
\end{equation}
is dense in $\mathcal H$. Thus it is sufficient to estimate
\begin{equation}
\|\mathbf P_\tau (e_0 g) - e_0g \|_{\mathcal H},	
\end{equation}
for $g \in {\rm Lip}_b$. By Lemma \ref{lemma: semigroup convergence in coupling},
\begin{equation} 
\|\mathbf P_\tau(e_0 g) - e_0 g\|_{\mathcal H} = \lim_{T \rightarrow \infty} \| \mathbf P^T_\tau (e_0^T g) - e_0^Tg \|_{\mathcal H_T}. 	
\end{equation}
Thus, since 
\begin{equation}
\|\mathbf P_\tau^T(e_0^Tg)-e_0^Tg\|_{\mathcal H_T} = \| \overline{\mathbf P}^T_\tau(g)-g\|_{\overline {\mathcal H}_T},	
\end{equation}
the proposition follows from the uniform convergence to $0$ on ${\rm Lip}_b$ of the right-hand side as stated in Lemma \ref{lem: strong cont technical}.
\end{proof}

We now prove Proposition \ref{prop: hille yosida} and thereby complete the construction of the $\varphi^4_3$  Hamiltonian $\mathbf H$. 
\begin{proof}[Proof of Proposition \ref{prop: hille yosida}]
	
The proposition follows by the Hille-Yosida theorem to the semigroup $(\mathbf P_\tau)_{\tau \geq 0}$, which is applicable thanks to Propositions \ref{proposition: semigroup norm bounds} and \ref{proposition: semigroup strong continuity}.	
	
\end{proof}

\subsection{Spectral properties of $\mathbf H$} \label{subsec: spectral properties of H}

We will now prove that the spectrum of $\mathbf H$, denoted by $\sigma(\mathbf H)$, is discrete. Recall that by the Rayleigh-Ritz criterion,
\begin{equation}
\inf \sigma(\mathbf H) = \frac{\langle \mathbf H \varphi, \varphi \rangle}{\|\varphi\|^2}.
\end{equation}
By construction, $\sigma(\mathbf H) \subset \mathbb R$.

\begin{proposition} \label{proposition: lower bound spectrum H}
The spectrum of $\mathbf H$ is bounded from below, i.e.\
\begin{equation}
\inf \sigma(\mathbf H) >-\infty.	
\end{equation}
 	
\end{proposition}

\begin{proof}
Let $\tau \geq 1$. By applying the Cauchy-Schwarz inequality and the gluing of amplitudes in Theorem \ref{thm: main}, for every $u \in \mathcal H$,
\begin{align}
|\langle \mathbf P_\tau u, u \rangle_{\mathcal H}| 
&\leq \|u\|_{\mathcal H} \left(\int \left( \int u(\varphi') \mathcal A_\tau(\varphi,\varphi') \nu^0(d\varphi') \right)^2 \nu^0(d\varphi) \right)^{1/2}
\\
&=\|u\|_{\mathcal H} \left(\int  \int \int u(\varphi') u(\varphi'') \mathcal A_\tau(\varphi,\varphi') \mathcal A_\tau(\varphi,\varphi'') \nu^0(d\varphi') \nu^0(d\varphi'')  \nu^0(d\varphi) \right)^{1/2}
\\
&= \|u\|_{\mathcal H} \left( \int \int \mathcal A_{2\tau}(\varphi',\varphi'') u(\varphi') u(\varphi'') \nu^0 (d\varphi') \nu^0 (d\varphi'') \right)^{1/2}.
\end{align}
Hence, again by the Cauchy-Schwarz inequality and the gluing of amplitudes in Theorem \ref{thm: main}, and by the bounds on the periodic amplitudes (c.f. the main theorem of \cite{BG20}), there exists $E_0 \in \mathbb R$ such that
\begin{equation}
|\langle \mathbf P_\tau u, u \rangle_{\mathcal H}|\leq \|u\|_{\mathcal H} \left( (\mathcal A_{4\tau}^{\rm per})^{1/2} \left( \int u(\varphi')^2 u(\varphi'')^2 \nu(d\varphi) \nu(d\varphi') \right)^{1/2}  \right)^{1/2}\leq e^{E_0 t}\|u\|^2_{\mathcal H}. 
\end{equation}

In particular, $\|e^{-t(\mathbf H+E_0)}\|_{\text{op}}\leq 1$. We claim that $(\mathbf H+E_0) \geq 0$. Assume not.  Let $P_{\mathbf H}$ denote the projection-valued measure associated to $\mathbf H$ coming from the spectral theorem. For every $a<0$ let us consider $u_a \in \mathcal H$ such that
\begin{equation}
\langle u_a, P_{\mathbf H}((-\infty,a)) u_a \rangle > 0,	
\end{equation}
and write $\mu^a$ to denote the finite Borel measure such that, for every Borel set $\Omega\subset \mathbb R$, 
\begin{equation}
\langle u_a, P_{\mathbf H}(\Omega) u_a \rangle = \int_{\Omega} \mu^a(d\lambda). 
\end{equation}
Note that such a non-zero projection exists by the assumption $\inf \sigma(\mathbf H+E_0)=-\infty$.
By functional calculus,
\begin{equation}
\langle e^{-t(\mathbf H+E_0)}u_a, u_a\rangle = \int_{\lambda < -E_0} e^{-t(\lambda+E_0)}\mu^a(d\lambda) + \int_{\lambda \geq  -E_0}e^{-t(\lambda+E_0)}\mu^a(d\lambda).  	
\end{equation}
The second term is bounded uniformly in $t$. Let us analyze the first term for $a<-E_0$. Note that $(-\infty,a)$ is of positive $\mu^a$-measure by construction. Thus, by monotone convergence  
\begin{equation}
\lim_{t\rightarrow  \infty }	\int_{\lambda < -E_0} e^{-t(\lambda+E_0)}\mu^a(d\lambda) \geq \lim_{t\rightarrow \infty} \int_{(-\infty,a)} e^{-t(\lambda+E_0)}\mu^a(d\lambda) = + \infty. 
\end{equation}
This gives a contradiction and hence $\mathbf H+E_0 \geq 0$ and hence $\inf \sigma(\mathbf H) \geq -E_0$. 
\end{proof}

Let us now prove that $\sigma(\mathbf H)$ is discrete. We follow the strategy of \cite{GGV24}.

\begin{proposition} \label{proposition: spectrum H discrete}
The spectrum $\sigma(\mathbf H)$ is discrete and consists of eigenvalues of finite multiplicity with associated eigenvectors that form an orthonormal basis of $\mathcal H$. 
\end{proposition}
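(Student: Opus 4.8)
The plan is to establish discreteness of $\sigma(\mathbf H)$ by showing that the semigroup operators $\mathbf P_\tau$ are \emph{compact} for $\tau > 0$; once this is known, the spectral theorem for compact self-adjoint operators gives the orthonormal eigenbasis and the finite multiplicity of each eigenvalue, and the relation $\mathbf P_\tau = e^{-\tau \mathbf H}$ transfers discreteness to $\mathbf H$ (with eigenvalues accumulating only at $+\infty$, using the lower bound from Proposition \ref{proposition: lower bound spectrum H}). So the crux is compactness of $\mathbf P_\tau$.

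To prove compactness, I would first reduce to the ground-state transformed picture. Working in the simultaneous coupling space $\boldsymbol{\mathcal H}$, and using the isometry $\mathcal I_\infty : \overline{\mathcal H} \to \mathcal H$, $f \mapsto e_0 f$ (valid since $e_0 > 0$ almost surely by Proposition \ref{prop: tightness eigenvectors}), compactness of $\mathbf P_\tau$ on $\mathcal H$ is equivalent to compactness of the ground-state transformed operator $\overline{\mathbf P}_\tau := e_0^{-1} e^{-\tau(\mathbf H - \inf\sigma(\mathbf H))} e_0^{-1}$ on $\overline{\mathcal H} = L^2(\overline\nu^0)$, where $\overline\nu^0(d\varphi) = e_0(\varphi)^2 \nu^0(d\varphi)$. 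The transformed operators $\overline{\mathbf P}^T_\tau$ at finite cutoff are transition semigroups of Markov processes $\tilde X_T$ (Lemma \ref{lemma: gs markov}), are Markovian with $\overline\nu^0_T$-reversible kernels, and — since $\mathbf H_T$ has discrete spectrum — are compact. The strategy of \cite{GGV24} is then to pass compactness to the limit: one shows the kernels $\mathcal A_\tau^T(\varphi,\varphi')/(e_0^T(\varphi) e_0^T(\varphi'))$ converge (via Theorems \ref{thm: bulk}, \ref{thm: enhancement converge} and Proposition \ref{prop: tightness eigenvectors}) to the limiting kernel in a sufficiently strong sense — concretely, I would aim for an operator-norm bound of the form
\begin{equation}
\sup_T \big\| \mathbf P^T_{\tau} - \Pi_N \mathbf P^T_{\tau} \Pi_N \big\|_{\mathcal L(\mathcal H_T)} \xrightarrow{N \to \infty} 0
\end{equation}
where $\Pi_N$ projects onto a finite-dimensional subspace (e.g.\ low Fourier modes of the boundary field), which exhibits $\mathbf P_\tau$ as a norm-limit of finite-rank operators, hence compact. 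Equivalently, one can use the Hilbert–Schmidt route: from Corollary \ref{corollary: lp bound amplitude} and the gluing property of Theorem \ref{thm: main}, the kernel of $\overline{\mathbf P}_\tau$ is square-integrable on $\overline\nu^0 \otimes \overline\nu^0$ — indeed $\int\int \big(\mathcal A_\tau(\varphi,\varphi')/(e_0(\varphi)e_0(\varphi'))\big)^2 \overline\nu^0(d\varphi)\overline\nu^0(d\varphi') = \int\int \mathcal A_{2\tau}(\varphi,\varphi') \nu^0(d\varphi)\nu^0(d\varphi')$ which is finite by Corollary \ref{cor: tori gluing} and the bounds on the periodic amplitude from \cite{BG20} — so $\overline{\mathbf P}_\tau$ is Hilbert–Schmidt and therefore compact.

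I would carry out the steps in this order: (i) record that $\mathbf P_\tau$ is self-adjoint on $\mathcal H$ (which follows from reversibility of $\nu^0$ under the kernel $\mathcal A_\tau$, itself a consequence of the symmetry $\mathcal A_\tau(\varphi,\varphi') = \mathcal A_\tau(\varphi',\varphi)$ built into the construction and the gluing identity); (ii) establish the Hilbert–Schmidt bound $\|\mathcal A_\tau\|_{L^2(\nu^0 \otimes \nu^0)}^2 = \mathbb E_{\nu^0}[\mathcal A_{2\tau}(\varphi,\varphi)] < \infty$ using the gluing of Theorem \ref{thm: main}, Corollary \ref{cor: tori gluing}, and the finiteness of the periodic partition function; (iii) conclude $\mathbf P_\tau$ is compact for every $\tau > 0$; (iv) apply the spectral theorem for compact self-adjoint operators to get an orthonormal eigenbasis $(e_k)$ with eigenvalues $\mu_k(\tau) \to 0$, each of finite multiplicity; (v) use $\mathbf P_\tau = e^{-\tau\mathbf H}$ together with Proposition \ref{proposition: lower bound spectrum H} to translate this into: $\mathbf H$ has discrete spectrum consisting of eigenvalues $E_k = -\tfrac1\tau\log\mu_k(\tau)$ of finite multiplicity, bounded below and accumulating only at $+\infty$, with the same eigenbasis (consistency across different $\tau$ follows from the semigroup law).

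The main obstacle I anticipate is step (ii)/(iii): verifying that the limiting kernel $\mathcal A_\tau$ is genuinely Hilbert–Schmidt rather than merely that the cutoff kernels are. The subtlety is that $\overline\nu^0$ is singular with respect to the boundary Gaussian $\mu^0$ and $\mathcal A_{2\tau}(\varphi,\varphi)$ is only controlled through the exponential-of-enhancement bounds of Theorem \ref{thm: bulk} part (III), so one needs the large-field control of Lemma \ref{lem: large field} (or its adaptation as in the proof of Lemma \ref{lemma: eigenvector bounds}) to integrate $\mathcal A_{2\tau}(\varphi,\varphi)$ against $\nu^0$ — this is exactly the diagonal version of the small-field reduction argument, and I would need to check it applies with $\varphi_- = \varphi_+ = \varphi$ sampled from $\nu^0$ and with the closed cylinder $[0,2\tau] \times \mathbb T^2$ glued into the torus. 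If instead one follows the $\Pi_N$-truncation route of \cite{GGV24}, the obstacle shifts to proving the uniform-in-$T$ finite-rank approximation, which requires quantitative control on how the amplitude depends on high-frequency modes of the boundary data; the Hilbert–Schmidt route seems cleaner given the estimates already available in the excerpt, so I would pursue that first and fall back on the truncation argument only if the diagonal large-field bound proves recalcitrant.
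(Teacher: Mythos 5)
Your core argument coincides with the paper's: establish that $\mathbf P_\tau$ is Hilbert--Schmidt by writing $\|\mathcal A_\tau\|_{L^2(\nu^0\otimes\nu^0)}^2 = \mathcal A^{\rm per}_{2\tau}$ via the gluing property of Theorem \ref{thm: main} together with Corollary \ref{cor: tori gluing}, then bound the torus amplitude using \cite{BG20}. That is exactly step (ii) of your plan, and it is what the paper does first. You then apply the spectral theorem for compact self-adjoint operators directly to $\mathbf P_\tau$, whereas the paper instead passes from compactness of $\mathbf P_\tau$ to compactness of the resolvent $\mathbf R_\alpha$ (using $\mathbf P_\tau\mathbf R_\alpha\to\mathbf R_\alpha$ in operator norm as $\tau\downarrow 0$, which needs the lower bound of Proposition \ref{proposition: lower bound spectrum H}) and then invokes the analytic Fredholm theorem to get a meromorphic resolvent, hence discrete spectrum. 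Your direct route is perfectly valid and arguably shorter: $\mathbf P_\tau = e^{-\tau\mathbf H}$ is injective by functional calculus, so its compact spectral decomposition has strictly positive eigenvalues $\mu_k(\tau)\searrow 0$, and $E_k = -\tfrac1\tau\log\mu_k(\tau)$ is a sequence of finite-multiplicity eigenvalues of $\mathbf H$ accumulating only at $+\infty$; the semigroup law ensures the eigenbasis is independent of $\tau$. The compact-resolvent route is more standard in the Schr\"odinger operator literature and is what the paper (following \cite{GGV24}) uses, but it buys nothing extra here.

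Two smaller points. First, the detour through the ground-state transformed operator $\overline{\mathbf P}_\tau$ is unnecessary for compactness: the map $f\mapsto e_0 f$ is a Hilbert-space isometry, so compactness of $\overline{\mathbf P}_\tau$ on $\overline{\mathcal H}$ and of $\mathbf P_\tau$ on $\mathcal H$ are trivially equivalent, and the paper works directly with $\mathbf P_\tau$. Second, your intermediate identity is slightly off: you write $\int\int \big(\mathcal A_\tau(\varphi,\varphi')/(e_0(\varphi)e_0(\varphi'))\big)^2\,\overline\nu^0(d\varphi)\,\overline\nu^0(d\varphi') = \int\int\mathcal A_{2\tau}(\varphi,\varphi')\,\nu^0(d\varphi)\,\nu^0(d\varphi')$, but the left-hand side simplifies to $\int\int\mathcal A_\tau(\varphi,\varphi')^2\,\nu^0\,\nu^0$, and gluing once over $\varphi'$ gives $\int\mathcal A_{2\tau}(\varphi,\varphi)\,\nu^0(d\varphi) = \mathcal A^{\rm per}_{2\tau}$ --- a diagonal integral and then a trace, not a double integral of $\mathcal A_{2\tau}$. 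The conclusion is the same, and the large-field worry you raise at the end is moot precisely because Corollary \ref{cor: tori gluing} collapses the diagonal integral to the torus partition function, whose finiteness is already known; no further small-field reduction is required.
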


\begin{proof}
For every $\tau>0$,  $\mathbf P_\tau$ is Hilbert-Schmidt. To see this, first note that $P_\tau$ is an integral operator with kernel $\mathcal A_\tau$. It is sufficient to show that $\| \mathcal A_\tau\|_{L^2(\nu^0\otimes \nu^0)}$ is finite.  By the gluing property, for every $\tau > 0$, there exists $C(\tau) \geq 0$ such that
\begin{equation}
\int \mathcal A_\tau(\varphi,\varphi')\mathcal A_\tau(\varphi,\varphi') \nu(d\varphi) \nu(d\varphi') = \mathcal A_{2\tau}^{\rm per} \leq e^{C(\tau)}. 	
\end{equation}

For every $\alpha \in \mathbb C \setminus [-E_0,\infty)$, let us define the resolvent $\mathbf R_\alpha:=(\mathbf H-\alpha)^{-1}$. By Proposition \ref{proposition: lower bound spectrum H}, $\inf \sigma(\mathbf H) > -\infty$, by functional calculus we have that $\mathbf P_\tau \mathbf R_\alpha \rightarrow \mathbf R_\alpha$ in $\mathcal L(\mathcal H)$. Since $\mathbf P_\tau \mathbf R_\alpha$ is compact, we obtain $\mathbf R_\alpha$ is compact. 

Let $\alpha_0 < -E_0$ and consider the map
\begin{equation}
B(\lambda) := (\lambda-\alpha_0) \mathbf R_{\alpha_0}. 	
\end{equation}
Note that $B(\lambda)$ is compact for every $\lambda \in \mathbb C$ and $\lim_{\lambda \rightarrow \alpha_0}(\lambda-\alpha_0)^{-1}(B(\lambda)-B(\alpha_0)) = \mathbf R_{\alpha_0}$ exists. Therefore, by the analytic Fredholm theorem, $({\rm Id}_\mathcal H - B(\lambda))^{-1}$ exists outside of a discrete set of poles in $\mathbb C$. Now, by the resolvent identity, for every $\alpha \in \mathbb C \setminus [-E_0,\infty)$,
\begin{equation}
\mathbf R_\alpha = \mathbf R_{\alpha_0} + \mathbf R_{\alpha_0}\,  (\alpha-\alpha_0){\rm Id}_{\mathcal H} \, \mathbf R_\alpha. 
\end{equation}
Hence, we obtain that the resolvent admits a meromorphic extension outside of a discrete set on $\mathbb C$ (with poles of finite multiplicity). Thus $\sigma(\mathbf H)$ is discrete. Furthermore, since $\mathbf R_{\alpha_0}$ is compact, it has an orthonormal basis of eigenvectors. Thus, $\mathbf H$ admits an orthonormal basis eigenvectors.
\end{proof}

We now establish a Perron-Froebenius theorem on the lowest eigenvalue and associated ground state. By abuse of notation, let us write $E_0$ for the lowest eigenvalue.  
\begin{proposition}
The eigenvalue $E_0$ is almost surely simple and its associated normalized eigenvector, which we denote by $e_0$, can be chosen so that $e_0 >0$ almost surely with respect to $\nu^0$. 
\end{proposition}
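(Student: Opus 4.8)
The plan is to deduce the Perron--Frobenius statement for $\mathbf H$ from the corresponding statement for the regularized Hamiltonians $\mathbf H_T$ (Proposition \ref{proposition: cutoff hamiltonian}) by a limiting argument, leveraging the strict positivity of the limiting ground state candidate $e_0$ constructed in Proposition \ref{prop: tightness eigenvectors}. The key structural input is that the semigroup $(\mathbf P_\tau)_{\tau \geq 0}$ is \emph{positivity improving}: for every $\tau > 0$ and every non-negative $f \in \mathcal H$ with $\|f\|_{\mathcal H} > 0$, we have $\mathbf P_\tau f > 0$ almost surely with respect to $\nu^0$. This follows because the amplitude kernel $\mathcal A_\tau(\varphi, \varphi')$ is almost surely strictly positive --- indeed by Theorem \ref{thm: bulk}, part (III) together with the fact that $\mathcal A_\tau = \boldsymbol{\mathcal A}_\infty(0, \Xi^\partial_\infty(\varphi,\varphi'))$ is a limit of the strictly positive prelimiting amplitudes $\mathcal A_T^\tau$ (which are strictly positive since they are unnormalized Laplace transforms of genuine probability measures, hence expectations of strictly positive random variables), and one checks the limit remains strictly positive using the variational representation, in the same spirit as part (i) of Theorem \ref{thm: phi43 random bc}.

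Granting positivity improving, the argument is classical (see \cite[Theorem XIII.43]{RS78} or the analogous step in \cite{GGV24}). First I would record that $E_0 = \inf \sigma(\mathbf H)$ is an eigenvalue of finite multiplicity by Proposition \ref{proposition: spectrum H discrete}. Then: (i) if $e$ is any eigenvector with eigenvalue $E_0$, then $|e|$ is also an eigenvector with eigenvalue $E_0$ --- this uses that $E_0$ is the bottom of the spectrum so that $\langle \mathbf P_\tau |e|, |e| \rangle_{\mathcal H} \geq \langle \mathbf P_\tau e, e\rangle_{\mathcal H} = e^{-E_0 \tau} \||e|\|_{\mathcal H}^2$, forcing equality and hence $\mathbf P_\tau |e| = e^{-E_0\tau}|e|$; (ii) by the positivity improving property applied to $|e| \geq 0$, we get $|e| = e^{E_0 \tau}\mathbf P_\tau |e| > 0$ almost surely; (iii) consequently, if $e_1, e_2$ were two orthogonal ground state eigenvectors, both $|e_1|$ and $|e_2|$ are strictly positive, so $\langle |e_1|, |e_2|\rangle_{\mathcal H} > 0$, contradicting the fact that suitable real linear combinations of $e_1, e_2$ in the ground state eigenspace must be orthogonal yet, by (i)--(ii), of fixed sign. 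Hence the $E_0$-eigenspace is one-dimensional, i.e.\ $E_0$ is simple, and its normalized eigenvector can be chosen strictly positive.

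Finally, to identify the $\nu^0$-almost sure positivity concretely (and to connect with the random variable $e_0$ already built), I would argue that the ground state of $\mathbf H$ coincides (up to normalization and the isometry between $\mathcal H$ and $\boldsymbol{\mathcal H} = L^2(\mathbb P^0)$) with the limit point $e_0$ from Proposition \ref{prop: tightness eigenvectors}. Indeed, passing to the limit in the eigenvalue relation $e_0^T(\boldsymbol\varphi_T) = e^{E_0^T\tau}\mathbf P_\tau^T e_0^T(\boldsymbol\varphi_T)$ along the subsequence --- using the convergence of amplitudes (Theorem \ref{thm: bulk}), the convergence of enhancements (Theorem \ref{thm: enhancement converge}), the uniform bounds $|E_0^T| \leq C$ (Proposition \ref{proposition: cutoff spectral lower bound}), and the $L^q$-domination of the $e_0^T$ (Lemma \ref{lemma: eigenvector bounds}) to justify a Vitali-type passage to the limit as in Lemma \ref{lemma: semigroup convergence in coupling} --- yields $e_0(\boldsymbol\varphi_\infty) = e^{\tilde E \tau}\mathbf P_\tau e_0(\boldsymbol\varphi_\infty)$ for some $\tilde E \in [-C, C]$, so $e_0$ is an eigenvector of $\mathbf P_\tau$; since $e_0 > 0$ almost surely (already shown in Proposition \ref{prop: tightness eigenvectors}) and the $E_0$-eigenspace is the unique one containing a positive vector, necessarily $\tilde E = E_0$ and $e_0$ is (a scalar multiple of) the ground state. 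The main obstacle I anticipate is establishing the strict positivity of the limiting amplitude kernel $\mathcal A_\tau(\varphi,\varphi') > 0$ for $\nu^0\otimes\nu^0$-almost every $(\varphi,\varphi')$: the prelimiting amplitudes are manifestly positive, but the limit is defined through a variational problem ($-\log \boldsymbol{\mathcal A}_\infty = \inf_u \mathbb F_\infty$), and one must rule out the degenerate possibility that the infimum is $+\infty$ (equivalently $\mathcal A_\tau = 0$); this should follow from the same uniform equicoercivity and upper-bound estimates (Lemma \ref{lem: bulk equicoercivity}, and the $T=\infty$ analogue of Proposition \ref{prop: bdry UV stable}) that underlie the construction, but it requires a careful two-sided bound rather than the one-sided exponential bound stated in Theorem \ref{thm: bulk}(III).
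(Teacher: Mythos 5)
Your core route coincides with the paper's: reduce the Perron--Frobenius statement to the positivity-improving property of $(\mathbf P_\tau)_{\tau>0}$, which in turn follows from the $\nu^0\otimes\nu^0$-almost everywhere strict positivity of the kernel $\mathcal A_\tau$, and then invoke \cite[Proposition XIII.44]{RS78}. The paper's proof is in fact exactly this, stated in three lines; you spell out the standard Reed--Simon argument explicitly, which is fine but not a different method. Two remarks on the extra material in your writeup. First, the concern you flag at the end is real but is already absorbed in the paper's earlier results: strict positivity of the limiting amplitude is part (i) of Theorem \ref{thm: phi43 random bc} (the limiting unnormalized Laplace transform is asserted to be $>0$), which comes from the two-sided bound on the variational problem (lower bound from equicoercivity/Proposition \ref{prop: remainder terms}, upper bound from Theorem \ref{thm: bulk}(III)), exactly as in Proposition \ref{prop: bdry UV stable} for the boundary theory; the paper's one-line proof of the proposition implicitly rests on this, so you are right to note the dependence even though it is not a gap. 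Second, your closing paragraph — identifying the ground state of $\mathbf H$ with the limit point $e_0$ from Proposition \ref{prop: tightness eigenvectors} — goes beyond what the proposition claims and beyond what the paper proves: the remark immediately after this proposition in the paper explicitly declines to establish that identification, so that part of your argument is not needed for the statement at hand (and would require additional work to make airtight).
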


\begin{proof}
By \cite[Proposition XIII.44]{RS78}, it is sufficient to show that $\mathbf P_\tau $ is positivity improving for every $\tau >0$. This means that for every $f \in \mathcal H$ such that $f\geq 0$ and $f$ is not identically $0$ almost surely, then we have that $\mathbf P_\tau f > 0$ almost surely. This follows from the fact that $\mathcal A_\tau$ is positive $\nu^0\otimes\nu^0$-almost everywhere.  	
\end{proof}

\begin{remark}
Although we did not prove this, it is reasonable to expect that the random variable $e_0$ constructed in Proposition \ref{prop: tightness eigenvectors} coincides with $e_0$ above (hence the abuse of notation). 	
\end{remark}

Finally, we end with uniform $L^q$ bounds on the eigenvectors of $\mathbf H$. The proof follows from similar arguments as in Lemma \ref{lemma: eigenvector bounds} and thus we omit it. 
\begin{proposition} \label{prop: eigenvector bounds limiting}
There exists $q>2$ such that, for every $i \in \mathbb N$, there exists $c(q,i)>0$ such that
\begin{equation}
\| e_i\|_{L^q(\nu^0)} \leq c(q,i),
\end{equation}
where $e_i$ is a normalized eigenvector associated to eigenvalue $E_i$. 
\end{proposition}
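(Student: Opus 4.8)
The plan is to mimic the argument used for the ground states in Lemma \ref{lemma: eigenvector bounds}, but now working directly with the limiting amplitude $\mathcal A_\tau$ and the limiting Hamiltonian $\mathbf H$, which is legitimate since Proposition \ref{proposition: spectrum H discrete} already gives us that $\sigma(\mathbf H)$ is discrete with an orthonormal basis of eigenvectors $(e_i)_{i \in \mathbb N}$ and associated eigenvalues $(E_i)_{i \in \mathbb N}$. First I would fix $i$ and use that $e_i$ is an eigenvector: for every $\tau > 0$, almost surely in $\varphi$,
\begin{equation}
e_i(\varphi) = e^{E_i \tau} \int \mathcal A_\tau(\varphi, \varphi') e_i(\varphi') \, \nu^0(d\varphi').
\end{equation}
Here the prefactor $e^{E_i \tau}$ is a fixed finite constant (depending on $i$ and $\tau$) — this is the only place $i$ enters quantitatively, which is why the bound $c(q,i)$ is allowed to depend on $i$, unlike in Lemma \ref{lemma: eigenvector bounds} where uniformity in $T$ forced us to only use the lowest eigenvalue. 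By the Cauchy-Schwarz inequality and the gluing property of Theorem \ref{thm: main}, we then get
\begin{equation}
\left( \int \mathcal A_\tau(\varphi,\varphi') e_i(\varphi') \, \nu^0(d\varphi') \right)^2 \leq \|e_i\|_{\mathcal H}^2 \, \mathcal A_{2\tau}(\varphi,\varphi) = \mathcal A_{2\tau}(\varphi,\varphi),
\end{equation}
so that $e_i(\varphi)^2 \leq e^{2E_i \tau} \mathcal A_{2\tau}(\varphi,\varphi)$ almost surely.

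Next I would invoke the almost sure bounds on the amplitudes from part (III) of Theorem \ref{thm: bulk} (in its $T = \infty$ form, which is part of the statement), giving $c, p > 0$ such that $\mathcal A_{2\tau}(\varphi,\varphi) \leq C \exp(c \|\Xi^\partial_\infty(\varphi,\varphi)\|_{\boldsymbol{\mathfrak B}}^p)$ almost surely. Combining the previous display with this pointwise bound, for $\delta > 0$ small we obtain, for some $\alpha > 0$ and $c' > 0$,
\begin{equation}
\int e_i(\varphi)^{2+\delta} \, \nu^0(d\varphi) \leq e^{(2+\delta)E_i\tau} C^{\delta/2} \int \mathcal A_{2\tau}(\varphi,\varphi) \exp\!\left( c' \|\Xi^\partial_\infty(\varphi,\varphi)\|_{\boldsymbol{\mathfrak B}}^\alpha \right) \nu^0(d\varphi).
\end{equation}
Finally, the right-hand side is exactly of the form controlled by the small-field reduction argument: it is the $\tau = \infty$ (or rather, limiting) analogue of the quantity appearing in the proof of Lemma \ref{lem: large field}, with the diagonal boundary data $(\varphi,\varphi)$ both sampled according to $\nu^0$ and the amplitude on $[0,2\tau]\times\mathbb T^2$. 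I would therefore repeat that argument — the Bou\'e-Dupuis representation, the drift ansatz, and the observation that the extra exponential weight $\exp(c'\|\Xi^\partial\|_{\boldsymbol{\mathfrak B}}^\alpha)$ is absorbed into the coercive terms by choosing $\alpha$ sufficiently small — to conclude that the integral is finite, bounded by a constant depending only on $q = 2+\delta$ and $\tau$, hence on $q$ and $i$ after fixing $\tau$. This yields $\|e_i\|_{L^q(\nu^0)} \leq c(q,i)$ with $q = 2 + \delta > 2$.

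The main obstacle is the small-field reduction step: unlike in Lemma \ref{lemma: eigenvector bounds}, where the relevant estimate was already available at finite $T$ and one simply passed to the limit via Fatou, here I need the limiting ($T = \infty$) version of the estimate with the additional exponential weight and diagonal boundary conditions. However, this is genuinely a cosmetic adaptation of Section \ref{sec: smallfield reduction}: the proof of Lemma \ref{lem: large field} works with boundary fields sampled from $\nu^0_T$ and passes to $\nu^0$ in the limit, and restricting to the diagonal $(\varphi,\varphi)$ rather than independent $(\varphi_-,\varphi_+)$ changes nothing essential since the coupling of the two copies only makes the stochastic estimates of Section \ref{sec:stochastic} easier (fewer independent sources of randomness) — indeed Corollary \ref{corollary: lp bound amplitude} already records exactly this type of diagonal $L^q$ bound on the amplitude, and one could alternatively derive the proposition directly from that corollary together with the eigenvector equation and Proposition \ref{prop: eigenvector bounds limiting}'s underlying mechanism. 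A secondary minor point is to make sure the constant in part (III) of Theorem \ref{thm: bulk} applied on $[0,2\tau]\times\mathbb T^2$ is uniform in the (now fixed) $\tau$, which is immediate.
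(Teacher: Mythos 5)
Your proposal is correct and follows essentially the same route the paper intends: the paper explicitly omits this proof, stating only that it "follows from similar arguments as in Lemma \ref{lemma: eigenvector bounds}," and the remark after that lemma already foreshadows that the argument will be re-run on the eigenvectors of $\mathbf H$ once discreteness of the spectrum is known, with the freedom to let constants depend on $E_i$. Your write-up reproduces that plan faithfully — eigenvector equation, Cauchy-Schwarz plus gluing to get $e_i(\varphi)^2 \le e^{2E_i\tau}\mathcal A_{2\tau}(\varphi,\varphi)$, the $T=\infty$ almost-sure bound from Theorem \ref{thm: bulk}(III), and absorption of the resulting exponential weight via the small-field mechanism of Section \ref{sec: smallfield reduction}. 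One small caveat on the parenthetical alternative you sketch: Corollary \ref{corollary: lp bound amplitude} is stated only for $T\in[0,\infty)$, so invoking it directly for the limiting amplitude would itself require a limiting argument (Fatou/Skorokhod), which is no simpler than the route you actually take; and the reference to "Proposition \ref{prop: eigenvector bounds limiting}'s underlying mechanism" should presumably read Lemma \ref{lemma: eigenvector bounds}. Neither affects the correctness of the main argument.
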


\subsection{Proof of  Theorem \ref{thm: hamiltonian}} \label{subsec: proof of Theorem Hamiltonian}

We now combine the results established in this section in order to prove Theorem \ref{thm: hamiltonian}. 

\begin{proof}[Proof of theorem \ref{thm: hamiltonian}]

The construction of the $\varphi^4_3$ Hamiltonian is a consequence of Proposition \ref{prop: hille yosida}, see Definition \ref{defn:phi43hamiltonian}. The spectral properties and eigenvector bounds are a consequence of all of the results in Section \ref{subsec: spectral properties of H}.
	
\end{proof}

\appendix

\section{Besov spaces, paraproducts, and resonant products} \label{appendix:besov}

In this appendix, we recall some basic definitions and properties of Besov spaces and paradifferential calculus from \cite[Sections 2.7 and 2.8]{BCD11}. 

\subsection{Besov spaces}

We define Besov spaces on $\mathbb T^d$. Let $B(x,r)$ denote the ball centred at $x \in \mathbb R^d$ of radius $r > 0$ and let $A$ denote the annulus $B(0, \frac 43) \setminus B(0, \frac 38)$. Let $\tilde \Delta, \Delta \in C^\infty_c(\mathbb R^d;[0,1])$ be radially symmetric and satisfy 
\begin{itemize}
\item $\mathrm{supp} \tilde \chi \subset B(0, \frac 43)$ and $\mathrm{supp} \chi \subset A$; 
\item $\sum_{k \geq -1} \chi_k = 1$, where $\chi_{-1} = \tilde \chi$ and $\chi_k(\cdot) = \chi(2^{-k}\cdot)$ for $k \in \mathbb N \cup \{0\}$. 
\end{itemize}
Identify $\Delta_k$ with its Fourier multiplier. 	

For $s \in \mathbb R$, $p,q \in [1,\infty]$, we define the Besov spaces $B^s_{p,q}(\mathbb T^d)$ to be the completion of $C^\infty(\mathbb T^d)$ with respect to the norm
\begin{equation}
    \| f \|_{B^s_{p,q}(\mathbb T^d)} = \Big\| \Big( 2^{ks} \|\Delta_k f \|_{L^p(\mathbb T^d)} \Big)_{k \geq -1} \Big\|_{l^q}
\end{equation}
where $l^q$ is the usual space of $q$-summable sequences, interpreted as a supremum when $q=\infty$.

The following analytic estimates are standard.
\begin{proposition}
The following estimates hold.
\begin{enumerate}
\item[(i)] \textbf{Duality.} Let $s \in \mathbb R$ and $p_1,p_2,q_1,q_2 \in [1,\infty]$ such that $\frac 1{p_1} + \frac 1{p_2} = \frac 1{q_1} + \frac 1{q_2} = 1$. Then
	\begin{equation}
	\left| \int_{\mathbb T^d} f g dx  \right| \leq
	\| f \|_{B^{-s}_{p_1,q_1}} \|g \|_{B^s_{p_2,q_2}}, \qquad \forall f,g \in C^\infty(\mathbb T^d). 
	\end{equation}
\item[(ii)] \textbf{Fractional Leibniz.} Let $s \in \mathbb R$, $p,p_1,p_2,p_3,p_4,q \in [1,\infty]$ satisfy $\frac 1p = \frac 1{p_1} + \frac 1{p_2} = \frac 1{p_3} + \frac 1{p_4}$. Then there exists $C>0$ such that
	\begin{equation}
	\| fg \|_{B^s_{p,q}} \leq C\| f \|_{B^s_{p_1,q}} \| g \|_{L^{p_2}} + \| f \|_{L^{p_3}} \|g \|_{B^s_{p_4,q}}, \qquad \forall f,g \in C^\infty(\mathbb T^d).
	\end{equation}
\item[(iii)] \textbf{Interpolation.} Let $s,s_1,s_2 \in \mathbb R$ such that $s_1 < s < s_2$, $p,p_1,p_2,q,q_1,q_2 \in [1,\infty]$ and $\theta \in (0,1)$ satisfy
	\begin{equation}
	s= \theta s_1 + (1-\theta)s_2, \quad \frac 1p = \frac \theta {p_1} + \frac{1-\theta}{p_2}, \quad \frac 1q=\frac \theta {q_1} + \frac{1-\theta}{q_2}.
	\end{equation}
	Then there exists $C>0$ such that 
	\begin{equation}
	\| f \|_{B^s_{p,q}}\leq C\| f \|_{B^{s_1}_{p_1,q_1}}^\theta \| f \|_{B^{s_2}_{p_2,q_2}}^{1-\theta}.
	\end{equation}
\item[(iv)] \textit{Bernstein inequalities.} Let $R>0$ and let $B_{\mathbb Z^d}(R) = \{ n \in \mathbb Z^d : |n| \leq R \}$. Let $s_1, s_2 \in \mathbb R$ such that $s_1 < s_2$, $p,q \in [1,\infty]$. There exists $C>0$ such that 
\begin{equation}
\| f \|_{B^{s_2}_{p,q}}\leq C R^{s_2 - s_1} \| f \|_{B^{s_1}_{p,q}}, \qquad \forall f,g \in C^\infty(\mathbb T^d)
\end{equation}
and
\begin{equation}
\| g \|_{B^{s_1}_{p,q}}\leq CR^{s_1 - s_2}\| g \|_{B^{s_2}_{p,q}},	
\end{equation}
for $f,g \in C^\infty(\mathbb T^d)$ such that $\mathrm{supp} (\mathcal F f) \subset B_{\mathbb Z^d}(R)$ and $\mathrm{supp} (\mathcal F g) \subset \mathbb Z^d \setminus B_f(R)$. 
\end{enumerate}	
\end{proposition}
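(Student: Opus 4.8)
The plan is to derive all four estimates from the Littlewood--Paley characterisation of $B^s_{p,q}(\mathbb T^d)$ together with the Bony paraproduct decomposition, following the arguments of \cite[Sections~2.7 and~2.8]{BCD11}; the only point that genuinely has to be checked is that those arguments, written there for $\mathbb R^d$, transfer verbatim to $\mathbb T^d$. For the latter I would record once and for all that any Fourier multiplier with symbol in $C^\infty_c(\mathbb R^d)$ --- in particular the dyadic blocks $\Delta_k$ and the partial sums $S_k=\sum_{j<k}\Delta_j$ --- acts on $\mathbb T^d$ as convolution with the periodisation of a Schwartz function, hence is bounded on every $L^p(\mathbb T^d)$ with the usual rescaling behaviour; the almost-orthogonality and Fourier-support properties of the blocks are purely arithmetic statements on $\mathbb Z^d$ and hold identically.

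For the duality estimate (i) I would write $\int_{\mathbb T^d} fg\,dx=\sum_{j,k\ge -1}\int_{\mathbb T^d}\Delta_j f\,\Delta_k g\,dx$, observe that the summand vanishes unless $|j-k|\le N_0$ for a fixed $N_0$ (because $\operatorname{supp}\widehat{\Delta_j f}$ and $\operatorname{supp}\widehat{\Delta_k g}$ sit in dyadic annuli, or a ball for the index $-1$, of comparable size), then on that band apply H\"older in $x$ with exponents $(p_1,p_2)$ and H\"older in the summation index with exponents $(q_1,q_2)$, using $2^{(k-j)s}\lesssim 1$ on the diagonal to redistribute the weight $2^{-js}2^{ks}$; this produces $\le C\|f\|_{B^{-s}_{p_1,q_1}}\|g\|_{B^{s}_{p_2,q_2}}$.

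For the fractional Leibniz rule (ii) the key step is the splitting $fg=f\prec g+f\circ g+f\succ g$ with $f\prec g=\sum_j S_{j-1}f\,\Delta_j g$ and $f\circ g=\sum_{|i-j|\le 1}\Delta_i f\,\Delta_j g$. In $f\prec g$ the $j$-th term has spectrum in an annulus of size $2^j$, so $\Delta_k(f\prec g)$ only sees $|j-k|\le N_0$; bounding $\|S_{j-1}f\|_{L^{p_3}}\lesssim\|f\|_{L^{p_3}}$ and placing the weight $2^{js}$ on $\Delta_j g$ gives the $\|f\|_{L^{p_3}}\|g\|_{B^s_{p_4,q}}$ contribution, and symmetrically $f\succ g$ gives $\|f\|_{B^s_{p_1,q}}\|g\|_{L^{p_2}}$. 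For the resonant term one uses that $\Delta_i f\,\Delta_j g$ has spectrum in a ball of size $2^{\max(i,j)}$, so $\Delta_k(f\circ g)$ is nonzero only for $2^k\lesssim 2^j$; putting the full weight $2^{js}$ on one factor and an $L^{p_2}$ (resp. $L^{p_3}$) norm on the other, the residual geometric sum $\sum_{j\ge k-N_0}2^{(k-j)s}$ is summed off and absorbed into the $\ell^q$ norm by a discrete Young/H\"older inequality. This is exactly the proof of the product laws in \cite[Section~2.8]{BCD11}, and it is the one place where the Fourier-support bookkeeping and the range of the exponents must be tracked with care.

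For the interpolation estimate (iii) I would interpolate each block in $x$, $\|\Delta_j f\|_{L^p}\le\|\Delta_j f\|_{L^{p_1}}^{\theta}\|\Delta_j f\|_{L^{p_2}}^{1-\theta}$, so that $2^{js}\|\Delta_j f\|_{L^p}\le\bigl(2^{js_1}\|\Delta_j f\|_{L^{p_1}}\bigr)^{\theta}\bigl(2^{js_2}\|\Delta_j f\|_{L^{p_2}}\bigr)^{1-\theta}$, and then apply H\"older in $j$ with exponents $q_1/\theta$ and $q_2/(1-\theta)$, which is licit precisely because $\tfrac1q=\tfrac{\theta}{q_1}+\tfrac{1-\theta}{q_2}$. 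For the Bernstein inequalities (iv) I would first recall the scalar Bernstein estimate on $\mathbb T^d$ (convolution with a rescaled Schwartz kernel and Young's inequality, plus the reverse Nikolskii estimate via a multiplier equal to $|\xi|^{-1}$ on the relevant annulus), then note that if $\operatorname{supp}\mathcal F f\subset B_{\mathbb Z^d}(R)$ only the blocks with $2^j\lesssim R$ survive, so $2^{js_2}\lesssim R^{s_2-s_1}2^{js_1}$ on those indices and the $\ell^q$ norm is controlled as claimed, while if $\operatorname{supp}\mathcal F g\subset\mathbb Z^d\setminus B_{\mathbb Z^d}(R)$ only $2^j\gtrsim R$ survive and one gains the factor $R^{s_1-s_2}$ since $s_1<s_2$. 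The main obstacle, which is very mild, is therefore purely one of bookkeeping: tracking Fourier supports and summability exponents in the paraproduct argument for (ii) and confirming the uniform $L^p(\mathbb T^d)$-boundedness of the periodic multipliers; nothing beyond \cite{BCD11} is needed.
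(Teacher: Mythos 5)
Your proposal is correct and follows the same route the paper implicitly takes: the proposition is stated as standard and attributed to \cite[Sections~2.7 and~2.8]{BCD11}, and your sketch is precisely the Littlewood--Paley/Bony argument from that reference, adapted to $\mathbb T^d$ via the periodisation of the dyadic multipliers. The only cosmetic discrepancy is that the paper states the duality bound (i) without a multiplicative constant, whereas the block-overlap argument you give naturally produces a constant $C$ depending on $N_0$; this is harmless and is how the estimate is actually used elsewhere in the paper.
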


\subsection{Para and resonant products}

We now introduce para and resonant products following \cite[Section 2.8]{BCD11}. Let $f,g \in C^\infty(\mathbb T^d)$. Define the paraproduct
\begin{equation}
f \succ g=\sum_{l < k-1} \Delta_k f \Delta_l g	
\end{equation}
and the resonant product
\begin{equation}
f \circ g
=
\sum_{|k-l| \leq 1} \Delta_k f \Delta_l g.	
\end{equation}
We also write $f \prec g$ interchangeably for $g \succ f$. We thus have Bony's decomposition 
\begin{equation}
fg=f \prec g + f \circ g + f \succ g.	
\end{equation}

The following para and resonant product estimates are standard. 
\begin{proposition}
The following estimates hold.
\begin{enumerate}
	\item[(i)]\textbf{Paraproduct estimate.} Let $s \in \mathbb R$ and $p,p_1,p_2,q \in [1,\infty]$ be such that $\frac 1p = \frac 1 {p_1} + \frac 1{p_2}$. There exists $C>0$ such that
\begin{equation} 
	\|f \succ g\|_{B^{s}_{p,q}}\leq C\| f \|_{B^s_{p_1,q}} \| g \|_{L^{p_2}}, \qquad \forall f,g \in C^\infty(\mathbb T^d).
\end{equation}
	\item[(ii)] \textbf{Resonant product estimate.} Let $s_1,s_2 \in \mathbb R$ such that $s=s_1 + s_2 > 0$. Let $p,p_1,p_2,q \in [1,\infty]$ satisfy $\frac 1p = \frac 1{p_1} + \frac 1{p_2}$. There exists $C>0$ such that
	\begin{equation}
	\| f \circ g \|_{B^s_{p,q}} \leq C\| f \|_{B^{s_1}_{p_1,\infty}} \| g \|_{B^{s_2}_{p_2,q}}, \qquad \forall f,g \in C^\infty(\mathbb T^d). 	
	\end{equation}
\end{enumerate}	
\end{proposition}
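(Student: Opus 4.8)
The plan is to reduce both inequalities to the two standard structural facts about the Littlewood--Paley blocks. First, the uniform $L^p(\mathbb{T}^d)$ boundedness of $\Delta_j$ and of the partial sums $S_k := \sum_{l \le k} \Delta_l$, with operator norm independent of $j,k$: this follows from Young's convolution inequality once one notes that the convolution kernels are periodizations of $2^{jd}\check\chi(2^j\cdot)$ and $2^{kd}\check{\tilde\chi}(2^k\cdot)$, whose $L^1$ norms are scale-invariant because $\chi,\tilde\chi$ are fixed smooth compactly supported functions. Second, the spectral localization of products of blocks: for a fixed $N_0$ depending only on the choice of $\chi$, the spectrum of $\Delta_k f\, S_{k-2}g$ lies in a fixed dilate $2^k A'$ of an annulus, while the spectrum of $\Delta_k f\, \tilde\Delta_k g$ (with $\tilde\Delta_k := \Delta_{k-1}+\Delta_k+\Delta_{k+1}$) lies in a ball $B(0, c\,2^k)$. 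Both facts are elementary computations with the supports of $\chi_k$.

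For the paraproduct estimate, I would write $f \succ g = \sum_k \Delta_k f\, S_{k-2}g$. By the annulus localization there is $N_0$ with $\Delta_j(f \succ g) = \sum_{|j-k|\le N_0} \Delta_j(\Delta_k f\, S_{k-2}g)$. Applying uniform $L^p$ boundedness of $\Delta_j$, Hölder's inequality with $\tfrac1p = \tfrac1{p_1}+\tfrac1{p_2}$, and the uniform $L^{p_2}$ bound for $S_{k-2}$, one gets $\|\Delta_j(f\succ g)\|_{L^p} \le C\sum_{|j-k|\le N_0}\|\Delta_k f\|_{L^{p_1}}\|g\|_{L^{p_2}}$. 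Multiplying by $2^{js}$, bounding $2^{js}\le C2^{ks}$ for $|j-k|\le N_0$, and taking the $\ell^q$ norm in $j$ — a finite shift-and-sum, hence bounded on $\ell^q$ — yields $\|f\succ g\|_{B^s_{p,q}} \le C\|f\|_{B^s_{p_1,q}}\|g\|_{L^{p_2}}$.

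For the resonant product, write $f \circ g = \sum_k \Delta_k f\, \tilde\Delta_k g$. By the ball localization, $\Delta_j(f\circ g) = \sum_{k \ge j - N_0}\Delta_j(\Delta_k f\, \tilde\Delta_k g)$, so $\|\Delta_j(f\circ g)\|_{L^p} \le C\sum_{k\ge j - N_0}\|\Delta_k f\|_{L^{p_1}}\|\tilde\Delta_k g\|_{L^{p_2}} \le C\|f\|_{B^{s_1}_{p_1,\infty}}\|g\|_{B^{s_2}_{p_2,q}}\sum_{k\ge j-N_0}2^{-k(s_1+s_2)}c_k$, where $(c_k) := (2^{ks_2}\|\tilde\Delta_k g\|_{L^{p_2}}/\|g\|_{B^{s_2}_{p_2,q}})_k$ has unit $\ell^q$ norm. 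Hence $2^{js}\|\Delta_j(f\circ g)\|_{L^p} \le C\|f\|_{B^{s_1}_{p_1,\infty}}\|g\|_{B^{s_2}_{p_2,q}}\,(a * c)_j$ with $a_m := 2^{-ms}\mathbbm{1}_{\{m\ge -N_0\}}$. This is precisely where $s = s_1 + s_2 > 0$ enters: it is exactly the condition for $a \in \ell^1(\mathbb{Z})$, so Young's inequality for sequences gives $\|a*c\|_{\ell^q} \le \|a\|_{\ell^1}\|c\|_{\ell^q} \le C$, and the estimate follows.

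I do not expect a genuine obstacle here: this is the content of \cite[Section 2.8]{BCD11}, and I would cite it for the details. The only points needing a word of care are the two spectral-localization lemmas, and the verification that passing from $\mathbb{R}^d$ to $\mathbb{T}^d$ changes nothing — which holds because the periodized Littlewood--Paley kernels retain the same scale-invariant $L^1$ bounds used above.
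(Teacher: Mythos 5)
The paper does not give a proof of this proposition: it states the estimates as standard and cites \cite[Section 2.8]{BCD11}. Your argument is a correct reconstruction of that standard proof --- annulus/ball spectral localization of $\Delta_k f\, S_{k-2}g$ and $\Delta_k f\, \tilde\Delta_k g$, uniform $L^p$ boundedness of the blocks, H\"older, and Young's inequality for sequences (with $s>0$ precisely giving $\ell^1$-summability of $2^{-ms}\mathbbm{1}_{\{m\ge -N_0\}}$) --- so it matches the cited reference and there is nothing to reconcile.
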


\section{Bulk covariance estimates} \label{sec: Dirichlet covariance estimates}

\subsection{Interpolation with cutoff scale $T$}
\begin{proposition} \label{prop: M covariance interp T}
Let $\alpha \in [0,1)$. Then there exists $C>0$ such that, for every $T>0$, 
\begin{equation}
|C^M_T(x,y)|\leq C \frac{\langle T\rangle ^\alpha}{|x-y|^{1-\alpha}}, \qquad \forall x,y \in M. 
\end{equation}
\end{proposition}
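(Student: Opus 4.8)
The plan is to recall the standard Gaussian estimates for the mollified Dirichlet covariance and then interpolate them. First I would note that by the definition of the regularized covariance $C^M_T = \rho_T \ast C^M \ast \rho_T$ (convolution in the periodic $z$-directions), together with the representation of $C^M$ as the Dirichlet Green's function of $-\Delta + m^2$ on $M = [-L,L] \times \mathbb{T}^2$, one has the two a priori bounds
\begin{equation}
|C^M_T(x,y)| \leq \frac{C}{|x-y|}, \qquad |C^M_T(x,y)| \leq C \langle T \rangle,
\end{equation}
uniformly in $T > 0$ and $x,y \in M$. The first is the familiar short-distance singularity of the three-dimensional Green's function (the mollification does not worsen it, since $\rho$ is a nice compactly supported bump and the periodic directions only contribute a smoothing kernel with an integrable singularity), and the second comes from the fact that $\hat\rho_T(n) = \rho(\langle n\rangle/T)$ is supported on $|n| \lesssim T$, so that the diagonal value $C^M_T(x,x) = \mathbb{E}_\mu[\varphi_T(x)^2]$ is of order $\langle T \rangle$ by summing $\langle n \rangle^{-2}$ over $|n| \lesssim T$ (the $\tau$-direction sum is always convergent). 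Both of these are essentially the content of the covariance computations already invoked in Section \ref{sec: gaussian} and are collected in this appendix; I would cite \cite{BG20} for the periodic analogues and note the Dirichlet case follows by the domain Markov decomposition of Lemma \ref{lemma: covariance DMP decomp}.

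Next I would interpolate: for any $\alpha \in [0,1)$, writing $|C^M_T(x,y)| = |C^M_T(x,y)|^{1-\alpha} |C^M_T(x,y)|^\alpha$ and applying the first bound to the first factor and the second bound to the second factor gives
\begin{equation}
|C^M_T(x,y)| \leq \left(\frac{C}{|x-y|}\right)^{1-\alpha} (C\langle T\rangle)^\alpha = C \frac{\langle T\rangle^\alpha}{|x-y|^{1-\alpha}},
\end{equation}
which is exactly the claimed estimate (absorbing the constant). This is the entire argument; there is no real obstacle. The only point requiring a modicum of care is the uniformity in $T$ of the two input bounds, particularly the diagonal bound $|C^M_T(x,x)| \leq C\langle T\rangle$ — one should check that the Dirichlet boundary conditions do not spoil it, but since the Dirichlet Green's function is dominated pointwise by the full-space (or periodic) one, this is immediate. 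I would also remark that the same interpolation scheme, combined with the covariance estimates in Appendix \ref{appendix: harmonic extension} for the Poisson kernel, is what produces the various weighted bounds (with distance-to-boundary factors) used throughout Sections \ref{sec:stochastic} and \ref{sec:equicoercive}, so it is worth stating cleanly here.
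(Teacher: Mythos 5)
Your argument is correct, and it is a genuinely different route from the one in the paper. The paper's proof for $\alpha>0$ goes through the derivative of the regularized covariance in the cutoff parameter: one first establishes $|\dot C^M_t(x,y)| \leq C\langle t\rangle^{\alpha-1}|x-y|^{-(1-\alpha)}$ (Proposition \ref{prop: M derivative covariance interp}) and then applies the fundamental theorem of calculus, integrating $\langle t\rangle^{\alpha-1}$ over $[0,T]$ to pick up the $\langle T\rangle^\alpha$ factor; only the case $\alpha=0$ is done directly via the convolution formula $C^M_T=\int_0^T \partial_t\rho_t^2\ast C^M_\infty\,dt$. You instead establish the two endpoint bounds $|C^M_T(x,y)|\leq C/|x-y|$ (your $\alpha=0$ case, identical in substance to the paper's) and $|C^M_T(x,y)|\leq C\langle T\rangle$ (reduced to the diagonal by Cauchy--Schwarz on the covariance, then a Fourier sum over the frequency ball $|n|\lesssim T$), and then interpolate them multiplicatively by writing $|C^M_T|=|C^M_T|^{1-\alpha}|C^M_T|^\alpha$. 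This is more elementary and self-contained, as it avoids introducing the derivative estimate altogether. The trade-off is that the paper needs $\dot C^M_t$ anyway (it appears in the estimate of $\Upsilon_1$ and in $R_1(a)$), so the detour through Proposition \ref{prop: M derivative covariance interp} comes for free in context, whereas your version would not on its own produce that derivative bound. One minor imprecision: you refer to $\rho$ being a "compactly supported bump" as justification for the convolution preserving the short-distance bound, but note that the compact support is in Fourier space; the physical-space kernel of $\rho_T$ is Schwartz and not compactly supported. The conclusion stands (rapid decay suffices, and in any case for $|x-y|\lesssim 1/T$ one falls back on the diagonal bound $C^M_T(x,y)\leq C\langle T\rangle\leq C/|x-y|$), but the justification as written is slightly off.
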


In the case $\alpha > 0$, Proposition \ref{prop: M covariance interp T} follows from Proposition \ref{prop: M derivative covariance interp} and the fundamental theorem of calculus. The case $\alpha = 0$ uses the fact that $C^M_T(x,y) = \int_0^T \partial_t \rho_t^2 \ast C^M_\infty(x,y)dt$ and standard bounds on convolutions and the usual Dirichlet Green function $C^M_\infty$.

\subsection{Interpolation of time-derivative with cutoff scale $T$}

\begin{proposition}\label{prop: M derivative covariance interp}
Let $\alpha \in [0,1)$. Then there exists $C>0$ such that, for every $t>0$,
\begin{equation}
|\dot{C}^M_t(x,y)| \leq C \frac{\langle t\rangle ^{\alpha-1}}{|x-y|^{1-\alpha}}, \qquad \forall x,y\in M.
\end{equation}
Furthermore, in the case $\alpha \in (-1,0)$, there exists $C>0$ such that for every $t>0$,
\begin{equation}
|\dot{C}^M_t(x,y)| \leq C \frac{\langle t\rangle ^{\alpha-1} \log\left( 1 + \frac{|x-y|^2}{(x_1-y_1)^2} \right)}{|x-y|^{1-\alpha}}, \qquad \forall x,y\in M.
\end{equation}	
\end{proposition}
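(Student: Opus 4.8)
The plan is to reduce everything to an estimate on the heat kernel of the Dirichlet Laplacian on $M$, since $\dot C^M_t(x,y) = \partial_t \rho_t^2 \ast C^M_\infty(x,y)$ and $C^M_\infty = (-\Delta + m^2)^{-1}$ can be represented via the heat semigroup as $\int_0^\infty e^{-sm^2} p^M_s(x,y)\,ds$, where $p^M_s$ is the Dirichlet heat kernel on $M = [-L,L]\times\mathbb T^2$. First I would record the standard Gaussian upper bound for $p^M_s$: there exist $c,C>0$ with $p^M_s(x,y) \leq C s^{-3/2}\exp(-c|x-y|^2/s)$, uniformly for $s \in (0,\infty)$ (using that $M$ is a bounded-geometry manifold with boundary; the mass $m^2>0$ then gives exponential decay in $s$ so the $s\to\infty$ tail is harmless). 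From the definition of $\rho_t$ via $\hat\rho_t(n) = \rho(\langle n\rangle/t)$, the operator $\partial_t(\rho_t^2)$ acting in the $z$-variable only is a Fourier multiplier supported on $|n| \asymp t$ with the symbol and its derivatives controlled by $t^{-1}$ times a bump at scale $t$; concretely $\partial_t\rho_t^2$ has a kernel $K_t(z)$ on $\mathbb T^2$ with $\int |K_t(z)|\,dz = O(\langle t\rangle^{-1})$ and $K_t$ concentrated at spatial scale $\langle t\rangle^{-1}$, with the extra decay $|K_t(z)| \lesssim \langle t\rangle^{-1}\langle t\rangle^2 (1 + t|z|)^{-N}$ for any $N$. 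I would then write $\dot C^M_t(x,y) = \int_{\mathbb T^2} K_t(z'-z_y)\, C^M_\infty((\tau_x,z_x),(\tau_y,z'))\,dz'$ and insert the heat-kernel bound on $C^M_\infty$.

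The key step is then a scaling/interpolation argument. Splitting the $s$-integral in the heat representation at $s \sim \langle t\rangle^{-2}$: for $s \gtrsim \langle t\rangle^{-2}$ the mollification by $K_t$ costs only a harmless $O(\langle t\rangle^{-1})$ factor (the kernel $K_t$ has total mass $\asymp\langle t\rangle^{-1}$ and $p^M_s$ varies slowly on scale $\langle t\rangle^{-1}$ when $\sqrt s \gtrsim \langle t\rangle^{-1}$), and one bounds $\int_{\langle t\rangle^{-2}}^\infty s^{-3/2} e^{-c|x-y|^2/s}e^{-sm^2}\,ds$ by interpolating between the pointwise bound $\lesssim |x-y|^{-1}$ (all $s$) and $\lesssim \langle t\rangle$ (truncating the small-$s$ divergence at $\langle t\rangle^{-2}$): this yields $\lesssim \langle t\rangle^\alpha |x-y|^{\alpha-1}$ for $\alpha\in[0,1)$. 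For the range $s \lesssim \langle t\rangle^{-2}$, the convolution with $K_t$ smooths the $z$-singularity, and because $K_t$ has integral $O(\langle t\rangle^{-1})$ one gains exactly the factor needed; the remaining integral $\int_0^{\langle t\rangle^{-2}} s^{-3/2}\cdots ds$ is dominated by its endpoint and contributes $O(\langle t\rangle^{-1}\cdot\langle t\rangle) = O(1)$ times the appropriate power. Assembling the two ranges gives the first inequality. For the second inequality with $\alpha \in (-1,0)$, the point is that the extra negative power forces one to extract regularity in the \emph{longitudinal} ($\tau$) direction, which is unmollified: here one uses the one-dimensional Dirichlet heat kernel on $[-L,L]$ (method of images) to see that integrating $\int_0^\infty s^{-1/2}e^{-c(\tau_x-\tau_y)^2/s}\cdot (\text{2d part})\,ds$ over the region where the $2$d separation dominates produces the logarithm $\log(1 + |x-y|^2/(x_1-y_1)^2)$, exactly as in the classical bound for $(-\Delta)^{-1}$-type kernels with a weight in one variable.

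The main obstacle I anticipate is bookkeeping the interplay between the two scales — the heat-kernel scale $\sqrt s$ and the mollification scale $\langle t\rangle^{-1}$ — uniformly in $t$, and in particular checking that the mollification $K_t$ genuinely contributes total mass $O(\langle t\rangle^{-1})$ rather than $O(1)$ (this is where $\partial_t$ falling on $\rho_t^2$, as opposed to $\rho_t^2$ itself, is essential). A secondary technical point is the derivation of the logarithmic factor in the $\alpha<0$ case: one must carefully separate the contributions of $(x_1-y_1)$ small versus comparable to $|x-y|$, and control the former using the boundedness of the longitudinal interval plus the image-sum representation. I would isolate a self-contained lemma giving the pointwise heat-kernel bound (citing e.g. standard parabolic theory on bounded domains) so that the proof of the proposition itself is a clean two-region split plus elementary one-variable integral estimates, keeping all constants uniform in $t\in(0,\infty)$ thanks to the mass $m^2>0$.
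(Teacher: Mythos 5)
Your proposal takes a genuinely different route from the paper's. The paper starts from the representation $\dot C^M_t(x,y)=\int_{\mathbb T^2}(\delta_{u_1=y_1}\otimes\partial_t\rho_t^2)(u-y)\,C^M_\infty(u,x)\,du$ (a convolution in the $z$-variable only, on the slice $u_1=y_1$), then feeds in the pointwise Green-function bounds $|C^M_\infty(u,x)|\lesssim |u-x|^{-1}$ and, after an integration by parts for $\alpha<0$, $|\nabla_z C^M_\infty(u,x)|\lesssim |u-x|^{-2}$, and closes the estimate with an elementary weighted two-dimensional convolution. You instead expand $C^M_\infty$ through the heat-semigroup representation and perform a two-scale split of the $s$-integral. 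For $\alpha\in[0,1)$ your sketch is plausible: the split at $s\sim\langle t\rangle^{-2}$, together with $\int|K_t|\,dz=O(\langle t\rangle^{-1})$ and the interpolation between $\int_0^\infty s^{-3/2}e^{-c|x-y|^2/s}\,ds\lesssim|x-y|^{-1}$ and $\int_{\langle t\rangle^{-2}}^\infty s^{-3/2}\,ds\lesssim\langle t\rangle$, does give the target, although for $|x-y|\gtrsim\langle t\rangle^{-1}$ you must also use the rapid decay of $K_t$ at spatial scale $\langle t\rangle^{-1}$ (not merely its $L^1$ mass) to handle the $s\lesssim\langle t\rangle^{-2}$ region. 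The paper's weighted-convolution trick (inserting $\langle t\rangle^\beta|u-y|^\beta$ with $\beta=2-\alpha$) is shorter but the heat-kernel route is legitimate.

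The $\alpha\in(-1,0)$ case is where your sketch has a genuine gap. You observe correctly that the extra negative power needs more $\langle t\rangle$-decay than the $\langle t\rangle^{-1}$ supplied by $\|K_t\|_{L^1}$, but you do not say where that extra decay comes from. The paper's mechanism is an explicit integration by parts: $\partial_t\rho_t^2$ (whose kernel has $L^1$ mass $\sim\langle t\rangle^{-1}$) is traded for its primitive $\tilde\chi$ (with $L^1$ mass $\sim\langle t\rangle^{-2}$), at the price of a $z$-derivative landing on $C^M_\infty$. In heat-kernel language the same gain comes from the fact that $\widehat{\partial_t\rho_t^2}(0)=0$ (because $\rho\equiv1$ on $B(0,1)$ so $\rho'$ vanishes near the origin), i.e.\ $K_t$ has mean zero, and pairing a mean-zero kernel of $L^1$ mass $\langle t\rangle^{-1}$ concentrated at scale $\langle t\rangle^{-1}$ against a Lipschitz profile yields $O(\langle t\rangle^{-2})$ times the Lipschitz constant; you never invoke this. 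Your phrase about extracting regularity in the longitudinal $\tau$-direction cannot substitute for it, since the longitudinal variable is unmollified and therefore contributes no $\langle t\rangle$-dependence whatsoever. Relatedly, your attribution of the logarithm to a one-dimensional longitudinal heat integral does not match the actual source: in the paper the log appears because $\nabla_z C^M_\infty(\cdot,x)$, restricted to the slice $\tau=y_1$, has a non-integrable order-$2$ singularity in the two-dimensional $z$-variable at $(x_2,x_3)$ whenever $x_1=y_1$, and it is the $\tau$-separation $(x_1-y_1)^2$ appearing in the denominator $(x_1-y_1)^2+|(u_2,u_3)-(x_2,x_3)|^2$ that regularizes this divergence to the factor $\log\bigl(1+|x-y|^2/(x_1-y_1)^2\bigr)$. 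Without the integration by parts (or the vanishing-moment observation) you never meet this $2$D gradient singularity, so the $\alpha<0$ bound does not follow from your argument as written.
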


\begin{proof}

First of all, note that 
\begin{equation}
\dot C_t^M(x,y) = \int_{\mathbb T^2} (\delta_{u_1=y_1} \otimes \partial_t \rho_t^2)(u-y) C_\infty^M(u,x)	du.
\end{equation}
Here we abuse notation, denoting by $\rho$ the inverse Fourier Transform fo $\rho$. This is consistent with denoting operators and kernels in the same way. 
Note that $\delta_{u_1=y_1}\otimes \partial_t \rho_t^2(u-y) = \delta_{u_1=y_1} \langle t \rangle^{-1} \langle t \rangle^2 \rho'(\langle t\rangle((u_2,u_3 ) - (y_2,y_3)))$. By abuse of notation, let us write this as $\langle t \rangle\chi(\langle t \rangle(u-y))$ with (vector in the $(e_2,e_3)$ coordinates) primitive $\tilde\chi(\langle t\rangle (u-y))$ ---  we will use the primitive when considering the case $\alpha < 0$ below. 

Let us first analyze when $\alpha > 0$. By the estimate on the Green function (see \cite{CPR10}),
\begin{align}
|\dot C_t^M(x,y)| &\leq c_1 \langle t \rangle^{-1} \int_{\mathbb T^2} \langle t \rangle^2 |\chi(\langle t \rangle(u-y))| \frac{1}{|u-x|} du	
\\
&=\langle t \rangle^{-1+2-\beta} \int_{\mathbb T^2} \langle t \rangle^\beta|u-y|^\beta |\chi(\langle t \rangle(u-y))| \frac{1}{|u-y|^\beta|u-x|} du.
\end{align}
Choosing $1<\beta<2$ and computing the convolution, we thus have that
\begin{equation}
|\dot C_t^M(x,y)| \leq c_2 \frac{\langle t \rangle^{1-\beta}}{|x-y|^{\beta - 1}}.
\end{equation}
Choosing $\beta = 2-\alpha$ then yields the desired estimate in the case $\alpha \in (0,1)$. 
 
We now consider $\alpha = 0$. We split $\dot C_t^M(x,y) = I_1 + I_2$ according to whether $|u-y| \geq |x-y|/2$ or $|u-y| \leq |x-y|/2$, respectively. In the first case, we can proceed above to obtain
\begin{align}
|I_1| \leq c_3 \frac{\langle t \rangle^{-1}}{|x-y|^\varepsilon} 	\int_{|u-y|\leq |x-y|/2} \frac{1}{|u-y|^{2-\varepsilon}|u-x|} \leq c_4 \frac{\langle t \rangle^{-1}}{|x-y|}.
\end{align}
On the other hand, when $|u-y| \leq |x-y|/2$, we have that $|u-x| \geq |x-y|/2$. Hence
\begin{equation}
|I_2| \leq c_5 \frac{\langle t \rangle^{-1}}{|x-y|} \int \langle t \rangle^2 |\chi(\langle t\rangle (u-y))| du \leq c_6 \frac{\langle t \rangle^{-1}}{|x-y|}.	
\end{equation}

We will now analyze the case when $\alpha < 0$ in which we will obtain a log correction. By integration by parts, and recalling the primitive $\tilde \chi$, we have that  
\begin{equation}
\dot C_t^M(x,y) = \int_{\mathbb T^2} \tilde\chi(\langle t \rangle (u-y)) \cdot  \nabla_{(u_2,u_3)} C^M_\infty(u,x)  du.
\end{equation}
Let $\beta \in \mathbb R$ be arbitrary. Then by the gradient estimate on the Green function (see for instance \cite{CPR10}),
\begin{align}
|\dot C_t^M(x,y)| &\leq c_1 \langle t \rangle ^{-\beta} \int_{\mathbb T} \delta_{u_1=y_1} \langle t\rangle ^\beta |u-y|^\beta |\tilde\chi(\langle t \rangle (u-y))| \frac{1}{|u-y|^\beta |u-x|^2} du
\\
&\leq c_2 \langle t\rangle^{-\beta} \int_{\mathbb T^2} \frac{1}{((u_2-y_2)^2+(u_3-y_3)^2)^{\beta/2}} 
\\
&\hspace{40mm} \times \frac{1}{((y_1-x_1)^2+(u_2-x_2)^2 + (u_3-x_3)^2)^2}du 
\end{align}
provided $0<\beta < 2$. We claim that the convolution above is finite and in fact bounded by
\begin{equation}
c_3 \frac{\langle t \rangle^{-\beta} \log\left( 1+ \frac{|x-y|^2}{(x_1-y_1)^2}\right)}{|x-y|^\beta }.	
\end{equation}
 Setting $\beta = 1-\alpha$ then yields the desired estimate with the range $\alpha \in (-1,0)$ (and in fact a suboptimal estimate in the full range $\alpha \in (-1,1)$). 

We now turn to the convolution and for ease of notation set $y=0$ (the other cases follow similarly). Then we need to estimate
\begin{equation}
I:= \int_{\mathbb T^2} \frac{1}{(u_2^2+u_3^2)^{\beta/2}(x_1^2 + (u_2-x_2)^2 + (u_3-x_3)^2} du.	
\end{equation}
Let us split it into the integration domains:
\begin{align}
\mathcal A_1&:= \{ x_1^2 + (u_2-x_2)^2 + (u_3-x_3)^2 \leq |x|^2/4 \}, \,\mathcal A_2:= \mathbb T^2 \setminus \mathcal A_1. 	
\end{align}
On $\mathcal A_1$, we have that $c|x| \leq |(u_2,u_3)| \leq C |x|$. Thus we obtain that
\begin{equation}
I(\mathcal A_1) \leq \frac{c_3}{|x|^\beta} \int_0^{|x|} \frac{r}{(x_1^2+r^2)}dr	= \frac{c_3}{2|x|^\beta} (\log (x_1^2+|x|^2) - \log(x_1^2))
\end{equation}
On $\mathcal A_2=\mathcal A_1^c$, we have that
\begin{equation}
I(\mathcal A_2) \leq \frac{c_4}{|x|^{\varepsilon}} \int \frac{1}{|(u_2,u_3)|^\beta |(0,u_2,u_3)-x|^{2-\varepsilon}} du \leq \frac{c_5}{|x|^{\beta}}.
\end{equation}

\end{proof}

\subsection{Interpolation with distance to boundary}

\begin{proposition}
\label{prop: M covariance interp bdry}	
Let $\alpha_1,\alpha_2\in[0,1)$. Then there exists $C>0$ such that, for every $T>0$,
\begin{equation}
|C^M_T(x,y)| \leq C \frac{d(x,\partial M)^{\alpha_1}d(y,\partial M)^{\alpha_2}}{|x-y|^{1+\alpha_1+\alpha_2}}, \qquad \forall x,y \in M. 	
\end{equation}
\end{proposition}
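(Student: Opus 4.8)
\textbf{Proof proposal for Proposition \ref{prop: M covariance interp bdry}.}

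The plan is to obtain the boundary-weighted interpolation estimate from the pointwise estimate on $C^M_T$ together with elliptic (Green function) bounds, following the same philosophy as the proof of Proposition \ref{prop: M covariance interp T} (interpolating between a base bound and a derivative bound via the fundamental theorem of calculus), but now interpolating in the \emph{longitudinal} variable against the distance to $\partial M$. First I would reduce to the case $\alpha_1, \alpha_2 > 0$; the endpoint cases where one or both exponents vanish follow by continuity/a limiting argument, or directly from Proposition \ref{prop: M covariance interp T} with $\alpha = 0$ (which already gives $|C^M_T(x,y)| \leq C/|x-y|$, matching the claimed bound when $\alpha_1 = \alpha_2 = 0$). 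For $\alpha_1, \alpha_2 > 0$ the strategy is: since $C^M_T(x,y)$ vanishes as $x$ approaches $\partial M$ (Dirichlet boundary conditions), write, with $x = (\tau_x, z_x)$ and letting $x^* = (\tau_x^*, z_x)$ denote the nearest boundary point in the longitudinal coordinate,
\begin{equation}
C^M_T(x,y) = C^M_T(x,y) - C^M_T(x^*, y) = \int_{\tau_x^*}^{\tau_x} \partial_{\tau} C^M_T((\sigma, z_x), y) \, d\sigma,
\end{equation}
and similarly peel off one longitudinal integration near $\partial M$ in the $y$ variable. Each longitudinal derivative $\partial_\tau C^M_T$ should satisfy $|\partial_\tau C^M_T(u,y)| \leq C |u-y|^{-2}$ uniformly in $T$ — this is the Dirichlet Green function gradient estimate (cf.\ \cite{CPR10}, as used in the proof of Proposition \ref{prop: M derivative covariance interp}) applied after noting that $C^M_T = \partial_t\rho_t^2 \ast C^M_\infty$ integrated in $t$ smooths only in the periodic directions and hence does not worsen the longitudinal derivative bound.

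The key steps in order: (i) establish the uniform-in-$T$ gradient bound $|\nabla_x C^M_T(x,y)| \leq C|x-y|^{-2}$ for $x,y \in M$, using that the longitudinal direction is unregularized and that the mollification in $z$ is an $L^1$-bounded Fourier multiplier, combined with standard interior/boundary elliptic estimates on the Dirichlet Green kernel; (ii) perform the single-variable longitudinal integration near $\partial M$ in the $x$ slot, controlling $\int_{\tau_x^*}^{\tau_x} |(\sigma, z_x) - y|^{-2}\, d\sigma$ by interpolating: split the integral according to whether $|\sigma - \tau_x| \lesssim |x-y|$ or not, bound the near-diagonal part by $d(x,\partial M)^{\alpha_1} |x-y|^{-1-\alpha_1}$ (using $|\tau_x - \tau_x^*| = d(x,\partial M)$ only appears when the whole interval is short, otherwise trading a power $d(x,\partial M)^{\alpha_1}$ for $|x-y|^{-\alpha_1}$), which is exactly the kind of elementary estimate done for $\Iota\Iota$ in the proof of Lemma \ref{lem: upsilon2 a}; (iii) repeat the argument in the $y$ slot starting from the bound just obtained, noting that $\partial_{\tau_y}$ of the already-differentiated kernel is again controlled by second-order Green kernel estimates, producing the symmetric factor $d(y,\partial M)^{\alpha_2} |x-y|^{-\alpha_2}$; (iv) assemble the two steps to get $d(x,\partial M)^{\alpha_1} d(y,\partial M)^{\alpha_2} |x-y|^{-1-\alpha_1-\alpha_2}$.

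The main obstacle I expect is step (i): obtaining the gradient estimate \emph{uniformly in the cutoff $T$} and \emph{up to the boundary}. The subtlety is that $C^M_T$ is mollified only in the $z$-variable, so one cannot simply invoke smoothness of a mollified kernel; instead one must argue that convolving $C^M_\infty$ against $\partial_t\rho_t^2$ and integrating over $t \in [0,T]$ preserves the $|x-y|^{-2}$ longitudinal-gradient bound with constant independent of $T$ — this requires combining the scaling $\partial_t\rho_t^2 \sim \langle t\rangle \chi(\langle t\rangle \cdot)$ (as in the proof of Proposition \ref{prop: M derivative covariance interp}) with the known pointwise bounds $|C^M_\infty(x,y)| \lesssim |x-y|^{-1}$ and $|\nabla C^M_\infty(x,y)| \lesssim |x-y|^{-2}$ from \cite{CPR10} and a careful convolution estimate that does not lose a $\log T$. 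Once this uniform gradient bound is in hand, the remaining steps (ii)–(iv) are elementary Young-type convolution estimates of the sort already carried out repeatedly in Sections \ref{sec:stochastic} and \ref{sec:equicoercive}. Alternatively, one could bypass the gradient bound and instead interpolate directly against the known estimate $|C^M_T(x,y)| \lesssim \langle T\rangle^{\alpha}|x-y|^{-1+\alpha}$ of Proposition \ref{prop: M covariance interp T} combined with the fact that $C^M_T$ is comparable to $\overline C^B_T$-type kernels near the boundary (cf.\ Appendix \ref{appendix: harmonic extension}), but the longitudinal-integration approach is cleaner and more self-contained.
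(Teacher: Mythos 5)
Your proposal is a genuinely different route from the paper's, and while it is plausibly correct, it is substantially more work and bypasses a shortcut the paper exploits.

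The paper's proof simply cites from \cite{CPR10} the \emph{full boundary-weighted} estimate on the unregularized kernel,
\begin{equation}
|C^M_\infty(x,y)| \leq C\,\frac{\min\bigl(d(x,\partial M)^\alpha,\, d(y,\partial M)^\alpha\bigr)}{|x-y|^{1+\alpha}},
\end{equation}
and then shows that the mollification in the $z$-direction preserves this bound by writing $C^M_T(x,y) = \int_{\mathbb T^2}\delta_{u_1=y_1}\otimes\langle T\rangle^2\rho^2(\langle T\rangle(u-y))\,C^M_\infty(u,x)\,du$ and splitting into the regimes $|u-y|\gtrless |x-y|/2$. It also uses a symmetry-plus-interpolation reduction to a \emph{one-sided} weight ($\alpha_1=\alpha$, $\alpha_2=0$), which is what makes the mollification estimate a single, clean convolution bound. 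Your approach instead tries to re-derive the boundary weight from scratch: FTC in the longitudinal variable against a uniform-in-$T$ gradient bound, then repeated in $y$. This buys self-containedness at the cost of three complications the paper avoids. First, you correctly flag that establishing $|\nabla_x C^M_T(x,y)|\lesssim |x-y|^{-2}$ uniformly in $T$ is the crux; but proving it requires exactly the ``mollification-in-$z$ preserves a kernel bound'' argument that the paper uses directly on the already-weighted bound — so the FTC layer is effectively redundant work on top. Second, your reduction to $\alpha_1,\alpha_2>0$ and FTC-in-both-variables requires a mixed second derivative bound $|\partial_{\tau_x}\partial_{\tau_y}C^M_T|\lesssim|x-y|^{-3}$, which you wave at but which is a further regularity claim the paper never needs; the paper's one-sided reduction plus interpolation in $(\alpha_1,\alpha_2)$ avoids this entirely. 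Third, the FTC integral $\int_{\tau_x^*}^{\tau_x}|(\sigma,z_x)-y|^{-2}\,d\sigma$ is not integrable if the line segment passes near $y$ (e.g.\ $z_x=z_y$ with $\tau_y$ between $\tau_x^*$ and $\tau_x$), so the argument only works in the regime $d(x,\partial M)\lesssim|x-y|$; in the complementary regime you must fall back on the plain bound $|C^M_T|\lesssim|x-y|^{-1}$ — your sketch gestures at this split but does not make the case distinction explicit, and it is essential. Finally, the alternative route you suggest at the end (comparing $C^M_T$ to $\overline C^B_T$-type kernels) is not a sound substitute: $C^M_T$ is the covariance of the \emph{Dirichlet} bulk GFF and $\overline C^B_T$ the covariance of the harmonic extension of the boundary field; they are not comparable objects near $\partial M$ (the former vanishes there, the latter does not).
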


\begin{proof}
By symmetry and interpolation, it is sufficient to prove that for $\alpha \in [0,1)$,
\begin{equation}
|C^M_T(x,y)| \leq C \frac{d(x,\partial M)^\alpha}{|x-y|^{1+\alpha}}.	
\end{equation}
We will use that for the case $T=\infty$,
\begin{equation}
|C^M_\infty(x,y)| \leq C \frac{\min (d(x,\partial M)^\alpha, d(y,\partial M)^\alpha)}{|x-y|^{1+\alpha}},	
\end{equation}
see for instance \cite{CPR10}. 

Note that
\begin{equation}
C_T^M(x,y) = \int_{\mathbb T^2} \delta_{u_1=y_1}\otimes \langle T\rangle^2 \rho^2(\langle T \rangle (u-y)) C^M_T(u,x) du.
\end{equation}
Let us split this integral into $I_1$ and $I_2$ according to whether $|u-y| \geq |x-y|/2$ or $|u-y| \leq |x-y|/2$, respectively.

For $I_1$, using the bound above,
\begin{align}
|I_1| &\leq c_1 d(x,\partial M)^\alpha \int \delta_{u_1=y_1}\otimes \langle T\rangle^2 \rho^2(\langle T \rangle (u-y)) |u-y|^\beta \frac{1}{|u-y|^\beta |u-x|^{1+\alpha}} du
\\
&\leq c_2 \langle T \rangle^{2-\beta} d(x,\partial M)^\alpha \int \frac{1}{|u-y|^\beta |u-x|^{1+\alpha}} du 
\\ &\leq c_3 \frac{\langle T \rangle^{2-\beta} d(x,\partial M)^\alpha}{|x-y|^{\epsilon}} \int \frac{1}{|u-y|^{\beta-\varepsilon}|u-x|^{1+\alpha}} du,
\end{align}
where we choose $\beta = 2$. Hence,
\begin{equation}
|I_1| \leq c_4 	\frac{d(x,\partial M)^\alpha}{|x-y|^{1+\alpha}}. 
\end{equation}

Now, in the case $|u-y| \leq |x-y|/2$, we have that $|u-x| \geq |x-y|/2$. Hence by the bound on $C^M_\infty(x,y)$, we have that
\begin{equation}
|I_2| \leq c_1 \frac{d(x,\partial M)^\alpha}{|x-y|^{1+\alpha}} \int \delta_{u_1=y_1}\otimes \langle T \rangle^2 \rho^2( \langle T \rangle (u-y)) du \leq c_2 	\frac{d(x,\partial M)^\alpha}{|x-y|^{1+\alpha}}.
\end{equation}

Combining the estimates for $I_1$ and $I_2$ yields the desired result.

\end{proof}

\subsection{Properties of $J_t$}\label{appendix: regularizing jt}

\begin{lemma}[Commutator estimate]\label{commutator-estimate}
For every $s<0$, every $p \in [1,\infty)$, Then for $1/q>1/p+2\epsilon$ and $\epsilon<1/4$, for every $f,g \in C^\infty$,  
\begin{equation}
	\|J_t(f \succ g) - J_t f \succ g\|_{B^{s-\varepsilon'}_{p,p}} \leq C\langle t \rangle^{-1/2-\varepsilon'}\|f\|_{\mathcal C^{s-\varepsilon}} \|g\|_{B^{1/4}_{q,q}}.
\end{equation}
\end{lemma}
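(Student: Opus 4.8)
The plan is to prove the commutator estimate by expanding the paraproduct into Littlewood--Paley blocks and exploiting the fact that $J_t$ is a Fourier multiplier that is smooth at frequencies $\lesssim \langle t \rangle$ and decays at frequencies $\gtrsim \langle t \rangle$. Recall that $J_t(n) = \frac{1}{\langle n \rangle} \sqrt{\partial_t \rho_t^2((n_2,n_3))}$ (up to the harmonic part of the symbol), and that $\sqrt{\partial_t \rho_t^2}$ is supported on an annulus of size $\langle t \rangle$ in the $(n_2,n_3)$-variables. Writing $f \succ g = \sum_{j} S_{j-1} g\, \Delta_j f$ (Bony decomposition), the commutator becomes
\begin{equation}
J_t(f \succ g) - (J_t f) \succ g = \sum_j \bigl( J_t(S_{j-1}g\, \Delta_j f) - S_{j-1} g\, \Delta_j (J_t f) \bigr).
\end{equation}
Each summand is localized in frequency at scale $2^j$, so the main point is to produce a gain of $2^{-j\varepsilon'}$ together with a factor $\langle t \rangle^{-1/2-\varepsilon'}$, after which summation in $j$ and Besov/H\"older/Young inequalities close the estimate.

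First I would split the sum over $j$ according to whether $2^j \lesssim \langle t \rangle$ or $2^j \gtrsim \langle t \rangle$. In the high-frequency regime $2^j \gtrsim \langle t\rangle$, one does not actually need the commutator structure: one estimates $J_t(S_{j-1}g\,\Delta_j f)$ and $S_{j-1}g\,\Delta_j(J_t f)$ separately, using that $\|J_t \Delta_j\|_{L^p \to L^p} \lesssim 2^{-j} \langle t\rangle^{\text{(something)}}$ and more crucially that the symbol of $J_t$ restricted to frequency $2^j$ is $O(2^{-j})$ times a rapidly decaying factor in $2^j/\langle t\rangle$; the polynomial decay in $2^j/\langle t\rangle$ converts into the needed $\langle t\rangle^{-1/2-\varepsilon'} 2^{-j\varepsilon'}$ and extra room. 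In the low-frequency regime $2^j \lesssim \langle t\rangle$, one writes the commutator kernel explicitly: $J_t(S_{j-1}g\,\Delta_j f)(x) - S_{j-1}g(x)\,\Delta_j(J_t f)(x) = \int K_t^j(x,y)\,(S_{j-1}g(y) - S_{j-1}g(x))\,\Delta_j f(y)\,dy$ where $K_t^j$ is the kernel of $J_t \Delta_j$, and uses the mean-value bound $|S_{j-1}g(y) - S_{j-1}g(x)| \lesssim |x-y|\, \|\nabla S_{j-1} g\|_{L^\infty}$ together with the first moment bound on the kernel $\int |K_t^j(x,y)|\,|x-y|\,dy \lesssim 2^{-j}\langle t\rangle^{-1/2-\varepsilon'} 2^{-j\varepsilon'} \cdot (\text{const})$ — this is where the smoothness of the $J_t$ symbol at low frequency is used, since a smooth symbol of order $-1$ with an extra $\langle t\rangle^{-1/2-\varepsilon'}$-type smallness yields a kernel with integrable first moment of the right size. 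I would actually phrase this by interpolating: $J_t$ as a map into $H^{1/4}$-type spaces gains $1/4$ derivatives uniformly and picks up $\langle t\rangle^{-1/2}$ in $L^2_t$-integrable fashion (this is the content of the kernel estimates in Appendix \ref{sec: Dirichlet covariance estimates} and the discussion of $J_t$ there), and the commutator upgrades this by an additional $\varepsilon'$ from the regularity of $g$.

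For the functional-analytic bookkeeping: one controls $\|\Delta_j f\|_{L^{p_1}} \lesssim 2^{-j(s-\varepsilon)} \|f\|_{\mathcal C^{s-\varepsilon}}$ and $\|\nabla S_{j-1} g\|_{L^{p_2}} \lesssim 2^{j(1 - 1/4 + \delta)} \|g\|_{B^{1/4}_{q,q}}$ via Bernstein, then applies H\"older with $1/p = 1/p_1 + 1/p_2$; the constraint $1/q > 1/p + 2\varepsilon$ and $\varepsilon < 1/4$ is exactly what is needed to absorb the $\ell^q$ versus $\ell^\infty$ summation over $j$ in the Besov norm on $g$ and to keep the geometric series $\sum_j 2^{j(\varepsilon - \varepsilon')} 2^{-j \cdot 0}$ summable after accounting for $s - \varepsilon < s$ giving a negative power of $2^j$ from $f$. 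Summing the low- and high-frequency contributions and using $\sum_j 2^{-j\varepsilon'} \langle t\rangle^{-1/2-\varepsilon'} \lesssim \langle t\rangle^{-1/2-\varepsilon'}$ (uniformly, since the geometric series is bounded independently of $t$) yields the claimed bound.

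The main obstacle I anticipate is getting the \emph{uniform-in-$t$} structure of the kernel estimate right: one must be careful that the multiplier $J_t$, which is smooth at scale $\langle t\rangle$ but \emph{not} a classical symbol of a fixed order (its $L^\infty$ norm is $O(\langle t\rangle^{-1})$ on its support but the support moves with $t$), produces a kernel $K_t^j$ whose first moment is bounded by $C\, 2^{-j}\, 2^{-j\varepsilon'}\, \langle t\rangle^{-1/2-\varepsilon'}$ \emph{with $C$ independent of both $j$ and $t$} in the regime $2^j \lesssim \langle t\rangle$, and by a rapidly decaying expression in $2^j/\langle t\rangle$ otherwise. This is morally the same computation as the $J_t$-regularization bounds already used throughout Section \ref{sec:equicoercive} (e.g. in the proof of Lemma \ref{lem: r8 bound}), so I would cite or lightly adapt the kernel estimates collected in Appendix \ref{sec: Dirichlet covariance estimates} rather than redo them; the genuinely new content is only the commutator gain of $\varepsilon'$ derivatives, which is the standard paraproduct commutator mechanism (as in \cite{GIP15}) transposed to this $t$-dependent multiplier setting.
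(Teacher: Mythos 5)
Your proof treats $J_t$ as a Fourier multiplier on the torus $\mathcal M$ that plays well with the Littlewood--Paley blocks $\Delta_j$ defining the paraproduct: you expand $f \succ g$ into blocks, localize the kernel of $J_t$ at each frequency scale, and run the classical commutator argument (mean-value bound on $S_{j-1}g$, first-moment bound on the kernel, Bernstein, Young). That entire mechanism requires $J_t$ to be a Fourier multiplier in the same basis as the paraproduct. But $J_t$ is not: it is defined through the \emph{Dirichlet} spectral decomposition $\{\mathsf f_n\}$ on the cylinder $M$, while $\succ$ is defined through the periodic Fourier decomposition on $\mathcal M$. In multiplier language, $J_t$ involves $(-\Delta+m^2)^{-1/2}$ with $\Delta$ the Dirichlet Laplacian, whereas the object your argument actually estimates is its periodic analogue $j_t = J_t^{\rm per}$, with $\Delta^{\rm per}$. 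These differ by a nonlocal operator whose kernel concentrates near $\partial M$, and your frequency-localization and kernel-moment bounds do not see this discrepancy.

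The paper's proof is built exactly around that gap. It writes the commutator as the torus commutator for $j_t$ (which is the known estimate from the \cite{BG20} framework and needs no new proof) plus the two correction terms $T_1 = (J_t - j_t)(f\succ g)$ and $T_2 = ((J_t - j_t)f)\succ g$, and then spends the entire argument on the difference $J_t - j_t$: a heat-kernel representation of $(-\Delta+m^2)^{-1/2} - (-\Delta^{\rm per}+m^2)^{-1/2}$, the method of images $e^{s\Delta}(x,y) - e^{s\Delta^{\rm per}}(x,y) = -2\sum_{n_1\,\text{odd}} e^{s\Delta^{\infty}}(x+n,y)$, the resulting weighted kernel bound $|\partial^\beta\Gamma(x,y)| \lesssim d(x,\partial M)^\alpha / |x-y|^{2+|\beta|-\alpha}$, and an interpolation step in weighted Sobolev spaces to produce the Besov mapping property. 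None of this appears in your sketch. Re-deriving the torus commutator estimate from scratch is acceptable (if somewhat wasted effort), but without the $J_t - j_t$ analysis you have proved the lemma only for $j_t$, not for the Dirichlet $J_t$ the statement is actually about. The constraints $1/q > 1/p + 2\epsilon$ and $\epsilon < 1/4$ are, incidentally, precisely the artifacts of the interpolation loss in the weighted-Sobolev step for the boundary correction, not of the usual paraproduct bookkeeping you invoke to justify them.
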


\begin{proof}
Let us write $j_t:=J_t^{\rm per}$, the corresponding map defined as in the periodic case. The commutator estimate for $j_t$ follows, so therefore we have to estimate the terms
\begin{equation}
	T_1:= (J_t-j_t)(f \succ g), \qquad 
	T_2:=((J_t-j_t)f)\succ g.
\end{equation}
We will analyze $T_1$, the desired estimate on $T_2$ follows similarly.

We use the heat kernel decomposition:
\begin{equation}
J_t-j_t:= \mathcal F^{-1}\left( \sqrt{\rho_t^2} \right) \ast \left( \int_0^\infty s^{-1/2} e^{s (\Delta-m^2)} ds - \int_0^\infty s^{-1/2} e^{s(\Delta^{\rm per}-m^2)} ds \right),
\end{equation}
where we recall $\ast$ is convolution in the $z$-direction.
The first factor satisfies the bound 
\begin{equation} \|\mathcal F^{-1}\left( \sqrt{\rho_t^2} \right) \ast f\|_{B^{s-\epsilon}_{p,p}}\leq \langle t\rangle^{-1/2-\epsilon} \|f\|_{B^{s}_{p,p}}.	
\end{equation}

And we need to estimate the operator on the right. Let us define this operator to be $\Gamma$. The kernel of $\Gamma$ viewed as an operator on $M^{\rm per}$ is given by
\begin{equation}
	\Gamma(x,y):= \int_0^\infty s^{-1/2} \left( e^{s(\Delta-m^2)}(x,y) -e^{s(\Delta^{\rm per}-m^2)}(x,y) \right) ds.  
\end{equation}
We represent the heat kernels by their Poisson summations. Let $(-\Delta^\infty+m^2)$ be the massive Laplacian on $\mathbb R^3$. Then (using the method of images for the second one), we have that
\begin{align}
	e^{s(\Delta^{\rm per}-m^2)}(x,y) &= \sum_{n \in \mathbb Z^3}e^{s(\Delta^\infty-m^2)}(x+n,y),
	\\
	e^{s(\Delta-m^2)}(x,y)&= \sum_{n \in \mathbb Z^3} (-1)^{n_1} e^{s(\Delta^\infty-m^2)}(x+n,y).
\end{align}
Hence,
\begin{equation} 
e^{s(\Delta-m^2)}(x,y) - e^{s(\Delta^{\rm per}-m^2)}(x,y)= - 2\sum_{\substack{n \in \mathbb Z^3 \\ n_1 \in 2\mathbb Z+1}}  e^{s(\Delta^\infty-m^2)}(x+n,y).
\end{equation}
We now claim that for $\beta$ a multindex with $|\beta|\leq 2$ for any $\alpha<2+|\beta|$
\begin{equation}\label{eq:heat-difference}
|\partial_{x,y}^{\beta} \Gamma(x,y)| \leq \frac{{d(x,\partial M)^\alpha}}{|x-y|^{2+|\beta|-\alpha}}
\end{equation}

We now split $\Gamma = \Gamma^<+\Gamma^>$ according to the $s$ integration near $0$ and at $\infty$, respectively. Let us first estimate $\Gamma^>$. By the above, it suffices to estimate
\begin{equation}
	\Gamma^>(x,y) = -2\sum_{\substack{n \in \mathbb Z^3 \\ n_1 \in 2\mathbb Z+1}}\int_{s_0}^\infty s^{-1/2} \frac{1}{(2\pi s)^{3/2} }e^{- \frac{|x+n-y|^2}{2s}}e^{-sm^2} ds.
\end{equation}
Thanks to the exponential decay in $s$ due to the $m^2$ term, $\Gamma^>$ and its derivatives are uniformly bounded. 

We now turn to $\Gamma^<$. We do the $\partial^{\beta_1}_x$ derivative, the other follows similarly. From above, we have that
\begin{align}
	|\partial^{\beta_1}_x \Gamma^<(x,y)| &\leq c_1 \sum_{\substack{n \in \mathbb Z^3 \\ n_1 \in 2\mathbb Z+1}}\int_{0}^{s_0}\frac{|x+n-y|}{s^3} e^{- \frac{|x+n-y|^2}{2s}} ds.
\end{align}
We split the sum according to when $n_1 \in \{-1,1\}$ and otherwise. For the first, we have that by a change of variables 
\begin{multline}
	\sum_{\substack{n \in \mathbb Z^3 \\ n_1=-1,1}}\int_{0}^{s_0}\frac{|x+n-y|}{s^3} e^{- \frac{|x+n-y|^2}{2s}} ds \\ \leq \sum_{\substack{n \in \mathbb Z^3 \\ n_1=-1,1}}\frac{c_2}{|x+n-y|^3} \leq c_3 \frac{\min (d(x,\partial M)^{-\delta}, d(y,\partial M)^{-\delta})}{|x-y|^{3-\delta}}. 
\end{multline}
For the second, we use that $|x-n-y| \geq c_4 |n_1|$ uniformly over $x$ and $y$. From this, we have that 
\begin{align}
	\sum_{\substack{n \in \mathbb Z^3 \\ |n_1|\geq 2}}\int_{0}^{s_0}\frac{|x+n-y|}{s^3} e^{- \frac{|x+n-y|^2}{2s}} ds &\leq \sum_{\substack{n \in \mathbb Z^3 \\ |n_1|\geq 2}}\int_{0}^{s_0}\frac{|x+n-y|}{s^3} e^{- \frac{|x+n-y|^2}{4s}} e^{-\frac{c_4|n_1|}{4s}} ds 
	\\
	&\leq  \sum_{\substack{n \in \mathbb Z^3 \\ |n_1|\geq 2}}\frac{c_5 e^{-c_4|n_1|}}{|x+n-y|^3}  \leq c_6. 
\end{align}
This shows \eqref{eq:heat-difference}. Now by Young's inequality \eqref{eq:heat-difference} implies for $|\beta|\leq 2$
\[
\|d(x,\partial M)^{\alpha} \left(\partial^{\beta} \int \Gamma(x,y)f(y)\mathrm{d}y \right)\|_{L^{p}} \leq \|f\|_{L^{p}}
\]
Provided $\alpha>1$, which implies that $f \mapsto \int \int \Gamma(x,y)f(y) \mathrm{d}y$ is bounded from $L^p$ to the weighted Sobolev space $W^{2,p}(d^{1+\delta}(x,\partial M))$, 
and similarly one may show boundedness 
\[W^{-1,p}~\mapsto~W^{1,p}(d^{1+\delta}(x,\partial M)).\]
In the same way one may show that it is also bounded $W^{s,p} \mapsto W^{s+1,p}$ (without weight) for $s \in \mathbb{R}$. Now by interpolation (see \cite{ADD23}) we obtain $\Gamma$ is bounded 
\[ W^{s,p} \mapsto W^{s+\epsilon}(d^{2\epsilon,p}(x,\partial M))\] which in turn implies boundedness  $W^{s,p} \mapsto W^{s+\epsilon,q}$ for $1/q>1/p+2\epsilon$ by Hölder's inequality. From this our statement follows by Standard Besov embeddings.
\end{proof}

The following lemma follows from above commutator proof. 


\begin{lemma}
\begin{equation}
\|J_t f\|_{B^{s+1-\varepsilon'}_{p,p}} \leq C \langle t \rangle^{-1/2-\varepsilon'} \|f\|_{B^s_{p,p}}
\end{equation}	
\end{lemma}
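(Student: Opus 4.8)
The plan is to show that the operator $J_t$ gains one derivative, with the cost of a factor $\langle t \rangle^{-1/2-\varepsilon'}$, by factoring $J_t$ through the product of two maps: a smoothing term coming from the mollifier $\sqrt{\rho_t^2}$ in the periodic $z$-direction, and a bounded-with-one-derivative-gain operator built from the one-dimensional fractional integral $\int_0^\infty s^{-1/2} e^{s(\Delta-m^2)}\,ds = (-\Delta+m^2)^{-1/2}$ in the $\tau$-direction. Recall that $J_t$ is, up to the Fourier multiplier $\sqrt{\partial_t \rho_t^2}$ acting in $z$, the operator with symbol $(4\pi^2(n_1/2L)^2 + \langle (n_2,n_3)\rangle^2)^{-1/2}$, which is precisely the square root of the (periodized) massive Dirichlet Green function $(-\Delta^{\rm per}+m^2)^{-1/2}$. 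So one writes $J_t = \mathcal{F}^{-1}(\sqrt{\rho_t^2}) \ast_z \circ\, (-\Delta^{\rm per}+m^2)^{-1/2}$, exactly as in the decomposition used in the proof of Lemma \ref{commutator-estimate}.

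Concretely, I would carry out three steps. First, recall the standard mollifier estimate: since $\hat\rho_t(n) = \rho(\langle n \rangle/t)$ and $\partial_t \rho_t^2$ is supported on frequencies $\asymp t$, one has
\begin{equation}
\|\mathcal{F}^{-1}(\sqrt{\rho_t^2}) \ast_z g\|_{B^{s}_{p,p}} \leq C\langle t \rangle^{-1/2-\varepsilon'}\|g\|_{B^{s+\varepsilon'}_{p,p}},
\end{equation}
which follows from Bernstein's inequality applied to each Littlewood--Paley block together with the decay $\|\sqrt{\partial_t\rho_t^2}\|_{\infty} \lesssim \langle t\rangle^{-1/2}$ and a summation over the $O(1)$ many relevant dyadic scales (the extra $\varepsilon'$ is absorbed since the frequencies contributing are of size $\asymp t$). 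Second, invoke the boundedness of $(-\Delta^{\rm per}+m^2)^{-1/2}$ as a map $B^{s}_{p,p} \to B^{s+1}_{p,p}$: this is classical elliptic regularity for the periodic massive Laplacian and can be read off from the kernel bounds in Appendix \ref{sec: Dirichlet covariance estimates} together with Young's inequality and interpolation, exactly as in the analogous argument at the end of the proof of Lemma \ref{commutator-estimate}. Third, compose the two: applying the elliptic gain first to land in $B^{s+1}_{p,p}$ and then the mollifier estimate with the roles of $s$ shifted gives
\begin{equation}
\|J_t f\|_{B^{s+1-\varepsilon'}_{p,p}} \leq C\langle t\rangle^{-1/2-\varepsilon'}\|(-\Delta^{\rm per}+m^2)^{-1/2}f\|_{B^{s+1}_{p,p}} \leq C\langle t\rangle^{-1/2-\varepsilon'}\|f\|_{B^{s}_{p,p}}.
\end{equation}

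Since the excerpt states that "the following lemma follows from above commutator proof", the cleanest route is simply to observe that the proof of Lemma \ref{commutator-estimate} already established both ingredients: the bound on $\mathcal{F}^{-1}(\sqrt{\rho_t^2})\ast(\cdot)$ in the displayed inequality there, and the $W^{s,p}\to W^{s+1,p}$ (equivalently $B^{s}_{p,p}\to B^{s+1}_{p,p}$) mapping property of the relevant fractional-integration operator. One only needs to note that $J_t$ itself --- as opposed to the commutator $J_t - j_t$ --- is handled identically but more easily, because for $J_t$ one does \emph{not} need the difference-of-heat-kernels estimate \eqref{eq:heat-difference}; the full operator $(-\Delta^{\rm per}+m^2)^{-1/2}$ is regularizing by one full derivative on all of $B^s_{p,p}$ directly. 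I do not anticipate a genuine obstacle here; the only mild care needed is bookkeeping the $\varepsilon'$ loss, which is there purely to convert the $L^\infty$ bound on the $z$-mollifier symbol into a Besov-norm gain without appealing to an endpoint estimate, and to allow the Hölder step converting weighted Sobolev bounds into unweighted ones if one follows the interpolation route rather than the direct one.
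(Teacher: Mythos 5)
There is a genuine gap. Your factorisation $J_t = \mathcal{F}^{-1}(\sqrt{\rho_t^2})\ast_z \circ (-\Delta^{\rm per}+m^2)^{-1/2}$ is not the one the paper uses, and it does not describe the paper's $J_t$: the paper's proof of Lemma \ref{commutator-estimate} explicitly records that $J_t$ is built from the \emph{Dirichlet} Laplacian on $M$, i.e.\ $J_t = \mathcal{F}^{-1}(\sqrt{\rho_t^2})\ast (-\Delta+m^2)^{-1/2}$ with $\Delta$ Dirichlet, and that the periodic object $j_t = J_t^{\rm per}$ is a \emph{different} operator, the discrepancy being governed by the kernel $\Gamma$ of \eqref{eq:heat-difference}. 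When you identify $J_t$ with $j_t$ you are silently replacing the Dirichlet square root by the periodic one; those kernels differ by image charges concentrated near $\partial M$, and the whole point of the $\Gamma$-estimate is to show this discrepancy is harmless. Your remark that "for $J_t$ one does \emph{not} need the difference-of-heat-kernels estimate \eqref{eq:heat-difference}" is therefore the opposite of what is needed: the paper's proof states that the lemma "follows from \eqref{eq:heat-difference} and standard arguments," i.e.\ one decomposes $J_t = j_t + (J_t - j_t)$, handles $j_t$ by exactly the standard mollifier-plus-elliptic-gain argument you describe, and handles the remainder $\mathcal{F}^{-1}(\sqrt{\rho_t^2})\ast\Gamma$ by the kernel bound \eqref{eq:heat-difference} together with the weighted-Sobolev and interpolation steps at the end of the proof of Lemma \ref{commutator-estimate}.

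So the part of your argument concerning $j_t$ is essentially correct and accounts for the "standard arguments" half of the paper's proof, but the second half — the $\Gamma$-term — is missing. You cannot avoid it by declaring that $(-\Delta^{\rm per}+m^2)^{-1/2}$ has full one-derivative gain, because that operator is $j_t$ (modulo the mollifier), not $J_t$. To close the gap, apply the mollifier bound to $\Gamma f$ and use that $\Gamma$, by \eqref{eq:heat-difference} and interpolation, maps $B^s_{p,p}$ into a weighted space from which a bounded embedding into $B^{s+\varepsilon}_{q,q}$ with the appropriate loss in integrability follows (cf.\ the final paragraph of the proof of Lemma \ref{commutator-estimate}).
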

\begin{proof}
  This follows from \eqref{eq:heat-difference} and standard arguments. 
\end{proof}

\subsection{Regularizing properties of $(C_T^M)^{1/2}$}

\label{subsec: regularizing cthalf}

As can be seen by using the basis of the Dirichlet Laplacian (product of sines), $C^M_\infty$ is a positive operator and we denote its square root by $(C^M_\infty)^{1/2}$. We may represent this by the formula,
\begin{equation}
	(C_\infty^M)^{1/2} = \int_0^\infty t^{-1/2} e^{-t (-\Delta+m^2)} dt.
\end{equation}

The following lemma follows from \cite{CPR10}. Note that the asymmetric bound is as in their heat kernel bound.
\begin{lemma} 
For every $\alpha_1,\alpha_2 \in (0,1)$, there exists $C>0$ such that
\begin{equation}
	|(C_\infty^M)^{1/2}(x,y)| \leq C\frac{d(x,\partial M)^{\alpha_1}d(y,\partial M)^{\alpha_2}}{|x-y|^{2+\alpha_1+\alpha_2}}, \qquad \forall x,y \in M.
\end{equation}	
\end{lemma}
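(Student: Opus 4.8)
## Proof proposal for the kernel estimate on $(C_\infty^M)^{1/2}$

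The plan is to derive the pointwise bound on the kernel of $(C_\infty^M)^{1/2}$ directly from the corresponding estimate on the Dirichlet heat kernel $e^{-t(-\Delta+m^2)}(x,y)$ on $M$, via the subordination-type formula
\begin{equation}
(C_\infty^M)^{1/2}(x,y) = c\int_0^\infty t^{-1/2} e^{-t(-\Delta+m^2)}(x,y)\, dt
\end{equation}
for an explicit constant $c>0$ (this identity is justified by functional calculus applied to the Dirichlet Laplacian, using that $\lambda^{-1/2}=c\int_0^\infty t^{-1/2}e^{-t\lambda}\,dt$). The key external input is the Gaussian upper bound with boundary decay for the Dirichlet heat kernel: for $\alpha_1,\alpha_2\in(0,1)$ there is $C>0$ with
\begin{equation}
e^{-t(-\Delta+m^2)}(x,y) \leq C\, e^{-m^2 t/2}\, t^{-3/2}\Big(\frac{d(x,\partial M)}{\sqrt t}\wedge 1\Big)^{\alpha_1}\Big(\frac{d(y,\partial M)}{\sqrt t}\wedge 1\Big)^{\alpha_2} e^{-|x-y|^2/(Ct)},
\end{equation}
which is exactly the form of the heat kernel estimate in the references cited earlier in the paper (e.g.\ \cite{CPR10}); I would invoke it as a black box.

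First I would substitute this bound into the integral representation and split the $t$-integral at the scale $t\sim|x-y|^2$. For $t\le |x-y|^2$ the Gaussian factor $e^{-|x-y|^2/(Ct)}$ dominates: bounding $(d(x,\partial M)/\sqrt t\wedge 1)^{\alpha_1}\le (d(x,\partial M))^{\alpha_1} t^{-\alpha_1/2}$ and similarly for $y$, one is left with $\int_0^{|x-y|^2} t^{-1/2-3/2-\alpha_1/2-\alpha_2/2} e^{-|x-y|^2/(Ct)}\,dt$; the change of variables $t=|x-y|^2 s$ reduces this to $|x-y|^{-2-\alpha_1-\alpha_2}$ times a convergent integral $\int_0^1 s^{-2-(\alpha_1+\alpha_2)/2} e^{-1/(Cs)}\,ds<\infty$ (convergence at $s=0$ is forced by the exponential). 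For $t\ge |x-y|^2$ the Gaussian factor is $\le 1$ and the mass term $e^{-m^2 t/2}$ provides decay at infinity; using the same crude bound on the boundary factors gives $\int_{|x-y|^2}^\infty t^{-2-(\alpha_1+\alpha_2)/2}\,dt \lesssim |x-y|^{-2-\alpha_1-\alpha_2}$ directly, since the exponent $-2-(\alpha_1+\alpha_2)/2 < -1$. Combining the two ranges yields
\begin{equation}
|(C_\infty^M)^{1/2}(x,y)| \leq C\,\frac{d(x,\partial M)^{\alpha_1}\, d(y,\partial M)^{\alpha_2}}{|x-y|^{2+\alpha_1+\alpha_2}},
\end{equation}
as claimed.

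The only mild subtlety — and the step I would be most careful about — is the interchange of the $t$-integral with the kernel and the precise justification of the subordination formula at the level of kernels (rather than operators); this is standard but worth a sentence, appealing to monotone convergence and positivity of the heat semigroup together with the $L^1_t$-integrability established by the very estimate above. A secondary point is that the heat kernel bound I am quoting must be the one with the symmetric-in-form but independently-chosen exponents $\alpha_1,\alpha_2$ in $(0,1)$, matching the statement; if the reference only provides the bound with $\min(d(x,\partial M),d(y,\partial M))^\alpha$ one recovers the asymmetric version by interpolating (or simply by noting $\min(a,b)^{\alpha_1+\alpha_2}\le a^{\alpha_1}b^{\alpha_2}$ is false in general, so one genuinely needs the two-sided boundary decay of the heat kernel — which is exactly what the cited heat kernel estimates provide). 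No genuinely hard analysis is involved; the computation is the routine subordination argument, so I would present it compactly.
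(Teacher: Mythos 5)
Your proof is correct and takes exactly the same route as the paper: subordinate $(-\Delta+m^2)^{-1/2}$ to the Dirichlet heat semigroup via $\lambda^{-1/2}=c\int_0^\infty t^{-1/2}e^{-t\lambda}\,dt$, plug in the Dirichlet heat kernel bound with boundary decay from \cite{CPR10}, and split the $t$-integral at $t\sim|x-y|^2$. The paper itself only records the subordination formula and cites \cite{CPR10} for the heat kernel estimate (``Note that the asymmetric bound is as in their heat kernel bound''), so your write-up is essentially the verification the authors omitted, and the computation is done correctly in both ranges.

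One small inaccuracy in your closing digression: the inequality $\min(a,b)^{\alpha_1+\alpha_2}\le a^{\alpha_1}b^{\alpha_2}$ is in fact \emph{true} for all $a,b\ge 0$ and $\alpha_1,\alpha_2>0$, since $\min(a,b)^{\alpha_1}\le a^{\alpha_1}$ and $\min(a,b)^{\alpha_2}\le b^{\alpha_2}$. So even if the reference only supplied a $\min$-type heat kernel bound, you could pass to the product form directly; there is no need to appeal to the ``genuinely two-sided'' version, though of course having it makes the step immediate. This does not affect the validity of the argument, but the claim as written is false and worth correcting.
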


\section{Harmonic extension and Poisson kernel estimates} \label{appendix: harmonic extension}

\subsection{Regularity versus boundary blow-up} \label{appendix: harmonic extension estimates}

Let $\overline H$ denote the $m$-Harmonic extension on $\mathbb{R}\times \mathbb{T}^2$ with associated Poisson kernel $\overline P_0$. For each $\tau \geq 0$, let $\widehat{\overline P}_0(\tau,\cdot)$ denote its Fourier transform. Explicitly, we have 
\begin{equation}
\widehat{\overline P}_0(\tau,n)\label{eq:poisson-fourier}
=
e^{-|\tau|(4\pi^2|n|^2+m^2)^{1/2}}, \qquad \forall \tau \in \mathbb R, \, \forall n \in \mathbb Z^2.	
\end{equation}
Let $\overline H^*$ denote the adjoint of $\overline H$. Its action on smooth functions is given by
\begin{align}
\widehat{\overline H^*} g(n)
=
\int_\mathbb{R} e^{-|\tau|(4\pi^2|n|^2+m^2)^{1/2}} \hat g(\tau,n) d\tau, \qquad \forall g\in C^\infty(\mathbb R\times \mathbb T^2), \, \forall n \in \mathbb Z^2. 	
\end{align}

Let $g \in L^1_\tau B^s_{p,q,z}$. We claim that $\overline H^* g \in B^s_{p,p,z}$. To see this, note that
\begin{align}
\int_{\mathbb R} \|\widehat{\overline H^*} g(\tau,\cdot)\|_{B^s_{p,p,z}}d\tau
&\leq
C \int_{\mathbb R} \| g(\tau,\cdot) \|_{B^s_{p,p,z}} d\tau.
\end{align}
Hence $\overline H^*: L^1_\tau B^s_{p,q,z} \rightarrow B^s_{p,q,z}$. 

We now have the following lemma 
\begin{lemma}
  Let $\alpha \in \mathbb R$ and $\beta\geq0$, then
  \begin{equation} \label{tool: H reg decay}
  \| \overline H\varphi(\tau,\cdot)\|_{B^{\alpha +\beta}_{p,q,z}}
  \leq
  C(\beta) \tau^{-\beta}\|\varphi\|_{B^\alpha_{p,q,z}}.	
\end{equation}
And similarly 
\begin{equation}
  \|H(\phi_-,\phi_+)(\tau,\cdot)\|_{B^{\alpha +\beta}_{p,q,z}}\leq C(\beta,L) d(\tau,\cdot)^{-\beta} \left(\|\phi_-\|_{B^{\alpha}_{p,q,z}}+\|\phi_+\|_{B^{\alpha}_{p,q,z}}\right).
\end{equation}
\end{lemma}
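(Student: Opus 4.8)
The statement is a pair of Besov-space estimates on the $m$-harmonic extension in the periodic direction, together with its loss-of-time-weight gain-of-regularity feature. The plan is to reduce everything to the explicit Fourier symbol \eqref{eq:poisson-fourier} of $\overline{P}_0$ and the Littlewood--Paley characterization of $B^s_{p,q}(\mathbb{T}^2)$. First I would prove the single-extension estimate \eqref{tool: H reg decay}. Writing $\overline{H}\varphi(\tau,\cdot) = \mathcal{F}^{-1}_{\mathbb{T}^2}\bigl(n\mapsto e^{-|\tau|\langle 2\pi n\rangle}\widehat\varphi(n)\bigr)$ with $\langle 2\pi n\rangle = (4\pi^2|n|^2+m^2)^{1/2}$, one applies a Littlewood--Paley block $\Delta_k$ and estimates the Fourier multiplier $m_{k,\tau}(n) := \langle 2\pi n\rangle^{\beta} e^{-|\tau|\langle 2\pi n\rangle}\chi_k(n)$ as a convolution operator on $L^p$. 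On the support of $\chi_k$ one has $\langle 2\pi n\rangle \asymp 2^k$ (up to $m$-dependent constants at $k=-1$), so $|m_{k,\tau}(n)|\lesssim 2^{k\beta}e^{-c|\tau|2^k}$ pointwise, and a Mikhlin-type bound on the derivatives of $m_{k,\tau}$ (each $\tau$-derivative brings down a factor of $\langle 2\pi n\rangle \asymp 2^k$, while $\partial_n^\gamma$ of the symbol on the annulus is controlled by $2^{-k|\gamma|}$ times the base symbol) gives $\|\Delta_k \overline{H}\varphi(\tau,\cdot)\|_{L^p} \lesssim 2^{k\beta}e^{-c|\tau|2^k}\|\Delta_k\varphi\|_{L^p}$. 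Using $\sup_{x\geq 0} x^{\beta}e^{-cx} = C(\beta)$ with $x = |\tau|2^k$ converts $2^{k\beta}e^{-c|\tau|2^k} \leq C(\beta)|\tau|^{-\beta}$, and then taking the weighted $\ell^q$ norm over $k$ of $2^{k(\alpha+\beta)}\|\Delta_k\overline{H}\varphi(\tau,\cdot)\|_{L^p}$ and comparing with $2^{k\alpha}\|\Delta_k\varphi\|_{L^p}$ yields \eqref{tool: H reg decay}.

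For the double-extension estimate I would use the decomposition already recorded in the text: $H(\varphi_-,\varphi_+) = H(\varphi_-,0) + H(0,\varphi_+)$, and on $M = [-L,L]\times\mathbb{T}^2$ one has $H(\varphi_-,0) = \overline{H}\varphi_-|_M + \Sigma^-(\varphi_-)$, where $\Sigma^-$ is $m$-harmonic with boundary data $-\overline{H}\varphi_-|_{\partial^+M}$ on $\partial^+M$ and $0$ on $\partial^-M$, and symmetrically for $H(0,\varphi_+)$. (Here I am adapting the decomposition from Section~2.4, which was stated with $B$ as the middle cut; the same splitting applies with the two components of $\partial M$ in place of $B$.) For the $\overline{H}\varphi_-|_M$ piece, near $\partial^-M = \{-L\}\times\mathbb{T}^2$ one has $d(\tau,\partial M) = \tau + L$, so \eqref{tool: H reg decay} applied with the extension variable $\tau+L$ gives the bound with weight $d(\tau,\partial M)^{-\beta}$; away from $\partial^-M$ the function $\overline{H}\varphi_-$ is smooth with all seminorms controlled by $\|\varphi_-\|_{B^\alpha_{p,q,z}}$ times an $L$-dependent constant (again from \eqref{tool: H reg decay} evaluated at $\tau+L$ bounded below), so the estimate holds there trivially. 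The correction term $\Sigma^-(\varphi_-)$ is $m$-harmonic on $M$ with smooth (in fact, by \eqref{tool: H reg decay}, arbitrarily regular) boundary data of size $\lesssim_L \|\varphi_-\|_{B^\alpha_{p,q,z}}$ on the far boundary and zero on the near one; by elliptic regularity up to the boundary it is smooth on all of $M$ with $C^k$ norms bounded by $C(k,L)\|\varphi_-\|_{B^\alpha_{p,q,z}}$, so its contribution to $\|H(\varphi_-,\varphi_+)(\tau,\cdot)\|_{B^{\alpha+\beta}_{p,q,z}}$ is harmless (one can absorb it into $C(\beta,L)$). Summing the $\varphi_-$ and $\varphi_+$ contributions and noting that $d(\tau,\partial M)^{-\beta}$ dominates both $(\tau+L)^{-\beta}$ and $(L-\tau)^{-\beta}$ up to a constant gives the claimed bound.

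The only genuinely non-routine point is the Mikhlin multiplier bound for $m_{k,\tau}$ that is uniform in both $k$ and $\tau$: one must check that the rescaled symbol $n\mapsto m_{k,\tau}(2^k n)$ and its derivatives up to order $\lfloor d/2\rfloor + 1$ are bounded uniformly, which requires noting that the $\tau$-dependence enters only through the factor $e^{-|\tau|\langle 2\pi n\rangle}$, whose $n$-derivatives on the annulus $|n|\asymp 2^k$ produce powers of $|\tau|2^k$ that are then reabsorbed by the exponential decay --- i.e.\ $\sup_{x\geq 0}x^{j}e^{-cx}<\infty$ for each $j$. Everything else --- the Littlewood--Paley summation, the interchange of the extension variable with the distance-to-boundary, the elliptic regularity for $\Sigma^\pm$ --- is standard. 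I expect this to be a short lemma; the main obstacle, to the extent there is one, is simply being careful with the $m$-dependence at the lowest frequency block and with the geometry near the two ends of the cylinder.
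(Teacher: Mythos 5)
Your argument is correct and follows essentially the same route as the paper: the first estimate is the statement that $\varphi\mapsto\overline H\varphi(\tau,\cdot)$ is an $S^\beta$-type Fourier multiplier of norm $\lesssim\tau^{-\beta}$ (the paper cites this directly from \cite{BCD11}, you reprove it via Littlewood--Paley blocks and rescaled Mikhlin bounds, which is the same content unpacked), and the second estimate is obtained in both cases by writing $H(\varphi_-,\varphi_+)$ as the sum of the two infinite-cylinder extensions plus an $m$-harmonic correction whose boundary data is $\overline H\varphi_\mp$ evaluated a distance $\geq L$ from the source, hence smooth with norms controlled by $\|\varphi_\pm\|_{B^\alpha_{p,q,z}}$. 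The only cosmetic difference is that you split $H(\varphi_-,\varphi_+)=H(\varphi_-,0)+H(0,\varphi_+)$ first and invoke the $\Sigma^\pm$ decomposition from Section~2.4, whereas the paper writes the combined correction in a single formula; both lead to the same bound.
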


\begin{proof}
To show the first statement observe that by \eqref{eq:poisson-fourier} we have that $\overline{H}(\phi)(\tau,\cdot)=M_{\tau}\phi$, where $M_{\tau}$ is an $S^{\beta}$ multiplier in the sense of \cite{BCD11}, with multiplier norm being bounded by $\tau^{-\beta}$. For the second statement we observe that (for a cylinder $[0,L]\times \mathbb{T}^2$)
\begin{equation}
  H(\phi_-,\phi_+)(\tau,\cdot)=H\left(\overline{H}(\phi_-)(L,\cdot),\overline{H}(\phi_+)(-L)\right)(\tau,\cdot)+\overline{H}(\phi_-)(\tau,\cdot)+\overline{H}(\phi_+)(\tau-L,\cdot).
\end{equation}
The first term satisfies
\begin{align*}
  \|H\left(\overline{H}(\phi_-)(L,\cdot),\overline{H}(\phi_+)(-L)\right)&\|_{L^\infty_{\tau}B^{\alpha+\beta}_{p,q,z}} 
  \\ \leq& C\left( \|\overline{H}(1,\cdot)(\phi_{-})\|_{B^{\alpha+\beta}_{p,q,z}}+\|\overline{H}(0,\cdot)(\phi_{+})\|_{B^{\alpha+\beta}_{p,q,z}})\right)\\
                        \leq&C(L,\beta)\left(\|\phi_-\|_{B^{\alpha}_{p,q,z}}+\|\phi_+\|_{B^{\alpha}_{p,q,z}} \right),
\end{align*}
where in the last line we have applied \eqref{tool: H reg decay}. Now we can conclude by triangle inequality.

\end{proof}

\subsection{Gradient estimate}

\begin{lemma} \label{lem: gradient Cb}
  For every $\delta > 0$, there exists $C>0$ such that 
  \begin{equation} |\nabla_y C^B_\infty (x, y)| \leq C \frac{1}{d (y, \partial M)^{2 + \delta}} \frac{1}{d
     (x, \partial M)^{\delta}}, \qquad \forall x,y \in M.
  \end{equation}
\end{lemma}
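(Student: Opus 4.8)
The plan is to derive the bound from the explicit Fourier representation of the Poisson kernel on the infinite cylinder and then transfer to the finite cylinder using the smoothing decomposition $C^B_\infty(x,y) = \overline C^B_\infty(x,y) + (\text{smooth correction})$. Recall that $C^B_\infty$ is (the kernel of) the covariance operator $\overline H \circ (-\mathcal N_0)^{-1} \circ \overline H^*$, whose symbol in the periodic variable can be computed explicitly as in Appendix \ref{appendix: harmonic extension estimates}: writing $x = (\tau_x,z_x)$ and $y=(\tau_y,z_y)$, one has
\begin{equation}
\overline C^B_\infty(x,y) = \sum_{n \in \mathbb Z^2} \frac{e^{-(|\tau_x| + |\tau_y|)\langle n\rangle}}{2\langle n \rangle}\mathsf e_n(z_x - z_y),
\end{equation}
where $\langle n \rangle = (4\pi^2|n|^2+m^2)^{1/2}$. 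Differentiating in $y$ brings down a factor which is $O(\langle n \rangle)$ in the $z_y$-directions (from $\partial_{z_y}\mathsf e_n$) or exactly $-\langle n \rangle \,{\rm sgn}(\tau_y)$ in the $\tau_y$-direction, so in all cases $|\nabla_y \overline C^B_\infty(x,y)| \lesssim \sum_n e^{-(|\tau_x|+|\tau_y|)\langle n \rangle}$.

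First I would estimate this last sum. Splitting the exponential as $e^{-(|\tau_x|+|\tau_y|)\langle n \rangle} = e^{-|\tau_y|(1-\epsilon)\langle n \rangle}\cdot e^{-|\tau_x|\langle n \rangle}\cdot e^{-\epsilon|\tau_y|\langle n \rangle}$ and using the elementary bound $e^{-a\langle n \rangle}\langle n \rangle^s \leq C_s a^{-s}$ for $a>0$, $s\geq 0$, one gets, for the two extra exponential factors, decay $\langle n \rangle^{-(2+\delta)}$ at the cost of $|\tau_y|^{-(2+\delta)}$ and decay $\langle n \rangle^{-\delta}$ at the cost of $|\tau_x|^{-\delta}$ (absorbing the $(1-\epsilon)$ and $\epsilon$ constants into $C$). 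Since $\sum_{n\in\mathbb Z^2}\langle n\rangle^{-(2+2\delta)} < \infty$, this yields
\begin{equation}
|\nabla_y \overline C^B_\infty(x,y)| \leq C\, \frac{1}{|\tau_y|^{2+\delta}}\,\frac{1}{|\tau_x|^{\delta}},
\end{equation}
and since $d(\cdot,\partial M)$ depends only on the $\tau$-coordinate and is comparable to $|\tau \mp L|$ near the boundary (and bounded below in the bulk, where everything is smooth anyway, one can also use $e^{-c\langle n\rangle}$-type decay directly), this is exactly the claimed estimate with $\overline C^B_\infty$ in place of $C^B_\infty$. The main technical point is the clean extraction of the two independent weights $d(x,\partial M)^{-\delta}$ and $d(y,\partial M)^{-(2+\delta)}$ from the single exponential; this is the step requiring the most care, though it is still routine once the splitting above is in place.

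Finally I would pass from $\overline C^B_\infty$ to $C^B_\infty$. By the discussion in Section \ref{sec: gaussian} (the relation $\mathcal N = \mathcal N_0 + \mathcal N_\infty$ with $\mathcal N_\infty$ infinitely smoothing, together with the identities $H^\sigma(0,\varphi^0) = \overline H\varphi^0|_{M^\sigma} + \Sigma^\sigma(\varphi^0)$ where $\Sigma^\sigma$ is smooth up to the boundary), one has $C^B_\infty(x,y) = \overline C^B_\infty(x,y) + r(x,y)$ where $r$ is the kernel of an operator built from $\overline H$, $\mathcal N_\infty$, $(-\mathcal N_0)^{-1}$ and the smooth corrections $\Sigma^\pm$, and is therefore smooth on $M \times M$ (in particular bounded together with its $y$-gradient). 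Since $d(\cdot,\partial M)$ is bounded on $M$, the contribution of $r$ to $\nabla_y C^B_\infty$ is trivially dominated by the right-hand side of the claimed bound. Combining the two contributions gives the lemma. I do not expect any serious obstacle here; the only care needed is to confirm that the smooth correction $r$ genuinely has a bounded $y$-derivative up to the boundary, which follows from elliptic regularity for $\Sigma^\pm$ exactly as used elsewhere in the paper (e.g.\ in the estimate of $R_3$).
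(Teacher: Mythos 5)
Your approach — expanding $C^B_\infty$ in Fourier modes on the cylinder and extracting the two independent weights from the exponential decay via $e^{-a\langle n\rangle}\langle n\rangle^s\lesssim a^{-s}$ — is a genuinely different route from the paper's. The paper writes $\nabla_y C^B$ as a double boundary integral $\int_{\partial M}\int_{\partial M}C^M(\tilde x,\tilde y)\,P(x,\tilde x)\,\nabla_y P(y,\tilde y)\,d\tilde x\,d\tilde y$, plugs in the pointwise Poisson kernel bounds from Krantz ($|P(x,\tilde x)|\lesssim d(x,\partial M)/|x-\tilde x|^3$, $|\nabla_y P|\lesssim d(y,\partial M)/|y-\tilde y|^4$) together with $|C^M(\tilde x,\tilde y)|\lesssim|\tilde x-\tilde y|^{-1}$, and then absorbs the numerators and redistributes powers to produce the two weights. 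Your Fourier computation is more explicit and exploits the flat-torus structure; the paper's is more geometric and would transfer to curved settings. Both land on the same estimate.

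One point needs tightening. You write $\overline C^B_\infty$ as the symbol of $\overline H\circ(-\mathcal N_0)^{-1}\circ\overline H^*$, where $\overline H$ in the paper extends from the cut $B=\{0\}\times\mathbb T^2$, so your $|\tau_x|,|\tau_y|$ measure distance to $B$; yet in the next breath you identify $d(\cdot,\partial M)$ with $|\tau\mp L|$, i.e.\ distance to the end caps $\{\pm L\}\times\mathbb T^2$. These are different boundaries, and the covariance you want in the lemma (as it is used in Lemma~\ref{lem: upsilon2 a}) is that of $H(\varphi_-,\varphi_+)$ with rough data on $\partial M=\{\pm L\}\times\mathbb T^2$, so the blowup is near $\pm L$, not near $0$. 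The fix is straightforward — replace $e^{-|\tau|\langle n\rangle}$ by the Poisson symbol for the strip $[-L,L]$, namely $\sinh((L\mp\tau)\langle n\rangle)/\sinh(2L\langle n\rangle)$, which for large $\langle n\rangle$ behaves like $e^{-d(x,\partial^\pm M)\langle n\rangle}$, after which your estimate goes through verbatim with $|\tau_x|,|\tau_y|$ replaced by $d(x,\partial M),d(y,\partial M)$. (Alternatively, decompose the strip Poisson kernel as the one-sided infinite-cylinder extension from $\pm L$ plus a smooth correction, which is the style of decomposition you invoke, just anchored at the correct boundary.) With that geometric relabeling, your argument is sound, and the smooth-correction step at the end works exactly as you say.
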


\begin{proof}
We drop $\infty$ from the notation. By definition we have
  \[ \nabla_y C^B (x, y) = \int_{\partial M} \int_{\partial M} C^{M} (\tilde{x},
     \tilde{y}) P (x, \tilde{x}) \nabla_y P (y, \tilde{y}) \mathd \tilde{x}
     \mathd \tilde{y}. \]
  Now from \cite{K06} we have
  \begin{eqnarray*}
    | \nabla_y C^B (x, y) | & \leq & \int_{\partial M} \int_{\partial M} | C^{M}
    (\tilde{x}, \tilde{y}) P (x, \tilde{x}) \nabla_y P (y, \tilde{y}) | \mathd
    \tilde{x} \mathd \tilde{y}\\
    & \leq & C\int_{\partial M} \int_{\partial M} \frac{1}{| \tilde{x} - \tilde{y} |} \frac{d
    (x, {\partial M})}{| x - \tilde{x} |^3} \frac{d (y, {\partial M})}{| y - \tilde{y} |^4} \mathd
    \tilde{x} \mathd \tilde{y}\\
    & \leq &C' \int_{\partial M} \int_{\partial M} \frac{1}{| \tilde{x} - \tilde{y} |} \frac{1}{|
    x - \tilde{x} |^2} \frac{1}{| y - \tilde{y} |^3} \mathd \tilde{x} \mathd
    \tilde{y}\\
    & \leq & C''\frac{1}{d {(y, {\partial M})^{2 + \varepsilon}} } \frac{1}{d (x,
    {\partial M})^{\varepsilon}} \int_{\partial M} \int_{\partial M} \frac{1}{| \tilde{x} - \tilde{y} |}
    \frac{1}{| x - \tilde{x} |} \frac{1}{| y - \tilde{y} |} \mathd \tilde{x}
    \mathd \tilde{y}\\
    & \leq & C'''\frac{1}{d {(y, {\partial M})^{2 + \varepsilon}} } \frac{1}{d (x,
    {\partial M})^{\varepsilon}}.
  \end{eqnarray*}
  
\end{proof}

\bibliographystyle{plain}
\bibliography{refs.bib}

\end{document}